\documentclass{amsart}
\usepackage{graphicx, color, amsmath, amsfonts, amsthm, hyperref, amssymb, xypic, stmaryrd, mathtools} % Required for inserting images

\numberwithin{equation}{section}

\newcommand{\bF}{\mathbb{F}}

\newcommand{\bZ}{\mathbb{Z}}
\newcommand{\bQ}{\mathbb{Q}}
\newcommand{\bA}{\mathbb{A}}
\newcommand{\bT}{\mathbb{T}}

\newcommand{\cO}{\mathcal{O}}
\newcommand{\cS}{\mathcal{S}}
\newcommand{\cQ}{\mathcal{Q}}
\newcommand{\cJ}{\mathcal{J}}
\newcommand{\fm}{\mathfrak{m}}

\newcommand{\fg}{\mathfrak{g}}
\newcommand{\ft}{\mathfrak{t}}
\newcommand{\loc}{\mathrm{loc}}

\newcommand{\ad}{\operatorname{ad}}
\newcommand{\rhobar}{\overline{\rho}}
\newcommand{\rhobarss}{\overline{\rho}^{\textnormal{ss}}}
\newcommand{\GL}{\mathrm{GL}}

\newcommand{\Ind}{\operatorname{Ind}}
\newcommand{\Gal}{\operatorname{Gal}}
\newcommand{\Frob}{\mathrm{Frob}}
\newcommand{\Hom}{\operatorname{Hom}}

\newcommand{\tld}[1]{\widetilde{#1}}

\newcommand{\trace}{\operatorname{tr}}
\newcommand{\diag}{\operatorname{diag}}

\newcommand{\image}{\operatorname{im}}
\newcommand{\isom}{\xrightarrow{\sim}}

\newcommand{\fn}{\mathfrak{n}}
\newcommand{\lbar}[1]{\overline{#1}}
\newcommand{\Art}{\operatorname{Art}}

\newcommand{\Iw}{\mathrm{Iw}}

\newcommand{\St}{\mathrm{St}}

\newcommand{\aux}{\mathrm{aux}}

\newcommand{\nr}{\mathrm{nr}}

\newcommand{\der}{\mathrm{der}}
\newcommand{\Lie}{\operatorname{Lie}}

\newcommand{\length}{\operatorname{length}}
\newcommand{\ord}{\operatorname{ord}}
\newcommand{\CH}{\operatorname{CH}}
\newcommand{\chibar}{\overline{\chi}}

\DeclareMathOperator{\tr}{tr}
\DeclareMathOperator{\CNL}{CNL}
\DeclareMathOperator{\Spec}{Spec}
\DeclareFontFamily{U}{wncy}{}
\DeclareFontShape{U}{wncy}{m}{n}{<->wncyr10}{}
\DeclareSymbolFont{mcy}{U}{wncy}{m}{n}
\DeclareMathSymbol{\Sha}{\mathord}{mcy}{"58}

\theoremstyle{definition}
\newtheorem{defn}[subsubsection]{Definition}
\newtheorem{cor}[subsubsection]{Corollary}
\newtheorem{pp}[subsubsection]{Proposition}
\newtheorem{thm}[subsubsection]{Theorem}
\newtheorem{lemma}[subsubsection]{Lemma}

\theoremstyle{remark}
\newtheorem{rem}[subsubsection]{Remark}

\title[Modularity of residually reducible Galois representations]{On the modularity of residually reducible Galois representations of rank 2}

\begin{document}
\author[A. Dotto and J. A. Thorne]{Andrea Dotto and Jack A. Thorne}
\begin{abstract}
We use a new version of the Taylor--Wiles method, based on a tame analogue of the techniques introduced by Skinner--Wiles, to prove new modularity theorems for residually reducible Galois representations of rank 2 over totally real fields. 
\end{abstract}

\maketitle

\tableofcontents

\section{Introduction}

\subsection{Context} The relation between modular forms and Galois representations is one of the central themes of the Langlands programme. Galois representations can be used to study properties of modular forms: see for example their use, by Serre and Swinnerton-Dyer, to characterize the congruences satisfied by Ramanujan's $\tau$ function. They can also be used as an intermediary in the conjectural dictionary between automorphic representations and motives \cite{Clo90, Tay02}. Serre's conjecture \cite{Ser87}, which predicts a dictionary between Galois representations and modular forms modulo $p$ that is very tightly constrained by local-global compatibility, has become a key organising principle, as well as having far-reaching arithmetic consequences. It is now a theorem \cite{Kha09a,Kha09}, but generalisations to base fields other than the rational numbers so far remain out of reach. 

One of the main tools in studying the modular--Galois dictionary is the idea of modularity lifting theorems, that prove the modularity of Galois representations with $p$-adic coefficients starting from modulo $p$ information. The first modularity lifting theorems were proved by Wiles and Taylor on the way to the proof of the modularity of semistable elliptic curves over $\mathbb{Q}$ (and therefore Fermat's Last Theorem) \cite{Wil95, Tay95}. These theorems applied to Galois representations $\rho : G_{\mathbb{Q}} \to \GL_2(\overline{\mathbb{Q}}_p)$ with (in retrospect) rather nice properties: for example, that $\rho$ arises from a $p$-divisible group over $\mathbb{Z}[1/N]$ for some $N$ prime to $p$, and that the semisimple residual representation $\overline{\rho}^{\textnormal{ss}} : G_{\mathbb{Q}} \to \GL_2(\overline{\mathbb{F}}_p)$ is both irreducible and modular (in the sense of Serre's conjecture). 

The Taylor--Wiles method has now been refined and generalised in works by many authors (see e.g. \cite{Cal23} for an excellent survey), which include applications both to number fields other than $\mathbb{Q}$ (including CM fields, with applications to the modularity of elliptic curves over imaginary quadratic fields \cite{Car25}) and Galois representations with repeated Hodge--Tate weights (including applications to the modularity of abelian surfaces over $\mathbb{Q}$ \cite{Box21}). The most powerful results have generally included some version of the condition that $\rhobarss$ is irreducible, or even the stronger condition that $\rhobarss|_{G_{\mathbb{Q}(\zeta_p)}}$ is irreducible, the so-called \emph{Taylor--Wiles hypothesis}. 

One result that falls outside this framework is the theorem of Skinner--Wiles \cite{Ski99}, which proves the modularity of many representations $\rho : G_{\mathbb{Q}} \to \GL_2(\overline{\mathbb{Q}}_p)$ for which the associated residual representation is \emph{reducible}. The difficulty is that in this case the deformation theory of $\rhobarss$, which is the basic tool in the proofs of all known modularity lifting theorems, is very hard to control. On the other hand, when such results are available, they can have significant consequences: the results of \cite{Ski99} form an indispensable part of the proof of Serre's conjecture, starting in \cite{Kha06}, and form the stepping stone for the far-reaching results towards the Fontaine--Mazur conjecture (still in the case where the base field is $\mathbb{Q}$) given in \cite{Pan22}. 
More precisely, \cite{Ski99} and \cite{Pan22} give a general result when $F / \mathbb{Q}$ is totally real and the splitting field of $\rhobarss : G_F \to \GL_2(\overline{\mathbb{F}}_p)$ is abelian over $\mathbb{Q}$: but this condition is seldom satisfied when $F \neq \mathbb{Q}$. 

\subsection{Results of this paper} In this paper we prove new modularity results for 2-dimensional $p$-adic Galois representations over totally real fields, whose residual representations are reducible, therefore generalising the main theorem of \cite{Ski99}. Our main result is as follows:
\begin{thm}\label{introthm_automorphy_lifting}
    Let $F$ be a totally real field,
    let $p$ be an odd prime, and let $\rho : G_F \to \GL_2(\overline{\mathbb{Q}}_p)$ be a continuous, irreducible representation satisfying the following conditions:
    \begin{enumerate}
        \item $\rho$ is unramified at all but finitely many places, and $\det \rho = \epsilon^{-1}$. 
        \item There exists $\mathbf{\lambda} \in (\mathbb{Z}^2_+)^{\Hom(F, \overline{\mathbb{Q}}_p)}$ such that $\rho$ is ordinary of weight $\mathbf{\lambda}$. 
        \item The semisimple residual representation 
        $\overline{\rho}^{\textnormal{ss}} : G_F \to \GL_2(\overline{\mathbb{F}}_p)$ is reducible, isomorphic to $\overline{\chi}_1 \oplus \overline{\chi}_2$ for $\overline{\chi}_1, \overline{\chi}_2 : G_F \to \overline{\mathbb{F}}_p^\times$ continuous characters, and $(\overline{\chi}_1 / \overline{\chi}_2)|_{G_{F(\zeta_p)}}$ is non-trivial. 
    \end{enumerate}
    Then $\rho$ is automorphic: there exists an isomorphism $\iota : \overline{\mathbb{Q}}_p \to \mathbb{C}$ and a cuspidal, regular algebraic, $\iota$-ordinary automorphic representation $\pi$ of $\GL_2(\mathbb{A}_F)$ of weight $\iota \mathbf{\lambda}$ such that $\rho \cong r_{\pi, \iota}$. 
\end{thm}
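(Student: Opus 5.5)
Since $\rho$ is irreducible, I will first use the lattice argument of Ribet to choose a $G_F$-stable lattice whose reduction is a non-split extension $\rhobar = \begin{pmatrix} \chibar_1 & * \\ 0 & \chibar_2\end{pmatrix}$. I may swap the roles of $\chibar_1,\chibar_2$ if needed, since both orderings occur. Ordinarity then gives a $G_{F_v}$-stable line at each $v \mid p$, with a prescribed unramified-times-cyclotomic-power character on it.

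I will then reduce to a favourable situation by soluble base change. I will pass to a totally real extension $F'/F$, linearly disjoint from the splitting field of $\rhobar$ and from $F(\zeta_p)$. On this extension $\rho|_{G_{F'}}$ will be unramified outside $p$ and have Iwahori-level (unipotent) ramification at the remaining places, and $\chibar_1,\chibar_2$ will be unramified away from $p$ and trivial at $v\mid p$ in the appropriate sense. At least one ordering of the extension class will remain nonsplit. By Langlands base change and the descent argument used in Skinner--Wiles, modularity over $F'$ implies modularity over $F$, since the extension is soluble.

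The heart of the argument is an $R=\bT$ theorem for the ordinary deformation ring $R$ of $\rhobar$ of fixed determinant $\epsilon^{-1}$. I will follow Skinner--Wiles, but with the tame version the abstract advertises. In place of Skinner--Wiles' nearly ordinary primes I will use Taylor--Wiles sets $Q_N$ of primes $q \equiv 1 \bmod p^N$ at which $\rhobar(\Frob_q)$ has distinct eigenvalues; these exist because $(\chibar_1/\chibar_2)|_{G_{F(\zeta_p)}}\neq 1$. The difficulty is that the dual Selmer group cannot be killed using reducible $\rhobar$: the classes coming from $H^1(F,\chibar_2/\chibar_1)$ and its dual obstruct the usual numerical coincidence. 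To handle this, I will patch Hecke modules of Hida families over auxiliary levels and use Chebotarev to annihilate only the part of the dual Selmer group orthogonal to the reducible locus. This will yield a patched module $M_\infty$ over $R_\infty=R_{\loc}[[x_1,\dots,x_g]]$ that is maximal Cohen--Macaulay over $S_\infty$, with $\dim R_\infty$ exceeding $\dim S_\infty$ by at most the dimension of the reducible locus.

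To conclude, I will show that every irreducible component of $\Spec R_\infty$ meeting the irreducible locus lies in $\operatorname{supp} M_\infty$. This should follow from a dimension count: the reducible locus, parametrised by pairs of characters, has dimension strictly less than that of each component, so a component with generic point irreducible is not contained in it. Combined with connectedness of the ordinary local rings after base change, and Hida theory for existence of an ordinary modular lift of $\rhobar$, this will give $\rho$ in the support, hence $\rho \cong r_{\pi,\iota}$ with $\pi$ $\iota$-ordinary of weight $\iota\lambda$.

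I expect the main obstacle to be the existence of a modular point on the right component. This means producing an ordinary, cuspidal, irreducible modular deformation of $\rhobar$ with the given extension class, together with the dimension inequality for the reducible locus. Here I would try Skinner--Wiles' Eisenstein congruence argument: construct cusp forms congruent to an Eisenstein series, with the relevant ideal controlled by Wiles' proof of the main conjecture over totally real fields. I would then use the nonvanishing of the extension class to guarantee irreducibility.
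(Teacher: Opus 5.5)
There is a genuine gap, and it is in the step you treat as routine: the dimension count for the reducible locus. In the deformation ring $R_b$ of a non-split $\rhobar_b$, reducible points are not parametrised by pairs of characters alone. The upper-right entry can also move through $H^1(F_S/F,\chibar)$ (compare Lem.~\ref{lem_bounding_reducible_locus} and Prop.~\ref{prop_c_dim}). Since $\chibar$ is totally odd, the Euler characteristic formula gives $h^1(F_S/F,\chibar)\geq[F:\mathbb{Q}]$, so this locus is not small compared with the components you want to reach. In the pseudodeformation ring the reducible locus really is just pairs of characters, but there you have no presentation bounding dimension or connectedness dimension from below.

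Taylor--Wiles primes do not rescue you. The characters may ramify at each $v\in Q_N$ through $k(v)^\times(p)$, so in the patched limit the reducible locus gains $|Q_N|$ dimensions, exactly as many as patching adds to the patched ring. There is no mechanism that ``annihilates only the part of the dual Selmer group orthogonal to the reducible locus''. Your set-up is also internally inconsistent. If $R_\infty=R_{\loc}\llbracket x_1,\dots,x_g\rrbracket$ has dimension exceeding $\dim S_\infty$, then so does each of its components (the local ordinary rings being equidimensional). A module finite over $S_\infty$ has support of dimension at most $\dim S_\infty$, so no component lies in $\operatorname{supp}M_\infty$.

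What one can actually prove is a local statement: $\widehat{(P_{\infty,\mathfrak q})}\to\widehat{(\mathbb{T}_{\infty,\mathfrak q})}$ has nilpotent kernel at dimension-one primes $\mathfrak q$ outside the reducible (and dihedral) locus. There $\ad^0\rho_{\mathfrak q}$ is irreducible and a second patching can kill the dual Selmer group; one then propagates using connectedness dimension. Since $R_b$ does not map to $\mathbb{T}$ (modular lifts need not admit lattices reducing to $\rhobar_b$), this has to go through the pseudodeformation ring and move between different classes $b$.

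The missing idea is how to make the reducible locus small, and then how to remove the device that does so. The paper first proves modularity only for representations that are Steinberg at an auxiliary set $T$ of places with $q_v\equiv -1 \bmod p$ (Thm.~\ref{thm_tame_SW_black_box}). The Steinberg condition forces reducible points to split at $T$, so their characters factor through an abelian extension in which $T$ splits. One chooses $Q_N$ with $|Q_N|=|T|$ so that generators of powers of the primes of $T$ have prescribed images in $\prod_{v\in Q_N}k(v)^\times(p)$ (Prop.~\ref{prop_existence_of_TW_places_first_patching}). This keeps that Galois group a quotient of $\mathbb{Z}_p^{1+\delta_F}$ up to bounded cokernel. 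Then, with $h=h^1_{T\text{-split}}(F_{S_p\cup S_{\aux}}/F,\chibar)$, the reducible locus of $R_{b,\infty}$ has dimension at most $1+\delta_F+h$ mod $\varpi$, while its connectedness dimension is at least $[F:\mathbb{Q}]+|T|$. This is why $|T|>h+1$ is required.

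To reach $\rho$ itself, the paper needs a further layer that your plan lacks entirely:
\begin{enumerate}
\item Fakhruddin--Khare--Patrikis-style lifts $\rho'$ of $\rho \bmod \varpi^{N-P_\rho}$ that are Steinberg at $T$ and at an extra set $S_{\aux}$ of size bounded independently of $N$ (Prop.~\ref{prop:T-new lift}). This uses a choice of $T$ keeping the mod $p$ Selmer group bounded, and the fact that ordinarity forces the image to contain a conjugate of an open subgroup of $\GL_2(\mathbb{Z}_p)$.
\item Level-raising of the residual form.
\item A mod $\varpi^N$ level-lowering argument on Shimura curves: Mazur's principle adapted to reducible $\rhobarss$ via Cayley--Hamilton quotients (Prop.~\ref{prop_pseudocharacters_with_large_image}), losing only a constant $t_\rho$ independent of $N$.
\end{enumerate}
Letting $N\to\infty$ then gives modularity of $\rho$. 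Your Eisenstein-congruence step does match the paper's residual modularity argument (Lem.~\ref{lem_residual_automorphy}), but that is not where the difficulty lies.
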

(We recall what it means for $\rho$ to be ``ordinary of weight $\lambda$'' and ``automorphic of weight $\iota \lambda$'' in \S \ref{subsec_notation} below.) Combining this with the theorems of \cite{Ski99} and their refinements in \cite{Pan22}, we immediately deduce the following corollary.
One can easily write down other applications, such as to the modularity of elliptic curves. 

\begin{cor}\label{introcor_modularity_in_abelian_case}
    Let $F$ be a totally real field such that $F / \mathbb{Q}$ is abelian, and let $p \geq 5$ be a prime that is unramified in $F$. Let $\rho : G_F \to \GL_2(\overline{\mathbb{Q}}_p)$ be a continuous, irreducible representation satisfying the following conditions:
    \begin{enumerate}
        \item  $\rho$ is unramified at all but finitely many places, and $\det \rho = \epsilon^{-1}$. 
        \item There exists $\mathbf{\lambda} \in (\mathbb{Z}^2_+)^{\Hom(F, \overline{\mathbb{Q}}_p)}$ such that $\rho$ is ordinary of weight $\mathbf{\lambda}$. 
        \item The semi-simplified residual representation $\rhobarss$ is reducible. 
    \end{enumerate}
    Then $\rho$ is automorphic, associated to a cuspidal, regular algebraic, ordinary automorphic representation of $\GL_2(\mathbb{A}_F)$. 
\end{cor}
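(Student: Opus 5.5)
We split according to whether $\rhobarss = \overline{\chi}_1 \oplus \overline{\chi}_2$ satisfies the hypothesis of Theorem \ref{introthm_automorphy_lifting}, namely whether $(\overline{\chi}_1/\overline{\chi}_2)|_{G_{F(\zeta_p)}}$ is non-trivial. If it is, Theorem \ref{introthm_automorphy_lifting} applies directly: conditions (1) and (2) are the same in both statements, $p \geq 5$ is odd, and reducibility of $\rhobarss$ gives the decomposition into characters. It remains to treat the case where $(\overline{\chi}_1/\overline{\chi}_2)|_{G_{F(\zeta_p)}}$ is trivial, i.e.\ $\overline{\chi}_1/\overline{\chi}_2 \in \{1, \overline{\epsilon}\}$ as a character of $G_F$, since $\Gal(F(\zeta_p)/F)$ is cyclic and generated by the image of $\overline{\epsilon}$. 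In this case we invoke the results of \cite{Ski99} as extended in \cite{Pan22}.

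The key point is that in this exceptional case the splitting field of $\rhobarss$ is abelian over $\mathbb{Q}$. The determinant condition gives $\overline{\chi}_1\overline{\chi}_2 = \overline{\epsilon}^{-1}$, and $\overline{\chi}_1/\overline{\chi}_2 = \overline{\epsilon}^{a}$ with $a \in \{0,1\}$. Hence $\overline{\chi}_1^2 = \overline{\epsilon}^{a-1}$ and $\overline{\chi}_2 = \overline{\chi}_1\overline{\epsilon}^{-a}$. It follows that $\overline{\chi}_1$ differs from a power of $\overline{\epsilon}$ by a quadratic character, and so the splitting field of $\rhobarss$ lies in $K(\zeta_p)$ for some extension $K/F$ of degree at most $2$.

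This does not immediately give abelianness over $\mathbb{Q}$. The approach I would take is to adjust $\overline{\chi}_1$ directly, or to make a solvable base change. When $a = 1$, we have $\overline{\chi}_1^2 = 1$, so $\overline{\chi}_1$ is quadratic. After restricting to the quadratic extension $F'$ cut out by $\overline{\chi}_1$, which is still totally real if $\overline{\chi}_1$ is totally even, $\rhobarss|_{G_{F'}} \cong 1 \oplus \overline{\epsilon}^{-1}$. This has splitting field $F'(\zeta_p)$. When $a = 0$, we get $\overline{\chi}_1 = \overline{\chi}_2$ and $\overline{\chi}_1^2 = \overline{\epsilon}^{-1}$; a similar restriction makes both characters powers of $\overline{\epsilon}$. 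Base change is not ideal, however, because $F'/\mathbb{Q}$ need not be abelian. Instead I would observe that \cite{Pan22} in fact only requires the splitting field of $\rhobarss$ to be abelian over $\mathbb{Q}$, rather than $F$ itself.

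The main obstacle is arranging a totally real base over which the residual splitting field is abelian over $\mathbb{Q}$, while preserving the other hypotheses, and then descending. Over $F$ itself, the splitting field of $\overline{\epsilon}$ is $F(\zeta_p)$, which is abelian over $\mathbb{Q}$ because $F/\mathbb{Q}$ is abelian. Any twist of $\rho$ by a character lifting a quadratic or finite-order adjustment preserves ordinarity and automorphy, with the weight shifted appropriately. I would therefore first twist $\rho$ by a finite-order character $\psi$ so that $\rhobarss \otimes \overline{\psi}$ becomes a sum of powers of $\overline{\epsilon}$, relaxing the determinant normalization if \cite{Pan22} allows this. The hard step is checking that the remaining technical hypotheses of \cite{Pan22} hold here; these are the uses of $p \geq 5$ and of $p$ being unramified in $F$. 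Concretely, this means distinctness and $p$-distinguishedness conditions on the ordinary characters at places above $p$, together with irreducibility of $\rho$. Where such a condition fails, I would make a further solvable totally real base change disjoint from the relevant fields, apply \cite{Pan22} there, and descend automorphy by Langlands' solvable base change together with irreducibility of $\rho$.
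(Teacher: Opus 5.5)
Your overall route is the paper's: split on whether $\chibar|_{G_{F(\zeta_p)}}$ is trivial, apply Theorem~\ref{introthm_automorphy_lifting} in the first case and \cite[Thm.\ 5.1.1]{Pan22} in the second. However, the reduction in the second case contains a false step. A character of $G_F$ that is trivial on $G_{F(\zeta_p)}$ factors through the cyclic group $\Gal(F(\zeta_p)/F)$, so it equals $\overline{\epsilon}^{a}$ for an \emph{arbitrary} exponent $a$, not only $a\in\{0,1\}$.

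Since $p$ is unramified in $F$, we have $F\cap\mathbb{Q}(\zeta_p)=\mathbb{Q}$, so $\overline{\epsilon}|_{G_F}$ has order $p-1$. For $p\geq 7$, a residual representation with $\chibar=\overline{\epsilon}^{3}$ (e.g.\ $\chibar_1=\overline{\epsilon}$, $\chibar_2=\overline{\epsilon}^{-2}$) is therefore not covered by your computation, even after swapping $\chibar_1$ and $\chibar_2$. Conversely, your case $a=0$ is vacuous: $\det\rhobarss(c_u)=\overline{\epsilon}^{-1}(c_u)=-1$ forces $\chibar_1(c_u)\neq\chibar_2(c_u)$, i.e.\ $\chibar(c_u)=-1$.

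That parity observation is exactly what repairs the argument. It shows $a$ is odd, say $a=2b+1$. Then $\chibar_1^2=\chibar\cdot\chibar_1\chibar_2=\overline{\epsilon}^{2b}$, so $\eta\coloneqq\chibar_1\overline{\epsilon}^{-b}$ satisfies $\eta^2=1$ and $\rhobarss\cong\eta\otimes(\overline{\epsilon}^{b}\oplus\overline{\epsilon}^{-b-1})$. Twist $\rho$ by the $\{\pm1\}$-valued lift of $\eta$. This preserves $\det\rho=\epsilon^{-1}$, so no relaxation of the determinant is needed. It also preserves irreducibility and ordinarity of weight $\lambda$, and automorphy descends along the twist.

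The paper does not bother with the twist. It simply notes that $\chibar$ is a power of the mod $p$ cyclotomic character and cites \cite[Thm.\ 5.1.1]{Pan22} directly; the hypotheses $F/\mathbb{Q}$ abelian and $p\geq 5$ unramified in $F$ are there to make that theorem applicable. Your fallback of a further solvable base change is therefore unnecessary. As stated it would also be risky, since a general solvable totally real extension of $F$ need not stay abelian over $\mathbb{Q}$ or unramified at $p$.
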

\begin{proof}
    If $(\overline{\chi}_1 / \overline{\chi}_2)|_{G_{F(\zeta_p)}}$ is non-trivial, this follows from Thm.\ \ref{introthm_automorphy_lifting}. Otherwise, $\overline{\chi}_1 / \overline{\chi}_2$ is a power of the (mod $p$) cyclotomic character, in which case we can appeal to \cite[Thm.\ 5.1.1]{Pan22}.%
\end{proof}

\subsection{Methods} 
The proof of Thm.\ \ref{introthm_automorphy_lifting} involves several different ingredients. The first is a modularity lifting theorem, in the residually reducible case, with some auxiliary ramification conditions (Thm.\ \ref{thm_tame_SW_black_box}); note that the residual modularity comes for free, thanks to the existence of Eisenstein series (Lem.\ \ref{lem_residual_automorphy}). 
We first discuss the available methods for proving such a theorem, and the difficulties they encounter.

\subsubsection{The Taylor--Wiles method}
Given an irreducible residual representation $\overline{\rho}$, one can form a Galois deformation ring $R$, representing the functor of deformations of $\overline{\rho}$ satisfying certain conditions (on allowed ramification, $p$-adic Hodge-theoretic properties, etc.). 
The existence and local-global compatibility of the Galois representations associated to suitable automorphic forms is then expressed in terms of the existence of a map 
\begin{equation*}
    \xymatrix{ R \ar[r] &  \mathbb{T} \curvearrowright  H }
\end{equation*}
arising from the universal property of $R$, where $\mathbb{T}$ is the Hecke algebra that acts faithfully on the space $H$ of automorphic forms. Proving a modularity lifting theorem means proving that this map is an isomorphism. In the context of the Taylor--Wiles method, this is done by introducing ramification at auxiliary sets $Q$ of Taylor--Wiles places; one gets auxiliary diagrams
\begin{equation*}
    \xymatrix{  R \ar[r] &  \mathbb{T} \curvearrowright  H \\
    R_Q \ar[r]\ar[u]  & \mathbb{T}_Q\ar[u] \curvearrowright  H_Q  \\
    R_\infty \ar[r]\ar[u]  & \mathbb{T}_\infty\ar[u] \curvearrowright  H_\infty, 
    }
\end{equation*}
where $R_Q$, $\mathbb{T}_Q$, $H_Q$ are the objects with auxiliary ramification, and $R_\infty$, $\mathbb{T}_\infty$, $H_\infty$ are the ``patched'' objects that arise by varying the Taylor--Wiles data and appealing to compactness. Typically one now uses a lower bound on the size of $H_\infty$ (arising from the freeness of the action of diamond operators at Taylor--Wiles places) and an upper bound on the size of $R_\infty$ (arising from a tangent space calculation, using Tate's Euler characteristic formula for global Galois cohomology), which together force $R = \mathbb{T}$ (or perhaps a weaker statement still sufficient for the modularity lifting theorem, such as that the surjective map $R \to \mathbb{T}$ has nilpotent kernel). 

\subsubsection{The Skinner--Wiles argument}
When the residual Galois representation $\overline{\rho}$ is reducible, one must use different objects. If $\overline{\rho}$ is semisimple, the deformation functor is not representable, so must be replaced by the functor of pseudodeformations, with its associated representing object $P$. For each indecomposable representation $\overline{\rho}_b$ with semisimplification $\overline{\rho}$, the deformation functor of $\overline{\rho}_b$ is representable, but the representing object $R_b$ no longer admits a map to the Hecke algebra (as the automorphic Galois representations carried by $\mathbb{T}$ might not all admit an invariant lattice reducing to $\overline{\rho}_b$). One gets a collection of diagrams indexed by non-zero cohomology classes $[b] \in H^1(F_S / F, \overline{\chi})$
\[ \xymatrix{ R_b  & P\ar[l] \ar[r] & \mathbb{T} \curvearrowright  H. } \]
Now $R_b$ admits a tangent-obstruction theory close to that of $R$ in the residually irreducible case, but $P$ does not (although see \cite{Wan20}). The approach taken in \cite{Ski99} is to pass from the maximal ideal $\mathfrak{m} \leq P$ to dimension 1 prime ideals $\mathfrak{p}$ such that the specialisation of the universal pseudodeformation to $P / \mathfrak{p}$ satisfies the Taylor--Wiles hypothesis relative to $k(\mathfrak{p}) \coloneqq \operatorname{Frac} (P / \mathfrak{p})$ (which is a local field). For a suitable $\overline \rho_b$, depending on $\mathfrak{p}$, this prime lifts to a prime $\mathfrak{p}_b \leq R_b$, and the induced map on localisations and completions $\widehat{(P_{\mathfrak{p}})} \to \widehat{(R_{b, \mathfrak{p}_b})}$ is an isomorphism. One can then use something like the usual Taylor--Wiles formalism to prove that these two isomorphic rings act faithfully on the localized and completed space $\widehat{(H_{\mathfrak{p}})}$ of modular forms. 

One is then faced with the problem of propagating this ``local'' modularity statement across the whole of $\Spec P$. This means constructing enough primes $\mathfrak{p}$ of a suitable type. This is possible provided the reducible locus in each ring $R_b$ has small enough dimension relative to the dimension of the Hecke algebra $\mathbb{T}$. The dimension of the reducible locus is controlled by a Galois cohomology group $H^1(F_S / F, \overline{\chi})$. Skinner--Wiles work with ordinary forms of variable weight, leading to a Hecke algebra of expected relative dimension $[ F : \mathbb{Q} ]$, and achieve a positive result when $\dim H^1(F_S/F, \overline{\chi})$ is small compared to $[F : \mathbb{Q}]$.
If the splitting field of $\overline{\chi}$ over $\mathbb{Q}$ is abelian, they show that this can be achieved after an abelian base change, using a theorem of Washington \cite{Was78}.

\subsubsection{A tame analogue of the Skinner--Wiles argument}\label{subsubsec_tame_SW_method}
Our point of departure is to pursue a tame analogue of the Skinner--Wiles argument. 
Instead of ordinary forms, we introduce sets of Taylor--Wiles primes and construct patched diagrams 
\[ \xymatrix{ R_b  & P\ar[l] \ar[r] & \mathbb{T} \curvearrowright  H \\
R_{b, \infty} \ar[u] & P_\infty \ar[l] \ar[r]\ar[u]  & \mathbb{T}_\infty \curvearrowright H_\infty. \ar[u]} \]
We do not kill the dual Selmer group at this point, so although this patched diagram exists for the usual reasons of compactness, and $H_\infty$ is free over a patched ring of diamond operators, there is no hope of getting an upper bound on the size of $R_{b, \infty}$ (or $P_\infty$) using Galois cohomology. Instead, we hope to keep the size of the reducible locus, or its patched version, under control. This turns out to be possible if we introduce a hypothesis of auxiliary ramification: we suppose that we are restricting to automorphic forms that are of Steinberg type at some auxiliary set $T$, with $q_v \not\equiv 1 \text{ mod }p$ for each $v \in T$. If we choose our Taylor--Wiles sets $Q$ to be well-placed with respect to $T$, then we can ensure that the reducible locus stays of bounded dimension, while the dimension (and importantly, the connectedness dimension) of $R_{b, \infty}$ can be increased. The key part of the argument is a calculation in class field theory, which forms part of the proof of Prop.\ \ref{prop_existence_of_TW_places_first_patching}.

We then aim to run something like the Skinner--Wiles argument for the rings $P_\infty$ and $R_{b, \infty}$. These rings no longer have a purely arithmetic interpretation, but they carry (pseudo-)representations of patched Galois groups (Lem.\ \ref{lem_patched_ring_represents_a_functor}), and the patched reducibility locus is in fact just the reducibility locus of these (pseudo-)representations.
Implicit in these statements is the fact that the relevant Galois groups can be patched using the Taylor--Wiles method; the requisite finiteness properties are a consequence of Mazur's condition~$\Phi_p$ (Lem.\ \ref{lem_quantitative_continuity_of_pseudocharacters}).
We then show that if $\mathfrak{p} \leq P_\infty$ is a dimension 1 prime, outside the reducible locus, and which lifts to a prime $\mathfrak{p}_b \leq R_{b, \infty}$, then the induced map on localisations and completions is an isomorphism (Cor.\ \ref{cor_comparison_of_patched_objects}), and these rings act nearly faithfully on the localised and completed patched space of modular forms (Prop.\ \ref{prop_local_R_equals_T}). This necessitates a further iteration of the Taylor--Wiles argument, where we introduce further sets of Taylor--Wiles places and carry out a second diagonalisation argument (\S\ref{subsec_second_patching}). 

An important point to note is that we do not need to boost the dimension of the Hecke algebra by working with ordinary forms: a large enough boost comes just from the first patching argument, and many of our arguments could certainly be adapted to non-ordinary cases.
However, we use the ordinarity condition in two places: first,
we need the local lifting ring at $p$-adic places to be normal, in order that it remain irreducible after localisation and completion at the contraction of the one-dimensional prime~$\mathfrak{p}_b$ of~$R_{b, \infty}$, and the only general situation in which we know how to prove this is the ordinary case. 
As pointed out to us by Lue Pan, this normality hypothesis could be removed by working with one-dimensional primes that contract to the maximal ideal.  
Second, we use the ordinarity condition to ensure that the image of our target representation~$\rho$ contains a conjugate of an open subgroup of $\GL_2(\mathbb{Z}_p)$ (Lem.\ \ref{lem_ordinary_implies_split_image}), which plays an important role in the arguments to be described
in the next section.

\subsubsection{Deducing our main result}
Ultimately, our analogue of the Skinner--Wiles argument leads to a modularity lifting theorem, in the case where the set $T$ of auxiliary places of Steinberg ramification is large enough relative to the dimension of $H^1(F_S / F, \overline{\chi})$. Further arguments are required in order to deduce Thm.\ \ref{introthm_automorphy_lifting}. 
Our strategy is, starting with a suitable $\cO[G_F]$-lattice in a representation $\rho$ as in the statement of the theorem, to introduce auxiliary sets $T$ and construct lifts $\rho'_T : G_F \to \GL_2(\cO)$ of $\rho \text{ mod }\varpi^N$ which are of Steinberg type at $T$. The modularity of these lifts can then be established using our modularity lifting theorem with auxiliary ramification. The lifts are constructed using a variation on the ideas introduced by Fakhruddin--Khare--Patrikis \cite{FKPI, FKPII}, which are an effective tool for constructing lifts of residually reducible Galois representations. 

We then hope to ``lower the level'', proving that $\rho'_T \text{ mod }\varpi^N$ is modular already at minimal (i.e.\ unramified at $T$) level. Level-lowering is a well-known question in the theory of automorphic forms (going back e.g.\ to \cite{Rib90}), but we are not aware of any results in the situation we need, which is level-lowering mod $\varpi^N$ for a residually reducible Galois representation. 
Matters are further complicated by the fact that the arguments of \cite{FKPII} require introducing ramification at sets $S_{\textnormal{aux}}$ of so-called ``trivial'' places (where $q_v \equiv 1 \text{ mod }p$ and the residual Galois representation is trivial on restriction to the decomposition group), which fall outside of the most robustly understood context for level-lowering, where Mazur's principle applies. Happily, it turns out that the classical arguments for proving Mazur's principle work well in our context, once one makes the observation that the action of $\mathbb{T}[G_F]$ on the \'etale cohomology of a Shimura curve necessarily factors through its maximal Cayley--Hamilton quotient: in this $\GL_2$ situation, this follows from the Eichler--Shimura relation. The price to be paid is that in passage from $T$-new level to minimal level, some ``error terms'' are introduced, giving only modularity of $\rho'_T \text{ mod }\varpi^{N-C_1 |T| - C_2 |S_{\aux}|}$ for some $C_1, C_2 \geq 0$. 

In order to complete the argument, we need to carefully control these parameters. For a given set $T$ and $N \geq 1$, the method of \cite{FKPII} gives a lift of $\rho \text{ mod }\varpi^N$ that is ramified at $T$ and also some unknown set $S_{\textnormal{aux}}$ of trivial places; for the modularity lifting theorem to apply, we need $|T|$ to exceed  $\dim H^1(F_{S \cup S_{\textnormal{aux}}} / F, \overline{\chi}) \leq \dim H^1(F_S / F, \overline{\chi}) + |S_{\textnormal{aux}}|$; and for there to be a meaningful result after applying level-lowering, we need the error term $C_1 |T| + C_2 |S_{\aux}|$ to be small relative to $N$. The situation is thus delicately balanced. However, one sees that by choosing $T$ in such a way that the dual Selmer group for the non-split residual extension of $\overline \chi_2$ by~$\overline \chi_1$ associated to $\rho$ remains bounded, the size of the set $S_{\textnormal{aux}}$ that must be introduced can also be bounded independently of~$N$, and this makes it possible to close the circle. (See Prop.\ \ref{prop:bounding mod p Selmer group} for this part of the argument.)  

\subsection{Related works} The inspiration for the approach taken in this paper is of course the work of Skinner--Wiles \cite{Ski99} and later generalisations and refinements (e.g. \cite{Tho15, Pan22}). The idea of proving modularity by first introducing auxiliary ramification mod $p^N$, and then lowering the level, is due to Khare \cite{Kha03, Kha04}, and was taken up by the second author in order to prove modularity theorems in a situation where the Taylor--Wiles hypothesis does not hold (although the residual representation is still irreducible) \cite{Tho16}. We also mention that one can prove more refined $R = \mathbb{T}$ type theorems in the residually reducible case under stronger simplifying assumptions, such as associated Selmer groups being 1-dimensional (see e.g.\ \cite{Ski97, Ber13, Deo23}). 

\subsection{Structure of this paper} 
The first two sections are aimed at proving our modularity lifting theorem with auxiliary ramification, which is Thm.\ \ref{thm_tame_SW_black_box}. 
In Section \ref{sec_pseudocharacters} we record some preparatory remarks about pseudocharacters and their deformation theory. In Section \ref{sec_an_ALT} we prove Thm.\ \ref{thm_tame_SW_black_box}, using the argument sketched in Section \ref{subsubsec_tame_SW_method} above. 
As already remarked, the same methods would suffice to prove results also for non-ordinary Galois representations, given e.g.\ the normality of the corresponding local lifting ring at the $p$-adic places. 

In Section \ref{sec: construction of T-new lifts} we give our version of the arguments of \cite{FKPII}, constructing the necessary $T$-new lifts. 
(In fact, we are able to skip some of the hardest parts of the arguments of \emph{op.\ cit.}, based on ``doubling'', since we are starting with a $p$-adic representation $\rho$ and just need to construct lifts of $\rho \text{ mod }\varpi^N$ for sufficiently large integers $N$.) In Section \ref{sec_level_lowering} we give our new level-lowering theorems in the residually reducible case, and show how they combine with the theorem proved in Section \ref{sec: construction of T-new lifts} to give a useful modularity criterion, analogous to e.g. \cite[Cor.\ 4.15]{Tho16}. Finally, in Section \ref{sec_deduction_of_main_theorem} we combine all of the above-mentioned ingredients in order to deduce our main Thm.\ \ref{introthm_automorphy_lifting}.

\subsection{Notation}\label{subsec_notation}

If $F$ is a field, then we will write $\overline{F}$ for a fixed choice of algebraic closure, and if $F$ is perfect, $G_F = \Gal(\overline{F} / F)$. If $F$ is a number field, we will also fix an embedding $\overline{F} \to \overline{F}_v$ extending the canonical embedding $F \to F_v$ for each place $v$ of $F$. This determines an embedding $G_{F_v} \to G_F$ of the decomposition group. If $v$ is a finite place, then we write $\varpi_v \in F_v$ for a choice of uniformizer, $\cO_{F_v} \leq F_v$ for the ring of integers, $k(v) = \cO_{F_v} / (\varpi_v)$ for the residue field, $q_v = | k(v) |$ for the size of the residue field, $\Frob_v \in G_{F_v}$ for a fixed choice of geometric Frobenius lift, and $\Art_{F_v} : F_v^\times \to G_{F_v}^{\textnormal{ab}}$ for the local Artin map, normalised so that $\Art_{F_v}(\varpi_v)$ is a geometric Frobenius lift. If $v$ is an infinite place, then we write $c_v \in G_F$ for the image of the complex conjugation in $G_{F_v}$. 

If $p$ is a prime, then we fix an algebraic closure $\overline{\mathbb{Q}}_p$ of $\mathbb{Q}_p$, and call a finite extension $E / \mathbb{Q}_p$ inside $\overline{\mathbb{Q}}_p$ a coefficient field. We will write $[\cdot] : k \to \cO$ for the Teichm\"uller lift. We say that the coefficient field $E$ is sufficiently large (with respect to a number field $F$) if it contains the image of every embedding $F \hookrightarrow \overline{\mathbb{Q}}_p$. We will denote by $\cO$ the ring of integers of $E$, by $\varpi$ a fixed choice of uniformizer, and set $k = \cO / (\varpi)$. We will write $\CNL_\cO$ for the category of complete Noetherian local $\cO$-algebras $A$ with residue field $k$. We write $\widehat{\GL}_2$ for the group-valued functor on $\CNL_\cO$ that associates to any $A$ the group $\widehat{\GL}_2(A) = \ker(\GL_2(A) \to \GL_2(k))$. A prime $p$ having been fixed, we will write $\epsilon$ for the $p$-adic cyclotomic character (either over $\mathbb{Z}_p$, or over any $\mathbb{Z}_p$-algebra $A$, depending on the context). If $G$ is a group, then we write $G(p)$ for the pro-$p$ completion of $G$. 

We will write $S_p$ for the set of $p$-adic places of the base number field $F$. If $S$ is a finite set of finite places of $F$, then we write $F_S$ for the maximal subextension of $\overline{F} / F$ that is unramified outside $S$ and the infinite places,
and $G_{F, S} = \Gal(F_S / F)$. If $M$ is a discrete $\mathbb{Z}[G_{F, S}]$-module, then we write $H^i(F_S / F, M) = H^i(G_{F, S}, M)$ for the associated Galois cohomology group; we write analogously $H^i(F_v, M) = H^i(G_{F_v}, M)$ in the local case. If a coefficient field has been fixed, then we will also use the notation $h^i = \dim_k H^i$ (when $M$ is a $k[G_{F, S}]$- or $k[G_{F_v}]$-module) and $H^i(F_S / F, \overline{\chi}) = H^i(F_S / F, k(\overline{\chi}))$ (when $\overline{\chi} : G_{F, S} \to k^\times$ is a character). Slightly more generally, if $M$ is an $A$-module of finite length, where $A \in \CNL_\cO$, then we will write $h^i = \operatorname{length}_A H^i$ (if $M$ is annihilated by $\mathfrak{m}_A$, this agrees with the previous usage). 

Let $\mathbb{Z}^2_+ = \{ (\lambda_1, \lambda_2) \in \mathbb{Z}^2 \mid \lambda_1 \geq \lambda_2 \}$. If $v \in S_p$ and $\lambda_v = (\lambda_{v, \tau})_{\tau \in \Hom(F_v, E)} \in (\mathbb{Z}^2_+)^{\Hom(F_v, E)}$, then we say that a continuous representation $\rho_v : G_{F_v} \to \GL_2(E)$ is ordinary of weight $\lambda_v$ 
if there is an isomorphism
\begin{equation}\label{eqn_def_of_ordinary} \rho_v \sim \begin{pmatrix} \psi_{v, 1} & * \\ 0 & \psi_{v, 2} \end{pmatrix}, 
\end{equation}
where for each $i = 1, 2$, $\psi_{v, i} \circ \Art_{F_v}|_{\cO_{F_v}^\times} : \cO_{F_v}^\times \to \overline{\mathbb{Q}}_p^\times$ agrees, on an open subgroup of $\cO_{F_v}^\times$, with the character 
\[ x \mapsto \prod_{\tau \in \Hom(F_v, E)} \tau(x)^{-(\lambda_{\tau, 3-i}+i-1)}. \]
Similarly, if $\rho : G_F \to \GL_2(E)$ is a continuous representation, we say that $\rho$ is ordinary of weight $\lambda \in (\mathbb{Z}^2_+)^{\Hom(F, E)}$ if for every $v \in S_p$, $\rho|_{G_{F_v}}$ is ordinary of weight $\lambda_v$, where $\lambda_v$ is defined by $\lambda_{v, \tau} = \lambda_\tau$ if $\tau \in \Hom(F_v, E) \subset \Hom(F, E)$.

Let $F$ be a totally real number field. We recall that if $\pi$ is a cuspidal, regular algebraic automorphic representation of $\GL_2(\mathbb{A}_F)$ then there is a unique $\lambda \in (\mathbb{Z}^2_+)^{\Hom(F, \mathbb{C})}$ such that $\pi_\infty$ has the same infinitesimal character as the dual of the representation of $(\operatorname{Res}_{F / \mathbb{Q}} \GL_2)_{\mathbb{C}}$ of highest weight $\lambda$; in this case, we say that $\pi$ has weight $\lambda$. If $\iota : \overline{\mathbb{Q}}_p \to \mathbb{C}$ is an isomorphism, then there exists a continuous irreducible representation $r_{\pi, \iota} : G_F \to \GL_2(\overline{\mathbb{Q}}_p)$ which is de Rham and which satisfies compatibility with the local Langlands correspondence at all finite places of $F$. Moreover, if $\pi$ is $\iota$-ordinary, in the sense of \cite{Ger19}, then $r_{\pi, \iota}$ is ordinary of weight $\iota^{-1} \lambda$, where we define $(\iota^{-1} \lambda)_\tau = \lambda_{\iota \tau}$.

\subsection{Acknowledgements} 
A.D. was supported by a Royal Society University Research Fellowship during the writing of this paper.

J.T.'s work was funded by the European Union (ERC CoG-101169866). Views and opinions expressed are however those of the author(s) only and do not necessarily reflect those of the European Union or the European Research Council. Neither the European Union nor the granting authority can be held responsible for them.

\subsection{AI disclosure}
The first version of this paper was written without the use of AI tools.
Afterwards, the second-named author used an LLM to check for typos and mathematical errors, and to investigate the scope of our methods.
This resulted in a machine-generated argument for removing the hypothesis that $(\overline \chi_1 / \overline \chi_2)|_{G_{F(\zeta_p)}} \ne 1$ from Thm.\ \ref{introthm_automorphy_lifting}. The details of this argument are being elaborated, and will appear elsewhere.
None of the text or ideas in this paper was generated by AI. 

\section{Recollections on pseudocharacters}\label{sec_pseudocharacters}

\subsection{Pseudocharacters}\label{subsec:pseudocharacters}
We recall here some basic results on rank-2 pseudocharacters. Let $\Gamma$ be a group, and let $R$ be a $\mathbb{Z}[1/2]$-algebra. By definition, a rank-2 pseudocharacter is a function $t : \Gamma \to R$ (equivalently, an $R$-module  homomorphism $R[\Gamma] \to R$) satisfying the relations 
$t(1) = 2$, $t(\sigma \tau) = t(\tau \sigma)$, and 
    \[ t(\eta \sigma \tau) + t(\eta \tau \sigma) - t(\eta \tau)t(\sigma) - t(\tau \sigma) t(\eta) - t(\sigma \eta) t(\tau) + t(\eta) t(\sigma) t(\tau) = 0 \]
    for all $\eta, \sigma, \tau \in \Gamma$. 
    A pseudocharacter has a determinant $\det t(\sigma) \coloneqq \frac{1}{2}(t(\sigma)^2 - t(\sigma^2))$. We recall that if $\rho : \Gamma \to \GL_2(R)$ is a representation, then $\tr \rho$ is a pseudocharacter; and that if $R$ is an algebraically closed field, then this operation induces a bijection between the set of isomorphism classes of semisimple representations, and the set of pseudocharacters over $R$ (see e.g.\ \cite{Rou96}).
    
    If $\Gamma$ is a profinite group and $R$ is a topological ring, we say that $t$ is continuous if it is continuous as a function $\Gamma \to R$. 
    
There are several different equivalent definitions of pseudocharacter in our context: in terms of ``pseudo matrix entries'' $a(\sigma)$, $d(\sigma)$, and $x(\sigma, \tau)$ as in \cite{Ski99} (this requires the presence of an element of order 2 and trace 0 in $\Gamma$), in terms of group determinants as in \cite{Che14}, and in terms of the geometric invariant theory of the group $\GL_2$ (see \cite{Laf18}, \cite{Eme23}). We will make use of the first two here. The matrix entry definition is useful, as in \cite{Ski99, Pan22}, in analysing quotients of pseudodeformation rings, while the group determinant formulation is useful in analysing finiteness properties of such rings. Finally, an important role is played by the generalized matrix algebras associated to a pseudocharacter, as described in \cite{Bel09} and \cite{Che14}. 

\subsection{Pseudo matrix entries}\label{subsec:pseudo matrix entries}

Let $t : \Gamma \to R$ be a rank-2 pseudocharacter, and suppose given an element $c_\infty \in \Gamma$ of order 2 such that $t(c_\infty) = 0$. (The notation is supposed to suggest complex conjugation at an infinite place.) For any $z, w \in R[\Gamma]$ we can define further elements of $R$ by the formulae
\[ a(z) \coloneqq \frac{1}{2}(-t(c_\infty z) + t(z)), \]
\[ d(z) \coloneqq \frac{1}{2}(t(c_\infty z) + t(z)), \]
and
\[ x(z, w) \coloneqq a(z w) - a(z) a(w). \]
(We are essentially  following \cite[\S 2]{Pan22} here, but have extended these functions from $\Gamma$ to $R[\Gamma]$, which seems to be necessary at some points of the argument, and reversed the roles of $a$ and $d$, because we prefer our residual representations to be upper-triangular, but our main argument requires us to take $a(c_\infty) = -1$ and $d(c_\infty) = 1$.)
Suppose that $t$ is continuous and that $R$ is either a topological field, or a complete discrete valuation ring with its valuation topology. Then we can always find a continuous representation $\rho : \Gamma \to \GL_2(R)$ with $\tr \rho = t$. We recall the recipe from \cite[\S 2.1.5]{Pan22} in the case that $R$ is a complete discrete valuation ring with uniformizer $\varpi_R$. If all elements $x(\sigma, \tau)$ are 0 in $R$, then we can define
\begin{equation}\label{eq_rep_of_reducible_ps} \rho(\sigma) \coloneqq  \begin{pmatrix} a(\sigma) & 0 \\ 0 & d(\sigma) \end{pmatrix}. 
\end{equation}
If there exist $\sigma, \tau$ with $x(\sigma, \tau) \neq 0$, then choose $x(\sigma_0, \tau_0)$ with minimal $\varpi_R$-adic valuation. Then we can define
\begin{equation}\label{eq_rep_of_irreducible_ps} \rho(\sigma) \coloneqq  \begin{pmatrix} a(\sigma) & x(\sigma, \tau_0) / x(\sigma_0, \tau_0) \\ x(\sigma_0, \sigma) & d(\sigma) \end{pmatrix}. 
\end{equation}
In the case that $R$ is a field, this representation is reducible if $x(\sigma, \tau) = 0$ for all $\sigma, \tau \in \Gamma$, and (absolutely) irreducible otherwise. This uses the existence of $c_\infty$. 
In general, if $t$ is a pseudocharacter over a ring $R$, then we define the reducibility ideal $\mathcal{R} \leq R$ of $t$ to be the ideal generated by all elements $x(\sigma, \tau)$ ($\sigma, \tau \in \Gamma$): then a prime ideal $\mathfrak{p}$ contains $\mathcal{R}$ if and only if the (unique up to isomorphism) semisimple representation $\Gamma \to \GL_2(\overline{k(\mathfrak{p})})$ with trace $t$ is reducible.
(Here, and in what follows, we write~$k(\mathfrak p)$ for the fraction field of $R/\mathfrak{p}$.)

\subsection{Cayley--Hamilton algebras}\label{subsec:pseudocharacters for level-lowering}
Let $p$ be an odd prime, and let $E / \mathbb{Q}_p$ be a coefficient field.
Let~$\Gamma$ be a group, $R$ a $\mathbb{Z}[1/2]$-algebra, 
and $t: \Gamma \to R$ a pseudocharacter of rank~$2$.
The aim of this subsection is to prove Prop.\ \ref{prop_pseudocharacters_with_large_image}, which gives a technical property of modules over the Cayley--Hamilton algebra associated to~$t$, and which will be useful in the level-lowering arguments of Section~\ref{sec_level_lowering}.
We begin with some preliminary material.

As in~\cite[Ex.\ 7.6]{Che14}, we associate to~$t$ a $2$-dimensional determinant (i.e.\ a multiplicative homogeneous $R$-polynomial law of degree~$2$) denoted
$D : R[\Gamma] \to R$.
By~\cite[Ex.\ 1.2(ii)]{Che14}, the evaluation of~$D$ at~$R$ is a quadratic form $D_R: R[\Gamma] \to R$, and it determines~$D$ uniquely.
More precisely, if~$B$ denotes the symmetric $R$-bilinear form on~$R[\Gamma]$ defined by
\[
(x, y) \mapsto D_R(x+y)-D_R(x)-D_R(y),
\]
then, for all~$n \geq 1$ and $x_1, \ldots, x_n \in R[\Gamma]$, we have
\begin{equation}\label{eqn:quadratic form}
D_{R[T_1, \ldots, T_n]}(\sum_{i=1}^n x_i \otimes T_i) = \sum_{i=1}^n D_R(x_i)T_i^2 + \sum_{i < j}B(x_i, x_j)T_iT_j,
\end{equation}
cf.\ \cite[Prop.\ II.1]{Rob63}.

Since~$D$ corresponds to the pseudocharacter~$t$, \cite[Lem.\ 1.9]{Che14} shows that
$B(g, h) = D_R(h)t(gh^{-1})$ for all~$g, h \in \Gamma$, and $D_R : \Gamma \to R^\times$ is a 
group homomorphism satisfying $D_R(g) \coloneqq  \frac 1 2(t(g)^2-t(g^2))$, by~\cite[Lem.\ 1.9]{Che14}. Hence~$D_R|_{\Gamma}$ coincides with~$\det(t)$ in the sense of Section~\ref{subsec:pseudocharacters}.

If~$g \in \Gamma$, we define $g^* \coloneqq  D_R(g)g^{-1} \in R[\Gamma]$.
This extends uniquely to an $R$-linear anti-involution of~$R[\Gamma]$, denoted~$x \mapsto x^*$.
If we extend~$t$ to an $R$-linear function $R[\Gamma] \to R$, we thus see that
\begin{equation}\label{eqn:quadratic form and involution}
B(x, y) = t(xy^*),
\end{equation}
for all~$x, y \in R[\Gamma]$.

As usual, if~$x \in R[\Gamma]$ we define the $D$-characteristic polynomial of~$x$ as%
\[
f_x(T) \coloneqq  D_{R[T]}(T-x),
\]
which coincides with $T^2 -t(x^*)T + D_R(x)$.
Note that there is a homogeneous $R$-polynomial law of degree~$2$ defined by
\[
\chi: R[\Gamma] \to R[\Gamma] \]
\[ x \mapsto x^2-t(x^*)x+D_R(x).
\]
Following~\cite[Section~1.4]{Che14}, we define the Cayley--Hamilton ideal $\mathcal{I}_{\CH}(t) \subset R[\Gamma]$ as the
two-sided ideal generated by the coefficients of %
\[
\chi_{R[T_1, \ldots, T_n]}(\sum_{i=1}^n x_i \otimes T_i) \in R[\Gamma] \otimes_R R[T_1, \ldots, T_n], 
\]
as~$n$ runs through the non-negative integers, and $(x_1, \ldots, x_n)$ runs through $R[\Gamma]^{\times n}$.
Since~$\chi$ has itself degree~$2$, 
a consideration of~\eqref{eqn:quadratic form}
with~$D$ replaced by~$\chi$
shows that this ideal is generated by $(\chi_R(x): x \in R[\Gamma])$, i.e.\ by $(f_x(x): x \in R[\Gamma])$.

\begin{lemma}\label{lem:computation of CH ideals}
    Let~$\Gamma$ be a group, let~$R$ be a $\mathbb Z[1/2]$-algebra, and let $t : \Gamma \to R$ be a rank-two pseudocharacter of~$\Gamma$.
    Let~$J$ be the two-sided ideal of~$R[\Gamma]$ generated by $(f_g(g): g \in \Gamma)$.
    Then~$\mathcal{I}_{\CH}(t) = J$.
\end{lemma}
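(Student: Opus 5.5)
The inclusion $J \subseteq \mathcal{I}_{\CH}(t)$ is immediate, since $\mathcal{I}_{\CH}(t)$ is generated by the elements $\chi_R(x) = f_x(x)$ for $x \in R[\Gamma]$. For the reverse inclusion we show that $\chi_R(x) \in J$ for every $x \in R[\Gamma]$, and the plan is to polarize the quadratic law $\chi$. By \eqref{eqn:quadratic form} applied to $\chi$, for $x = \sum_i r_i g_i$ with $r_i \in R$ and $g_i \in \Gamma$ we get
\[
\chi_R(x) = \sum_i r_i^2 \chi_R(g_i) + \sum_{i<j} r_i r_j \beta(g_i, g_j),
\]
where $\beta(y,z) \coloneqq \chi_R(y+z) - \chi_R(y) - \chi_R(z)$ is the symmetric bilinear map attached to $\chi$. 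Since $\chi_R(g_i) = f_{g_i}(g_i) \in J$, it remains to show that $\beta(g,h) \in J$ for all $g, h \in \Gamma$.

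Expanding the definition of $\chi$ and using \eqref{eqn:quadratic form and involution},
\[
\beta(g,h) = gh + hg - t(g^*)h - t(h^*)g + t(gh^*).
\]
We relate this to $f_{gh}(gh)$, $f_g(g)$ and $f_h(h)$, which all lie in $J$. Put $\delta_g = D_R(g)$. The relation $f_g(g)\in J$ says that, modulo $J$, $g^2 \equiv t(g^*)g - \delta_g$. Since $g$ is a unit, multiplying by $\delta_g g^{-1}$ gives $g^* \equiv t(g^*) - g$ modulo $J$, using that $J$ is a two-sided ideal. Applying $t$ to $g^* = \delta_g g^{-1}$ shows $t(g^*) = \delta_g t(g^{-1}) = t(g)$ for a rank-two pseudocharacter, and hence $g^* \equiv t(g) - g$.

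Now $h^* = \delta_h h^{-1}$, so $gh^* \equiv g(t(h) - h) = t(h)g - gh$. Comparing traces gives $t(gh^*) = t(g)t(h) - t(gh)$. Substituting into the formula for $\beta$,
\[
\beta(g,h) \equiv hg + gh - t(g)h - t(h)g + t(g)t(h) - t(gh) \pmod J.
\]
This equals $g^*h^* + g^*h^{*}$-type reordering: concretely, $(t(g)-g)(t(h)-h) = t(g)t(h) - t(g)h - t(h)g + gh$. The right-hand side above therefore equals $g^*h^* + hg - t(gh)$ modulo $J$. Since $h^*g^* = (gh)^* \equiv t(gh) - gh$ by the same identity applied to $gh\in\Gamma$, this is
\[
g^*h^* - h^*g^* + hg - gh .
\]
The remaining step is to show that $g^*h^* - h^*g^* \equiv gh - hg$. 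Substituting $g^* \equiv t(g) - g$ and $h^* \equiv t(h)-h$, the scalar and linear terms cancel in the commutator, leaving $gh - hg$. Thus $\beta(g,h) \equiv 0 \pmod J$, and the reverse inclusion follows.

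The main obstacle is this bookkeeping step: choosing the right identities, $g^* \equiv t(g) - g$ mod $J$ and $t(g^*) = t(g)$, so that the polarized term $\beta(g,h)$ visibly reduces to zero modulo $J$. One must also check carefully that the pseudocharacter identity $t(g^{-1})\delta_g = t(g)$ holds; it follows from the definition of $\det t$ and the rank-two relation with $\eta=g^{-1}$, $\sigma=\tau=g$.
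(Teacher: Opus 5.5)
Your proof is correct, but you organize it differently from the paper. The paper never polarizes $\chi$. It first extends the congruence $x + x^* \equiv t(x) \pmod J$ from $\Gamma$ to all of $R[\Gamma]$ by linearity. It then shows that $\{x : x^*x \equiv D_R(x) \pmod J\}$ is closed under addition. The cross term $x^*y + y^*x$ equals $t(x^*y)$ by the linear congruence applied to $x^*y$, and this matches the polarization of $D_R$ via \eqref{eqn:quadratic form and involution} and the symmetry of $B$. The two congruences then give $f_x(x) \equiv 0$ at once.

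You instead kill the cross term $\beta(g,h)$ of $\chi$ on pairs of group elements by explicit computation. That costs you the scalar identities $t(g^*) = t(g)$ and $t(gh^*) = t(g)t(h) - t(gh)$, together with $(gh)^* = h^*g^*$. The paper's version is shorter because it works with arbitrary $x,y$: the linear congruence does all the work, and $t(x^*) = t(x)$ comes for free from $B(x,1) = B(1,x)$. Yours is more hands-on but entirely elementary once those identities are available. Your detour through $g^*h^* - h^*g^*$ is also avoidable. Substituting $t(gh) \equiv gh + h^*g^* \equiv gh + (t(h)-h)(t(g)-g)$ into your expression for $\beta(g,h)$ gives $0$ directly.

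One step needs a justification you did not give. ``Comparing traces'' in $gh^* \equiv t(h)g - gh \pmod J$ presupposes that $t$ vanishes on $J$. This is true. By centrality of $t$ it suffices that $t(f_g(g)c) = 0$ for $c \in \Gamma$, and that is the rank-two relation with $(\eta,\sigma,\tau) = (c,g,g)$ divided by $2$. Alternatively, derive $D_R(h)\,t(gh^{-1}) = t(g)t(h) - t(gh)$ directly from the relation with $(\eta,\sigma,\tau) = (gh^{-1},h,h)$, exactly as you did for $t(g^*) = t(g)$.

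There is also a small slip. To get $g^* \equiv t(g^*) - g$ you should multiply $f_g(g)$ by $g^{-1}$. Multiplying by $\delta_g g^{-1}$ also works, but then you must divide by the unit $\delta_g$.
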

\begin{proof}
    We follow the proof of~\cite[Prop.\ 1]{BLR}.
   
    We see from~\eqref{eqn:quadratic form and involution}, and the fact that~$B$ is symmetric, that $t(x) = t(x^*)$ for all~$x \in \Gamma$, hence for all~$x \in R[\Gamma]$.
    Similarly, the fact that~$D$ is homogeneous of degree~$2$ implies that $D_R(g^*) = D_R(g)$ for all~$g \in \Gamma$.
    Hence~$*$ preserves~$J$, and so it induces an anti-involution of $R[\Gamma]/J.$

    We now show that, for all~$x \in R[\Gamma]$, we have $x + x^* = t(x)$ and $x^*x = D_R(x)$ in~$R[\Gamma]/J$.
    These identities imply that $f_x(x) = 0$ in $R[\Gamma]/J$, and so $\mathcal{I}_{\CH}(t) \subset J$, as desired.

    The equality $x+x^* = t(x)$ follows by $R$-linearity from the case in which $x \in \Gamma$, which holds because $x^2-t(x)x+D_R(x) = 0$ in $R[\Gamma]/J$.

    To check that $x^*x = D_R(x),$ it suffices to check that $\{x\in R[\Gamma]/J : x^*x = D_R(x)\}$ is additively closed (since it contains $rx$ for all~$r \in R, x \in \Gamma$).
    Computing in $R[\Gamma]/J$, we have
    \[
    (x+y)^*(x+y) = x^*x + y^*y + x^*y+y^*x = x^*x+y^*y+t(x^*y),
    \]
    where the second equality follows from the trace identity in~$R[\Gamma]/J$ proved in the previous paragraph.
    So it suffices to check that $D_R(x+y) = D_R(x)+D_R(y)+t(x^*y)$ for all~$x, y \in R[\Gamma]$.
    This is a consequence of~\eqref{eqn:quadratic form} with~$n = 2$, as well as~\eqref{eqn:quadratic form and involution} and the fact that~$t$ is a central function.
\end{proof}

\begin{rem}\label{rem:higher rank pseudocharacters}
    As is well-known, Lem.\ \ref{lem:computation of CH ideals} can fail when the rank of~$t$ is greater than~$2$, even for absolutely irreducible pseudocharacters valued in~$\cO$: see~\cite[Section~5]{BLR} for a counterexample.
\end{rem}

\begin{pp}\label{prop_pseudocharacters_with_large_image}
    Let $\Gamma$ be a profinite group, and let $\rho : \Gamma \to \GL_2(\cO)$ be a continuous representation such that $\rho \otimes_\cO E$ is absolutely irreducible. We extend $\rho$ to an $\cO$-algebra homomorphism $\cO[\Gamma] \to M_2(\cO)$. Suppose that $\overline{\rho}^{\textnormal{ss}}$ is isomorphic to a direct sum of two distinct characters. Then:
    \begin{enumerate}
        \item There exists an integer $C \geq 0$ such that the image $\rho(\cO[\Gamma]) \leq M_2(\cO)$ contains $\varpi^C M_2(\cO)$. 
        \item Let $t \coloneqq  \operatorname{tr}(\rho)$, and let $S \coloneqq  \cO[\Gamma]/\mathcal{I}_{\CH}(t)$ denote the maximal Cayley--Hamilton quotient of $\cO[\Gamma]$. 
        Then for any $C$ as in (1), and any $x \in S$ such that $\rho(x) = 0$, we have $\varpi^{2C} x = 0$; and for any $x \in S$ and $N \geq 2C$ such that $\rho(x) \equiv 0 \text{ mod }\varpi^N$, we have $\varpi^{2C} x \in \varpi^N S$. 
        \item Let $N \geq 2C$, let $M$ be an $S / (\varpi^N)$-module,  and let $y \in M$ be an element of exact order $\varpi^N$. Let $z \in S$ be an element that annihilates $S y$. Then $\rho(z) \equiv 0 \text{ mod } \varpi^{N-2C}$.
        \item Let $N \geq 2C$, let $M$ be an $S / (\varpi^N)$-module, and let $y \in M$ be an element of exact order $\varpi^N$. Let $\phi \in \Gamma$ be such that the characteristic polynomial of~$\rho(\phi)$ is $(X-\alpha)(X-\beta)$, where $\alpha, \beta \in \cO$ satisfy $\mathrm{ord}_\varpi(\alpha-\beta) \leq D$, for some $D \geq 0$ with $D < N - 2C$. Then $(S y)[\phi - \alpha]$ contains an $\cO$-submodule isomorphic to $\cO / (\varpi^{N - 2C - D})$. 
    \end{enumerate}
\end{pp}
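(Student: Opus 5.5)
For (1), the image $A \coloneqq \rho(\cO[\Gamma])$ is a closed $\cO$-subalgebra of $M_2(\cO)$ (it is finitely generated as an $\cO$-module, being a submodule of $M_2(\cO)$). I will show that $A \otimes_\cO E = M_2(E)$. Since $\rho \otimes E$ is absolutely irreducible, Burnside's theorem gives that the $E$-span of $\rho(\Gamma)$ is $M_2(E)$. Hence $A$ is an $\cO$-lattice of full rank in $M_2(\cO)$, and so contains $\varpi^C M_2(\cO)$ for some $C$. Continuity is not even needed here, since we use the image of the group algebra rather than its closure.

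For (2), the key observation is that $\rho$ factors through $S$: $\tr\rho = t$ and matrices satisfy Cayley--Hamilton, so by Lem.\ \ref{lem:computation of CH ideals} it suffices to check $f_g(g) = 0$ for $g \in \Gamma$, which is clear. I then exploit the structure of $S$ as a Cayley--Hamilton algebra, with trace $t$ and reduced norm $D$, where $x + x^* = t(x)$ and $x^*x = D(x)$ (as in the proof of Lem.\ \ref{lem:computation of CH ideals}). Choose $e_{ij} \in S$ with $\rho(e_{ij}) = \varpi^C E_{ij}$, possible by (1). Suppose $x \in S$ with $\rho(x) = 0$. For any $y, w \in S$ we have $t(yxw) = \tr\rho(yxw) = 0$. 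In a Cayley--Hamilton algebra of rank $2$, an element $z$ all of whose "traces" $t(zw)$ vanish satisfies $z^2 = 0$, and more usefully should be killed after multiplying by elements whose images span $\varpi^C M_2$.

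The concrete plan is to establish the identity
\[ \varpi^{2C} x = \sum_{i,j} (\text{expressions } t(e_{ij} x e_{kl}) e_{\cdot\cdot}) \]
in $S$. This mimics the identity $X = \sum_{i,j} \tr(X E_{ji}) E_{ij}$ in $M_2$. One route: in any Cayley--Hamilton quotient the polarized identity $yz + zy - t(y)z - t(z)y - t(yz) + t(y)t(z) = 0$ holds, and multiplying suitably gives formulas of the form $e\, x\, e' = t(\ldots)\, e''$ for "matrix unit" elements. Since the $e_{ij}$ are only matrix units up to $\varpi^C$, I expect to work instead with the idempotent-like element coming from $c$ or the distinct residual characters: because $\rhobarss$ is a sum of two distinct characters, $S \otimes \cO$ (completed) carries a generalized matrix algebra structure (\cite{Bel09}, \cite{Che14}) with idempotents $e_1, e_2$ lifting the residual ones. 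Writing $S = \begin{pmatrix} e_1Se_1 & e_1Se_2 \\ e_2Se_1 & e_2Se_2\end{pmatrix}$ with $e_iSe_i = \cO$ via $t$, the only possible kernel of $\rho$ lies in the off-diagonal parts. Here $e_1Se_2 \cdot e_2Se_1 \to \cO$, and $\rho$ on $e_1Se_2$ is injective away from the annihilator of the pairing. Using (1), pick $b \in e_2Se_1$ with $\rho(b)$ of valuation $\le C$. Then $x_{12} \mapsto t(x_{12} b)$ determines $\varpi^C x_{12}$, giving $\varpi^{2C} x = 0$ with room to spare; the congruence version mod $\varpi^N$ follows by the same identity, dividing the trace equations by $\varpi^N$.

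This step is the main obstacle: setting up the idempotents in $S$ itself (not a completion), or else producing a direct polynomial identity. I will try the idempotents first, using Hensel lifting of $e_1$ from the residual image and the Cayley--Hamilton identity $e^2 = e$ in $S$.

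For (3), let $z$ annihilate $Sy$. If $\rho(z) \not\equiv 0 \bmod \varpi^{N-2C}$, then by (1) there are $u, w \in S$ with $\rho(uzw) = \varpi^{a} \cdot 1$ for some $a < N$, after using $\varpi^{2C}$ for both sides. By (2) applied to $uzw - \varpi^a$, we get $\varpi^{2C}uzw \equiv \varpi^{a+2C} \bmod \varpi^N S$. Since $uzw$ kills $y$, this gives $\varpi^{a+2C} y = 0$, contradicting the exact order once the bookkeeping is arranged. For (4), apply Cayley--Hamilton to $\phi$: $(\phi - \alpha)(\phi - \beta) = 0$ in $S$. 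Then $(\phi - \beta) y \in (Sy)[\phi - \alpha]$. Applying (3) with $z$ replaced by multiples shows that $\varpi^k(\phi - \beta)$ fails to annihilate $Sy$ when $\rho(\varpi^k(\phi-\beta)) \not\equiv 0 \bmod \varpi^{N-2C}$. Since $\rho(\phi - \beta)$ has an eigenvalue $\alpha - \beta$ of valuation $\le D$, the element $(\phi - \beta) y$, or rather $(\phi - \beta) s y$ for a suitable $s$, has order at least $\varpi^{N-2C-D}$, which gives the required submodule.
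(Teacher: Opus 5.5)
Your arguments for (1), (2) and (4) follow the paper's route. For (1) you use Burnside, as the paper does. For (2) you use idempotents $e_1+e_2=1$ in $S$ lifting the residual ones; the paper simply cites \cite[Lem.\ 1.4.3]{Bel09} for the Hensel-lifting step you describe. With $e_iSe_i=\cO e_i$ via $t$, one takes $b\in e_1Se_2$, $c\in e_2Se_1$ with $\rho(b)=\varpi^C E_{12}$ and $\rho(c)=\varpi^C E_{21}$, so that $bc=\varpi^{2C}e_1$ and $cb=\varpi^{2C}e_2$. For (4) you use $(\phi-\beta)(Sy)\subset(Sy)[\phi-\alpha]$ together with (3) applied to $\varpi^{N-2C-D-1}(\phi-\beta)$. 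One correction in (2): $x_{12}c=0$ does not give $\varpi^{C}x_{12}=0$. What it gives is $\varpi^{2C}x_{12}=x_{12}cb=0$, so there is no room to spare.

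The genuine gap is in (3), and it propagates to (4). Suppose an entry of $\rho(z)$ has valuation $v<N-2C$. Your sandwich produces $z'\in SzS$ with $\rho(z')=\varpi^{v+2C}$; in general $z'$ must be a sum $\sum_k u_kzw_k$, since $\rho(z)$ may have rank one. Applying (2) to $z'-\varpi^{v+2C}\in\ker\rho$ gives $\varpi^{2C}z'=\varpi^{v+4C}$ in $S$, hence $\varpi^{v+4C}y=0$. This contradicts the exact order of $y$ only when $v<N-4C$. So your argument proves only $\rho(z)\equiv 0\bmod\varpi^{N-4C}$, and then (4) only with $N-4C-D$ and the hypothesis $D<N-4C$. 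No rearrangement of the bookkeeping recovers $N-2C$: one factor $\varpi^{2C}$ is spent on the sandwich, and a second is the loss built into (2).

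The fix is to bypass (2) and work directly in the block decomposition, where $\rho$ is injective on the diagonal blocks $e_iSe_i=\cO e_i$.
\begin{itemize}
\item Since $\varpi^{N-1}y\neq 0$, one of $e_1y, e_2y$, say $e_1y$, has exact order $\varpi^N$. Hence any $\lambda e_1\in e_1Se_1$ killing $e_1y$ has $\lambda\in\varpi^N\cO$, with no loss.
\item Write $z_{ij}=e_ize_j$. Because $z$ annihilates $Sy$, the four elements $z_{11}$, $z_{12}c$, $bz_{21}$, $bz_{22}c$ all lie in $e_1Se_1$ and all kill $e_1y$.
\item Reading off $\rho$ gives $\rho(z_{11})\equiv 0\bmod\varpi^N$ and $\rho(z_{12}),\rho(z_{21})\equiv 0\bmod\varpi^{N-C}$.
\item Writing $z_{22}=de_2$, one has $bz_{22}c=d\varpi^{2C}e_1$, so $\rho(z_{22})\equiv 0\bmod\varpi^{N-2C}$.
\end{itemize}
This is how the paper obtains exactly $N-2C$.
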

\begin{proof}
(1) is simply Burnside's theorem (which says that, since $\rho \otimes_\cO E$ is absolutely irreducible, the map $E[\Gamma] \to M_2(E)$ is surjective).

For (2), we may apply \cite[Lem.\ 1.4.3]{Bel09} to conclude the existence of idempotents $e_1, e_2 \in S$ such that $e_1 + e_2 = 1$ and the restriction of $t = \mathrm{tr} \rho$ to $e_1 S e_1$ or $e_2 S e_2$ is a rank-one pseudocharacter. After conjugating $\rho$ by an element of $\GL_2(\cO)$, we can assume that $\rho(e_i)$ is the diagonal matrix with $1$ in the $(i, i)$-entry (for each $i = 1, 2$). 
It also follows from \emph{loc.\ cit.} that $t$ induces an isomorphism $e_i S e_i \to \cO$ (for each $i = 1, 2$). 

Using (1), we can find $b, c \in S$ such that $\rho(b) = \left( \begin{smallmatrix} 0 & \varpi^C \\ 0 & 0 \end{smallmatrix}\right)$ and $\rho(c) = \left( \begin{smallmatrix} 0 & 0 \\ \varpi^C & 0 \end{smallmatrix}\right)$. After multiplying on the left and right, we can assume moreover that $b = e_1 b e_2$ and $c = e_2 c e_1$. 
Now, since $x \in \ker \rho \leq \ker t$, if we write  $x = e_1 x e_1 + e_1 x e_2 + e_2 x e_1 + e_2 x e_2$, then each summand also lies in $\ker \rho$. Since $t$ induces an isomorphism $e_i S e_i \to \cO$, we have $e_1 x e_1 = e_2 x e_2 = 0$ and $c b = \varpi^{2C} e_2$. We also have $\rho(e_1 x e_2 c) = 0$ and $e_1 x e_2 c \in e_1 S e_1$, hence $e_1 x e_2 c = e_1 x c = 0$. It follows that
\[ \varpi^{2C} e_1 x e_2 =  e_1 x e_2 c b = e_1 x c b = 0. \]
Reversing the roles of $e_1$ and $e_2$ shows too that $\varpi^{2C} e_2 x e_1 = 0$, hence $\varpi^{2C} x = 0$, as desired.

Next suppose instead that $N \geq 2C$ and $\rho(x) \equiv 0 \text{ mod }\varpi^N$. Let us write $x = \left( \begin{smallmatrix} x_{11} & x_{12} \\ x_{21} & x_{22} \end{smallmatrix} \right)$, with $x_{ij} = e_i x e_j$. %
Then $x_{11} \in \varpi^N \cO e_1$ and $x_{22} \in \varpi^N \cO e_2$. We have $x_{12} c \in e_1 S e_1$ and $\rho(x_{12} c) \equiv 0 \text{ mod }\varpi^{N}$, hence $x_{12} c \in \varpi^{N} \cO e_1$, hence $x_{12} c  b = \varpi^{2 C} x_{12} \in \varpi^{N} \cO b$, hence $\varpi^{2C} x_{12} \in \varpi^{N} S$. Similarly we see $\varpi^{2C} x_{21} \in \varpi^N S$, hence $\varpi^{2C} x \in \varpi^N S$. This completes the proof of (2). 

To prove (3), note first that at least one of $e_1 y$, $e_2 y$ must have exact order $\varpi^N$; suppose without loss of generality that it is $e_1 y$. If $z$ annihilates $S y$, then it annihilates $S e_1 y$. Continuing with the notation of the above paragraphs, let us write $z = \left( \begin{smallmatrix} z_{11} & z_{12} \\ z_{21} & z_{22} \end{smallmatrix} \right)$, where $z_{ij} = e_i z e_j$. 
Then $z_{11} e_1 y = 0$, $z_{12} c e_1 y = 0$, $z_{21} e_1 y = 0$, and $z_{22} c e_1 y = 0$. 

On the other hand, we have $e_1 S e_1 y = \cO / \varpi^N \cO \cdot e_1 y$. 
Writing $z_{11} = a e_1$ and $z_{22} = d e_2$ with $a, d \in \cO$, we have $z_{11} e_1 y = a e_1 y = 0$, hence $a \in \varpi^N \cO$. 
We have $b z_{22} c = d b c = d \varpi^{2C} e_1$, hence $b z_{22} c e_1 y = d \varpi^{2C} e_1 y = 0$, showing that $d \in \varpi^{N -2C} \cO$, and $\rho(z_{22}) \equiv 0 \text{ mod }\varpi^{N - 2C}$. 
We also have $ z_{12} c e_1 y = 0$, hence $z_{12} c \in \varpi^N \cO e_1$; 
multiplying on the right by~$b$ we deduce that $\varpi^{2C} z_{12} \in \varpi^N \cO b$, and $\rho(z_{12}) \equiv 0 \text{ mod }\varpi^{N-C}$. 
Finally, we have $z_{21} e_1 y = 0$, hence $b z_{21} e_1 y = 0$, $bz_{21} \in \varpi^N \cO e_1$, $\varpi^{2C} z_{21} \in \varpi^N \cO c$, hence $\rho(z_{21}) \equiv 0 \text{ mod }\varpi^{N-C}$. We have shown that $\rho(z) \equiv 0 \text{ mod }\varpi^{N-2C}$, as required.
This concludes the proof of~(3).

To prove (4), %
observe that $(\phi-\beta) (Sy) \leq (Sy)[\phi-\alpha]$, so it suffices to show that $\varpi^{N-2C - D - 1} (\phi-\beta) (Sy) \neq 0$. Suppose for contradiction that $\varpi^{N-2C - D - 1} (\phi-\beta) (Sy) = 0$, and apply (3) with $z = \varpi^{N-2C - D - 1}(\phi-\beta)$ to conclude that $\rho(z) \equiv 0 \text{ mod } \varpi^{N - 2C}$, hence that $\rho(\phi-\beta) \equiv 0 \text{ mod }\varpi^{D+1}$. This implies that $\mathrm{tr} \rho(\phi) = \alpha + \beta \equiv 2 \beta \text{ mod }\varpi^{D+1}$, hence that $\alpha \equiv \beta \text{ mod }\varpi^{D+1}$. This contradiction completes the proof. 
\end{proof}

\subsection{Pseudodeformations}
Suppose that~$p$ is an odd prime, and that $E / \mathbb{Q}_p$ is a coefficient field. 
Assume that~$\Gamma$ satisfies Mazur's condition~$\Phi_p$, and
fix a continuous residual pseudocharacter $\overline{t} : \Gamma \to k$. 
We can consider the functor $\operatorname{PDef} : \operatorname{CNL}_\cO \to \operatorname{Sets}$ of continuous pseudocharacters lifting $\overline{t}$ (otherwise said, pseudodeformations). This is representable by an object $P(\Gamma) \in \operatorname{CNL}_\cO$. 

Let $\Delta \leq \Gamma$ be a normal open subgroup such that $\overline{t}$ factors through $\Gamma / \Delta$. Let $P_1(\Delta) = \Delta$ and, for $j \geq 1$, define $P_{j+1}(\Delta) \coloneqq  \overline{P_j(\Delta)^p [ P_1(\Delta), P_j(\Delta)]}$, an open normal subgroup of $\Gamma$ (cf.\ \cite[Ch.\ I, \S 4.2, Ex.\ 6]{Ser02}; here we use that $\Gamma$ satisfies $\Phi_p$).
Then $(P_j(\Delta))_{j \geq 1}$ is the lower $p$-central series of $\Delta$, and the inverse limit
\[ \varprojlim_j \Delta/P_j(\Delta) \]
may be identified with the pro-$p$ completion~$\Delta(p)$ of $\Delta$. Let $P_\infty(\Delta) \coloneqq  \cap_{j \geq 1} P_j(\Delta)$. We have the following useful lemma.
\begin{lemma}\label{lem_quantitative_continuity_of_pseudocharacters}
    With the notation and assumptions of the previous paragraph:
    \begin{enumerate}
        \item Any pseudodeformation of $\overline{t}$ factors through the quotient $\Gamma / P_\infty(\Delta)$.
        \item Let $A \in \CNL_\cO$ be such that $\mathfrak{m}_A^N = 0$, and let $t : \Gamma \to A$ be a pseudodeformation of $\overline{t}$. Then $t$ factors through $\Gamma / P_{3N}(\Delta)$. 
    \end{enumerate}
\end{lemma}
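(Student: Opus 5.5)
The plan is to prove (2) first and deduce (1) from it by a limiting argument. The key tool is the group-determinant formalism recalled above. Given a pseudodeformation $t : \Gamma \to A$ with $\mathfrak{m}_A^N = 0$, let $D$ be the associated determinant and $S = A[\Gamma]/\mathcal{I}_{\CH}(t)$ the Cayley--Hamilton quotient. By Lem.\ \ref{lem:computation of CH ideals}, $t$ factors through $S$. So it suffices to control the image of $\Gamma$ in $S^\times$ and show that it is killed by $P_{3N}(\Delta)$.

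The first step is to construct the natural two-sided ideal $J \subset S$ (the kernel of $S \to \overline{S}$, where $\overline{S} = k[\Gamma]/\ker(\overline{D})$ is the semisimple Cayley--Hamilton algebra attached to $\overline t$). I then want to prove the following:
\begin{itemize}
\item[(a)] $J^2 \subseteq \mathfrak{m}_A S$. This is the standard result of Chenevier \cite[Lem.\ 2.? / Prop.\ 2.?]{Che14}, that the kernel of $S\otimes k \to \overline S$ has square zero, or at least is nilpotent of index at most $2$ in rank $2$. In rank $2$ it can be checked directly: for $x \in J$ we have $t(x), D(x) \in \mathfrak{m}_A$, so $x^2 = t(x^*)x - D(x) \in \mathfrak{m}_A S$, and linearising the Cayley--Hamilton identity gives $xy + yx \in \mathfrak{m}_A S$, hence $xy \in \mathfrak m_A S$ modulo such terms.
\item[(b)] Consequently $J^{2N} \subseteq \mathfrak{m}_A^N S = 0$. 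One needs a little care because $J$ and $\mathfrak m_A S$ interact: $J \supseteq \mathfrak{m}_A S$ and $J^2 \subseteq \mathfrak m_A S$ give $J^{2j} \subseteq \mathfrak m_A^j S$ by induction.
\item[(c)] Since $\overline{t}$ factors through $\Gamma/\Delta$, elements $g\in\Delta$ map into $1 + J$. This holds because $\overline{S}$ is generated by the image of $\Gamma/\Delta$ and $\overline{D}$ factors through $\Gamma/\Delta$, so $g - 1 \in \ker(\overline D)$ on the image.
\end{itemize}

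The second step is a filtration argument in the group $U = 1+J$. Its filtration $U_j = 1 + J_j$ is defined by $J_1 = J$, $J_{j+1} = pJ_j + J J_j + J_j J$ (or simply via powers mixed with $p$). I show by induction that $P_j(\Delta)$ maps into $1 + \mathcal{J}_j$, where $\mathcal{J}_j$ is the ideal generated by monomials of total ``weight'' $\geq j$, with $J$ given weight $1$ and $p$ weight $1$. This uses $(1+x)^p = 1 + px + \dots + x^p$ and $[1+x,1+y] \in 1 + (xy-yx)(\dots)$. Since $p \in \mathfrak m_A$, and hence $p\in J$ with $p^N=0$, all weight-$\geq 3N$ (in fact $2N$ suffices with the counting above, so $3N$ leaves slack for the binomial cross terms) monomials vanish by (b). Therefore $P_{3N}(\Delta)$ maps to $1$ in $S$, and $t$ factors through $\Gamma/P_{3N}(\Delta)$.

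For (1), any pseudodeformation to $A \in \CNL_\cO$ is the inverse limit of its reductions to $A/\mathfrak{m}_A^N$. Each reduction is trivial on $P_{3N}(\Delta) \supseteq P_\infty(\Delta)$, so $t$ is constant on cosets of $P_\infty(\Delta)$. Continuity is not even needed here beyond $A = \varprojlim A/\mathfrak m_A^N$.

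The main obstacle is step (a): making precise the nilpotence of $J$ in the Cayley--Hamilton algebra with an explicit exponent. I would try to cite Chenevier's result that, for a Cayley--Hamilton algebra with residually multiplicity-free determinant, $\ker(S\to \overline S)$ is the Jacobson radical and is nilpotent modulo $\mathfrak m_A$ of bounded index (rank $2$ gives index $\leq 2$). Failing that, I would argue directly via the generalized matrix algebra structure as in the proof of Prop.\ \ref{prop_pseudocharacters_with_large_image}. The exact constant $3$ then comes out of tracking the weights carefully.
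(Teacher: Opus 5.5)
\textbf{Gap in step (a).} Your overall architecture matches the paper's: pass to the Cayley--Hamilton quotient $S$, show that the radical $J$ (the image of the kernel of $\overline{\rho}$) is nilpotent, and show by induction that the lower $p$-central series satisfies $P_j(\Delta)\mapsto 1+J^j$. The problem is step (a). The inclusion $J^2\subseteq \mathfrak{m}_A S$ is false for the arbitrary $\overline{t}$ allowed in the lemma.

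Your ``direct check'' only gives $x^2\in\mathfrak{m}_AS$ and $xy+yx\in \mathfrak{m}_AS$ for $x,y\in J$. Anticommutation does not force $xy\in\mathfrak{m}_AS$. Here is a counterexample:
\begin{itemize}
\item Take $A=k$, let $\Gamma$ be free pro-$p$ on $a,b$, and let $\overline t=2$.
\item Let $E=\Lambda^\bullet_k(ke_1\oplus ke_2)$, with augmentation $\epsilon$ and determinant $\epsilon^2$ (trace $2\epsilon$).
\item Every element of the augmentation ideal squares to zero, also after any commutative base change. So $E$ is Cayley--Hamilton of degree $2$.
\item The map $a\mapsto 1+e_1$, $b\mapsto 1+e_2$ therefore induces $S\to E$ sending $(a-1)(b-1)\mapsto e_1e_2\neq 0$.
\item But $\mathfrak{m}_AS=0$, so $J^2\not\subseteq\mathfrak{m}_AS$. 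In fact $[a,b]\mapsto 1+2e_1e_2\neq 1$, so the image of $P_2(\Gamma)$ in $S^\times$ is nontrivial.
\end{itemize}
Your claim does hold when $\overline t$ is multiplicity free, which is the case used later in the paper: there the generalised matrix algebra structure gives $J^2\subseteq\mathfrak{m}_AS$. It fails when $\overline t$ is residually scalar, and the lemma is stated without excluding that case.

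\textbf{The fix.} The correct input is what the paper cites as \cite[Lem.\ 2.10]{Che14}, namely Nagata--Higman for nil index $2$: $J^3\subseteq \mathfrak{m}_AS$. It follows from your own relations. Modulo $\mathfrak{m}_AS$, for $x,y,z\in J$ you have $xyz\equiv -yxz\equiv yzx$, and also $xyz=x(yz)\equiv -yzx$. Hence $2xyz\equiv 0$, and $2$ is invertible since $p$ is odd.

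This gives $J^{3N}\subseteq \mathfrak{m}_A^NS=0$. Your filtration argument then runs with $\mathcal J_j=J^j$; no separate weight for $p$ is needed, since $p\in\mathfrak{m}_AS\subseteq J$. The binomial terms cost nothing: $\binom pk z^k\in pJ^{jk}\subseteq J^{j+1}$ and $z^p\in J^{j+1}$. So you get exactly $P_{3N}(\Delta)$. The $3$ is the Nagata--Higman index $2^2-1$, not slack.

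\textbf{Two smaller points.} First, the induction controls the abstract subgroups generated by $p$-th powers and commutators. To pass to the closures $P_j(\Delta)$ you need continuity of $t$, as the paper does. Second, your deduction of (1) from (2) via $\mathfrak{m}_A$-adic separatedness of $A$ is valid, and it is a self-contained replacement for the paper's citation of \cite[Lem.\ 3.8]{Che14}.
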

\begin{proof}
    (1) is well-known -- see e.g. \cite[Lem.\ 3.8]{Che14}. For (2), let $R \coloneqq  A[\Gamma]/\mathcal{I}_{\CH}(t)$ be the maximal Cayley--Hamilton quotient of~$A[\Gamma]$,  and
     let $\overline{\rho} : \Gamma \to \GL_2(k)$ be a semisimple representation giving rise to $\overline{t}$. Then $t$ factors through an $A$-linear map $t' : R \to A$. Let $I$ denote the image of $\ker \overline{\rho}$ in $R$. Then $I$ is a 2-sided ideal, equal to the radical of $R$, and \cite[Lem.\ 2.10]{Che14} shows that $I^{3 N} = 0$. 

     Let $P_j'(\Delta)$ denote the sequence of normal subgroups of $\Delta$ defined by $P'_1(\Delta) = \Delta$, $P'_{j+1}(\Delta) = P'_j(\Delta)^p [ P'_1(\Delta), P'_j(\Delta) ]$ if $j \geq 1$. We will show that $t$ factors through $\Gamma / P'_{3N}(\Delta)$. Since $t$ is continuous, this will imply that it in fact factors through $\Gamma / P_{3N}(\Delta)$, since the closure of $P'_j$ is $P_j$ (as follows from an easy induction).

    Let $\rho : \Gamma \to R^\times$ denote the tautological homomorphism. We have $\rho(\Delta) \leq 1 + I$, by construction. We show by induction that $\rho(P'_j(\Delta)) \leq 1 + I^j$ for all $j \geq 1$. It suffices, since $\rho$ is continuous, to show that if $x\in P'_j(\Delta)$ and $y \in P'_1(\Delta)$, then $\rho(x^p) \in 1 + I^{j+1}$ and $\rho([x, y]) \in 1 + I^{j+1}$. Suppose that $\rho(x) = 1 + z$, $\rho(y) = 1 + w$, where $z \in I$, $w \in I^j$. Then $\rho(x^p) = 1 + \sum_{k=1}^p {p \choose k} z^k$, and we see that each term in the sum lies in $I^{j+1}$. On the other hand, we have 
    \[ (1+z)(1+w)(1+z)^{-1}(1+w)^{-1} \equiv (1+z)(1+w)(1-z+\dots+(-z)^j)(1-w) \equiv 1 \text{ mod }I^{j+1}. \] 
    Since $I^{3N} = 0$, it follows that $\rho(P'_{3N}(\Delta)) = 1$. We have $t = t' \circ \rho$, so if $\gamma \in \Gamma$, $\delta \in P_{3N}'(\Delta)$, it follows that $t( \gamma \delta ) = t'(\rho(\gamma \delta)) = t'(\rho(\gamma)) = t(\gamma)$, as required. 
\end{proof}
Suppose now that there is a non-trivial quadratic character $\chi : \Gamma \to \{ \pm 1 \}$ such that $\overline{t} = \overline{t} \otimes \chi$, and let $K \coloneqq  \ker \chi$. In this case, we can form the quotient $P(\Gamma)^{\chi\textnormal{-dih}}$ of $P(\Gamma)$ defined by imposing the relation $t = t \otimes \chi$ on the universal pseudocharacter. (This is supposed to capture the pseudocharacters that are ``dihedral, induced from $K$''. The pseudocharacter $t \otimes \chi$ is of course defined as $(t \otimes \chi)(\sigma) \coloneqq  \chi(\sigma)t(\sigma)$ for all~$\sigma \in \Gamma$.) 
\begin{pp}\label{prop_universal_dihedral_pseudocharacter}
    Let $\Lambda \coloneqq  \cO\llbracket  K^{\textnormal{ab}}(p) \rrbracket $, let $c \in \Gamma - K$, and let $\sigma : \Lambda \to \Lambda$ be the involution induced by the map $\lambda \mapsto c \lambda c^{-1}$ on group elements. Suppose that there are characters $\overline{\chi}_1, \overline{\chi}_2 : \Gamma \to k^\times$ such that $\overline{t} = \overline{\chi}_1 + \overline{\chi}_2$. Then there is an isomorphism $P(\Gamma)^{\chi\textnormal{-dih}} \cong \Lambda^\sigma$.
\end{pp}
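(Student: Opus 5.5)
We will construct mutually inverse maps between $\Lambda^\sigma$ and $P(\Gamma)^{\chi\textnormal{-dih}}$. Since $\overline{t} = \overline{t}\otimes\chi$ and $\overline{t} = \overline{\chi}_1+\overline{\chi}_2$, the characters satisfy $\overline{\chi}_2 = \overline{\chi}_1\chi$, and $\overline{\chi}_1|_K = \overline{\chi}_2|_K =: \overline{\psi}$ with $\overline{\psi}^c = \overline{\psi}$ (conjugation by $c$). Consider the tautological character $\psi^{\mathrm{univ}} : K \to \Lambda^\times$ given by $\tilde\psi \cdot [\,\cdot\,]$, where $[\,\cdot\,]$ is the map $K \to K^{\textnormal{ab}}(p) \to \Lambda^\times$ and $\tilde\psi$ is the Teichm\"uller lift of $\overline{\psi}$. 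Note $\Lambda$ is the universal deformation ring of $\overline{\psi}$, which is a standard fact, since deformations of $\overline\psi$ to $A$ are $\tilde\psi\cdot\mu$ with $\mu$ valued in $1+\mathfrak{m}_A$, hence factoring through $K^{\textnormal{ab}}(p)$.

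\emph{Map $P^{\chi\textnormal{-dih}} \to \Lambda^\sigma$.} Define $T = \tr \Ind_K^\Gamma \psi^{\mathrm{univ}}$, which is $\psi^{\mathrm{univ}}(g)+\psi^{\mathrm{univ}}(cgc^{-1})$ for $g\in K$ and $0$ on $\Gamma - K$. This takes values in $\Lambda^\sigma$, since $\sigma$ swaps the two summands (using $\tilde\psi^c = \tilde\psi$). It is a pseudocharacter, being the trace of a representation over $\Lambda$, and it lifts $\overline{t}$ and satisfies $T = T\otimes\chi$. It remains to check that $\Lambda^\sigma$ is a complete Noetherian local $\cO$-algebra with residue field $k$. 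We expect this from $\Lambda$ being finite over $\Lambda^\sigma$; here $K^{\textnormal{ab}}(p)$ is topologically finitely generated by $\Phi_p$, so $\Lambda$ is Noetherian, and $\Lambda^\sigma$ is Noetherian since $2$ is invertible. This gives $\alpha : P^{\chi\textnormal{-dih}} \to \Lambda^\sigma$.

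\emph{Surjectivity.} $\Lambda^\sigma$ is topologically generated by the elements $[g]+[cgc^{-1}]$, because $2$ is invertible, so the averaging map $\Lambda \to \Lambda^\sigma$ is surjective. These elements are $\tilde\psi(g)^{-1}T(g)$, so they lie in the image of $\alpha$.

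\emph{Injectivity.} This is the main obstacle, and I would argue via the universal property and the structure of dihedral pseudocharacters. Let $t$ be the universal pseudocharacter over $P = P^{\chi\textnormal{-dih}}$. Then $t$ vanishes on $\Gamma-K$, since $2t(\gamma) = t(\gamma)-t(\gamma)\chi(\gamma)$ there forces $2t(\gamma)=0$. The restriction $t|_K$ is a pseudocharacter lifting $2\overline{\psi}$, and is invariant under $c$-conjugation. I would use a generalized matrix algebra (Cayley--Hamilton algebra) $S = P[\Gamma]/\mathcal{I}_{\CH}(t)$ with idempotents $e = (1+ \chi\cdot)/2$-type. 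Concretely, set $e = (1+c/ \sqrt{\det c}\,)/2$, or better the image of $\frac12(1+ \chi)$, using the fact that $\chi$ lies in the center modulo $\mathcal{I}_{\CH}$ as a consequence of $t = t\otimes\chi$. The goal is to show that $t|_K = \psi+\psi^c$ for a character $\psi: K\to A^\times$ over a finite extension $A = P[\psi]$ of $P$ with $A^{\sigma} = P$, that $A$ is a quotient of $\Lambda$, and that $A^\sigma \supseteq P$ then splits $\alpha$.

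Perhaps more cleanly, I would check on Artinian points. Namely, I would show that every $\chi$-dihedral pseudodeformation $t$ to $A$ factors through $\Lambda^\sigma \to A$. By surjectivity, this is equivalent to $\ker\alpha = 0$. Define a map $\Lambda \to A[\Gamma]/\mathcal{I}_{\CH}$ restricted to the commutative subalgebra $e S e$ sending $g\mapsto e g e$ for $g\in K$. The key computation, using Lem.\ \ref{lem:computation of CH ideals}, is that $eSe$ is commutative and $t(ege)$ recovers half the trace. Then $t(g) = \tr(ege) + \tr(e c g c^{-1} e)$ expresses $t$ through $\Lambda^\sigma$.
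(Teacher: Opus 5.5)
\textbf{There is a genuine gap: injectivity is not proved, and the mechanism you propose for it does not work.}

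Your construction of $\alpha$ from the pseudocharacter of $\Ind_K^\Gamma \psi^{\mathrm{univ}}$ is fine. So is your surjectivity argument. Both agree with the paper, which just twists first so that $\overline{\chi}_1 = 1$ instead of carrying the Teichm\"uller factor $\tilde\psi$. The problem is injectivity, which is the real content of the proposition. Your steps fail as follows.
\begin{enumerate}
\item The character $\chi$ is not an element of $S = P[\Gamma]/\mathcal{I}_{\CH}(t)$, so ``$\chi$ lies in the centre'' has no meaning. What $t = t\otimes\chi$ actually gives you is the involution $g \mapsto \chi(g)g$ of $S$, i.e.\ a $\mathbb{Z}/2$-grading.
\item Your element $\frac12(1+c/\sqrt{\det c})$ is not idempotent. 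Since $t(c)=0$, one has $c^2 = -\det c$ in $S$, so you would need $\sqrt{-\det c}$.
\item Even the correct spectral idempotent $e$ of $c$ is useless here. Its corner $eSe$ is identified with $P$ via $t$, so the commutativity of $eSe$ carries no information.
\item The map $g\mapsto ege$ is not multiplicative on $K$, because $e$ does not commute with $K$. In the model $\Ind_K^\Gamma\psi$, $K$ acts diagonally while the eigenlines of $c$ are not coordinate lines, so $eg(1-e)\neq 0$ whenever $\psi(g)\neq\psi(cgc^{-1})$.
\item Your identity $t(g) = t(ege)+t(ecgc^{-1}e)$ is just $t(g)=\tfrac12 t(g)+\tfrac12 t(g)$, because $t$ vanishes on $\Gamma - K$. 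It produces no ring homomorphism out of $\Lambda^\sigma$.
\end{enumerate}
Your alternative target is the right one: a character $\psi$ of $K$ over a commutative finite $P$-algebra with $t|_K=\psi+\psi^c$. But you give no reason why such a $\psi$ exists, and that is exactly the crux.

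\textbf{The missing idea} is that the image $R_0\subset S$ of $P[K]$ is itself commutative. Use the anti-involution $x^*=t(x)-x$ of $S$, as in the proof of Lem.\ \ref{lem:computation of CH ideals}. Note that $c^* = -c$ because $t(c)=0$. For $x\in R_0$, the element $xc$ lies in the image of $P[\Gamma-K]$, so $t(xc)=0$. Hence $-xc=(xc)^*=c^*x^*=-cx^*$, which gives $x^*=c^{-1}xc$ on $R_0$. So $*$ is simultaneously an automorphism and an anti-automorphism of $R_0$, which forces $R_0$ to be commutative.

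The map $g\mapsto\tilde\psi(g)^{-1}g$ then extends to a homomorphism $\Lambda\to R_0$. This uses that $R_0$ is a finite local $P$-algebra in which these elements are $\equiv 1$ modulo the maximal ideal. Under this map, $[g]+[cgc^{-1}]$ goes to $\tilde\psi(g)^{-1}(g+g^*)=\tilde\psi(g)^{-1}t(g)\in P$. Restricting to $\Lambda^\sigma$ therefore gives a map $\Lambda^\sigma\to P$ that splits $\alpha$. In other words, $R_0$ is the ``$P[\psi]$'' you were looking for, and this is precisely the paper's argument.
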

\begin{proof}
    After twisting, we can assume that $\overline{\chi}_1 = 1$ and $\overline{\chi}_2 = \chi$. Let $X : K \to \Lambda^\times$ denote the universal character. Taking the pseudocharacter of the induction $\Ind_K^\Gamma X$ over $\Lambda$ gives a map $P(\Gamma) \to \Lambda$ that factors through the quotient $P(\Gamma)^{\chi\textnormal{-dih}}$ and takes values in $\Lambda^\sigma$. We need to show that the induced map $P(\Gamma)^{\chi\textnormal{-dih}} \to \Lambda^\sigma$ is an isomorphism. We begin by noting that it is surjective: indeed, if $g \in K$ then $t(g) \in P(\Gamma)^{\chi\textnormal{-dih}}$ is sent to $X(g) + X(c g c^{-1}) = g + c g c^{-1} \in \Lambda^\sigma$, and elements of this form span a dense $\cO$-submodule of $\Lambda^\sigma$.

    Let $A = P(\Gamma)^{\chi\textnormal{-dih}}$, and let $R$ denote the maximal Cayley--Hamilton quotient of $A[\Gamma]$. Then there is an anti-involution on $R$, given by the formula $x^\ast \coloneqq  t(x) - x$ (cf. the proof of Lem.\ \ref{lem:computation of CH ideals}). There is also an involution $\iota$, given by the formula $\iota(g) = \chi(g) g$ on group elements. Thus there is a $\mathbb{Z} / 2 \mathbb{Z}$-grading $R = R_0 \oplus R_1$, where $R_0$ is precisely the image of $A[K]$ in $R$. Note that $R_0$ is preserved by $(\cdot)^\ast$.

    Let $x \in R_0$. Then $t(xc) = 0$, hence $(xc)^\ast = - x c = c^\ast x^\ast = -c x^\ast$, hence $x c - c x^\ast = (x - c x^\ast c^{-1}) c = 0$ in $R$. The element $c$ is a unit, so we conclude that $x = c x^\ast c^{-1}$ for all $x \in R_0$. If $x, y \in R_0$, we conclude that $(x y)^\ast = x^\ast y^\ast$, hence (since $(\cdot)^\ast$ is an anti-involution) that $R_0$ is commutative. 

    It follows that the homomorphism $K \to R_0^\times$ factors through a homomorphism $K^{\textnormal{ab}} \to R_0^\times$. In fact, $R_0$ is a finite $A$-algebra, so this homomorphism induces a homomorphism $\Lambda \to R_0$. By construction, the restriction of this map to $\Lambda^\sigma$ sends an element $g + c g c^{-1}$ ($g \in K$) to $g + c g c^{-1} = g + g^\ast = t(g) \in P(\Gamma)^{\chi\textnormal{-dih}} \leq R$. These elements span a dense $\cO$-submodule of $\Lambda^\sigma$, so we get an induced homomorphism $\Lambda^\sigma \to P(\Gamma)^{\chi\textnormal{-dih}}$. This gives a splitting to the surjective map $P(\Gamma)^{\chi\textnormal{-dih}} \to \Lambda^\sigma$, which must therefore be an isomorphism. 
\end{proof}

\subsection{Pseudodeformations and deformations}\label{subsec:pseudodeformations and deformations}

Continue to assume that~$p$ is an odd prime, and that $E/\bQ_p$ is a coefficient field. 
Fix a surjection $\Gamma \to \Gamma_0$ of profinite groups satisfying Mazur's condition $\Phi_p$, and choose an element $c_\infty \in \Gamma$ of order 2. We suppose given an indecomposable residual representation
\[ \overline{\rho}_b= \begin{pmatrix}  \overline{\chi}_1 & * \\ 0 & \overline{\chi}_2 \end{pmatrix} : \Gamma_0 \to \GL_2(k) \]
such that $\overline{\rho}_b(c_\infty) = \operatorname{diag}(-1, 1)$; and we set $\overline{t} \coloneqq  \tr \overline{\rho}_b$. 
In this case we can consider the functor $\operatorname{Def}_b : \operatorname{CNL}_\cO \to \operatorname{Sets}$ of deformations of the $\Gamma$-representation $\overline{\rho}_b$, i.e.\ $\widehat \GL_2$-conjugacy classes of lifts $\rho : \Gamma \to \GL_2$ of~$\rhobar_b$, 
which is represented by an object $R_b = R_b(\Gamma) \in \CNL_\cO$. 

The universal property of $P\coloneqq  P(\Gamma)$ implies the existence of a morphism $P \to R_b$ in $\operatorname{CNL}_\cO$. We write $\rho_b^{\textnormal{univ}} : \Gamma \to \GL_2(R_b)$ for a representative of the universal deformation, chosen so that $\rho_b^{\textnormal{univ}}(c_\infty) = \operatorname{diag}(-1, 1)$. 
This condition implies that, for any~$\sigma, \tau \in \Gamma$, we have
    \begin{equation}\label{eqn:pseudo and actual matrix entries}
    \rho_{b}^{\textnormal{univ}}(\sigma)_{11} = a(\sigma), \rho_{b}^{\textnormal{univ}}(\sigma)_{22} = d(\sigma), \rho_{b}^{\textnormal{univ}}(\sigma)_{12}\rho_{b}^{\textnormal{univ}}(\tau)_{21} = x(\sigma, \tau).
    \end{equation}
    
If $\mathfrak{q}_b \leq R_b$ is a prime ideal, we write $\rho_{\mathfrak{q}_b} : \Gamma \to \GL_2(R_b / \mathfrak{q}_b)$ for the specialisation of $\rho_b^{\textnormal{univ}}$. 
The following lemma is used to compare the universal pseudodeformation and deformation rings after localization and completion at~$\mathfrak{q}_b$.

\begin{lemma}\label{lem_comparison_of_P_and_R_b}
    Let $\mathfrak{q}_b \leq R_b$ be a prime ideal, and let $\mathfrak{q}$ denote its pullback to $P$. Then:
    \begin{enumerate}
        \item If $\mathfrak{q}_b$ has dimension 1 and $\rho_{\mathfrak{q}_b} \otimes k(\mathfrak{q}_b)$ is irreducible, then the natural map 
        $\widehat{(P_{\mathfrak{q}})} \to \widehat{(R_{b, \mathfrak{q}_b})}$ is an isomorphism, and $\Spec R_{b, \mathfrak{q}_b} \to \Spec P_{\mathfrak{q}}$ is surjective. 
        \item If $\rho_{\mathfrak{q}_b} \otimes k(\mathfrak{q}_b)$ is irreducible, then $\dim R_b / \mathfrak{q}_b \leq \dim P / \mathfrak{q}$. Furthermore, if $\mathfrak{q}_b$ is a minimal prime, then so is $\mathfrak{q}$. 
    \end{enumerate}
\end{lemma}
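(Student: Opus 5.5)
The strategy is to compare the two rings by comparing the functors they represent on suitable local Artinian algebras over $k(\mathfrak{q}_b)$. The key input is that over a field, or more generally over a local ring whose residual representation is absolutely irreducible, pseudocharacters and representations match (Nyssen/Rouquier, or \cite[Thm.\ 2.22]{Che14}).

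Set $L = k(\mathfrak{q}_b)$. Under hypothesis (1), $\mathfrak{q}_b$ has dimension 1, so $L$ is a local field: a finite extension of $E$ if $\varpi \notin \mathfrak{q}_b$, or a local field of characteristic $p$ otherwise. By the standard description (as in \cite{Ski99}, or Kisin's treatment of completed localizations at dimension-1 points), $\widehat{(P_{\mathfrak q})}$ pro-represents continuous pseudodeformations of $t_{\mathfrak q} = t \otimes k(\mathfrak q)$ to Artinian local $L'$-algebras. Here $L' = k(\mathfrak q)$, with the topology coming from $P/\mathfrak q$. Similarly, $\widehat{(R_{b,\mathfrak q_b})}$ pro-represents deformations of $\rho_{\mathfrak q_b}\otimes L$.

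The first step is to show $k(\mathfrak q)=L$. The inclusion $P/\mathfrak q \hookrightarrow R_b/\mathfrak q_b$ is finite: $R_b$ is topologically generated over $P$ by the matrix entries $\rho(\sigma)_{12},\rho(\sigma)_{21}$. By \eqref{eqn:pseudo and actual matrix entries} their products lie in $P$. Choose $\tau_0$ with $\rho(\tau_0)_{21}\notin\mathfrak q_b$, which exists by irreducibility. Then $\rho(\sigma)_{12}=x(\sigma,\tau_0)/\rho(\tau_0)_{21}$. After localizing, every entry lies in $k(\mathfrak q)[\rho(\tau_0)_{21}]$. Moreover $\rho(\tau_0)_{21}$ can be rescaled away by conjugating by a diagonal matrix commuting with $\rho(c_\infty)$, which is exactly the $\widehat{\GL}_2$-conjugation freedom. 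One should therefore show that $L$ is generated by the pseudo matrix entries, i.e.\ equals $k(\mathfrak q)$; this is the content of formula \eqref{eq_rep_of_irreducible_ps}.

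Next, on Artinian local $L$-algebras $A$, the map $\mathrm{Def}(\rho_{\mathfrak q_b}\otimes L)(A) \to \mathrm{PDef}(t_{\mathfrak q})(A)$ is a bijection, because $t_{\mathfrak q}$ is absolutely irreducible over $L$. Since the tangent spaces match and the functors agree, the completed local rings are isomorphic. Continuity conditions match because both are defined via $\Gamma$-continuity through the integral models. The main obstacle is exactly this: making the identification of completed localizations with the deformation functors over $L$ precise, in particular handling continuity and the fact that $A$ need not come from an integral lattice. I would use the explicit integral construction \eqref{eq_rep_of_irreducible_ps} over finite $P/\mathfrak q$-algebras to produce the required lattices.

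Surjectivity of $\Spec R_{b,\mathfrak q_b}\to\Spec P_{\mathfrak q}$ then follows. The extension $P_{\mathfrak q}\to R_{b,\mathfrak q_b}$ is integral (by finiteness above), and becomes an isomorphism after completion, hence is faithfully flat on completions. A prime of $P_{\mathfrak q}$ lifts by going-up combined with the fact that $P_{\mathfrak q}\to\widehat{P_{\mathfrak q}}$ is faithfully flat. For (2), the finiteness of $P/\mathfrak q\to R_b/\mathfrak q_b$ (shown above, valid without the dimension hypothesis) gives $\dim R_b/\mathfrak q_b\le\dim P/\mathfrak q$. If $\mathfrak q_b$ is minimal but $\mathfrak q$ is not, pick $\mathfrak q'\subsetneq\mathfrak q$. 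Applying the same local identification at $\mathfrak q_b$ (irreducibility is an open condition, so it suffices to work in the generic fibre), going-down for the completed flat isomorphism lifts $\mathfrak q'$ to a prime strictly inside $\mathfrak q_b$, a contradiction.
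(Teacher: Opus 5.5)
You have the right strategy for the completed-local comparison in (1): compare the pro-represented functors at a dimension-one point, and use that pseudodeformations and deformations agree when the pseudocharacter is absolutely irreducible. This is what \cite[Cor.\ 2.3.6]{Pan22}, which the paper simply cites, does. However, the step the rest of your proof leans on is false. You claim that $P/\mathfrak q\to R_b/\mathfrak q_b$ is finite whenever $\rho_{\mathfrak q_b}\otimes k(\mathfrak q_b)$ is irreducible, ``valid without the dimension hypothesis''. Your argument only shows that the off-diagonal entries lie in a fraction field, which is a birationality statement. Quotients like $x(\sigma,\tau_0)/x(z,\tau_0)$ need not be integral over $P/\mathfrak q$: consider $k\llbracket X,Y\rrbracket\subset k\llbracket X,W\rrbracket$ with $Y=XW$. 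In general $\Spec R_b\to\Spec P$ behaves like a blow-up along the reducible locus, and the paper depends on this. Prop.~\ref{prop_height_of_ideal_in_P} assumes that $R_b/(\mathfrak q_b,\mathfrak m_P)$ is not Artinian while $\rho_{\mathfrak q_b}$ is irreducible. The proof of Thm.~\ref{thm_tame_SW_black_box} constructs exactly such a $Q_b$, with $R_{b,\infty}/(Q_b,\mathfrak m_{P_\infty})$ surjecting onto $k\llbracket X\rrbracket$. Consequences:
\begin{itemize}
\item Your deduction of $\dim R_b/\mathfrak q_b\le\dim P/\mathfrak q$ from finiteness collapses.
\item The integrality of $P_{\mathfrak q}\to R_{b,\mathfrak q_b}$ that you use for going-up is unjustified.
\item The minimality argument fails too. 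It invokes the completed-local identification at $\mathfrak q_b$ itself, but typically $\dim R_b/\mathfrak q_b\ge 2$ there. Then $k(\mathfrak q_b)$ is not a local field, and no deformation-theoretic description of $\widehat{(R_{b,\mathfrak q_b})}$ is available.
\end{itemize}

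What is true is finiteness in dimension one, for a different reason. Irreducibility forces $\mathfrak q\neq\mathfrak m_P$, so $\mathfrak m_P(R_b/\mathfrak q_b)$ is a nonzero, hence primary, ideal of a one-dimensional local domain, and complete Nakayama applies. Normalising $\rho(z)_{12}=1$, for some $z$ with $\overline{\rho}_b(z)_{12}$ a unit, gives $\rho(\tau)_{21}=x(z,\tau)$, whence $k(\mathfrak q_b)=k(\mathfrak q)$. Note also that when $\varpi\in\mathfrak q_b$ the completions are usually of mixed characteristic. Your test objects must then be Artinian $\cO$-algebras with residue field $L$, not $L$-algebras. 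Surjectivity on spectra needs no integrality: lift a prime along the faithfully flat map $P_{\mathfrak q}\to\widehat{(P_{\mathfrak q})}\cong\widehat{(R_{b,\mathfrak q_b})}$ and contract to $R_{b,\mathfrak q_b}$.

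For (2), choose a dimension-one prime $\mathfrak p_b\supseteq\mathfrak q_b$ at which the representation is still irreducible, with pullback $\mathfrak p$, and apply (1) there. Then $\widehat{(R_{b,\mathfrak p_b})}/\mathfrak q_b$ is a quotient of $\widehat{(P_{\mathfrak p})}/\mathfrak q$, so $\operatorname{ht}(\mathfrak p_b/\mathfrak q_b)\le\operatorname{ht}(\mathfrak p/\mathfrak q)$. A complete local domain $A$ satisfies $\dim A=\operatorname{ht}\mathfrak r+\dim A/\mathfrak r$ for every prime $\mathfrak r$, and $\dim R_b/\mathfrak p_b=\dim P/\mathfrak p=1$. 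Together these give $\dim R_b/\mathfrak q_b\le\dim P/\mathfrak q$. Minimality follows the same way. Take a minimal prime of $\widehat{(R_{b,\mathfrak p_b})}$ lying over $\mathfrak q_bR_{b,\mathfrak p_b}$. Transported to $\widehat{(P_{\mathfrak p})}$, it contracts to $\mathfrak qP_{\mathfrak p}$, which is minimal by going-down for the flat map $P_{\mathfrak p}\to\widehat{(P_{\mathfrak p})}$.
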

We remark that $R_{b, \mathfrak{q}_b}$ denotes the localisation of $R_b$ at the prime ideal $\mathfrak{q}_b$, and $\widehat{(R_{b, \mathfrak{q}_b})}$ denotes the completion of this local ring at its maximal ideal. 
\begin{proof}
    This is \cite[Cor.\ 2.3.6]{Pan22}.
\end{proof}
We also give a more technical result that is a slight generalisation of~\cite[Prop.\ 2.4.2]{Pan22}. 
\begin{pp}\label{prop_height_of_ideal_in_P}
    Let $\mathfrak{q}_b \leq R_b$ be a prime ideal such that $\rho_{\mathfrak{q}_b} \otimes k(\mathfrak{q}_b)$ is irreducible, 
    and suppose given $z \in \cO[\Gamma]$ such that $\rho_{\mathfrak{q}_b}(z)_{12} = 1$. Let $\mathfrak{q}$ denote the pullback of $\mathfrak{q}_b$ to $P$, let $I_c \leq P / \mathfrak{q}$ denote the ideal generated by the elements $x(z, \tau)$ ($\tau \in \Gamma$), and suppose that $R_b / (\mathfrak{q}_b, \mathfrak{m}_P)$ is not Artinian. Then $\operatorname{ht}(I_c) \leq 1$. 
\end{pp}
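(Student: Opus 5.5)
The plan is to adapt the argument of \cite[Prop.\ 2.4.2]{Pan22}. Set $A \coloneqq R_b/\mathfrak{q}_b$, a domain with fraction field $k(\mathfrak{q}_b)$, and let $B \leq A$ be the image of $P/\mathfrak{q}$, so that $P/\mathfrak{q} \to B$ is injective. By \eqref{eqn:pseudo and actual matrix entries} we have $x(z,\tau) = \rho_{\mathfrak{q}_b}(z)_{12}\rho_{\mathfrak{q}_b}(\tau)_{21} = \rho_{\mathfrak{q}_b}(\tau)_{21}$ in $A$. Hence $I_c$ maps into the ideal $J \leq A$ generated by the lower-left entries $c(\tau) \coloneqq \rho_{\mathfrak{q}_b}(\tau)_{21}$, $\tau \in \Gamma$.

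The key observation is that $J$ is proper and cuts out the locus where $\rho_{\mathfrak{q}_b}$ becomes upper triangular. It lies in $\mathfrak{m}_A$, since $\rhobar_b$ is upper triangular. The entries $\rho_{\mathfrak{q}_b}(\tau)_{11} = a(\tau)$, $\rho_{\mathfrak{q}_b}(\tau)_{22} = d(\tau)$ and $c(\tau) = x(z,\tau)$ all lie in $B$. The upper-right entries $b(\sigma)$ satisfy $b(\sigma) c(\tau) = x(\sigma,\tau) \in B$. Since $\rho_{\mathfrak{q}_b}(z)_{12}=1$, the element $z$ gives a ``$b$-coordinate'' equal to $1$. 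From this I would deduce that $A$ is generated over $B$ by the elements $b(\sigma)$, and that $A = B[b(\sigma)]$ is contained in $B[1/c]$ for any $c = c(\tau)$ that is nonzero in $A$. Such a $c$ exists because $\rho_{\mathfrak{q}_b}\otimes k(\mathfrak{q}_b)$ is irreducible. More precisely, $b(\sigma) = x(\sigma,\tau)/c(\tau)$.

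The height bound then goes as follows. Consider $A/\mathfrak{m}_P A = R_b/(\mathfrak{q}_b,\mathfrak{m}_P)$, which by hypothesis is not Artinian, so it has a prime $\mathfrak{P} \leq A$ containing $\mathfrak{m}_P A$ that is not maximal. I claim that every $c(\tau)$ lies in every such $\mathfrak{P}$. Indeed, if some $c(\tau_0) \notin \mathfrak{P}$, then $A_{\mathfrak{P}}$ is a localisation of $B[1/c(\tau_0)]$. Its residue field would then be generated over $B/\mathfrak{m}_P = k$ by the images of $b(\sigma) = x(\sigma,\tau_0)/c(\tau_0)$, and these images lie in $k$. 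So $\mathfrak{P}$ would be maximal, a contradiction. Hence $J \subseteq \mathfrak{P}$, and therefore $I_c \subseteq \mathfrak{p}' \coloneqq \mathfrak{P}\cap (P/\mathfrak{q})$. I also need $\mathfrak{p}' \neq \mathfrak{m}_P$; equivalently, $\dim (P/\mathfrak{q})/\mathfrak{p}' \geq 1$.

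For this I would use that $B$ is complete local and that $A$ is topologically generated over $B$ by the $b(\sigma)$ with a single relation type $b(\sigma)c(\tau) = x(\sigma,\tau)$. The strategy is to compare $A/\mathfrak{P}$ with its image: $A/\mathfrak{P}$ is a one-or-more-dimensional quotient in which all $c$'s vanish. If $B/\mathfrak{p}'$ were $k$, then $A/\mathfrak{P}$ would be generated over $k$ by the $b(\sigma)$, with $a$ and $d$ constant, and these would define a reducible upper-triangular family deforming $\rhobar_b$ only through the extension class. I would instead pick $\mathfrak{P}$ so that $\mathfrak{p}'$ is non-maximal. This can be done by a dimension count comparing $\dim A$ and $\dim B$ via Lem.\ \ref{lem_comparison_of_P_and_R_b}(2). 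With $\mathfrak{p}'$ non-maximal, $I_c$ is contained in a non-maximal prime $\mathfrak{p}'$ of the domain $P/\mathfrak{q}$.

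Finally, the height bound itself. Since all $c$'s vanish on the closed subset $V(J)$, which contains the non-Artinian fibre, I would use the relation $A \subseteq B[1/c]$ to show that $\dim A/J \geq \dim A - 1$: the ideal $J$ has height at most one, because inverting any single nonzero $c$ already identifies $A$ and $B$ birationally, so $V(J) = V(c)$ up to components. Then contracting gives $\operatorname{ht}(I_c) \leq 1$ via going-down for the birational extension $B \to A$. I expect this last step to be the main obstacle. The difficulty is justifying dimension/going-down for the non-finite extension $B \subseteq A$, which requires exactly the non-Artinian hypothesis. What I would try first is Pan's argument verbatim, replacing his element with $z$.
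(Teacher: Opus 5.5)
Your proposal has a genuine gap: the non-Artinian hypothesis is never used in a way that produces a prime of small height containing $I_c$, and both steps meant to do this fail.

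\emph{The ``key observation'' is vacuous.} Each $c(\tau) = x(z,\tau)$ already lies in $\mathfrak{m}_P$, because the residual pseudocharacter is reducible. So it lies in every prime $\mathfrak{P} \supseteq \mathfrak{m}_P A$ for trivial reasons. Moreover, for any such $\mathfrak{P}$ the contraction $\mathfrak{P} \cap (P/\mathfrak{q})$ contains the image of $\mathfrak{m}_P$ and is therefore forced to be the maximal ideal. Your requirement that $\mathfrak{p}'$ be non-maximal thus contradicts the way $\mathfrak{P}$ was chosen. This route only yields $I_c \subseteq \mathfrak{m}_{P/\mathfrak{q}}$, which says nothing.

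\emph{The final step does not work as stated.} The ideal $J$ is not principal, so Krull's theorem gives no bound on its height, and ``$V(J) = V(c)$ up to components'' is unjustified. Even a height-one prime of $A$ containing $J$ need not contract to a height-one prime of $P/\mathfrak{q}$. The inclusion $P/\mathfrak{q} \subseteq A$ is birational but not finite, and the dimension formula and going-down fail for such extensions. For example, in $k[x,y] \subseteq k[x,y/x]$ the prime $(x)$ has height one upstairs but contracts to $(x,y)$. Ruling out exactly this exceptional-divisor behaviour is the content of the proposition.

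The missing idea is to pass to the normalisation $B_1$ of $P/\mathfrak{q}$ in its fraction field and argue by contradiction. $B_1$ is finite over $P/\mathfrak{q}$, since complete local rings are Nagata.
\begin{enumerate}
\item Suppose $I_c B_1$ had height $\geq 2$. Then for every height-one prime $\mathfrak{p}$ of $B_1$ some $x(z,\tau) \notin \mathfrak{p}$. Hence $\rho_{\mathfrak{q}_b}(\sigma)_{12} = x(\sigma,\tau)/x(z,\tau) \in (B_1)_{\mathfrak{p}}$ for all $\sigma$.
\item Since $B_1$ is normal, $B_1 = \bigcap_{\operatorname{ht}\mathfrak{p} = 1} (B_1)_{\mathfrak{p}}$. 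So all matrix entries of $\rho_{\mathfrak{q}_b}$ lie in $B_1 \cap R_b/\mathfrak{q}_b$; the remaining entries already lie in $P/\mathfrak{q}$.
\item The ring $B_1 \cap R_b/\mathfrak{q}_b$ is finite over $P/\mathfrak{q}$, hence complete and closed in $R_b/\mathfrak{q}_b$. It contains topological generators, so it equals $R_b/\mathfrak{q}_b$.
\item It follows that $R_b/(\mathfrak{q}_b,\mathfrak{m}_P)$ is finite over $k$, i.e.\ Artinian. This is the contradiction, and it is exactly where the hypothesis enters.
\item Heights then descend from $B_1$ to $P/\mathfrak{q}$ by the dimension formula, since $B_1$ is finite over the universally catenary domain $P/\mathfrak{q}$. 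A height-$\leq 1$ prime of $B_1$ containing $I_c B_1$ therefore contracts to a height-$\leq 1$ prime containing $I_c$.
\end{enumerate}
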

\begin{proof}
    We follow closely the proof of \cite[Prop.\ 2.4.2]{Pan22}.%
    By definition, we must show that there exists a prime of $P / \mathfrak{q}$ containing $I_c$ of height less than or equal to 1. Let $B_1$ denote the normalisation of $P / \mathfrak{q}$ in $k(\mathfrak{q})$. Since $P / \mathfrak{q}$ is complete local Noetherian, $B_1$ is a finite, integral $P / \mathfrak{q}$-algebra. We will in fact show that $I_c B_1$ has height $\leq 1$; this implies that $I_c$ has height $\leq 1$ by \cite[\href{https://stacks.math.columbia.edu/tag/02IJ}{Tag 02IJ}]{stacks-project} (and since complete Noetherian local rings are universally catenary). 

    By construction, we have a diagram of inclusions
    \[
    \xymatrix{
    &R_b/\mathfrak{q}_b \ar[dr]&\\
    P \ar[ur] \ar[dr]/\mathfrak{q}&&k(\mathfrak{q}_b)\\
    &B_1 \subset k(\mathfrak{q}) \ar[ur]&
    }
    \]
    Let $B_1'$ denote the intersection of $B_1$ with $R_b / \mathfrak{q}_b$ inside $k(\mathfrak{q}_b)$. Then $B_1'$ is a finite $P / \mathfrak{q}$-algebra (because so is~$B_1$) and so $B_1' / (\mathfrak{m}_P)$ is Artinian. 

    Suppose for contradiction that $I_c B_1$ has height greater than 1. Then for any height 1 prime $\mathfrak{p}$ of $B_1$, there exists $\tau \in \Gamma$ such that $x(z, \tau) \not\in \mathfrak{p}$. If $\sigma \in \Gamma$, then we find that
    \[ \rho_{\mathfrak{q}_b}(\sigma)_{12} = \frac{ \rho_{\mathfrak{q}_b}(\sigma)_{12} \rho_{\mathfrak{q}_b}(\tau)_{21} }{ \rho_{\mathfrak{q}_b}(\tau)_{21} } = \frac{ x(\sigma, \tau) }{x(z, \tau)} \in (B_1)_{\mathfrak{p}}. \]
    (The second equality used the fact that $\rho_{\mathfrak{q_b}}(z)_{12} = 1$.)
    Since $\mathfrak{p}$ is arbitrary amongst height 1 primes of~$B_1$, and $B_1$ is normal, it follows that in fact $\rho_{\mathfrak{q}_b}(\sigma)_{12} \in B_1$ for all $\sigma \in \Gamma$; hence $\rho_{\mathfrak{q}_b}$ takes values in $\GL_2(B_1')$, since the remaining matrix entries are contained in $P/\mathfrak{q}$, by~\eqref{eqn:pseudo and actual matrix entries}. 
    Since $B_1'$ is a complete, hence closed subring of $R_b / \mathfrak{q}_b$,
    and the entries of $\rho_{\mathfrak{q}_b}$ are topological generators of $R_b/\mathfrak{q}_b$,
    it follows that $B_1' = R_b / \mathfrak{q}_b$, and therefore that $B_1' / (\mathfrak{m}_P) \cong R_b / (\mathfrak{q}_b, \mathfrak{m}_P)$. Since the source of this isomorphism is Artinian and the target is not, this is a contradiction. This contradiction completes the proof. 
\end{proof}

\section{An automorphy lifting theorem}\label{sec_an_ALT}

In this section, we develop our tame analogue of the Skinner--Wiles argument, proving our main automorphy lifting theorem (Thm.\ \ref{thm_tame_SW_black_box}) below. We carry out Taylor--Wiles patching twice: first, in Section \ref{subsec:first patching argument}, to boost the size of the (pseudo-)deformation ring relative to the size of the reducible locus; second, in Section \ref{subsec_second_patching}, to actually prove our ``$\widehat{(R_\mathfrak{p})} = \widehat{(\mathbb{T}_{\mathfrak{p}})}$'' theorem. To be more precise, the second stage may be carried out many times, in order to propagate pro-modularity throughout the whole of the patched (pseudo-)deformation ring constructed during the first stage. 

\subsection{Statement of main result}

 Let $p$ be an odd prime, let $F$ be a totally real field, and let $E / \mathbb{Q}_p$ be a (sufficiently large) coefficient field. In this section, we consider a continuous, absolutely irreducible representation $\rho : G_F \to \GL_2(E)$ and a weight $\mathbf{\lambda} \in (\mathbb{Z}^2_+)^{\Hom(F, E)}$ satisfying the following conditions:
 \begin{itemize}
     \item $\rho$ is unramified at all but finitely many places. 
     \item $\det \rho = \epsilon^{-1}$.
     \item For each $v \in S_p$, $\rho|_{G_{F_v}}$ is ordinary of weight $\mathbf{\lambda}_v$.
     \item Writing $\overline{\rho}^{\textnormal{ss}}$ for the semisimplified mod~$\varpi$ reduction of~$\rho$, there is an isomorphism $\overline{\rho}^{\textnormal{ss}} \cong \overline{\chi}_1 \oplus \overline{\chi}_2$, for continuous characters $\overline{\chi}_1, \overline{\chi}_2 : G_F \to k^\times$.
 \end{itemize}
 Let $u$ be an infinite place of~$F$. After possibly re-ordering, we can assume that $\overline{\chi}_1(c_u) = -1$ and $\overline{\chi}_2(c_u) = 1$. We set $\overline{\chi} \coloneqq \overline{\chi}_1 / \overline{\chi}_2$. 
 Our goal in Section~\ref{sec_an_ALT} will be to prove the following theorem.
\begin{thm}\label{thm_tame_SW_black_box}
    With hypotheses as above, suppose that the following further conditions are satisfied:
    \begin{enumerate}
        \item $\overline{\chi}|_{G_{F(\zeta_p)}} \neq 1$.
        \item For each $v \in S_p$, $\rhobarss|_{G_{F_v}}$ is trivial and $[F_v : \mathbb{Q}_p] > 2$. 
        Moreover, if we write $\rho|_{G_{F_v}} \sim \left( \begin{smallmatrix} \psi_{v, 1} & * \\ 0 & \psi_{v, 2} \end{smallmatrix} \right)$ for characters $\psi_{v, i} : G_{F_v} \to \overline{\mathbb{Q}}_p^\times$, as in the definition of $\lambda_v$-ordinary, then $\psi_{v, 1} \circ \Art_{F_v}$ is trivial on the subgroup of $p$-power roots of unity of $\cO_{F_v}^\times$.
        \item There are finite sets $T$, $S_{\textnormal{aux}}$ of finite places of $F$ with the following properties:
        \begin{enumerate}
            \item $\rho$ is unramified outside $S_p \cup T \cup S_{\textnormal{aux}}$. 
            \item For each $v \in T$: $q_v \equiv -1 \text{ mod }p$, $\rhobarss |_{G_{F_v}}$ is unramified,            
            $\overline{\chi}_1(\Frob_v) = q_v$, $\overline{\chi}_2(\Frob_v) = 1$, and $(\rho|_{G_{F_v}}\otimes_\cO E)^{\textnormal{ss}} \cong 1 \oplus \epsilon^{-1}$.
            \item For each $v \in S_{\textnormal{aux}}$: $q_v \equiv 1\text{ mod }p$, $\rhobarss|_{G_{F_v}}$ is trivial, and $(\rho|_{G_{F_v}} \otimes_\cO E)^{\textnormal{ss}} \cong 1 \oplus \epsilon^{-1}$.
        \end{enumerate}
        \item $[F : \mathbb{Q}] + |T \cup S_{\textnormal{aux}}|$ is even. 
        \item $|T| > h^1_{T\textnormal{-split}}(F_{S_p \cup S_{\textnormal{aux}}} / F, \overline{\chi}) + 1$.
        \item There exists a cuspidal, regular algebraic automorphic representation $\pi$ of $\GL_2(\mathbb{A}_F)$ of weight $\lambda_\pi$ and trivial central character, and an isomorphism $\iota : \overline{\mathbb{Q}}_p \to \mathbb{C}$, all satisfying the following conditions:
        \begin{enumerate}
            \item  There is an isomorphism $\overline{r}_{\pi, \iota}^{\textnormal{ss}} \cong \rhobarss$. 
            \item $\pi$ is $\iota$-ordinary. Thus for each $v \in S_p$ there is an isomorphism
            \[ r_{\pi, \iota}|_{G_{F_v}} \sim \begin{pmatrix} \psi_{v, 1} & \ast \\ 0 & \psi_{v, 2} \end{pmatrix}, \]
            as in \eqref{eqn_def_of_ordinary}.
            \item For each $v \in S_p$, the character $\psi_{v, 1} \circ \Art_{F_v}$ is trivial on the subgroup of $p$-power roots of unity of $\cO_{F_v}^\times$.             
            \item For each $v \in T \cup S_{\textnormal{aux}}$, $\pi_v$ is the Steinberg representation. In particular, there is an isomorphism $(r_{\pi, \iota}|_{G_{F_v}})^{\textnormal{ss}} \cong 1 \oplus \epsilon^{-1}$.
            \item $\pi$ is unramified outside $S_p \cup T \cup S_{\textnormal{aux}}$. 
        \end{enumerate}
    \end{enumerate}
    Then $\rho$ is modular. 
\end{thm}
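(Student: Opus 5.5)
We follow the tame Skinner--Wiles strategy sketched in \S\ref{subsubsec_tame_SW_method}. The first step is to fix the arithmetic set-up. Let $S = S_p \cup T \cup S_{\textnormal{aux}}$, and let $\Gamma = G_{F,S}$ with $c_\infty = c_u$. Using (4), choose a definite quaternion algebra over $F$ ramified at exactly $T \cup S_{\textnormal{aux}}$ (and possibly all infinite places). On it we form the completed ordinary Hecke algebra $\mathbb{T}$ acting on ordinary forms of varying weight, localised at the Eisenstein maximal ideal determined by $\rhobarss$, and Steinberg at $T \cup S_{\textnormal{aux}}$. Via Jacquet--Langlands, the automorphic representation $\pi$ of (6) gives a point of $\mathbb{T}$, so $\mathbb{T} \neq 0$.

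Next we relate $\mathbb{T}$ to the Galois side. Let $P$ be the pseudodeformation ring of $\overline{t} = \tr\rhobarss$ with local conditions: ordinary with the character condition of (2)(c) at $S_p$, Steinberg-type at $T \cup S_{\textnormal{aux}}$, and determinant $\epsilon^{-1}$. The Galois representations attached to $\mathbb{T}$ give a surjection $P \to \mathbb{T}$. For each class $b \in H^1(F_S/F, \overline{\chi})$ we also have the deformation ring $R_b$ of $\overline{\rho}_b$ and the map $P \to R_b$. The goal is to show that the prime of $P$ cut out by $\tr\rho$ lies in the support of $\mathbb{T}$.

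The first patching argument proceeds as follows. We choose Taylor--Wiles sets $Q_N$ of places $v$ with $q_v \equiv 1 \bmod p^N$, positioned relative to $T$ via the class field theory calculation (Prop.\ \ref{prop_existence_of_TW_places_first_patching}), and without attempting to kill the dual Selmer group. Patching gives rings $P_\infty \to \mathbb{T}_\infty$ acting on $H_\infty$, which is free over the diamond-operator ring $S_\infty$, together with $R_{b,\infty}$. Here Lem.\ \ref{lem_quantitative_continuity_of_pseudocharacters} supplies the finiteness needed to patch the Galois groups, and Lem.\ \ref{lem_patched_ring_represents_a_functor} identifies the patched reducibility locus. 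Freeness gives $\dim \mathbb{T}_\infty \ge \dim S_\infty$.

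The key estimate is on the reducible locus, which is governed by $H^1$ of the Borel-type deformations. The Steinberg condition at $v \in T$, together with $q_v \equiv -1$, forces $\overline{\chi}$-extensions to be $T$-split. So the dimension of the reducible locus is bounded by roughly $h^1_{T\textnormal{-split}}(F_{S_p\cup S_{\textnormal{aux}}}/F,\overline{\chi}) + 1 + \dim S_\infty$ minus a term that grows with $|T|$. Hypothesis (5) then makes the reducible locus of codimension at least $2$ in each irreducible component of $R_{b,\infty}$, whose dimension we bound below by a presentation argument. We expect this dimension count, and keeping it uniform in the patching, to be the main obstacle.

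With this estimate in hand, we propagate modularity. Take a dimension-one prime $\mathfrak{p}$ of $P_\infty$ outside the reducible locus. It lifts to some $\mathfrak{p}_b \le R_{b,\infty}$ for a suitable $b$, and by Cor.\ \ref{cor_comparison_of_patched_objects} (from Lem.\ \ref{lem_comparison_of_P_and_R_b}) the completed localisations agree. The ring $\widehat{(R_{b,\infty,\mathfrak{p}_b})}$ has a good tangent-obstruction theory, and it is irreducible because the ordinary local lifting rings are normal; this is where (2), including $[F_v:\mathbb{Q}_p]>2$, enters. A second patching (\S\ref{subsec_second_patching}) with new Taylor--Wiles sets, now killing dual Selmer over $k(\mathfrak{p})$ using (1) and the large image of Lem.\ \ref{lem_ordinary_implies_split_image}, gives near faithfulness at $\mathfrak{p}$ (Prop.\ \ref{prop_local_R_equals_T}). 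So each irreducible component of $\Spec P_\infty$ containing one modular irreducible one-dimensional prime is entirely modular.

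Connectedness then spreads this across all of $\Spec P_\infty$. Components meet in codimension one; Prop.\ \ref{prop_height_of_ideal_in_P} bounds the intersections with the reducibility ideal, and the connectedness-dimension boost coming from the first patching lets us choose connecting chains that avoid the reducible locus. Starting from the component containing $\pi$, every component is therefore in the support of $H_\infty$. Specialising along $S_\infty \to \cO$ shows that the point $\tr\rho$ of $P$ lies in $\Spec\mathbb{T}$, so $\rho$ is modular.
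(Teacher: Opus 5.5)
Your outline of the first patching, the bound on the reducible locus, and the local ``$R=\mathbb{T}$'' statement at one-dimensional primes matches the paper. However, the final propagation step has a genuine gap. You run the connectedness argument on $\Spec P_\infty$, but no connectedness statement is available there. The bound $\operatorname{cdim}\geq[F:\mathbb{Q}]+|T|$ comes from Brodmann's theorem applied to a presentation of $R_{b,\infty}$ as a quotient of $R_{\textnormal{loc}}^{\square_{S_p\cup S_{\textnormal{aux}}}}\widehat{\otimes}_{\cO}S_\infty\llbracket x_1,\dots,x_g\rrbracket$ (a ring with normal reduced quotient) by $r$ elements, where $g-r$ is computed from the Euler characteristic formula. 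A pseudodeformation ring has no such presentation. So connectedness only gives, for a fixed class $b$ admitting a nice prime, that $\mathcal{M}_\infty R_{b,\infty}$ is nilpotent (Prop.~\ref{prop_propagating_pro_modularity}). Your claim that every component of $\Spec P_\infty$ lies in the support of $H_\infty$ is also more than you can expect, since components inside the reducible locus need not be. The real difficulty is this: the seed from $\pi$ lives on $R_{b',\infty}$, where $b'$ is the class of the lattice of $r_{\pi,\iota}$. By contrast, $\rho$ gives a point of $R_{b_\rho,\infty}$, and $b_\rho$ is determined by $\rho$ up to scalar. Once $h\geq 2$ these classes can be independent, and nothing in your sketch moves modularity from one class to another.

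The missing idea is the paper's change-of-class argument, which is where Prop.~\ref{prop_height_of_ideal_in_P} is actually used; it does not bound intersections with the reducibility ideal.
\begin{itemize}
\item Complete $b_1=b$, $b_2=b_\rho$ to a basis of $H^1_{T\textnormal{-split}}(F_{S_p\cup S_{\textnormal{aux}}}/F,\overline{\chi})$, take $z_j\in\cO[\Gamma_\infty]$ dual to it, and consider the reducible $k\llbracket X\rrbracket$-point of $R_{b,\infty}$ with class $b_1+Xb_2$.
\item A minimal prime $Q_b$ of $R_{b,\infty}/(\widetilde{r}_{b,\infty}(z_3)_{12},\dots,\widetilde{r}_{b,\infty}(z_h)_{12})$ through this point has dimension at least $[F:\mathbb{Q}]+|T|-h+3$ by Prop.~\ref{prop_c_dim}(3). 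By hypothesis (5) this is too large for $Q_b$ to lie in the reducible locus, and $Q_b$ contains $\mathcal{M}_\infty R_{b,\infty}$.
\item After pulling back to $P_\infty$, Prop.~\ref{prop_height_of_ideal_in_P} shows that killing the $b_1$-coordinate (the ideal $I_c$ generated by the $x(z_1,\tau)$) costs at most one dimension.
\item One can therefore pick a one-dimensional characteristic-$p$ prime $\mathfrak{q}\supseteq(Q,\varpi,I_c)$ off the reducible and dihedral loci. A lattice over the normalisation $k'\llbracket Y\rrbracket$ of $P_\infty/\mathfrak{q}$ with nonsplit reduction then has a $T$-split residual class with vanishing $b_1,b_3,\dots,b_h$ coordinates, hence a multiple of $b_\rho$.
\item So $\mathfrak{q}$ is a nice prime arising from $R_{b_\rho,\infty}$, and the propagation can be run there.
\end{itemize}

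Three smaller points. You never deal with the $\overline{\chi}$-dihedral locus, which must be avoided together with the reducible locus. The large-image input for the second patching is Lem.~\ref{lem_zariski_density}, applied to the characteristic-$p$ representation over $k(\mathfrak{q})$, not Lem.~\ref{lem_ordinary_implies_split_image}, which concerns $\rho$ itself. Finally, to obtain the first nice prime you need an argument that a whole component through the point of $\pi$ lies in the support; the paper uses \cite{New23} for this.
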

We are free to extend $\cO$, so we can assume (by Ribet's lemma) that there is a representation $\rho' : G_F \to \GL_2(\cO)$ such that $\rho' \otimes_\cO \overline{\mathbb{Q}}_p \cong r_{\pi, \iota}$ and an equality (not just isomorphism)
\[ \rho' \text{ mod }\varpi = \begin{pmatrix} \overline{\chi}_1 & * \\ 0 & \overline{\chi}_2 \end{pmatrix}, \]
with the property that $(\rho' \text{ mod }\varpi)(c_u)$ is diagonal and the extension class is non-split. 
\subsection{Deformation and pseudodeformation rings}\label{subsec_deformation_and_pseudodeformation_rings}

Throughout the remainder of Section \ref{sec_an_ALT}, we take hypotheses as in Thm.\ \ref{thm_tame_SW_black_box}. If $v \in S_p$, we let $\Lambda_v \coloneqq  \cO \llbracket  \cO_{F_v}^\times(p)^{\textnormal{tf}} \rrbracket $, $\widetilde{\Lambda}_v \coloneqq  \cO \llbracket  F_v^\times(p)^{\textnormal{tf}} \rrbracket $, $\Lambda \coloneqq  \widehat{\otimes}_{v \in S_p} \Lambda_v$, $\widetilde{\Lambda} \coloneqq  \widehat{\otimes}_{v \in S_p} \widetilde{\Lambda}_v$, where the superscript ``tf'' indicates that we take the quotient by the (finite) subgroup of $p$-power torsion elements. Via local class field theory, $\widetilde{\Lambda} \in \CNL_\cO$ represents the functor of tuples $(\psi_v)_{v \in S_p}$ of lifts of the trivial character of $G_{F_v}$ ($v \in S_p$) such that $\psi_v$ is trivial on the subgroup of $p$-power torsion elements of $G_{F_v}^{\textnormal{ab}}$. 
\subsubsection{Ordinary pseudodeformation rings}\label{subsubsec:ordinary pseudodeformation rings}
Let $\operatorname{PDef}^{\textnormal{ord}}: \operatorname{CNL}_\cO \to \operatorname{Sets}$ be the functor associating to any $A$ the set of pairs $(( \psi_v )_{v \in S_p}, t)$ as follows:
\begin{itemize}
    \item For each $v \in S_p$, $\psi_v : G_{F_v} \to A^\times$ is a lift of the trivial character, trivial on the $p$-power torsion subgroup of $G_{F_v}^{\textnormal{ab}}$.
    \item $t : G_{F, S_p \cup T \cup S_{\textnormal{aux}}} \to A$ is a pseudocharacter lifting $\overline{\chi}_1 + \overline{\chi}_2$ and satisfying the following conditions (cf.\ \cite[Def.\ 5.3.1]{Pan22}). First, for any $v \in S_p$, $\sigma, \tau \in G_{F_v}$, and $\eta \in G_F$, we have
    \[ t( \sigma) = \psi_v(\sigma) + (\epsilon \psi_v)(\sigma)^{-1} \]
    \[ t( (\sigma - \psi_v(\sigma))(\tau - (\epsilon \psi_v)(\tau)^{-1}) \eta) = 0. \]
    Second, for any 
    $v \in T \cup S_{\textnormal{aux}}$, $\sigma, \tau \in G_{F_v}$, and $\eta \in G_F$, we have
    \[ t(\sigma) = 1 + \epsilon(\sigma)^{-1} \]
    and
    \[ t((\sigma-1)(\tau - \epsilon(\tau)^{-1}) \eta) = 0. \]
    \item $\det t = \epsilon^{-1}$.
\end{itemize}
The functor~$\operatorname{PDef}^{\textnormal{ord}}$ is represented by an object $P^{\textnormal{ord}}  \in \operatorname{CNL}_\cO$ that has a natural structure of $\widetilde{\Lambda}$-algebra, and is a quotient of the ring $P(G_{F, S_p \cup T \cup S_{\textnormal{aux}}}) \widehat{\otimes}_\cO \widetilde{\Lambda}$. 

\subsubsection{Taylor--Wiles data}\label{subsubsec:Taylor--Wiles data}
We call a \emph{Taylor--Wiles datum} a tuple 
\[
(Q, ( \alpha_v^{(1)}, \alpha^{(2)}_v)_{v \in Q}, (\lambda_v)_{v \in Q}) 
\]
consisting of a finite ordered set $Q = (v_1, \dots, v_q)$ of finite places of $F$, a  tuple of pairs $( \alpha_v^{(1)}, \alpha^{(2)}_v )$ of elements of $k$, and a tuple of surjective homomorphisms $\lambda_v: \mathbb{Z}_p \to k(v)^\times(p)$, all satisfying the following conditions:
\begin{itemize}
    \item For each $v \in Q$, $q_v \equiv 1 \text{ mod }p$.
    \item For each $v \in Q$, $\alpha_v^{(1)}, \alpha_v^{(2)}$ are the distinct eigenvalues of $\rhobarss(\Frob_v)$. 
\end{itemize}
We say that the datum has level $N$ if for each $v \in Q$, $q_v \equiv 1 \text{ mod }p^N$. Writing $\Delta_Q \coloneqq  \prod_{v \in Q} k(v)^\times(p)$, the ordering of $Q$ and the maps $\lambda_v$ then determine a surjection $\lambda_Q: \mathbb{Z}_p^{|Q|} \to \Delta_Q$, with kernel contained in $p^N \mathbb{Z}_p^{|Q|}$. We say that a (possibly unordered) set $Q$ of places of $F$ is a set of Taylor--Wiles places if it forms part of a Taylor--Wiles datum. 

Given a Taylor--Wiles datum, we write $\operatorname{PDef}^{\textnormal{ord}}_{Q}$ for the functor of pairs $(( \psi_v )_{v \in S_p}, t)$ satisfying the same conditions as in Section~\ref{subsubsec:ordinary pseudodeformation rings}, except that $t$ is now allowed to be ramified at places $v \in Q$. It is similarly represented by an object $P^{\textnormal{ord}}_{Q} \in \operatorname{CNL}_\cO$, quotient of the ring $P(G_{F, S_p \cup T \cup S_{\textnormal{aux}} \cup Q}) \widehat{\otimes}_\cO \widetilde{\Lambda}$.

\subsubsection{Ordinary deformation rings}\label{subsubsec:ordinary deformation rings}
We now want to define deformation rings (as opposed to pseudodeformation rings), along with their framed counterparts. We recall that we have fixed an infinite place $u$ with $\overline{\chi}_1(c_u) = -1$. Any cohomology class in $H^1(F, \overline{\chi})$ is represented by a unique cocycle $b$ such that $b(c_u) = 0$, in which case we say that $b$ is normalised. All cocycles appearing in \S \ref{sec_an_ALT} will be assumed to be normalised. 

We write $H^1_{T \textnormal{-split}}(F_{S_p \cup S_{\textnormal{aux}}} / F, \overline{\chi}) \leq H^1(F_{S_p \cup S_{\textnormal{aux}}}/F, \overline{\chi})$ for the subgroup of elements locally trivial at places of $T$. If $[b] \in H^1_{T \textnormal{-split}}(F_{S_p \cup S_{\textnormal{aux}}} / F, \overline{\chi})$ is non-zero, then we write 
\[ \overline{\rho}_b \coloneqq  \begin{pmatrix} \overline{\chi}_1 & \overline{\chi}_2 b \\ 0 & \overline{\chi}_2 \end{pmatrix} : G_{F, S_p \cup S_{\textnormal{aux}}} \to \GL_2(k) \]
for the corresponding non-split residual representation. This is well-defined, because we assume $b$ to be normalised.

Given such a normalized cocycle~$b$, we write $\operatorname{Def}^{\textnormal{ord}} : \CNL_\cO \to \operatorname{Sets}$ for the functor of pairs $(( \psi_v )_{v \in S_p}, [r])$, where $\psi_v$ ($v \in S_p$) is a lift of the trivial character of $G_{F_v}$, trivial on the $p$-power torsion subgroup of $G_{F_v}^{\textnormal{ab}}$, and $[r]$ is a deformation (i.e. $\widehat{\GL}_2$-conjugacy class of liftings $r : G_{F, S_p \cup T \cup S_{\textnormal{aux}}} \to \GL_2$) of $\overline{\rho}_b$ satisfying the following conditions:
\begin{itemize}
    \item $\det r = \epsilon^{-1}$.
    \item For each $v \in S_p$ and $\sigma, \tau \in G_{F_v}$, we have $\tr r(\sigma) = \psi_v(\sigma) + (\epsilon \psi_v)(\sigma)^{-1}$ and 
    \[ (r(\sigma) - \psi_v(\sigma))(r(\tau) - (\epsilon \psi_v)(\tau)^{-1}) = 0. \]
    \item For each $v \in T \cup S_{\textnormal{aux}}$ and $\sigma, \tau \in G_{F_v}$, we have $\tr r(\sigma) = 1 + \epsilon(\sigma)^{-1}$ and
    \[ (r(\sigma) - 1)(r(\tau) - \epsilon(\tau)^{-1}) = 0. \] 
\end{itemize}
This functor is represented by a quotient $R^{\textnormal{ord}}_{b}$ of $R_b(G_{F, S_p \cup T \cup S_{\textnormal{aux}}}) \widehat{\otimes}_\cO \widetilde{\Lambda}$, where the first factor is the universal deformation ring defined in Section~\ref{subsec:pseudodeformations and deformations}. If $Q$ is a set of Taylor--Wiles places, we write $\operatorname{Def}^{\textnormal{ord}}_{Q}$ for the functor of pairs $(( \psi_v )_{v \in S_p}, [r])$ satisfying the same conditions as above, except that $r$ is now allowed to be ramified at the places in $Q$. This functor is likewise represented by an object $R^{\textnormal{ord}}_{b, Q} \in \CNL_\cO$, which is a quotient of $R_b(G_{F, S_p \cup T \cup S_{\textnormal{aux}} \cup Q}) \widehat{\otimes}_\cO \widetilde{\Lambda}$.

\subsubsection{Local lifting rings}\label{subsubsec:local lifting rings}
If $v$ is a finite place of $F$, and $b \in H^1_{T\text{-split}}(F_{S_p \cup S_{\aux}} / F, \overline \chi)$, we write $R_{b, v}^\square$ for the ring representing the functor of all liftings of $\overline{\rho}_b|_{G_{F_v}}$ of determinant $\epsilon^{-1}$. We introduce the following variants:
\begin{itemize}
    \item If $v \in S_p$, the quotient $R_{b, v}^{\textnormal{ord}}$ of $R_{b,v}^{\square} \widehat{\otimes}_\cO \widetilde{\Lambda}_v$ classifying pairs $(\psi_v, r_v)$, where  $\psi_v$ is a lift of the trivial character of $G_{F_v}$, trivial on the subgroup of $p$-power torsion elements in $G_{F_v}^{\textnormal{ab}}$, and $r_v : G_{F_v} \to \GL_2$ is a lift of $\overline{\rho}_{b}|_{G_{F_v}}$ such that for all $\sigma, \tau \in G_{F_v}$, we have $\tr r_v(\sigma) = \psi_{v}(\sigma) + (\epsilon \psi_{v})(\sigma)^{-1}$ 
    and
    \[ (r_v(\sigma) - \psi_{v}(\sigma))(r_v(\tau) - (\epsilon \psi_{v})(\tau)^{-1}) = 0. \]
    \item If $v \in T$, the quotient $R_{b, v}^{\textnormal{St}, 1}$ of $R_{b, v}^\square$ representing the functor of all lifts that are Steinberg (not just up to quadratic twist); more precisely, the quotient defined in \cite[\S 5.4.4]{Tho16} with $\alpha_v = 1$, $\beta_v = -1$. %
    \item If $v \in S_{\textnormal{aux}}$, the quotient $R_{b, v}^{\textnormal{St}}$ representing the functor of all Steinberg lifts (i.e.\ in the notation of \cite{Sho16}, the quotient of the ring $R^{\square}(\overline{\rho}_b|_{G_{F_v}}, \tau_{1, \textnormal{ns}})$ by the relation $\det = \epsilon^{-1}$).
    \item If $v \not\in S_p \cup T \cup S_{\textnormal{aux}}$, then $R_{b, v}^{\textnormal{ur}}$ is the quotient of $R_{b, v}^\square$ classifying unramified liftings.
\end{itemize}
\begin{lemma}\label{lem_local_and_global_lifting_rings}
    Let $A \in \CNL_{\cO}$ be given a $\widetilde{\Lambda}$-algebra structure, and let $r : G_{F, S_p \cup T \cup S_{\textnormal{aux}} \cup Q} \to \GL_2(A)$ be a continuous lifting of $\overline{\rho}_b$ of determinant $\epsilon^{-1}$. Then the following are equivalent:
    \begin{enumerate}
        \item The induced map $R_b(G_{F, S_p \cup T \cup S_{\textnormal{aux}} \cup Q}) \widehat{\otimes}_\cO \widetilde{\Lambda} \to A$ factors through $R_{b, Q}^{\textnormal{ord}}$.
        \item For each $v \in S_p$ (resp.\ $ v \in T $, resp.\ $v \in S_{\textnormal{aux}}$), the map $R_{b, v}^\square \widehat{\otimes}_\cO \widetilde{\Lambda}_v \to A$ induced by $r|_{G_{F_v}}$ factors through $R_{b, v}^{\textnormal{ord}}$ (resp.\ $R_{b, v}^{\textnormal{St}, 1}$, resp.\ $R_{b, v}^{\textnormal{St}}$).
    \end{enumerate}
\end{lemma}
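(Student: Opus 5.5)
The plan is to compare the conditions place by place. The global ring $R_{b,Q}^{\textnormal{ord}}$ is by definition the quotient of $R_b(G_{F, S_p \cup T \cup S_{\textnormal{aux}} \cup Q}) \widehat{\otimes}_\cO \widetilde{\Lambda}$ cut out by the relations in the definition of $\operatorname{Def}^{\textnormal{ord}}_Q$ in \S\ref{subsubsec:ordinary deformation rings}. Each of these relations is a condition on $r|_{G_{F_v}}$ for a single $v \in S_p \cup T \cup S_{\textnormal{aux}}$, together with the determinant condition. This condition is invariant under $\widehat{\GL}_2(A)$-conjugation, so it makes sense on the deformation $[r]$. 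The map in (1) therefore factors through $R^{\textnormal{ord}}_{b,Q}$ exactly when $(( \psi_v )_{v}, [r])$ lies in $\operatorname{Def}^{\textnormal{ord}}_Q(A)$. Here $\psi_v$ is the character determined by the $\widetilde{\Lambda}$-algebra structure on $A$; this uses the description of $\widetilde{\Lambda}$ via local class field theory in \S\ref{subsec_deformation_and_pseudodeformation_rings}. So the lemma reduces to checking, separately for each type of place, that the global condition on $r|_{G_{F_v}}$ coincides with the condition defining the corresponding local quotient.

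For $v \in S_p$ this is a tautology. The local ring $R^{\textnormal{ord}}_{b,v}$ is defined as the quotient of $R^\square_{b,v}\widehat{\otimes}_\cO \widetilde{\Lambda}_v$ by exactly the relations $\tr r_v(\sigma) = \psi_v(\sigma) + (\epsilon\psi_v)(\sigma)^{-1}$ and $(r_v(\sigma)-\psi_v(\sigma))(r_v(\tau)-(\epsilon\psi_v)(\tau)^{-1}) = 0$. The map $\widetilde{\Lambda}_v \to A$ is the one induced by $\widetilde{\Lambda}_v \to \widetilde{\Lambda}$. One should also check that the relations generate the same ideal whether they are imposed on the framed local lifting or on the global lifting. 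This holds because both the framed local ring and its ordinary quotient are defined by the same functor of points evaluated at $A$.

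For $v \in T \cup S_{\textnormal{aux}}$ the work is to identify the Steinberg quotients $R^{\textnormal{St},1}_{b,v}$ (from \cite[\S 5.4.4]{Tho16} with $\alpha_v=1$, $\beta_v = -1$) and $R^{\textnormal{St}}_{b,v}$ (from \cite{Sho16}, with $\det = \epsilon^{-1}$) with the quotients of $R^\square_{b,v}$ cut out by $\tr r(\sigma) = 1+\epsilon(\sigma)^{-1}$ and $(r(\sigma)-1)(r(\tau)-\epsilon(\tau)^{-1}) = 0$.

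I expect this to be the main obstacle, because those rings are defined elsewhere, for instance as reduced $\cO$-flat quotients or Zariski closures of characteristic-zero loci. I would first show that the quotient $R'_v$ cut out by these relations represents the functor of lifts admitting a filtration with the prescribed characters. Over fields and domains, these lifts have the form $\left(\begin{smallmatrix} 1 & * \\ 0 & \epsilon^{-1}\end{smallmatrix}\right)$ in some basis, i.e.\ Steinberg type. I would then use the explicit analysis in the cited sources: \cite[\S 5.4.4]{Tho16} for $q_v \equiv -1 \bmod p$, and \cite{Sho16} for trivial places. These show that $R'_v$ is already reduced, $\cO$-flat, and equal to the Steinberg component. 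In the $T$ case this should be formal, since $q_v \not\equiv 1$ separates the characters $1$ and $\epsilon^{-1}$ residually and \cite{Tho16} defines the ring by these equations. In the $S_{\textnormal{aux}}$ case, if equality of rings is not directly available, I would instead take these relation-defined quotients as the definition, or check that the global definition in \S\ref{subsubsec:ordinary deformation rings} was made to match them. Combining all places gives (1) $\Leftrightarrow$ (2).
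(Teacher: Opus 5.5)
Your place-by-place reduction and your treatment of $v \in S_p$ are fine. The content of the lemma, however, lies at $T$ and $S_{\aux}$, and there your proposal does not actually supply an argument.

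\textbf{The gap at $S_{\aux}$.} Here $q_v \equiv 1 \bmod p$ and, in the hard case, $\rhobar_b|_{G_{F_v}}$ is trivial. Your intermediate claim, that the relation-defined quotient $R$ represents lifts admitting a filtration with the prescribed characters, is false in this case because $1$ and $\epsilon^{-1}$ coincide residually.

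Already over $k[\varepsilon]$ this breaks down. Write $r = 1+\varepsilon\phi$. The relations and the fixed determinant only force $\tr\phi = 0$, since the product relation is automatic; so $R$ has a $6$-dimensional tangent space. A filtered lift, by contrast, would need $\phi(\Frob_v)$ and $\phi(t_v)$ to lie in the nilradical of a common Borel subalgebra, which is not even a linear condition.

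Moreover, $R^{\St}_{b,v}$ is defined via \cite{Sho16} as a reduced $\cO$-flat quotient characterised by its $\overline{E}$-points. So it cannot be identified with $R$ by comparing moduli descriptions on Artinian rings. Your fallback of taking the relation-defined quotient as the definition does not prove the lemma as stated. It would also leave unproved the properties recorded in Rem.~\ref{rem:structure of Steinberg lifting rings}, which Cor.~\ref{cor_normality_of_local_lifting_ring} needs.

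The missing idea is to show that $R$ is itself reduced and $\cO$-flat. The paper does this by inspecting the proof of \cite[Prop.\ 5.8]{Sho16}. The trace relations $\tr r(\sigma) = 1+\epsilon(\sigma)^{-1}$ alone cut out a ring $R$ with $R\llbracket x\rrbracket \cong R'$, where Shotton's $R'$ is a Cohen--Macaulay $\cO$-flat domain. Hence $R$ is an $\cO$-flat domain. Since $R$ has the same $\overline{E}$-points as $R^{\St}_{b,v}$, the natural surjection $R \to R^{\St}_{b,v}$ is an isomorphism.

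\textbf{The gap at $T$.} Your justification rests on a guess about how \cite{Tho16} defines $R^{\St,1}_{b,v}$, and you only discuss fields and domains. The comparison is needed over every $A \in \CNL_\cO$. This case does go through, but it requires an explicit computation:
\begin{enumerate}
\item Since $q_v - 1 \in \cO^\times$, you can conjugate so that $r(\Frob_v) = \diag(q_v,1)$ and $r(t_v) = 1 + \left(\begin{smallmatrix} x & y \\ z & -x\end{smallmatrix}\right)$ with $x^2+yz=0$.
\item The relation $\tr r(\Frob_v t_v) = \tr r(\Frob_v)$ gives $(q_v-1)x = 0$, hence $x = 0$.
\item The relation $(r(\Frob_v)-1)(r(t_v)-1) = 0$ then forces $y = 0$.
\end{enumerate}
This is exactly the Steinberg shape defining $R^{\St,1}_{b,v}$.
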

\begin{proof}
    It is clear that the conditions in (2) imply (1). We need to show that the converse holds. We can do this place-by-place. If $v \in S_p$, it is true by definition. We next consider the case of $v \in T$. The restriction of the universal lifting over $R_{b, Q}^{\textnormal{ord}}$ to $G_{F_v}$ is conjugate to a representation $r$ with $r(\Frob_v) = \diag(q_v, 1)$ and $r(t_v) = 1 + \left( \begin{smallmatrix} x & y \\ z & -x \end{smallmatrix}\right)$, where $t_v \in I_{F_v}$ is a generator of tame inertia, and $x^2 + yz = 0$. The relation $\tr r(\Frob_v) = \tr r(\Frob_v t_v)$ implies that $(q_v-1)x = 0$, hence $x = 0$, and $yz = 0$. Then the relation $(r(\Frob_v)-1)(r(t_v) - 1) = 0$ implies $y  = 0$, showing that $r$ is of type $R_{b, v}^{\textnormal{St}, 1}$. 
    
    Finally we consider the case $v \in S_{\textnormal{aux}}$, giving the details just in the most challenging case that $\overline{\rho}_b|_{G_{F_v}}$ is trivial. Inspecting the definition of $R^{\textnormal{ord}}_{b, Q}$, we see that we again need to address the purely local question of whether the quotient $R_{b, v}^{\square} \to R$ defined by the equations $\tr r(\sigma) = 1 + \epsilon(\sigma)^{-1}$ and $(r(\sigma) - 1)(r(\tau) - \epsilon(\tau)^{-1}) = 0$ is equal to $R_{b, v}^{\textnormal{St}}$. We have already used in the first sentence of the proof that there is a map $R \to R_{b, v}^{\textnormal{St}}$, and we need to check that this is an isomorphism. 
    
    We claim that in this case the relations $\tr r(\sigma) = 1 + \epsilon(\sigma)^{-1}$ already define $R_{b, v}^{\textnormal{St}}$. Inspecting the proof of \cite[Prop.\ 5.8]{Sho16}, we see that the ring $R$ defined by these relations for all $\sigma \in G_{F_v}$ agrees with quotient of the ring $R'$ appearing there by the relation fixing the determinant to be $\epsilon^{-1}$. It is shown in \emph{loc.\ cit.} that $R'$ is a Cohen--Macaulay $\cO$-flat domain, and that $R' / (\varpi)$ is a domain. Since there is an isomorphism $R' \cong R\llbracket x\rrbracket $, we deduce that $R$ is likewise a Cohen--Macaulay $\cO$-flat domain, 
    and that $R / (\varpi)$ is a domain.
    We moreover see that $R / (\varpi)$ is normal, since one can easily check from the defining equations that $R / (\varpi)$ is regular in codimension 1. Since $R$ and $R_{b, v}^{\textnormal{St}}$ have the same $\overline{E}$-points, it follows that $R \cong R_{b, v}^{\textnormal{St}}$, as required.    
\end{proof}

\begin{rem}\label{rem:structure of Steinberg lifting rings}
We record explicitly that if $v \in S_{\textnormal{aux}}$, then $R_{b, v}^{\textnormal{St}}$ is a Cohen--Macaulay normal $\cO$-flat domain, 
and $R^{\St}_{b, v}/\varpi$ is a Cohen--Macaulay normal domain. 
If $v \in T$, then \cite[Prop.\ 5.16]{Tho16} states that $R_{b, v}^{\textnormal{St}, 1}$ is even formally smooth over $\cO$.
\end{rem}
If $T_0$ is a finite set of places of $F$, then we write $\operatorname{Def}^{\textnormal{ord}, \square_{T_0}}_{Q}$ for the functor of $\widehat{\GL}_2$-conjugacy classes of pairs $(r, (X_v)_{v \in T_0})$, where $r$ is a lifting of type $\operatorname{Def}^{\textnormal{ord}}_{b,Q}$, and $X_v \in \widehat{\GL}_2$. (The action of an element $g \in \widehat{\GL}_2$ on such a pair is given by the formula $g \cdot (r, (X_v)) = (g r g^{-1}, g X_v)$.) 
This functor is represented by an object $R^{\textnormal{ord}, \square_{T_0}}_{b, Q} \in \CNL_\cO$. 
If $v \in T_0$, then $R^{\textnormal{ord}, \square_{T_0}}_{ Q}$ has a natural structure of $R_{b, v}^\square$-algebra, determined by the natural transformation that sends the orbit of $(r, (X_v)_{v \in T_0})$ to $X_v^{-1} r X_v$. 
An important special case will be when $T_0 = S_p \cup S_{\textnormal{aux}} \cup T_1$, where $T_1$ is a finite set of finite places disjoint from $S_p \cup S_{\textnormal{aux}} \cup T \cup Q$. In this case we define %
\[ R_{\textnormal{loc}}^{\square_{T_0}} \coloneqq  \left( \widehat{\bigotimes}_{v \in S_p} R_{b, v}^{\textnormal{ord}} \right) \widehat{\otimes}_\cO \left( \widehat{\bigotimes}_{v \in S_{\textnormal{aux}}} R_{b, v}^{\textnormal{St}} \right) \widehat{\otimes}_\cO \left( \widehat{\bigotimes}_{v \in T_1} R_{b, v}^{\textnormal{ur}} \right). \]
\begin{rem}\label{rem:tangent spaces and Selmer groups}
The rings $R_b^{\textnormal{ord}}$ and $R^{\textnormal{ord}}_{b, Q}$ may be identified with the rings $R_\cS$ and $R_{\cS_Q}$ associated to certain deformation problems $\mathcal{S}$ and $\mathcal{S}_Q$ as in \cite[Def.\ 5.6]{Tho16}. In particular, their tangent spaces, and the tangent spaces of their framed variants are controlled by the Selmer groups $H^1_{\mathcal{S}, T_0}(\ad^0 \overline{\rho}_b)$ and $H^1_{\mathcal{S}_Q, T_0}(\ad^0 \overline{\rho}_b)$ defined in \emph{op.\ cit.}---see e.g.\ \cite[Prop.\ 5.10]{Tho16}, and note that this standard result does not use the standing hypothesis of \cite[\S 5.3]{Tho16} that $\overline{\rho}_b$ is absolutely irreducible: indecomposability is enough. 
\end{rem}
\begin{pp}\label{prop_normality_of_ordinary_lifting_ring}
    If $v \in S_p$, then the reduced quotient $(R_{b, v}^{\textnormal{ord}})^{\textnormal{red}}$ is a normal, $\cO$-flat domain of Krull dimension $4 + 2 [ F_v : \mathbb{Q}_p]$. Moreover, $(R_{b, v}^{\textnormal{ord}})^{\textnormal{red}} / (\varpi)$ is also normal. 
\end{pp}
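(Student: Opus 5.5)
The plan is to reduce to the known structure of ordinary lifting rings in the case of trivial residual representation, following Geraghty \cite{Ger19} and the analysis in \cite[\S 5]{Pan22} or Thorne's work on ordinary rings. By hypothesis (2) of Thm.\ \ref{thm_tame_SW_black_box}, $\rhobarss|_{G_{F_v}}$ is trivial. The cocycle $b$ may nevertheless restrict non-trivially to $G_{F_v}$, so $\overline{\rho}_b|_{G_{F_v}}$ is unipotent, possibly non-split. First, I would describe $R_{b,v}^{\textnormal{ord}}$ geometrically. Consider the closed subscheme $\mathcal{G}$ of $\Spec (R_{b,v}^\square\widehat{\otimes}\widetilde{\Lambda}_v) \times \mathbb{P}^1$ consisting of lifts $r_v$ together with a line $L$ such that $G_{F_v}$ acts on $L$ via $\psi_v$ and on the quotient via $(\epsilon\psi_v)^{-1}$. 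Its scheme-theoretic image in $\Spec R_{b,v}^\square\widehat{\otimes}\widetilde{\Lambda}_v$ should have the same reduced structure as $\Spec R_{b,v}^{\textnormal{ord}}$. This needs a check that the defining relations $(r(\sigma)-\psi_v(\sigma))(r(\tau)-(\epsilon\psi_v)(\tau)^{-1})=0$ cut out the image of $\mathcal{G}$ at the level of $\overline{E}$-points and points valued in finite fields, which is elementary linear algebra with $2\times 2$ matrices.

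Second, I would show that $\mathcal{G}$, completed along the relevant closed points, is formally smooth over $\widetilde{\Lambda}_v$ (or over $\cO$) of relative dimension $3+[F_v:\mathbb{Q}_p]$. Here one uses the hypothesis that $\psi_v$ is trivial on $p$-power roots of unity, which is built into $\widetilde{\Lambda}_v$. The tangent and obstruction spaces for the Borel-valued deformations are governed by $H^i(F_v, \epsilon^{-1}\psi_v^{-2})$. Tate's local Euler characteristic gives relative dimension $[F_v:\mathbb{Q}_p]$ for the extension class, plus $3$ for the framing modulo the Borel. Obstructions are controlled by $H^2(F_v,\epsilon\psi_v^{2})$, which is dual to $H^0$. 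That $H^0$ may be non-zero at points where $\epsilon\psi_v^2$ is trivial, so one instead works with the projective family over all of $\widetilde{\Lambda}_v$ and computes dimensions. The outcome is that $\mathcal{G}$ is $\cO$-flat, irreducible and normal, of dimension $1 + [F_v:\mathbb{Q}_p] + 3 + [F_v:\mathbb{Q}_p] = 4+2[F_v:\mathbb{Q}_p]$, since $\widetilde{\Lambda}_v$ has relative dimension $[F_v:\mathbb{Q}_p]+1$ over $\cO$. The map $\mathcal{G}\to \Spec R^{\textnormal{ord}}_{b,v}$ is projective and generically an isomorphism, because the ordinary line is unique when the two characters differ. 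Hence $(R^{\textnormal{ord}}_{b,v})^{\textnormal{red}}$ is an $\cO$-flat domain of the stated dimension.

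Third, for normality I would use the condition $[F_v:\mathbb{Q}_p]>2$. It should guarantee that the locus where the map from $\mathcal{G}$ is not an isomorphism, namely where $r_v$ is scalar up to twist or where $\psi_v = (\epsilon\psi_v)^{-1}$, has codimension at least $2$ in both the generic and special fibres. Then Serre's criterion applies. Condition $R_1$ follows because the map is an isomorphism in codimension $1$. Condition $S_2$ should come from a Cohen--Macaulay statement, which I would obtain as in Snowden's or Thorne's treatment: the pushforward of $\mathcal{O}_{\mathcal{G}}$ has vanishing higher cohomology, since the fibres are points or $\mathbb{P}^1$. The same argument applied mod $\varpi$, using that $\mathcal{G}/\varpi$ is smooth over $\widetilde{\Lambda}_v/\varpi$, gives normality of the special fibre. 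I expect the main obstacle to be this third step: the codimension estimate for the non-split/scalar locus, together with establishing $S_2$ rather than just $R_1$, especially in the special fibre. For this I would try to cite the analogous result in \cite[Lem.\ 5.2.x]{Pan22} or Geraghty's normality results directly, and check that their hypotheses are matched by our condition $[F_v:\mathbb{Q}_p]>2$.
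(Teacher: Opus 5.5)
There is a genuine gap, and it sits exactly where the new content of the statement lies. When $F_v$ contains no non-trivial $p$th root of unity the result is already \cite[Lem.\ 6.2.2]{Pan22}. The case that must actually be handled is $\zeta_p \in F_v$.

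\textbf{Smoothness of $\mathcal{G}$ fails.} When $\zeta_p \in F_v$, the character $\epsilon\psi_v^2$ governing the extension class is residually trivial on $G_{F_v}$. So at the closed points of your $\mathcal{G}$ the obstruction group $H^2(F_v,k)\cong H^0(F_v,k(1))^\vee$ is non-zero. Concretely, use the Demushkin presentation $x_1^{p^s}[x_1,x_2]\cdots[x_{d_v+1},x_{d_v+2}]=1$ of $G_{F_v}(p)$, with local coordinates $\psi_v$, the line, and the values $b_i$ of the upper-right entry on $x_1,\dots,x_{d_v+2}$. Then $\mathcal{G}$ is locally cut out in a regular scheme by the single equation $\sum_i c_i b_i=0$. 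The $c_i$ are Fox derivatives of the relator evaluated at the residually trivial character $\epsilon\psi_v^2$, so they lie in $\mathfrak{m}_{\widetilde{\Lambda}_v}$ and the equation lies in $\mathfrak{m}^2$. Hence neither $\mathcal{G}$ nor $\mathcal{G}/(\varpi)$ is formally smooth. Your sentence ``the outcome is that $\mathcal{G}$ is $\cO$-flat, irreducible and normal'' therefore asserts precisely what has to be proved.

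\textbf{Where $[F_v:\mathbb{Q}_p]>2$ is really used.} The hypersurface structure makes $\mathcal{G}$ Cohen--Macaulay, so normality reduces to $R_1$. The hypothesis $[F_v:\mathbb{Q}_p]>2$ is needed to establish $R_1$ and integrality, for both $\mathcal{G}$ and $\mathcal{G}/(\varpi)$; this is what the proof of \cite[Lem.\ 3.11]{Tho15}, invoked in the paper, supplies. You instead spend the hypothesis on the locus where $\mathcal{G}\to\Spec R^{\textnormal{ord}}_{b,v}$ fails to be an isomorphism, which has large codimension anyway.

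\textbf{Dimension count.} The relative dimension of $\mathcal{G}$ over $\widetilde{\Lambda}_v$ is $[F_v:\mathbb{Q}_p]+2$, not $[F_v:\mathbb{Q}_p]+3$, and $\dim\widetilde{\Lambda}_v=[F_v:\mathbb{Q}_p]+2$. Your individual contributions do not add up to your stated total.

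\textbf{Passing from $\mathcal{G}$ to the ring.} This step needs a different mechanism. Serre's criterion on the image requires $S_2$, and ``the fibres are points or $\mathbb{P}^1$'' does not supply it. What you need is that $A\coloneqq R^\square_{b,v}\widehat{\otimes}_\cO\widetilde{\Lambda}_v\to H^0(\mathcal{G},\cO_{\mathcal{G}})$ is surjective. Then $B\coloneqq H^0(\mathcal{G},\cO_{\mathcal{G}})$ is the ring of the scheme-theoretic image. It is a normal $\cO$-flat domain simply because $\mathcal{G}$ is normal, integral and flat; no codimension estimate on the image is needed.

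The paper gets surjectivity as follows. Write $\mathcal{G}\subset\mathbb{P}^1_A$ as the vanishing locus of the $d_v+2$ sections $r(x_i)-\psi_v(x_i)$ of $\Hom(\cO(-1),\cO^2)$. Then the ideal sheaf is a quotient of $\cO(-1)^{2(d_v+2)}$, so its $H^1$ vanishes. The same vanishing gives $H^1(\mathcal{G},\cO_{\mathcal{G}})=0$, hence $B/(\varpi)\cong H^0(\mathcal{G}/(\varpi),\cO)$. This is normal because $\mathcal{G}/(\varpi)$ is normal and integral, not because it is smooth.

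\textbf{Checking points.} Identifying $(R^{\textnormal{ord}}_{b,v})^{\textnormal{red}}$ with $B$ requires every prime $\mathfrak{p}$ of $R^{\textnormal{ord}}_{b,v}$ to lie in $\Spec B$, including positive-dimensional primes of characteristic $p$. Checking $\overline{E}$-points and finite-field points is not enough. The only finite-field point is the closed point, and $\cO$-flatness is part of the conclusion, so a component inside $\Spec R^{\textnormal{ord}}_{b,v}/(\varpi)$ would go undetected. The paper argues over $k(\mathfrak{p})$ for arbitrary $\mathfrak{p}$. It uses \cite[Lem.\ 6.2.11]{All23} when $\psi_v\not\equiv(\epsilon\psi_v)^{-1}$ modulo $\mathfrak{p}$, and a direct rank-one argument otherwise.
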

\begin{proof}
    In the case that $F_v$ contains no non-trivial $p$th roots of unity, this is \cite[Lem.\ 6.2.2]{Pan22}, so we can assume that $F_v$ contains a non-trivial $p$th root of unity. Let $d_v = [ F_v : \mathbb{Q}_p ]$. Then $d_v$ is even and, by \cite[Thm.\ 7.2.8]{Neu08}, $G_{F_v}(p)$ admits a presentation as the quotient of a free pro-$p$ group on generators $x_1, \dots, x_{d_v+2}$ by the relation 
    \[ x_1^{p^s} [x_1, x_2] [ x_3, x_4 ] \dots [x_{d_v+1}, x_{d_v+2}] = 1 \]
    (where $p^s$ is the number of $p$-power roots of unity in $F_v$). We will prove the result in the case that $\overline{\rho}_b|_{G_{F_v}}$ is trivial, the case where it is a non-split extension of the trivial character by itself being simpler. 

    Following \cite{Ger19}, let $A \coloneqq R_{b, v}^{\square} \widehat{\otimes}_\cO \widetilde{\Lambda}_v$, let $r : G_{F_v} \to \GL_2(A)$ denote the universal lifting, and let $\mathcal{G}_v \leq \mathbb{P}^1_A$ be the closed subscheme which for a morphism $A \to B$ classifies direct summands $L_B \leq B^2$ such that for all $\sigma \in G_{F_v}$, $(r(\sigma) - \psi_v(\sigma)) L_B = 0$. (We remind the reader that here $\psi_v$ is the pushforward of the universal character of $G_{F_v}$ from $\widetilde{\Lambda}$.) Equivalently, $\mathcal{G}_v$ is the subscheme defined by the vanishing of the sections $r(x_i) - \psi_v(x_i) \in \Hom_{\cO_{\mathbb{P}^1_A}}(\cO(-1), \cO^2)$ ($i = 1, \dots, d_v+2$). It follows that there is a short exact sequence
    \[ 0 \to \mathcal{K} \to \cO(-1)^{2(d_v+2)} \to \mathcal{I} \to 0, \]
    where $\mathcal{I} \leq \cO_{\mathbb{P}^1_A}$ is the ideal sheaf of $\mathcal{G}_v$. Since $H^\ast(\mathbb{P}^1_A, \cO(-1)) = 0$, we see that $H^1(\mathbb{P}^1_A, \mathcal{I}) = 0$, and therefore that the sequence
    \[ 0 \to H^0(\mathbb{P}^1_A, \mathcal{I}) \to A \to H^0(\mathcal{G}_v, \cO_{\mathcal{G}_v}) \to 0 \]
    is also exact. However, the proof of \cite[Lem.\ 3.11]{Tho15} shows that $\mathcal{G}_v$ is normal, integral, and $\cO$-flat (using here our assumption that $[F_v : \mathbb{Q}_p] > 2$), and therefore that $B \coloneqq H^0(\mathcal{G}_v, \cO_{\mathcal{G}_v})$ is also normal, integral, and $\cO$-flat. (Note that the proof situation here is not exactly identical since we fix the determinant and trivialise the action of the $p$-power roots of unity on the ordinary line, but the changes required are minimal.) %
    In fact, we see that $B / (\varpi)$ is also normal: $H^1(\mathcal{G}_v, \cO_{\mathcal{G}_v}) = 0$, hence $B / (\varpi) \cong H^0(\mathcal{G}_v, \cO_{\mathcal{G}_v / (\varpi)})$. The scheme $\mathcal{G}_v / (\varpi)$ is still integral and normal, so the result follows.

    To complete the proof of the proposition, we therefore need to show that $(R_{b, v}^{\textnormal{ord}})^{\textnormal{red}} \cong B$. It is clear that $B$ is a quotient of $R_{b, v}^{\textnormal{ord}}$, so it suffices to check that these two quotients of $A$ have the same prime ideals. Let $\mathfrak{p}$ be a prime ideal of $R_{b, v}^{\textnormal{ord}}$. Taking into account the truth of a slight variation on \cite[Cor.\ 3.12]{Tho15}, it suffices to show that $r \otimes_A k(\mathfrak{p})$ admits an invariant line on which $G_{F_v}$ acts via $\psi_v \text{ mod }\mathfrak{p}$. If the characters $\psi_v \text{ mod }\mathfrak{p}$, $(\epsilon \psi_v)^{-1} \text{ mod }\mathfrak{p}$ are distinct, then this follows immediately from \cite[Lem.\ 6.2.11]{All23}. Suppose instead that these characters are equal. Then either $r \text{ mod }\mathfrak{p}$ equals $(\psi_v \text{ mod }\mathfrak{p}) \cdot 1_2$, in which case we're done, or there exists $\tau \in G_{F_v}$ such that $r(\tau) - \psi_v(\tau) \not\equiv 0 \text{ mod }\mathfrak{p}$. In the latter case, $r(\tau) - \psi_v(\tau)\text{ mod }\mathfrak{p}$ has rank 1, and the relation $(r(\sigma) - \psi_v(\sigma))(r(\tau) - \psi_v(\tau)) \equiv 0 \text{ mod }\mathfrak{p}$ shows that $(r(\tau) - \psi_v(\tau)) k(\mathfrak{p})^2$ is the desired line.  
\end{proof}
\begin{cor}\label{cor_normality_of_local_lifting_ring}
    $R_{\textnormal{loc}}^{\square_{S_p \cup S_{\textnormal{aux}}}}$ has Krull dimension $1 + 3 |S_p \cup S_{\textnormal{aux}} | + 2 [ F : \mathbb{Q} ]$, and its underlying reduced ring is normal and $\cO$-flat. 
\end{cor}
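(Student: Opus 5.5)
The ring $R_{\textnormal{loc}}^{\square_{S_p \cup S_{\textnormal{aux}}}}$ is, by definition (with $T_1 = \emptyset$), the completed tensor product over $\cO$ of the rings $R_{b,v}^{\textnormal{ord}}$ for $v \in S_p$ and $R_{b,v}^{\textnormal{St}}$ for $v \in S_{\textnormal{aux}}$. The plan is to reduce to the individual factors, using Prop.\ \ref{prop_normality_of_ordinary_lifting_ring} and Rem.\ \ref{rem:structure of Steinberg lifting rings}, and then to use a standard result on completed tensor products of normal flat local rings.

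\emph{Dimension.} For $v \in S_p$, Prop.\ \ref{prop_normality_of_ordinary_lifting_ring} gives $\dim (R_{b,v}^{\textnormal{ord}})^{\textnormal{red}} = 4 + 2[F_v:\mathbb{Q}_p]$. For $v \in S_{\textnormal{aux}}$, the fixed-determinant Steinberg lifting ring $R_{b,v}^{\textnormal{St}}$ has dimension $4$. This is the usual value $1 + 3$ for framed fixed-determinant $\GL_2$ lifting rings at $\ell \neq p$ (cf.\ \cite{Sho16}, \cite{Tho16}). Each factor is $\cO$-flat, so each has relative dimension $\dim - 1$ over $\cO$. For $\cO$-flat complete local Noetherian $\cO$-algebras with residue field $k$ whose reductions mod $\varpi$ are domains, relative dimensions add under $\widehat{\otimes}_\cO$. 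This follows, for instance, by choosing Noether normalisations $\cO\llbracket x_1,\dots,x_{d_i}\rrbracket \to A_i$ (finite, injective) and tensoring. The total dimension is therefore
\[ 1 + \sum_{v \in S_p} (3 + 2[F_v:\mathbb{Q}_p]) + \sum_{v\in S_{\textnormal{aux}}} 3 = 1 + 3|S_p \cup S_{\textnormal{aux}}| + 2[F:\mathbb{Q}]. \]
Passing to reduced quotients does not change dimension, so it is harmless to work with the $(R^{\textnormal{ord}}_{b,v})^{\textnormal{red}}$.

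\emph{Normality and flatness.} Write $A_v$ for $(R_{b,v}^{\textnormal{ord}})^{\textnormal{red}}$ or $R_{b,v}^{\textnormal{St}}$, as appropriate. By the cited results, each $A_v$ is a normal $\cO$-flat domain with $A_v/\varpi$ normal; in particular $A_v/\varpi$ is a domain. The ring $\widehat{\bigotimes}_v A_v$ is $\cO$-flat, being a completed tensor product of flat $\cO$-algebras. Its reduction mod $\varpi$ is $\widehat{\bigotimes}_{k} (A_v/\varpi)$.

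I would then invoke the standard fact (used, e.g., in \cite[Lem.\ 3.3]{BLGHT}-type arguments, or via geometric normality of the generic and special fibres) that a completed tensor product over $k$ of complete local Noetherian normal domains with residue field $k$, for $k$ sufficiently large (geometrically normal), is again a normal domain. Granting this, $B \coloneqq \widehat{\bigotimes}_v A_v$ is $\cO$-flat with $B/\varpi$ a normal domain. A Noetherian local ring $B$ such that $\varpi$ is a nonzerodivisor and $B/\varpi$ is a normal domain is itself a normal domain; this is Serre's criterion, with $R_1$ and $S_2$ lifting from the Cartier divisor $\varpi = 0$ in the local setting. So $B$ is a normal $\cO$-flat domain.

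Finally, $R_{\textnormal{loc}}^{\square} \to B$ is surjective with nilpotent kernel, since the kernel is generated by the nilradicals of the factors. Hence $B$ is the underlying reduced ring of $R_{\textnormal{loc}}^{\square}$, and the two rings have equal dimension.

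The main obstacle is the tensor product step: justifying that normality of the special fibres $A_v/\varpi$ is preserved under $\widehat{\otimes}_k$. This requires geometric normality, which one gets by extending $E$ (or from the explicit descriptions: $\mathcal{G}_v \bmod \varpi$ is normal after any finite residue extension, and the Steinberg ring via \cite{Sho16}). The dimension count for $R^{\textnormal{St}}_{b,v}$ also needs to be checked against \cite{Sho16}.
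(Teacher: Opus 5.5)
Your proposal is essentially the paper's proof. You reduce to the factors via Prop.~\ref{prop_normality_of_ordinary_lifting_ring} and Rem.~\ref{rem:structure of Steinberg lifting rings}, use $\cO$-flatness of the completed tensor product (the paper cites \cite[Lem.\ 1.4]{Tho15}), deduce normality of the special fibre $\widehat{\bigotimes}_k (A_v/\varpi)$, and lift normality along the nonzerodivisor $\varpi$; your dimension count and nilradical argument fill in details the paper leaves implicit.

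The one step you grant is exactly what the paper takes from \cite[Thm.\ 5.4]{Tab25}, and it should be cited as such rather than called standard. These rings are far from finite type over $k$, so the usual results on tensor products of geometrically normal algebras do not apply directly. The BLGHT-type lemma you allude to controls only dimension and irreducibility, not normality.
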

\begin{proof}
    If $A, B \in \CNL_\cO$ are $\cO$-flat and geometrically integral, and both $A / (\varpi)$ and $B / (\varpi)$ are normal, then $A \widehat{\otimes}_\cO B$ is normal. Indeed, \cite[Lem.\ 1.4]{Tho15} shows that $A \widehat{\otimes}_\cO B$ is $\cO$-flat and geometrically integral. If $C \in \CNL_\cO$ is $\cO$-flat, then $C$ is normal if $C / (\varpi)$ is normal. However, \cite[Thm.\ 5.4]{Tab25} shows that $A \widehat{\otimes}_\cO B / (\varpi) \cong (A / (\varpi)) \widehat{\otimes}_k (B / (\varpi))$ is normal, so the result follows. 
    
    The result therefore follows from Prop.\ \ref{prop_normality_of_ordinary_lifting_ring} and
    Rem.\ \ref{rem:structure of Steinberg lifting rings}.
\end{proof}
The universal property of $P^{\textnormal{ord}}$ (resp.\ $P^{\textnormal{ord}}_{Q}$) implies that for any $b$, the ring $R^{\textnormal{ord}}_{b}$ (resp.\ $R^{\textnormal{ord}}_{b, Q}$) has a natural structure of $P^{\textnormal{ord}}$-algebra (resp.\ $P^{\textnormal{ord}}_{Q}$-algebra). 
From now on, we will write 
$\mathcal{R} \leq P^{\textnormal{ord}}$ (resp.\ $\mathcal{R}_Q \leq P^{\textnormal{ord}}_{Q}$) for the reducibility ideal,
as defined in Section~\ref{subsec:pseudo matrix entries}.

\begin{lemma}\label{lem_bounding_reducible_locus}
    Let $h \coloneqq  h^1_{T  \textnormal{-split}}(F_{S_p \cup S_{\textnormal{aux}}}/F, \overline{\chi})$. Then for any Taylor--Wiles set $Q$, there exists a surjective homomorphism of $P^{\textnormal{ord}}_{ Q}$-algebras
    \[ P^{\textnormal{ord}}_{ Q}\llbracket  X_1, \dots, X_{h} \rrbracket  \to R^{\textnormal{ord}}_{b,  Q}. \]
\end{lemma}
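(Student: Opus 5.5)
Write $r = \rho_{b,Q}^{\textnormal{univ}}$ for the universal deformation over $R^{\textnormal{ord}}_{b,Q}$, normalised so that $r(c_u) = \diag(-1,1)$. By \eqref{eqn:pseudo and actual matrix entries}, the diagonal entries $a(\sigma), d(\sigma)$ and the products $r(\sigma)_{12}r(\tau)_{21} = x(\sigma,\tau)$ already lie in the image of $P^{\textnormal{ord}}_Q$. Together with the images of $\widetilde{\Lambda}$ (which come from $P^{\textnormal{ord}}_Q$ by definition), the entries of $r$ topologically generate $R^{\textnormal{ord}}_{b,Q}$. The plan is therefore to show that the off-diagonal entries $r(\sigma)_{12}$ ($\sigma \in G_{F, S_p\cup T\cup S_{\aux}\cup Q}$) can be generated over $P^{\textnormal{ord}}_Q$ by $h$ elements, modulo the image of $P^{\textnormal{ord}}_Q$ and products involving the $21$-entries. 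Concretely, I would prove that the cotangent space of $R^{\textnormal{ord}}_{b,Q}\otimes_{P^{\textnormal{ord}}_Q} k$ has dimension at most $h$. Then I would apply the complete Nakayama lemma, sending $X_1,\dots,X_h$ to lifts of a basis of this cotangent space, to get the surjection.

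Set $\bar R \coloneqq R^{\textnormal{ord}}_{b,Q}/\fm_{P^{\textnormal{ord}}_Q}$ and let $\bar r$ denote the induced representation over $\bar R$. All the pseudo-matrix-entry functions of $\bar r$ agree with those of $\overline{\rho}_b$: $a = \overline{\chi}_1$, $d = \overline{\chi}_2$, and $x(\sigma,\tau) = 0$ since $\overline{\rho}_b$ is upper triangular. So $r(\sigma)_{12}r(\tau)_{21} = 0$ in $\bar R$. The entries $r(\tau)_{21}$ vanish mod $\fm$, and the $r(\sigma)_{12}$ cannot all lie in $\fm$, because $b \neq 0$. Choose $z$ with $b(z)\neq0$, so that $r(z)_{12}$ is a unit; then $r(\tau)_{21} = 0$ in $\bar R$ for all $\tau$. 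Hence $\bar r$ is upper triangular with diagonal characters lifting $\overline{\chi}_1, \overline{\chi}_2$. Writing $\bar r = \left(\begin{smallmatrix}\overline{\chi}_1 & \overline{\chi}_2 B\\ 0 & \overline{\chi}_2\end{smallmatrix}\right)$, the function $B$ is a cocycle valued in $\bar R$ lifting $b$, and it is normalised at $c_u$. Restricting to first order, $\Hom(\fm_{\bar R}/\fm_{\bar R}^2, k)$ injects (using the residual $\widehat{\GL}_2$-conjugation freedom, which is already killed by normalisation) into the space of cocycle perturbations $b + \epsilon c$ with $c$ a normalised cocycle. That is, it injects into $H^1(F_{S_p\cup T\cup S_{\aux}\cup Q}/F,\overline{\chi})$.

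The remaining step, which I expect to be the main obstacle, is to cut this down to $H^1_{T\textnormal{-split}}(F_{S_p\cup S_{\aux}}/F,\overline{\chi})$ using the local conditions.

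At $v\in Q$: $\overline{\chi}(\Frob_v)\neq 1$ and $q_v\equiv1$. The local condition itself does not kill ramification, but the pseudocharacter of $\bar r$ equals $\overline{\chi}_1+\overline{\chi}_2$, so the diagonal characters are unramified at $v$. A class in $H^1(F_v,\overline{\chi})$ with $\overline{\chi}|_{G_{F_v}}$ unramified and nontrivial, and $q_v \equiv 1$, has $H^1$ of dimension $h^0(\overline{\chi}^{-1}\epsilon)$, which is $0$ when $\overline{\chi}(\Frob_v) \ne q_v = 1$. So $H^1(F_v,\overline{\chi})=0$ and $c$ is locally trivial, hence unramified at $v$.

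At $v\in T$: Lemma \ref{lem_local_and_global_lifting_rings} shows that $\bar r|_{G_{F_v}}$ is of type $R^{\St,1}_{b,v}$. With diagonal $\overline{\chi}_1|_{G_{F_v}} = \epsilon^{-1}$-type and $\overline{\chi}_2 = 1$, an upper-triangular Steinberg lift in which $1$ appears as the quotient forces the extension class to split. I would check this via the relation $(r(\sigma)-1)(r(\tau)-\epsilon(\tau)^{-1})=0$, which makes the extension of $1$ by $\epsilon^{-1}$ trivial in the wrong direction. So $c|_{G_{F_v}}$ is trivial.

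This also explains why the lemma uses unramified-outside $S_p\cup S_{\aux}$. The $T$-split condition implies unramified at $T$, and $Q$ contributes nothing. Hence the tangent dimension is at most $h$, completing the argument.
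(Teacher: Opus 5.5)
Your argument is correct and is essentially the paper's proof. Both reduce, via complete Nakayama, to bounding the embedding dimension of $R^{\textnormal{ord}}_{b,Q}/(\mathfrak{m}_{P^{\textnormal{ord}}_Q})$; both observe that over this fibre the $c_u$-normalised universal representation is upper triangular with diagonal $\overline{\chi}_1,\overline{\chi}_2$; and both use that its $(1,2)$-entry is a cocycle that is unramified at $Q$ (as $H^1(F_v,\overline{\chi})=0$ there) and split at $T$ (by the Steinberg relation). The only difference is that you work with tangent vectors, whereas the paper expands the universal cocycle in a basis $b_1,\dots,b_h$ and shows the resulting coefficients generate the maximal ideal. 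One small inaccuracy: normalisation at $c_u$ does not kill conjugation by diagonal elements of $\widehat{\GL}_2$. This is harmless, because once you fix a representative of the universal deformation, the map from tangent vectors to normalised cocycles is well defined, and it is injective since that representative's entries topologically generate the ring.
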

\begin{proof}
    It suffices to give an upper bound for the embedding dimension of $R^{\textnormal{ord}}_{b, Q} / ( \mathfrak{m}_{P^{\textnormal{ord}}_{Q}})$. 
    Let $\rho_b^{\textnormal{univ}}$ be a representative of the universal deformation over $R^{\textnormal{ord}}_{b,  Q} / (\mathfrak{m}_{P^{\textnormal{ord}}_{Q}})$ with $\rho_b^{\textnormal{univ}}(c_u) = \diag(-1, 1)$, as usual. 
    Then $\rho_b^{\textnormal{univ}}(\sigma)_{21} = 0$ for all $\sigma \in G_F$, because $\rho_b^{\textnormal{univ}}(\tau)_{12} \rho_b^{\textnormal{univ}}(\sigma)_{21} = 0$ for all $\sigma, \tau \in G_F$, and there exists $\tau$ such that $\rho_b^{\textnormal{univ}}(\tau)_{12}$ is a unit). Moreover we have $(\rho_b^{\textnormal{univ}})_{11}  = \overline{\chi}_1$ and $(\rho_b^{\textnormal{univ}})_{22}  = \overline{\chi}_2$. Thus $\overline{\chi}_2^{-1} \rho_b^{\textnormal{univ}}(\sigma)_{12}$ is a 1-cocycle in
    \[ Z^1(F_{S_p \cup T \cup S_{\textnormal{aux}} \cup Q} / F, \overline{\chi}) \otimes_k (R^{\textnormal{ord}}_{b, Q} / ( \mathfrak{m}_{P^{\textnormal{ord}}_{Q}})) = Z^1(F_{S_p\cup T \cup S_{\textnormal{aux}} } / F, \overline{\chi}\otimes_k (R^{\textnormal{ord}}_{b, Q} / ( \mathfrak{m}_{P^{\textnormal{ord}}_{Q}}))). \]
    In fact, the cohomology class represented by this 1-cocycle is trivial at each place $v \in T$. Indeed, the definition of $R_{b,v }^{\textnormal{St}, 1}$ implies that $\rho_b^{\textnormal{univ}}|_{G_{F_v}}$ admits an invariant line lifting the one given by the equation $\overline{\chi}_2(\Frob_v) = 1$. Since it also visibly admits an invariant line lifting the one given by the equation $\overline{\chi}_1(\Frob_v) = -1$ (because $\rho_b^{\textnormal{univ}}(\sigma)_{21} = 0$ for all~$\sigma \in G_F$), it must be split at $v$. 

    Let $[b_1], \dots, [b_h]$ be a $k$-basis for $H^1_{T\textnormal{-split}}(F_{S_p\cup S_{\textnormal{aux}} } / F, \overline{\chi})$, and let $\sigma_1, \dots, \sigma_h \in G_{F(\overline{\chi}_1, \overline{\chi}_2)}$ be such that the $h$ ``evaluation at $\sigma_i$'' functionals on $H^1_{T\textnormal{-split}}(F_{S_p\cup S_{\textnormal{aux}} } / F, \overline{\chi})$ are linearly independent. Choose $a_{ij} \in \cO$ such that $\sum_{i=1}^h a_{ij} b_l(\sigma_i) = \delta_{jl}$ for each $j, l = 1, \dots, h$, and let $z_j \coloneqq \sum_{i=1}^h a_{ij} \sigma_i \in \cO[G_{F(\overline{\chi}_1, \overline{\chi}_2)}]$. We claim that the ring 
    \[
    A \coloneqq  R^{\textnormal{ord}}_{b, Q} / ( \mathfrak{m}_{P^{\textnormal{ord}}_{Q}}, \rho_b^{\textnormal{univ}}(z_1)_{12}, \dots, \rho_b^{\textnormal{univ}}(z_h)_{12})
    \]
    coincides with~$k$.
    This immediately implies the desired upper bound on the embedding dimension.

    To see this, write $\overline{\chi}_2^{-1} \rho_b^{\textnormal{univ}}(\sigma)_{12} = b + \sum_{j=1}^h x_j b_j$ with $x_j \in \mathfrak{m}_{R^{\textnormal{ord}}_{b, Q}} / ( \mathfrak{m}_{P^{\textnormal{ord}}_{Q}})$. Evaluating at $z_j$ for each $j = 1, \dots, h$, we see that the equality $\overline{\chi}_2^{-1} \rho_b^{\textnormal{univ}}(\sigma)_{12} = b$ holds after projection to $A$, and that the specialisation of $\rho_b^{\textnormal{univ}}$ to $A$ is equal to $\overline{\rho}_b$. By the universal property of $R^{\textnormal{ord}}_{b, Q}$, this is possible only if $A = k$, as required. 
\end{proof}

We say that we are in the dihedral case if the character $\overline{\chi}$ is quadratic. In this case, it follows from the hypothesis of Thm.\ \ref{thm_tame_SW_black_box} that the extension $F(\overline{\chi}) / F$ is a quadratic CM extension in which all places above $p$ split. 
\begin{pp}\label{prop_existence_of_TW_places_first_patching}
    We can find a constant $c \geq 0$ with the following property: for any $N \geq 1$ we can find a Taylor--Wiles datum $Q_N$ of level $N$ satisfying the following conditions:
    \begin{enumerate}
        \item $|Q_N| = |T|$.
        \item Let $M_{Q_N}$ be the maximal abelian pro-$p$ extension of~$F$ unramified outside $S_p \cup S_{\textnormal{aux}} \cup Q_N$, in which every element of $T$ splits. Then there is a homomorphism $\mathbb{Z}_p^{1 + \delta_F} \to \Gal(M_{Q_N} / F)$ with cokernel of size bounded above by $p^c$. 
        \item If we are in the dihedral case, then let $L_{Q_N}$ be the maximal abelian pro-$p$ extension of~$F(\overline{\chi})$ unramified outside $S_p \cup S_{\textnormal{aux}} \cup T \cup Q_N$. Then the natural map $\Gal(L_{Q_N} / F(\overline{\chi})) \to \Gal(L_\varnothing / F(\overline{\chi})$ is an isomorphism, and there is a homomorphism $\mathbb{Z}_p^{[F : \mathbb{Q}]} \to \Gal(L_{Q_N} / F(\overline{\chi})) / (c_u+1)$ with finite cokernel. 
    \end{enumerate}
\end{pp}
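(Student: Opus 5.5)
The plan is to pick the places of $Q_N = (v_1, \dots, v_{|T|})$ one at a time using the Chebotarev density theorem, pairing each $v_i$ with a place $w_i \in T$. The Frobenius at $w_i$, which dies in $M_{Q_N}$ because $w_i$ splits there, should kill the new tame inertia $k(v_i)^\times(p)$, up to an error bounded independently of $N$. Concretely, write $T = \{w_1, \dots, w_{|T|}\}$ and let $h_F$ be the class number. Choose $\alpha_i \in F^\times$ generating $w_i^{h_F}$, so that $\alpha_i$ is a unit outside $w_i$. Let $E$ be the group of $(S_p \cup S_{\aux})$-units of $F$. By class field theory, $\Gal(M_{Q_N}/F)$ is the pro-$p$ completion of $\mathbb{A}_F^\times$ modulo the closure of $F^\times$, $F_\infty^\times$, $\prod_{v \notin S_p \cup S_{\aux} \cup Q_N} \cO_{F_v}^\times$ and $\prod_{w \in T} F_w^\times$. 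It is an extension of $\Gal(M_\varnothing / F)$ by a quotient of $\Delta_{Q_N} = \prod_i k(v_i)^\times(p)$.
\begin{itemize}
\item $\Gal(M_\varnothing/F)$ is independent of $N$. It is a quotient of the Galois group of the maximal abelian pro-$p$ extension of $F$ unramified outside $S_p \cup S_{\aux}$, which is $\mathbb{Z}_p^{1+\delta_F}$ times a finite group, where $\delta_F$ is the Leopoldt defect. This bounded part is what the constant $c$ absorbs.
\item The idele $\alpha_i$ is trivial in the quotient. Its component at $w_i$ is killed by the $T$-split condition, its components away from $w_i$, $Q_N$, $S_p \cup S_{\aux}$ are units (hence killed), and its $S_p \cup S_{\aux}$ components are not killed but are harmless in what follows. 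So the image of $\alpha_i$ in $k(v_i)^\times(p)$, together with its images in $k(v_j)^\times(p)$ for $j \neq i$ and its $S_p \cup S_{\aux}$-components, gives a relation in $\Gal(M_{Q_N}/F)$.
\end{itemize}

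We therefore impose the following conditions on $v_i$:
\begin{enumerate}
\item[(a)] $v_i$ splits completely in $F(\zeta_{p^N})$, so that $q_{v_i} \equiv 1 \bmod p^N$.
\item[(b)] $\overline{\chi}(\Frob_{v_i}) \neq 1$, so that $\rhobarss(\Frob_{v_i})$ has distinct eigenvalues.
\item[(c)] $\alpha_i$ is not a $p$th power in $k(v_i)$, so that its image generates $k(v_i)^\times(p)$.
\item[(d)] $\alpha_j$ for $j \ne i$, and a fixed finite set of generators of $E$, are $p^N$th powers in $k(v_i)$.
\end{enumerate}
With (a)--(d), the relation given by $\alpha_i$ identifies a generator of $k(v_i)^\times(p)$ with an element coming from the $S_p \cup S_{\aux}$ components. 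Condition (d) for the units handles the global unit contribution. Together these show that the image of $\Delta_{Q_N}$ in $\Gal(M_{Q_N}/F)$ is bounded, and hence that (2) holds with $\mathbb{Z}_p^{1+\delta_F} \to \Gal(M_{Q_N}/F)$ having cokernel bounded by $p^c$.

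Conditions (a)--(d) are conditions on $\Frob_{v_i}$ in the compositum of $F(\overline{\chi}_1, \overline{\chi}_2, \zeta_{p^N})$ with the Kummer extension generated by $p^N$th roots of the $\alpha_j$ ($j \ne i$), of $E$, and a $p$th root of $\alpha_i$. The main obstacle is checking that these conditions are simultaneously satisfiable uniformly in $N$, i.e.\ a linear disjointness statement.

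First, the $\alpha_j$ are multiplicatively independent of each other and of $E$ modulo $p$th powers, since they have valuations at distinct primes $w_j \notin S_p \cup S_{\aux}$. The only obstruction is the bounded torsion of $F(\zeta_{p^N})^\times/F^\times$, which contributes to $c$.

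Second, the Kummer extension is abelian over $F(\zeta_{p^N})$ with Galois group on which $\Gal(F(\zeta_{p^N})/F)$ acts through the cyclotomic character. Since $\overline{\chi}|_{G_{F(\zeta_p)}} \neq 1$, the extension $F(\zeta_{p^N}, \overline{\chi})/F(\zeta_{p^N})$ is nontrivial of order prime to $p$, hence disjoint from the Kummer extension. So we may ask for a Frobenius that is nontrivial on $\overline{\chi}$, trivial on $\zeta_{p^N}$, on the chosen $p^N$th roots and on the roots for $E$, and nontrivial on $\alpha_i^{1/p}$. I expect this step to require the most care, including being careful that ordering the choices does not interfere.

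For (3) in the dihedral case, $K = F(\overline{\chi})$ is CM. The places $v \in Q_N$ and $w \in T$ are inert in $K$ (since $\overline{\chi}(\Frob) = -1$), and $k(v_K)^\times(p)$ is cyclic of order at most $p$-part of $q_v^2-1$. I would add to the Chebotarev conditions that a fixed $(S_p \cup S_{\aux} \cup T)$-unit $\beta$ of $K$ generates $k(v_K)^\times(p)$. This is again a Kummer condition over $K(\zeta_{p^N})$, compatible with the previous ones by the same disjointness argument. It implies, by the same idelic argument, that $Q_N$ introduces no new ramification, so $\Gal(L_{Q_N}/K) \to \Gal(L_\varnothing/K)$ is an isomorphism. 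The rank statement then reduces to the classical fact that, for a CM field, the minus part of the maximal abelian pro-$p$ extension unramified outside $p$ (here enlarged by the places of $S_{\aux} \cup T$, whose contributions are finite) has $\mathbb{Z}_p$-rank $[F:\mathbb{Q}]$, since the Leopoldt defect of $K$ lives in the plus part. Quotienting by $(c_u+1)$ picks out this minus part up to finite index.
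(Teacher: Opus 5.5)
Your proof of (2) and your proof of (3) each contain a step that fails. In both, an identification of inertia at $Q_N$ with local elements at $S_p\cup S_{\textnormal{aux}}$ (or $S_p\cup S_{\textnormal{aux}}\cup T$) is mistaken for the vanishing of that inertia.

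\emph{Part (2).} The step that fails is your claim that (a)--(d) make the image of $\Delta_{Q_N}$ in $\Gal(M_{Q_N}/F)$ bounded. Consequently (2) does not arise as a bounded kernel over the $N$-independent group $\Gal(M_\varnothing/F)$. The relation from $\alpha_i$ only identifies a generator of $k(v_i)^\times(p)$ with the image of the $(S_p\cup S_{\textnormal{aux}})$-components of $\alpha_i$, and these have order growing with $N$. Take $F=\mathbb{Q}$ (this is pure class field theory), $T=\{\ell\}$, and $Q_N=\{q\}$ with $\ell$ not a $p$th power mod $q$ and $\ell^{p-1}\not\equiv 1 \bmod p^2$. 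Then $\Gal(M_{Q_N}/\mathbb{Q})\cong(\mathbb{Z}_p^\times(p)\times\mathbb{F}_q^\times(p))/\overline{\langle(\ell,\ell)\rangle}\cong\mathbb{F}_q^\times(p)$, all of it inertia at $q$, whereas $\Gal(M_\varnothing/\mathbb{Q})=0$.

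What your conditions actually give is the paper's picture. Let $A=\prod_{v\in S_p}\cO_{F_v}^\times(p)\times\prod_{v\in S_{\textnormal{aux}}}k(v)^\times(p)$, and consider the exact sequence whose kernel is $(A\times\prod_{v\in Q_N}k(v)^\times(p))/\overline{\cO_{F,T}^\times}$.
\begin{itemize}
\item The relations from the $\alpha_i$ show that this kernel is generated by the image of $A$.
\item Apply (d) to $\cO_F^\times$ in the form ``trivial image in $k(v_i)^\times(p)$''. This needs, for example, $q_{v_i}\not\equiv 1\bmod p^{N+1}$ in addition to being a $p^N$th power. It then shows the image of $A$ is a quotient of $A/\overline{\cO_F^\times}\cong\mathbb{Z}_p^{1+\delta_F}\oplus(\text{finite})$.
\item Together with the bounded class-group quotient, this gives (2).
\end{itemize}
Do keep (d). The paper's Chebotarev conditions constrain only the generators of the $T$-ideals. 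Without some control of $\cO_F^\times$ modulo the places of $Q_N$, the unit relations acquire $Q_N$-components that can destroy the bound. For instance, for a real quadratic $F$ with $|T|=2$, tuning the discrete logs of the fundamental unit makes $\Gal(M_{Q_N}/F)/p^N\cong(\mathbb{Z}/p^N)^2$.

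Your independence claim also fails when $p\mid h_F$. Then $v_{w_j}(\alpha_j)=h_F$ is divisible by $p$, $\alpha_j$ may lie in $\cO_F^\times(F^\times)^p$, and (c) contradicts (d). The paper's remedy is as follows. Let $\alpha_i$ generate $\mathfrak{p}_{w_i}^{\omega_i}$, where $\omega_i$ is the order of the class of $w_i$, and set $\nu=\max_i v_p(\omega_i)$. Require only that $\alpha_i$ is not a $p^{\nu+1}$th power in $k(v_i)$, while $\alpha_j$ is one for $j\neq i$. The needed Frobenius comes from an inertia element above $w_i$ in $F(\zeta_{p^N},\alpha_1^{1/p^{\nu+1}},\dots,\alpha_{|T|}^{1/p^{\nu+1}})$, which moves only $\alpha_i^{1/p^{\nu+1}}$ and also fixes $p$-power roots of units. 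The resulting loss of $p^{|T|\nu}$ goes into $c$.

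\emph{Part (3).} Here the same confusion leads you to a statement that is false as you formulate it. Requiring a fixed $(S_p\cup S_{\textnormal{aux}}\cup T)$-unit $\beta$ of $K=F(\overline{\chi})$ to generate $k(v_K)^\times(p)$ only moves the inertia at $v_K$ into the image of the local groups at $S_p\cup S_{\textnormal{aux}}\cup T$; it does not kill it. In fact $\Gal(L_{Q_N}/K)\to\Gal(L_\varnothing/K)$ is in general not injective. For $F=\mathbb{Q}$, the compositum of $K$ with the subfield of $\mathbb{Q}(\zeta_q)$ of degree $p^{v_p(q-1)}$ lies in $L_{\{q\}}$ and is ramified at $q$.

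What holds is the isomorphism after dividing by $(c_u+1)$. This is what the paper's proof establishes, so the first clause of (3) should be read modulo $c_u+1$, and it is what is used later through Prop.~\ref{prop_universal_dihedral_pseudocharacter}. The missing idea is short. Each $v\in Q_N$ is inert in $K$ with $q_v\equiv 1\bmod p$. Hence $c_u$ acts on $k(v_K)^\times(p)\subseteq\mathbb{F}_{q_v^2}^\times$ by $x\mapsto x^{q_v}$, which is the identity there because $v_p(q_v^2-1)=v_p(q_v-1)$. So $c_u+1$ acts as multiplication by $2$, which is invertible since $p$ is odd. Therefore the inertia at $Q_N$ dies in $\Gal(L_{Q_N}/K)/(c_u+1)$ without any further Chebotarev condition. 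The rank bound then comes from the minus part of $\Gal(L_\varnothing/K)$, as you say.
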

Here $\delta_F$ denotes the Leopoldt defect of the totally real field $F$ (conjecturally 0, and bounded above by $([F : \mathbb{Q}]-1) / 2$, by the main result of \cite{Wal84}).
\begin{proof}
    This is an exercise in class field theory. We first establish (2). Enumerate $T = \{ v_1, \dots, v_s \}$, and let $\mathfrak{p}_i \leq \cO_F$ be the prime ideal corresponding to $v_i$, $\omega_i$ its order in $\operatorname{Cl}(\cO_F)$, $\Pi_i \in \cO_F$ a generator for $\mathfrak{p}_i^{\omega_i}$, and $\nu_i \coloneqq v_p(\omega_i)$.
    
    Let $\nu \coloneqq \max_i \nu_i$. We are free to increase $N$, so can assume without loss of generality that $N > \nu$. We claim that we can find a Taylor--Wiles set $Q_N = \{ u_1, \dots, u_s \}$ of level $N$ with the following properties:
    \begin{itemize}
        \item $|Q_N| = |T|$.
        \item The image of $\Pi_i$ in $k(u_j)^\times(p)$ is divisible by $p^{\nu+1}$ if $i \neq j$, and not divisible by $p^{\nu+1}$ if $i = j$. 
    \end{itemize}
    It then follows that the quotient of $\prod_{v \in Q_N} k(v)^\times(p)$ by the subgroup generated by $\Pi_1, \dots, \Pi_s$ is annihilated by $p^\nu$.  To find such $Q_N$, it is enough, by the Chebotarev density theorem, to show that for each $i = 1, \dots, s$, the image of the map
    \[ G_{F(\zeta_{p^N})} \to \overline{\chi}(G_{F(\zeta_{p^N})}) \times \prod_{j=1}^s \Gal(F(\zeta_{p^N}, \Pi_j^{1/p^{\nu+1}})/F(\zeta_{p^N})) \]
    contains an element $(x, (y_j)_{j=1}^s)$ with $x \neq 1$, $y_j = 1$ if $j \neq i$, and $y_i \neq 1$. Since we assume $\overline{\chi}(G_{F(\zeta_p)})$ is non-trivial, and this group has order prime to $p$, we can find elements in the image with $x \neq 1$ and $y_j = 1$ for all $j$. On the other hand, the extension $F(\zeta_{p^N}, \Pi_j^{1/p^{\nu+1}})/F(\zeta_{p^N})$ is ramified above $v_j$ but not above $v_k$ if $k \neq j$ (because $\nu+1 > \nu_j$). Elements of the inertia group at a place above $v_i$ therefore have $x = 1$ and $y_j = 1$ if $j \neq i$, but $y_i \neq 1$. 
    This concludes the proof of the claim.

    By class field theory, there is a short exact sequence
    \begin{multline*}  0 \to  \frac{\prod_{v \in S_p} \cO_{F_v}^\times(p) \times \prod_{v \in S_{\textnormal{aux}} \cup Q_N} k(v)^\times(p)}{\overline{\cO_{F, T}^\times}} \to \Gal(M_{Q_N} / F) \\
    \to \operatorname{Cl}(\cO_F)(p)/\langle \mathfrak{p}_1, \dots, \mathfrak{p}_s \rangle \to 0.  \end{multline*}
    Let $p^{c_1}$ denote the order of the torsion subgroup of the quotient $\prod_{v \in S_p} \cO_{F_v}^\times(p) / \overline{\cO_F^\times} \times \prod_{v \in S_{\textnormal{aux}} }k(v)^\times(p)$, let $c_2 = s \nu$, and let $p^{c_3}$ denote the order of  $\operatorname{Cl}(\cO_F)(p)/\langle \mathfrak{p}_1, \dots, \mathfrak{p}_s \rangle$. We see that property (2) holds on taking $c = c_1 + c_2 + c_3$.

    Now suppose further that we are in the dihedral case. Since each place of $Q_N$ is inert in $F(\overline{\chi})$, we find that the natural map 
    \[ \Gal(L_{Q_N} / F(\overline{\chi})) / (c_u+1) \to \Gal(L_\varnothing / F(\overline{\chi})) / (c_u + 1) \]
    is an isomorphism: its kernel is a quotient of a product of the groups $k({\widetilde{v}})^\times(p) / (c_u + 1)$, for places $\widetilde{v}$ of $F(\overline{\chi})$ lying above $v \in Q_N$, but these are trivial because $(q_{v} - 1, q_{v} + 1) = 2$ and $p$ is odd. The result (after possibly increasing $c$ to account for the $p$-torsion in  $\Gal(L_\varnothing / F(\overline{\chi})) / (c_u + 1)$) follows immediately. 
\end{proof}
\begin{cor}
    We can find a constant $C \geq 0$ with the following property: 
     for any $N \geq 1$ we can find a Taylor--Wiles datum $Q_N$ of level $N$ satisfying the following conditions:
    \begin{enumerate}
        \item $|Q_N| = |T|$.
        \item There are elements $x_1, \dots, x_{1 + \delta_F} \in P^{\textnormal{ord}}_{Q_N}$ such that $P^{\textnormal{ord}}_{Q_N} / (\mathcal{R}_{Q_N}, \varpi, x_1, \dots, x_{1 + \delta_F})$ is an Artinian local $k$-algebra of dimension at most $C$.
        \item If we are in the dihedral case, then the map $P_{Q_N}^{\textnormal{ord}, \overline{\chi}\textnormal{-dih}} \to P^{\textnormal{ord}, \overline{\chi}\textnormal{-dih}}$ is an isomorphism, and $P^{\textnormal{ord}, \overline{\chi}\textnormal{-dih}} / (\varpi)$ has Krull dimension at most $[F : \mathbb{Q}]$. 
    \end{enumerate}
\end{cor}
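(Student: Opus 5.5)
We take $Q_N$ as supplied by Prop.~\ref{prop_existence_of_TW_places_first_patching}; part (1) is then immediate. The remaining parts translate the class field theory in that proposition into statements about pseudodeformation rings, by showing that the relevant quotients of $P^{\textnormal{ord}}_{Q_N}$ are controlled by completed group algebras of $\Gal(M_{Q_N}/F)$ and $\Gal(L_{Q_N}/F(\overline{\chi}))$.

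For (2), we work with $B \coloneqq P^{\textnormal{ord}}_{Q_N}/(\mathcal{R}_{Q_N}, \varpi)$. Over $B$ the universal pseudocharacter is reducible. Since all $x(\sigma,\tau)$ vanish, the pseudo matrix entries $a$ and $d$ of \S\ref{subsec:pseudo matrix entries} (using $c_u$) are multiplicative, so $t = \chi_1 + \chi_2$ for characters $\chi_i : G_{F, S_p\cup T\cup S_{\aux}\cup Q_N} \to B^\times$ lifting $\overline{\chi}_i$, with $\chi_1\chi_2 = \epsilon^{-1}$. Hence $B$ is topologically generated by the values of $\chi_1$ together with the $\psi_v$ for $v \in S_p$. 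We write $\chi_1 = \overline{\chi}_1 \cdot \mu$ with $\mu$ of $p$-power order, so that $\mu$ factors through the maximal abelian pro-$p$ quotient unramified outside $S_p\cup T\cup S_{\aux}\cup Q_N$.

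Next we use the local conditions at $v \in T$. There $t|_{G_{F_v}} = 1+\epsilon^{-1}$, and since $\overline{\chi}_1(\Frob_v) = q_v \neq 1$ the residual characters at $v$ are distinct. Because $\chi_1|_{G_{F_v}} + \chi_2|_{G_{F_v}} = 1 + \epsilon^{-1}$ with distinct residual values, $\chi_1|_{G_{F_v}} = \epsilon^{-1}$, so $\mu|_{G_{F_v}}$ is trivial. Thus $v$ splits and is unramified in the extension cut out by $\mu$, and $\mu$ factors through $\Gal(M_{Q_N}/F)$. This gives a surjection $\cO\llbracket \Gal(M_{Q_N}/F)\rrbracket \to B$. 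Indeed, the $\psi_v$ are determined by $t|_{G_{F_v}}$ together with the ordinary relation, which forces $\psi_v$ to equal $\chi_1|_{G_{F_v}}$ or $\chi_2|_{G_{F_v}}$ twisted appropriately; we will need to check this carefully, possibly after replacing $B$ by the subring generated by traces, and this is the main technical point. Now let $x_1, \dots, x_{1+\delta_F}$ be (images of) $[g_i]-1$, where $g_i$ are the images of the generators of $\mathbb{Z}_p^{1+\delta_F}$ from Prop.~\ref{prop_existence_of_TW_places_first_patching}(2). These elements lie in $P^{\textnormal{ord}}_{Q_N}$, since they can be expressed via $a(\cdot)$ evaluated on group elements. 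The quotient $B/(x_i)$ is then a quotient of $k[\mathrm{coker}]$, whose dimension is at most $p^c$, so we may take $C = p^c$.

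For (3), in the dihedral case we apply Prop.~\ref{prop_universal_dihedral_pseudocharacter} with $K = G_{F(\overline{\chi}), S}$. This identifies the unrestricted dihedral pseudodeformation ring with $\Lambda^\sigma$. Imposing the ordinary and Steinberg conditions at $S_p\cup T\cup S_{\aux}$ yields quotients of $\cO\llbracket \Gal(L_{Q_N}/F(\overline{\chi}))\rrbracket^\sigma$, and compatibly for $Q_N = \varnothing$. By Prop.~\ref{prop_existence_of_TW_places_first_patching}(3), $\Gal(L_{Q_N}/F(\overline{\chi})) \to \Gal(L_\varnothing/F(\overline{\chi}))$ is an isomorphism, so the dihedral rings agree and $P^{\textnormal{ord},\overline{\chi}\text{-dih}}_{Q_N} \to P^{\textnormal{ord},\overline{\chi}\text{-dih}}$ is an isomorphism. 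For the dimension bound, we use $\det t = \epsilon^{-1}$: the universal character $X$ then satisfies $X \cdot X^c = \epsilon^{-1}$ on $K$. This forces the relevant group to be a quotient by $(c_u+1)$ up to finite index, so $P^{\textnormal{ord},\overline{\chi}\text{-dih}}/(\varpi)$ is finite over $k\llbracket \mathbb{Z}_p^{[F:\mathbb{Q}]}\rrbracket$, and its Krull dimension is at most $[F:\mathbb{Q}]$. We expect the main obstacle to be the bookkeeping showing that the ordinary and $\det$ conditions really cut the group algebra down to the $(c_u+1)$-coinvariants, together with the parallel issue in (2) of the $\psi_v$.
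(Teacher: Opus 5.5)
Your reduction to $B = P^{\textnormal{ord}}_{Q_N}/(\mathcal{R}_{Q_N},\varpi)$, the decomposition $t = \chi_1+\chi_2$, the argument at the places of $T$, the choice of the $x_i$, and your outline of (3) all follow the paper. The gap is the claimed surjection $\cO\llbracket \Gal(M_{Q_N}/F)\rrbracket \to B$. It rests on the assertion that the $\psi_v$ ($v \in S_p$) are determined by $t$, and that assertion is false.

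The linear-independence argument forcing $\psi_v \in \{\chi_1|_{G_{F_v}}, \chi_2|_{G_{F_v}}\}$ only works over a field. Here $\rhobarss|_{G_{F_v}}$, and hence $\omega|_{G_{F_v}}$, is trivial, so all the characters involved coincide residually. For a concrete counterexample, work over $k[y]/(y^2)$. Take the constant pseudocharacter $t = \overline{\chi}_1+\overline{\chi}_2$, and set $\psi_v = 1+y\lambda$, where $\lambda$ is a non-zero additive character of $G_{F_v}$ factoring through $F_v^\times(p)^{\mathrm{tf}}$. Then $\psi_v+\psi_v^{-1} = 2 = t|_{G_{F_v}}$, and the second ordinary relation becomes $-y^2\lambda(\sigma)\lambda(\tau)t(\eta) = 0$. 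So this is a point of $B$, and the corresponding map $B \to k[y]/(y^2)$ is surjective. Yet the image of $k\llbracket \Gal(M_{Q_N}/F)\rrbracket$ and all your $x_i$, which depend only on $t$, are sent into $k$. Hence $B/(x_1,\dots,x_{1+\delta_F})$ need not be a quotient of $k[\mathrm{coker}]$. For instance, if $c=0$ your bound $C = p^c$ would force this quotient to be $k$, contradicting the surjection onto $k[y]/(y^2)$. Replacing $B$ by its subring generated by traces does not help, because the statement concerns the full quotient of $P^{\textnormal{ord}}_{Q_N}$.

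The missing idea is integrality rather than determination. Each $\psi_v(\sigma)$ is a root of the monic polynomial $X^2 - t(\sigma)X + \epsilon^{-1}(\sigma)$. So $B$ is a quotient of the ring $S_{Q_N}$ obtained from $k\llbracket \Gal(M_{Q_N}/F)\rrbracket \widehat{\otimes}_\cO \widetilde{\Lambda}$ by imposing $\psi_v + (\epsilon\psi_v)^{-1} = (X+\epsilon^{-1}X^{-1})|_{G_{F_v}}$ for $v \in S_p$. Here $X$ is the lift of $\overline{\chi}_1$ given by the universal character of $\Gal(M_{Q_N}/F)$.

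The fibre of $S_{Q_N}$ over the closed point of $k\llbracket \Gal(M_{Q_N}/F)\rrbracket$ is $\widetilde{\Lambda}/(\varpi, \psi_v+\psi_v^{-1}-2 : v\in S_p)$. This is a finite $k$-algebra independent of $N$, since every $[\gamma]-1$ squares to zero there. By the completed Nakayama lemma, $S_{Q_N}$ is generated over $k\llbracket \Gal(M_{Q_N}/F)\rrbracket$ by a number $d$ of elements independent of $N$, and $C = d\,p^c$ then works. This is exactly the paper's argument.

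The same point is needed for the dimension bound in (3). The ordinary dihedral ring is a quotient of $\cO\llbracket\cdot\rrbracket^\sigma \widehat{\otimes}_\cO \widetilde{\Lambda}$, not of $\cO\llbracket\cdot\rrbracket^\sigma$ alone. You need the $\widetilde{\Lambda}$-factor to be finite over the trace part to conclude that $P^{\textnormal{ord},\overline{\chi}\textnormal{-dih}}/(\varpi)$ has Krull dimension at most $[F:\mathbb{Q}]$.
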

\begin{proof}
    Choose $Q_N$ as in Prop.\ \ref{prop_existence_of_TW_places_first_patching}; we will show that these additional properties also hold. 
    We recall that, by definition, $P_{Q_N}^{\textnormal{ord}} / (\mathcal{R}_{Q_N}, \varpi)$ is a quotient of $P(G_{F, S_p \cup T \cup S_{\textnormal{aux}} \cup Q_N}) \widehat{\otimes}_\cO \widetilde{\Lambda}$.
    The universal pseudocharacter $t^{\textnormal{univ}}$ over $P^{\textnormal{ord}}_{Q_N} / (\mathcal{R}_{Q_N}, \varpi)$ admits a unique decomposition $\epsilon^{-1} \chi_2^{-1} \oplus \chi_2$, where $\chi_2$ is a character lifting $\overline{\chi}_2$. Moreover, $\chi_2 [\overline{\chi}_2]^{-1}$ (where $[\cdot]$ denotes Teichm\"uller lift) factors through $\Gal(M_{Q_N} / F)$. Indeed, we just need to check that if $v \in T$, then $\chi_2|_{G_{F_v}}$ is unramified and $\chi_2(\Frob_v) = 1$. If $v\in T$ then $t^{\textnormal{univ}}|_{G_{F_v}} = 1 + \epsilon^{-1} = \epsilon^{-1} \chi_2^{-1}|_{G_{F_v}} + \chi_2|_{G_{F_v}} $, where the factors are distinct, and the result follows by the uniqueness of this decomposition in the residually multiplicity-free case. 
    
    Let $X_2$ denote the universal deformation of $\chi_2$ over $k\llbracket M_{Q_N} \rrbracket$. It follows from the discussion in the previous paragraph that we may view $P_{Q_N}^{\textnormal{ord}} / (\mathcal{R}_{Q_N}, \varpi)$ as a quotient of the ring $S_{Q_N}$, defined to be the quotient of $k\llbracket M_{Q_N} \rrbracket \widehat{\otimes}_\cO \widetilde{\Lambda}$ by the relations $\psi_v + (\epsilon \psi_v)^{-1} = (X_2 \oplus \epsilon^{-1} X_2^{-1})|_{G_{F_v}}$ ($v \in S_p$). According to Prop.\ \ref{prop_existence_of_TW_places_first_patching}, we can find an isomorphism $M_{Q_N} \cong \mathbb{Z}_p^{1 + \delta_F} \oplus T_{Q_N}$, where $T_{Q_N}$ is a finite abelian group of order bounded independently of $N$. In particular, we can find $x_1, \dots, x_{1 + \delta_F} \in k \llbracket M_{Q_N} \rrbracket$ such that $k \llbracket M_{Q_N} \rrbracket / (x_1, \dots, x_{1 + \delta_F}) \cong k [ T_{Q_N} ]$. On the other hand, $S_{Q_N} \otimes_{k \llbracket M_{Q_N} \rrbracket} k$ is a quotient of $\widetilde{\Lambda} / (\varpi)$ by the relations $\psi_v + \psi_v^{-1} = 1 + 1$, $v \in S_p$, and this is a finite Artinian local $k$-algebra of dimension bounded independently of $N$. Using the completed version of Nakayama's lemma, we conclude that $S_{Q_N}$ is a finite $k\llbracket M_{Q_N} \rrbracket$-algebra, with minimal number of generators bounded independently of $N$; taking in account what we know about the ring $k\llbracket M_{Q_N} \rrbracket$, this implies the statement given in the corollary.

    Property (3) follows immediately from Prop.\ \ref{prop_universal_dihedral_pseudocharacter} and the third part of Prop.\ \ref{prop_existence_of_TW_places_first_patching}. 
\end{proof}

\subsection{Automorphic forms and Hecke algebras}\label{subsec:automorphic forms and Hecke algebras}

We continue with the set-up and assumptions of Thm.\ \ref{thm_tame_SW_black_box}. Let $D$ be a quaternion algebra over $F$ ramified precisely at $T \cup S_{\textnormal{aux}}$ and the infinite places. (This exists because of our assumption that $[F : \mathbb{Q}] + |T \cup S_{\textnormal{aux}}|$ is even.) Fix a choice of maximal order $\cO_D \leq D$ and a choice of isomorphism $\cO_D \otimes_{\cO_F} {\widehat{\cO}_F^{T \cup S_{\textnormal{aux}}}} \cong M_2(\widehat{\cO}_F^{T \cup S_{\textnormal{aux}}})$. We use this isomorphism to identify $(D \otimes_F F_v)^\times$ with $\GL_2(F_v)$ for every finite place $v\not\in T \cup S_{\textnormal{aux}}$ of $F$. 

If $U = \prod_v U_v \leq (D \otimes_F \mathbb{A}_F^\infty)^\times$ is an open compact subgroup, then we write $H(U)$ for the set of functions $f : D^\times \mathbb{A}_F^{\infty, \times} \backslash (D \otimes_F \mathbb{A}_F^\infty)^\times / U \to \cO$. 

Let $U_0 \coloneqq  \prod_v U_{0, v}$, where $U_{0, v} \coloneqq  (\cO_D \otimes_{\cO_F} \cO_{F_v})^\times$. If $v$ is a finite place of $F$, and $b \geq c \geq 0$, $b \geq 1$, then we define $\Iw_{v}(b) \leq \GL_2(\cO_{F_v})$ to be the subgroup of matrices which are upper-triangular modulo $\varpi_v^b$, and $\Iw_v(b, c)$ to be either $\Iw_v(b)$ (if $c = 0$) or the kernel of the homomorphism $\Iw_v(b) \to \cO_{F_v}^\times / \langle \cO_{F_v}^\times[\textnormal{tors}], 1 + \varpi_v^c \cO_{F_v} \rangle$ defined by the formula
\[ \begin{pmatrix} x & y \\ z & w \end{pmatrix} \mapsto x w^{-1} \text{ mod }\varpi_v^c \]
(if $c \geq 1$).
We then write $U_0(b, c) \leq U_0$ for the open compact subgroup defined as follows:
\begin{itemize}
    \item If $v \not\in S_p$, then $U_0(b, c)_v = U_{0, v}$.
    \item If $v \in S_p$, then $U_0(b, c)_v = \Iw_v(b, c)$. 
\end{itemize}
If $Q$ is a set of Taylor--Wiles places of level $N \geq 1$, then we write $U_{Q(N), 1} \leq U_Q \leq U_0$ for the open compact subgroups defined as follows:
\begin{itemize}
    \item If $v \not\in Q$, then $U_{Q(N), 1, v} = U_{Q, v} = U_{0, v}$. 
    \item If $v \in Q$, then $U_{Q, v} = \Iw_v \coloneqq  \Iw_v(1)$ is the usual upper-triangular Iwahori subgroup (of matrices that are upper-triangular modulo $\varpi_v$) and $U_{Q(N), 1, v}$ is the kernel of the homomorphism $\Iw_v \to k(v)^\times(p) / (p^N)$ defined by the formula 
    \[ \begin{pmatrix} x & y \\ z & w \end{pmatrix} \in \Iw_v \mapsto x w^{-1} \text{ mod }\varpi_v. \]
\end{itemize}
If $Q$ is a set of Taylor--Wiles places and $b, c$ are as above, then we define $U_Q(b, c) \coloneqq  U_Q \cap U_0(b, c)$ and $U_{Q(N), 1}(b, c) \coloneqq  U_{Q(N), 1} \cap U_0(b, c)$. The spaces $H(U_Q(b, c))$ and $H(U_{Q(N), 1}(b, c))$ receive actions of various different kinds of Hecke operators:
\begin{itemize}
    \item Unramified Hecke operators $T_v$ at places $v \not\in S_p \cup T \cup S_{\textnormal{aux}} \cup Q$.
    \item Iwahori-type Hecke operators $U_v$ at places $v \in S_p \cup Q$.
    \item Quaternionic Hecke operators $U_v \coloneqq  [\Pi_v \cO_{D_v}^\times]$ for $v \in T$ (where $\Pi_v \in \cO_{D_v}$ is a choice of uniformizer). 
    \item Diamond operators $\langle u \rangle = [ U_{Q(N), 1}(b, c)_v \diag(u, 1) U_{Q(N), 1}(b, c)_v]$ for $u \in \cO_{F_v}^\times$ and $v \in S_p \cup Q$. 
\end{itemize}
We write $\mathbb{T}^{\textnormal{univ}}$ for the formal polynomial algebra over $\widetilde{\Lambda}$ in the indeterminates $T_v$ ($v \not \in S_p \cup T \cup S_{\textnormal{aux}}$), and $\mathbb{T}^{\textnormal{univ}}_{T \cup Q} \coloneqq  \mathbb{T}^{\textnormal{univ}}[U_v : v \in T \cup Q]$. We define maximal ideals of these rings as follows:
\begin{itemize}
    \item $\mathfrak{m} \leq \mathbb{T}^{\textnormal{univ}}$ is the maximal ideal with residue field $k$ generated by $\mathfrak{m}_{\widetilde{\Lambda}}$ and the elements $T_v - \tr \rhobarss(\Frob_v)$ ($v \not\in S_p \cup S_{\textnormal{aux}} \cup T$). 
    \item If $Q$ is part of the Taylor--Wiles datum $(Q, (\alpha_v^{(1)}, \alpha_v^{(2)})_{v \in Q}, (\lambda_v)_{v \in Q})$, then $\mathfrak{m}_{Q} \leq \mathbb{T}^{\textnormal{univ}}_{T \cup Q}$ is the maximal ideal with residue field $k$ generated by $\mathfrak{m}$ and the elements $U_v - \alpha_v^{(1)}$ ($v \in Q$) and $U_v - 1$ ($v \in T$).
    We allow $Q = \varnothing,$ in which case $\fm_\varnothing \coloneqq  (\fm, U_v-1: v \in T)$.
\end{itemize}
Let $U_p \coloneqq  \prod_{v \in S_p} U_v$, and let $H(U_{Q(N), 1}(b, c))^{\textnormal{ord}} \leq H(U_{Q(N), 1}(b, c))$ denote the $U_p$-ordinary part, i.e.\ the maximal direct summand on which $U_p$ acts invertibly. 
We define $H(U_Q(b, c))^{\textnormal{ord}}, H(U_0(b, c))^{\textnormal{ord}}$ in a similar way.

Note that $H(U_{Q(N), 1}(b, c))^{\textnormal{ord}}$ (resp.\ $H(U_Q(b, c))^{\textnormal{ord}}, H(U_0(b, c))^{\textnormal{ord}}$) is a $\mathbb{T}^{\textnormal{univ}}_{Q \cup T}$-module. 
Only the action of $\widetilde{\Lambda}$ and of $T_v$, $v\in Q$, is not contained in what we've already written. 
The operator $T_v$ ($v \in Q$) acts by the image of $\Art_{F_v}(\varpi_v)$ under the homomorphism from $W_{F_v}^{\textnormal{ab}}$ to the centre of the Hecke algebra at level $U_{Q(N), 1, v}$.
If~$v \in S_p$, we define the action of $\widetilde \Lambda_v$ using the splitting $F_v^\times(p)^{\textnormal{tf}} \cong \cO_{F_v}^\times(p)^{\textnormal{tf}} \times \varpi_v^{\mathbb{Z}_p}$. The action of the group $\cO_{F_v}^\times$ just defined factors through $\cO_{F_v}^\times(p)^{\textnormal{tf}}$; the operator $U_v$ acts invertibly on $H(U_{Q(N), 1}(b, c))^{\textnormal{ord}}$, by definition, so making $\varpi_v \in F_v^\times(p)$ act by $U_v$ defines the structure of $\widetilde{\Lambda}_v$-module.

The Jacquet--Langlands correspondence, together with the assumption in Thm.\ \ref{thm_tame_SW_black_box} about the existence of $\pi$, implies that $\mathfrak{m}_{\varnothing}$ is in the support of $H(U_0(1, 0))^{\textnormal{ord}}$.
(Here \cite[Lem.\ 4.1]{Tho16} shows that we have selected the correct residual eigenvalue if $v \in T$.) The diamond operators make any module $H(U_{Q(N), 1}(b, c))$ into a module for the group algebra of 
\[
\prod_{v \in S_p} (1 + \varpi_v \cO_{F_v})^{\textnormal{tf}} / (1 + \varpi_v^{\max(c, 1)} \cO_{F_v})^{\textnormal{tf}} \times \Delta_Q / p^N.
\]
The action of this algebra commutes with the action of $\mathbb{T}^{\textnormal{univ}}_{T \cup Q}$. 

We now describe a useful way of packaging these actions. Using the ordering of $Q$ that is part of the definition of Taylor--Wiles datum, we can view $H(U_{Q(N), 1}(b, c))^{\textnormal{ord}}$ as an $\cO[(\mathbb{Z}_p / (p^N))^{|Q|}]$-module.
In the next proposition, we write $m_0$ for the exponent of $p$ in the least common multiple of the integers $ | F^\times \backslash (U_0(1, 0) \mathbb{A}_F^{\infty, \times} \cap t^{-1} D^\times t ) |$ for $t \in (D \otimes_F \mathbb{A}_F^\infty)^\times$. 
\begin{pp}\label{prop_freeness_of_modular_forms_at_TW_level}
    Let $N \geq 1$, and let $Q$ be a Taylor--Wiles set of level $N + m_0$. Then:
    \begin{enumerate}
        \item For any $b' \geq b$, $c \leq b$, there are natural isomorphisms
        \[ H(U_{Q(N),1}(b, c))^{\textnormal{ord}}_{\mathfrak{m}_Q} \to H(U_{Q(N), 1}(b', c))^{\textnormal{ord}}_{\mathfrak{m}_Q} \]
        and
        \[ H(U_0(b, c))^{\textnormal{ord}}_{\mathfrak{m}_\varnothing} \to H(U_Q(b, c))^{\textnormal{ord}}_{\mathfrak{m}_Q}. \]
        \item For any $b \geq c$ with $b \geq 1$, $H(U_{Q(N),1}(b, c))^{\textnormal{ord}}_{\mathfrak{m}_Q}$ is a free module over the group algebra 
        \[ \cO \left [\prod\nolimits_{v \in S_p} (1 + \varpi_v \cO_{F_v})^{\textnormal{tf}} / (1 + \varpi_v^{\max(c, 1)} \cO_{F_v})^{\textnormal{tf}} \times (\mathbb{Z}_p / (p^N))^{|Q|} \right ] \]
        and there is an isomorphism
        \[ H(U_{Q(N),1}(b, c))^{\textnormal{ord}}_{\mathfrak{m}_Q} \otimes_{\cO\left [\prod\nolimits_{v \in S_p} (1 + \varpi_v \cO_{F_v})^{\textnormal{tf}} / (1 + \varpi_v^{\max(c, 1)} \cO_{F_v})^{\textnormal{tf}} \times (\mathbb{Z}_p / (p^N))^{|Q|} \right ]} \cO \cong H(U_0(1, 0))^{\textnormal{ord}}_{\mathfrak{m}_\varnothing} \]
        of $\mathbb{T}^{\textnormal{univ}}$-modules.
    \end{enumerate}
\end{pp}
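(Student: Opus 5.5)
The plan is to follow the standard arguments for definite quaternion algebras (as in \cite{Tho16}, \cite{Ger19}), using the finite set $X = D^\times \mathbb{A}_F^{\infty,\times}\backslash (D\otimes_F \mathbb{A}_F^\infty)^\times$ modulo the relevant compact open subgroups.

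\textbf{Step 1: independence of the depth $b$.} The first isomorphism in (1) is Hida's control theorem in the $p$-direction. For $v \in S_p$, the trace map from level $\Iw_v(b',c)$ to $\Iw_v(b,c)$ composed with a suitable power of $U_v$ gives an inverse to pullback on ordinary parts. Concretely, $U_v$ maps $H(\Iw_v(b'))$ into the image of $H(\Iw_v(b'-1))$ whenever $b' > \max(c,1)$. This uses the standard double coset decomposition
\[
\Iw_v(b')\,\diag(\varpi_v,1)\,\Iw_v(b') = \textstyle\coprod_{a} \left(\begin{smallmatrix}\varpi_v & a\\ 0&1\end{smallmatrix}\right)\Iw_v(b').
\]
Since $U_v$ is invertible on ordinary parts, the inclusion is an isomorphism. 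Localisation at $\mathfrak{m}_Q$ is harmless because these maps are $\mathbb{T}^{\textnormal{univ}}_{T\cup Q}$-equivariant.

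\textbf{Step 2: passage from level $U_0$ to Iwahori level at $Q$.} This is the usual Taylor--Wiles computation at each $v \in Q$. The $\GL_2(k(v))$-representation $\Ind_B^G 1$ is built from functions on $\GL_2(F_v)/\Iw_v$. On the $\mathfrak{m}$-localised space, $U_v$ satisfies $U_v^2 - T_vU_v + q_v\langle\varpi_v\rangle = 0$ on the image of $H(U_{Q\setminus\{v\}})^2$. Its roots reduce to the distinct values $\alpha_v^{(1)},\alpha_v^{(2)}$, so $H(U_Q)_{\mathfrak{m}_Q}$ is the $\alpha^{(1)}$-eigenspace. The composite of pullback with projection to that eigenspace is then an isomorphism from $H(U_0)_{\mathfrak{m}_\varnothing}$. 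For this we need that $H(\Iw_v)_{\mathfrak m}$ has no new vectors at $\mathfrak m$. We get this from the fact that $\rhobarss(\Frob_v)$ has distinct eigenvalues whose ratio is not $q_v\equiv 1$, together with the Steinberg and special eigenvalue constraint $U_v^2 = 1$ on the new part. Via Jacquet--Langlands, such new forms would force $\overline\chi(\Frob_v)\equiv q_v^{\pm1} = 1$, contradicting distinctness. We then induct on $|Q|$.

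\textbf{Step 3: freeness over the diamond group algebra, which is the main obstacle.} Write $\Delta$ for the finite group $\prod_{v}(1+\varpi_v\cO_{F_v})^{\textnormal{tf}}/(1+\varpi_v^{\max(c,1)})^{\textnormal{tf}}\times(\bZ_p/p^N)^{|Q|}$. Then $U_{Q(N),1}(b,c)$ is normal in $U_Q(b,c)$ with quotient $\Delta$ after the $\lambda_Q$-identification. The task is to show that $\Delta$ acts freely on $X/U_{Q(N),1}(b,c)$ modulo the centre. The stabiliser of a point $t$ is the image of $(tU t^{-1}\mathbb{A}_F^{\infty,\times}\cap D^\times)/F^\times$, a finite group. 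This is where the level $N+m_0$ is used. These stabilisers have $p$-part bounded by $p^{m_0}$, and an element of $D^\times$ that is unipotent mod $\varpi_v$ at $p$-adic places lies in a torsion-free pro-$p$ group; this needs $b\ge1$ and $p$ odd. The resulting image in $k(v)^\times(p)/p^N$ is killed because $Q$ has level $N+m_0$, so the $p^{m_0}$-torsion maps trivially to $\Delta_Q/p^N$.

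It follows that $H(U_{Q(N),1}(b,c))$ is free over $\cO[\Delta]$. Direct summands of free modules over the local ring $\cO[\Delta]$ are free, so localisation at $\mathfrak{m}_Q$ and taking the ordinary part, both of which are $\cO[\Delta]$-direct summands, preserve freeness. Freeness identifies the $\Delta$-coinvariants with the $\Delta$-invariants, and the invariants are $H(U_Q(b,c))^{\textnormal{ord}}_{\mathfrak{m}_Q}$. Steps 1 and 2 identify this with $H(U_0(1,0))^{\textnormal{ord}}_{\mathfrak m_\varnothing}$; for $c\le 1$ we use Step 1 at level $c=0$, and otherwise we note that the $p$-part is already accounted for in $\Delta$. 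All maps are $\mathbb{T}^{\textnormal{univ}}$-equivariant, which gives the asserted isomorphism. The delicate point is the stabiliser computation: making sure both the $p$-adic and $Q$-components of $\Delta$ act freely simultaneously, with only the single constant $m_0$ absorbing the torsion.
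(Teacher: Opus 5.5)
\textbf{Steps 1 and 2.} These are the standard arguments. In Step 2, make sure the absence of new vectors at $\fm_Q$ is proved integrally. Jacquet--Langlands only sees characteristic~$0$, but the cokernel of $H(U_{Q\setminus\{v\}})^2_{\fm}\to H(U_Q)_{\fm}$ can have torsion. What you actually need is that $U_v^2-1$ kills this cokernel, which is an identity in the Iwahori--Hecke algebra. Combine this with $(\alpha_v^{(1)})^2\neq 1$. You also need injectivity of that map after localising at $\fm$: its kernel consists of functions factoring through the reduced norm, which are supported on twists of $1\oplus\epsilon^{-1}$.

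\textbf{The $Q$-part of Step 3.} This is correct. A stabiliser is the image of a finite group whose order has $p$-part dividing $p^{m_0}$. Since $k(v)^\times(p)$ is cyclic of order at least $p^{N+m_0}$, such elements are $p^N$-th powers and die in $k(v)^\times(p)/(p^N)$.

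\textbf{The gap: the $p$-adic factor of $\Delta$.} The gap is the factor $\prod_{v\in S_p}(1+\varpi_v\cO_{F_v})^{\textnormal{tf}}/(1+\varpi_v^{\max(c,1)}\cO_{F_v})^{\textnormal{tf}}$. Your reason for free action there is wrong, for three reasons.
\begin{itemize}
\item $\Iw_v(b,c)$ does not force unipotence mod $\varpi_v$; only $xw^{-1}$ is constrained, modulo torsion and $1+\varpi_v^c\cO_{F_v}$.
\item Even the pro-$p$ Iwahori is not torsion-free modulo the centre when $\mu_p\subset F_v$, for example $\diag(\zeta_p,1)$. The paper explicitly allows $\mu_p\subset F_v$.
\item The level of $Q$ does not touch this factor, so the $m_0$ trick is unavailable.
\end{itemize}

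\textbf{Free action can actually fail.} Suppose $F(\zeta_p)\hookrightarrow D$ (e.g.\ $S_{\textnormal{aux}}=\varnothing$ and $F\supset\mathbb{Q}(\zeta_p)^+$), and $F_v=\mathbb{Q}_{p^2}(\zeta_p)$ with $\varpi_v=\zeta_p-1$. Take
$g=\left(\begin{smallmatrix}1+\varpi_v a & a(1-a)\\ \varpi_v^2 & 1+\varpi_v(1-a)\end{smallmatrix}\right)$, which lies in $\Iw_v(2)$ and equals $1+\varpi_v P$ with $P$ a rank one idempotent. It has eigenvalues $\zeta_p,1$, so up to scalars it is $\GL_2(F_v)$-conjugate to $\gamma=\zeta_p^{(p+1)/2}\in F(\zeta_p)^\times\subset D^\times$. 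However, $xw^{-1}\equiv 1+\varpi_v(2a-1)\bmod \varpi_v^2$. This is non-trivial in $\cO_{F_v}^\times/\langle\mu(F_v),1+\varpi_v^2\cO_{F_v}\rangle\cong k(v)/\mathbb{F}_p$ whenever $a \bmod \varpi_v\notin\mathbb{F}_p$. Choosing $t$ with $t_v^{-1}\gamma t_v\in F_v^\times g$, and suitable $t_w$ elsewhere, gives a point of level $U_{Q(N),1}(2,2)$ whose stabiliser has non-trivial $p$-adic component.

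\textbf{Repair.} Use Step 1 first. The isomorphisms in (1) are $\Delta$-equivariant, so you may replace $b$ by some $b'\gg c$. Let $g\in\Iw_v(b')$ be non-scalar of finite order in $\operatorname{PGL}_2(F_v)$, with eigenvalues $\alpha,\beta$.
\begin{itemize}
\item We have $x+w=\alpha+\beta$ and $(x-\alpha)(x-\beta)=-yz\equiv 0\bmod\varpi_v^{b'}$. Hence $xw^{-1}\equiv(\alpha/\beta)^{\pm1}\bmod\varpi_v^{b'-\delta}$, where $\delta=v(\alpha/\beta-1)\le e(F_v/\mathbb{Q}_p)/(p-1)$ and $v$ is normalised by $v(\varpi_v)=1$.
\item Krasner's lemma then forces $\alpha/\beta\in\mu(F_v)$ once $b'>2e(F_v/\mathbb{Q}_p)/(p-1)$.
\item Consequently $xw^{-1}\in\mu(F_v)(1+\varpi_v^c\cO_{F_v})$ once $b'\ge c+\delta$.
\end{itemize}
At such depth the $p$-adic factor also acts freely, and your $Q$-argument applies verbatim. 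Freeness of the localised ordinary part then descends to level $b$ via (1).
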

\begin{proof} 
    We omit the proofs of these standard results from Hida theory and the Taylor--Wiles method for $\GL_2$. The observation that the spaces of modular forms here are free, provided $Q$ has level $N + m_0$, appears in Khare--Wintenberger \cite[\S 7]{Kha09}. An abstraction of the underlying principle appears again in \cite[Lem.\ 8.18]{Boc19}. 
\end{proof}
If $Q$ is part of a Taylor--Wiles datum of level~$N$ then $P^{\textnormal{ord}}_{Q}$ and $R^{\textnormal{ord}}_{b, Q}$ have natural structures of $\cO[\Delta_Q]$-algebra, arising from the restriction of the universal pseudocharacter to $G_{F_v}$ (for the places $v \in Q$). The following proposition summarises properties arising from the existence of Galois representations associated to automorphic representations of $(D \otimes_F \mathbb{A}_F)^\times$.
\begin{pp}
    Let $N \geq 1$, and let $Q$ be a Taylor--Wiles set of level $N + m_0$. Then:
    \begin{enumerate}
        \item For any $b \geq \max(c, 1)$, there is a unique structure on $H(U_{Q(N), 1}(b, c))^{\textnormal{ord}}_{\mathfrak{m}_Q}$ of $P^{\textnormal{ord}}_{ Q}$-module such that, for any finite place $v\not\in S_p \cup T \cup S_{\textnormal{aux}} \cup Q$ of $F$, $t^{\textnormal{univ}}(\Frob_v)$ acts by the Hecke operator $T_v$. 
        \item The two induced structures of $\widetilde{\Lambda}[\Delta_Q]$-module (one by diamond operators, the other by the $\widetilde{\Lambda}[\Delta_Q]$-algebra structure on $P^{\textnormal{ord}}_{Q}$) agree. 
        \item The natural map 
        \[ H(U_{Q(N), 1}(b, c))^{\textnormal{ord}}_{\mathfrak{m}_Q} \otimes_{P^{\textnormal{ord}}_{ Q}} P^{\textnormal{ord}} \to H(U_{0}(b, c))^{\textnormal{ord}}_{\mathfrak{m}_\varnothing} \]
        is an isomorphism of $P^{\textnormal{ord}}$-modules. 
    \end{enumerate}
\end{pp}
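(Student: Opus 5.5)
The plan is to build the $P^{\textnormal{ord}}_Q$-module structure from the Galois representations attached to automorphic forms, then check the local conditions defining $\operatorname{PDef}^{\textnormal{ord}}_Q$ and the compatibilities (2) and (3).

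\emph{Step 1: a pseudocharacter over the Hecke algebra.} Let $\mathbb{T}_Q$ be the image of $\mathbb{T}^{\textnormal{univ}}_{T\cup Q}$ in $\operatorname{End}_\cO(H(U_{Q(N),1}(b,c))^{\textnormal{ord}}_{\mathfrak{m}_Q})$. This is a finite, reduced, $\cO$-flat local $\cO$-algebra, since the space is finite free over $\cO$ and the unramified Hecke operators are semisimple. After inverting $p$, Jacquet--Langlands identifies the space with a sum of ordinary cuspidal automorphic representations of $\GL_2(\mathbb{A}_F)$ that are Steinberg at $T\cup S_{\textnormal{aux}}$, together with characters of $D^\times$; the latter are excluded because $\overline{\chi}|_{G_{F(\zeta_p)}}\neq 1$ rules out Eisenstein-type contributions in the trivial-character case. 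Taking $\prod_\pi \operatorname{tr} r_{\pi,\iota}$ gives a pseudocharacter $G_{F,S_p\cup T\cup S_{\textnormal{aux}}\cup Q}\to \mathbb{T}_Q\otimes E$. By Chebotarev it takes values in the closed subring $\mathbb{T}_Q$ generated by the $T_v$, since $t(\Frob_v)=T_v$. It lifts $\overline{\chi}_1+\overline{\chi}_2$ by the choice of $\mathfrak{m}_Q$, and its determinant is $\epsilon^{-1}$ because the central character is trivial and the normalisations are cohomological.

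\emph{Step 2: verifying the local conditions.} At each $v\in S_p$, Hida theory (as in \cite{Ger19}) gives characters $\psi_v$ valued in $\mathbb{T}_Q$, built from $U_v$ and the $\widetilde{\Lambda}_v$-action. Each $r_{\pi,\iota}|_{G_{F_v}}$ is then an extension of $(\epsilon\psi_v)^{-1}$ by $\psi_v$, so the relations $t(\sigma)=\psi_v(\sigma)+(\epsilon\psi_v)(\sigma)^{-1}$ and $t((\sigma-\psi_v(\sigma))(\tau-(\epsilon\psi_v)(\tau)^{-1})\eta)=0$ hold pointwise in each $\pi$-component, hence in $\mathbb{T}_Q$. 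The triviality of $\psi_v$ on $p$-power roots of unity follows from the level structure and from hypothesis (6c) via the congruence.

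At $v\in T\cup S_{\textnormal{aux}}$, local-global compatibility for Steinberg $\pi_v$ gives $r_{\pi,\iota}|_{G_{F_v}}\sim\left(\begin{smallmatrix}\epsilon^{-1}&*\\0&1\end{smallmatrix}\right)$, possibly after an unramified quadratic twist. For $v\in T$ the twist is pinned down by $U_v=1$ on the $\mathfrak{m}_Q$-part, as in \cite[Lem.\ 4.1]{Tho16}. For $v\in S_{\textnormal{aux}}$ one has $q_v\equiv1$ and $\rhobarss|_{G_{F_v}}$ trivial; an unramified quadratic twist of Steinberg would still reduce to the trivial representation, so I would handle this case through the exact definition of the quaternionic level, on which $D_v^\times$ acts trivially. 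The relations in the definition then hold in each component.

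This yields a map $P^{\textnormal{ord}}_Q\to\mathbb{T}_Q$ of $\widetilde{\Lambda}$-algebras, giving existence in (1). Uniqueness follows because $P^{\textnormal{ord}}_Q$ is topologically generated over $\widetilde{\Lambda}$ by the $t^{\textnormal{univ}}(\Frob_v)$, by Chebotarev and continuity.

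\emph{Step 3: parts (2) and (3).} For (2), local-global compatibility at $v\in Q$ shows that $r_{\pi,\iota}|_{G_{F_v}}$ is a sum of two characters, one of which is $\Art_{F_v}$ composed with the diamond character on $k(v)^\times(p)$. Since $\alpha_v^{(1)}\neq\alpha_v^{(2)}$, Hensel's lemma splits the pseudocharacter over $\mathbb{T}_Q$, and the component lifting the $\alpha_v^{(1)}$-eigenvalue matches the diamond operators. The $\widetilde{\Lambda}$-structures agree by construction. For (3), Prop.~\ref{prop_freeness_of_modular_forms_at_TW_level}(2) identifies the diamond coinvariants with $H(U_0(b,c))^{\textnormal{ord}}_{\mathfrak{m}_\varnothing}$ after using the first isomorphism in part (1) of that proposition. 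It then remains to show that $P^{\textnormal{ord}}_Q\otimes_{\cO[\Delta_Q]}\cO\to P^{\textnormal{ord}}$ is an isomorphism, or at least surjective. This holds because imposing trivial diamond characters at $v\in Q$ forces $t|_{G_{F_v}}$ to be unramified, as the other character is determined by the determinant. Compatibility of Hecke actions then finishes the argument.

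\emph{Expected obstacle.} The delicate point is checking the Steinberg-type relations at $S_{\textnormal{aux}}$ in the residually trivial case, and justifying the passage from pointwise relations to relations in $\mathbb{T}_Q$. The latter uses only the reducedness of $\mathbb{T}_Q$ and its embedding into $\prod_\pi\overline{E}$.
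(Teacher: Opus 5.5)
Your route is the paper's. Its proof says only that (1) and (2) are a standard consequence of the existence of, and local--global compatibility for, Galois representations attached to automorphic forms on $D$, and that (3) then follows from Prop.~\ref{prop_freeness_of_modular_forms_at_TW_level}; your Steps 1--3 unpack exactly this.

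\textbf{The places $v \in S_{\textnormal{aux}}$.} This is the step you flag as delicate, and it is argued incorrectly; the fix you propose would not work.

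First, an unramified quadratic twist of Steinberg is \emph{not} residually trivial at such $v$. If $\chi$ is the unramified quadratic character, the attached Galois representation has semisimplification $\chi \oplus \chi\epsilon^{-1}$. Since $q_v \equiv 1 \bmod p$, its reduction sends $\Frob_v$ to $-1$ with multiplicity two, and $-1 \neq 1$ in $k$ because $p$ is odd.

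Second, $D_v^\times$ does not act trivially on $H(U)$. The level at $v$ is $\cO_{D_v}^\times$, and a uniformiser $\Pi_v$ acts through an involution (by the trivial central character). Its $(-1)$-eigenspace is in general nonzero before localisation.

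The correct argument is simpler. By Chebotarev and Brauer--Nesbitt, every $\pi$ contributing to the localised space has $\operatorname{tr} r_{\pi,\iota}$ reducing to $\operatorname{tr}\rhobarss$. By hypothesis (3)(c) of Thm.~\ref{thm_tame_SW_black_box}, $\rhobarss|_{G_{F_v}}$ is trivial for $v \in S_{\textnormal{aux}}$, and this rules out the twist. This is precisely why $\mathfrak{m}_Q$ contains $U_v - 1$ only for $v \in T$. There $q_v \equiv -1 \bmod p$ makes the twist residually indistinguishable, which is the case you handled correctly.

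\textbf{Orientation of the Steinberg extension.} You have it the wrong way round. The relations defining $P^{\textnormal{ord}}_Q$ at $v \in T \cup S_{\textnormal{aux}}$ encode $r|_{G_{F_v}} \sim \left(\begin{smallmatrix} 1 & * \\ 0 & \epsilon^{-1} \end{smallmatrix}\right)$, with the trivial character on the sub-line (compare the proof of Lem.~\ref{lem_local_and_global_lifting_rings}). For a non-split $\left(\begin{smallmatrix} \epsilon^{-1} & b \\ 0 & 1 \end{smallmatrix}\right)$, the relation $t((\sigma-1)(\tau-\epsilon(\tau)^{-1})\eta)=0$ for all $\eta$ would force $b$ to be a coboundary, since $r_{\pi,\iota}$ is absolutely irreducible. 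In the paper's normalisation ($\det r_{\pi,\iota} = \epsilon^{-1}$, cohomology rather than Tate module), local--global compatibility does give the trivial sub-line, so the relations hold. You just need to check them with that orientation.

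\textbf{Smaller points.}
\begin{itemize}
\item Reducedness of $\mathbb{T}_Q$, which you use to check relations pointwise, also needs the $U_v$ ($v \in S_p \cup T \cup Q$) and the diamond operators to act semisimply on the localised $E$-fibre, not just the $T_v$. It holds: after ordinary projection and localisation at $\mathfrak{m}_Q$, each relevant local eigenspace is one-dimensional, and $U_v$ is an involution for $v \in T$.
\item In (3) you only need the trivial surjectivity of $P^{\textnormal{ord}}_Q \otimes_{\cO[\Delta_Q]} \cO \to P^{\textnormal{ord}}$. The composite of $H \otimes_{\cO[\Delta_Q]} \cO \twoheadrightarrow H \otimes_{P^{\textnormal{ord}}_Q} P^{\textnormal{ord}}$ with the natural map to $H(U_0(b,c))^{\textnormal{ord}}_{\mathfrak{m}_\varnothing}$ is already an isomorphism, by freeness over $\cO[\Delta_Q/(p^N)]$ and the second isomorphism of Prop.~\ref{prop_freeness_of_modular_forms_at_TW_level}(1).
\item Your argument for the full isomorphism is incomplete anyway. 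That $t|_{G_{F_v}}$ is a sum of unramified characters does not by itself make $t$ unramified at $v$. You would also need the vanishing of $H^1(F_v, -)$ of the ratio of the two distinct unramified residual characters, to kill ramified off-diagonal entries in the Cayley--Hamilton algebra.
\end{itemize}
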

\begin{proof}
    The existence of a structure of $P(G_{F, S_p \cup T \cup S_{\textnormal{aux}} \cup Q}) \widehat{\otimes}_\cO \widetilde{\Lambda}$-module on $H(U_{Q(N), 1}(b, c))^{\textnormal{ord}}_{\mathfrak{m}_Q}$, and the fact that this action factors through the quotient $P_Q^{\textnormal{ord}}$, is a standard consequence of the existence of, and local-global compatibility for, the Galois representations attached to automorphic representations on $D$. 

    This shows (1) and (2); and (3) then follows from Prop.\ \ref{prop_freeness_of_modular_forms_at_TW_level}. 
\end{proof}
We define 
\[ H_0: = \varprojlim_{c \geq 1} H(U_0(c, c))^{\textnormal{ord}}_{\mathfrak{m}_\varnothing} \]
and
\[ H_{Q(N)} \coloneqq  \varprojlim_{c \geq 1} H(U_{Q(N), 1}(c, c))^{\textnormal{ord}}_{\mathfrak{m}_Q}.\]
\begin{cor}
    $H_0$ is a finitely generated $P^{\textnormal{ord}}$-module, finite free as $\Lambda$-module. Writing $\mathbb{T}_0$ for the quotient of $P^{\textnormal{ord}}$ that acts faithfully on $H_0$, $\mathbb{T}_0$ is a finite, faithful, reduced $\Lambda$-algebra, equidimensional of dimension $1 + [ F : \mathbb{Q} ]$.

    More generally, if $Q$ is a part of a Taylor--Wiles datum of level $N + m_0$, then $H_{Q(N)}$ is a finitely generated $P^{\textnormal{ord}}_{Q}$-module, finite free as $\Lambda[ \Delta_Q / (p^N)]$-module, and there is an isomorphism $H_{Q(N)} \otimes_{\cO[\Delta_Q]} \cO \cong H_0$ of $P^{\textnormal{ord}}_{Q} \otimes_{\cO[\Delta_Q]} \cO \cong P^{\textnormal{ord}}$-modules. 
\end{cor}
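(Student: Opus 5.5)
The plan is to deduce everything from Prop.\ \ref{prop_freeness_of_modular_forms_at_TW_level} and the preceding proposition by passing to the inverse limit over $c$, using standard Hida-theoretic control. First I treat $H_0$. For each $c \geq 1$, Prop.\ \ref{prop_freeness_of_modular_forms_at_TW_level}(2) (with $Q = \varnothing$) shows that $H(U_0(c,c))^{\textnormal{ord}}_{\mathfrak{m}_\varnothing}$ is free over $\Lambda_c \coloneqq \cO[\prod_{v \in S_p}(1+\varpi_v\cO_{F_v})^{\textnormal{tf}}/(1+\varpi_v^c\cO_{F_v})^{\textnormal{tf}}]$. Its coinvariants are $H(U_0(1,0))^{\textnormal{ord}}_{\mathfrak{m}_\varnothing}$, which is finite free over $\cO$ of rank $r$ independent of $c$. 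Hence the rank over $\Lambda_c$ is $r$. The transition maps, given by trace or by the isomorphisms of part (1), are compatible with the quotients $\Lambda_{c+1} \to \Lambda_c$ and induce isomorphisms $H(U_0(c+1,c+1))^{\textnormal{ord}}_{\mathfrak{m}_\varnothing} \otimes_{\Lambda_{c+1}} \Lambda_c \cong H(U_0(c,c))^{\textnormal{ord}}_{\mathfrak{m}_\varnothing}$. These are surjective maps of free modules of the same rank, so they are isomorphisms. I then choose compatible bases (possible by lifting a basis step by step, using Nakayama). Since $\Lambda = \varprojlim \Lambda_c$ (after identifying $\cO_{F_v}^\times(p)^{\textnormal{tf}}$ with $(1+\varpi_v\cO_{F_v})^{\textnormal{tf}}$), this gives $H_0 \cong \Lambda^r$.

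The $P^{\textnormal{ord}}$-actions at finite level are compatible, so they assemble to a continuous action on $H_0$. The image of $P^{\textnormal{ord}}$ in $\operatorname{End}_\Lambda(H_0)$ is finite over $\Lambda$, so $H_0$ is finitely generated over $P^{\textnormal{ord}}$ and $\mathbb{T}_0$ is a finite $\Lambda$-algebra. One point needs checking here: the $\Lambda$-structure through $\widetilde{\Lambda} \to P^{\textnormal{ord}}$ agrees with the diamond action. This is part (2) of the preceding proposition. Faithfulness over $\Lambda$ follows because $H_0$ is free of positive rank, the rank being positive since $\mathfrak{m}_\varnothing$ is in the support. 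Then $\mathbb{T}_0 \subset \operatorname{End}_\Lambda(\Lambda^r)$ is $\Lambda$-torsion-free. Every minimal prime of $\mathbb{T}_0$ therefore lies over $(0) \subset \Lambda$, and finiteness gives equidimensionality of dimension $\dim \Lambda = 1 + \sum_{v\in S_p}[F_v:\bQ_p] = 1+[F:\bQ]$.

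Reducedness is the main obstacle. Since $\mathbb{T}_0$ is $\Lambda$-torsion-free, it embeds in $\mathbb{T}_0 \otimes_\Lambda \operatorname{Frac}\Lambda$, so it suffices to show semisimplicity after inverting. My plan is to use the density of arithmetic points. For the Zariski-dense set of height-one primes $\wp \leq \Lambda$ corresponding to sufficiently regular classical weights with trivial nebentypus-type character, Hida's control theorem identifies $H_0 \otimes_\Lambda \Lambda/\wp$ with a space of classical ordinary quaternionic forms. On that space the Hecke operators away from $S_p\cup T\cup S_{\aux}$ act semisimply (they are normal for the Petersson inner product). Multiplicity one, via Jacquet--Langlands and strong multiplicity one, gives an action through a reduced quotient after inverting $p$. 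A nilpotent element of $\mathbb{T}_0$ would then vanish modulo infinitely many such $\wp$, hence lie in their intersection, which is zero in a torsion-free finite $\Lambda$-module. One also has to see that the $U_v$ for $v \in S_p$ and the $\widetilde{\Lambda}$-action lie in the image of $P^{\textnormal{ord}}$; this is part of the preceding proposition.

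The Taylor--Wiles level statement is identical, with $\Lambda_c$ replaced by $\Lambda_c[(\bZ_p/p^N)^{|Q|}]$ and using the full freeness statement of Prop.\ \ref{prop_freeness_of_modular_forms_at_TW_level}(2). The isomorphism $H_{Q(N)}\otimes_{\cO[\Delta_Q]}\cO \cong H_0$ follows by taking the limit of part (3) of the preceding proposition. Passage through the limit is harmless because the modules are finite free over complete rings, so the Mittag-Leffler condition holds and the tensor product commutes with $\varprojlim$. The identification $P^{\textnormal{ord}}_Q\otimes_{\cO[\Delta_Q]}\cO\cong P^{\textnormal{ord}}$ holds because killing the diamond action forces $t|_{G_{F_v}}$ to be unramified at $v\in Q$. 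Here $\rhobarss(\Frob_v)$ has distinct eigenvalues, so the restricted pseudocharacter splits as a sum of two characters, and inertia acts only through the $\Delta_Q$-characters.
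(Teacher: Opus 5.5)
Your proposal is correct. For everything except reducedness it spells out what the paper treats as immediate from Prop.~\ref{prop_freeness_of_modular_forms_at_TW_level} and the proposition following it. For reducedness, however, you take a different route from the paper.

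The paper argues at finite level. Each faithful quotient $\mathbb{T}(H(U_0(c,c))^{\textnormal{ord}}_{\mathfrak{m}_\varnothing})$ of $\mathbb{T}^{\textnormal{univ}}$ is $\cO$-flat with \'etale $E$-fibre, hence reduced. Then $\mathbb{T}_0$ is identified with their inverse limit. An embedding into that limit would suffice, and it follows from surjectivity of $H_0 \to H(U_0(c,c))^{\textnormal{ord}}_{\mathfrak{m}_\varnothing}$.

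You instead specialise at a Zariski-dense set of arithmetic weights. You use Hida's control theorem to identify $H_0/\wp H_0$ with classical ordinary forms of trivial nebentypus (Iwahori level), and then use semisimplicity there.

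Both arguments rest on the same kind of input: strong multiplicity one plus uniqueness of the ordinary vector in each local component. The paper needs this only for the weight-$0$ spaces with $p$-power nebentypus from which $H_0$ is built, so it requires no change-of-weight control theorem. Your route imports that control theorem, which is standard but not recorded in the paper. In exchange, you work only with Iwahori-level classical forms, where the one-dimensionality of the ordinary line is transparent.

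Two small corrections. First, the primes cut out by arithmetic weights have dimension one, i.e.\ height $[F:\mathbb{Q}]$ in $\Lambda$, not height one; what you actually use is their Zariski density, which does hold. Second, vanishing on $H_0/\wp H_0$ places a nilpotent $x$ in $\wp \operatorname{End}_\Lambda(H_0)$ rather than in $\wp\mathbb{T}_0$. So the intersection should be taken in $\operatorname{End}_\Lambda(H_0)\cong M_r(\Lambda)$, where $\bigcap_\wp \wp M_r(\Lambda) = 0$.
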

\begin{proof}
    We just consider the case $Q = \varnothing$. In this case, we need only justify the reducedness of $\mathbb{T}_0$. If $c \geq 1$, let $\mathbb{T}(H(U_0(c, c))^{\textnormal{ord}}_{\mathfrak{m}_\varnothing})$ denote the quotient of $\mathbb{T}^{\textnormal{univ}}$ that acts faithfully on $H(U_0(c, c))^{\textnormal{ord}}_{\mathfrak{m}_\varnothing}$. Then $\mathbb{T}(H(U_0(c, c))^{\textnormal{ord}}_{\mathfrak{m}_\varnothing})$ is reduced (it is $\cO$-flat and its $E$-fibre is \'etale) and we can identify $\mathbb{T}_0$ with the inverse limit 
    \[ \mathbb{T}_0 = \varprojlim_c\mathbb{T}(H(U_0(c, c))^{\textnormal{ord}}_{\mathfrak{m}_\varnothing}). \qedhere \]
\end{proof}

\subsection{First patching argument}\label{subsec:first patching argument}
Recall that we have fixed a model $\rho'$ over $\cO$ for $r_{\pi, \iota}$. Since we are assuming that $\pi_v$ is the Steinberg representation for all~$v \in T,$
the representation $\rho' \text{ mod } \varpi$ has a $k[G_{F_v}]$-stable line on which the $G_{F_v}$-action is trivial. We conclude that $\rho' \text{ mod }\varpi = \overline{\rho}_{b'}$ for some non-zero $[b'] \in H^1_{T\textnormal{-split}}(F_{S_p \cup S_{\textnormal{aux}}} / F, \overline{\chi})$.

Choose, for each $N \geq 1$, a Taylor--Wiles datum with set $Q_N= (v_1^{(N)}, \dots, v_{|T|}^{(N)} )$ of level $N + m_0$ satisfying the conditions of Prop.\ \ref{prop_existence_of_TW_places_first_patching}. 
Let $S_\infty \coloneqq  \cO \llbracket  \mathbb{Z}_p^{|T|} \rrbracket $, and let $\mathfrak{a} \leq S_\infty$ be the augmentation ideal. 
We also define 
\[
\mathfrak{a}_N \coloneqq  \ker(S_\infty \to \cO [ (\mathbb{Z}_p / (p^N))^{|T|}]) 
\]
and 
\[
\mathfrak{b}_N \coloneqq  (\mathfrak{m}_\Lambda^N, \mathfrak{a}_N) \leq \Lambda \widehat{\otimes}_\cO S_\infty. 
\]
Then $\mathfrak{b}_N$ is an open ideal of $\Lambda \widehat{\otimes}_\cO S_\infty$, which is a complete Noetherian local $\cO$-algebra. 

Let $d_0 \coloneqq  \dim_k H_0 / (\mathfrak{m}_\Lambda)$, and if $N \geq 1$, let $r_N \coloneqq  N d_0 |T| p^N$. 
We define 
\[
\mathcal{D}^s G_{F, S_p \cup T \cup S_{\textnormal{aux}} \cup Q_N} \coloneqq  P_s(\Gal(F_{S_p \cup T \cup S_{\textnormal{aux}} \cup Q_N} / F(\overline{\chi}_1, \overline{\chi}_2))), 
\]
with notation as in the statement of Lem.\ \ref{lem_quantitative_continuity_of_pseudocharacters}. Then 
\[ \mathcal{D}^s G_{F, S_p \cup T \cup S_{\textnormal{aux}} \cup Q_N} \leq G_{F, S_p \cup T \cup S_{\textnormal{aux}} \cup Q_N} \]
is an open subgroup, and the group
\[ \Gamma_{S_p \cup T \cup S_{\textnormal{aux}} \cup Q_N} \coloneqq  \varprojlim_s G_{F, S_p \cup T \cup S_{\textnormal{aux}} \cup Q_N} / \mathcal{D}^s G_{F, S_p \cup T \cup S_{\textnormal{aux}} \cup Q_N} \]
may be identified with the Galois group, over $F$, of the maximal pro-$p$ extension of $F(\overline{\chi}_1, \overline{\chi}_2)$ contained in $F_{S_p \cup T \cup S_{\textnormal{aux}} \cup Q_N}$. 

We define a patching datum of level $N \geq 1$ to consist of the following data:
\begin{enumerate}
    \item A complete Noetherian local $\Lambda \widehat{\otimes}_\cO S_\infty$-algebra $\mathbf{P}_N$, together with a homomorphism $\mathbf{P}_N \to P^{\textnormal{ord}} / \mathfrak{m}_{P^{\textnormal{ord}}}^{r_N}$ of $\Lambda \widehat{\otimes}_\cO S_\infty$-algebras.
    \item For each non-zero $[b] \in H^1_{T \textnormal{-split}}(F_{S_p \cup S_{\textnormal{aux}}} / F, \overline{\chi})$, a complete Noetherian local $\mathbf{P}_N$-algebra $\mathbf{R}_{b, N}$, together with a homomorphism $\mathbf{R}_{b, N} \to R^{\textnormal{ord}}_{b} / \mathfrak{m}_{R^{\textnormal{ord}}_{b}}^{r_N}$, making the diagram
    \[ \xymatrix{ \mathbf{P}_N \ar[r] \ar[d] & P^{\textnormal{ord}} / \mathfrak{m}_{P^{\textnormal{ord}}}^{r_N} \ar[d] \\ \mathbf{R}_{b, N} \ar[r] & R^{\textnormal{ord}}_{b} / \mathfrak{m}_{R^{\textnormal{ord}}_{b}}^{r_N} }
    \]
    commute.
    \item A $\mathbf{P}_N$-module $\mathbf{H}_N$, finite free as $\Lambda \widehat{\otimes}_\cO S_\infty / (\mathfrak{b}_N)$-module, together with an isomorphism $\mathbf{H}_N / (\mathfrak{b}_N, \mathfrak{a}) \cong H_0 / (\mathfrak{b}_N)$ of $\mathbf{P}_N$-modules (where $\mathbf{P}_N$ acts via its homomorphism to $P^{\textnormal{ord}} / \mathfrak{m}_{P^{\textnormal{ord}}}^{r_N}$).
    \item A finite group $\Gamma_N$, together with a pseudorepresentation $\mathbf{t}_N : \Gamma_N \to \mathbf{P}_N$ and, for each non-zero $[b] \in H^1_{T \textnormal{-split}}(F_{S_p \cup S_{\textnormal{aux}}} / F, \overline{\chi})$, a representation $\mathbf{r}_{b, N} : \Gamma_N \to \GL_2(\mathbf{R}_{b, N})$.
    \item A homomorphism $\Gamma_N \to G_{F, S_p \cup T \cup S_{\textnormal{aux}}} / \mathcal{D}^{3 r_N} G_{F, S_p \cup T \cup S_{\textnormal{aux}}}$ such that the pushforward of $\mathbf{t}_N$ (resp.\ $\mathbf{r}_{b, N}$) through $\mathbf {P}_N \to P^{\textnormal{ord}} / \mathfrak{m}_{P^{\textnormal{ord}}}^{r_N}$ (resp.\ $\mathbf{R}_{b, N} \to R^{\textnormal{ord}}_{b} / \mathfrak{m}_{R^{\textnormal{ord}}_{b}}^{r_N}$) agrees with the pseudocharacter (resp.\ representation) of $\Gamma_N$ pulled back from the universal pseudocharacter (resp.\ representation) 
    of $G_{F, S_p \cup T \cup S_{\textnormal{aux}}} / \mathcal{D}^{3 r_N} G_{F, S_p \cup T \cup S_{\textnormal{aux}}}$
    over the latter ring. 
    Here we use Lem.\ \ref{lem_quantitative_continuity_of_pseudocharacters} 
    to ensure that $G_{F, S_p \cup T \cup S_{\textnormal{aux}}}$ acts through its quotient $G_{F, S_p \cup T \cup S_{\textnormal{aux}}} / \mathcal{D}^{3 r_N} G_{F, S_p \cup T \cup S_{\textnormal{aux}}}$.
    \item An element $\mathbf{c}_u \in \Gamma_N$ lifting the image of $c_u$ in $G_{F, S_p \cup T \cup S_{\textnormal{aux}}} / \mathcal{D}^{3 r_N} G_{F, S_p \cup T \cup S_{\textnormal{aux}}}$.
    \item For each $v \in S_p \cup T \cup S_{\textnormal{aux}}$, a homomorphism $\gamma_v : G_{F_v} \to \Gamma_N$ lifting the tautological homomorphism $G_{F_v} \to G_{F, T \cup S_p\cup S_{\aux}} / \mathcal{D}^{3 r_N} G_{F, T \cup S_p\cup S_{\aux}}$. 
    \item For each $i = 1, \dots, |T|$, a homomorphism $\gamma_i : \mathbb{Z}_p \to \Gamma_N$.
    \item For each non-zero $[b] \in H^1_{T \textnormal{-split}}(F_{S_p \cup S_{\textnormal{aux}}} / F, \overline{\chi})$, a surjection 
    \begin{multline*} R_{\textnormal{loc}}^{\square_{S_p \cup S_{\textnormal{aux}}}} \widehat{\otimes}_\cO S_\infty\llbracket x_1, \dots x_g\rrbracket  / (\mathfrak{m}_{R_{\textnormal{loc}}^{\square_{S_p \cup S_{\textnormal{aux}}}} \widehat{\otimes}_\cO S_\infty\llbracket x_1, \dots x_g\rrbracket }^{r_N}) 
    \\ \to \mathbf{R}_{b, N} \llbracket  y_1, \dots, y_{4|S_p \cup S_{\textnormal{aux}} \cup T| - 1} \rrbracket  / \mathfrak{m}_{\mathbf{R}_{b, N} \llbracket  y_1, \dots, y_{4|S_p \cup S_{\textnormal{aux}} \cup T| - 1} \rrbracket }^{r_N} 
    \end{multline*}
    whose kernel may be generated by $r$ elements, where 
    \[ g = h^1_{S_p \cup S_{\textnormal{aux}}\textnormal{-split}}(F_{S_p \cup T} / F, \ad^0 \overline{\rho}_b) + 4(|S_p \cup S_{\textnormal{aux}}| + 2 |T|) \]
    and $g - r = - [ F : \mathbb{Q}] - 1 + |S_p \cup S_{\textnormal{aux}}| + 4|T|$.
\end{enumerate}
The patching data of level $N$ form a groupoid $\operatorname{Patch}_N$, where the morphisms 
\[(\mathbf{P}_N, \{ \mathbf{R}_{b, N} \}_b, \mathbf{H}_N, \Gamma_N, \mathbf{t}_N, \{ \mathbf{r}_{b, N} \}_b )  \to  (\mathbf{P}'_N, \{ \mathbf{R}'_{b, N} \}_b, \mathbf{H}'_N, \Gamma'_N, \mathbf{t}'_N, \{ \mathbf{r}'_{b, N} \}_b ) \]
are given by isomorphisms of the objects appearing in the given tuple making all the relevant diagrams commute. In particular, the isomorphism $\mathbf{P}_N \cong \mathbf{P}'_N$ should carry the pseudocharacter $\mathbf{t}_N$ of $\Gamma_N$ to the pseudocharacter $\mathbf{t}'_N$ of $\Gamma'_N$. 

For each $1 \leq N \leq M$ with $M \geq r_N + 1$, we can define a patching datum $D(N, M) \in \operatorname{Patch}_N$ as follows:
\begin{enumerate}
    \item $\mathbf{P}_N \coloneqq  P^{\textnormal{ord}}_{Q_M} / \mathfrak{m}_{P^{\textnormal{ord}}_{Q_M}}^{r_N}$.
    \item For each non-zero $[b] \in H^1_{T \textnormal{-split}}(F_{S_p \cup S_{\textnormal{aux}}} / F, \overline{\chi})$, $\mathbf{R}_{b, N} \coloneqq  R^{\textnormal{ord}}_{b,Q_M} / \mathfrak{m}_{R^{\textnormal{ord}}_{b, Q_M}}^{r_N}$.
    \item $\mathbf{H}_N \coloneqq  H_{Q_M(N)} / (\mathfrak{b}_N)$. 
    \item $\Gamma_N \coloneqq  G_{F, S_p \cup T \cup S_{\textnormal{aux}} \cup Q_M} / \mathcal{D}^{3 r_N} G_{F, S_p \cup T \cup S_{\textnormal{aux}} \cup Q_M}$, with $\mathbf{t}_N$, $\mathbf{r}_{b, N}$ the specialisations of the universal (pseudo-)characters.
    \item The homomorphism $\Gamma_N \to G_{F, S_p \cup T \cup S_{\textnormal{aux}}} / \mathcal{D}^{3 r_N} G_{F, S_p \cup T \cup S_{\textnormal{aux}}}$ is the tautological one.
    \item $\mathbf{c}_u$ is the image of the fixed complex conjugation $c_u$ in $\Gamma_N$.
    \item For each $v \in S_p \cup S_{\textnormal{aux}} \cup T$, the map $G_{F_v} \to \Gamma_N$ is the pushforward of the tautological one $G_{F_v} \to G_{F, S_p  \cup T \cup S_{\textnormal{aux}} \cup Q_M}$. 
    \item The homomorphism $\gamma_i : \mathbb{Z}_p \to \Gamma_N$ is the projection of the composite $\mathbb{Z}_p \to I_{F_{v_i^{(M)}}}(p) \to G_{F, S_p \cup T \cup S_{\textnormal{aux}} \cup Q_M}$, where $\mathbb{Z}_p \to I_{F_{v_i^{(M)}}}(p)$ is an arbitrary lift of the map $\mathbb{Z}_p \to k(v_i^{(M)})^\times(p)$ associated to the Taylor--Wiles datum. 
    \item The map 
    \[ R_{\textnormal{loc}}^{\square_{S_p \cup S_{\textnormal{aux}}}} \widehat{\otimes}_\cO S_\infty\llbracket x_1, \dots x_g\rrbracket  \to \mathbf{R}_{b, N} \llbracket  y_1, \dots, y_{4|S_p \cup T \cup S_{\textnormal{aux}}| - 1} \rrbracket  / \mathfrak{m}_{\mathbf{R}_{b, N} \llbracket  y_1, \dots, y_{4|S_p \cup T \cup S_{\textnormal{aux}}| - 1} \rrbracket }^{r_N} \]
    comes by quotienting out the map $R_{\textnormal{loc}}^{\square_{S_p \cup S_{\textnormal{aux}}}} \widehat{\otimes} \mathcal{O}[\Delta_{Q_M}] \llbracket x_1, \dots, x_g\rrbracket  \to R_{b, Q_M}^{\textnormal{ord}, \square_{S_p \cup S_{\textnormal{aux}} \cup Q_M}}$, obtained using the formalism of e.g.\ \cite[\S 5]{Tho16}, with $g = \dim_k H^1_{\mathcal{S}_{Q_M}, S_p \cup S_{\textnormal{aux}} \cup Q_M}(\ad^0 \overline{\rho}_b) + 3|Q_M|$, bounded above by 
    \begin{multline}\label{eqn:bound on g in Euler characteristic formula}
    h^1_{S_p \cup S_{\textnormal{aux}} \cup Q_M\textnormal{-split}}(F_{S_p \cup T} / F, \ad^0 \overline{\rho}_b) + 4(|S_p \cup S_{\textnormal{aux}} \cup T \cup Q_M|) \\ \leq  h^1_{S_p \cup S_{\textnormal{aux}}\textnormal{-split}}(F_{S_p \cup T} / F, \ad^0 \overline{\rho}_b) + 4(|S_p \cup S_{\textnormal{aux}}| + 2|T|) ,
    \end{multline}
    and making the identifications 
    \[
    R_v^\square = \cO[\Delta_v]\llbracket x_1, x_2, x_3\rrbracket 
    \]
    for~$v \in Q_M$ (justified as $R_v^\square$ is a formally smooth $\cO[\Delta_{v}]$-algebra), and 
    \[
    R_{b, Q_M}^{\textnormal{ord}, \square_{S_p \cup S_{\textnormal{aux}} \cup Q_M}} \cong R_{b, Q_M}^{\textnormal{ord}}\llbracket y_1, \dots, y_{4 |S_p \cup S_{\textnormal{aux}} \cup Q_M|-1}\rrbracket  
    \]
    (depending on a choice of representative for the universal deformation). Note that the condition $M \geq r_N + 1$ implies that the natural map
    \begin{equation}\label{eqn:kernel containment on Iwasawa algebra} 
    S_\infty \llbracket x_1, x_2, x_3\rrbracket  / (\mathfrak{m}_{S_\infty}^{r_N}) \to \cO[\Delta_{Q_M}]\llbracket x_1, x_2, x_3\rrbracket  / (\mathfrak{m}_{S_\infty}^{r_N}) \end{equation}
    is an isomorphism. The formula for $g-r$ uses Tate's Euler characteristic formula (and, in particular, the vanishing of the terms $h^0(F, \ad^0 \overline{\rho}_b)$ and $h^0(F, \ad^0 \overline{\rho}_b(1))$). Note finally that we can augment $g$ and $r$ in order to saturate the upper bound in \eqref{eqn:bound on g in Euler characteristic formula}.
\end{enumerate}
\begin{lemma}\label{lem_finiteness_of_patching_data}
    \leavevmode
    \begin{enumerate}
        \item With notation as above, $\mathbf{t}_N$ and $\mathbf{r}_{b, N}$ do factor through $\Gamma_N$, and $D(N, M)$ is a well-defined patching datum of level $N$.
        \item For fixed $N$, the patching data $D(N, M)$ ($M \geq r_N+1$) fall into finitely many isomorphism classes. 
    \end{enumerate}
\end{lemma}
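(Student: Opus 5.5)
Part (1) comes down to applying Lem.\ \ref{lem_quantitative_continuity_of_pseudocharacters}(2), and then checking the items of the definition of patching datum one at a time. Take $\Gamma = G_{F, S_p \cup T \cup S_{\textnormal{aux}} \cup Q_M}$ and $\Delta = \Gal(F_{S_p \cup T \cup S_{\textnormal{aux}} \cup Q_M}/F(\overline{\chi}_1,\overline{\chi}_2))$. Then $\Delta$ is normal and open, and $\overline{t} = \overline{\chi}_1 + \overline{\chi}_2$ factors through $\Gamma/\Delta$. The ring $\mathbf{P}_N = P^{\textnormal{ord}}_{Q_M}/\mathfrak{m}^{r_N}$ has $\mathfrak{m}^{r_N}=0$, so $\mathbf{t}_N$ factors through $\Gamma/P_{3r_N}(\Delta) = \Gamma_N$. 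For $\mathbf{r}_{b,N}$, apply the same argument to the Cayley--Hamilton quotient used in that proof. The algebra $\mathbf{R}_{b,N}[\Gamma]\to M_2(\mathbf{R}_{b,N})$ is Cayley--Hamilton, so $\mathbf{r}_{b,N}$ factors through $R = \mathbf{R}_{b,N}[\Gamma]/\mathcal{I}_{\CH}$. The image $I$ of $\ker\overline{\rho}^{\textnormal{ss}}$ satisfies $I^{3r_N}=0$, and the induction in the proof of Lem.\ \ref{lem_quantitative_continuity_of_pseudocharacters} shows that $P'_{3r_N}(\Delta)$ maps to $1$. This handles the representation itself and not only its trace.

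The remaining items are then checked in turn. The map to $G_{F,S_p\cup T\cup S_{\textnormal{aux}}}/\mathcal{D}^{3r_N}$ is well defined because the lower $p$-central series is functorial under surjections. The compatibility with $P^{\textnormal{ord}}/\mathfrak{m}^{r_N}$ and $R^{\textnormal{ord}}_b/\mathfrak{m}^{r_N}$ comes from the natural maps $P^{\textnormal{ord}}_{Q_M}\to P^{\textnormal{ord}}$ obtained by quotienting by the augmentation ideal of $\cO[\Delta_{Q_M}]$. The $\Lambda\widehat{\otimes} S_\infty$-algebra structure is defined via $S_\infty\to\cO[\Delta_{Q_M}]$ using the $\lambda_v$. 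Since $Q_M$ has level $M+m_0\ge N+m_0$, this is compatible with $\mathfrak{a}_N$. The freeness of $\mathbf{H}_N$ over $\Lambda\widehat{\otimes}S_\infty/\mathfrak{b}_N$, together with the identification $\mathbf{H}_N/\mathfrak{a}\cong H_0/\mathfrak{b}_N$, follows from the corollary on $H_{Q(N)}$ and Prop.\ \ref{prop_freeness_of_modular_forms_at_TW_level}. Item (9) was essentially established when the datum was constructed: it uses \eqref{eqn:kernel containment on Iwasawa algebra}, the Euler characteristic count, and the fact that a presentation with $r$ relations survives reduction modulo $\mathfrak{m}^{r_N}$.

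For part (2), I would bound the size of each component of $D(N,M)$ independently of $M$. First, $\mathbf{P}_N$ and $\mathbf{R}_{b,N}$ are quotients of power series rings over $\cO$ modulo $\mathfrak{m}^{r_N}$, in a number of variables bounded independently of $M$. For $\mathbf{R}_{b,N}$ the bound comes from the surjection in (9), where $g$ is independent of $M$. For $\mathbf{P}_N$, Lem.\ \ref{lem_bounding_reducible_locus} does not directly help. Instead I would bound the embedding dimension of $P^{\textnormal{ord}}_{Q_M}$: it injects into a product of the $R^{\textnormal{ord}}_{b,Q_M}$ modulo the reducible part, or more simply its tangent space is bounded by pseudodeformation tangent spaces of $\Gamma$. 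This in turn is controlled by $h^1$ of $\Delta^{\textnormal{ab}}/p$ over $F(\overline{\chi}_1,\overline{\chi}_2)$ with ramification at $Q_M$, and grows linearly in $|Q_M|=|T|$, which is fixed. Consequently these rings have bounded cardinality. Second, $\mathbf{H}_N$ is free of rank $d_0$ over the finite ring $\Lambda\widehat{\otimes}S_\infty/\mathfrak{b}_N$, so it has bounded size.

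The main obstacle is $\Gamma_N$, which must have cardinality bounded independently of $M$. Here $\Gamma_N$ is an extension of $\Gal(F(\overline{\chi}_1,\overline{\chi}_2)/F)$ by $\Delta/P_{3r_N}(\Delta)$, a finite $p$-group of lower $p$-central class $3r_N$. Its size is bounded in terms of $3r_N$ and the minimal number of generators of $\Delta(p)$, namely $\dim H^1(\Delta,\bF_p)$. By class field theory over $F(\overline{\chi}_1,\overline{\chi}_2)$, that dimension is at most a constant plus a multiple of $|Q_M|=|T|$. Given these bounds, there are only finitely many possibilities for the rings, groups, modules and all the maps between them, since all objects are finite of bounded size, and the auxiliary maps $\gamma_v,\gamma_i$ land in finite sets. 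Hence the data fall into finitely many isomorphism classes.
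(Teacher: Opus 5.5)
\paragraph{The gap: the module structure on $\mathbf{H}_N$.}
Your factorisation of $\mathbf{t}_N$ and $\mathbf{r}_{b,N}$ through $\Gamma_N$ is sound. The Cayley--Hamilton route for $\mathbf{r}_{b,N}$ is actually more careful than a bare appeal to $P_N(\widehat{\GL}_2(A))=1$, because $\mathbf{r}_{b,N}(\Delta)$ need not lie in $\widehat{\GL}_2$. Your part (2) is an acceptable elaboration of the bounded-cardinality argument that the paper takes from \cite[Lem.\ 3.28]{Tho15}.

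The gap is in part (1). You never check that $\mathbf{H}_N = H_{Q_M(N)}/(\mathfrak{b}_N)$ is actually a module over $\mathbf{P}_N = P^{\textnormal{ord}}_{Q_M}/\mathfrak{m}_{P^{\textnormal{ord}}_{Q_M}}^{r_N}$, i.e.\ that $\mathfrak{m}_{P^{\textnormal{ord}}_{Q_M}}^{r_N}$ annihilates $H_{Q_M(N)}/(\mathfrak{b}_N)$. Likewise, $\mathfrak{m}_{P^{\textnormal{ord}}}^{r_N}$ must kill $H_0/(\mathfrak{b}_N)$ for the isomorphism in item (3) to be one of $\mathbf{P}_N$-modules. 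Finite generation gives you some power of the maximal ideal killing these modules. That is not enough: the exponent must be independent of $M$, otherwise $D(N,M)$ is not an object of $\operatorname{Patch}_N$. This is exactly why $r_N$ is defined to be $N d_0 |T| p^N$. Your argument never uses this value, or $d_0$ at all.

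\paragraph{The missing argument.}
Take $f_1,\dots,f_{d_0}\in\mathfrak{m}_{P^{\textnormal{ord}}_{Q_M}}$.
\begin{enumerate}
\item They act as commuting nilpotent endomorphisms of the $k$-vector space $H_{Q_M(N)}/(\mathfrak{a},\mathfrak{m}_\Lambda)\cong H_0/(\mathfrak{m}_\Lambda)$, which has dimension $d_0$. Hence $f_1\cdots f_{d_0}=0$ on this space.
\item Therefore $f_1\cdots f_{d_0}$ carries $H_{Q_M(N)}/(\mathfrak{b}_N)$ into $(\mathfrak{a},\mathfrak{m}_\Lambda)\,H_{Q_M(N)}/(\mathfrak{b}_N)$.
\item One has $(\mathfrak{a},\mathfrak{m}_\Lambda)^{|T|p^N}\le(\mathfrak{a}_N,\mathfrak{m}_\Lambda)$, since $(1+y)^{p^N}-1\equiv y^{p^N}$ mod $p$. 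Raising to the $N$-th power gives $(\mathfrak{a},\mathfrak{m}_\Lambda)^{|T|Np^N}\le\mathfrak{b}_N$.
\item Iterating, any product of $d_0|T|Np^N=r_N$ elements of $\mathfrak{m}_{P^{\textnormal{ord}}_{Q_M}}$ acts as zero on $H_{Q_M(N)}/(\mathfrak{b}_N)$. This bound is uniform in $M$.
\end{enumerate}
Without this step, $D(N,M)$ has not been shown to be a patching datum of level $N$. The finiteness count in your part (2) would then concern objects whose membership in $\operatorname{Patch}_N$ has not been established.
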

\begin{proof}
    (1) The claim for $\mathbf{t}_N$ follows from Lem.\ \ref{lem_quantitative_continuity_of_pseudocharacters}. The claim for $\mathbf{r}_{b, N}$ is simpler (note that, if $A \in \CNL_\cO$ and $\mathfrak{m}_A^N = 0$, then $P_N(\widehat{\GL}_2(A)) = 1$). We also need to check the claim that the action of $P^{\textnormal{ord}}_{ Q_M}$ on $H_{Q_M(N)} / (\mathfrak{b}_N)$ factors through the quotient $P^{\textnormal{ord}}_{Q_M} / \mathfrak{m}_{P^{\textnormal{ord}}_{Q_M}}^{r_N}$. If $f_1, \dots, f_{d_0} \in \mathfrak{m}_{P^{\textnormal{ord}}_{Q_M}}$, then they induce commuting nilpotent endomorphisms of $H_{Q_M(N)} / (\mathfrak{a}, \mathfrak{m}_\Lambda)$, so the product $f_1 \dots f_{d_0}$ is 0, and we have 
    \[ f_1 \dots f_{d_0} \cdot H_{Q_M(N)} / (\mathfrak{b}_N) \leq (\mathfrak{a}, \mathfrak{m}_\Lambda) H_{Q_M(N)} / (\mathfrak{b}_N). \]
    Since $(\mathfrak{a}, \mathfrak{m}_\Lambda)^{|T| p^N} \leq (\mathfrak{a}_N, \mathfrak{m}_\Lambda)$, hence $(\mathfrak{a}, \mathfrak{m}_\Lambda)^{|T| N p^N} \leq (\mathfrak{a}_N, \mathfrak{m}_\Lambda^N) = \mathfrak{b}_N$, it follows that if $f_1, \dots, f_{d_0 |T| N p^N} \in \mathfrak{m}_{P^{\textnormal{ord}}_{Q_M}}$, then $f_1 \dots f_{d_0 |T| N p^N}$ acts as 0 on $H_{Q_M(N)} / (\mathfrak{b}_N)$. Since $r_N = d_0 |T| N p^N$, the claim follows.

    Finally, we also need to explain why the map
    \begin{multline*}  R_{\textnormal{loc}}^{\square_{S_p \cup S_{\textnormal{aux}}}} \widehat{\otimes}_\cO S_\infty\llbracket x_1, \dots x_g\rrbracket  / \mathfrak{m}_{R_{\textnormal{loc}}^{\square_{S_p \cup S_{\textnormal{aux}}}} \widehat{\otimes}_\cO S_\infty\llbracket x_1, \dots x_g\rrbracket }^{r_N} \\ \to R^{\textnormal{ord}}_{b, Q_M} \llbracket  y_1, \dots, y_{4|S_p \cup S_{\textnormal{aux}} \cup T| - 1} \rrbracket  / \mathfrak{m}_{R^{\textnormal{ord}}_{b, Q_M} \llbracket  y_1, \dots, y_{4|S_p \cup S_{\textnormal{aux}} \cup T| - 1} \rrbracket }^{r_N} 
    \end{multline*}
    determined by our choices has a kernel generated by $r$ elements, whereas the Euler characteristic formalism would imply only that a map 
    \begin{multline*} R_{\textnormal{loc}}^{\square_{S_p \cup S_{\textnormal{aux}}}} \widehat{\otimes}_\cO \cO[\Delta_{Q_M}]\llbracket x_1, \dots x_g\rrbracket  / \mathfrak{m}_{R_{\textnormal{loc}}^{\square_{S_p \cup S_{\textnormal{aux}}}} \widehat{\otimes}_\cO \cO[\Delta_{Q_M}]\llbracket x_1, \dots x_g\rrbracket }^{r_N} \\ \to R^{\textnormal{ord}}_{b,  Q_M} \llbracket  y_1, \dots, y_{4|S_p \cup S_{\textnormal{aux}} \cup T| - 1} \rrbracket  / \mathfrak{m}_{R^{\textnormal{ord}}_{b, Q_M} \llbracket  y_1, \dots, y_{4|S_p \cup S_{\textnormal{aux}} \cup T| - 1} \rrbracket }^{r_N} 
    \end{multline*}
    (thus with $S_\infty$ replaced by $\cO[\Delta_{Q_M}]$) has a kernel generated by $r$ elements. This replacement is justified provided that $\ker(S_\infty \to \cO[\Delta_{Q_M}]) \leq \mathfrak{a}_M$ is contained in $\mathfrak{m}_{S_\infty}^{r_N}$. As observed in~\eqref{eqn:kernel containment on Iwasawa algebra}, this is true because we impose the condition $M \geq r_N + 1$. 

    (2) It suffices to check for fixed $N$, there is a uniform bound on the cardinality of the objects $\mathbf{P}_N$, $\mathbf{R}_{b, N}$, $\Gamma_N$, etc.\ appearing in the definition of the patching data $D(N, M)$. This is true: an argument for the pseudodeformation ring is given in \cite[Lem.\ 3.28]{Tho15}, and this argument can be adapted essentially without change to the other objects appearing. 
\end{proof}
If $N \geq 1$, then there is a functor $\Pi_{N+1} : \operatorname{Patch}_{N+1} \to \operatorname{Patch}_N$ given by ``dividing out by $\mathfrak{m}^{r_N}$  and $\mathfrak{b}_N$''. By the second part of Lem.\ \ref{lem_finiteness_of_patching_data}, we can find a sequence of integers $1 \leq M_1 < M_2 < M_3 < \dots$ and, for every $N \geq 1$, an isomorphism $\Pi_{N+1}( D(N+1, M_{N+1}) ) \cong D(N, M_N)$ in $\operatorname{Patch}_{N}$. We therefore obtain inverse systems of rings $P^{\textnormal{ord}}_{ Q_{M_N}} / \mathfrak{m}_{P^{\textnormal{ord}}_{Q_{M_N}}}^{r_N}$, modules $H_{Q_{M_N}(N)} / (\mathfrak{b}_N)$, and groups $G_{F, S_p \cup T \cup S_{\textnormal{aux}} \cup Q_{M_N}} / \mathcal{D}^{3 r_N} G_{F, S_p \cup T \cup S_{\textnormal{aux}} \cup Q_{M_N}}$. Passing to the inverse limit, we obtain the following objects:
\begin{enumerate}
    \item A complete Noetherian local $\Lambda \widehat{\otimes}_\cO S_\infty$-algebra $P_\infty$, together with a homomorphism $P_\infty \to P^{\textnormal{ord}}$ of $\Lambda \widehat{\otimes}_\cO S_\infty$-algebras.
    \item For each non-zero $[b] \in H^1_{T\textnormal{-split}}(F_{S_p \cup S_{\textnormal{aux}}} / F, \overline{\chi})$, a complete Noetherian local $P_\infty$-algebra $R_{b, \infty}$, together with a homomorphism $R_{b, \infty} \to R^{\textnormal{ord}}_{b}$ making the diagram
    \[ \xymatrix{ P_\infty \ar[r] \ar[d] & P^{\textnormal{ord}} \ar[d] \\ 
    R_{b, \infty} \ar[r] & R^{\textnormal{ord}}_{b} }
    \]
    commute.
    \item A $P_\infty$-module $H_\infty$, finite free as $\Lambda \widehat{\otimes}_\cO S_\infty$-module, together with an isomorphism $H_\infty / (\mathfrak{a}) \cong H_0$ of $P_\infty$-modules. 
    \item A profinite group $\Gamma_\infty$, together with a pseudorepresentation $t_\infty : \Gamma_\infty \to P_\infty$, and for each non-zero $b \in H^1_{T \textnormal{-split}}(F_{S_p \cup S_{\textnormal{aux}}} / F, \overline{\chi})$, a representation $r_{b, \infty} : \Gamma_\infty \to \GL_2(R_{b, \infty})$. 
    \item A homomorphism $\Gamma_\infty \to \Gamma_{F, S_p \cup T \cup S_{\textnormal{aux}}}$ such that the pushforward of $t_\infty$ (resp.\ $r_{b, \infty}$) to $P^{\textnormal{ord}}$ (resp.\ $R^{\textnormal{ord}}_{b}$) agrees with the pseudocharacter (resp.\ representation) of $\Gamma_\infty$ pulled back from the universal pseudocharacter (resp.\ representation) over these rings. 
    \item A  lift $c_\infty \in \Gamma_\infty$ of $c_u$ of order 2.
    \item For each place $v \in S_p \cup T \cup S_{\textnormal{aux}}$, a homomorphism $G_{F_v} \to \Gamma_\infty$.
    \item For each $i = 1, \dots, |T|$, a homomorphism $\gamma_i : \mathbb{Z}_p \to \Gamma_\infty$, such that $t_\infty \circ \gamma_i = \mu_{i, 1} + \mu_{i, 2}$ is a direct sum of characters of~$\mathbb Z_p$, with the characters $\mu_{1, 1}, \dots, \mu_{|T|, 1}$ collectively determining the $S_\infty$-algebra structure on $P_\infty$. %
\end{enumerate}
\begin{lemma}\label{lem_patched_ring_represents_a_functor}
    \leavevmode
    \begin{enumerate}
        \item $P_\infty$ represents the functor of pairs of pseudodeformations of $t_\infty \text{ mod }\mathfrak{m}_{P_\infty}$ and characters $\psi_v$ ($v \in S_p$), of determinant $\epsilon^{-1}$, and satisfying the analogue of the local conditions defining $P^{\textnormal{ord}}$ at the start of \S \ref{subsec_deformation_and_pseudodeformation_rings} (well-defined because of the given homomorphisms $G_{F_v} \to \Gamma_\infty$, $v \in S_p \cup T \cup S_{\textnormal{aux}}$). 
        \item If the class $[b] \in H^1_{T\textnormal{-split}}(F_{S_p \cup S_{\textnormal{aux}}} / F, \overline{\chi})$ is non-zero, then $R_{b, \infty}$ represents the functor of deformations of $r_{b, \infty} \text{ mod } \mathfrak{m}_{R_{b, \infty}}$, of determinant $\epsilon^{-1}$, and satisfying the analogous local conditions at places $v \in S_p \cup T \cup S_{\textnormal{aux}}$. 
    \end{enumerate}
\end{lemma}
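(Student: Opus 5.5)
The plan is to prove that the finite-level objects represent the analogous functors, and then pass to the inverse limit using the compatibility of the transition maps.

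\emph{Step 1 (finite level).} For each $N$, consider the datum $D(N, M_N)$. Here $\Gamma_N = G_{F, S_p \cup T \cup S_{\textnormal{aux}} \cup Q_{M_N}} / \mathcal{D}^{3r_N}$, and $\mathbf{P}_N = P^{\textnormal{ord}}_{Q_{M_N}}/\mathfrak{m}^{r_N}$. We check that for $A \in \CNL_\cO$ with $\mathfrak{m}_A^{r_N} = 0$, $\Hom(\mathbf{P}_N, A)$ is in natural bijection with pairs $((\psi_v), t)$, where $t$ is a pseudodeformation of $\mathbf{t}_N \bmod \mathfrak{m}$ to $A$ as a pseudocharacter of $\Gamma_N$, with determinant $\epsilon^{-1}$ and the local conditions at $v \in S_p \cup T \cup S_{\textnormal{aux}}$ imposed via $\gamma_v$.

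This holds by the definition of $P^{\textnormal{ord}}_{Q_{M_N}}$ together with Lem.\ \ref{lem_quantitative_continuity_of_pseudocharacters}(2). Any pseudodeformation of $G_{F, S_p \cup T \cup S_{\textnormal{aux}} \cup Q_{M_N}}$ valued in such an $A$ factors through $\Gamma_N$. Here we use that $\mathcal{D}^s$ is defined by the lower $p$-central series of the kernel of $\overline{\chi}_1, \overline{\chi}_2$, and that $P_{3N'} \supseteq P_{3r_N}$ for $N' \le r_N$. Note that the determinant and local conditions are conditions on values of $t$ on $\Gamma_N$ and on the images of the $G_{F_v}$, so they descend. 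The same argument applies to $\mathbf{R}_{b,N}$ and deformations of $\mathbf{r}_{b,N} \bmod \mathfrak{m}$. Here one uses that $\widehat{\GL}_2(A)$ is killed by $P_{r_N}$, so representations valued in $\widehat{\GL}_2(A)$ factor through $\Gamma_N$, and conjugacy classes are unchanged.

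\emph{Step 2 (compatibility).} The isomorphisms $\Pi_{N+1}(D(N+1, M_{N+1})) \cong D(N, M_N)$ identify the group $\Gamma_{N+1}/\mathcal{D}^{3r_N}$ with $\Gamma_N$, compatibly with $\mathbf{t}$, the $\gamma_v$ and $\mathbf{c}_u$. We therefore obtain the following: for $A$ with $\mathfrak{m}_A^{r_N}=0$, a point of $\mathbf{P}_{N+1}$ valued in $A$ factors through $\mathbf{P}_{N+1}/\mathfrak{m}^{r_N} = \mathbf{P}_N$. The corresponding pseudocharacter of $\Gamma_{N+1}$ then factors through $\Gamma_N$ by Step 1. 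Hence the bijections of Step 1 are compatible along the inverse system.

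\emph{Step 3 (limit).} Set $\Gamma_\infty = \varprojlim \Gamma_N$ and $P_\infty = \varprojlim \mathbf{P}_N$. Let $A$ be Artinian, and pick $N$ with $\mathfrak{m}_A^{r_N}=0$. Then:
\begin{itemize}
\item $\Hom(P_\infty, A) = \Hom(\mathbf{P}_N, A)$ for $N$ large, since $P_\infty \to \mathbf{P}_N$ is surjective with kernel $\mathfrak{m}_{P_\infty}^{r_N}$ (this needs a check of Mittag-Leffler type, namely that the kernels form the $\mathfrak{m}$-adic filtration; it follows from the transition maps being the quotient by $\mathfrak{m}^{r_N}$).
\item Continuous pseudocharacters of $\Gamma_\infty$ valued in $A$ that lift the residual one factor through $\Gamma_\infty \to \Gamma_N$. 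This is Lem.\ \ref{lem_quantitative_continuity_of_pseudocharacters}(2) applied to $\Gamma_\infty$. We need $\Gamma_\infty$ to satisfy $\Phi_p$ and the subgroups $\ker(\Gamma_\infty \to \Gamma_N)$ to be the terms $P_{3r_N}$ of the filtration attached to the open normal subgroup $\ker(\Gamma_\infty \to \Gamma_0)$, where $\Gamma_0$ is the Galois group of $F(\overline{\chi}_1,\overline{\chi}_2)/F$. Both follow because $\Gamma_\infty$ is an extension of a finite group by the limit of pro-$p$ groups with bounded generator rank; the bound on the rank comes from the bounded size of the $\Gamma_N/P_2$, as in Lem.\ \ref{lem_finiteness_of_patching_data}.
\end{itemize}
Thus the functors agree on Artinian objects, and pro-representability gives (1). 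Part (2) is identical, with $\widehat{\GL}_2$-conjugacy classes.

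I expect the main obstacle to be the verification in Step 3 that $\ker(\Gamma_\infty \to \Gamma_N)$ is exactly the $3r_N$-th term of the lower $p$-central filtration of $\Gamma_\infty$. For this I would first show, by induction on $j$, that the filtration on $\Gamma_{N+1}$ pushes forward onto that of $\Gamma_N$ for $j \le 3r_N$. This uses that the image of a term of the lower $p$-central series under a surjection is the corresponding term.
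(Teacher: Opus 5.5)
Your proposal is correct and is essentially the paper's argument. The paper likewise uses Lem.~\ref{lem_quantitative_continuity_of_pseudocharacters}(2) to show that a pseudodeformation of $t_\infty$ modulo $\mathfrak{m}^N$ factors through $\Gamma_\infty/\mathcal{D}^{3N}\Gamma_\infty$, identifies this group with a quotient of the finite-level group $G_{F, S_p \cup T \cup S_{\textnormal{aux}} \cup Q_{M_N}}/\mathcal{D}^{3N}$ (using $r_N \geq N$), and then invokes the universal property of $P^{\textnormal{ord}}_{Q_{M_N}}$; the only difference is packaging: instead of comparing functors on Artinian rings, it takes the representing object $P'_\infty$, gets $P'_\infty \to P_\infty$ from its universal property, and builds compatible maps $P_\infty \to P^{\textnormal{ord}}_{Q_{M_N}}/\mathfrak{m}^{r_N} \to P'_\infty/\mathfrak{m}^N$ whose limit is the inverse.
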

\begin{proof}
    We just show (1), which is the trickier of the two. Let $P(\Gamma_\infty) \widehat{\otimes}_\cO \widetilde{\Lambda} \to P'_\infty$ denote the representing object of the given functor. Then there's a map $P'_\infty \to P_\infty$ arising from the universal property of $P'_\infty$. We need to show that it's an isomorphism. To this end, consider a quotient $P'_\infty / \mathfrak{m}_{P'_\infty}^N$ for some $N \geq 1$. Then Lem.\ \ref{lem_quantitative_continuity_of_pseudocharacters} shows that the universal pseudocharacter $t(\Gamma_\infty) \text{ mod } \mathfrak{m}_{P'_\infty}^N$ factors through $\Gamma_\infty / \mathcal{D}^{3 N} \Gamma_\infty$, which may be identified with a quotient of the group
    \[ G_{F, S_p \cup T \cup S_{\textnormal{aux}} \cup Q_{M_N}}/\mathcal{D}^{3 N} G_{F, S_p \cup T \cup S_{\textnormal{aux}} \cup Q_{M_N}} \]
    (using here that $r_N \geq N$). We therefore find that there is a map 
    \[ P_\infty \to P^{\textnormal{ord}}_{Q_{M_N}} / \mathfrak{m}_{P^{\textnormal{ord}}_{ Q_{M_N}}}^{r_N} \to P'_\infty / \mathfrak{m}_{P'_\infty}^N. \]
    These maps are compatible as $N$ varies, and so passing to the limit gives a map $P_\infty \to P'_\infty$, which one checks is an inverse to the map arising from the universal property. 
\end{proof}
\begin{cor}\label{cor_comparison_of_patched_objects}
    Let $\mathfrak{q}_b \leq R_{b, \infty}$ be a prime ideal of dimension 1 and characteristic $p$ such that the associated representation is irreducible, and let $\mathfrak{q}^{\textnormal{ps}}$ denote its pullback to $P_\infty$. Then the induced map $\widehat{(P_{\infty, \mathfrak{q}^{\textnormal{ps}}})} \to \widehat{(R_{b, \infty, \mathfrak{q}_b})}$ is an isomorphism.
\end{cor}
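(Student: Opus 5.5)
The plan is to reduce the statement to Lem.\ \ref{lem_comparison_of_P_and_R_b}(1), using the functorial descriptions of $P_\infty$ and $R_{b,\infty}$ given by Lem.\ \ref{lem_patched_ring_represents_a_functor}. We apply the framework of \S\ref{subsec:pseudodeformations and deformations} with $\Gamma = \Gamma_\infty$ and $c_\infty$ the chosen lift of $c_u$. We take $\Gamma_0$ to be the image of $\Gamma_\infty$ in $\Gamma_{F, S_p \cup T \cup S_{\aux}}$, which is the Galois group of a $p$-extension of the finite extension $F(\overline{\chi}_1,\overline{\chi}_2)$ and so satisfies $\Phi_p$. Note that $\Gamma_\infty$ itself satisfies $\Phi_p$: it is an inverse limit of the groups $\Gamma_N$, each of which is an extension of a finite prime-to-$p$ quotient by a $p$-group with uniformly bounded Frattini quotient, because the number of generators of $\mathcal{D}^1/\mathcal{D}^2$ is bounded independently of $Q_M$ once $|Q_M| = |T|$ is fixed. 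The residual representation $r_{b,\infty} \bmod \mathfrak{m}$ is $\overline{\rho}_b$, which is indecomposable with $\overline{\rho}_b(c_\infty) = \diag(-1,1)$.

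We then write the patched rings as quotients of the unrestricted rings. Let $P := P(\Gamma_\infty)$ and $R_b := R_b(\Gamma_\infty)$. By Lem.\ \ref{lem_patched_ring_represents_a_functor}, $P_\infty$ is the quotient of $P \widehat{\otimes}_\cO \widetilde{\Lambda}$ cut out by the local conditions of \S\ref{subsubsec:ordinary pseudodeformation rings}. Similarly, $R_{b,\infty}$ is the quotient of $R_b \widehat{\otimes}_\cO \widetilde{\Lambda}$ cut out by the local conditions, which are exactly the matrix relations of \S\ref{subsubsec:ordinary deformation rings} evaluated on $r$. The key compatibility is that, at a prime where the representation is irreducible, these two sets of conditions cut out the same locus after localisation and completion. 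The point is that each pseudocharacter condition has the form $t(\xi\eta) = 0$ for all $\eta$, with $\xi = (\sigma - \psi_v(\sigma))(\tau - (\epsilon\psi_v)(\tau)^{-1})$. For a representation $r$ which is irreducible over $k(\mathfrak{q}_b)$, Burnside's theorem shows that $r(\eta)$ spans $M_2$ after localising at $\mathfrak{q}_b$. Hence the trace relations are equivalent to the matrix relations $r(\xi) = 0$ over $\widehat{R_{b,\mathfrak{q}_b}}$.

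Now set $\tilde{\mathfrak{q}}_b$ to be the pullback of $\mathfrak{q}_b$ to $R_b \widehat{\otimes} \widetilde{\Lambda}$. This prime need not be of dimension $1$, so instead of invoking Lem.\ \ref{lem_comparison_of_P_and_R_b}(1) directly I would reprove its mechanism, following \cite[Cor.\ 2.3.6]{Pan22}. Over the complete local ring $A = \widehat{(P_{\infty,\mathfrak{q}^{\mathrm{ps}}})}$, the pseudocharacter $t_\infty$ is residually absolutely irreducible: its residue field is the fraction field of a characteristic $p$ dimension $1$ domain, a local field of characteristic $p$, and the irreducibility is inherited from $\mathfrak{q}_b$. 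Such a pseudocharacter comes from a representation $\Gamma_\infty \to \GL_2(A)$ which is unique up to conjugacy, by Nyssen--Rouquier. Its residual representation over $k(\mathfrak{q}^{\mathrm{ps}})$ admits a $\Gamma_\infty$-stable lattice reducing to $\overline{\rho}_b$, namely the one coming from $\rho_{\mathfrak{q}_b}$.

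The deformation theory of $\overline{\rho}_b$ over $\mathcal{O}$-algebras, restricted to those augmented to $k(\mathfrak{q}_b)$ through $\rho_{\mathfrak{q}_b}$, then matches the pseudodeformation theory over $A$. By the previous paragraph the local conditions agree, so both completed local rings pro-represent the same functor on Artinian local $k(\mathfrak{q}^{\mathrm{ps}})$-algebras, namely deformations of $\rho_{\mathfrak{q}_b} \otimes k(\mathfrak{q}_b)$ subject to the local conditions. This gives an isomorphism. The residue field extension $k(\mathfrak{q}^{\mathrm{ps}}) \to k(\mathfrak{q}_b)$ is trivial because the matrix entries lie in $k(\mathfrak{q}^{\mathrm{ps}})$ after normalising by $x(\sigma_0,\tau_0)$, as in \eqref{eq_rep_of_irreducible_ps}. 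The main obstacle I expect is precisely this residue-field and lattice issue, that is, checking that the construction in Pan's argument goes through with the ordinary and Steinberg conditions imposed. I would handle it by noting that those conditions are closed conditions pulled back compatibly along $P \to R_b$, so that Pan's isomorphism descends to the quotients.
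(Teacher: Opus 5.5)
This is essentially the paper's argument: Lem.~\ref{lem_patched_ring_represents_a_functor} exhibits $P_\infty$ and $R_{b,\infty}$ as (pseudo)deformation rings of $\Gamma_\infty$ with local conditions, and Pan's comparison (Lem.~\ref{lem_comparison_of_P_and_R_b}(1)) then gives the isomorphism; your trace-pairing (Burnside) check is the compatibility, left implicit in the paper, that lets the isomorphism pass from the unrestricted rings to the quotients. Your stated reason for re-deriving Pan's mechanism is mistaken, though: $R_{b,\infty}$ is a quotient of $R_b(\Gamma_\infty)\widehat{\otimes}_{\cO}\widetilde{\Lambda}$, so the pullback of $\mathfrak{q}_b$ there has residue ring $R_{b,\infty}/\mathfrak{q}_b$ and is of dimension $1$, which is exactly what your final ``descend to the quotients'' step needs.
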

\begin{proof}
    This follows from Lem.\ \ref{lem_patched_ring_represents_a_functor} and Lem.\ \ref{lem_comparison_of_P_and_R_b}. 
\end{proof}
Let $\mathcal{R}_\infty \leq P_\infty$ be the reducibility ideal of the pseudocharacter $t_\infty$ (by definition, generated by the elements $x(\sigma, \tau)$ for $\sigma, \tau \in \Gamma_\infty$---note that the definition of these elements in terms of $t_\infty$ uses our patched complex conjugation $c_\infty \in \Gamma_\infty$). If we are in the dihedral case, let $\mathcal{D}_\infty \leq P_\infty$ denote the $\overline{\chi}$-dihedral ideal, defined by the equation $t_\infty = t_\infty \otimes [\overline{\chi}]$ on the universal pseudocharacter. 
\begin{lemma}\label{lem_zariski_density}
    Let $\mathfrak{q}^{\textnormal{ps}}$ be a prime ideal of characteristic $p$, let $K \coloneqq  k(\mathfrak{q}^{\textnormal{ps}})$, and let $\rho_\mathfrak{q} : \Gamma_\infty \to \GL_2(K)$ be a semisimple representation with associated pseudocharacter $t_\infty \text{ mod }\mathfrak{q}^{\textnormal{ps}}$. Suppose that the following conditions are satisfied:
    \begin{enumerate}
        \item $\mathcal{R}_\infty \not\leq \mathfrak{q}^{\textnormal{ps}}$.
        \item There exists $g \in \Gamma_\infty$ such that the ratio of the eigenvalues of $\rho_{\mathfrak{q}}(g)$ is an element of $K^\times$ of infinite order.
        \item If we are in the dihedral case, then $\mathcal{D}_\infty \not\leq \mathfrak{q}^{\textnormal{ps}}$.
    \end{enumerate}
    Then $\rho_{\mathfrak{q}}$ is absolutely irreducible and $\ad^0 \rho_{\mathfrak{q}}$ is absolutely irreducible, even after restriction to the subgroup $\ker(\Gamma_\infty \to \Gal(F(\zeta_{p^\infty}) / F))$.
\end{lemma}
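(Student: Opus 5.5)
First I show that $\rho_{\mathfrak q}$ is absolutely irreducible. By condition (1) there are $\sigma,\tau \in \Gamma_\infty$ with $x(\sigma,\tau) \notin \mathfrak q^{\textnormal{ps}}$. The recipe of \S\ref{subsec:pseudo matrix entries}, which uses the element $c_\infty$ of order $2$ with $t_\infty(c_\infty)=0$, then gives a representation over $K$ (or over a valuation ring in $K$) with trace $t_\infty \bmod \mathfrak q^{\textnormal{ps}}$. Since $x(\sigma,\tau)\neq 0$, this representation is absolutely irreducible. By uniqueness of semisimple representations with a given pseudocharacter over $\overline K$, the representation $\rho_{\mathfrak q}\otimes \overline K$ is isomorphic to it and is therefore absolutely irreducible.

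Next, let $H \coloneqq \ker(\Gamma_\infty \to \Gal(F(\zeta_{p^\infty})/F))$. I would argue by contradiction, assuming $\ad^0\rho_{\mathfrak q}|_H\otimes\overline K$ is reducible. The key observation is that $\Gal(F(\zeta_{p^\infty})/F)$ is abelian, and its pro-$p$ part is procyclic. Moreover, $\rho_{\mathfrak q}$ is a characteristic $p$ representation of a group $\Gamma_\infty$, and $\Gamma_\infty$ is an extension of the finite prime-to-$p$ group $\Gal(F(\overline\chi_1,\overline\chi_2)/F)$ by a pro-$p$ group. I would use the standard classification of subgroups of $\GL_2(\overline K)$ for which $\ad^0$ is reducible. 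Such a subgroup, up to scalars, is contained in a Borel subgroup, or in the normaliser of a torus, or has finite (exceptional) projective image; in the exceptional case $\ad^0$ is irreducible anyway, except in degenerate cases. I would run through these cases for $\rho_{\mathfrak q}|_H$:
\begin{enumerate}
\item If $\rho_{\mathfrak q}|_H$ is reducible, then, since $H$ is normal in $\Gamma_\infty$ and $\rho_{\mathfrak q}$ is irreducible, Clifford theory shows that either $\rho_{\mathfrak q}|_H$ is a sum of two distinct characters permuted by $\Gamma_\infty$, or it is scalar up to a common character. In the distinct-characters case, $\rho_{\mathfrak q}$ is induced from an index-$2$ subgroup containing $H$. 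In the scalar case, the projective image of $\Gamma_\infty$ is a quotient of the abelian group $\Gal(F(\zeta_{p^\infty})/F)$. An irreducible $2$-dimensional representation with abelian projective image is again dihedral, induced from an index-$2$ subgroup. Hence both subcases reduce to the dihedral situation.
\item If $\rho_{\mathfrak q}|_H$ is irreducible but $\ad^0\rho_{\mathfrak q}|_H$ is reducible, then $\rho_{\mathfrak q}|_H$ is induced from an index-$2$ subgroup of $H$. In this case $\rho_{\mathfrak q}|_H$ has projective image dihedral, or lying inside a torus normaliser.
\end{enumerate}

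In each case one obtains a quadratic character $\eta$ of $\Gamma_\infty$ (or of a finite-index subgroup) with $\rho_{\mathfrak q}\cong\rho_{\mathfrak q}\otimes\eta$. The unreduced forms of these characters are killed as follows.
\begin{itemize}
\item \textbf{Finite projective image.} Here every element has eigenvalue ratio a root of unity, which contradicts condition (2).
\item \textbf{$\eta$ factors through $\Gal(F(\zeta_{p^\infty})/F)$.} Then $\eta$ is trivial on $G_{F(\zeta_p)}$-type elements. Reducing mod $\mathfrak m$, we get $\overline\chi_1\oplus\overline\chi_2 \cong (\overline\chi_1\oplus\overline\chi_2)\otimes\eta$. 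Since $p$ is odd, $\eta$ has prime-to-$p$ order, so this forces either $\eta=1$ or $\overline\chi=\eta$. If $\overline\chi=\eta$ with $\eta$ factoring through $\Gal(F(\zeta_{p^\infty})/F)$, then $\overline\chi|_{G_{F(\zeta_p)}}=1$ (the pro-$p$ part carries no quadratic characters), contradicting hypothesis (1) of Thm.~\ref{thm_tame_SW_black_box}.
\item \textbf{General $\eta$.} The same residual argument shows $\eta=\overline\chi$ (its Teichm\"uller lift). We are then in the dihedral case and $t_\infty \bmod \mathfrak q^{\textnormal{ps}}$ is $[\overline\chi]$-invariant, so $\mathcal D_\infty\le \mathfrak q^{\textnormal{ps}}$, contradicting condition (3).
\end{itemize}

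The main obstacle is the second reduction step: making the group-theoretic classification rigorous over an arbitrary characteristic $p$ field $K$, which is typically a Laurent series field, and handling the case where the relevant index-$2$ subgroup does not contain $H$ or is not normal in $\Gamma_\infty$. For the normality issue I would first try the following. Since $H$ is normal in $\Gamma_\infty$, the set of quadratic characters $\eta$ of $H$ with $\rho|_H\cong\rho|_H\otimes\eta$ is $\Gamma_\infty$-stable and has order at most $3$, because it is a subgroup of $(\bZ/2)^2$. One can then pass to a finite-index normal subgroup, using that the quotient $\Gamma_\infty/H$ is abelian, pro-(finite prime-to-$p$ times $\bZ_p$), and cannot support the $S_3$-type permutations needed. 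The condition (2) element $g$ would be used to rule out the projective image being $A_4$, $S_4$, or $A_5$.
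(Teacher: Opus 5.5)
Your proof that $\rho_{\mathfrak q}$ is absolutely irreducible is correct. Your Case 1 also works: there the index-$2$ subgroup contains $H$, so the quadratic character is defined on all of $\Gamma_\infty$.

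The gap is in Case 2, which carries the actual content of the lemma. There you only obtain a nontrivial quadratic character $\eta$ of $H$ with $\rho_{\mathfrak q}|_H\cong\rho_{\mathfrak q}|_H\otimes\eta$. But $H$ has \emph{infinite} index in $\Gamma_\infty$, since $\Gamma_\infty/H$ is infinite, so your parenthetical ``or of a finite-index subgroup'' does not apply. Your last bullet then runs the residual argument as if $\eta$ were defined on $\Gamma_\infty$. Run on $H$, it only gives $\overline{\chi}|_H=\eta$, i.e.\ that $\overline{\chi}^2$ factors through $\Gal(F(\zeta_p)/F)$. This does not make $\overline{\chi}$ quadratic, so you are not in the dihedral case. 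Hypothesis (3) concerns the identity $t_\infty=t_\infty\otimes[\overline{\chi}]$ on all of $\Gamma_\infty$, so it cannot be invoked. The repair you sketch addresses the wrong point. What is needed is not that $\eta$ be $\Gamma_\infty$-stable (and an abelian quotient can in any case permute three self-twists cyclically), but that the self-twist \emph{extend} from $H$ to $\Gamma_\infty$.

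Within your framework this can be fixed using (2), in two steps.
\begin{enumerate}
\item Suppose $\rho_{\mathfrak q}|_H$ had more than one nontrivial self-twist, hence three. Then its projective image would be a Klein four-group, whose normaliser in $\operatorname{PGL}_2$ is finite. So the projective image of $\Gamma_\infty$ would be finite, contradicting (2). Hence $\eta$ is the unique nontrivial self-twist.
\item By Schur's lemma the $\eta$-eigenline $L\subset\ad^0\rho_{\mathfrak q}$ for $H$ is then unique, hence $\Gamma_\infty$-stable. It is spanned by an invertible trace-zero matrix, since a nilpotent one would have an $H$-stable kernel. So $\Gamma_\infty$ acts on $L$ by a quadratic character $\delta$ with $\delta|_H=\eta$ and $\rho_{\mathfrak q}\cong\rho_{\mathfrak q}\otimes\delta$, and your residual argument applies to $\delta$.
\end{enumerate}

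The paper avoids Clifford theory on $H$ altogether. It takes the Zariski closure $G$ of the projective image of the whole of $\Gamma_\infty$. Condition (2) puts a split torus $T$ in $G$, and irreducibility forces $G=\operatorname{PGL}_2$ or $G=N(T)$. The normaliser case gives a quadratic $\delta$ on $\Gamma_\infty$ with $t_\infty \text{ mod }\mathfrak{q}^{\textnormal{ps}}$ invariant under twisting by $\delta$, which the residual argument and (3) rule out. The paper then observes that the commutator subgroup of $\rho_{\mathfrak q}(\Gamma_\infty)$ is Zariski dense in $\operatorname{SL}_2$, and lies in $\rho_{\mathfrak q}(H)$ because $\Gamma_\infty/H$ is abelian. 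That observation is the idea your proposal is missing. It also makes any classification of finite subgroups of $\operatorname{PGL}_2$ unnecessary.
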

\begin{proof}
    We claim that the image of $\rho_{\mathfrak{q}}(\Gamma_\infty)$ in $\operatorname{PGL}_2(K)$ is Zariski dense. This in turn implies that the commutator subgroup $[\rho_{\mathfrak{q}}(\Gamma_\infty), \rho_{\mathfrak{q}}(\Gamma_\infty)]$ is Zariski dense in $\operatorname{SL}_2(K)$, implying the truth of the lemma. 

    To establish the claim, observe that the absolute irreducibility of $\rho_{\mathfrak{q}}$ follows immediately from the fact that $\mathcal{R}_\infty \not\leq \mathfrak{q}^{\textnormal{ps}}$. Let $G$ denote the Zariski closure of the image of $\rho_{\mathfrak{q}}(\Gamma_\infty)$ in $\operatorname{PGL}_2(K)$. The existence of an element in the image with eigenvalue ratio of infinite order implies that $G$ must contain a split torus $T$. By irreducibility, we must have either $G = \operatorname{PGL}_2$, or $G = N_{\operatorname{PGL}_2}(T)$. In the latter case, there is a non-trivial quadratic character $\delta : \Gamma_\infty \to k^\times$ such that $t = t \otimes \delta$. Reducing modulo the maximal ideal, we find $\overline{t} = \overline{t} \otimes \delta$, hence $\overline{\chi}_2 = \overline{\chi}_1 \delta$. This is only possible if $\overline{\chi}$ is quadratic and $\delta = \overline{\chi}$, i.e.\ we are in the dihedral case. However, this possibility is explicitly ruled out by our hypotheses. 
\end{proof}
\begin{pp}\label{prop_p_infty_red_dim}
    We have 
    \[ \dim P_\infty / (\mathcal{R}_\infty, \varpi) \leq 1 + \delta_F \]
    and, if we are in the dihedral case, 
    \[ \dim P_\infty / (\mathcal{D}_\infty, \varpi) \leq [F : \mathbb{Q}]. \]
\end{pp}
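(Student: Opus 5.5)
The plan is to show that both quotients are, modulo $\mathfrak{m}_{P_\infty}^N$ for every $N$, controlled by the corresponding quotients of the rings $P^{\textnormal{ord}}_{Q_{M_N}}$, whose sizes are bounded uniformly by the Corollary after Prop.\ \ref{prop_existence_of_TW_places_first_patching}. We then pass to the limit.

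For the reducible locus, let $C$ and $x_1, \dots, x_{1+\delta_F} \in P^{\textnormal{ord}}_{Q_N}$ be as in that corollary. Since $1+\delta_F$ is independent of $N$, the tuples of images of these elements in the finite rings $P^{\textnormal{ord}}_{Q_{M_N}} / \mathfrak{m}^{r_N}$ can be added to the patching data. Equivalently, after passing to a further subsequence of the $M_N$ (using finiteness as in Lem.\ \ref{lem_finiteness_of_patching_data}), we may assume these images are compatible under the transition maps. The limit then gives elements $y_1, \dots, y_{1+\delta_F} \in P_\infty$.

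Next, the reducibility ideal behaves well under the identifications: the image of $\mathcal{R}_\infty$ in $\mathbf{P}_N = P^{\textnormal{ord}}_{Q_{M_N}}/\mathfrak{m}^{r_N}$ is the image of $\mathcal{R}_{Q_{M_N}}$. This holds because $t_\infty$ pushes forward to the universal pseudocharacter through $\Gamma_N$, $c_\infty$ maps to $c_u$, and $\Gamma_\infty \to \Gamma_N$ is surjective, so the elements $x(\sigma,\tau)$ correspond. Hence
\[ P_\infty/(\mathcal{R}_\infty, \varpi, y_1, \dots, y_{1+\delta_F}, \mathfrak{m}_{P_\infty}^{r_N}) \]
is a quotient of $P^{\textnormal{ord}}_{Q_{M_N}}/(\mathcal{R}_{Q_{M_N}}, \varpi, x_i)$, which has $k$-dimension at most $C$. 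Since $r_N \to \infty$ and the bound is independent of $N$, the powers of the maximal ideal of the quotient $A \coloneqq P_\infty/(\mathcal{R}_\infty, \varpi, y_i)$ stabilise. By Krull's intersection theorem $A$ is Artinian of length $\le C$, so $\dim P_\infty/(\mathcal{R}_\infty,\varpi) \le 1+\delta_F$. An alternative route avoids choosing $y_i$ directly: $P_\infty/(\mathcal{R}_\infty,\varpi)$ represents pairs of characters as in Lem.\ \ref{lem_patched_ring_represents_a_functor}, so it is a quotient of $k\llbracket \Gamma_\infty^{\textnormal{ab}}(p)$-type$\rrbracket$ rings with $T$-splitting conditions, and the class field theory bound passes to the limit.

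For the dihedral case, the same argument applies to $\mathcal{D}_\infty$, whose image in $\mathbf{P}_N$ is the dihedral ideal of $P^{\textnormal{ord}}_{Q_{M_N}}$. By the corollary, $P^{\textnormal{ord},\overline{\chi}\textnormal{-dih}}_{Q_{M_N}} \cong P^{\textnormal{ord},\overline{\chi}\textnormal{-dih}}$ is independent of $N$. Therefore $P_\infty/\mathcal{D}_\infty$ surjects onto $P^{\textnormal{ord},\overline{\chi}\textnormal{-dih}}$ with compatible isomorphisms modulo $\mathfrak{m}^{r_N}$, and so it is isomorphic to $P^{\textnormal{ord},\overline{\chi}\textnormal{-dih}}$. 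Its reduction mod $\varpi$ has dimension at most $[F:\mathbb{Q}]$.

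The main obstacle is the bookkeeping that makes the ideal images match at finite level. One has to check that the image of $\mathcal{R}_\infty$ in each $\mathbf{P}_N$ is exactly the image of $\mathcal{R}_{Q_{M_N}}$; the easy inclusion is not sufficient, since we need the quotient to be small. One also has to check that the uniform length bound indeed yields the dimension bound in the limit.
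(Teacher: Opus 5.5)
Your proposal is correct and follows the same route as the paper, whose proof simply says the bounds follow immediately from Prop.~\ref{prop_existence_of_TW_places_first_patching} and its corollary by patching. You have supplied the bookkeeping the paper leaves implicit: matching the reducibility and dihedral ideals at finite level via the surjection $\Gamma_\infty \to \Gamma_N$ with $c_\infty \mapsto \mathbf{c}_u$, the uniform bound $C$, and the isomorphism $P^{\textnormal{ord},\overline{\chi}\textnormal{-dih}}_{Q} \cong P^{\textnormal{ord},\overline{\chi}\textnormal{-dih}}$. A small simplification is available: you need not arrange compatible elements $y_i$, since the corollary already gives $\dim_k P_\infty/(\mathcal{R}_\infty,\varpi,\mathfrak{m}_{P_\infty}^n) \leq C\binom{n+\delta_F}{1+\delta_F}$ for every $n$, which bounds the Hilbert--Samuel polynomial and hence the Krull dimension by $1+\delta_F$.
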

\begin{proof}
    This follows immediately from Prop.\ \ref{prop_existence_of_TW_places_first_patching} and its corollary. 
\end{proof}
\begin{pp}\label{prop_c_dim}
    Let $[b] \in H^1_{T \textnormal{-split}}(F_{S_p \cup S_{\textnormal{aux}}} / F, \overline{\chi})$ be non-zero. Then:
    \begin{enumerate}
     \item We have $\dim R_{b, \infty} / (\mathcal{R}_\infty, \varpi) \leq \delta_F + h + 1$. If we are in the dihedral case, then $\dim R_{b, \infty} / (\mathcal{D}_\infty, \varpi) \leq \max(\delta_F + h + 1, [F : \mathbb{Q}])$.
    \item The connectedness dimension of $R_{b, \infty}$ (see \cite{Bro86}) satisfies $\operatorname{cdim}(R_{b, \infty}) \geq [ F : \mathbb{Q} ] + |T|$.
    \item Each irreducible component of $\Spec R_{b, \infty}$ has dimension at least $1 + [F : \mathbb{Q}] + |T|$. More generally, if~$I_b \subset R_{b, \infty}$ is an ideal which can be generated by~$t$ elements, then each irreducible component of $\Spec R_{b, \infty}/I_b$ has dimension at least $1+[F:\mathbb Q]+|T|-t$.
    \end{enumerate}
\end{pp}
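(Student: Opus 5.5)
We prove the three parts separately, starting from the presentation of $R_{b,\infty}$ that the patching data provide. Item (9) of a patching datum is compatible with the transition maps, so passing to the inverse limit gives a surjection
\[ R_{\textnormal{loc}}^{\square_{S_p \cup S_{\textnormal{aux}}}} \widehat{\otimes}_\cO S_\infty\llbracket x_1, \dots, x_g\rrbracket \to R_{b, \infty}\llbracket y_1, \dots, y_{4|S_p \cup S_{\textnormal{aux}} \cup T|-1}\rrbracket \]
whose kernel is generated by $r$ elements. By Cor.\ \ref{cor_normality_of_local_lifting_ring}, the source $A$ has Krull dimension $1 + 3|S_p \cup S_{\textnormal{aux}}| + 2[F:\mathbb{Q}] + |T| + g$, and its reduced quotient is a normal domain. (Completed tensor products of $\cO$-flat domains with normal special fibre remain domains, as in that corollary, and $S_\infty$ and the power series variables only add regular factors.) Hence $A^{\textnormal{red}}$ is irreducible.

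For (3): $A$ is equidimensional and catenary, so by Krull's principal ideal theorem every irreducible component of $\Spec$ of the target has dimension at least $\dim A - r$. Using the given value of $g-r$, this is
\[ 1 + 3|S_p\cup S_{\textnormal{aux}}| + 2[F:\mathbb{Q}] + |T| - [F:\mathbb{Q}] - 1 + |S_p \cup S_{\textnormal{aux}}| + 4|T|. \]
Subtracting the $4|S_p \cup S_{\textnormal{aux}} \cup T| - 1$ formal variables $y_i$ leaves $1 + [F:\mathbb{Q}] + |T|$ for the components of $\Spec R_{b,\infty}$. Adding $t$ more generators only subtracts $t$, which gives the general claim.

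For (2): Grothendieck's connectedness theorem, in the form of \cite{Bro86}, says that for a complete local ring the connectedness dimension drops by at most $1$ per generator when passing to a quotient. The connectedness dimension of $\Spec A$ is $\dim A - 1$, since $A^{\textnormal{red}}$ is a domain. Quotienting by $r$ elements gives connectedness dimension at least $\dim A - 1 - r$. Removing the formal variables $y_i$ lowers both the dimension and the connectedness dimension by exactly their number. The same count as in (3) then gives $\operatorname{cdim}(R_{b,\infty}) \geq [F:\mathbb{Q}] + |T|$. The main technical point is to check that \cite{Bro86}'s inequality applies to completed local rings and is compatible with removing the power series variables; I would cite the standard form used in \cite{Tho15} and \cite{Pan22}.

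For (1): mimic Lem.\ \ref{lem_bounding_reducible_locus} in the patched setting. By Lem.\ \ref{lem_patched_ring_represents_a_functor}, $R_{b,\infty}$ is a deformation ring for $r_{b,\infty} \bmod \mathfrak{m}$. Modulo $\mathfrak{m}_{P_\infty}$, the lower-left entry of the universal representation vanishes, so the upper-right entry gives a cocycle. Its class is split at $T$ and unramified at the patched Taylor--Wiles inertia (to be checked via the local conditions), so it lies in $H^1_{T\textnormal{-split}}(F_{S_p\cup S_{\textnormal{aux}}}/F,\overline{\chi})$. Exactly as in that lemma, this yields a surjection $P_\infty\llbracket X_1,\dots,X_h\rrbracket \to R_{b,\infty}$. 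Thus $R_{b,\infty}/(\mathcal{R}_\infty,\varpi)$ is a quotient of $P_\infty/(\mathcal{R}_\infty,\varpi)\llbracket X_1,\dots,X_h\rrbracket$, and Prop.\ \ref{prop_p_infty_red_dim} gives dimension at most $1+\delta_F+h$. The dihedral case is identical with $\mathcal{D}_\infty$, giving at most $[F:\mathbb{Q}] + h$. To reach the stated maximum, I would instead use Prop.\ \ref{prop_universal_dihedral_pseudocharacter}: off the reducible locus, dihedral representations are inductions, determined up to finitely many choices by their pseudocharacter, so the fibres there have dimension $0$. The bound $\max(\delta_F+h+1,[F:\mathbb{Q}])$ then follows by splitting into the reducible and irreducible dihedral loci. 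I expect this dihedral refinement to be the main obstacle.
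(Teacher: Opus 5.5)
The gap is in the dihedral half of (1). What your argument actually proves is $\dim R_{b,\infty}/(\mathcal{D}_\infty,\varpi)\le[F:\mathbb{Q}]+h$. The proposed refinement via ``fibres of dimension $0$ over the irreducible locus'' does not by itself improve this.

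For a local homomorphism $A\to B$ of Noetherian local rings, the general bound is $\dim B\le\dim A+\dim B/\mathfrak{m}_AB$, which involves only the closed fibre. But the closed point of $\Spec P_\infty$ is the reducible residual pseudocharacter. Over it, the fibre $R_{b,\infty}/(\mathfrak{m}_{P_\infty})$ can have dimension up to $h$, and this is exactly where your $+h$ comes from.

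The missing input is Lem.\ \ref{lem_comparison_of_P_and_R_b}(2) (from \cite{Pan22}, applicable to the patched rings via Lem.\ \ref{lem_patched_ring_represents_a_functor}). It says that if the representation at a prime $\mathfrak{q}_b$ of $R_{b,\infty}$ is irreducible, then $\dim R_{b,\infty}/\mathfrak{q}_b\le\dim P_\infty/(\mathfrak{q}_b\cap P_\infty)$. With it, the paper argues as follows. Let $Q$ be a minimal prime of $R_{b,\infty}/(\mathcal{D}_\infty,\varpi)$. If the representation at $Q$ is reducible, then $Q\supseteq\mathcal{R}_\infty R_{b,\infty}$ and the non-dihedral bound applies. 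Otherwise $\dim R_{b,\infty}/Q\le\dim P_\infty/(Q\cap P_\infty)\le\dim P_\infty/(\mathcal{D}_\infty,\varpi)\le[F:\mathbb{Q}]$ by Prop.\ \ref{prop_p_infty_red_dim}.

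Only irreducibility matters at this step: an absolutely irreducible representation is determined up to conjugacy by its trace, so there are no ``finitely many choices''. Prop.\ \ref{prop_universal_dihedral_pseudocharacter} is not the relevant input here; it was already used upstream to bound $\dim P_\infty/(\mathcal{D}_\infty,\varpi)$. Incidentally, your weaker bound $[F:\mathbb{Q}]+h$ would suffice for the later uses in Props.\ \ref{prop_existence_of_nice_prime} and \ref{prop_propagating_pro_modularity}, since $|T|>h+1$. It does not, however, prove (1) as stated.

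The rest matches the paper. Your (2) and (3) are exactly its argument: the patched presentation of $R_{b,\infty}\llbracket y_1,\dots,y_{4|S_p\cup S_{\textnormal{aux}}\cup T|-1}\rrbracket$ as a quotient of $R_{\textnormal{loc}}^{\square_{S_p\cup S_{\textnormal{aux}}}}\widehat{\otimes}_\cO S_\infty\llbracket x_1,\dots,x_g\rrbracket$ by $r$ elements, followed by Krull's height theorem and \cite[Thm.\ 2.4]{Bro86}. Two small corrections:
\begin{itemize}
\item Item (9) only gives $r$-generated kernels modulo $\mathfrak{m}^{r_N}$. Deducing that the limiting kernel is $r$-generated needs a short argument, which the paper supplies as Lem.\ \ref{lem_generation_of_kernel}.
\item With the usual convention, $\operatorname{cdim}(A)=\dim A$ when $\Spec A$ is irreducible. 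The $-1$ comes from the connectedness theorem itself, which gives $\operatorname{cdim}\ge\dim A-1-r$ after cutting by $r$ elements, so your count is unaffected.
\end{itemize}

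Your non-dihedral half of (1) reverses the order of the paper's argument and is equally valid. You patch first and then combine Prop.\ \ref{prop_p_infty_red_dim} with a patched Lem.\ \ref{lem_bounding_reducible_locus}. The paper instead combines Lem.\ \ref{lem_bounding_reducible_locus} with the corollary to Prop.\ \ref{prop_existence_of_TW_places_first_patching} at each finite level and then patches. The surjection $P_\infty\llbracket X_1,\dots,X_h\rrbracket\to R_{b,\infty}$ needs no analysis at the patched Taylor--Wiles inertia, because $R_{b,\infty}/(\mathfrak{m}_{P_\infty})\cong R_b^{\textnormal{ord}}/(\mathfrak{m}_{P^{\textnormal{ord}}})$, as observed in the proof of Thm.\ \ref{thm_tame_SW_black_box}.
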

\begin{proof}
    (1) Lem.\ \ref{lem_bounding_reducible_locus} and Prop.\ \ref{prop_existence_of_TW_places_first_patching} together imply that for any $N \geq 1$, there is a surjection
    \[ A_{Q_N} \llbracket  x_1, \dots , x_{h + \delta_F + 1} \rrbracket  \to R^{\textnormal{ord}}_{b, Q_N} / (\mathcal{R}_{Q_N}, \varpi), \]
    where $A_{Q_N}$ is an Artinian $k$-algebra of uniformly bounded dimension. By patching, we conclude that there is a surjection 
    \[ A_{\infty} \llbracket  x_1, \dots , x_{h + \delta_F +1} \rrbracket  \to R_{b, \infty} / (\mathcal{R}_{\infty}, \varpi), \]
    where $A_\infty$ is again an Artinian $k$-algebra. This immediately implies the desired upper bound on the dimension.

    Suppose we are in the dihedral case, and let $Q \leq R_{b, \infty} / (\mathcal{D}_\infty, \varpi)$ be a minimal prime. If the specialisation of the universal lifting to $k(Q)$ is reducible, then $R_{b, \infty} / (\mathcal{D}_\infty, \varpi, Q)$ is a quotient of $R_{b, \infty} / (\mathcal{R}_\infty, \varpi)$, and we're done by the previous paragraph. Otherwise, the specialisation is irreducible, and Lem.\ \ref{lem_comparison_of_P_and_R_b} implies that we have
    \[ \dim R_{b, \infty} / (\mathcal{D}_\infty, \varpi, Q) \leq \dim P_\infty / (\mathcal{D}_\infty, \varpi) \leq [ F : \mathbb{Q} ], \]
    by Prop.\ \ref{prop_p_infty_red_dim}. 

    (2) By patching, and by Lem.\ \ref{lem_generation_of_kernel} below,  there is a presentation
    \[ R_{b, \infty} \llbracket y_1, ..., y_{4|S_p \cup S_{\textnormal{aux}} \cup T| - 1}\rrbracket  \cong \left( R_{\textnormal{loc}}^{\square_{S_p \cup S_{\textnormal{aux}}}} \widehat{\otimes}_\cO S_\infty \right) \llbracket x_1, \dots x_g \rrbracket  / (f_1, \dots, f_r), \]
    where $g-r = -[F : \mathbb{Q}] - 1 + |S_p \cup S_{\textnormal{aux}}| + 4 |T|$. By \cite[Thm.\ 2.4]{Bro86}, it follows that the connectedness dimension of $R_{b, \infty}$ is at least
    \begin{multline*} \dim R_{\textnormal{loc}}^{\square_{S_p \cup S_{\textnormal{aux}}}} + |T| + g - r - 1 - 4(|S_p \cup S_{\textnormal{aux}} \cup T|) + 1 \\= 1 + 3|S_p \cup S_{\textnormal{aux}}| + 2[F:\mathbb{Q}] + |T| - [F : \mathbb{Q}] - 1 + |S_p \cup S_{\textnormal{aux}}| + 4 |T| - 1 - 4(|S_p \cup S_{\textnormal{aux}} \cup T|) + 1 \\
    =   [F : \mathbb{Q}] + |T|. 
    \end{multline*}
    This concludes the proof of (2); and then the bounds in (3) also follows from the existence of this presentation.  
\end{proof}
\begin{lemma}\label{lem_generation_of_kernel}
    Let $R$ be a Noetherian local ring, and let $f : R \to S$ be a surjective ring homomorphism. Suppose that for all $n \geq 1$, the kernel of the induced map $R / \mathfrak{m}_R^n \to S / \mathfrak{m}_S^n$ can be generated by $r$ elements. Then the kernel of $f$ can be generated by $r$ elements.
\end{lemma}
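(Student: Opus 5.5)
Let $I = \ker f$ and $J_n = \ker(R/\mathfrak{m}_R^n \to S/\mathfrak{m}_S^n)$. Since $f$ is surjective, $f(\mathfrak{m}_R^n) = \mathfrak{m}_S^n$, so $J_n = (I + \mathfrak{m}_R^n)/\mathfrak{m}_R^n$. The plan is to deduce the bound on the number of generators of $I$ from the bound on the $J_n$ using the Artin--Rees lemma and Nakayama's lemma. The point is that the minimal number of generators of $I$ equals $\dim_k I/\mathfrak{m}_R I$, where $k = R/\mathfrak{m}_R$, so it suffices to show $\dim_k I/\mathfrak{m}_R I \leq r$.

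First I will compare $J_n$ with $I$. We have $J_n \cong I/(I \cap \mathfrak{m}_R^n)$ as $R$-modules. By Artin--Rees, there is $c \geq 0$ such that $I \cap \mathfrak{m}_R^n = \mathfrak{m}_R^{n-c}(I \cap \mathfrak{m}_R^c) \subseteq \mathfrak{m}_R^{n-c} I$ for all $n \geq c$. Choosing $n \geq c+1$ gives $I \cap \mathfrak{m}_R^n \subseteq \mathfrak{m}_R I$. Hence $I/\mathfrak{m}_R I$ is a quotient of $I/(I\cap \mathfrak{m}_R^n) \cong J_n$; more precisely, $J_n/\mathfrak{m}_R J_n \cong I/(\mathfrak{m}_R I + I \cap \mathfrak{m}_R^n) = I/\mathfrak{m}_R I$.

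By hypothesis $J_n$ is generated by $r$ elements, as an ideal of $R/\mathfrak{m}_R^n$ and hence as an $R$-module. So $\dim_k J_n/\mathfrak{m}_R J_n \leq r$, and therefore $\dim_k I/\mathfrak{m}_R I \leq r$. Since $R$ is Noetherian, $I$ is finitely generated, and Nakayama's lemma for the local ring $R$ shows that lifts of a $k$-basis of $I/\mathfrak{m}_R I$ generate $I$. Thus $I$ is generated by at most $r$ elements, and padding with zeros gives exactly $r$.

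There is no serious obstacle here. The only step needing care is the Artin--Rees inclusion $I \cap \mathfrak{m}_R^n \subseteq \mathfrak{m}_R I$ for $n \gg 0$, which is what lets a single finite level detect $I/\mathfrak{m}_R I$; this is where the Noetherian hypothesis on $R$ is used. No completeness is required for this argument.
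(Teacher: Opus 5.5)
Your proposal is correct and follows essentially the same argument as the paper. Both identify the kernel at level $n$ with $I/(I \cap \mathfrak{m}_R^n)$, use Artin--Rees to get $I \cap \mathfrak{m}_R^{n} \subseteq \mathfrak{m}_R I$ for one sufficiently large $n$, deduce $J_n/\mathfrak{m}_R J_n \cong I/\mathfrak{m}_R I$, and conclude with Nakayama's lemma.
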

\begin{proof}
    Let $I = \ker f$, and let $f_n : R / \mathfrak{m}_R^n \to S / \mathfrak{m}_S^n$ be the induced homomorphism. Then $I_n = \ker f_n = (I + \mathfrak{m}_R^n) / \mathfrak{m}_R^n$, and our hypothesis is that $\ker f_n$ can be generated by $r$ elements for all $n \geq 1$.

    There is a natural isomorphism $I_n \cong I / (I \cap \mathfrak{m}_R^n)$. By the Artin--Rees lemma, there exists $k \geq 1$ such that for all $n \geq k$, we have $I \cap \mathfrak{m}_R^n = \mathfrak{m}_R^{n-k} (I \cap \mathfrak{m}_R^k)$. In particular, we have 
    \[ I \cap \mathfrak{m}_R^{k+1} = \mathfrak{m}_R ( I \cap \mathfrak{m}_R^k) \leq \mathfrak{m}_R I. \]
    It follows that
    \[ I_{k+1} / \mathfrak{m}_R I_{k+1} \cong I / (I \cap \mathfrak{m}_R^{k+1}, \mathfrak{m}_R I) = I / \mathfrak{m}_R I. \]
    Our hypothesis states that $I_{k+1} / \mathfrak{m}_R I_{k+1}$ has dimension at most $r$ as a vector space over $R / \mathfrak{m}_R$; Nakayama's lemma then implies that $I$ may be generated by at most $r$ elements. 
\end{proof}
We set $\mathcal{M}_\infty \coloneqq  \operatorname{Ann}_{P_\infty} H_\infty$, and $\mathbb{T}_\infty \coloneqq  P_\infty / \mathcal{M}_\infty$. Thus $\mathbb{T}_\infty$ is a finite $\Lambda \widehat{\otimes}_{\cO} S_\infty$-algebra, and the kernel of the natural homomorphism $\mathbb{T}_\infty \otimes_{S_\infty} \cO \to \mathbb{T}_0$ is nilpotent (using e.g.\ \cite[Lem.\ 2.2]{Tay08}).
\begin{defn}\label{defn:nice prime ideal}
    Let $\mathfrak{q} \leq \mathbb{T}_\infty$ be a prime ideal of dimension 1 and characteristic $p$. We say that $\mathfrak{q}$ is nice if it satisfies the following conditions:
    \begin{enumerate}
        \item $\mathcal{R}_\infty \mathbb{T}_\infty \not\leq \mathfrak{q}$. If we are in the dihedral case, then $\mathcal{D}_\infty  \mathbb{T}_\infty \not\leq \mathfrak{q}$. 
        \item Let $A$ denote the normalisation of $\mathbb{T}_\infty / \mathfrak{q}$ in its field of fractions. Then the map $P_\infty \to k(\mathfrak{q})$ extends to a homomorphism $R_{b, \infty} \to A$ for some non-zero $[b] \in H^1_{T \textnormal{-split}}(F_{S_p \cup S_{\textnormal{aux}}} / F, \overline{\chi})$. 
    (In this case, we say that $\mathfrak{q}$ arises from $R_{b, \infty}$.)
    \end{enumerate}
\end{defn}
\begin{lemma}
    Let $\mathfrak{q}$ be a nice prime, let $K \coloneqq  k(\mathfrak{q})$, and let $\rho_{\mathfrak{q}} : \Gamma_\infty \to \GL_2(K)$ be a representation with corresponding pseudocharacter $t_\infty \text{ mod }\mathfrak{q}^{\textnormal{ps}}$. Then $\ad^0 \rho_{\mathfrak{q}}$ is absolutely irreducible, even after restriction to the subgroup $\ker(\Gamma_\infty \to \Gal(F(\zeta_{p^\infty}) / F))$. 
\end{lemma}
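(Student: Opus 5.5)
The plan is to deduce this from Lem.\ \ref{lem_zariski_density}, applied to the pullback $\mathfrak{q}^{\textnormal{ps}}$ of $\mathfrak{q}$ to $P_\infty$. Since $\mathbb{T}_\infty$ is a quotient of $P_\infty$, this prime has characteristic $p$ and residue field $k(\mathfrak{q}) = K$. Condition (1) of Defn.\ \ref{defn:nice prime ideal} says $\mathcal{R}_\infty \not\leq \mathfrak{q}^{\textnormal{ps}}$, which is hypothesis (1) of the lemma. In the dihedral case it also says $\mathcal{D}_\infty \not\leq \mathfrak{q}^{\textnormal{ps}}$, which is hypothesis (3). What remains is hypothesis (2): we must find $g \in \Gamma_\infty$ such that the ratio of the eigenvalues of $\rho_{\mathfrak{q}}(g)$ has infinite order in $K^\times$.

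To produce $g$, I would use that $\mathfrak{q}$ has dimension $1$. Then $\mathbb{T}_\infty/\mathfrak{q}$ is a one-dimensional complete local domain of characteristic $p$, so $K$ is a local field isomorphic to $k'((T))$ with $k'$ finite. In such a field an element of $K^\times$ has finite order exactly when it is a root of unity, which lies in $k'^\times$. It therefore suffices to show that the eigenvalue ratios of $\rho_{\mathfrak{q}}$ do not all lie in $\overline{k}$.

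Suppose for contradiction that every eigenvalue ratio is a root of unity. Then every $\rho_{\mathfrak{q}}(g)$ has eigenvalues $\alpha, \zeta\alpha$ with $\zeta \in \overline{k}^\times$. Since $\det = \epsilon^{-1}$ takes values in $k$ modulo $p$, $\alpha^2$ is algebraic over $k$, and hence so is $\tr \rho_{\mathfrak{q}}(g) = \alpha(1+\zeta)$. The pseudocharacter $t_\infty \bmod \mathfrak{q}^{\textnormal{ps}}$ would then take values in the algebraic closure of $k$ in $K$, which is the finite field $k'$. By Lem.\ \ref{lem_patched_ring_represents_a_functor}, $P_\infty$ is topologically generated over $\widetilde{\Lambda}$ by the values of $t_\infty$. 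It would follow that $\widetilde{\Lambda} \to P_\infty/\mathfrak{q}^{\textnormal{ps}}$ is surjective up to a finite extension.

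I expect this to be the main obstacle: the $\widetilde{\Lambda}$-part is not controlled by traces alone, so I must exclude a prime on which the trace is residually constant but the $\psi_v$ vary. Here I would use the relations $t(\sigma) = \psi_v(\sigma) + (\epsilon\psi_v)(\sigma)^{-1}$ for $\sigma \in G_{F_v}$, $v \in S_p$. If the traces are finite over $k$, then each $\psi_v(\sigma)$ satisfies a quadratic equation over $k'$ and is therefore algebraic. In that case $P_\infty/\mathfrak{q}^{\textnormal{ps}}$ is finite over $\mathbb{F}_p$, contradicting $\dim = 1$.

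This gives the required $g$. Lem.\ \ref{lem_zariski_density} then shows that $\ad^0\rho_{\mathfrak{q}}$ is absolutely irreducible on $\ker(\Gamma_\infty \to \Gal(F(\zeta_{p^\infty})/F))$. Condition (2) of niceness is not needed for this argument.
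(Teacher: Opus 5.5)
Your argument is correct, but it follows a different route from the paper's.

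\textbf{The paper's route.} The paper tests hypothesis (2) of Lem.\ \ref{lem_zariski_density} only on elements whose eigenvalues visibly lie in $K$:
\begin{itemize}
\item elements of $G_{F_v}$ for $v \in S_p$, with eigenvalues $\psi_v$ and $(\epsilon\psi_v)^{-1}$;
\item elements of $\gamma_i(\mathbb{Z}_p)$, with eigenvalues $\mu_{i,1}$ and $\mu_{i,2}$.
\end{itemize}
If all these ratios had finite order, the $\psi_v$ and $\mu_{i,1}$ would be trivial modulo $\mathfrak q$. Then $\Lambda\widehat{\otimes}_\cO S_\infty \to \mathbb{T}_\infty/\mathfrak q$ would factor through $k$. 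Since $\mathbb{T}_\infty$ is finite over $\Lambda\widehat{\otimes}_\cO S_\infty$ (because $H_\infty$ is finite free), $\mathbb{T}_\infty/\mathfrak q$ would be Artinian, a contradiction.

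\textbf{Your route.} You stay inside $P_\infty$. You use that $P_\infty$ is topologically generated over $\widetilde\Lambda$ by traces (Lem.\ \ref{lem_patched_ring_represents_a_functor}). You then control the $\widetilde\Lambda$-part with the ordinary relation $t = \psi_v + (\epsilon\psi_v)^{-1}$ on $G_{F_v}$.

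\textbf{What each buys.} Your argument never uses $H_\infty$, so it yields hypothesis (2) for every one-dimensional characteristic-$p$ prime of $P_\infty$. The paper's argument is shorter given the finiteness of $\mathbb{T}_\infty$. It also produces an element whose eigenvalue ratio lies in $K$, which is what Lem.\ \ref{lem_zariski_density} literally asks for.

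\textbf{One step to tighten.} The reduction ``it suffices to show that the eigenvalue ratios do not all lie in $\overline{k}$'' is too quick. Your contradiction yields some $g$ with $\operatorname{tr}\rho_{\mathfrak q}(g)$ transcendental over $k$, and its eigenvalue ratio may lie only in a quadratic extension of $K$. This is harmless. The proof of Lem.\ \ref{lem_zariski_density} goes through after base change to $\overline{K}$, since an element whose eigenvalue ratio has infinite order still forces the Zariski closure to contain a maximal torus there. But you should say so explicitly.
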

\begin{proof}
    By Lem.\ \ref{lem_zariski_density}, it suffices to show there is $g \in \Gamma_\infty$ such that the ratio of the eigenvalues of $\rho_{\mathfrak{q}}(g)$ lies in $K$ and has infinite order. Suppose not. Then, in particular, for each $v \in S_p$, the character $\psi_v \text{ mod }\mathfrak{q}$ is trivial, showing that the homomorphism $\Lambda \to K$ factors through $\Lambda \to k$; and the characters $\mu_{i, 1} \text{ mod }\mathfrak{q}$ must be trivial, showing that the homomorphism $S_\infty \to K$ must factor through $S_\infty \to k$. Therefore the map $\Lambda \widehat{\otimes}_\cO S_\infty \to \mathbb{T}_\infty / \mathfrak{q}$ is finite, yet factors through the quotient $\Lambda \widehat{\otimes}_\cO S_\infty \to k$, implying that $\mathbb{T}_\infty / \mathfrak{q}$ must be an Artinian $k$-algebra. Since this ring is, by hypothesis, of Krull dimension 1, this is a contradiction.
\end{proof}
\begin{pp}\label{prop_existence_of_nice_prime}
    There exists a nice prime $\mathfrak{q} \leq \mathbb{T}_\infty$. 
\end{pp}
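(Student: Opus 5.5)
Our plan is to produce $\mathfrak{q}$ starting from the automorphic point $\rho'$, with residual representation $\overline{\rho}_{b'}$. By construction $\mathbb{T}_\infty$ is a finite $\Lambda \widehat{\otimes}_\cO S_\infty$-algebra, and $H_\infty$ is free over this ring. Hence every irreducible component of $\Spec \mathbb{T}_\infty$ whose generic point is in the support of $H_\infty$ with full rank has dimension $1 + [F:\mathbb{Q}] + |T|$, and $\Spec \mathbb{T}_\infty/(\varpi)$ has components of dimension $[F:\mathbb{Q}] + |T|$. We will work inside one such component, namely one containing the image of the point of $\mathbb{T}_0$ given by $\pi$ and lying over the prime of $R_{b',\infty}$ determined by $\rho'$. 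To set this up we use the map $R_{b',\infty} \to R^{\textnormal{ord}}_{b'}$, and choose a minimal prime of $\mathbb{T}_\infty$ whose preimage in $P_\infty$ lifts to a prime $\mathfrak{Q}_{b'}$ of $R_{b',\infty}$, irreducible at the generic point. For this we use Lem.\ \ref{lem_comparison_of_P_and_R_b}(2) together with the surjectivity statement of Lem.\ \ref{lem_comparison_of_P_and_R_b}(1) applied at the characteristic-zero point $\rho'$.

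Next we compare dimensions. The closed subset $V(\mathcal{R}_\infty, \varpi)$ (and, in the dihedral case, $V(\mathcal{D}_\infty, \varpi)$) of $\Spec \mathbb{T}_\infty$ has dimension at most $1 + \delta_F$ (resp.\ $[F:\mathbb{Q}]$) by Prop.\ \ref{prop_p_infty_red_dim}. On the other hand, the component $Z = \Spec \mathbb{T}_\infty/(\mathfrak{Q},\varpi)$ has dimension $[F:\mathbb{Q}]+|T|$. Since $\delta_F \le ([F:\mathbb{Q}]-1)/2$ and $|T| \ge 1$, both bad loci are of dimension strictly smaller than $\dim Z - 1$. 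Hence, by prime avoidance in the excellent ring $\mathbb{T}_\infty/(\mathfrak{Q},\varpi)$, we can choose a chain of primes of $Z$ ending in a dimension $1$ prime $\mathfrak{q}$ that avoids both bad loci. Concretely, we take a system of parameters of $Z$, none of which lie in the minimal primes over $\mathcal{R}_\infty$ or $\mathcal{D}_\infty$, and cut down. This gives condition (1) of Definition \ref{defn:nice prime ideal}.

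Condition (2) is the main obstacle: we must extend $P_\infty \to k(\mathfrak{q})$ to $R_{b,\infty} \to A$, with $A$ the normalisation of $\mathbb{T}_\infty/\mathfrak{q}$. We proceed as in the construction of \eqref{eq_rep_of_irreducible_ps}. Since $\mathfrak{q} \not\supseteq \mathcal{R}_\infty$, the pseudocharacter $t_\infty \bmod \mathfrak{q}$ is irreducible, and $A$ is a complete DVR (of characteristic $p$, finite over $k\llbracket X\rrbracket$). Using $c_\infty$, the recipe \eqref{eq_rep_of_irreducible_ps} gives a representation $\Gamma_\infty \to \GL_2(A)$ with trace $t_\infty$. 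Its reduction is a non-split upper-triangular extension $\overline{\rho}_b$, after normalising so that the off-diagonal entry $x(\sigma,\tau_0)/x(\sigma_0,\tau_0)$ has unit values and the lower-left entries lie in $\mathfrak{m}_A$; Ribet's lemma then ensures that the extension class is non-zero. We will also check that the class is unramified outside $S_p \cup S_{\textnormal{aux}}$ and split at $T$, using the local conditions at $T$ exactly as in the proof of Lem.\ \ref{lem_bounding_reducible_locus}, so that $[b] \in H^1_{T\textnormal{-split}}$. Moreover, the residual representation factors through $G_{F,S_p\cup T\cup S_{\textnormal{aux}}}$, since the patched groups agree with the unpatched ones modulo $\mathfrak{m}$. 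Finally, the local conditions at $S_p \cup T \cup S_{\textnormal{aux}}$ transfer from the pseudocharacter to the representation by Lem.\ \ref{lem_local_and_global_lifting_rings}. Lem.\ \ref{lem_patched_ring_represents_a_functor}(2) then yields the desired map $R_{b,\infty} \to A$.

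The delicate point is that the residual class $b$ is thereby determined by $\mathfrak{q}$ and not chosen in advance. This is acceptable, since the definition only asks for some non-zero $[b]$.
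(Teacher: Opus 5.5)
Your dimension count is correct, and simpler than the paper's: $\mathbb{T}_\infty$ is equidimensional of dimension $1+[F:\mathbb{Q}]+|T|$ because $H_\infty$ is free over $\Lambda\widehat{\otimes}_\cO S_\infty$, and Prop.~\ref{prop_p_infty_red_dim} then gives a one-dimensional characteristic-$p$ prime $\mathfrak q$ satisfying condition~(1). Your claim that the residual class of $\rho_{\mathfrak q}$ is unramified at the Taylor--Wiles places and split at $T$ is also right. For the Taylor--Wiles places the actual reason is that $H^1(F_v,\overline{\chi})=0$ there.

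\textbf{The gap.} The problem is the assertion that the reduction of the lattice from \eqref{eq_rep_of_irreducible_ps} is $\overline{\rho}_b$ for some $[b]\in H^1_{T\textnormal{-split}}(F_{S_p\cup S_{\textnormal{aux}}}/F,\overline{\chi})$.
\begin{itemize}
\item The residue field $k'$ of $A$ can be a proper extension of $k$. For example, the normalisation of $k\llbracket x,y\rrbracket/(x^2-ay^2)$, with $a\in k$ a non-square, has residue field $k(\sqrt{a})$. Your $\mathfrak q$ is chosen only to avoid $V(\mathcal{R}_\infty)$ and $V(\mathcal{D}_\infty)$, so you have no control over $k'$.
\item The reductions of $x(\sigma,\tau_0)/x(\sigma_0,\tau_0)$ then define a class in $H^1_{T\textnormal{-split}}(F_{S_p\cup S_{\textnormal{aux}}}/F,\overline{\chi})\otimes_k k'$. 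As soon as $h\geq 2$, this class need not be a $(k')^\times$-multiple of a $k$-rational class.
\item Choosing another lattice does not help. Since $2\in A^\times$ and $\rho_{\mathfrak q}(c_\infty)=\operatorname{diag}(-1,1)$, every $\Gamma_\infty$-stable lattice has the form $Ae_1\oplus\varpi_A^nAe_2$. So the lattice with non-split reduction having $\overline{\chi}_1$ as sub is unique up to homothety, and its residual class is determined up to $(k')^\times$.
\item Consequently, in that case no homomorphism $R_{b,\infty}\to A$ extends $P_\infty\to A$ for any $k$-rational $b$, and $\mathfrak q$ is not nice.
\end{itemize}
You also cannot simply enlarge $\cO$ afterwards: the rings $R_{b,\infty}$ for the new $k'$-rational directions were never patched (compare the remark on new directions in \S\ref{subsec_second_patching}). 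The paper does use your Ribet-type construction at the end of the proof of Thm.~\ref{thm_tame_SW_black_box}. It works there only because the conditions $x(z_j,\tau)\in\mathfrak q$ kill the other directions, forcing the class to be a multiple of the $k$-rational $b_\rho$.

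\textbf{How the paper avoids this.} It takes $\mathfrak q$ to be the image of a one-dimensional characteristic-$p$ prime $\mathfrak{q}_{b'}$ of $R_{b',\infty}$. Since $k(\mathfrak q)=k(\mathfrak{q}_{b'})$, one gets $R_{b',\infty}/\mathfrak{q}_{b'}\subseteq A$, so condition~(2) holds for the rational class $b'$ by construction. Your first paragraph sets up a lift $\mathfrak{Q}_{b'}$ but never uses it, and as set up it would not suffice. Lem.~\ref{lem_comparison_of_P_and_R_b}(2) only gives $\dim R_{b',\infty}/\mathfrak{Q}_{b'}\le\dim P_\infty/\mathfrak{Q}^{\textnormal{ps}}$, which is the wrong direction. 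What is actually needed:
\begin{itemize}
\item A \emph{minimal} prime $Q_{b'}$ of $R_{b',\infty}$ whose pullback to $P_\infty$ contains $\mathcal{M}_\infty$. The paper obtains this from a local $R=\mathbb{T}$ statement at the automorphic point $\mathfrak p$, via the tangent space computation of \cite{New23}.
\item Prop.~\ref{prop_c_dim}(3), which then gives $\dim R_{b',\infty}/Q_{b'}\geq 1+[F:\mathbb{Q}]+|T|$.
\item Prop.~\ref{prop_c_dim}(1) together with $|T|>h+1$, which lets one avoid the reducible and dihedral loci inside $R_{b',\infty}$.
\end{itemize}

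A smaller point: the local conditions pass from $t_\infty$ to $\rho_{\mathfrak q}$ because $\rho_{\mathfrak q}\otimes\operatorname{Frac}(A)$ is absolutely irreducible and the trace form is non-degenerate. That is the right justification, not Lem.~\ref{lem_local_and_global_lifting_rings}.
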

\begin{proof}
    Recall from the start of Section~\ref{subsec:first patching argument} the lift $\rho'$ of $\overline{\rho}_{b'}$ associated to $\pi$. This determines a map 
    \[
    R_{b', \infty} \to R_{b'}^{\ord} \to \cO 
    \]
    whose pullback to $P_\infty$ factors through $\mathbb{T}_\infty$. Let $\mathfrak{p}_{b'} \leq R_{b', \infty}$, $\mathfrak{p}^{\textnormal{ps}} \leq P_\infty$, $\mathfrak{p} \leq \mathbb{T}_\infty$ be the respective kernels. The map $\widehat{(P_{\infty, \mathfrak{p}^{\textnormal{ps}}})} \to \widehat{(R_{b', \infty, \mathfrak{p}_{b'}})}$ is an isomorphism (same argument as Cor.\ \ref{cor_comparison_of_patched_objects}, using \cite[Prop.\ 2.2.1]{Pan22}). On the other hand, we have $\dim_E \mathfrak{p}^{\textnormal{ps}} / ((\mathfrak{p}^{\textnormal{ps}})^2, \mathfrak{a}) \otimes_\cO E = [F : \mathbb{Q}]$, using the main theorem of \cite{New23}. This forces the map $\widehat{(P_{\infty, \mathfrak{p}^{\textnormal{ps}}})} \to \widehat{(\mathbb{T}_{\infty, \mathfrak{p}})}$ to be an isomorphism (for dimension reasons). We conclude that if $Q_{b'}$ is a minimal prime of $R_{b', \infty}$ contained in $\mathfrak{p}_{b'}$, and $Q^{\textnormal{ps}}$ is its pullback to $P_\infty$, then $\dim P_\infty / Q^{\textnormal{ps}} = \dim R_{b', \infty} / Q_{b'}$ and $\mathcal{M}_\infty \leq Q^{\textnormal{ps}}$. 

    Since 
    \[ \dim R_{b' ,\infty} / (Q_{b'}, \mathcal{R}_\infty, \varpi) \leq 1 + \delta_F + h < [F : \mathbb{Q}] + |T|, \] and (if we are in the dihedral case), 
    \[ \dim R_{b' ,\infty} / (Q_{b'}, \mathcal{D}_\infty, \varpi) \leq \max(1 + \delta_F + h, [F : \mathbb{Q}]) < [F : \mathbb{Q}] + |T|, \]
    we can find a prime ideal $\mathfrak{q}_{b'}$ of dimension 1 and characteristic $p$ containing $Q_{b'}$, and in particular containing $\mathcal{M}_\infty R_{b', \infty}$, that avoids the reducible and/or dihedral locus. Let $\mathfrak{q}^{\textnormal{ps}}$ denote the pullback of $\mathfrak{q}_{b'}$ to $P_\infty$, and $\mathfrak{q}$ its image in $\mathbb{T}_\infty$. Lem.\ \ref{lem_comparison_of_P_and_R_b} shows that $k(\mathfrak{q}) = k(\mathfrak{q}_{b'})$. Thus $\mathfrak{q}$ is the desired nice prime. 
\end{proof}
The key technical ingredient in the rest of the proof is the following proposition, whose proof will be given in Section \ref{subsec_second_patching} below:
\begin{pp}\label{prop_local_R_equals_T}
    Let $\mathfrak{q} \leq \mathbb{T}_\infty$ be a nice prime, and let $\mathfrak{q}^{\textnormal{ps}}$ be its pullback to $P_\infty$. Then the induced map $\widehat{(P_{\infty, \mathfrak{q}^{\textnormal{ps}}})} \to \widehat{(\mathbb{T}_{\infty, \mathfrak{q}})}$ has nilpotent kernel. 
\end{pp}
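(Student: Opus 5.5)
We plan to prove Prop.\ \ref{prop_local_R_equals_T} by a second Taylor--Wiles patching argument, carried out over the local field $K = k(\mathfrak{q})$. The key input is that the residual representation at $\mathfrak{q}$ is ``big'': it satisfies a Taylor--Wiles hypothesis, even though $\overline{\rho}^{\textnormal{ss}}$ does not.

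\textbf{Setting up the comparison at $\mathfrak{q}$.} Let $\mathfrak{q}$ be nice, arising from $R_{b,\infty}$, and let $\mathfrak{q}_b$ be the prime of $R_{b,\infty}$ given by the kernel of $R_{b,\infty} \to A \to k(\mathfrak{q})$. By Cor.\ \ref{cor_comparison_of_patched_objects},
\[ \widehat{(P_{\infty,\mathfrak{q}^{\textnormal{ps}}})} \cong \widehat{(R_{b,\infty,\mathfrak{q}_b})}. \]
This ring represents deformations of $\rho_{\mathfrak{q}} : \Gamma_\infty \to \GL_2(A)$ to complete local rings with residue field $K$, satisfying the local conditions. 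The justification is Lem.\ \ref{lem_patched_ring_represents_a_functor}, combined with the usual description of localised--completed deformation rings, as in \cite{Pan22}.

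By the lemma following Defn.\ \ref{defn:nice prime ideal}, $\ad^0\rho_{\mathfrak{q}}$ is absolutely irreducible even on $\ker(\Gamma_\infty\to \Gal(F(\zeta_{p^\infty})/F))$. This is exactly the Taylor--Wiles hypothesis over $K$.

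\textbf{Second patching.} Using this hypothesis, we choose a second family of auxiliary sets $Q'_N$ of Taylor--Wiles places of level $N$, chosen inside the patched Galois group $\Gamma_\infty$ via the Chebotarev-type arguments of \cite{Ski99, Pan22}. These sets are chosen so that each $Q'_N$ kills the dual Selmer group of $\ad^0\rho_{\mathfrak{q}}(1)$ computed in characteristic-$p$ coefficients over $K$. There is a subtlety here: $\Gamma_\infty$ is only a patched group, not an arithmetic one, so the places must be realised at each finite level $D(N,M)$. We would do this by choosing them at the finite levels and then rediagonalising.

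We then perform the patching argument of \S\ref{subsec:first patching argument} again, now localised and completed at $\mathfrak{q}$. This produces:
\begin{itemize}
\item a ring $R'_\infty$ that is a power series ring over the completed local lifting ring at $\mathfrak{q}_b$ (the completion of $R^{\square}_{\loc}$), modulo as many relations as the Euler characteristic allows;
\item a module $H'_\infty$ that is free over an enlarged ring $S'_\infty$ of diamond operators, obtained via Prop.\ \ref{prop_freeness_of_modular_forms_at_TW_level} by localising and completing.
\end{itemize}

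\textbf{Conclusion by counting.} We then apply the standard dimension count. By Cor.\ \ref{cor_normality_of_local_lifting_ring}, the local lifting ring is reduced, normal and $\cO$-flat. Its localisation and completion at the contraction of $\mathfrak{q}_b$ therefore remains a domain. This uses excellence, together with normality of the reduction mod $\varpi$, since $\mathfrak{q}$ has characteristic $p$.

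Consequently $R'_\infty$ is a domain of dimension equal to $\operatorname{depth}_{S'_\infty} H'_\infty$. Hence $H'_\infty$ has full support, and $R'_\infty$ acts nearly faithfully on it. Quotienting by the augmentation ideal of the new diamond operators gives the statement: the map $\widehat{(P_{\infty,\mathfrak{q}^{\textnormal{ps}}})}\to \widehat{(\mathbb{T}_{\infty,\mathfrak{q}})}$ has nilpotent kernel.

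\textbf{Expected main obstacle.} The hardest step will be the second patching itself. It must be run compatibly with the first inverse-limit construction, which requires a double diagonalisation over $N$ and $M$. It also requires controlling the localisation and completion at a characteristic-$p$ dimension-$1$ prime at each finite level, where the rings are Artinian and $\mathfrak{q}$ is not directly visible.

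To handle this, we would first try to replace the localisation at $\mathfrak{q}$ by quotients by suitable powers of $\mathfrak{q}$ together with bounded-length arguments, in the spirit of Lem.\ \ref{lem_quantitative_continuity_of_pseudocharacters}. We would then check that the dual Selmer computations over $K$ descend to these finite quotients.
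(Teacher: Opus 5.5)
Your strategy is the paper's: a second patching on top of the first, with new Taylor--Wiles sets chosen at each finite level using the big image of $\ad^0\rho_{\mathfrak{q}}$, then a comparison of the depth of the localised and completed patched module with the dimension of a normal domain built from $R^{\square}_{\loc}$, and finally tensoring down along $\widetilde{S}_\infty \to S_\infty$. However, the step you defer as the ``main obstacle'' is where the proof actually lives. The instruction ``choose $Q'_N$ to kill the dual Selmer group over $K$'' has to be turned into a finite-level statement, with errors uniform in the truncation, before it can be patched. (No deformation theory over the imperfect field $K$ is needed or used.)

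This is Prop.~\ref{prop_existence_of_TW_places}. One transports $R_{b,\infty} \to A = k\llbracket X \rrbracket$ to $R^{\textnormal{ord}}_{b,Q_{M_N}} \to A/(X^m)$ with $m = r_N$, and works over the image $A'$ of $B = R_{b,\infty}/\mathfrak{q}_b$, which need not be normal. One fixes $C_0$ with $X^{C_0}A \subset B$ and $C_1$ with $X^{C_1}\operatorname{End}_A(\ad^0\rho_m) \subset (\ad^0\rho_m)(A[G_{F(\zeta_{p^\infty})}])$, both independent of $m$. Then $Q'_N$ of fixed size $q_2$ is chosen so that $H^1_{Q'_N\textnormal{-split}}(\,\cdot\,, \ad^0\rho_m(1))$ is killed by $X^{C_1}$. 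The Greenberg--Wiles formula then yields a map $(A')^g \to \widetilde{I}/(\widetilde{I}^2, I_{\loc})$ with cokernel killed by $X^{2C}$, where $C = \max(C_0, C_1)$. What is uniform is this exponent of $X$, not a length; the lengths grow with $m$. The error vanishes after localising at $\mathfrak{q}$, where $X$ is a unit, and this is exactly what makes \eqref{eq_patched_presentation} surjective.

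The second missing ingredient is not flagged in your proposal at all. You want to present the localised and completed framed $R_{b,\infty}$ as a quotient of a power series ring over the completion of $R^{\square}_{\loc}\widehat{\otimes}_\cO S_\infty$ at the contraction of $\mathfrak{q}_b$. For this, the two primes must have the same residue field, i.e.\ the local ring must surject onto $A'$. This surjectivity is also the hypothesis under which Prop.~\ref{prop_existence_of_TW_places} identifies the relative cotangent module with a Selmer group.

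Framing only at $S_p \cup S_{\aux}$ and the Taylor--Wiles places does not ensure this. The local restrictions need not generate $B$, so $k(\mathfrak{q})$ can be a proper finite extension of the residue field of the local ring at the contracted prime. Your ``power series ring over the completed local lifting ring, modulo relations'' is then not available. The paper fixes this with Lem.~\ref{lem_good_unramified_places}: for each $N$ it frames at a set $T_N$ of $d_1$ unramified places, with $d_1$ uniform by the tangent space bound, whose Frobenius matrix entries topologically generate $R^{\textnormal{ord}}_{b,Q_{M_N}}$. This costs $3d_1$ more in $q_2$ and $4d_1$ more framing variables. Since unramified lifting rings are formally smooth, normality is preserved.

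With these two points supplied, your count goes through as in Prop.~\ref{prop_presentation_of_tilde_ring}. The ring $\widehat{(\widetilde{P}_{\infty,\widetilde{\mathfrak{q}}^{\textnormal{ps}}})}$ has dimension at most $[F:\mathbb{Q}]+|T|+q_2$, with equality only if its spectrum is irreducible. The localised patched module has depth at least $[F:\mathbb{Q}]+|T|+q_2$, which forces irreducibility and near faithfulness. For this step only normality of $(R^{\square}_{\loc})^{\textnormal{red}}$ and excellence are used, not normality modulo $\varpi$.
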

In the remainder of Section \ref{subsec:first patching argument}, we show how Prop.\ \ref{prop_local_R_equals_T} implies Thm.\ \ref{thm_tame_SW_black_box}.
\begin{pp}\label{prop_propagating_pro_modularity}
    Let $b \in H^1_{T\textnormal{-split}}(F_{S_p \cup S_{\textnormal{aux}}} / F, \overline{\chi})$ be non-zero, and suppose that there exists a nice prime $\mathfrak{q}$ arising from $R_{b, \infty}$. Then the ideal $\mathcal{M}_\infty R_{b, \infty}$ is nilpotent. 
\end{pp}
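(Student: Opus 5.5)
The plan is a Skinner--Wiles style propagation argument on $\Spec R_{b, \infty}$, using the connectedness dimension bound of Prop.\ \ref{prop_c_dim}. Let $\mathcal{M}_b \coloneqq \mathcal{M}_\infty R_{b,\infty}$. It suffices to show that every minimal prime $Q_b$ of $R_{b,\infty}$ contains $\mathcal{M}_b$. Call an irreducible component $\Spec R_{b,\infty}/Q_b$ \emph{modular} if $\mathcal{M}_b \leq Q_b$. The key local statement is the following. Suppose $\mathfrak{q}_b \leq R_{b,\infty}$ is a prime of dimension $1$ and characteristic $p$, lying outside $V(\mathcal{R}_\infty)$ (and $V(\mathcal{D}_\infty)$ in the dihedral case) and containing $\mathcal{M}_b$. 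Then its image in $\mathbb{T}_\infty$ is nice: condition (2) of Defn.\ \ref{defn:nice prime ideal} holds because $P_\infty/\mathfrak{q}^{\textnormal{ps}} \to R_{b,\infty}/\mathfrak{q}_b$ induces an isomorphism of fraction fields (Lem.\ \ref{lem_comparison_of_P_and_R_b}), so $R_{b,\infty}/\mathfrak{q}_b$ lands in the normalisation. Combining Prop.\ \ref{prop_local_R_equals_T} with Cor.\ \ref{cor_comparison_of_patched_objects} gives that $\mathcal{M}_\infty$ is nilpotent in $\widehat{(R_{b,\infty,\mathfrak{q}_b})}$, hence in $R_{b,\infty,\mathfrak{q}_b}$ by faithful flatness. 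Consequently \emph{every} minimal prime of $R_{b,\infty}$ contained in $\mathfrak{q}_b$ contains $\mathcal{M}_b$.

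First I produce one modular component. Let $\mathfrak{q}$ be the given nice prime arising from $R_{b,\infty}$. The map $R_{b,\infty}\to A$ has kernel $\mathfrak{q}_b$, a dimension-one prime containing $\mathcal{M}_b$ and lying outside the reducible and dihedral loci. By the previous paragraph, every component through $\mathfrak{q}_b$ is modular, and there is at least one such component.

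Next comes propagation. Suppose some component is not modular. Let $I$ be the intersection of the modular minimal primes and $J$ that of the non-modular ones. Then $V(I)\cup V(J)=\Spec R_{b,\infty}$, and both pieces are nonempty. Since $\operatorname{cdim}(R_{b,\infty})\geq [F:\mathbb{Q}]+|T|$ by Prop.\ \ref{prop_c_dim}(2), we get $\dim R_{b,\infty}/(I+J)\geq [F:\mathbb{Q}]+|T|$. Pick a component $Z$ of $V(I+J)$ of that dimension. Then $Z\cap V(\varpi)$ has dimension at least $[F:\mathbb{Q}]+|T|-1$, while $V(\mathcal{R}_\infty,\varpi)$, and in the dihedral case $V(\mathcal{D}_\infty,\varpi)$, have dimension at most $\max(\delta_F+h+1,[F:\mathbb{Q}])$ by Prop.\ \ref{prop_c_dim}(1). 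Hypothesis (5), namely $|T|>h+1$, together with $\delta_F \leq [F:\mathbb{Q}]-1$, gives $\delta_F+h+1<[F:\mathbb{Q}]+|T|-1$. For the dihedral bound we need $[F:\mathbb{Q}]<[F:\mathbb{Q}]+|T|-1$, which holds since $|T|\geq 2$. So I can choose a dimension-one, characteristic-$p$ prime $\mathfrak{q}_b\supseteq I+J$ avoiding both bad loci; this is a standard prime avoidance on a catenary local ring, taking a chain inside $Z\cap V(\varpi)$ not contained in the smaller closed sets.

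Such a $\mathfrak{q}_b$ contains some modular minimal prime, hence contains $\mathcal{M}_b$. By the local statement, every minimal prime below $\mathfrak{q}_b$ is then modular. But $\mathfrak{q}_b\supseteq J$ also contains a non-modular minimal prime, which is a contradiction. Hence all components are modular, and $\mathcal{M}_b$ is nilpotent.

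The main obstacle is the careful dimension bookkeeping in the avoidance step. One must confirm that the inequalities derived from hypothesis (5) really give strict room after intersecting with $V(\varpi)$ inside $Z$. One must also confirm that the chosen prime has dimension exactly one, not merely at least one, so that Cor.\ \ref{cor_comparison_of_patched_objects} and Prop.\ \ref{prop_local_R_equals_T} apply. A secondary check is that niceness requires the map to the normalisation, which the fraction field identification from Lem.\ \ref{lem_comparison_of_P_and_R_b}(1) supplies.
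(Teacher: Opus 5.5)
Your proposal is correct and follows essentially the same route as the paper. You get a first modular component through the given nice prime. You then use the connectedness-dimension bound of Prop.~\ref{prop_c_dim} to find a dimension-one, characteristic-$p$ prime lying in the intersection of a modular and a non-modular component and avoiding the reducible (and dihedral) loci; this prime is nice, so Prop.~\ref{prop_local_R_equals_T} gives the contradiction. The only cosmetic difference is in checking condition (2) of niceness: you use the residue-field identification coming from Cor.~\ref{cor_comparison_of_patched_objects}, whereas the paper reads it off the explicit matrix-entry formula \eqref{eq_rep_of_irreducible_ps}.
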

\begin{proof}
    It suffices to show that $\mathcal{M}_\infty R_{b, \infty}$ is contained in every minimal prime of $R_{b, \infty}$. We first show that it is contained in some minimal prime. More precisely, let $A$ denote the normalisation of $\mathbb{T}_\infty / \mathfrak{q}$ in its field of fractions, and let $R_{b, \infty} \to A$ denote the extension of $P_\infty \to A$ implicit in the statement of the proposition. Let $\mathfrak{q}_b \leq R_{b, \infty}$ be its kernel, and let $Q \leq R_{b, \infty}$ be a minimal prime contained in $\mathfrak{q}_b$. We first claim that $\mathcal{M}_\infty R_{b, \infty} \leq Q$.

    To see this, note that we have, by construction, a diagram of complete local rings
    \[ \xymatrix{ \widehat{(R_{b, \infty, \mathfrak{q}_b})} & \\ \widehat{(P_{\infty, \mathfrak{q}^{\textnormal{ps}}})} \ar[u] \ar[r] & \widehat{(\mathbb{T}_{\infty, \mathfrak{q}})} } \]
    and that, by Prop.\ \ref{prop_local_R_equals_T}, $\mathcal{M}_\infty P_{\infty, \mathfrak{q}^{\textnormal{ps}}}$ is nilpotent. It follows that $\mathcal{M}_\infty R_{b, \infty, \mathfrak{q}_b}$ is nilpotent, and therefore that $\mathcal{M}_\infty R_{b, \infty}$ is contained in $Q$. 

    Suppose next that there is a minimal prime of $R_{b, \infty}$ that does not contain $\mathcal{M}_\infty R_{b, \infty}$. Then, by Prop.\ \ref{prop_c_dim} (and by the definition of connectedness dimension), we can find minimal primes $Q_1, Q_2 \leq R_{b, \infty}$ with the following properties:
    \begin{itemize}
        \item $\mathcal{M}_\infty R_{b, \infty} \leq Q_1$.
        \item $\mathcal{M}_\infty R_{b, \infty} \not\leq Q_2$.
        \item The dimension of $R_{b, \infty} / (Q_1, Q_2)$ is at least $[F : \mathbb{Q}] + |T|$.
    \end{itemize}
    We claim that we can find a prime ideal $\mathfrak{p}_b \leq R_{b, \infty}$ of dimension 1 and characteristic $p$ with the following properties:
    \begin{itemize}
        \item $\mathfrak{p}_b$ contains $(Q_1, Q_2)$, and in particular $\mathcal{M}_\infty R_{b, \infty}$.
        \item We have $\mathcal{R}_\infty R_{b, \infty} \not\leq \mathfrak{p}_b$. If we are in the dihedral case, then $\mathcal{D}_\infty R_{b, \infty} \not\leq \mathfrak{p}_b$.
    \end{itemize}
    This is possible provided that the inequality
    \[ \dim R_{b, \infty} / (Q_1, Q_2, \varpi) >  \dim R_{b, \infty} / (\mathcal{R}_\infty, \varpi) \]
    (and, if we are in the dihedral case, the inequality
    \[ \dim R_{b, \infty} / (Q_1, Q_2, \varpi) >  \dim R_{b, \infty} / (\mathcal{D}_\infty, \varpi)) \]
     holds. Using Prop.\ \ref{prop_c_dim}, we see that this would be true if 
    \[ [F : \mathbb{Q}] + |T| - 1 > \max(\delta_F + h + 1, [F : \mathbb{Q}]), \]
    which follows from our assumption that $|T| > h + 1$.
    
    Let $\mathfrak{p}^{\textnormal{ps}}$ denote the pullback of $\mathfrak{p}_b$ to $P_\infty$. Then $\mathcal{M}_\infty \leq \mathfrak{p}^{\textnormal{ps}}$, and so the latter prime is pulled back from a prime ideal $\mathfrak{p} \leq \mathbb{T}_\infty$. To check that $\mathfrak{p}$ is nice, we just need to check that $R_{b, \infty} / \mathfrak{p}_b$ and $P_\infty / \mathfrak{p}^{\textnormal{ps}}$ have the same fraction field (so that $R_{b, \infty} / \mathfrak{p}_b$ is in particular contained in the normalisation of $P_\infty / \mathfrak{p}^{\textnormal{ps}}$ in its field of fractions). We can choose $\sigma_0 \in \Gamma_\infty$ and $\alpha \in R_{b, \infty}$ such that $\diag(\alpha, 1) r_{b, \infty}(\sigma_0) \diag(\alpha, 1)^{-1}$ has upper right entry equal to 1. Then $R_{b, \infty} / \mathfrak{p}_b$, and hence its fraction field, is topologically generated by the matrix entries of $\diag(\alpha, 1) r_{b, \infty} \diag(\alpha)^{-1} \text{ mod }\mathfrak{p}_b$. However, formula \eqref{eq_rep_of_irreducible_ps} shows that these entries are already contained in $\operatorname{Frac}(P_\infty / \mathfrak{p}^{\textnormal{ps}})$. 
    
    We can now apply Prop.\ \ref{prop_local_R_equals_T} once more to conclude that $\mathcal{M}_\infty P_{\infty, \mathfrak{p}}$ is nilpotent, and hence that $\mathcal{M}_\infty R_{b, \infty} \leq Q_2$, a contradiction. This contradiction concludes the proof. 
\end{proof}
We can now complete the proof of Thm.\ \ref{thm_tame_SW_black_box}. 
\begin{proof}[Proof of Thm.\ \ref{thm_tame_SW_black_box}]
    After conjugation, we may assume that the representation that we must prove is modular factors through a continuous homomorphism $\rho : G_{F, S_p \cup T \cup S_{\textnormal{aux}}} \to \GL_2(\cO)$.
    A standard classicality result in Hida theory implies that it suffices to show that the morphism $P^{\ord} \to \cO$ classifying~$\rho$
    factors through~$\mathbb{T}_0$.
    Since $\mathbb{T}_\infty \otimes_{S_\infty} \cO \to \mathbb{T}_0$ is surjective with nilpotent kernel, it suffices
    to show that the pulled back morphism $P_\infty \to \cO$ factors through $\mathbb{T}_\infty$.

    After further conjugation, we can assume that $\rho  \text{ mod }\varpi$ is equal to $\overline{\rho}_{b_\rho}$, for a cocycle $b_\rho$ representing a non-zero class in $H^1_{T\textnormal{-split}}(F_{S_p \cup S_{\textnormal{aux}}} / F, \overline{\chi})$. By Prop.\ \ref{prop_existence_of_nice_prime} and Prop.\ \ref{prop_propagating_pro_modularity}, we can assume the existence of another cocycle $b$ representing a non-zero class such that there is a nice prime of $\mathbb{T}_\infty$ arising from $R_{b, \infty}$ and that $\mathcal{M}_\infty R_{b, \infty}$ is nilpotent. 
    We can assume that the cohomology classes of $b$, $b_\rho$ are independent (otherwise, $\rho$ determines an $\cO$-point of $R_{b, \infty}$, and then of $\mathbb{T}_\infty$, as desired).
    We are going to prove that there exists a nice prime $\mathfrak q$ of~$\mathbb{T}_\infty$ arising from $R_{b_{\rho}, \infty}$.
    Prop.\ \ref{prop_propagating_pro_modularity} then implies that $\mathcal{M}_\infty R_{b_\rho, \infty}$ is nilpotent, and therefore that the restriction of the homomorphism $R_{b_\rho, \infty} \to \cO$ associated to $\rho$ to $P_\infty$ factors through $\mathbb{T}_\infty$.

    We begin by completing $b_1 \coloneqq  b$, $b_2 \coloneqq  b_\rho$ to a set $b_1, b_2, \dots, b_h$ whose image in $H^1_{T\textnormal{-split}}(F_{ S_p \cup S_{\textnormal{aux}}} / F, \overline{\chi})$ forms a basis for this $k$-vector space. We choose elements $\sigma_1, \dots, \sigma_h$ of $\Gamma_\infty$ projecting to elements in $G_{F(\overline{\chi}_1, \overline{\chi}_2)}$ such that the functionals on $H^1_{T\textnormal{-split}}(F_{ S_p \cup S_{\textnormal{aux}}} / F, \overline{\chi})$ induced by evaluation at $\sigma_1, \dots, \sigma_h$ are $k$-linearly independent, and elements $a_{ij} \in \cO$ such that $\sum_{i=1}^h a_{ij} b_l(\sigma_i) = \delta_{jl}$ for each $j, l = 1, \dots, h$. Let $z_j \coloneqq \sum_{i=1}^h a_{ij} \sigma_i \in \cO[\Gamma_\infty]$. 

    We consider a lift $\widetilde{\rho}$ of $\overline{\rho}_b$ over $A \coloneqq  k\llbracket X\rrbracket $, given by the formula
    \[ \widetilde{\rho}(\sigma) \coloneqq  \begin{pmatrix} \overline{\chi}_1(\sigma) & \overline{\chi}_2 (\sigma)(b_1(\sigma) + X b_2(\sigma)) \\ 0 & \overline{\chi}_2(\sigma) \end{pmatrix}. \]
    Then $\widetilde{\rho}(z_1) = \left( \begin{smallmatrix} 1 & 1 \\ 0 & 1 \end{smallmatrix} \right)$,  $\widetilde{\rho}(z_2) = \left( \begin{smallmatrix} 1 & X \\ 0 & 1 \end{smallmatrix} \right)$, and $\widetilde{\rho}(z_j) = 1_2$ if $j = 3, \dots, h$.
    
    Let $\mathfrak{p}_b \leq R_{b, \infty}$ denote the kernel of the homomorphism $R_{b, \infty} \to A$ induced by~$\widetilde{\rho}$.
    Then $r_{b, \infty} \otimes_{R_{b, \infty}} R_{b, \infty}/\mathfrak{p}_b$ is $\widehat{\GL}_2$-conjugate to~$\widetilde{\rho}$. Let $\alpha \coloneqq r_{b, \infty}(z_1)_{12} \in  1 +\fm_{R_{b, \infty}}$, and let 
    \[ \widetilde{r}_{b, \infty} \coloneqq  \diag(\alpha^{-1}, 1) r_{b, \infty} \diag(\alpha, 1) = \begin{pmatrix} \widetilde{a}(\sigma) & \widetilde{b}(\sigma) \\ \widetilde{c}(\sigma) & \widetilde{d}(\sigma) \end{pmatrix}. \]
    Then $\widetilde{r}_{b, \infty}(z_1)_{12} \text{ mod } \mathfrak{p}_b = 1$ and $\widetilde{r}_{b, \infty}(c_\infty) = \operatorname{diag}(-1, 1)$, hence $\widetilde{r}_{b, \infty} \text{ mod }\mathfrak{p}_b = \widetilde{\rho}$ (equality, not just conjugacy). In particular, we have $\widetilde{r}_{b, \infty}(z_j)_{12} \equiv 0 \text{ mod }\mathfrak{p}_b$ for each $j = 3, \dots, h$. 
    
    Let $Q_b$ denote a minimal prime of $R_{b, \infty} / (\widetilde{r}_{b, \infty}(z_3)_{12}, \dots, \widetilde{r}_{b, \infty}(z_h)_{12})$ contained in $\mathfrak{p}_b$. Then 
    \[
    \dim R_{b, \infty} / Q_b \geq 1 + [F : \mathbb{Q}] + |T| - (h-2) = [F : \mathbb{Q}] + |T| - h + 3, 
    \]
    where the inequality uses Prop.\ \ref{prop_c_dim}~(3). 
    On the other hand, we have
    \[ \dim R_{b, \infty} / (\mathcal{R}_\infty, Q_b, \varpi) \leq \dim P_{\infty} / (\mathcal{R}_\infty, \varpi) + \dim R_{b, \infty} / (\mathfrak{m}_{P_\infty}, Q_b).  \]
    Prop.\ \ref{prop_p_infty_red_dim} shows that $\dim P_{\infty} / (\mathcal{R}_\infty, \varpi) \leq 1 + \delta_F$. We have
    \[ R_{b, Q}^{\textnormal{ord}} / (\mathfrak{m}_{P^{\textnormal{ord}}_{ Q}}) = R_{b}^{\textnormal{ord}} / (\mathfrak{m}_{P^{\textnormal{ord}}}) \]
    for any Taylor--Wiles datum $Q$. 
    By patching, we deduce that $R_{b, \infty}/(\fm_{P_\infty}) = R_b^{\textnormal{ord}}/(\fm_{P^{\textnormal{ord}}})$,
    and therefore $R_{b, \infty} / (\mathfrak{m}_{P_\infty}, Q_b)$ is a quotient of 
    \[ R_{b}^{\textnormal{ord}} / (\mathfrak{m}_{P^{\textnormal{ord}}}, \widetilde{r}_{b, \infty}(z_3)_{12}, \dots, \widetilde{r}_{b, \infty}(z_h)_{12}), \]
    which has Krull dimension at most 2, by Lem.\ \ref{lem_bounding_reducible_locus}. We conclude that 
    \[ \dim R_{b, \infty} / (\mathcal{R}_\infty, Q_b, \varpi) \leq 1 + \delta_F + 2 = 3 + \delta_F. \]
    Our hypotheses imply that
    \[ 3 + \delta_F < [F : \mathbb{Q}] + |T| - h + 3, \]
    so we further conclude that $\mathcal{R}_{b, \infty}$ is not contained in~$Q_b$, hence $\widetilde{r}_{b, \infty} \otimes k(Q_b)$ is irreducible.

    Let $Q$ denote the pullback of $Q_b$ to $P_\infty$. Then $x(z_j, \tau) \in Q$ for each $j = 3, \dots, h$ and $\tau \in \Gamma_\infty$, since its image in $R_{b, \infty}$ is $\widetilde{r}_{b, \infty}(z_j)_{12}\widetilde{r}_{b, \infty}(\tau)_{21}$. 
    Since $\mathcal{M}_\infty R_{b, \infty}$ is nilpotent, it is contained in~$Q_b$, and so $\mathcal{M}_\infty \leq Q$. 
    By Lem.\ \ref{lem_comparison_of_P_and_R_b}, we have 
    \[ \dim P_\infty / Q \geq \dim R_{b, \infty} / Q_b \geq [F : \mathbb{Q}] + |T| - h + 3. \]
    Let $I_c \leq P_\infty / Q$ be the ideal generated by the elements $x(z_1, \tau)$ for $\tau \in \Gamma_\infty$, as in the statement of Prop.\ \ref{prop_height_of_ideal_in_P}. The ring $R_{b, \infty} / (Q_b, \mathfrak{m}_{P_\infty})$ is not Artinian as, by construction, it has $R_{b, \infty} / \mathfrak{p}_b$ as a quotient. We can therefore apply Prop.\ \ref{prop_height_of_ideal_in_P} to deduce that $\operatorname{ht}(I_c) \leq 1$, and therefore that
    \[ \dim P_\infty / (Q, \varpi, I_c) \geq [F : \mathbb{Q}] + |T| - h + 1. \]
    Let $\mathfrak{q}$ be a dimension 1, characteristic $p$ prime ideal of $P_\infty$ containing $(Q, \varpi, I_c)$ such that $\mathcal{R}_\infty \not\leq \mathfrak{q}$ and (if we are in the dihedral case) $\mathcal{D}_\infty \not\leq \mathfrak{q}$. Such an ideal exists 
    by~\cite[Lem.\ 1.9]{Tho15},
    since we have 
    \[ \dim P_\infty / (\mathcal{R}_\infty, \varpi) \leq 1 + \delta_F < \dim P_\infty / (Q, \varpi, I_c) \]
    and, if we are in the dihedral case,
    \[ \dim P_\infty / (\mathcal{D}_\infty, \varpi) \leq [ F : \mathbb{Q}] < \dim P_\infty / (Q, \varpi, I_c). \]
    We are going to show that~$\mathfrak q$ is a nice prime ideal arising from $R_{b_{\rho}, \infty}$.
    Part~(1) of Def.\ \ref{defn:nice prime ideal} holds by construction (using that $\mathcal M_\infty \leq Q$), so it remains to check part~(2).
    
    Let $A \cong k'\llbracket Y\rrbracket $ be the normalisation of $P_\infty / \mathfrak{q}$ in its fraction field, and let $\rho_{\mathfrak{q}} : \Gamma_\infty \to \GL_2(A)$ be a representation with pseudocharacter $t_\infty \text{ mod }\mathfrak{q}$, chosen (as described in \S \ref{sec_pseudocharacters}) to have $\rho_{\mathfrak{q}}(c_\infty) = \operatorname{diag}(-1, 1)$ and $\rho_{\mathfrak{q}} \text{ mod }Y$ upper triangular but \emph{not} diagonal. In particular, this forces the extension class appearing in the reducible residual representation $\rho_{\mathfrak{q}} \text{ mod }Y$ to be non-trivial. 

    The defining relations of the rings $P^{\textnormal{ord}}_{Q}$, and therefore of $P_\infty$, show that if $v \in T$, and $\sigma, \tau \in G_{F_v}$, then we have the equation
    \[ (\rho_{\mathfrak{q}}(\sigma) - 1)(\rho_{\mathfrak{q}}(\tau) - \epsilon(\tau)^{-1}) = 0. \]
    In particular, the cohomology class defining the extension $\rho_{\mathfrak{q}} \text{ mod }Y$ must become split after restriction to any place in $T$. %
    
    Since $\rho_{\mathfrak{q}} \otimes \operatorname{Frac}(A)$ is irreducible, we can find $\tau_0 \in \Gamma_\infty$ such that $\rho_{\mathfrak{q}}(\tau_0)_{21}\neq 0$. By construction, we have $x(z_j, \tau_0) \equiv 0 \text{ mod }\mathfrak{q}$ for $j = 1$ and for each $j = 3, \dots, h$. 
    Since $x(z_j, \tau_0) = \rho_{\mathfrak{q}}(z_j)_{12}\rho_{\mathfrak{q}}(\tau_0)_{21}$ in~$\operatorname{Frac}(A)$,
    it follows that $\rho_{\mathfrak{q}}(z_j)_{12} = 0$ for each such value of $j$. Reducing modulo $Y$, it follows that the extension class appearing in $\rho_{\mathfrak{q}}$ is a scalar multiple of $b_\rho$. It follows immediately that $\mathfrak{q}$ is a nice prime of $P_\infty$ that arises from $R_{b_\rho, \infty}$. This concludes the proof.
\end{proof}

\subsection{Second patching argument}\label{subsec_second_patching}

We now turn to the proof of Prop.\ \ref{prop_local_R_equals_T}, which rests upon the introduction of further Taylor--Wiles places.
\begin{pp}\label{prop_existence_of_TW_places}
    Let $m \geq 1$, let $A \coloneqq  k\llbracket X\rrbracket $, let $Q$ be a Taylor--Wiles set, let $T_a$ be a finite set of finite places disjoint from $S_p \cup S_{\textnormal{aux}} \cup T \cup Q$, and suppose given a homomorphism $f : R_{b, Q}^{\textnormal{ord},\square_{S_p \cup S_{\textnormal{aux}} \cup Q \cup T_a}} \to A / (X^m)$, corresponding to a lifting $\rho_m$ of $\overline{\rho}_b$, such that the composite 
    \[ R_{\textnormal{loc}}^{\square_{S_p \cup S_{\textnormal{aux}}\cup Q \cup T_a}} \to R_{b, Q}^{\textnormal{ord},\square_{S_p \cup S_{\textnormal{aux}}\cup Q \cup T_a}} \to  A / (X^m) \]
    surjects onto the image $A'$ of $f$. 
    Suppose given $C_0, C_1 \geq 0$ such that 
    \[
    X^{C_0} \left (A / (X^m) \right)\leq A' 
    \]
    and 
    \[
    X^{C_1} \operatorname{End}_A( \ad^0 \rho_m ) \leq (\ad^0 \rho_m)(A[G_{F(\zeta_{p^\infty})}]). 
    \]
    Let $C \coloneqq  \max(C_0, C_1)$. Then, for any
    \[
    q_2 \geq h^1(F_{S_p \cup S_{\textnormal{aux}} \cup T \cup Q \cup T_a}/F, \ad^0 \rhobar_b(1)) 
    \]
    and $N \geq 1$, we can find a set $Q'$ of Taylor--Wiles places of level $N$, with $q_2$ elements and disjoint from $S_p \cup S_{\textnormal{aux}} \cup T \cup Q \cup T_a$, and with the following property: 
    \begin{itemize}
    \item Let $\widetilde{Q} \coloneqq  Q \cup Q'$, and let $\widetilde{I}$ denote the kernel of the induced homomorphism $R_{b, \widetilde{Q}}^{\textnormal{ord},\square_{S_p \cup S_{\textnormal{aux}}\cup Q \cup T_a}} \to A'$, $I_{\textnormal{loc}}$ the kernel of the induced homomorphism $R^{\square_{S_p \cup S_{\textnormal{aux}} \cup Q \cup T_a}}_{\textnormal{loc}} \to A'$. Then there exists a homomorphism
    \[ (A')^g \to \widetilde{I} / (\widetilde{I}^2, I_{\textnormal{loc}}) \]
    of $A'$-modules, with cokernel annihilated by $X^{2 C}$, where 
    \[ g \coloneqq  q_2 - [ F: \mathbb{Q}] - 1 + |S_p \cup S_{\textnormal{aux}} \cup Q \cup T_a|. \]
    \end{itemize}
\end{pp}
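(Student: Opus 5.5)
The plan is to adapt the standard Taylor--Wiles argument over $k$ to the coefficient ring $A' \subseteq A/(X^m)$, keeping track of $X$-power torsion. The relative cotangent space $\widetilde I/(\widetilde I^2, I_{\loc})$ will be compared with a Selmer group for $\ad^0\rho_m$ with coefficients in $A'$ (or in $A/(X^m)$), relative to the local conditions at $S_p\cup S_{\aux}\cup Q\cup T_a$. Write $\rho_m \otimes A/(X^m)$ for the deformation determined by $f$. The formalism of \cite[\S 5]{Tho16} (cf.\ Rem.\ \ref{rem:tangent spaces and Selmer groups}), applied to the framed deformation ring and the $A'$-point, identifies
\[
\Hom_{A'}(\widetilde I/(\widetilde I^2, I_{\loc}), M) \cong H^1_{\mathcal{S}_{\widetilde Q}, T_0}(\ad^0 \rho_m \otimes_{A'} M)
\]
for $A'$-modules $M$, with $T_0 = S_p\cup S_{\aux}\cup Q \cup T_a$. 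Here the Selmer conditions are the relative ones: classes are unrestricted at $T_0$ modulo the local liftings, unramified outside $T_0\cup T\cup \widetilde Q$, Steinberg-type at $T$, and arbitrary at $Q'$.

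The first step is an Euler characteristic computation (Greenberg--Wiles / Poitou--Tate) over the Artinian ring. It should show that the generic rank of the relative cotangent space is
\[
q_2 - [F:\bQ] - 1 + |T_0| + h^1(\text{dual Selmer for } \ad^0\rho_m(1) \text{ with } \widetilde Q\text{-split conditions}),
\]
with error terms at $T_0$ and at infinity, using $h^0(\ad^0\rho_m)$ and local terms of $X$-bounded torsion. To obtain a map $(A')^g \to \widetilde I/(\widetilde I^2,I_{\loc})$ with cokernel killed by $X^{2C}$, it will suffice to arrange that the dual Selmer group $H^1_{\mathcal S^\perp_{\widetilde Q}}(\ad^0\rho_m(1))$ is killed by $X^{C}$. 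Then a duality and length count, together with the fact that $A'$ contains $X^{C_0}A/(X^m)$ (to pass between $A'$- and $A/(X^m)$-modules), will give the cokernel bound $X^{C_0+C_1}$, hence at most $X^{2C}$.

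The second step is choosing $Q'$ by Chebotarev, as in the usual proof but with nonreduced coefficients. For each class $[\phi]$ in the dual Selmer group with $X^{C}[\phi]\neq 0$, I would find $\sigma \in G_{F(\zeta_{p^N})}$ such that $\rho_m(\sigma)$ has distinct residual eigenvalues and $\phi(\sigma)$ has nonzero projection onto the $\alpha$-eigenline beyond $X^{C}$-torsion. The hypothesis $X^{C_1}\operatorname{End}_A(\ad^0\rho_m) \subseteq \ad^0\rho_m(A[G_{F(\zeta_{p^\infty})}])$ is exactly what replaces absolute irreducibility of $\ad^0$ on $G_{F(\zeta_{p^\infty})}$. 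Restricting $\phi$ to the field cut out by $\ad^0\rho_m$ and $\zeta_{p^\infty}$ gives a homomorphism whose $A$-span is a $G$-stable submodule. Applying the elements $X^{C_1}e$ for suitable idempotent-like endomorphisms $e$ (projection to an eigenline of a fixed regular semisimple element $\rho_m(\sigma_0)$, which exists by condition (1) on $\overline{\chi}$) then produces the required $\sigma$.

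Choosing places $v$ with Frobenius near $\sigma\sigma_0$ successively will cut the dual Selmer group down: each place kills one $A$-generator modulo $X^{C}$-torsion. Since $\dim_k$ of the residual dual Selmer group is at most $h^1(F_{S_p\cup S_{\aux}\cup T\cup Q\cup T_a}/F,\ad^0\rhobar_b(1))\le q_2$, $q_2$ places suffice, padding with extra places if needed. This generator-counting over the nonreduced ring $A/(X^m)$ is where I expect the main obstacle. One must ensure that the minimal number of generators does not grow, and that the torsion losses from $C_0$ and $C_1$ are incurred only once each rather than accumulating per place. I would handle this by working with the image of the dual Selmer group in $H^1(F(\zeta_{p^\infty},\rho_m)/F, \cdot)$, via inflation-restriction with $X^{C_1}$-bounded kernel, and killing generators of the quotient by $X^{C}$-torsion at once.
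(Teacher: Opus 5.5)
Your architecture matches the paper's. You identify the relative cotangent space with a Selmer group, and you use Greenberg--Wiles to see that its length grows by exactly $g$ per step once the $Q'$-split dual Selmer group is bounded $X$-power torsion. You then dualise and use $C_0$. Your Chebotarev step (the $A$-span of $\phi(G_L)$ via the $C_1$ hypothesis, then Frobenius near $\tau\sigma_0$) is also the paper's.

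The gap is in how you choose $Q'$. You claim that every $[\phi]$ with $X^C[\phi]\neq 0$ has a Taylor--Wiles place where its restriction is not $X^C$-torsion. The span argument does not give this, and it is false in general. From $X^{C_1}[\phi]\neq 0$ you only get $A\cdot\phi(G_L)\supseteq X^{m-1}W_m(1)$, hence a place where the restriction is merely \emph{nonzero}. Let $K$ be the intersection of the kernels of restriction to \emph{all} Taylor--Wiles places of level $N$. Nothing forces $K=0$, and if $0\neq X^C[\phi]\in K$ then no place does what you want. So the successive ``one generator per place'' count, and the proposed repair via ``the quotient by $X^C$-torsion'', are unsupported; that is the wrong quotient.

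The paper separates the two issues. Put $H\coloneqq H^1(F_{S_p\cup S_{\aux}\cup T\cup Q\cup T_a}/F,W_m(1))$ with $W_m=\ad^0\rho_m$, and kill all of $H^1_{Q'\textnormal{-split}}$, which contains your dual Selmer group.

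\textbf{Step 1.} Show $X^{C_1}K=0$ by contradiction. This needs only a nonzero local restriction, which is exactly what the span argument delivers. Inflation--restriction costs nothing here: $H^1(L/F,W_m(1))=0$ for $L=F(\zeta_{p^N},\rho_m)$, by the composition-factor argument of Lem.\ \ref{lem:definition of M}. So the kernel is zero, not merely $X^{C_1}$-bounded.

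\textbf{Step 2.} Count with socles, as in \cite[Lem.\ 6.5]{Kis04}.
\begin{itemize}
\item Since $H^0(F,\ad^0\rhobar_b(1))=0$, one has $H[X]\cong H^1(F_{S_p\cup S_{\aux}\cup T\cup Q\cup T_a}/F,\ad^0\rhobar_b(1))$.
\item Because $A/(X^m)$ is uniserial, $\operatorname{soc}(H/K)$ then has dimension at most $q_2$.
\item Each place gives $f_v\colon H/K\to H^1_{\textnormal{nr}}(F_v,W_m(1))\cong A/(X^m)$. These maps are jointly injective by definition of $K$, hence jointly injective on the socle.
\item So at most $q_2$ of them are already jointly injective on the socle. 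Injectivity on the socle implies injectivity on $H/K$.
\end{itemize}
Hence $q_2$ places (after padding) have joint kernel exactly $K$, and the $X^{C_1}$ loss is incurred once rather than per place.

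This also explains why the hypothesis on $q_2$ uses the full group $h^1(\cdot,\ad^0\rhobar_b(1))$. It is the socle of $H$, not the residual dual Selmer group, that controls the count. The $X$-torsion of the dual Selmer group with $\mathcal{L}^\perp$-conditions at $T$ need not be the residual dual Selmer group.
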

\begin{proof}
    We can assume that $m > C$, as otherwise the conclusion is vacuously true because $X^{2C}\widetilde{I} / (\widetilde{I}^2, I_{\textnormal{loc}}) = 0$. 
     Let $Q'$ be an additional set of Taylor--Wiles places, to be specified. If $0 \leq n \leq m$, let 
     \[
     M_n \coloneqq  \Hom_{A'}(\widetilde{I} / (\widetilde{I}^2, I_{\textnormal{loc}}), A / (X^n)), 
     \]
     and let
     \[
     \rho_n : G_{F, S_p \cup S_{\textnormal{aux}} \cup T \cup \widetilde{Q}} \to \GL_2(A / (X^n))
     \]
     be the lifting associated to $f \text{ mod }X^n$, and $W_n \coloneqq  \ad^0 \rho_n$ be the adjoint module of~$\rho_n$. Applying \cite[Proposition 4.10]{All23a} shows that there is a $A$-module isomorphism 
     \[ M_n \cong h^1_{\mathcal{S}, S_p \cup S_{\textnormal{aux}} \cup Q \cup T_a}(\ad \rho_n ), \]
     where $\mathcal{S}$ is the deformation problem, in the sense of \emph{op.\ cit.}, determined by our choice of local lifting rings. 
     Hence, a standard calculation in Galois cohomology, using the Greenberg--Wiles Euler characteristic formula (cf. \cite[Lem.\ 4.3.2]{Pan22}), shows that  the length $\ell_A(M_n)$ of $M_n$ as an $A$-module is equal to 
     \[ h^1_{Q'\textnormal{-split}, \mathcal{L}_{n, T}^\perp}(F_{S_p \cup S_{\textnormal{aux}} \cup T \cup T_a \cup Q} / F, W_n(1)) + n(|Q'| - [ F: \mathbb{Q}] - 1 + |S_p \cup S_{\textnormal{aux}}\cup Q \cup T_a|), \]
     where $\mathcal{L}_{n, T}^\perp = \{ \mathcal{L}_{n, v}^\perp \}_{v \in T}$ is the collection of dual local conditions, to the conditions $\mathcal{L}_{n, v} \leq H^1(F_v, \ad \rho_n)$ determined by the local lifting ring $R_v^{\operatorname{St}, 1}$ ($v \in T$). (These local conditions will reappear in \S \ref{sec: construction of T-new lifts} below.) This computation uses that $H^0(F, W_1) = H^0(F, W_1(1)) =  0$.
     
     We note that there is a natural isomorphism
     \begin{multline*}  H^1_{Q'\textnormal{-split},  \mathcal{L}_{n, T}^\perp}(F_{S_p\cup S_{\textnormal{aux}}\cup T \cup T_a \cup Q} / F, W_n(1)) \\ \cong  H^1_{Q'\textnormal{-split},  \mathcal{L}_{m, T}^\perp}(F_{S_p \cup S_{\textnormal{aux}} \cup T \cup T_a \cup Q} / F, W_m(1))[X^n]. 
     \end{multline*}
     Suppose therefore that we can choose $Q'$ so that $|Q'| = q_2$ and \[ X^{C_1} H^1_{Q'\textnormal{-split}}(F_{S_p \cup S_{\textnormal{aux}} \cup T \cup T_a \cup Q} / F, W_m(1)) = 0. \]
     We find that $\length_A(M_{n+1}) = \length_A(M_{n}) + g$ for each $n = C_1, \dots, m-1$. Since in  any case there is an isomorphism $M_{n} = M_{n+1}[X^n]$, this implies that there is an isomorphism $M_m \cong (A / (X^m))^g \oplus D$, where $D$ is an $A / (X^m)$-module killed by $X^{C_1}$. Since $\Hom_k(A / (X^m), k) \cong A / (X^m)$, it follows that $\widetilde{I} / (\widetilde{I}^2, I_{\textnormal{loc}}) \otimes_{A'} A / (X^m) \cong (A / X^m)^g \oplus D'$, where $X^C D' = 0$. The map $(A')^g \to (A / X^m)^g \oplus D'$ induced by inclusion in the first factor is injective, with cokernel annihilated by $X^{C}$. The map 
     \[ \widetilde{I} / (\widetilde{I}^2, I_{\textnormal{loc}}) \otimes_{A'} A / (X^m) \to \widetilde{I} / (\widetilde{I}^2, I_{\textnormal{loc}}) \]
     induced by $x \otimes a \mapsto (X^C a) x$ has kernel and  cokernel annihilated by $X^C$. Taking the composite of these two maps, it follows that there is a map $(A')^g \to \widetilde{I} / (\widetilde{I}^2, I_{\textnormal{loc}})$ with kernel and cokernel annihilated by $X^{2C}$.

     To complete the proof of the proposition, it is therefore enough to show that for any $q_2 \geq h^1(F_{S_p\cup S_{\textnormal{aux}} \cup T\cup Q \cup T_a}/F, W_1(1))$, we can choose $Q'$ of cardinality $q_2$ so that $H^1_{Q'\textnormal{-split}}(F_{S_p \cup S_{\textnormal{aux}} \cup T \cup T_a \cup Q} / F, W_m(1))$ is annihilated by $X^{C_1}$. By \cite[Lem.\ 6.5]{Kis04}, it is enough to show that the kernel of the map
     \[ H^1(F_{S_p \cup S_{\textnormal{aux}} \cup T \cup Q \cup T_a} / F, W_m(1)) \to \prod_{v} H^1(F_v, W_m(1)) \]
     is annihilated by $X^{ C_1}$, where the product ranges over the set of all Taylor--Wiles places of level $N$. Let $[\phi]$ be a class in the kernel, and suppose for contradiction that $X^{ C_1} [\phi] \neq 0$. Let $L / F$ be the extension of $F(\zeta_{p^N})$ cut out by $\rho_m$. Then $H^1(L / F, W_m(1)) = 0$ (cf. Lem.\ \ref{lem:definition of M}), so $X^{C_1} \operatorname{Res}_{L / F} \phi \neq 0$, where now $\operatorname{Res}_{L / F} \phi \in \Hom(G_L, W_m(1))^{G_F}$. It follows from the definition of $C_1$ that $A \cdot \phi(G_L)$ contains $X^{m-1} W_m(1)$. (Indeed, the image of the map $A[G_F] \to \operatorname{End}_A(W_m(1))$ contains $X^{C_1} \operatorname{End}_A(W_m(1))$, so for any $z \in W_m(1)$ such that $X^{C_1} z \neq 0$, $X^{m-1} W_m(1) \leq A[G_F] \cdot z$.)

     Choose any $\sigma \in G_{F(\zeta_{p^\infty})}$ such that $\overline{\chi}(\sigma) \neq 1$. If $\phi(\sigma) \not\in (\sigma-1) W_m(1)$, then we can choose, by Chebotarev density theorem, a Taylor--Wiles place $v$ of level $N$ such that $\rho_m(\Frob_v) = \rho_m(\sigma)$ and $\phi(\Frob_v) \not\in (\Frob_v-1)W_m(1)$. In particular, the image of $[\phi]$ in $H^1(F_v, W_m(1))$ is non-zero, contradicting the definition of $[\phi]$. Therefore $\phi(\sigma) \in (\sigma-1) W_m(1)$.

     In this case, we can choose $\tau \in G_L$ such that $\phi(\tau) \not\in (\sigma-1) W_m(1)$ (as otherwise it would not be the case that $A \cdot \phi(G_L)$ contains $X^{m-1} W_m(1)$). We can then choose a Taylor--Wiles place $v$ of level $N$ such that $\rho_m(\Frob_v) = \rho_m(\tau \sigma) = \rho_m(\sigma)$ and $\phi(\Frob_v) = \phi(\tau \sigma) = \phi(\tau) + \phi(\sigma) \not\in (\Frob_v-1) W_m(1)$. It follows that the image of $[\phi] \in H^1(F_v, W_m(1))$ is again non-zero. This contradiction concludes the proof. 
\end{proof}
Coming back to the situation of Prop.\ \ref{prop_local_R_equals_T}, recall that we are given a normalised cocycle $b$ representing a non-zero class in $H^1_{T \textnormal{-split}}(F_{S_p\cup S_{\textnormal{aux}}} / F, \overline{\chi})$, and a nice prime $\mathfrak{q} \leq \mathbb{T}_\infty$ that arises from $R_{b, \infty}$, and we must show that the natural map $\widehat{(P_{\infty, \mathfrak{q}^{\textnormal{ps}}})} \to \widehat{(\mathbb{T}_{\infty, \mathfrak{q}})}$ has nilpotent kernel. Let $A$ denote the normalisation of $\mathbb{T}_\infty / \mathfrak{q}$ in its field of fractions. By hypothesis, the map $P_\infty \to A$ extends to a homomorphism $R_{b, \infty} \to A$, corresponding to the equivalence class of the lifting $r_{b, \infty} : \Gamma_\infty \to \GL_2(A)$ of $\overline{\rho}_b$. We also write $\mathfrak{q}_b \coloneqq  \ker(R_{b, \infty} \to A)$ and $B \coloneqq  R_{b, \infty} / \mathfrak{q}_b \leq A$, and we keep the notation~$Q_M$ for the Taylor--Wiles sets constructed in Section~\ref{subsec:first patching argument}.

After possibly enlarging $\cO$, we can assume that the residue field of $A$ equals $k$, and then fix an isomorphism $A \cong k\llbracket X\rrbracket $. (More precisely: let $k' / k$ be the residue field of $A$, and let $\cO' / \cO$ be the unramified extension of $\cO$ with residue field $k'$. Let $\mathfrak{q}'_b$ be a prime of $R_{b, \infty} \otimes_{\cO} \cO'$ lying above $\mathfrak{q}_b$, and let $(\mathfrak{q}')^{\textnormal{ps}}$ be its pullback to $P_\infty \otimes_\cO \cO'$. Then the normalisation of $(P_\infty \otimes_\cO \cO') / (\mathfrak{q}')^{\textnormal{ps}}$ in its fraction field may be identified with $A$, and the natural map $P_{\infty, \mathfrak{q}^{\textnormal{ps}}} \to (P_\infty \otimes_\cO \cO')_{(\mathfrak{q}')^{\textnormal{ps}}}$ is faithfully flat. Therefore, if we show that the ideal $\mathcal{M}_\infty (P_\infty \otimes_\cO \cO')_{(\mathfrak{q}')^{\textnormal{ps}}}$ is nilpotent, it will follow that the ideal $\mathcal{M}_\infty P_{\infty, \mathfrak{q}^{\textnormal{ps}}}$ is also nilpotent. This reduction behaves well with respect to patching because e.g.\ the natural map
\[ \left(\varprojlim_N P_\infty / \mathfrak{m}_{P_\infty}^{r_N} \right) \otimes_\cO \cO' \to \left(\varprojlim_N P_\infty / \mathfrak{m}_{P_\infty}^{r_N}\otimes_\cO \cO'  \right) \]
is an isomorphism. Base extension by $k' / k$ does introduce new directions in the group $H^1(F_{S_p \cup S_{\textnormal{aux}}}, k(\overline{\chi}))$, but this is not a problem here as we are now using a fixed $b$.)

\begin{lemma}\label{lem_good_unramified_places}
    There exists $d_1 \geq 0$ with the following property: for any $M \geq 1$, there are places $v_1, \dots, v_{d_1} \not\in S_p \cup S_{\textnormal{aux}} \cup T\cup Q_M$ such that $R_{b, Q_M}^{\textnormal{ord}}$ may be topologically generated as a $\widetilde{\Lambda}$-algebra by the entries of $\rho_b^{\textnormal{univ}}(\Frob_{v_i})$ ($i = 1, \dots, d_1$), for any representative of the universal deformation.  
\end{lemma}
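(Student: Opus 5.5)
The plan is to bound the embedding dimension of $R_{b,Q_M}^{\textnormal{ord}}$ relative to $\widetilde{\Lambda}$ uniformly in $M$. Then I will use Chebotarev to find finitely many Frobenius elements whose matrix entries generate the cotangent space. By the complete Nakayama lemma, it suffices to show that the images of the matrix entries of $\rho_b^{\textnormal{univ}}(\Frob_{v_i})$ span the relative cotangent space $\mathfrak{m}_{R}/(\mathfrak{m}_R^2, \mathfrak{m}_{\widetilde{\Lambda}})$, where $R = R_{b,Q_M}^{\textnormal{ord}}$. Changing the representative of the universal deformation conjugates the entries by an element of $\widehat{\GL}_2(R)$. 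This does not affect the subalgebra they generate modulo the question at hand, because the conjugated entries together with the old ones generate the same ideal modulo $\mathfrak{m}_R^2$ only up to the contribution of the conjugating matrix. I will therefore fix the normalised representative with $\rho_b^{\textnormal{univ}}(c_u)=\diag(-1,1)$ and remark that the statement for other representatives follows by the same argument applied to that representative.

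The cotangent space of $R$ over $\widetilde{\Lambda}$ is dual to a subspace of $H^1(F_{S_p\cup S_{\textnormal{aux}}\cup T\cup Q_M}/F, \ad^0\overline{\rho}_b)$: the Selmer group $H^1_{\mathcal{S}_{Q_M}}(\ad^0\overline{\rho}_b)$ of Rem.\ \ref{rem:tangent spaces and Selmer groups}, enlarged slightly to allow the $\widetilde\Lambda$-directions. Its dimension is bounded independently of $M$. For this I use \eqref{eqn:bound on g in Euler characteristic formula} and $|Q_M| = |T|$: the Selmer group is bounded by $h^1_{S_p\cup S_{\aux}\textnormal{-split}}(F_{S_p\cup T}/F,\ad^0\overline{\rho}_b) + 4(|S_p\cup S_{\aux}|+2|T|)$. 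Call this bound $d$. A tangent vector is a lift $\rho_\varepsilon$ to $k[\varepsilon]$, which is a class $[\phi]$. If all entries of $\rho_\varepsilon(\Frob_v)$ for the chosen $v$ equal those of $\overline{\rho}_b(\Frob_v)$, then $\phi(\Frob_v)=0$ for the normalised cocycle. So it suffices to choose $v_1,\dots,v_{d_1}$ such that no nonzero $[\phi]$ in the tangent space has $\phi(\Frob_{v_i})=0$ for all $i$. This is after fixing a normalisation of $\phi$, using $c_u$ to kill the coboundary ambiguity, as in Lem.\ \ref{lem_bounding_reducible_locus}.

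I will realise this with $d_1 = d$ places, chosen inductively. Given the span $V$ of the evaluation functionals already chosen, if $V^\perp\neq 0$ in the tangent space $H$, I pick $[\phi]\in V^\perp$ nonzero. Let $L$ be the field cut out by $\overline{\rho}_b$, $\ad^0\overline{\rho}_b$ and all cocycles in $H$. This is a finite extension of $F$, but it depends on $M$. Since $\phi\neq 0$ as a normalised cocycle, some $\sigma\in G_F$ has $\phi(\sigma)\neq 0$. Chebotarev then gives $v\notin S_p\cup S_{\aux}\cup T\cup Q_M$ with $\Frob_v$ mapping to $\sigma$ in $\Gal(L/F)$, and adding the functional at $\Frob_v$ strictly increases $\dim V$. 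After at most $d$ steps $V^\perp=0$.

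The main obstacle is the normalisation issue: the entries of $\rho^{\textnormal{univ}}(\Frob_v)$ detect the cocycle $\phi$ itself rather than its class. I plan to handle this via the $c_u$-normalisation, noting that $\ad^0\overline{\rho}_b$ has trivial $H^0$, so normalised cocycles are determined by their classes. I also need to check that the $\widetilde\Lambda$-directions are accounted for: these are fixed by $\widetilde\Lambda$ itself, since we generate as a $\widetilde\Lambda$-algebra. Uniformity of $d_1$ in $M$ comes from the uniform bound $d$.
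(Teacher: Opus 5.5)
Your overall plan matches the paper's one-line proof: bound the relative tangent space of $R^{\textnormal{ord}}_{b,Q_M}$ uniformly (using $|Q_M|=|T|$), then apply Chebotarev. The tangent-space bound is fine. The gap is in the criterion you use to choose the places.

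You pick the $v_i$ so that the cocycles $\phi_t$ attached to \emph{one} fixed representative do not all vanish at the $\Frob_{v_i}$. The lemma, however, asks for a single set of places that works for every representative, so ``the same argument applied to that representative'' reverses the quantifiers.

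Your reason for thinking this is harmless is false. You claim that normalised cocycles are determined by their classes because $H^0(\ad^0\rhobar_b)=0$. In fact the condition $\phi(c_u)=0$ only forces the conjugating matrix to be diagonal, and $X=\diag(1,0)$ gives the nonzero normalised coboundary $\sigma\mapsto\left(\begin{smallmatrix}0&b(\sigma)\\0&0\end{smallmatrix}\right)$. Hence for $x\in\mathfrak{m}_R$ the conjugate $\diag(1+x,1)\,\rho_b^{\textnormal{univ}}\,\diag(1+x,1)^{-1}$ is another normalised representative. Its cocycles are $\phi_t+\ell(t)\left(\begin{smallmatrix}0&b\\0&0\end{smallmatrix}\right)$, where $\ell$ is the linear form on tangent vectors determined by $x$. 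Suppose some nonzero $t$ satisfies $\phi_t(\Frob_{v_i})=c\left(\begin{smallmatrix}0&b(\Frob_{v_i})\\0&0\end{smallmatrix}\right)$ at every chosen $v_i$ for one scalar $c$; nothing in your induction rules this out. Choosing $x$ with $\ell(t)=-c$, the entries at your places no longer span the cotangent space.

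There is a related problem with Chebotarev. It only prescribes the conjugacy class of $\Frob_v$, since the paper fixes the decomposition groups in advance. But $\phi(\sigma)\neq 0$ does not imply $\phi(\tau\sigma\tau^{-1})\neq0$. The only conjugation-stable condition is $\phi(\sigma)\notin(\operatorname{Ad}\rhobar_b(\sigma)-1)\ad^0\rhobar_b$, and you have not arranged that.

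Both problems go away if you choose places by a representative-free criterion that controls a whole finite group:
\begin{itemize}
\item Let $L/F$ be the extension cut out by $\rho_b^{\textnormal{univ}}$ modulo $\mathfrak{m}_R^2+\mathfrak{m}_{\widetilde{\Lambda}}R$. It does not depend on the representative, and $|\Gal(L/F)|\le|\GL_2(k\oplus V)|$, where $V$ is the relative cotangent space. This is bounded independently of $M$ by your tangent-space bound.
\item By Chebotarev, choose one place outside $S_p\cup S_{\textnormal{aux}}\cup T\cup Q_M$ whose Frobenius lies in each conjugacy class of $\Gal(L/F)$. A subgroup of a finite group that meets every conjugacy class is the whole group, so the fixed elements $\Frob_{v_i}$ generate $\Gal(L/F)$, whatever the decomposition groups are.
\item Entries of $\rho_b^{\textnormal{univ}}$ evaluated on words in the $\Frob_{v_i}^{\pm1}$ are polynomials in the entries at the $\Frob_{v_i}$. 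For inverses use the adjugate, since $\det=\epsilon^{-1}$ is a unit of $\cO$. So modulo $\mathfrak{m}_R^2+\mathfrak{m}_{\widetilde{\Lambda}}R$, the entries at the $\Frob_{v_i}$ account for the entries of every $\rho_b^{\textnormal{univ}}(\sigma)$.
\item The entries of all $\rho_b^{\textnormal{univ}}(\sigma)$ generate $R$ by universality, so the entries at the $\Frob_{v_i}$ span $V$, and complete Nakayama finishes the proof.
\end{itemize}
This works for all representatives at once. It gives $d_1$ equal to a uniform bound on the number of conjugacy classes (pad with extra places if needed), rather than $d$. It is a natural way to make the paper's appeal to Chebotarev and the uniform tangent-space bound precise.
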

\begin{proof}
    This is a consequence of the Chebotarev density theorem, together with the existence of a uniform bound on the dimension of the Zariski tangent space of $R_{b,  Q_M}^{\textnormal{ord}}$ (which was proved in the course of proving Lem.\ \ref{lem_finiteness_of_patching_data}~(2)). 
\end{proof}
Let 
\[
q_2 \coloneqq  h^1(F_{S_p \cup S_{\textnormal{aux}} \cup T}/F, \ad^0 \rhobar_b(1)) + |T| + 3 d_1, 
\]
and let 
\[
\widetilde{S}_\infty \coloneqq  \cO \llbracket  \mathbb{Z}_p^{|T|} \oplus \mathbb{Z}_p^{q_2}\rrbracket . 
\]
There is a natural homomorphism $\widetilde{S}_\infty \to S_\infty$. 
We write 
\[
\widetilde{\mathfrak{a}}_N \coloneqq  \ker(\widetilde{S}_\infty \to \cO[(\mathbb{Z}_p / p^N)^{|T| + q_2}]) 
\]
and 
\[
\widetilde{\mathfrak{b}}_N \coloneqq  (\mathfrak{m}_\Lambda^N, \widetilde{\mathfrak{a}}_N) \leq \Lambda \widehat{\otimes}_\cO \widetilde{S}_\infty.
\]
Then $\widetilde{\mathfrak{b}}_N$ is an open ideal of $\Lambda \widehat{\otimes}_\cO \widetilde{S}_\infty$, and its image in $\Lambda \widehat{\otimes}_\cO S_\infty$ is equal to $\mathfrak{b}_N$. We set $\widetilde{r}_N \coloneqq  N d_0 (|T| + q_2) p^N$. 

We now define, for each $N \geq 1$, a groupoid $\widetilde{\operatorname{Patch}}_N$ of ``augmented patching data of level $N$''. Such a datum consists of the following objects (here we continue to use the notation~$M_N$ to denote the integers produced in the course of the first patching argument):
\begin{itemize}
    \item A complete Noetherian local $\Lambda \widehat{\otimes}_\cO \widetilde{S}_\infty$-algebra $\widetilde{\mathbf{P}}_N$, together with a homomorphism $\widetilde{\mathbf{P}}_N \to P^{\textnormal{ord}}_{Q_{M_N}} / \mathfrak{m}_{P^{\textnormal{ord}}_{Q_{M_N}}}^{r_N}$ of $\Lambda \widehat{\otimes}_\cO \widetilde{S}_\infty$-algebras.
    \item A complete Noetherian local $\widetilde{\mathbf{P}}_N$-algebra $\widetilde{\mathbf{R}}_{b, N}$, together with a homomorphism $\widetilde{\mathbf{R}}_{b, N} \to R^{\textnormal{ord}}_{b, Q_{M_N}} / \mathfrak{m}_{R^{\textnormal{ord}}_{b, Q_{M_N}}}^{r_N}$, making the diagram
    \[ \xymatrix{ \widetilde{\mathbf{P}}_N \ar[r] \ar[d] & P^{\textnormal{ord}}_{ Q_{M_N}} / \mathfrak{m}_{P^{\textnormal{ord}}_{ Q_{M_N}}}^{r_N} \ar[d] \\
    \widetilde{\mathbf{R}}_{b, N} \ar[r] & R^{\textnormal{ord}}_{b,  Q_{M_N}} / \mathfrak{m}_{R^{\textnormal{ord}}_{b, Q_{M_N}}}^{r_N}} \]
    commute.
    \item A $\widetilde{\mathbf{P}}_N$-module $\widetilde{\mathbf{H}}_N$, finite free as $\Lambda \widehat{\otimes}_\cO \widetilde{S}_\infty / \widetilde{\mathfrak{b}}_N$-module, together with an isomorphism $\widetilde{\mathbf{H}}_N \otimes_{\widetilde{S}_\infty} S_\infty \cong H_{Q_{M_N}(N)} / (\mathfrak{b}_N)$ of $\widetilde{\mathbf{P}}_N$-modules (where $\widetilde{\mathbf{P}}_N$ acts on $H_{Q_{M_N}(N)} / (\mathfrak{b}_N)$ via the given homomorphism $\widetilde{\mathbf{P}}_N \to P^{\textnormal{ord}}_{ Q_{M_N}} / \mathfrak{m}_{P^{\textnormal{ord}}_{ Q_{M_N}}}^{r_N}$).
    \item A homomorphism in $\CNL_\cO$
       \begin{multline*} R_{\textnormal{loc}}^{\square_{S_p \cup S_{\textnormal{aux}}}} \widehat{\otimes}_\cO S_\infty\llbracket x_1, \ldots, x_{q_2 + 4 |T| + 4d_1 + |S_p \cup S_{\textnormal{aux}}| - [F : \mathbb{Q}] - 1}\rrbracket  / (\mathfrak{m}_{R_{\textnormal{loc}}^{\square_{S_p \cup S_{\textnormal{aux}}}} \widehat{\otimes}_\cO S_\infty\llbracket x_1, \ldots x_{q_2 + 4 |T| + 4d_1 + |S_p \cup S_{\textnormal{aux}}| - [F : \mathbb{Q}] - 1}\rrbracket }^{\widetilde{r}_N}) 
    \\ \to \widetilde{\mathbf{R}}_{b, N} \llbracket  y_1, \ldots, y_{4(|S_p \cup S_{\textnormal{aux}} \cup T| + d_1) - 1} \rrbracket  / \mathfrak{m}_{\widetilde{\mathbf{R}}_{b, N} \llbracket  y_1, \ldots, y_{4(|S_p \cup S_{\textnormal{aux}} \cup T| +d_1) - 1} \rrbracket }^{\widetilde{r}_N} 
    \end{multline*}
    \item A finite group $\widetilde{\Gamma}_N$, together with a pseudorepresentation $\widetilde{\mathbf{t}}_N : \widetilde{\Gamma}_N \to \widetilde{\mathbf{P}}_N$ and a representation $\widetilde{\mathbf{r}}_{b, N} : \widetilde{\Gamma}_N \to \GL_2(\widetilde{\mathbf{R}}_{b, N})$.
    \item A homomorphism $\widetilde{\Gamma}_N \to G_{F, S_p \cup T \cup S_{\textnormal{aux}} \cup Q_{M_N}} / \mathcal{D}^{3 r_N} G_{F, S_p \cup T \cup S_{\textnormal{aux}} \cup Q_{M_N}} $ such that the pushforwards of $\widetilde{\mathbf{t}}_N$ (resp.\ $\widetilde{\mathbf{r}}_{b, N}$) to $P^{\textnormal{ord}}_{ Q_{M_N}} / \mathfrak{m}_{P^{\textnormal{ord}}_{Q_{M_N}}}^{r_N}$ (resp.\ $R^{\textnormal{ord}}_{b, Q_{M_N}} / \mathfrak{m}_{R^{\textnormal{ord}}_{b, Q_{M_N}}}^{r_N}$) agree with the pseudocharacter (resp.\ representations) of $\widetilde{\Gamma}_N$ pulled back from the universal pseudocharacter (resp.\ representations) over these latter rings. 
    \item An element $\widetilde{\mathbf{c}}_u \in \widetilde{\Gamma}_N$ lifting the image of $c_u$ in $G_{F, S_p \cup T \cup S_{\textnormal{aux}} \cup Q_{M_N}} / \mathcal{D}^{3 r_N} G_{F, S_p \cup T \cup S_{\textnormal{aux}} \cup Q_{M_N}}$.
    \item For each $v \in S_p \cup T \cup S_{\textnormal{aux}}$, a homomorphism $\widetilde{\gamma}_v : G_{F_v} \to \widetilde{\Gamma}_N$ lifting the tautological homomorphism $G_{F_v} \to G_{F, S_p \cup T \cup S_{\textnormal{aux}} \cup Q_{M_N}} / \mathcal{D}^{3 r_N} G_{F, S_p \cup T \cup S_{\textnormal{aux}} \cup Q_{M_N}}$. 
\end{itemize}
The isomorphisms of augmented patching data are again given by isomorphisms of the relevant objects making all relevant squares commute. 

We may define, for each $N \geq 1$, a functor $\widetilde{\Pi}_{N+1} : \widetilde{\operatorname{Patch}}_{N+1} \to \widetilde{\operatorname{Patch}}_{N}$, given by ``dividing out by $\mathfrak{m}^{\widetilde{r}_N}$ and $\widetilde{\mathfrak{b}}_N$''. The definition of this functor uses the isomorphisms $\Pi_{N+1}(D(N+1, M_{N+1})) \cong D(N, M_N)$ in $\operatorname{Patch}_N$ already constructed as part of the first patching argument. 

For each $N \geq 1$, we apply Lem.\ \ref{lem_good_unramified_places} to $Q_{M_N}$ to obtain an auxiliary set of unramified places $T_N$ with $|T_N| = d_1$. By construction, we are given an isomorphism $R_{b, \infty} / \mathfrak{m}_{R_{b, \infty}}^{r_N} \cong R^{\textnormal{ord}}_{b,  Q_{M_N}} / \mathfrak{m}_{R^{\textnormal{ord}}_{b,  Q_{M_N}}}^{r_N}$, under which $r_{b, \infty}$ specialises to a specialisation of the fixed choice of universal lifting. In particular, the map $R_{b, \infty} / \mathfrak{m}_{R_{b, \infty}}^{r_N} \to A / (X^{r_N})$ gives us a map
\[ R^{\textnormal{ord}}_{b, Q_{M_N}} / \mathfrak{m}_{R^{\textnormal{ord}}_{b, Q_{M_N}}}^{r_N} \to A / (X^{r_N}), \]
which (with the given data) naturally extends to a map
\[ R^{\textnormal{ord}, \square_{S_p\cup S_{\textnormal{aux}} \cup Q_{M_N} \cup T_N}}_{b, Q_{M_N}} \to A / (X^{r_N}) \]
with the same image $A'$ (namely the image of $B \to A / (X^{r_N})$), and such that the induced map
\[ R^{\square_{S_p \cup S_{\textnormal{aux}}\cup Q_{M_N} \cup T_N}}_{\textnormal{loc}} \to A / (X^{r_N}) \]
has the same image (using the defining property of the set $T_N$, given in Lem.\ \ref{lem_good_unramified_places}). 

Fix $C_0 \geq 0$ such that $X^{C_0} A \leq B$. By Lem.\ \ref{lem_zariski_density}, the specialisation of $r_{b, \infty}$ to $k(\mathfrak{q}_b)$ acts absolutely irreducibly on the adjoint representation, even after restriction to $\ker(\Gamma_\infty \to \Gal(F(\zeta_{p^\infty}) / F))$. We can thus find $C_1 \geq 0$ such that 
\[ X^{C_1} \operatorname{End}(\ad^0 r_{b, \infty} \otimes_{R_{b, \infty}} A) \leq A \cdot (\ad^0 r_{b, \infty} \text{ mod }\mathfrak{q}_b)(\ker(\Gamma_\infty \to \Gal(F(\zeta_{p^\infty}) / F))). \]
We can then apply Prop.\ \ref{prop_existence_of_TW_places} for any $N \geq 1$, with $m \coloneqq r_N$ and with these values of $C_0, C_1$.
We obtain an additional set $Q'_N$ of Taylor--Wiles places of level $N + m_0$, with $q_2$ elements, and such that there is a homomorphism $(A')^g \to \widetilde{I} / (\widetilde{I}^2, I_{\textnormal{loc}})$ with cokernel annihilated by $X^{2C}$, where $g = q_2 - [F : \mathbb{Q}] - 1 + |S_p \cup S_{\textnormal{aux}}\cup T \cup T_N|$, and where $A', \widetilde{I}$, $I_{\textnormal{loc}}$ are as defined in the statement of Prop.\ \ref{prop_existence_of_TW_places}, i.e.\ 
\[
\widetilde I \coloneqq \ker \left ( R_{b, Q_{M_N}}^{\textnormal{ord}, \square_{S_p \cup S_{\aux}\cup Q_{M_N}\cup T_N}} \to A/ (X^{r_N}) \right )
\]
and~$I_{\loc}$ is the preimage of~$\widetilde I$ in $R^{\square_{S_p \cup S_{\textnormal{aux}}\cup Q_{M_N} \cup T_N}}_{\textnormal{loc}}$.
We set $\widetilde{Q}_N \coloneqq  Q_{M_N} \cup Q'_N$, and choose arbitrary lifts $\widetilde{x}_{1, N}, \dots, \widetilde{x}_{g, N} \in \widetilde{I}$ of the images of the basis vectors of $(A')^g$. 

We now define, for each $1 \leq N \leq \widetilde{N}$, an augmented patching datum $\widetilde{D}(N, \widetilde{N}) \in \widetilde{\operatorname{Patch}}_N$, as follows:
\begin{itemize}
    \item $\widetilde{\mathbf{P}}_N \coloneqq P^{\textnormal{ord}}_{\widetilde{Q}_{\widetilde{N}}} / \mathfrak{m}_{P^{\textnormal{ord}}_{\widetilde{Q}_{\widetilde{N}}}}^{\widetilde{r}_N}$. The map $\widetilde{\mathbf{P}}_N \to P^{\textnormal{ord}}_{ Q_{M_N}} / \mathfrak{m}_{P^{\textnormal{ord}}_{ Q_{M_N}}}^{r_N}$ is the composite of the natural map
    \[ P^{\textnormal{ord}}_{\widetilde{Q}_{\widetilde{N}}} / \mathfrak{m}_{P^{\textnormal{ord}}_{\widetilde{Q}_{\widetilde{N}}}}^{\widetilde{r}_N} \to P^{\textnormal{ord}}_{ Q_{M_{\widetilde{N}}}} / \mathfrak{m}_{P^{\textnormal{ord}}_{ Q_{M_{\widetilde{N}}}}}^{r_N} \]
    with the isomorphism
    \[P^{\textnormal{ord}}_{ Q_{M_{\widetilde{N}}}} / \mathfrak{m}_{P^{\textnormal{ord}}_{Q_{M_{\widetilde{N}}}}}^{r_N} \cong P^{\textnormal{ord}}_{ Q_{M_{N}}} / \mathfrak{m}_{P^{\textnormal{ord}}_{ Q_{M_{N}}}}^{r_N}  \]
    constructed as part of the first patching argument. 
    \item $\widetilde{\mathbf{R}}_{b, N} \coloneqq R^{\textnormal{ord}}_{b, \widetilde{Q}_{\widetilde{N}}} / \mathfrak{m}_{R^{\textnormal{ord}}_{b, \widetilde{Q}_{\widetilde{N}}}}^{\widetilde{r}_N}$, with map constructed similarly. 
    \item $\widetilde{\mathbf{H}}_N \coloneqq H_{\widetilde{Q}_{\widetilde{N}}(N)} / (\widetilde{\mathfrak{b}}_N)$, with map constructed similarly. 
    \item To write down the homomorphism 
    \begin{multline*}
    R^{\square_{S_p\cup S_{\textnormal{aux}}}}_{\textnormal{loc}} \widehat{\otimes}_\cO S_\infty \llbracket  x_1, \ldots, x_{q_2+4|T|+4d_1+|S_p \cup S_{\aux}|-[F:\mathbb Q]-1} \rrbracket/\mathfrak m^{\widetilde r_N}  
    \to \widetilde{\mathbf R}_{b, N}\llbracket y_1, \ldots, y_{4(|S_p \cup S_{\aux} \cup T|+d_1)-1} \rrbracket  / \mathfrak m^{\widetilde r_N},
    \end{multline*}
    we first take the natural homomorphism
    \[ R^{\square_{S_p \cup S_{\textnormal{aux}}\cup Q_{M_{\widetilde{N}}} \cup T_{\widetilde{N}}}}_{\textnormal{loc}} \to R^{\textnormal{ord},\square_{S_p \cup S_{\textnormal{aux}} \cup Q_{M_{\widetilde{N}}} \cup T_{\widetilde{N}}}}_{b, \widetilde{Q}_{\widetilde{N}}}. \]
    Using the choice of a surjection $S_\infty \llbracket x_1, \dots, x_{3 |T|}\rrbracket  \to \widehat{\otimes}_{v \in Q_{M_{\widetilde{N}}}} R_{b, v}^\square$ already discussed during the course of the first patching argument, 
    and a choice of isomorphism $\cO\llbracket x_1, \dots, x_{3d_1}\rrbracket  \cong \widehat{\otimes}_{v \in T_{\widetilde{N}}} R_{b, v}^{\textnormal{ur}}$, this becomes a map
    \[ R^{\square_{S_p\cup S_{\textnormal{aux}}}}_{\textnormal{loc}} \widehat{\otimes}_\cO S_\infty \llbracket x_1, \dots, x_{3(|T| + d_1)}\rrbracket  \to R^{\textnormal{ord},\square_{S_p \cup S_{\textnormal{aux}}\cup Q_{M_{\widetilde{N}}} \cup T_{\widetilde{N}}}}_{b, \widetilde{Q}_{\widetilde{N}}} \]
    that we extend to a map from 
    \[ R^{\square_{S_p\cup S_{\textnormal{aux}}}}_{\textnormal{loc}} \widehat{\otimes}_\cO S_\infty \llbracket x_1, \dots, x_{3(|T| + d_1)+q_2 - [F : \mathbb{Q}]-1+|S_p \cup S_{\textnormal{aux}}| + |T| + d_1}\rrbracket  \]
    by sending the last few power series variables to the chosen elements 
    \[ \widetilde{x}_{1, \widetilde{N}}, \dots, \widetilde{x}_{q_2 - [F : \mathbb{Q}]-1+|S_p \cup S_{\textnormal{aux}}| + |T| + d_1, \widetilde{N}}. \]
    Finally, we identify the target $R^{\textnormal{ord},\square_{S_p \cup S_{\textnormal{aux}}\cup Q_{M_{\widetilde{N}}} \cup T_{\widetilde{N}}}}_{b, \widetilde{Q}_{\widetilde{N}}}$ of this map with a power series ring over %
    $R^{\textnormal{ord}}_{b, \widetilde{Q}_{\widetilde{N}}}$
    by using a choice of representative of the universal deformation in the usual way, and finally we pass to the quotient by the $\widetilde{r}_N$-th power of the maximal ideal. 
    \item $\widetilde{\Gamma}_N \coloneqq G_{F, S_p \cup T \cup S_{\textnormal{aux}} \cup \widetilde{Q}_{\widetilde{N}}} / \mathcal{D}^{3 \widetilde{r}_N} G_{F, S_p \cup T \cup S_{\textnormal{aux}} \cup \widetilde{Q}_{\widetilde{N}}}$, and the additional data $\widetilde{\mathbf{t}}_N$, $\widetilde{\mathbf{r}}_{b, N}$, $\widetilde{\mathbf{c}}_u$, and $\widetilde{\gamma}_v$ ($v \in S_p \cup T \cup S_{\textnormal{aux}}$) comes via specialisation of the universal (or tautological) data over $P^{\textnormal{ord}}_{\widetilde{Q}_{\widetilde{N}}}$ and $R^{\textnormal{ord}}_{b, \widetilde{Q}_{\widetilde{N}}}$.
\end{itemize}
The analogue of Lem.\ \ref{lem_finiteness_of_patching_data} holds for these augmented patching data. 
Therefore we can find $1 \leq \widetilde{N}_1 < \widetilde{N}_2 < \widetilde{N}_3 < \dots$ and, for each $N \geq 1$, an isomorphism $\widetilde{\Pi}_{N+1}(\widetilde{D}(N+1, \widetilde{N}_{N+1})) \cong \widetilde{D}(N, \widetilde{N}_N)$ in $\widetilde{\operatorname{Patch}}_N$. We thus obtain inverse systems of rings $P^{\textnormal{ord}}_{ \widetilde{Q}_{\widetilde{N}_N}} / \mathfrak{m}_{P^{\textnormal{ord}}_{ \widetilde{Q}_{\widetilde{N}_N}}}^{\widetilde{r}_{N}}$ and modules $\widetilde{H}_{\widetilde{Q}_{\widetilde{N}_N}(N)} / (\widetilde{\mathfrak{b}}_{N})$. Passing to the inverse limit, we obtain the following objects:
\begin{itemize}
    \item A complete Noetherian local $\Lambda \widehat{\otimes}_\cO \widetilde{S}_\infty$-algebra $\widetilde{P}_\infty$, together with a homomorphism $\widetilde{P}_\infty \to P_\infty$ of $\Lambda \widehat{\otimes}_\cO \widetilde{S}_\infty$-algebras. 
    \item A complete Noetherian local $\widetilde{P}_\infty$-algebra $\widetilde{R}_{b, \infty}$, together with a homomorphism $\widetilde{R}_{b, \infty} \to R_{b, \infty}$ making the diagram
    \[ \xymatrix{ \widetilde{P}_\infty \ar[r] \ar[d] & P_\infty \ar[d] \\
    \widetilde{R}_{b, \infty} \ar[r] & R_{b, \infty}} \]
    commute.
    \item A $\widetilde{P}_\infty$-module $\widetilde{H}_\infty$, finite free as $\Lambda \widehat{\otimes}_\cO \widetilde{S}_\infty$-module, together with an isomorphism $\widetilde{H}_\infty \otimes_{\widetilde{S}_\infty} S_\infty \cong H_\infty$ of $\widetilde{P}_\infty$-modules. 
    \item A homomorphism 
    \begin{multline*} R^{\square_{S_p\cup S_{\textnormal{aux}}}}_{\textnormal{loc}} \widehat{\otimes}_\cO S_\infty\llbracket x_1, \dots, x_{4(|T| + d_1) + |S_p\cup S_{\textnormal{aux}}| + q_2 - [F: \mathbb{Q}] - 1}\rrbracket  \\ \to \widetilde{R}_{b, \infty}\llbracket t_1, \dots, t_{4(|S_p \cup S_{\textnormal{aux}}\cup T| + d_1) - 1}\rrbracket .
    \end{multline*}
\end{itemize}
Let $\widetilde{\mathfrak{q}}^{\textnormal{ps}}$ denote the pullback of $\mathfrak{q}^{\textnormal{ps}}$ to $\widetilde{P}_\infty$, let $\widetilde{\mathfrak{q}}_b$ denote the pullback of $\mathfrak{q}_b$ to $\widetilde{R}_{b, \infty}$,  let $\widetilde{\mathfrak{q}}_b^\square$ denote the pullback of $\widetilde{\mathfrak{q}}_b$ to $\widetilde{R}_{b, \infty}\llbracket t_1, \dots, t_{4(|S_p \cup S_{\textnormal{aux}}\cup T| + d_1) - 1}\rrbracket$, and let $\widetilde{\mathfrak{q}}_{\textnormal{loc}}$ denote the pullback of $\widetilde{\mathfrak{q}}_b^\square$ to $R^{\square_{S_p\cup S_{\textnormal{aux}}}}_{\textnormal{loc}} \widehat{\otimes}_\cO S_\infty\llbracket x_1, \dots, x_{3(|T| + d_1)}\rrbracket $. Then, by construction, the residue rings of $\mathfrak{q}_b$, $\widetilde{\mathfrak{q}}_b$, and $\widetilde{\mathfrak{q}}_{\textnormal{loc}}$ are all isomorphic, and the induced map
\begin{multline}\label{eq_patched_presentation} \widehat{(R^{\square_{S_p\cup S_{\textnormal{aux}}}}_{\textnormal{loc}} \widehat{\otimes}_\cO S_\infty\llbracket x_1, \dots, x_{3(|T| + d_1)}\rrbracket _{\widetilde{\mathfrak{q}}_{\textnormal{loc}}})}\llbracket x_{3(|T| + d_1)+1},  \dots, x_{4(|T| + d_1) + |S_p\cup S_{\textnormal{aux}}| + q_2 - [F: \mathbb{Q}] - 1}\rrbracket  \\ \to \widehat{( \widetilde{R}_{b, \infty}  \llbracket t_1, \dots, t_{4(|S_p \cup S_{\textnormal{aux}}\cup T| + d_1) - 1}\rrbracket_{\widetilde{\mathfrak{q}}_b^\square})}  \cong \widehat{( \widetilde{R}_{b, \infty, \widetilde{\mathfrak{q}}_b} )} \llbracket t_1, \dots, t_{4(|S_p \cup S_{\textnormal{aux}}\cup T| + d_1) - 1}\rrbracket 
\end{multline}
is surjective (using at this last step the conclusion of Prop.\ \ref{prop_existence_of_TW_places}).
\begin{pp}\label{prop_presentation_of_tilde_ring}
    The underlying reduced ring of 
    \[ \widehat{(\widetilde{P}_{\infty, \widetilde{\mathfrak{q}}^{\textnormal{ps}}})}\llbracket t_1, \dots, t_{4(|S_p \cup S_{\textnormal{aux}}\cup T| + d_1) - 1}\rrbracket  \]
    is a quotient of a normal Noetherian local domain of Krull dimension $4(|S_p\cup S_{\textnormal{aux}} \cup T| + d_1) + [F : \mathbb{Q}] + |T| + q_2 - 1$. 
\end{pp}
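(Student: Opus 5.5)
The plan is to read this off from the surjection \eqref{eq_patched_presentation}, combined with two further inputs: a comparison between $\widetilde{P}_\infty$ and $\widetilde{R}_{b,\infty}$ at the relevant primes, and the normality of the local lifting ring from Cor.\ \ref{cor_normality_of_local_lifting_ring}. I would carry this out in three steps, followed by a dimension count.

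\emph{Step 1: identify $\widetilde{P}_\infty$ with $\widetilde{R}_{b,\infty}$ after completion.} First I would establish the analogue of Lem.\ \ref{lem_patched_ring_represents_a_functor} for the augmented patched objects. The proof is the same: use Lem.\ \ref{lem_quantitative_continuity_of_pseudocharacters} to see that $\widetilde{P}_\infty$ (resp.\ $\widetilde{R}_{b,\infty}$) represents pseudodeformations (resp.\ deformations) of a patched Galois group $\widetilde{\Gamma}_\infty$ with the ordinary and Steinberg local conditions. Next, $\widetilde{\mathfrak{q}}_b$ has dimension 1 and characteristic $p$, because its residue ring equals that of $\mathfrak{q}_b$, and the associated representation is irreducible, because $\mathfrak{q}$ is nice. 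So Lem.\ \ref{lem_comparison_of_P_and_R_b}(1) applies, exactly as in Cor.\ \ref{cor_comparison_of_patched_objects}, and gives an isomorphism $\widehat{(\widetilde{P}_{\infty,\widetilde{\mathfrak{q}}^{\textnormal{ps}}})} \cong \widehat{(\widetilde{R}_{b,\infty,\widetilde{\mathfrak{q}}_b})}$. Adjoining the variables $t_i$ to both sides, the ring in the statement becomes the target of \eqref{eq_patched_presentation}.

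\emph{Step 2: pass to reduced quotients.} Let $\mathcal{A}$ denote the source of \eqref{eq_patched_presentation}. Since the target of a surjection's reduced quotient receives a surjection from the reduced quotient of the source, it suffices to show that $\mathcal{A}^{\textnormal{red}}$ is a quotient of a normal Noetherian local domain of the stated dimension. For this I replace $R_{\textnormal{loc}}^{\square_{S_p\cup S_{\textnormal{aux}}}}$ by its reduced quotient $R^{\textnormal{red}}$. By Cor.\ \ref{cor_normality_of_local_lifting_ring}, $R^{\textnormal{red}}$ is a normal $\cO$-flat domain of dimension $1+3|S_p\cup S_{\textnormal{aux}}|+2[F:\mathbb{Q}]$. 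Then:
\begin{itemize}
\item[(a)] $B \coloneqq R^{\textnormal{red}}\widehat{\otimes}_\cO S_\infty\llbracket x_1,\dots,x_{3(|T|+d_1)}\rrbracket$ is a power series ring over $R^{\textnormal{red}}$ in $|T|+3(|T|+d_1)$ variables, hence again a normal domain.
\item[(b)] $B$ is complete local Noetherian, hence excellent. So the localization $B_{\widetilde{\mathfrak{q}}_{\textnormal{loc}}}$ is a normal excellent local ring, and its completion is normal. A normal local ring is a domain.
\item[(c)] Adjoining the remaining power series variables preserves being a normal domain.
\end{itemize}
Finally, the natural map from the completed localization of $R_{\textnormal{loc}}\widehat{\otimes}S_\infty\llbracket x\rrbracket$ to that of $B$ is surjective, and its image kills the nilradical. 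Hence $\mathcal{A}^{\textnormal{red}}$ is a quotient of the normal local domain constructed in (a)--(c).

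\emph{Step 3: count dimensions.} The ring $B$ is a complete local domain, hence catenary and equidimensional, and $\dim B/\widetilde{\mathfrak{q}}_{\textnormal{loc}}=1$. Therefore
\[
\dim B_{\widetilde{\mathfrak{q}}_{\textnormal{loc}}} = \dim B - 1 = 3|S_p\cup S_{\textnormal{aux}}|+2[F:\mathbb{Q}]+4|T|+3d_1,
\]
and completion preserves this dimension. The number of additional variables is
\[
4(|T|+d_1)+|S_p\cup S_{\textnormal{aux}}|+q_2-[F:\mathbb{Q}]-1-3(|T|+d_1) = |T|+d_1+|S_p\cup S_{\textnormal{aux}}|+q_2-[F:\mathbb{Q}]-1.
\]
Adding the two gives $4|S_p\cup S_{\textnormal{aux}}|+5|T|+4d_1+[F:\mathbb{Q}]+q_2-1$. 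This equals $4(|S_p\cup S_{\textnormal{aux}}\cup T|+d_1)+[F:\mathbb{Q}]+|T|+q_2-1$, as claimed.

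The step I expect to need the most care is Step 1. One must check that the augmented patched rings really represent the pseudodeformation and deformation functors, and that $\widetilde{\mathfrak{q}}_b$ meets the hypotheses of Lem.\ \ref{lem_comparison_of_P_and_R_b}. The normality part of Step 2 is formal once one uses excellence of complete local rings.
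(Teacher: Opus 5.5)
Your proposal is correct and follows the paper's proof. The paper likewise first gets $\widehat{(\widetilde{P}_{\infty, \widetilde{\mathfrak{q}}^{\textnormal{ps}}})} \cong \widehat{(\widetilde{R}_{b, \infty, \widetilde{\mathfrak{q}}_b})}$ by the argument of Cor.~\ref{cor_comparison_of_patched_objects}, and then reads off the statement from the surjection \eqref{eq_patched_presentation}, Cor.~\ref{cor_normality_of_local_lifting_ring}, and the fact that normality and reducedness are preserved under localisation and completion of excellent rings. Your Step 3 just makes explicit the dimension count that the paper leaves implicit. One wording fix in Step 2: what you need is that the kernel of the map onto the completed localisation of your ring $B$ is nilpotent, and this holds because the nilradical of $R_{\textnormal{loc}}^{\square_{S_p\cup S_{\textnormal{aux}}}}$ is finitely generated and nilpotent.
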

\begin{proof}
    The same argument as in Cor.\ \ref{cor_comparison_of_patched_objects} shows that the natural map 
    \[ \widehat{(\widetilde{P}_{\infty, \widetilde{\mathfrak{q}}^{\textnormal{ps}}})} \to \widehat{(\widetilde{R}_{b, \infty, \widetilde{\mathfrak{q}}_b})} \]
    is an isomorphism.The given statement now follows from \eqref{eq_patched_presentation} and Cor.\ \ref{cor_normality_of_local_lifting_ring} and the fact that normality and reducedness 
    of complete Noetherian local rings
    is preserved under localisation and completion (by e.g.\ \cite[\href{https://stacks.math.columbia.edu/tag/0C21}{Tag 0C21}, \href{https://stacks.math.columbia.edu/tag/0C22}{Tag 0C22}]{stacks-project} and the standard fact that complete Noetherian local rings are excellent~\cite[\href{https://stacks.math.columbia.edu/tag/07QW}{Tag 07QW}]{stacks-project}).    
\end{proof}
We can now complete the proof of Prop.\ \ref{prop_local_R_equals_T}. 
The ring $\widehat{(\widetilde{P}_{\infty, \widetilde{\mathfrak{q}}^{\textnormal{ps}}})}$ acts on the non-zero module $\widehat{(\widetilde{H}_{\infty, \widetilde{\mathfrak{q}}^{\textnormal{ps}}})}$, which has depth at least $[F : \mathbb{Q}] + |T| + q_2$ (by \cite[\href{https://stacks.math.columbia.edu/tag/0FCC}{Tag 0FCC}]{stacks-project}, and since $\widetilde{H}_\infty$ has depth $1 + [F : \mathbb{Q}] + |T| + q_2$, being free over $\Lambda \widehat{\otimes}_\cO \widetilde{S}_\infty$).  By Prop.\ \ref{prop_presentation_of_tilde_ring}, $\widehat{(\widetilde{P}_{\infty, \widetilde{\mathfrak{q}}^{\textnormal{ps}}})}$ has Krull dimension at most $[F : \mathbb{Q}] + |T| + q_2$, with equality only if its spectrum is irreducible. Therefore its spectrum is irreducible and it acts nearly faithfully (in the sense of \cite[Def.\ 2.1]{Tay08}) on $\widehat{(\widetilde{H}_{\infty, \widetilde{\mathfrak{q}}^{\textnormal{ps}}})}$. Therefore, $\widetilde{H}_{\infty, \widetilde{\mathfrak{q}}^{\textnormal{ps}}} \otimes_{\widetilde{S}_\infty} S_\infty \cong H_{\infty, \mathfrak{q}}$ is a nearly faithful $\widetilde{P}_{\infty, \widetilde{\mathfrak{q}}^{\textnormal{ps}}} \otimes_{\widetilde{S}_\infty} S_\infty \cong P_{\infty, \mathfrak{q}^{\textnormal{ps}}}$-module, implying that the kernel of the map $P_{\infty, \mathfrak{q}^{\textnormal{ps}}} \to \mathbb{T}_{\infty, \mathfrak{q}}$ is nilpotent. This is precisely what we needed to prove.

\section{Construction of $T$-new lifts}\label{sec: construction of T-new lifts}
In this section we implement the lifting technique introduced in \cite{FKPI, FKPII} in order to construct lifts of Galois representations modulo $p^N$ with enough auxiliary ramification that we can prove their modularity using the automorphy lifting theorem established in \S \ref{sec_an_ALT}. This application will be taken up in the next section. 

We follow the methods of \cite{FKPI,FKPII} closely, with three important differences. First, since we can choose the initial value of $N$ as large as we like (and it will eventually go to infinity), we can skip the ``doubling'' argument used in the cited works to go from a mod $p$ representation to mod~$p^N$ for some sufficiently large~$N$.

Second, we need control over the number of auxiliary primes that are introduced in order to actually construct the $p$-adic lift. (We remind the reader that there are in fact two kinds of auxiliary primes; those in the set $T$ are used in Section \ref{sec_an_ALT} to control the size of the reducible locus, while those in the set $S_{\textnormal{aux}}$ allow us to construct the requisite $p$-adic lift.) We formulate the required result as Prop.\ \ref{prop:T-new lift} below: its key feature (relative to \cite{FKPI, FKPII}) is that, fixing the size of the set $T$ in advance, we can choose $T$ in such a way that the size of $|S_{\textnormal{aux}}|$ is bounded. 

Third, our eventual application forces us to work with representations that may not have image containing an open subgroup of $\GL_2(\mathbb{Z}_p)$. This hypothesis is used in \cite{FKPI, FKPII} to construct auxiliary Ramakrishna-type deformation problems using elements in the image contained in a split maximal torus of $\GL_{2, \mathbb{Z}_p}$ (in particular, defined over $\mathbb{Z}_p$). Our representations may only have image containing a rational conjugate of $\GL_2(\mathbb{Z}_p)$, which forces us to generalise the definition of these deformation problems to allow tori that are only defined rationally, not integrally. We analyse these deformation problems by direct calculation; it would be interesting to understand how to define them for a general reductive group, along the lines considered in~\cite{FKPI}. 
\subsection{Preliminaries}\label{subsec:preliminaries for lifting argument}

\subsubsection{Definitions and assumptions}\label{subsubsec:assumptions on Galois representation}
Let $p$ be an odd prime, let $E / \mathbb{Q}_p$ be a coefficient field, let $F$ be a totally real number field, and let 
\[ \rhobar = \left (\begin{smallmatrix}
\chibar_1 & *\\0 & \chibar_2
\end{smallmatrix} \right ) : G_F \to \GL_2(k) \]
be  a continuous indecomposable representation with determinant $\det \rhobar = \epsilon^{-1}.$
We assume the existence of a complex conjugation~$c_u \in G_F$ such that $\rhobar(c_u) = \diag(-1, 1)$. %
We write~$\chibar \coloneqq  \chibar_1 \chibar_2^{-1}$, and we assume that $\chibar|_{G_{F(\zeta_p)}} \ne 1$.
We let $\lambda \in (\mathbb{Z}^2_+)^{\Hom(F, E)}$, and we let $\rho : G_{F} \to \GL_2(\cO)$ be a lift of~$\rhobar$ such that: 
\begin{enumerate}
    \item $\rho$ is unramified at all but finitely many places of~$F$.
    \item $\det \rho = \epsilon^{-1}.$ %
    \item If~$v \in S_p,$ then 
    $\rho|_{G_{F_v}}$ is de Rham of regular weight~$\lambda_v$ and inertial type~$\tau_v$. 
    \item There exists $h \in \GL_2(E)$ such that the image of~$\rho$ contains an open subgroup of $h \GL_2(\bZ_p) h^{-1}$.
\end{enumerate}

\begin{defn}\label{defn:good prime}
If~$v \nmid p$ is a finite place of~$F$,
we say that~$v$ is \emph{good} for~$\rhobar$ if $q_v \equiv -1 \text{ mod } p$, 
$\rhobar$ is unramified at~$v$, and $\rhobar(\Frob_v)$ is conjugate to $\diag(-1, 1)$ within $\rhobar(G_F)$.
\end{defn}

For each~$N \geq 1$, we write:
\begin{itemize}
    \item $\rho_N \coloneqq  \rho \otimes_\cO \cO/\varpi^N$.
    \item $K_N \coloneqq  F(\rho_N, \mu_{p})$. We also write~$K \coloneqq  K_1$.
    \item $F_N^* \coloneqq  F(\rhobar, \ad^0 \rho_N, \mu_{p^N})$.
\end{itemize}
We choose~$C_\rho > 0$ minimally so that $\rho(\cO[G_F])$ contains $\varpi^{C_\rho}M_2(\cO)$.
(This is possible, by Prop.\ \ref{prop_pseudocharacters_with_large_image}(1).)

\subsubsection{The set~$S_{\Sha}$}
By~\cite[Lem.\ 7.1]{FKPII}, our assumption that $\chibar|_{G_{F(\zeta_p)}} \ne 1$ implies that
\begin{equation}\label{eqn:vanishing of Galois cohomology of K}
H^1(\Gal(K/F), \ad^0 \rhobar(1)) = 0.
\end{equation}
Whenever~$S$ is a finite set of places of~$F,$ and~$M$ a continuous $\cO[G_{F, S}]$-module, we define $\Sha^i_S(G_{F, S}, M)$ by the exact sequence
\[
0 \to \Sha^i_S(G_{F, S}, M) \to H^i(G_{F, S}, M) \to \prod_{v \in S}H^i(G_{F_v}, M).
\]
Write~$S_0$ for the set of finite, prime-to-$p$, ramified places of~$\rho$.
Because of~\eqref{eqn:vanishing of Galois cohomology of K}, we can find 
a finite set~$S_{\Sha}$ of places of~$F$, disjoint from $S_0 \cup S_p$, and such that:
\begin{enumerate}
    \item For all~$v \in S_{\Sha}:$
    $q_v \equiv 1 \mod \varpi$ and $\rhobar |_{G_{F_v}}$ is trivial.
    \item Writing $S\coloneqq  S_p \cup S_0 \cup S_{\Sha}$, then %
\begin{equation}\label{eqn:vanishing of Sha1}
\Sha^1_S(G_{F, S}, \ad^0 \rhobar(1)) = 0.
\end{equation}
\end{enumerate}
We fix such a set~$S_{\Sha}$ for the duration of Section~\ref{sec: construction of T-new lifts}, and we continue writing $S\coloneqq  S_p \cup S_0 \cup S_{\Sha}$.
Note that~\eqref{eqn:vanishing of Sha1} implies that
\[
\Sha^1_{S \cup X}(G_{F, S \cup X}, \ad^0 \rhobar(1)) = 0
\]
for every~$X$ disjoint from~$S$, since this group coincides with $\Sha^1_{S \cup X}(G_{F, S}, \ad^0 \rhobar(1)),$ which is a subgroup of $\Sha^1_S(G_{F, S}, \ad^0 \rhobar(1)).$
It then follows from Poitou--Tate duality that $\Sha^2_{S \cup X}(G_{F, S \cup X}, \ad^0 \rhobar) = 0$.

\subsubsection{Local lifting rings}
We introduce local lifting rings at places of~$S$ 
in the following way (the determinant is always fixed to be~$\epsilon^{-1}$):
\begin{enumerate}
    \item For each $v \in S_p$,
    we choose a component~$R_v$ of~$R_{\rhobar|_{G_{F_v}}}^{\lambda_v, \tau_v}$ containing~$\rho|_{G_{F_v}}$; 
    \item For each $v \in S_{\Sha},$ we let~$R_v$ be the unramified component of~$R_{\rhobar|_{G_{F_v}}}^\square$ containing~$\rho|_{G_{F_v}}$; 
    \item For each $v \in S_0,$ we choose a component~$R_v$ of $R_{\rhobar_{G_{F_v}}}^\square$ containing~$\rho|_{G_{F_v}}$. 
\end{enumerate}
In all cases, $\Spec R_v[1/\varpi]$ is formally smooth at every closed point.
(When $v \mid p$, this is a consequence of~\cite[Thm.\ A.2]{Kis09} and~\cite[Rem.\ 16]{Boo19}, asserting that the irreducible components of $R_{\rhobar|_{G_{F_v}}}^{\lambda_v, \tau_v}[1/\varpi]$ are regular.
When $v \nmid p$, the result follows from a similar argument, making use of the moduli space of Weil--Deligne representation rather than the moduli space of $(\varphi, N)$-modules; or see~\cite[Lem.\ 2.6.6]{Bel19}.)

\begin{rem}\label{rem:intended choice of components}
In our intended application, which is the proof of Prop.\ \ref{prop:level-lowering}, the following will be true:
\begin{enumerate}
\item If~$v \in S_p$, then $\rho$ is ordinary and semistable of weight~$\lambda_v$.
The inertial type~$\tau_v$ is chosen to be trivial, so $R_{\rhobar|_{G_{F_v}}}^{\lambda_v, \tau_v}$ is the semistable lifting ring
$R_{\rhobar|_{G_{F_v}}}^{\lambda_v, \mathrm{st}}$ of weight~$\lambda_v$.
The ring~$R_v$ is chosen 
to be an ordinary component of $R_{\rhobar|_{G_{F_v}}}^{\lambda_v, \mathrm{st}}$ containing $\rho |_{G_{F_v}}$,
such that
for every $\overline{\mathbb Q}_p$-point~$\rho'_v$ of~$\Spec R_v$, writing $\rho'_v \sim \left( \begin{smallmatrix} \psi'_{v, 1} & * \\ 0 & \psi'_{v, 2} \end{smallmatrix} \right)$ for characters $\psi'_{v, i} : G_{F_v} \to \overline{\mathbb{Q}}_p^\times$, as in the definition of $\lambda_v$-ordinary, then $\psi'_{v, 1} \circ \Art_{F_v}$ is trivial on the subgroup of $p$-power roots of unity of $\cO_{F_v}^\times$.
\item If~$v \in S_0$, then 
$q_v \equiv 1 \text{ mod } p$, $\rhobar^{\textnormal{ss}}|_{G_{F_v}}$ is trivial, and
$(\rho|_{G_{F_v}})^{\textnormal{ss}} \cong 1 \oplus \epsilon^{-1}$;
the ring~$R_v$ is chosen to be the Steinberg lifting ring of~$\rhobar|_{G_{F_v}}$.
\item If~$v \in S_{\Sha}$, then 
$q_v \equiv 1 \text{ mod } p$, $\rhobar|_{G_{F_v}}$ is trivial, and
$\rho|_{G_{F_v}}$ is unramified; the ring~$R_v$ is chosen to be the unramified component of the lifting ring of~$\rhobar|_{G_{F_v}}$.
\end{enumerate}
\end{rem}

\subsubsection{Statement of main result}
Our target in this section is the following result. 
In its statement, we fix integers $M_\rho$ and $P_\rho$ (produced by Lem.\ \ref{lem_rationally_defined_elements_in_image}), and integers $M_v$ for~$v \in S$ produced by the construction of Section~\ref{subsubsec:local condition at S}; and we fix lifting rings~$R_v$ for~$v \in S$, as in Section~\ref{subsec:preliminaries for lifting argument}.
We also employ the notation $C_\rho$ from Section~\ref{subsec:preliminaries for lifting argument}, and $h^1_\varnothing(\ad^0 \rhobar)$ from Section~\ref{subsec:Selmer groups II}.
These are quantities depending only on~$\rho$ and on the set~$S$. %

\begin{pp}\label{prop:T-new lift}
Make the same assumptions as in Section~\ref{subsubsec:assumptions on Galois representation}.
Fix~$N > \max \{P_\rho + 2 M_\rho, P_\rho + \max_{v \in S} M_v, P_\rho + C_\rho\}$, and an integer $a>0$. 
Then there exist finite sets~$T\coloneqq T_{a, N}$ and~$S_{\aux}$ of finite places of~$F$, disjoint from~$S$, and a continuous lift 
\[
\rho' : G_{F, S \cup S_{\aux} \cup T} \to \GL_2(\cO)
\]
of~$\rho \otimes_\cO \cO/\varpi^{N-P_\rho}$, such that:
\begin{enumerate}
    \item $\det \rho' = \epsilon^{-1}$;
    \item For all~$v \in S$: 
    $\rho'|_{G_{F_v}}$ defines an $\cO$-point of $\Spec R_v$. 
    \item For all $v \in T$: $q_v \equiv -1 \text{ mod }p$,
    $\rhobar(\Frob_v)$ is conjugate to~$\diag(-1, 1)$ in~$\rhobar(G_F)$, and
    $(\rho'|_{G_{F_v}} \otimes_\cO E)^{\textnormal{ss}} \cong 1 \oplus \epsilon^{-1}$. 
    \item For all $v \in S_{\textnormal{aux}}$: $q_v \equiv 1\text{ mod } \varpi^{M_\rho}$, $q_v \not \equiv 1\text{ mod } \varpi^{M_\rho+1}$, $\overline{\rho}|_{G_{F_v}}$ is trivial, and $(\rho'|_{G_{F_v}} \otimes_\cO E)^{\textnormal{ss}} \cong 1 \oplus \epsilon^{-1}$.
    \item $|S_{\aux}| \leq P_\rho(h^1_\varnothing(\ad^0 \rhobar)+1)+1$; 
    \item $|T| = a$;
    \item $\rho'(\cO[G_F])$ contains $\varpi^{C_\rho}M_2(\cO)$.
\end{enumerate}
Furthermore, if~$L/F$ is a finite Galois extension linearly disjoint from $F(\rho, \epsilon)$, and such that $p \nmid [L:F]$, then the sets~$T$ and~$S_{\aux}$ can be chosen to split completely in~$L$. 
\end{pp}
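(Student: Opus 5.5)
We follow the strategy of \cite{FKPI, FKPII}, skipping the doubling step because we start from the genuine $p$-adic lift $\rho$. The proof has two stages. First we choose $T$: a set of $a$ good places in the sense of Def.~\ref{defn:good prime}, taken by Chebotarev in $F_N^*$ (and split in $L$ if required). We choose them so that $\rho_N(\Frob_v)$ is conjugate to a lift of $\diag(-1,1)$ whose eigenvalue ratio is $q_v$ modulo $\varpi^N$, and so that $\rho_N|_{G_{F_v}}$ already lies on the Steinberg-type local condition $\mathcal{L}_{N,v}$ of $R^{\St,1}_v$. Adding these places must not enlarge the relevant dual Selmer group. Concretely, we impose conditions on $\Frob_v$ in $\Gal(F_N^*/F)$ ensuring that $h^1_{\mathcal{L}^\perp}(\ad^0\rhobar(1))$ with the Steinberg conditions at $T$ stays bounded by $h^1_\varnothing(\ad^0 \rhobar)+1$, independently of $a$ and $N$. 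This is possible because the Steinberg condition at a good place has the same dimension as the unramified one, so the Greenberg--Wiles formula shows the Selmer/dual Selmer balance is unchanged. We also use \eqref{eqn:vanishing of Galois cohomology of K} together with the vanishing of $\Sha^1_{S\cup X}$ to make the local restriction maps behave like independent conditions.

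Second, we kill the remaining dual Selmer group using auxiliary trivial places $S_{\aux}$, following the Ramakrishna-type argument of \cite{FKPII}. We use Lem.~\ref{lem_rationally_defined_elements_in_image} to produce elements of $\rho(G_F)$ lying in a rationally defined split torus of $h\GL_2(\mathbb{Z}_p)h^{-1}$ whose eigenvalue ratio is $\equiv 1 \bmod \varpi^{M_\rho}$ exactly. Chebotarev then gives places $v$ with $q_v \equiv 1 \bmod \varpi^{M_\rho}$, $q_v\not\equiv 1 \bmod \varpi^{M_\rho+1}$ and $\rhobar|_{G_{F_v}}$ trivial. At these places we define the generalized Steinberg-type local conditions with respect to a torus defined only over $E$, and analyse them by direct matrix computation. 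The cost is a loss of $P_\rho$ powers of $\varpi$: these conditions are liftable and have the correct tangent dimension only up to $\varpi^{P_\rho}$-torsion error. Each new place kills at least one dimension of dual Selmer up to this error. Iterating the FKP relative deformation argument from $\rho_N$, we choose for each element of a dual Selmer basis a batch of at most $P_\rho$ places, plus one extra place. This yields $|S_{\aux}|\le P_\rho(h^1_\varnothing(\ad^0 \rhobar)+1)+1$, which is (5).

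With $\Sha^2$ vanishing and the dual Selmer group for the enlarged problem killed, we obtain a relative lifting statement: every lift mod $\varpi^n$ satisfying the local conditions at $S\cup T\cup S_{\aux}$ and congruent to $\rho$ mod $\varpi^{n-P_\rho}$ lifts to level $n+1$. The hypotheses $N > P_\rho+\max M_v$ ensure that the $S$-local conditions of \S\ref{subsubsec:local condition at S} are "$M_v$-smooth" there. Induction on $n$, starting from $\rho_N$, then produces $\rho'$ lifting $\rho \bmod \varpi^{N-P_\rho}$ and satisfying (1)--(4) and (6). Property (7) follows from $N-P_\rho > C_\rho$: $\rho'(\cO[G_F])$ agrees with $\rho(\cO[G_F])$ modulo $\varpi^{C_\rho+1}$, and Nakayama applied to $\varpi^{C_\rho}M_2(\cO)$ gives the claim.

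The main obstacle is the uniform bound on $|S_{\aux}|$. The difficulty is that the choice of $T$ must leave the dual Selmer group bounded independently of $a$, while the non-integral tori force the $\varpi^{P_\rho}$ losses at each auxiliary place. I would first try to prove a precise local lemma: for the rational-torus condition at $v$ with $\mathrm{ord}(q_v-1)=M_\rho$, the cokernel of the restriction from the global dual Selmer group is annihilated by $\varpi^{P_\rho}$. Given this lemma, the counting argument is formal.
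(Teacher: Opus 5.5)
Your architecture matches the paper's. You choose good places $T$ so that the mod $\varpi$ Selmer group stays bounded. You then choose trivial places $S_{\aux}\subset\cQ_N$ with rational-torus conditions, which kill the \emph{relative} Selmer groups. Finally you run the FKP lifting with a loss of $P_\rho$.

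\textbf{The gap.} The step that actually bounds $|S_{\aux}|$ is missing, and the local lemma you propose cannot supply it. The paper tracks $\length_\cO H^1_{T\cup S_{\aux}}(\ad^0\rho_{P_\rho})$. This length is initially $\le P_\rho(h^1_\varnothing(\ad^0\rhobar)+1)$, by Prop.~\ref{prop:bounding mod p Selmer group} and \cite[Lem.~6.1]{FKPI}, and it must drop by at least one with each new place $v$.
\begin{enumerate}
\item Let $H^1_{\mathcal{L}}$ be the Selmer group relaxed to $H^1_{\nr}+\mathcal{L}_{P_\rho,v}$ at $v$. Greenberg--Wiles gives $\length H^1_{\mathcal{L}}=\length H^1_{\mathcal{L}^\perp}+P_\rho$.
\item Suppose a dual class $\psi$ has image in $H^1_{\nr}/(\mathcal{L}^\perp_{P_\rho,v}\cap H^1_{\nr})\cong\cO/\varpi^{P_\rho}$ not killed by $\varpi^{P_\rho-C_0-1}$. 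This only gives $\length H^1_{\mathcal{L}}\le\length H^1_{T\cup S_{\aux}}(\ad^0\rho_{P_\rho})+C_0$. So a place chosen only to kill dual classes need not decrease the length.
\item You need, at the \emph{same} $v$, a Selmer class $\phi$ whose image in $H^1_{\nr}/(\mathcal{L}_{P_\rho,v}\cap H^1_{\nr})$ is not killed by $\varpi^{C_0}$. This forces $\length H^1_{T\cup S_{\aux}\cup\{v\}}\le\length H^1_{\mathcal{L}}-(C_0+1)$, hence a net drop of at least one.
\end{enumerate}
Producing such a $v$ is Prop.~\ref{prop:FKP double cocycle}, a global Chebotarev argument. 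It uses:
\begin{itemize}
\item Lem.~\ref{lem:definition of M}(2), which gives $N_\rho$ such that $\phi\bmod\varpi^{N_\rho}$ is nonzero on $G_{F^*_{N_1}}$, where $N_1=N+\max_j\lambda_{g_j}$. This needs $\Lie\,\ad^0\rho(G_F)$ to be semisimple and \cite[Cor.~B.2]{FKPI}.
\item Lem.~\ref{lem_existence_of_well_placed_elements}, which gives finitely many tori $g_1,\dots,g_s$ and a uniform $C_0$. Writing $\phi(\gamma_2)=\varpi^mX$ and $\psi(\gamma_2)=Y$ with $X,Y$ primitive, some $g_i$ makes both $\tr\varpi^{\lambda_{g_i}}E_{g_i}X$ and $\tr\varpi^{\mu_{g_i}}N_{g_i}Y$ have valuation $\le C_0$.
\item The element $\gamma_1$ of Lem.~\ref{lem_rationally_defined_elements_in_image}, together with the $\gamma_2^r\gamma_1$ trick for $r\in\{0,1,2\}$.
\end{itemize}
This is exactly why $P_\rho=N_\rho+2C_0$.

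Your proposed lemma sees none of this. For $r\le P_\rho$, where these conditions live, they are exactly balanced with no torsion error: $s_v=M_\rho-\lambda_{g_i}\ge P_\rho$ makes $G_{F_v}$ act trivially on $\ad^0\rho_{P_\rho}$. The local quotients are then already $\cO/\varpi^{P_\rho}$, so a cokernel bound by $\varpi^{P_\rho}$ is automatic. Moreover, a statement about one fixed $v$ cannot encode the simultaneous choice needed in step 3.

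\textbf{Two smaller points.}

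First, for $T$, balancedness alone does not bound the dual Selmer group. In Prop.~\ref{prop:bounding mod p Selmer group}, each new good place has $\rho_N(\Frob_v)=\rho_N(c_u)$. It is chosen so that a given nonzero dual class $\psi$ has nonzero $\fn$-component at $\Frob_v$, i.e.\ nonzero image modulo $\mathcal{L}_v^\perp$. This uses $\psi|_{G_{K_N}}\neq0$ (Lem.~\ref{lem:extra cohomology vanishing}) and an element $\tau$ with $\psi(\tau)$ in the socle of $\ad^0\rhobar(1)$. Then $h^1$ does not increase, and the $+1$ appears only when the dual Selmer group is already $0$.

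Second, trivial places kill only the \emph{relative} dual Selmer group. So your statement that every lift mod $\varpi^n$ lifts to level $n+1$ is too strong. Also, your congruence-to-$\rho$ hypothesis is not preserved by the induction. What holds, and suffices, is the following. If $\tilde\rho_{n+P_\rho}$ satisfies the local conditions, then $\tilde\rho_{n+P_\rho}\bmod\varpi^{n+1}$ lifts to level $n+1+P_\rho$ with the local conditions. One obtains this by correcting an arbitrary global lift by a cocycle valued in $\ad^0\rho_{P_\rho}$, via the Poitou--Tate diagram chase.
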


\begin{rem}\label{rem:independence of w}
    The sets~$T$ and~$S_{\aux}$ in Prop.\ \ref{prop:T-new lift} will be constructed by an application of the Chebotarev density theorem.
    Given any finite place~$\mathfrak{w}$ of~$F$, we can therefore choose them in such a way that $\mathfrak{w} \not \in T \cup S_{\aux}$.
\end{rem}

Note that Prop.\ \ref{prop:T-new lift}~(2) implies that~$\rho'$ is unramified at~$S_{\Sha}$. 
We immediately record the following simplification.

\begin{lemma}\label{lem:deducing bound on image of rho}
    If~$\rho' : G_F \to \GL_2(\cO)$ is congruent to~$\rho$ modulo~$\varpi^{N-P_\rho}$, then $\rho'(\cO[G_F])$ contains $\varpi^{C_\rho}M_2(\cO)$.
    Hence, condition~(7) in Prop.\ \ref{prop:T-new lift} is implied by the other conditions.
\end{lemma}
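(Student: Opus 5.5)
We use the defining property of $C_\rho$ and then transport it along the congruence. Choose elements $x_{ij} \in \cO[G_F]$ with $\rho(x_{ij}) = \varpi^{C_\rho} E_{ij}$, where $E_{ij}$ are the matrix units. This is possible since $\rho(\cO[G_F]) \supseteq \varpi^{C_\rho} M_2(\cO)$. Both $\rho$ and $\rho'$ extend $\cO$-linearly to algebra homomorphisms $\cO[G_F] \to M_2(\cO)$, and $\rho \equiv \rho' \bmod \varpi^{N-P_\rho}$ on group elements. Hence $\rho'(x) \equiv \rho(x) \bmod \varpi^{N-P_\rho} M_2(\cO)$ for every $x \in \cO[G_F]$. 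In particular,
\[ \rho'(x_{ij}) = \varpi^{C_\rho} E_{ij} + \varpi^{N-P_\rho} y_{ij}, \qquad y_{ij} \in M_2(\cO). \]

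Set $\Lambda' \coloneqq \rho'(\cO[G_F])$. This is an $\cO$-subalgebra of $M_2(\cO)$; it is a finite $\cO$-module, hence $\varpi$-adically closed. By the standing hypothesis $N > P_\rho + C_\rho$, write $N - P_\rho = C_\rho + e$ with $e \geq 1$. Then
\[ \rho'(x_{ij}) = \varpi^{C_\rho}(E_{ij} + \varpi^{e} y_{ij}), \qquad e \geq 1. \]
The four matrices $E_{ij} + \varpi^e y_{ij}$ reduce modulo $\varpi$ to the matrix units, so they form an $\cO$-basis of $M_2(\cO)$ by Nakayama's lemma. Therefore the $\cO$-span of the $\rho'(x_{ij})$, which lies in $\Lambda'$, equals $\varpi^{C_\rho} M_2(\cO)$. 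This gives $\varpi^{C_\rho} M_2(\cO) \subseteq \Lambda'$.

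For the second sentence of the lemma, note that conclusion (7) of Prop.\ \ref{prop:T-new lift} asks exactly that $\rho'(\cO[G_F]) \supseteq \varpi^{C_\rho} M_2(\cO)$. The other conclusions include that $\rho'$ lifts $\rho \otimes_\cO \cO/\varpi^{N-P_\rho}$, and $N$ satisfies $N > P_\rho + C_\rho$. So the first part applies directly, and (7) follows.

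The only point needing care is the strict inequality $N - P_\rho > C_\rho$, which makes $e \geq 1$ and lets Nakayama apply. It is built into the hypotheses on $N$, so no step is a real obstacle.
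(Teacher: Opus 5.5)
Your proof is correct and is essentially the paper's argument. The paper also transports $\varpi^{C_\rho}M_2(\cO) \subset \rho(\cO[G_F])$ along the congruence to obtain $\varpi^{C_\rho}M_2(\cO) \subset \rho'(\cO[G_F]) + \varpi^{N-P_\rho}M_2(\cO)$, and concludes using $N-P_\rho > C_\rho$ and Nakayama; the only difference is that it applies Nakayama to the inclusion $\rho'(\cO[G_F]) \subset \rho'(\cO[G_F]) + \varpi^{C_\rho}M_2(\cO)$ rather than to your explicit lifts of the scaled matrix units.
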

\begin{proof}
Let $X_\rho \coloneqq  \rho(\cO[G_F]), X_{\rho'} \coloneqq  \rho'(\cO[G_F])$, and let~$X \coloneqq  M_2(\cO)$.
Since $\rho' \equiv \rho \text{ mod } \varpi^{N - P_\rho} \cO,$
the images of $X_\rho$ and~$X_{\rho'}$ in $X/\varpi^{N-P_\rho} X$ coincide.
By definition of~$C_\rho$, they contain $\varpi^{C_\rho} X/\varpi^{N-P_\rho}X$.
Hence $X_{\rho'} + \varpi^{N-P_\rho}X \supset \varpi^{C_\rho}X.$
Since $N-P_\rho > C_\rho$, Nakayama's lemma applied to the inclusion $X_{\rho'} \subset X_{\rho'} + \varpi^{C_\rho}X$ (of finite type $\cO$-modules) shows that $X_{\rho'} = X_{\rho'} + \varpi^{C_\rho} X,$ hence $\varpi^{C_\rho} X \subset X_{\rho'}$, as desired.
\end{proof}

\subsubsection{Calculations in Galois cohomology}
For any continuous character $\overline\psi : G_F \to k^\times$, we will write $k(\overline\psi)$ for the corresponding one-dimensional $k[G_F]$-module, and 
$\bF_p(\overline\psi)$ for a representative of the unique $\bF_p[G_F]$-isomorphism class of simple $\bF_p[G_F]$-subquotients of~$k(\overline\psi)$.
(The uniqueness follows from the fact that $k/\bF_p$ is finite and separable. Similarly, $\bF_p(\overline \psi) \cong \bF_p(\overline \psi')$ if and only if $\overline \psi, \overline \psi'$ are $\Gal(k/\bF_p)$-conjugate.)

\begin{lemma}\label{lem:uniserial adjoint}
The $k[G_F]$-module $\ad^0 \rhobar$ has socle~$k(\chibar)$ and cosocle~$k(\chibar^{-1}).$
Furthermore,
\[
\operatorname{soc}_{k[G_F]} \ad^0 \rhobar = \{\psi \in \operatorname{End}_k(\rhobar): \operatorname{soc}_{k[G_F]}\rhobar \subset \ker \psi, \operatorname{im}\psi \subset \operatorname{soc}_{k[G_F]} \rhobar\}.
\]
\end{lemma}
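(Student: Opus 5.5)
We argue by explicit matrix computation in the basis $e_1, e_2$ of $\rhobar$ in which $\rhobar = \left(\begin{smallmatrix} \chibar_1 & \chibar_2 b \\ 0 & \chibar_2 \end{smallmatrix}\right)$ with $[b] \neq 0$. The socle of $\rhobar$ is then the line $k e_1$: it is stable, and $\rhobar$ is indecomposable, so no other line is stable. The right-hand side of the displayed identity is the set of $\psi$ with $\psi(e_1) = 0$ and $\psi(k^2) \subset k e_1$, which is the line $W \coloneqq k\left(\begin{smallmatrix} 0 & 1 \\ 0 & 0\end{smallmatrix}\right)$. This line automatically has trace zero.

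We first compute the conjugation action $g \cdot \psi = \rhobar(g)\psi\rhobar(g)^{-1}$ on $\ad^0\rhobar$. Let $E_{12}, H, E_{21}$ be the standard basis of $\ad^0$. A direct computation gives the following.
\begin{enumerate}
\item $g \cdot E_{12} = \chibar(g) E_{12}$, so $W \cong k(\chibar)$ is a submodule.
\item $g\cdot H \equiv H \bmod W$.
\item $g \cdot E_{21} \equiv \chibar(g)^{-1}E_{21} \bmod \langle E_{12}, H\rangle$.
\end{enumerate}
Hence the filtration $0 \subset W \subset \langle E_{12}, H\rangle \subset \ad^0\rhobar$ has graded pieces $k(\chibar)$, $k$, $k(\chibar^{-1})$. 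We also record the off-diagonal terms. Using the cocycle $b$, we get $g\cdot H = H - 2b(g)\chibar(g)^{-1}\cdot(\ldots)E_{12}$, so that the extension of $k$ by $k(\chibar)$ in $\langle E_{12},H\rangle$ has class a nonzero multiple of $[b]$. Similarly, the extension of $k(\chibar^{-1})$ by $k$ in $\ad^0\rhobar / W$ has class a nonzero multiple of the class of $b$ twisted appropriately, namely a nonzero class in $H^1(F,\chibar^{-1}\cdot\chibar) $ read as an extension of $k(\chibar^{-1})$ by $k$. Both extensions are non-split because $[b]\ne0$; we will double-check that no coboundary can kill the second one. The key point throughout is that $\chibar \neq 1$, since $\chibar|_{G_{F(\zeta_p)}}\neq 1$. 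In particular, $\chibar$, $1$ and $\chibar^{-1}$ are distinct characters, up to the possibility $\chibar = \chibar^{-1}$, which we handle separately.

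To identify the socle, let $U$ be a simple submodule of $\ad^0\rhobar$. Suppose $U \not\subset \langle E_{12},H\rangle$. Then $U$ maps isomorphically onto $k(\chibar^{-1})$, and this would split the extension $\ad^0\rhobar/W$ after projection. More precisely, the image of $U$ in $\ad^0\rhobar/W$ would be a complement to the submodule $\langle E_{12},H\rangle/W \cong k$, and this contradicts non-splitness. So $U \subset \langle E_{12}, H\rangle$. This module is a non-split extension of $k$ by $k(\chibar)$, so its socle is $W$, and therefore $U = W$. The same argument gives $\operatorname{soc} = W$ in the case $\chibar^2 = 1$, because it only uses non-splitness of the two successive extensions, not distinctness of the characters.

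The cosocle statement follows by duality. The trace pairing identifies $\ad^0\rhobar$ with its own dual, and the dual of the socle filtration is a filtration with the same extensions reversed. Alternatively, one can argue directly: a maximal submodule $V$ with simple quotient must contain $W$. Otherwise $V \cap W = 0$ and $V$ would be a $2$-dimensional complement to $W$, contradicting $\operatorname{soc}=W$, since $V$ would contain a simple submodule other than $W$. Therefore $V/W$ is a maximal submodule of $\ad^0\rhobar/W$, which is a non-split extension of $k(\chibar^{-1})$ by $k$. Its unique maximal submodule is $\langle E_{12},H\rangle/W$, so the cosocle is $k(\chibar^{-1})$.

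The main obstacle is checking carefully that the second extension, of $k(\chibar^{-1})$ by $k$ in $\ad^0\rhobar/W$, is non-split. Its class is computed from the $(1,1)$ and $(2,2)$ entries of $g\cdot E_{21}$. Explicitly, $\rhobar(g)E_{21}\rhobar(g)^{-1}$ has $H$-coefficient equal to $\pm b(g)\chibar(g)^{-1}$ up to units. This represents $[b]$ in $H^1(F,\chibar^{-1}\otimes\chibar)$ after twisting by $\chibar^{-1}$. That class is nonzero, and the remaining work is verifying this and the signs. Here we use $p$ odd, so that $2$ is invertible.
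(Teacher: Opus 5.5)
Your argument is correct, but it takes a different route from the paper.

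\textbf{The paper's route.} The paper argues abstractly. A simple submodule of $\ad^0\rhobar$ is a character $k(\overline\psi)$, and $\Hom_{G_F}(k(\overline\psi),\ad\rhobar)=\Hom_{G_F}(\rhobar\otimes\overline\psi,\rhobar)$ has dimension at most $1$, with equality only for $\overline\psi\in\{1,\chibar\}$. The case $\overline\psi=1$ contributes only scalars, which are not trace-zero since $p$ is odd. So the socle is the line $kE_{12}$, and the cosocle follows from self-duality.

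\textbf{Your route.} You exhibit the filtration $W\subset\langle E_{12},H\rangle\subset\ad^0\rhobar$ and show that both successive extensions are non-split. This costs a matrix computation, but it proves the sharper statement that $\ad^0\rhobar$ is uniserial, with explicit extension classes. It also never actually uses $\chibar\neq 1$, contrary to what you call the key point. The paper's route is shorter.

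\textbf{A correction to the step you flagged.} The exact formulas are $g\cdot H=H-2b(g)E_{12}$ and $g\cdot E_{21}\equiv\chibar(g)^{-1}\bigl(E_{21}+b(g)H\bigr)$ modulo $W$. The second extension is classified by $\mathrm{Ext}^1(k(\chibar^{-1}),k)=H^1(F,k(\chibar))$, not by ``$H^1(F,\chibar^{-1}\otimes\chibar)$''. The latter would be $H^1(F,k)$, where knowing $[b]\neq 0$ would tell you nothing. Take the standard cocycle $g\mapsto g\cdot s(g^{-1}\beta)-s(\beta)$ with section $s(\overline{E}_{21})=E_{21}$. The factor $\chibar(g)^{-1}$ then cancels, and the class is exactly $[b]$, with no twist or sign ambiguity.

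You can also skip this computation entirely. The trace pairing is perfect since $p$ is odd, and $W^\perp=\langle E_{12},H\rangle$. Hence $\ad^0\rhobar/W\cong\langle E_{12},H\rangle^\vee$, which is non-split because $\langle E_{12},H\rangle$ is.
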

\begin{proof}
    Since $\ad^0 \rhobar$ is self-dual it suffices to prove the statement about the socle.
    This is a consequence of the fact that, for any character $\overline\psi : G_F \to k^\times$, we have
    \[
    \dim_k \Hom(\rhobar \otimes \overline\psi, \rhobar) \leq 1
    \]
    with equality if and only if~$\overline\psi = 1, \chibar$.
\end{proof}

\begin{lemma}\label{lem:computing composition factors}
    Fix~$N \geq 1$.
    \begin{enumerate}
    \item The group $\Gal(K_N/K)$ has a filtration by $G_F$-equivariant normal subgroups,
    each of whose subquotients is $\bF_p[G_F]$-linearly isomorphic to $\bF_p(\chibar), \bF_p$ or~$\bF_p(\chibar^{-1})$.
    \item The group $\Gal(K(\mu_{p^N})/K(\mu_p))$ has a filtration by $G_F$-equivariant normal subgroups, each of those subquotients is $\bF_p[G_F]$-linearly isomorphic to $\bF_p$.
    \end{enumerate}
\end{lemma}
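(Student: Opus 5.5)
We will analyse the tower $K_N/K$ by the kernels of the successive reductions of $\rho$; part (2) is handled by the same method applied to the cyclotomic character. For $1 \leq n < N$, set $K_n = F(\rho_n,\mu_p)$. Then $\Gal(K_N/K)$ carries the filtration by the normal subgroups $\Gal(K_N/K_n)$, $n=1,\dots,N$. These subgroups are $G_F$-stable under conjugation, since each $K_n/F$ is Galois. It therefore suffices to show that each graded piece $\Gal(K_{n+1}/K_n)$ has a filtration with subquotients of the stated types.

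Fix $n$. The map $\sigma \mapsto \rho_{n+1}(\sigma)-1$ identifies $\Gal(K_{n+1}/K_n)$ with a subgroup of $\ker(\GL_2(\cO/\varpi^{n+1})\to \GL_2(\cO/\varpi^n))$. Writing $\rho_{n+1}(\sigma)=1+\varpi^n X_\sigma$ with $X_\sigma \in M_2(k)$, the assignment $\sigma\mapsto X_\sigma$ is an injective group homomorphism into $M_2(k)$. It is $G_F$-equivariant when $G_F$ acts on $M_2(k)$ through $\ad\rhobar$. The determinant condition $\det\rho=\epsilon^{-1}$ forces $\det\rho_{n+1}(\sigma)=\epsilon^{-1}(\sigma)$, and $\epsilon$ need not be trivial on $G_{K_n}$. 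So the image lies in $\ad\rhobar$ rather than in $\ad^0\rhobar$. Its trace component is $\varpi^{-n}(\epsilon^{-1}(\sigma)-1) \bmod \varpi$, on which $G_F$ acts trivially. Consequently $\Gal(K_{n+1}/K_n)$ is an $\bF_p[G_F]$-submodule of $\ad\rhobar$, viewed as an $\bF_p$-vector space.

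Every $\bF_p[G_F]$-subquotient of $\ad\rhobar$ has composition factors among those of $\ad\rhobar$ as an $\bF_p[G_F]$-module. These are obtained by restricting scalars from the $k[G_F]$-composition factors $k(\chibar)$, $k$, $k$, $k(\chibar^{-1})$; the first and last come from Lem.\ \ref{lem:uniserial adjoint}, and $k$ occurs twice, once from $\ad^0\rhobar$ and once from the scalars. By the discussion preceding Lem.\ \ref{lem:uniserial adjoint}, each restriction of scalars $k(\overline\psi)$ has all its simple $\bF_p[G_F]$-subquotients isomorphic to $\bF_p(\overline\psi)$. Refining by a composition series of the $\bF_p[G_F]$-submodule $\Gal(K_{n+1}/K_n)$ then yields subquotients isomorphic to $\bF_p(\chibar)$, $\bF_p$ or $\bF_p(\chibar^{-1})$. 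The corresponding subgroups are preimages of $G_F$-stable subgroups of an abelian group, so they are normal and $G_F$-stable in $\Gal(K_N/K)$. This proves (1).

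For (2), $\Gal(K(\mu_{p^N})/K(\mu_p))$ embeds via $\epsilon$ into $1+p\bZ_p \bmod p^N$. The filtration by the images of $1+p^m\bZ_p$ has graded pieces that embed into $\bF_p$, with $G_F$ acting trivially since $\Gal$ of a cyclotomic extension is abelian and $G_F$ acts on it by conjugation. The only point needing care is the $G_F$-equivariance of the identification in the second paragraph. This holds because conjugation by $\tau\in G_F$ corresponds to conjugation of $X_\sigma$ by $\rhobar(\tau)$, up to terms divisible by $\varpi$.
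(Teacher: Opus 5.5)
Your proposal is correct and follows essentially the same route as the paper. The paper embeds $\Gal(K_N/K)$ $G_F$-equivariantly into $1+\varpi M_2(\cO/\varpi^N\cO)$ via $\rho_N$ and intersects with the congruence filtration, whose graded pieces are exactly your $M_2(k)=\ad\rhobar$, including the scalar part you rightly account for via $\det\rho=\epsilon^{-1}$. Part (2) is likewise deduced in the paper from the $G_F$-equivariant injection into the abelian group $\Gal(F(\mu_{p^N})/F)$; your version just makes the pieces $\Gal(K_{n+1}/K_n)$ and the Jordan--H\"older refinement explicit.
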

\begin{proof}
    Since~$K_N \coloneqq  F(\rho_N, \mu_{p})$ and~$K \coloneqq  F(\rhobar, \mu_p)$,
    the group $\Gal(K_N/K)$ injects into $\Gal(F(\rho_N)/F(\rhobar))$ in a $G_F$-equivariant manner, by restriction of automorphisms.
    On the other hand, there is a $G_F$-equivariant embedding
    \[
    \rho_N : \Gal(F(\rho_N)/F(\rhobar)) \to 1 + \varpi M_2(\cO/\varpi^N \cO),
    \]
    and the target of this embedding admits a filtration as in the statement of the lemma.
    We can intersect each term of this filtration with $\Gal(K_N/K)$.
    The result is a filtration of $\Gal(K_N/K)$ by $G_F$-equivariant normal subgroups, each of whose subquotients is $\bF_p[G_F]$-linearly isomorphic to $\bF_p(\chibar), \bF_p$ or $\bF_p(\chibar^{-1})$ (or zero). 
    This concludes the proof of~(1).

    The proof of~(2) is immediate from the fact that $\Gal(F(\mu_{p^N})/F)$ is abelian,
    and $\Gal(K(\mu_{p^N})/K(\mu_p))$ injects $G_F$-equivariantly into it (by restriction of automorphisms).
\end{proof}

\begin{lemma}\label{lem:orthogonality of adjoint and dual}
Let~$\overline\psi_1 \in \{\chibar, 1, \chibar^{-1}\}$ and $\overline\psi_2 \in \{\chibar\omega, \omega, \chibar^{-1} \omega\}.$
Then 
\[
\Hom_{\bF_p[G_F]}(\bF_p(\overline{\psi}_1), \bF_p(\overline{\psi}_2)) = 0.
\]
\end{lemma}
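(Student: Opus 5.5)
The plan is to reduce the statement to a comparison of characters. Since $p$ is odd and $k/\bF_p$ is finite, a nonzero $\bF_p[G_F]$-homomorphism between the simple modules $\bF_p(\overline\psi_1)$ and $\bF_p(\overline\psi_2)$ is an isomorphism. By the remark preceding Lem.\ \ref{lem:uniserial adjoint}, such an isomorphism exists only if $\overline\psi_1$ and $\overline\psi_2$ are $\Gal(k/\bF_p)$-conjugate, i.e.\ $\overline\psi_2 = \overline\psi_1^{p^j}$ for some $j$. So it suffices to show that no such equality holds. Here $\omega$ denotes the mod $p$ cyclotomic character, which is valued in $\bF_p^\times$ and hence fixed by Frobenius twisting.

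I will restrict everything to $H \coloneqq G_{F(\zeta_p)}$, where $\omega|_H = 1$. Suppose $\overline\psi_1 = \chibar^{a}$ and $\overline\psi_2 = \chibar^{b}\omega$ with $a, b \in \{1, 0, -1\}$, and that $\chibar^{b}\omega = \chibar^{a p^j}$. Restricting to $H$ gives $\chibar|_H^{\,b - a p^j} = 1$.

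The argument is not purely on $H$. Evaluating at complex conjugation $c_u$, we have $\omega(c_u) = -1$ and $\chibar(c_u) = \chibar_1(c_u)/\chibar_2(c_u) = -1$. So the hypothetical equality gives
\[ (-1)^{b}\cdot(-1) = (-1)^{a p^j} = (-1)^{a}, \]
that is, $a + b$ is odd. This leaves the cases $(a,b) \in \{(0,\pm1), (\pm1, 0)\}$.

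In the case $a = 0$, the equality reads $\chibar^{b}\omega = 1$, so $\chibar^{\pm 1} = \omega^{-1}$. This is trivial on $H$, contradicting the standing hypothesis $\chibar|_{G_{F(\zeta_p)}} \neq 1$.

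In the case $b = 0$, the equality reads $\chibar^{\pm p^j} = \omega$. Restricting to $H$ gives $\chibar|_H^{p^j} = 1$. Since $\chibar$ has order prime to $p$, this forces $\chibar|_H = 1$, again a contradiction.

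The main point needing care is the first reduction: one has to check that an $\bF_p$-form of the simple module determines the character only up to Galois conjugacy, which is exactly the remark before Lem.\ \ref{lem:uniserial adjoint}. After that, the $c_u$ parity trick removes the mixed cases that restriction to $H$ alone cannot handle, for example $a = b = 1$, where $\chibar^{1-p^j}|_H = 1$ need not be contradictory.
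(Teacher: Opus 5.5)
Your proof is correct and follows the paper's argument. The paper likewise uses oddness of $\chibar$ and $\omega$ to reduce to the mixed cases $\Hom_{\bF_p[G_F]}(\bF_p, \bF_p(\chibar^{\pm 1}\omega))$ and $\Hom_{\bF_p[G_F]}(\bF_p(\chibar^{\pm 1}), \bF_p(\omega))$, and rules these out because no $\Gal(k/\bF_p)$-conjugate of $\chibar$ is a power of $\omega$; your restriction to $G_{F(\zeta_p)}$ verifies exactly this, and you simply spell out the Galois-conjugacy reduction and the two remaining cases in more detail.
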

\begin{proof}
    Since~$\chibar$ and~$\omega$ are totally odd, parity considerations reduce to proving that $\Hom_{\bF_p[G_F]}(\bF_p, \bF_p(\chibar^{\pm 1}\omega)) = 0$ and $\Hom_{\bF_p[G_F]}(\bF_p(\chibar^{\pm 1}), \bF_p(\omega)) = 0$.
    This is because, by assumption, no $\Gal(k/\bF_p)$-conjugate of~$\chibar$ is equal to a power of~$\omega$.
\end{proof}

\begin{lemma}\label{lem:extra cohomology vanishing}
    For all~$N \geq 1$, $H^1(\Gal(K_N/F), \ad^0 \rhobar(1)) = 0.$
\end{lemma}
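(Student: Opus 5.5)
We use inflation--restriction for the tower $F \subset K \subset K_N$. Since $\Gal(K_N/F)$ is finite and $K_N \supset K$, we have an exact sequence
\[
0 \to H^1(\Gal(K/F), \ad^0\rhobar(1)) \to H^1(\Gal(K_N/F), \ad^0 \rhobar(1)) \to \Hom_{\Gal(K/F)}(\Gal(K_N/K), \ad^0\rhobar(1)),
\]
where the last term uses that $\Gal(K_N/K)$ acts trivially on $\ad^0\rhobar(1)$: indeed $K$ contains both $F(\rhobar)$ and $\mu_p$. The first term vanishes by \eqref{eqn:vanishing of Galois cohomology of K}, so it remains to show that
\[
\Hom_{G_F}(\Gal(K_N/K), \ad^0\rhobar(1)) = 0.
\]
Here a homomorphism from $\Gal(K_N/K)$ to an abelian group of exponent $p$ factors through its Frattini quotient, which is a $G_F$-equivariant $\bF_p$-module.

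To prove this vanishing we filter both sides. By Lem.\ \ref{lem:computing composition factors}(1), $\Gal(K_N/K)$ has a $G_F$-stable filtration by normal subgroups whose subquotients are isomorphic to $\bF_p(\overline\psi_1)$ with $\overline\psi_1 \in \{\chibar, 1, \chibar^{-1}\}$. Suppose $f$ is a nonzero $G_F$-equivariant homomorphism. Choose the smallest step $Z$ of the filtration on which $f$ is not trivial, and restrict $f$ to $Z$. Its restriction to the next smaller step $Z'$ is trivial, so $f|_Z$ factors through $Z/Z'$. This gives a nonzero element of
\[
\Hom_{\bF_p[G_F]}(\bF_p(\overline\psi_1), \ad^0\rhobar(1)).
\]
Here we should check that the subquotients are abelian and that $f$ is additive on them; both hold because each subquotient is an $\bF_p$-module.

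On the target side, $\ad^0\rhobar(1)$, viewed as an $\bF_p[G_F]$-module, has a filtration by $k(\chibar\omega)$, $k(\omega)$ and $k(\chibar^{-1}\omega)$. Each of these has all its $\bF_p$-composition factors isomorphic to $\bF_p(\overline\psi_2)$ with $\overline\psi_2 \in \{\chibar\omega, \omega, \chibar^{-1}\omega\}$. The image of a nonzero map from the simple module $\bF_p(\overline\psi_1)$ is isomorphic to $\bF_p(\overline\psi_1)$ and lies in $\ad^0\rhobar(1)$. Its intersection with a suitable step of this filtration then maps injectively to a single composition factor, which yields a nonzero element of $\Hom_{\bF_p[G_F]}(\bF_p(\overline\psi_1), \bF_p(\overline\psi_2))$. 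This contradicts Lem.\ \ref{lem:orthogonality of adjoint and dual}.

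Hence $f=0$, the restriction map is zero, and therefore $H^1(\Gal(K_N/F), \ad^0\rhobar(1)) = 0$. The only mildly delicate point is the dévissage on the source: $\Gal(K_N/K)$ need not be abelian. This is handled by working step by step with the filtration by normal subgroups, as above.
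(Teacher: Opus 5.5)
Your proof is correct and follows the paper's argument. Inflation--restriction together with \eqref{eqn:vanishing of Galois cohomology of K} reduces the claim to $\Hom_{G_F}(\Gal(K_N/K), \ad^0\rhobar(1)) = 0$, which then follows from the filtration of Lem.\ \ref{lem:computing composition factors} and Lem.\ \ref{lem:orthogonality of adjoint and dual}; you simply write out the d\'evissage, including the non-abelian source, in more detail than the paper (the Frattini-quotient remark is harmless but unused).
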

\begin{proof}
By inflation-restriction, and the fact that $H^1(K/F, \ad^0 \rhobar(1)) = 0,$ 
it suffices to prove that
\[
H^1(\Gal(K_N/K), \ad^0 \rhobar(1))^{\Gal(K/F)} = 0.
\]
The group $\Gal(K_N/K) = \Gal(F(\rho_N, \zeta_p)/F(\rhobar, \zeta_p))$ 
acts trivially on $\ad^0 \rhobar(1)$,
and admits a filtration as in Lem.\ \ref{lem:computing composition factors}. 
On the other hand, the simple $\bF_p[G_F]$-subquotients of~$\ad^0\rhobar(1)$
are $\bF_p(\chibar\omega), \bF_p(\omega), \bF_p(\chibar^{-1}\omega).$
We can thus conclude by an application of Lem.\ \ref{lem:orthogonality of adjoint and dual}.
\end{proof}

The following result is essentially a restatement of~\cite[Lem.\ 6.4]{FKPI} and~\cite[Lem.\ 5.3]{FKPII}, but we provide some details since we work in a slightly different context.

\begin{lemma}\label{lem:definition of M}
\leavevmode
    \begin{enumerate}
        \item For all~$1 \leq i \leq N$, we have $H^1(\Gal(F_N^*/F), \ad^0 \rho_i(1)) = 0$.
        \item 
        There exists an integer~$N_\rho$ such that for all~$N> N_\rho$,
        the map 
        \[
        H^1(\Gal(F_N^*/F), \ad^0 \rho_{N_\rho}) \to H^1(\Gal(F_N^*/F), \ad^0 \rhobar)
        \]
        is zero.
    \end{enumerate}
\end{lemma}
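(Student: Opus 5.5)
Part (1) will be proved by inflation--restriction along the tower $F \subset K \subset K_N \subset F_N^*$, followed by a dévissage in~$i$. First I would reduce to $i = 1$. The module $\ad^0 \rho_i(1)$ has a filtration by $\Gal(F_N^*/F)$-stable submodules $\varpi^j \ad^0 \rho_i(1)$. Each graded piece is isomorphic to $\ad^0 \rhobar(1)$, because $\ad^0\rho_i$ is $\cO/\varpi^i$-free. The long exact sequence in cohomology then shows that it suffices to prove $H^1(\Gal(F_N^*/F), \ad^0\rhobar(1)) = 0$.

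Next, $F_N^* = F(\rhobar, \ad^0\rho_N, \mu_{p^N})$ is contained in $K_N(\mu_{p^N})$. The subgroup $\Gal(K_N(\mu_{p^N})/F_N^*)$ is normal, and $\ad^0 \rhobar(1)$ is inflated from $\Gal(F_N^*/F)$. Inflation on $H^1$ is injective, so it is enough to show
\[
H^1(\Gal(K_N(\mu_{p^N})/F), \ad^0\rhobar(1)) = 0.
\]
By Lem.\ \ref{lem:extra cohomology vanishing} and inflation--restriction, this reduces to showing that
\[
\Hom_{G_F}\bigl(\Gal(K_N(\mu_{p^N})/K_N), \ad^0\rhobar(1)\bigr) = 0.
\]
Here the group acts trivially on the coefficients, since $\mu_p \subset K_N$ and $\rhobar$ factors through $K$. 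The group $\Gal(K_N(\mu_{p^N})/K_N)$ injects $G_F$-equivariantly into $\Gal(K(\mu_{p^N})/K(\mu_p))$, whose composition factors are all $\bF_p$ by Lem.\ \ref{lem:computing composition factors}(2). Lem.\ \ref{lem:orthogonality of adjoint and dual} with $\overline\psi_1 = 1$ then kills all such homomorphisms, which proves (1).

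For part (2), I would follow the argument of \cite[Lem.\ 5.3]{FKPII}. First I would show that the groups $H^1(\Gal(F_N^*/F), \ad^0\rhobar)$ are uniformly bounded as $N$ varies. The plan is to use inflation--restriction along $F(\rhobar, \mu_p) \subset F_N^*$. The term $H^1(\Gal(K/F), \ad^0\rhobar)$ is a fixed finite group. The term $\Hom_{G_F}(\Gal(F_N^*/K), \ad^0\rhobar)$ factors through the maximal abelian exponent-$p$ quotient of $\Gal(F_N^*/K)$ on which $G_F$ acts through subquotients of $\ad^0\rhobar$.

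The main difficulty is that this quotient is not obviously bounded as $N \to \infty$. Instead, I would argue by compactness. The pro-$p$ group $\Gal(F_\infty^*/K)$, with $F_\infty^* = \bigcup_N F_N^*$, is a $p$-adic analytic group, since it embeds in $\ad^0\rho(G_F) \times \bZ_p^\times$. It is therefore topologically finitely generated, so its Frattini quotient is finite. Consequently $H^1(\Gal(F_\infty^*/F), \ad^0\rhobar)$ is finite, and the inflation maps from level~$N$ stabilise.

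For each class in the finite group $\varinjlim_N H^1(\Gal(F_N^*/F), \ad^0\rhobar) = H^1(\Gal(F^*_\infty/F), \ad^0\rhobar)$, I would then ask whether it lifts along $\ad^0\rho_n \to \ad^0\rhobar$ for every~$n$. The images of $H^1(\Gal(F^*_\infty/F), \ad^0\rho_n)$ form a decreasing chain of subgroups of a finite group, so they stabilise at some~$n_0$. A class in the stable image lifts compatibly to $H^1(\Gal(F^*_\infty/F), \ad^0\rho)$, using Mittag-Leffler, which holds since all the groups involved are finite.

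Next I would show that $H^1(\Gal(F^*_\infty/F), \ad^0 \rho)$ is torsion, hence finite, and that its elements are killed by some bounded power of~$\varpi$. The argument is that, rationally, $H^1$ of a $p$-adic analytic group is computed by Lie algebra cohomology, and $H^1(\fg, \ad^0)^{G} = 0$ because $\fg \cong \mathfrak{sl}_2$ (up to the abelian factor from $\mu_{p^\infty}$), using large image. A class coming from $\ad^0\rho$ that is killed by $\varpi^e$ lands in the image of multiplication by $\varpi^e$ at level~$N_\rho$. Its reduction mod~$\varpi$ therefore vanishes once $N_\rho \geq n_0 + e$.

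Finally, finiteness of the direct limit passes to each finite level $N > N_\rho$ by restricting the argument to $F_N^*$ through inflation. I expect this torsion/Lie algebra step, which controls $H^1$ with $\ad^0\rho$ coefficients, to be the main obstacle.
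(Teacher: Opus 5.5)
Your part (1) is correct. Routing through $K_N(\mu_{p^N})$ and Lem.~\ref{lem:extra cohomology vanishing}, rather than filtering $\Gal(F_N^*/K)$ directly as the paper does, is only a cosmetic difference. In part (2), the following steps are also fine: the passage to $\mathcal{G} \coloneqq \Gal(F_\infty^*/F)$, the finiteness statements, and the stabilisation of the images $I_n \coloneqq \image\bigl(H^1(\mathcal G, \ad^0\rho_n) \to H^1(\mathcal G, \ad^0\rhobar)\bigr)$ to the image of $H^1(\mathcal G, \ad^0\rho)$.

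The gap is in your last step. By the long exact sequence for $0 \to \ad^0\rho \xrightarrow{\varpi} \ad^0\rho \to \ad^0\rhobar \to 0$, the image of $H^1(\mathcal G, \ad^0\rho)$ in $H^1(\mathcal G, \ad^0\rhobar)$ is $H^1(\mathcal G, \ad^0\rho)/\varpi$. So what you actually need is $H^1(\mathcal G, \ad^0\rho) = 0$.

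Knowing that this group is finite and killed by $\varpi^e$ does not give that: a summand $\cO/\varpi$ would survive reduction at every level. Your claim that a class killed by $\varpi^e$ lies in the image of multiplication by $\varpi^e$ is false in general. Enlarging $N_\rho$ beyond $n_0$ also changes nothing, since the $I_n$ have already stabilised.

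The missing input is $H^0(\mathcal G, \ad^0\rhobar) = 0$. This holds by Lem.~\ref{lem:uniserial adjoint}, since the socle of $\ad^0\rhobar$ is $k(\chibar)$ with $\chibar \neq 1$. The same long exact sequence then shows that $\varpi$ acts injectively on $H^1(\mathcal G, \ad^0\rho)$. Being a finitely generated torsion $\cO$-module, it therefore vanishes. Hence $I_{n_0} = 0$ and $N_\rho \coloneqq n_0$ works, and injectivity of inflation on $H^1(\cdot, \ad^0\rhobar)$ descends the statement to $\Gal(F_N^*/F)$. This is exactly the hypothesis the paper supplies when it invokes \cite[Cor.\ B.2]{FKPI}.

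Note also a difference in setup. The paper first strips off the cyclotomic part $\Gal(F_N^*/F_N)$, using that $\ad^0\rho_i$ has no trivial submodule, and then the prime-to-$p$ part. It can therefore work with $\Gamma = \ad^0\rho(G_F)$, whose Lie algebra is semisimple. Keeping the abelian factor, as you do, is fine. But it needs the short Hochschild--Serre argument showing $H^1(\mathfrak{g}_1 \oplus \mathfrak{a}, \ad^0) = 0$ when $\mathfrak{a}$ is central and acts trivially.
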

\begin{proof}
Proof of~(1): Fix~$N \geq 1$. 
It suffices to prove that $H^1(\Gal(F_N^*/F), \ad^0\rhobar(1)) = 0,$
and by inflation-restriction and the fact that $H^1(\Gal(K/F), \ad^0 \rhobar(1)) = 0,$
it suffices to prove that
\begin{equation}\label{eqn:vanishing to prove}
\Hom_{G_F}(\Gal(F_N^*/K), \ad^0 \rhobar(1)) = 0.
\end{equation}
Since $F_N^* \coloneqq  F(\rhobar, \ad^0 \rho_N, \mu_{p^N})$, there is a $G_F$-equivariant injection
\[
\Gal(F_N^*/K) \to \Gal(K(\ad^0 \rho_N)/K) \times \Gal(K(\mu_{p^N})/K).
\]
Since $K_N \coloneqq  F(\rho_N, \mu_p)$, there is a $G_F$-equivariant surjection 
\[ \Gal(K_N/K) \to \Gal(K(\ad^0 \rho_N)/K). \]
Hence, by Lem.\ \ref{lem:computing composition factors}, $\Gal(F_N^*/K)$ can be filtered by $G_F$-equivariant normal subgroups, with successive quotients isomorphic to $\bF_p(\chibar), \bF_p, \bF_p(\chibar^{-1})$ as $\bF_p[G_F]$-modules.
Hence~\eqref{eqn:vanishing to prove} follows from an application of Lem.\ \ref{lem:orthogonality of adjoint and dual}.

Proof of~(2): Let $F_N \coloneqq  F(\rhobar, \ad^0 \rho_N)$.
For each~$1 \leq i \leq N$ we have the inflation-restriction sequence
\[
0 \to H^1(\Gal(F_N/F), \ad^0 \rho_i) \to H^1(\Gal(F_N^*/F), \ad^0 \rho_i) \to
\Hom_{G_F}(\Gal(F_N^*/F_N), \ad^0 \rho_i),
\]
and the last term vanishes, because~$G_F$ acts trivially on~$\Gal(F_N^*/F_N)$ but
$\ad^0 \rho_i$ does not have any nonzero trivial $\bZ_p[G_F]$-submodule.
(To see this last claim, it suffices to show that $\ad^0 \rhobar$ 
does not contain any trivial $\bF_p[G_F]$-submodule,
which is a consequence of Lem.\ \ref{lem:uniserial adjoint}.)

So it suffices to prove that there exists~$N_\rho$ such that, for all $N>N_\rho$, the map
\begin{equation}\label{eqn:map to prove vanishing}
H^1(\Gal(F_N/F), \ad^0 \rho_{N_\rho}) \to H^1(\Gal(F_N/F), \ad^0 \rhobar)
\end{equation}
is zero.
Note that the order of $\Gal(F_N/F(\ad^0 \rho_N))$ is coprime to~$p$: in fact, this group embeds via~$\rhobar$ in the centre of $\GL_2(k)$.
Hence, by inflation-restriction, we can replace~$F_N$ with~$F(\ad^0 \rho_N)$ in~\eqref{eqn:map to prove vanishing}.

Let~$\Gamma \coloneqq  (\ad^0 \rho)(G_F)$, a compact analytic group over~$\bQ_p$ which surjects onto $\Gal(F(\ad^0 \rho_N)/F)$.
Since $H^1(F(\ad^0 \rho_N)/F, \ad^0 \rho_i) \to H^1(\Gamma, \ad^0 \rho_i)$ is injective for all values of~$i$,
it suffices to find~$N_\rho$ such that, for all~$N>N_\rho$, the map
\[
H^1(\Gamma, \ad^0 \rho_{N_\rho}) \to H^1(\Gamma, \ad^0 \rhobar)
\]
is zero.
Since $H^0(\Gamma, \ad^0 \rhobar) = 0$,
this is a consequence of~\cite[Cor.\ B.2]{FKPI}, applied to the $\cO[\Gamma]$-module~$\ad^0 \rho$.
(The assumption in \emph{loc.\ cit.} that $\Lie \Gamma$ is semisimple follows from our assumption that $\rho(G_F)$ contains a conjugate of an open subgroup of $\GL_2(\bZ_p)$. In fact, under this assumption \cite[Thm.\ 0.7]{Pin98} implies that there exist an intermediate extension $\bQ_p \subset E_0 \subset E$, a central simple $E_0$-algebra $D_0$, and an isomorphism $D_0 \otimes_{E_0} E \isom M_2(E)$ such that $\rho(G_F) \subset E^\times D_0^\times$ and $\rho(G_F)$ has open image in $E^\times D_0^\times/E^\times$. Thus the Lie algebra of $\ad^0 \rho(G_F)$ coincides with $\Lie (D_0^\times/E_0^\times)$, which is semisimple.) 
\end{proof}
If $g \in \GL_2(\mathbb{Z}_p)$, then we define $E_g \coloneqq h g \left ( \begin{smallmatrix} 0 & 0 \\ 0 & 1  \end{smallmatrix} \right ) g^{-1} h^{-1}$, $N_g \coloneqq h g \left ( \begin{smallmatrix} 0 & 1 \\ 0 & 0 \end{smallmatrix} \right )g^{-1}h^{-1}$. These are elements of~$M_2(E)$. Let $\lambda_g, \mu_g \in \mathbb{Z}$ be minimal such that $\varpi^{\lambda_g} E_g$, $\varpi^{\mu_g} N_g$ lie in $M_2(\cO)$. Note that $\lambda_g \geq 0$ for all $g$, since $E_g$ is an idempotent. 
\begin{lemma}\label{lem_existence_of_well_placed_elements}
    We can find $g_1, \dots, g_s \in \GL_2(\mathbb{Z}_p)$ and $C_0 \geq 1$ such that for any $X, Y \in \ad^0 \rho \setminus \varpi \ad^0 \rho$, there is some $1\leq i \leq s$ such that 
    \[ \ord_\varpi \tr \varpi^{\lambda_{g_i}} E_{g_i} X  \leq C_0 \]
    and
    \[ \ord_\varpi \tr \varpi^{\mu_{g_i}} N_{g_i} Y \leq C_0. \]
\end{lemma}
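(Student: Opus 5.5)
We argue by compactness. The set $U \coloneqq (\ad^0 \rho \setminus \varpi \ad^0 \rho)^2$ is compact (a product of two copies of the complement of $\varpi\ad^0\rho$ in the free $\cO$-module $\ad^0\rho \cong \cO^3$, i.e.\ of primitive trace-zero matrices). For each $g \in \GL_2(\mathbb{Z}_p)$ the functions $X \mapsto \tr(\varpi^{\lambda_g} E_g X)$ and $Y \mapsto \tr(\varpi^{\mu_g} N_g Y)$ are continuous $\cO$-linear maps $\ad^0\rho \to \cO$. So the sets
\[ U_{g, C} \coloneqq \{ (X, Y) \in U : \ord_\varpi \tr(\varpi^{\lambda_g}E_gX) < C,\ \ord_\varpi \tr(\varpi^{\mu_g}N_gY) < C \} \]
are open. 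If we show that $\bigcup_{g, C} U_{g,C} = U$, then finitely many $U_{g_i, C_i}$ cover $U$, and $C_0 \coloneqq \max_i C_i$ works.

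It therefore suffices to show that for every pair $(X, Y) \in U$ there is some $g \in \GL_2(\mathbb{Z}_p)$ with $\tr(E_g X) \neq 0$ and $\tr(N_g Y) \neq 0$; the scalings $\varpi^{\lambda_g}, \varpi^{\mu_g}$ do not affect nonvanishing. Conjugating by $h$, put $X' = h^{-1}Xh$ and $Y' = h^{-1}Yh$. These are nonzero elements of $\mathfrak{sl}_2(E)$, and the conditions become
\[ \tr\bigl( \left(\begin{smallmatrix}0&0\\0&1\end{smallmatrix}\right) g^{-1}X'g \bigr) \neq 0, \qquad \tr\bigl( \left(\begin{smallmatrix}0&1\\0&0\end{smallmatrix}\right) g^{-1}Y'g \bigr) \neq 0. \]
Each condition says that a certain polynomial function of $g$ is nonzero: the $(2,2)$-entry of $g^{-1}X'g$, respectively the $(2,1)$-entry of $g^{-1}Y'g$.

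We next check that neither polynomial is identically zero on $\GL_2$. If the $(2,2)$-entry of $g^{-1}X'g$ vanished for all $g$, then $X'$ would lie in the intersection of all conjugates of the subspace $\{Z : Z_{22}=0\}$. That intersection is a proper $\GL_2$-stable subspace of $\mathfrak{sl}_2$, and since $\mathfrak{sl}_2$ is irreducible in characteristic $0$, it is zero; this contradicts $X' \neq 0$. The same argument applies to the $(2,1)$-entry and $Y'$.

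Hence both are nonzero polynomial functions on the irreducible variety $\GL_2$, and so is their product. Finally, $\GL_2(\mathbb{Z}_p)$ is Zariski dense in $\GL_2$ over $E$, being an open subgroup of the $p$-adic points of a connected group. So the product does not vanish identically on $\GL_2(\mathbb{Z}_p)$, which gives the required $g$.

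The main point needing care is the compactness step: the covering sets must be open in $U$, which is clear from continuity, and the lower bound $C_0 \geq 1$ is achieved by enlarging $C_0$ if necessary. Everything else is routine.
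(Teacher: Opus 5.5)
Your proof is correct and follows the same route as the paper's. Both use the nonvanishing of the polynomial $g \mapsto (\tr E_g X)(\tr N_g Y)$, the Zariski density of $\GL_2(\mathbb{Z}_p)$ in $\GL_{2,E}$, and compactness of $\ad^0\rho \setminus \varpi \ad^0\rho$. Beyond the paper, you justify the nonvanishing (via irreducibility of $\mathfrak{sl}_2$ under conjugation), and your open cover by the sets $U_{g,C}$ merges the paper's two compactness steps (choosing finitely many $g_i$, then the uniform valuation bound it calls automatic) into one.
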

\begin{proof}
    Consider the function $g \mapsto (\tr E_g X) (\tr N_g Y ) \in E[\GL_2]$.
    For any fixed $(X, Y)$ it is non-zero. The subset $\GL_2(\mathbb{Z}_p)$ is Zariski dense in $\GL_{2, E}$, so we can find $g \in \GL_2(\mathbb{Z}_p)$ such that $\tr E_g X$ and $\tr N_g Y$ are both non-zero. By compactness, we can find $g_1, \dots, g_r \in \GL_2(\mathbb{Z}_p)$ such that for any $X, Y \in \ad^0 \rho \setminus \varpi \ad^0 \rho$, there is some $i = 1, \dots, s$ such that $\tr E_{g_i} X$ and $\tr N_{g_i} Y$ are both non-zero. Since $\ad^0 \rho \setminus \varpi \ad^0 \rho$ is compact, the existence of a uniform bound on the maximum valuation is automatic. 
\end{proof}
We henceforth fix $g_1, \dots, g_s$ and $C_0$ as in the statement of Lem.\ \ref{lem_existence_of_well_placed_elements}, and set $P_\rho \coloneqq N_\rho + 2 C_0$. 
\begin{lemma}\label{lem_rationally_defined_elements_in_image}
    We can find an integer $M_\rho \geq P_\rho + \max_j \lambda_{g_j}$, divisible by $e$, with the following property: for any $1 \leq i \leq s$, there exists $\gamma \in G_F$ satisfying the following conditions:
    \begin{enumerate}
        \item Writing $q \coloneqq \det \rho(\gamma)$, we have $\rho(\gamma) = 1 + (q-1) E_{g_i}$ and $\rho_{P_\rho}(\gamma) = 1$. 
        \item $q \equiv 1 \text{ mod }\varpi^{M_\rho}$, but $q \not\equiv 1 \text{ mod }\varpi^{M_\rho+1}$. 
    \end{enumerate}
\end{lemma}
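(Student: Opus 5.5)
We will use the hypothesis that $\rho(G_F)$ contains an open subgroup $U$ of $h\GL_2(\bZ_p)h^{-1}$. Shrinking $U$, we may assume $U = h\,\Gamma(p^n)\,h^{-1}$ for some $n$, where $\Gamma(p^n) = \ker(\GL_2(\bZ_p) \to \GL_2(\bZ/p^n))$. We may also assume that $\rho_{P_\rho}$ is trivial on $U$. This is possible because $h\Gamma(p^n)h^{-1} \subset 1 + \varpi^{m}M_2(\cO)$ once $n$ is large compared with $m$ and the denominators of $h$ and $h^{-1}$.

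For each $i$ and each $x \in 1 + p^{n'}\bZ_p$, set $\gamma_i(x) \coloneqq h g_i \diag(1, x) g_i^{-1} h^{-1}$. Since $g_i \in \GL_2(\bZ_p)$ normalises $\Gamma(p^n)$, we have $\gamma_i(x) \in U$ whenever $n' \geq n$. A direct computation gives
\[ \gamma_i(x) = 1 + (x-1) E_{g_i}, \qquad \det \gamma_i(x) = x. \]
Any $\gamma \in G_F$ with $\rho(\gamma) = \gamma_i(x)$ therefore satisfies $q = \det\rho(\gamma) = x$ and $\rho(\gamma) = 1 + (q-1)E_{g_i}$. It also satisfies $\rho_{P_\rho}(\gamma) = 1$, since $\rho(\gamma) \in U$. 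This gives condition~(1).

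It remains to choose $M_\rho$ and the values $x$ so that condition~(2) holds, i.e.\ $\ord_\varpi(x-1) = M_\rho$ exactly. For $x \in \bZ_p$ we have $\ord_\varpi(x - 1) = e\, v_p(x-1)$, which explains why $M_\rho$ must be divisible by $e$. Choose an integer $m_0 \geq n$ with $e m_0 \geq P_\rho + \max_j \lambda_{g_j}$, set $M_\rho \coloneqq e m_0$, and take $x \coloneqq 1 + p^{m_0}$. Then $\ord_\varpi(x-1) = M_\rho$, and this single choice works uniformly for every $1 \leq i \leq s$.

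The step needing the most care is the uniform choice of $n$ at the start. One must check that $U$ can be taken both inside $\rho(G_F)$ and inside $\ker \rho_{P_\rho}$. Here $\ker \rho_{P_\rho}$ is open, so its image under $\rho$ is $\rho(G_F) \cap (1+\varpi^{P_\rho}M_2(\cO))$. This is an open subgroup of $\rho(G_F)$, and intersecting it with $h\Gamma(p^n)h^{-1}$ for large $n$ gives the required $U$. Once $U$ is fixed, everything else is an explicit matrix calculation. Finally, any element $\gamma \in G_F$ mapping onto $\gamma_i(x)$ under $\rho$ provides the required $\gamma$.
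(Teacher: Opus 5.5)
Your proposal is correct and follows essentially the same route as the paper: place a congruence subgroup of $h\GL_2(\bZ_p)h^{-1}$ inside $\rho(G_F)$, use the elements $1+(x-1)E_{g_i}$ of determinant $x$, and take $M_\rho = e m_0$ with $x = 1+p^{m_0}$ and $m_0$ large. The only small difference is that you obtain $\rho_{P_\rho}(\gamma)=1$ by shrinking $U$. In the paper this is automatic, since $\varpi^{\lambda_{g_i}}E_{g_i}$ is integral and $\ord_\varpi(q-1) = M_\rho \geq P_\rho + \lambda_{g_i}$.
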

\begin{proof}
    According to our standing hypotheses, $\rho(G_F)$ contains an open subgroup of $h \GL_2(\mathbb{Z}_p) h^{-1}$. In particular, there is $b \geq 1$ such that $\rho(G_F)$ contains $1 + (x-1) E_{g_j}$ for any $x \in 1 + p^b \mathbb{Z}_p$ and $1 \leq j \leq s$. Since $\det(1+(x-1)E_{g_j}) = x$, the lemma thus holds for any $M_\rho$ such that $e | M_\rho$ and $M_\rho \geq \max( b e , P_\rho + \max_j \lambda_{g_j})$.
\end{proof}

\subsection{Selmer groups}
We henceforth fix integers $M_\rho$ and $P_\rho$ satisfying the above conditions. 

\subsubsection{Local condition at~$S$}\label{subsubsec:local condition at S}

Recall our choice of lifting rings~$(R_v: v \in S)$ 
from Section~\ref{subsec:preliminaries for lifting argument}.
For all~$v \in S$, we write $\rho_v \coloneqq  \rho |_{G_{F_v}}$.
By construction, $\rho_v$ defines an $\cO$-point of~$\Spec R_v$.
Following~\cite[Lem.\ 4.4]{FKPI} we introduce, for each~$r \geq 1$, 
the notation~$Z^1_{r, v}$ for the $\cO$-submodule of
\[
\Hom_{R_v}(\Omega_{R_v/\cO} \otimes_{R_v, \rho_v} \cO, \cO/\varpi^r)
\]
consisting of those morphisms that annihilate the $\varpi$-power torsion.
If we regard it as a submodule of
\[
Z^1(G_{F_v}, \ad^0 \rho_r),
\]
then it contains $B^1(G_{F_v}, \ad^0 \rho_{r})$, by~\cite[Lem.\ 4.5]{FKPI}, and we denote by~$\mathcal{L}_{r, v}$ its image in
\[
H^1(G_{F_v}, \ad^0 \rho_{r}).
\]
When~$r = 1$, we omit~$r$ from the notation and write~$\mathcal{L}_v \coloneqq  \mathcal{L}_{1, v}$.
We also write~$\mathcal{L}_{r, v}^\perp$ for the annihilator of~$\mathcal{L}_{r, v}$ under the local duality pairing
\[
H^1(G_{F_v}, \ad^0 \rho_r) \otimes_{\cO/\varpi^r} H^1(G_{F_v}, \ad^0 \rho_r(1)) \to \cO/\varpi^r.
\]
By the same argument as in the proof of~\cite[Prop.\ 4.7(3)]{FKPI}, the local conditions~$\mathcal{L}_{r, v}$ are automatically balanced, in the sense that
\begin{align*}
|\mathcal{L}_{r, v}| &=  |H^0(G_{F_v}, \ad^0 \rho_r)| \text{ if } v \nmid p\\
 &= |H^0(G_{F_v}, \ad^0 \rho_r)|\cdot |\cO/\varpi^r|^{[F_v:\bQ_p]} \text{ if } v \mid p.
\end{align*}
We now apply~\cite[Prop.\ 4.7]{FKPI} to each~$v \in S$, with the integer there denoted $r_0$ being equal to~$P_\rho$.
We obtain: 
\begin{itemize}
\item an integer~$M_v$, denoted~$n_0$ in \emph{loc.\ cit.}, and
\item for each $n \geq M_v$,
a subset $D_{n, v}$ of the set of $\cO/\varpi^n$-points of $\Spec(R_v)$, containing $\rho_n |_{G_{F_v}}$ and denoted~$Y_n$ in \emph{loc.\ cit.},
\end{itemize}
such that, for all~$1 \leq r \leq P_\rho$ and~$n \geq M_v$, reduction modulo~$\varpi^n$ defines a surjection
$D_{n+r, v} \to D_{n, v}$
whose fibres are principal homogeneous spaces under the natural action of~$Z^1_{r, v}$.

\subsubsection{Local condition at good primes}\label{subsubsec:local condition at T}
If~$v$ is a good prime for~$\rhobar$, in the sense of Def.\ \ref{defn:good prime}, we write~$R_v \coloneqq  R^{\St, 1}_{\rhobar|_{G_{F_v}}}$ 
for the irreducible component of $R^\square_{\rhobar|_{G_{F_v}}}$ whose $\overline \bQ_p$-points 
classify the Steinberg lifts of~$\rhobar|_{G_{F_v}}$ that admit a $G_{F_v}$-stable line whose $\Frob_v$-eigenvalue is congruent to~$1$ modulo~$\varpi$. 
Equivalently, these are lifts of~$\rhobar$ whose semisimplification has the form $1 \oplus \epsilon^{-1}$
(as opposed to its nontrivial unramified quadratic twist).
Note that $R_v$ is formally smooth over~$\cO$: in the notation of~\cite[Prop.\ 5.6(2b)]{Sho16},
it corresponds to the ideal $(X_1, X_3)$.

If~$n \geq 1$, we say that~$\rho_n$ is \emph{new at~$v$} if its restriction to~$G_{F_v}$ defines a point $R_v \to \cO/\varpi^n$.
Note that, by definition of a good prime, if~$v$ is good for~$\rhobar$ then $\rhobar$ is new at~$v$.

Assume that~$\rho_{P_\rho}$ is new at~$v$, and choose an isomorphism $\cO\llbracket X_1, X_2, X_3\rrbracket  \isom R_v$, inducing $(\Spec R_v)(\cO) \isom (\varpi\cO)^3$.
We choose an arbitrary lift~$\rho'_v$ of~$\rho_{P_\rho} |_{G_{F_v}}$
to an $\cO$-point of $\Spec R_v$, 
and for all~$n \geq 1,$ we let $D_{n, v}$ be the mod $\varpi^n$-reduction of $\rho'_v + (\varpi^{P_\rho}\cO)^3$.
Note that reduction modulo~$\varpi^{n}$ defines a surjection $D_{n+r, v} \to D_{n, v}$ for all~$n, r \geq 1$.

If~$1 \leq r \leq P_\rho$, then $\rho_r$ is new at~$v$, and $\rho_r \cong \rho'_v \otimes_\cO \cO/\varpi^r$. 
We write $Z^1_{r, v}$ for the image in
$Z^1(G_{F_v}, \ad^0 \rho_r)$
of $\Hom_{\cO}(\Omega_{R_v/\cO} \otimes_{R_v, \rho'_v} \cO/\varpi^r, \cO/\varpi^r)$,
and we write~$\mathcal{L}_{r, v}$ for the image of~$Z^1_{r, v}$ in~$H^1(G_{F_v}, \ad^0 \rho_r)$.
For all~$n \geq P_\rho$ and~$1 \leq r \leq P_\rho$, the fibres of $D_{n+r, v} \to D_{n, v}$
are principal homogeneous spaces for~$Z^1_{r, v}$.

\begin{rem}
    $D_{n, v}$ is the set of lifts of~$\rhobar|_{G_{F_v}}$ to~$\cO/\varpi^n,$
    which are new at~$v$, and congruent to $\rho_{P_\rho}$ modulo~$\varpi^{P_\rho}$.
    Our construction is therefore a special case of~\cite[Prop.\ 4.7]{FKPI}, applied to the lift~$\rho'_v$ and the parameter~$r_0 \coloneqq  P_\rho$ (which produces~$n_0 = P_\rho$, by~\cite[Lem.\ 4.3]{FKPI} and its proof).

    The remaining parts of~\cite[Prop.\ 4.7]{FKPI} therefore apply, and we conclude that $|\mathcal{L}_{r, v}| = |H^0(G_{F_v}, \ad^0 \rho_r)|$ for all~$1 \leq r \leq P_\rho$.

\end{rem}

\begin{rem}\label{rem:explicit local condition n-good primes}
The module~$\mathcal{L}_v \coloneqq  \mathcal{L}_{1, v}$ 
is by definition the image of %
\[
\Hom_k(\Omega_{R_v/\cO} \otimes_{R_v, \rhobar} k, k) = \Hom_k(\fm_{R_v}/(\varpi, \fm_{R_v}^2), k)
\]
in 
\[
Z^1(G_{F_v}, \ad^0 \rhobar) = \Hom_k(\fm_{R^\square_{\rhobar|_{G_{F_v}}}}/(\varpi, \fm^2_{R^\square_{\rhobar|_{G_{F_v}}}}), k).
\]
We make it explicit as follows.
After conjugation by an element of~$\rhobar(G_F)$, we can assume that~$\rhobar(\Frob_v) = \diag(-1, 1)$,
and so
\[
\rhobar |_{G_{F_v}} = \omega^{-1}|_{G_{F_v}} \oplus 1.
\]
Writing $\ad^0 \rhobar(1) = \fn^- \oplus \ft^0 \oplus \fn$ for the standard triangular decomposition of $\ad^0 \rhobar = \mathfrak{sl}_2$
with respect to this basis of the representation space of~$\rhobar$,
we find that
\begin{gather*}
    \ad^0 \rhobar|_{G_{F_v}} \isom 
    \left ( \begin{smallmatrix}
        k & k\omega^{-1}\\k\omega& k
    \end{smallmatrix} \right ) = \fn \omega^{-1} \oplus \ft^0 \oplus \fn^- \omega,
    \\
    \ad^0 \rhobar(1)|_{G_{F_v}} \isom 
    \left ( \begin{smallmatrix}
        k\omega & k\\k\omega^2 & k\omega
    \end{smallmatrix} \right ) = \fn \oplus \ft^0 \omega \oplus \fn^- \omega^2.
\end{gather*}
Then
\[
\mathcal{L}_v = H^1(G_{F_v}, \fn^- \omega),
\]
and the orthogonal complement of~$\mathcal{L}_v$ is
\[
\mathcal{L}_v^\perp = H^1(G_{F_v}, \fn^- \omega^2 \oplus \ft^0\omega).
\]
We deduce the following equivalent condition for a class $[\psi] \in H^1(G_{F_v}, \ad^0 \rhobar(1))$ to have nonzero image in
$H^1(G_{F_v}, \ad^0 \rhobar(1))/\mathcal L_v^\perp$.
Let~$\pi_\fn$ be the idempotent
endomorphism of $\ad^0\rhobar(1)$
corresponding to the
projection onto~$\fn$ in the triangular decomposition of $\ad^0 \rhobar(1)$ just discussed.
Then~$\pi_\fn$ is $G_{F_v}$-equivariant, and its kernel on $H^1(G_{F_v}, \ad^0 \rhobar(1))$ coincides with $\mathcal L_v^\perp$.
Hence~$[\psi]$ is not zero in $H^1(G_{F_v}, \ad^0 \rhobar(1))/\mathcal L_v^\perp$ if and only if $\pi_\fn(\psi(\sigma)) \ne 0$ for any lift~$\sigma$ of~$\Frob_v$ to~$G_{F_v}$, and any representing cocycle~$\psi$ of~$[\psi]$. 
Note furthermore that the image of~$\pi_\fn$ coincides with $\operatorname{soc}_{k[G_F]} \ad^0 \rhobar(1)$ (by Lem.\ \ref{lem:uniserial adjoint}).

\end{rem}

\subsubsection{Local condition at~$S_{\aux}$}\label{subsec:local condition at S_aux}
Choose~$N \geq 2 M_\rho$. We write~$\cQ_N$ for the set of primes~$v$ of~$F$ such that:
\begin{enumerate}
    \item $q_v \equiv 1 \mod \varpi^{M_\rho}$ but~$q_v \not \equiv 1  \mod \varpi^{M_\rho+1}$;
    \item $\rho|_{G_{F_v}}$ is unramified.
    \item There exists $i \in \{ 1, \dots, s \}$ such that $\rho_N(\Frob_v)$ is conjugate, in $\rho_N(G_F)$, to $h g_i \diag(1, q_v) g_i^{-1} h^{-1} = 1 + (q_v - 1)E_{g_i}$ (notation as in the statement of Lem.\ \ref{lem_existence_of_well_placed_elements}; observe that $1+(q_v-1)E_{g_i} \in M_2(\cO)$ since $M_\rho>\lambda_{g_i}$).  
\end{enumerate}
Suppose that $v \in \cQ_N$. In this case, we will define, for each $n \geq 2 M_\rho$ and $1 \leq r \leq P_\rho$, local lifting sets $D_{n, v}$ and subgroups of cocycles $Z_{r, v}$, that are the main input into the lifting argument. We give a generalisation of the (specialisation to $\GL_2$ of the) data given in \cite[\S 3]{FKPI}, since we want to allow the case where the split torus of \emph{loc.\ cit.} is defined only over $E$, not necessarily over $\cO$. 

Choose $\gamma_v \in G_F$ such that 
\[
\rho_N(\gamma_v \Frob_v \gamma_v^{-1}) = 1 + (q_v - 1) E_{g_i} \text{ mod } \varpi^N,
\]
and $\alpha_v \in \GL_2(\cO)$ such that 
\[ \alpha_v (1 + (q_v - 1) E_{g_i}) \alpha_v^{-1} = \begin{pmatrix} 1 & x_v \\ 0 & q_v \end{pmatrix} \]
for some $x_v \in \cO$. We will define the data $D_{n, v}$ and $Z_{r, v}$ assuming that $\rho(\gamma_v) = \alpha_v = 1$, with the understanding that the required data in the general case is obtained simply by conjugating by $(\alpha_v \rho(\gamma_v) )^{-1}$. 

Let $s_v \coloneqq \min(\ord_\varpi x_v, \ord_\varpi (q_v - 1))$. 
We have the formula $E_{g_i} = \left ( \begin{smallmatrix} 0 & x_v / (q_v - 1) \\ 0 & 1 \end{smallmatrix} \right )$. Recalling that $\lambda_{g_i}$ is defined to be the minimal non-negative integer such that $\varpi^{\lambda_{g_i}} E_{g_i} \in M_2(\cO)$, we see that $s_v = M_\rho - \lambda_{g_i} \geq P_\rho$. In particular, the action of $G_{F_v}$ on $\ad^0 \rho_{P_\rho}$ and $\ad^0 \rho_{P_\rho}(1)$ is trivial. 

If $n \geq 2 M_\rho$, then we take $D_{n, v}$ to be the set of all representations $r : G_{F_v} \to \GL_2(\cO / \varpi^n)$ of the form
\[ r(\Frob_v) = \begin{pmatrix} 1 & 0 \\ y & 1 \end{pmatrix} \begin{pmatrix} 1 & x \\ 0 & q_v \end{pmatrix} \begin{pmatrix} 1 & 0 \\ -y & 1 \end{pmatrix}, \]
\[ r(t_v) = \begin{pmatrix} 1 & 0 \\ y & 1 \end{pmatrix} \begin{pmatrix} 1 & z \\ 0 & 1 \end{pmatrix} \begin{pmatrix} 1 & 0 \\ -y & 1 \end{pmatrix}, \]
for some $x, y, z \in \cO$ satisfying $x \equiv x_v \text{ mod }\varpi^{2M_\rho}$, $y \equiv 0 \text{ mod }\varpi^{2M_\rho - s_v}$, and $z \equiv 0 \text{ mod }\varpi^{2M_\rho}$.
We will sometimes use the notation~$r_{x, y, z}$ to emphasize the dependence of~$r$ on~$x, y, z$.

If $1 \leq r \leq P_\rho$ then we define $Z_{r, v}$ to be the $\cO$-submodule of $Z^1(F_v, \ad^0 \rho_r)$ generated by the unramified cocycles sending $\Frob_v$ to the matrices
\[ A \coloneqq \begin{pmatrix} 0 & 1 \\ 0 & 0 \end{pmatrix} \text{ and } B \coloneqq \varpi^{-s_v} \begin{pmatrix} - x_v & 0 \\ ( 1 - q_v) & x_v \end{pmatrix}, \]
and by the ramified cocycle sending $\Frob_v$ to $0$ and the generator $t_v$ of tame inertia to 
\[ C \coloneqq \begin{pmatrix} 0 & 1 \\ 0 & 0 \end{pmatrix}. \]
Since $G_{F_v}$ acts trivially on $\ad^0 \rho_{P_\rho}$, $Z_{r, v}$ is a free $\cO / \varpi^r$-module of rank 3.
\begin{lemma}\leavevmode
    \begin{enumerate}
        \item $\rho_N|_{G_{F_v}} \in D_{N, v}$. 
        \item For each $n \geq 2M_\rho$ and $1 \leq r \leq P_\rho$, reduction modulo $\varpi^n$ defines a surjection $D_{n+r, v} \to D_{n, v}$.
        \item For each $n \geq 2M_\rho$ and $1 \leq r \leq P_\rho$, the fibres of the map $D_{n+r, v} \to D_{n, v}$ are principal homogeneous spaces for the action of $Z_{r, v}$. 
        \item If $\rho' : G_{F_v} \to \GL_2(\cO)$ is a homomorphism such that $\rho' \text{ mod }\varpi^n \in D_{n, v}$ for all $n \geq 2M_\rho$, then $\rho'$ defines a point of the Steinberg lifting ring $R_v = R^{\textnormal{St}}_{\overline{\rho}|_{G_{F_v}}}$.
        \item For any $a, b \geq 1$ such that $a +b \leq P_\rho$, the natural maps induce a short exact sequence
        \[ 0 \to Z_{a, v} \to Z_{a+b, v} \to Z_{b, v} \to 0. \]
    \end{enumerate}
\end{lemma}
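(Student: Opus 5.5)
Throughout I work in the normalisation fixed above, i.e.\ $\rho(\gamma_v) = \alpha_v = 1$, so that $\rho_N(\Frob_v) \equiv \left(\begin{smallmatrix} 1 & x_v \\ 0 & q_v\end{smallmatrix}\right) \bmod \varpi^N$. The general case then follows by conjugating. The plan is to use the explicit parametrisation $r = r_{x,y,z}$ and check each claim by matrix computation. One preliminary point is that $r_{x,y,z}$ really is a representation of the tame quotient, i.e.\ that $r(\Frob_v) r(t_v) r(\Frob_v)^{-1} = r(t_v)^{q_v}$. Conjugating away the $\left(\begin{smallmatrix}1&0\\y&1\end{smallmatrix}\right)$ factors, this reduces to the identity $\left(\begin{smallmatrix}1&x\\0&q_v\end{smallmatrix}\right)\left(\begin{smallmatrix}1&z\\0&1\end{smallmatrix}\right)\left(\begin{smallmatrix}1&x\\0&q_v\end{smallmatrix}\right)^{-1} = \left(\begin{smallmatrix}1&q_v^{-1}z\\0&1\end{smallmatrix}\right)$. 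This is the Steinberg relation up to the usual normalisation of $t_v$, using that $\Frob_v$ is geometric.

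For (1), I take $x = x_v$ and $y = z = 0$. Since $\rho$ is unramified at $v$ and $\rho_N(\Frob_v)$ has the normalised form, $\rho_N|_{G_{F_v}} = r_{x_v,0,0} \bmod \varpi^N$.

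For (2) and (3), I will describe $D_{n,v}$ as the image of the set of triples $(x,y,z) \in (x_v + \varpi^{2M_\rho}\cO) \times \varpi^{2M_\rho-s_v}\cO \times \varpi^{2M_\rho}\cO$. Surjectivity in (2) is then immediate, since any triple representing a point mod $\varpi^n$ also represents a point mod $\varpi^{n+r}$. For (3), I compute the first-order variation. Write a point of the fibre as $r_{x+\varpi^n a,\, y+\varpi^{n-s_v}b,\, z+\varpi^n c}$ and expand the ratio with $r_{x,y,z}$ to order $\varpi^{n}$. Varying $x$ gives the unramified cocycle $\Frob_v \mapsto A$, and varying $z$ gives the ramified cocycle $t_v \mapsto C$. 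Varying $y$ changes the conjugation by $\left(\begin{smallmatrix}1&0\\ \varpi^{n-s_v} b &1\end{smallmatrix}\right)$, which acts on $\left(\begin{smallmatrix}1&x\\0&q_v\end{smallmatrix}\right)$ by $\varpi^{n-s_v}b \left(\begin{smallmatrix} -x & 0 \\ 1-q_v & x\end{smallmatrix}\right)$ to first order. This is $\varpi^n b B$, using $x \equiv x_v \bmod \varpi^{2M_\rho}$; the cocycle $B$ is integral because $s_v \le \ord(x_v), \ord(q_v-1)$.

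This identifies the action of $Z_{r,v}$ on fibres, for $r \le P_\rho \le s_v$. The main obstacle is to show that the action is free, and that the second-order terms vanish modulo $\varpi^{n+r}$. The quadratic terms have valuation at least $2(n-s_v) \ge n + 2M_\rho - s_v \ge n+r$, because $n \ge 2M_\rho$ and $s_v \le M_\rho$. This is exactly where the bounds $n \ge 2M_\rho$ and $y \equiv 0 \bmod \varpi^{2M_\rho-s_v}$ are used. The action is transitive by the parametrisation. For freeness, I will check that if $r_{x',y',z'} \equiv r_{x,y,z} \bmod \varpi^{n+r}$ with the same reduction mod $\varpi^n$, then the differences of the parameters lie in $\varpi^{n+r}\cO$, $\varpi^{n+r-s_v}\cO$ and $\varpi^{n+r}\cO$ respectively. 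I get this by reading off the $(1,2)$ entries of $r(\Frob_v)$ and $r(t_v)$ for $x$ and $z$, and the $(2,1)$ entry of $r(\Frob_v)$ for $y$; the latter is $\approx y(1-q_v) - y^2 x$, which has valuation controlled by $s_v$.

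For (4), the compatible triples $(x_n,y_n,z_n)$ can be chosen to converge in $\cO$, by the principal-homogeneous structure of (3). So $\rho'$ is conjugate to $r_{x,y,z}$ with $x,y,z \in \cO$. Over $E$, this is conjugate to the representation $\Frob_v \mapsto \diag(1,q_v)$, $t_v \mapsto$ unipotent, which satisfies the Steinberg conditions. Hence $\rho'$ gives an $\cO$-point of $R_v$, because $R_v$ is $\cO$-flat and reduced, so its $\cO$-points are exactly the $\cO$-valued lifts that are Steinberg over $E$. For (5), $Z_{r,v}$ is free of rank $3$ over $\cO/\varpi^r$ on the fixed generators $A, B, C$. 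The sequence $0 \to Z_{a,v} \xrightarrow{\varpi^b} Z_{a+b,v} \to Z_{b,v} \to 0$ is therefore exact, coming from $0\to \cO/\varpi^a \to \cO/\varpi^{a+b}\to\cO/\varpi^b\to0$.
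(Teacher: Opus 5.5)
Your overall route is the paper's: parametrise $D_{n,v}$ by admissible triples $(x,y,z)$, check that translation by $aA+bB+cC$ sends $r_{x,y,z}$ to $r_{x+\varpi^n a,\,y+\varpi^{n-s_v}b,\,z+\varpi^n c}$, and treat (1), (2), (4), (5) as formal. Those parts are fine. The gap is in (3), at the step that recovers the parameters from the representation.

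First, you have placed the burden on the wrong half of ``principal homogeneous''. Freeness is automatic: if $\phi\cdot\tilde r=\tilde r$, then $\varpi^n\phi=0$ in $M_2(\cO/\varpi^{n+r})$, so $\phi=0$. Transitivity is the real content, and you assume it when you ``write a point of the fibre as $r_{x+\varpi^n a,\,y+\varpi^{n-s_v}b,\,z+\varpi^n c}$''. An arbitrary point of the fibre is $r_{x',y',z'}$ for some admissible triple. What you need is that $r_{x',y',z'}\equiv r_{x,y,z}\bmod\varpi^n$ forces $x'\equiv x$ and $z'\equiv z\bmod\varpi^n$, and $y'\equiv y\bmod\varpi^{n-s_v}$. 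This is exactly the claim the paper proves, at every level $n\ge 2M_\rho$. Your ``freeness check'' is the right computation, but it has to be run in this form.

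Second, your way of reading off $y$ fails in general. The difference of the $(2,1)$ entries of $r_{x,y,z}(\Frob_v)$ and $r_{x,y',z}(\Frob_v)$ is $(y-y')\bigl(1-q_v-x(y+y')\bigr)$. Since $\ord_\varpi(x(y+y'))\ge 2M_\rho$, the second factor has valuation exactly $\ord_\varpi(q_v-1)=M_\rho$, not $s_v$. Suppose $\ord_\varpi x_v<\ord_\varpi(q_v-1)$, i.e.\ $\lambda_{g_i}=M_\rho-s_v>0$. This is precisely the non-integral torus case the construction is designed to handle. Then the $(2,1)$ entry only gives $y\equiv y'\bmod\varpi^{n-M_\rho}$, which is too weak for both transitivity and your freeness check. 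The paper splits into cases:
\begin{itemize}
\item if $\ord_\varpi x_v=s_v$, it uses the $(1,1)$ entry $1-xy$, where $\ord_\varpi x=s_v$;
\item only if $\ord_\varpi x_v>s_v$ (so $s_v=M_\rho$) does it use the $(2,1)$ entry.
\end{itemize}

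There is also a smaller slip. The inequality $2(n-s_v)\ge n+2M_\rho-s_v$ is equivalent to $n\ge 2M_\rho+s_v$, so it does not follow from $n\ge 2M_\rho$. The pure quadratic term from varying $y$ is $-\delta^2x$ with $\delta=\varpi^{n-s_v}b$. It is the factor $x$ that gives this term valuation $\ge 2n-s_v\ge n+r$.
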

\begin{proof}
    The first two points are immediate from the definitions. Before proving (3), we give a more explicit description of $D_{n, v}$ for $n \geq 2M_\rho$. For $x \in x_v + \varpi^{2M_\rho} \cO / \varpi^n \cO$, $y \in \varpi^{2M_\rho - s_v} \cO / \varpi^n \cO$, and $z \in \varpi^{2M_\rho} \cO / \varpi^n \cO$, we have %
    \[ r_{x, y, z}(\Frob_v) = \begin{pmatrix} 1 - x y & x \\ (1 - q_v) y - x y^2 & q_v + x y \end{pmatrix} \]
    and
    \[ r_{x, y, z}(t_v) = \begin{pmatrix} 1 - y z & z \\ - y^2 z & 1 + y z \end{pmatrix}. \]
    We claim that for $x', y', z'$ satisfying the same conditions, we have $r_{x', y', z'} = r_{x, y, z}$ if and only if $x = x'$, $y \equiv y' \text{ mod }\varpi^{n - s_v} \cO$, and $z = z'$. In particular, we have $|D_{n, v}| = | \cO / \varpi^{n - 2M_\rho} \cO |^3$. To see the claim, suppose first that $r_{x', y', z'} = r_{x, y, z}$. Inspecting the upper right-hand entries of $\Frob_v$ and $t_v$ shows that $x = x'$ and $z = z'$. If $\ord_\varpi x_v = s_v$, then inspecting the upper left-hand entry of $\Frob_v$ shows that $x y \equiv x y' \text{ mod }\varpi^n$, hence $y \equiv y' \text{ mod }\varpi^{n - s_v}$. If $\ord_\varpi x_v > s_v$, then inspecting the lower left-hand entry of $\Frob_v$ shows that $(1-q_v) (y - y') - x (y^2 - (y')^2) \equiv 0 \text{ mod }\varpi^n$, hence $(1 - q_v - x (y + y'))(y - y') \equiv 0 \text{ mod }\varpi^n$, hence $y - y' \equiv 0 \text{ mod }\varpi^{n - s_v}$ (because in this case $\ord_\varpi(1 - q_v - x (y + y')) = s_v$). To finish the proof of the claim, we then need to show that if $x = x'$, $y \equiv y' \text{ mod }\varpi^{n - s_v}$, and $ z= z'$, then $r_{x, y, z} = r_{x', y', z'}$, but this is clear from the displayed formulae above.

    We can now prove (3). Suppose that $n \geq 2M_\rho$ and $1 \leq r \leq P_\rho$. Since $1 \leq r \leq P_\rho$ and $\rho_{P_\rho}|_{G_{F_v}}$ is trivial, we see that if $\phi \in Z^1(F_v, \ad \rho_r)$ and $r_{x, y, z} \in D_{n+r, v}$, then $(\phi \cdot r_{x, y, z})(\sigma) = r_{x, y, z}(\sigma) + \varpi^n \phi(\sigma)$. Let $A, B, C$ be the defining generating cocycles of $Z_{r, v}$. To prove (3), it's enough to show that if $a, b, c \in \cO / (\varpi^r)$, then 
    \[ (a A + b B + c C) \cdot r_{x, y, z} = r_{x + \varpi^n a, y + \varpi^{n - s_v} b, z + \varpi^n c}. \]
    Using that $2M_\rho \geq P_\rho + s_v$, we compute
    \[ r_{x + \varpi^n a, y + \varpi^{n - s_v} b, z + \varpi^n c}(\Frob_v) = r_{x, y, z}(\Frob_v) + a \varpi^n A + b \varpi^{n} B \]
    and 
    \[ r_{x + \varpi^n a, y + \varpi^{n - s_v} b, z + \varpi^n c}(t_v) = r_{x, y, z}(t_v) + c \varpi^n C, \]
    as required.

    Finally, (4) and (5) follow easily from the definitions.
\end{proof}
If~$1 \leq r \leq P_\rho$, then we write $\mathcal{L}_{r, v}$ for the image of $Z_{r, v}$ in $H^1(F_v, \ad^0 \rho_r)$. Recalling that $G_{F_v}$ acts trivially on $\ad^0 \rho_r$, we see that the following further properties hold:
\begin{itemize}
    \item $|\mathcal{L}_{v, r}| = |H^0(G_{F_v}, \ad^0 \rho_r)|$;
    \item $H^1_{\nr}(G_{F_v}, \ad^0 \rho_r)/(\mathcal{L}_{v, r} \cap H^1_{\nr}(G_{F_v}, \ad^0 \rho_r)) \cong \cO/\varpi^r$;
    \item $H^1_{\nr}(G_{F_v}, \ad^0 \rho_r(1))/(\mathcal{L}^{ \perp}_{v, r} \cap H^1_{\nr}(G_{F_v}, \ad^0 \rho_r(1))) \cong \cO/\varpi^r$.
\end{itemize}
(Compare~\cite[Lem.\ 6.7]{FKPI}.) In fact, we can make these isomorphisms more explicit. The isomorphism 
\[ H^1_{\nr}(G_{F_v}, \ad^0 \rho_r)/(\mathcal{L}_{v, r} \cap H^1_{\nr}(G_{F_v}, \ad^0 \rho_r)) \cong \cO/\varpi^r \]
may be given by the formula $\phi \mapsto \tr \phi(\Frob_v) \varpi^{\lambda_{g_i}} E_{g_i}$. The isomorphism 
\[ H^1_{\nr}(G_{F_v}, \ad^0 \rho_r(1))/(\mathcal{L}^{\perp}_{v, r} \cap H^1_{\nr}(G_{F_v}, \ad^0 \rho_r(1))) \cong \cO/\varpi^r \]
may be given by the formula $\psi \mapsto \tr \psi(\Frob_v) \varpi^{\mu_{g_i}} N_{g_i}$. We will use these expressions in killing the relative dual Selmer group below. 

\subsubsection{Selmer groups}\label{subsec:Selmer groups II}
Let~$T$ be a finite set (possibly empty) of good primes for~$\rhobar$, and assume that~$\rho_{P_\rho}$ is new at all places of~$T$.
For all~$1 \leq r \leq P_\rho$, we write
$H^1_{T}(\ad^0 \rho_r),$ resp.\ $H^1_{T}(\ad^0 \rho_r(1))$ for the Selmer group, resp.\ dual Selmer group denoted
\[
H^1_{\mathcal{L}_r}(G_{F, S \cup T}, \rho_r(\fg^{\der})), \text{ resp.\ } H^1_{\mathcal{L}_r^\perp}(G_{F, S \cup T}, \rho_r(\fg^{\der})^*)
\]
in~\cite[Section~6.1]{FKPI}.
By definition, we have 
\begin{gather*}
H^1_T(\ad^0 \rho_r) \coloneqq  \ker \left ( H^1(G_{F, S \cup T}, \ad^0 \rho_r) \to \prod_{v \in S \cup T} H^1(G_{F_v}, \ad^0 \rho_r)/\mathcal{L}_{r, v} \right )\\
H^1_T(\ad^0 \rho_r(1)) \coloneqq  \ker \left ( H^1(G_{F, S \cup T}, \ad^0 \rho_r(1)) \to \prod_{v \in S \cup T} H^1(G_{F_v}, \ad^0 \rho_r(1))/\mathcal{L}_{r, v}^\perp \right ).
\end{gather*}
Recall that if~$v \in S \cup T$, then
\[
|\mathcal{L}_{v, r}| = \left\{ \begin{array}{cc} | H^0(G_{F_v}, \ad^0 \rho_r) | & v \not\in S_p \\ |H^0(G_{F_v}, \ad^0 \rho_r) | \cdot |\cO / (\varpi^r)|^{[F_v : \mathbb{Q}_p]} & v \in S_p, \end{array}\right. 
\]
i.e.\ the local condition at~$v$ is balanced.
Thus the Greenberg--Wiles formula implies that 
\begin{equation}\label{eqn:Greenberg--Wiles}
h^1_T(\ad^0 \rho_r) = h^1_T(\ad^0 \rho_r(1)).
\end{equation}

\begin{pp}\label{prop:bounding mod p Selmer group}
    Fix~$N \geq P_\rho$.
    For all~$a \geq 0$,    
    there exists a set~$T_{a, N}$ of good primes for~$\rhobar$, such that:
\begin{enumerate}
    \item $T_{a, N}$ is disjoint from~$S,$
    \item $|T_{a, N}| = a,$
    \item $\rho_N$ is new at all places in~$T_{a, N}$, and
    \item $h^1_{T_{a, N}}(\ad^0 \rhobar) \leq h^1_\varnothing(\ad^0 \rhobar)+1.$
\end{enumerate}
If~$L/F$ is a finite Galois extension, linearly disjoint from~$F(\rho, \epsilon)$, and such that $p \nmid [L:F]$, 
the set~$T_{a, N}$ can furthermore be chosen to split completely in~$L$.
\end{pp}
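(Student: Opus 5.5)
We build $T_{a,N}$ one place at a time by Chebotarev, ensuring that each new good place either kills a dual Selmer class or leaves the mod $\varpi$ Selmer dimension unchanged. At the end, one extra place may have to be added, which accounts for the $+1$.

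Two observations drive the argument. Adding a good place $v$ to $T$ replaces the Selmer condition at $v$: before, classes in $H^1(G_{F,S\cup T},\ad^0\rhobar)$ are forced to be unramified at $v$; now they must lie in the Steinberg condition $\mathcal{L}_v=H^1(G_{F_v},\fn^-\omega)$ of Rem.\ \ref{rem:explicit local condition n-good primes}. Both conditions are balanced, so by \eqref{eqn:Greenberg--Wiles} it suffices to control the dual Selmer group $h^1_T(\ad^0\rhobar(1))$. The standard comparison (Wiles' formula, or the Poitou--Tate sequence) gives the following:
\begin{itemize}
\item if some class $[\psi]\in H^1_T(\ad^0\rhobar(1))$ has nonzero image in $H^1(G_{F_v},\ad^0\rhobar(1))/\mathcal{L}_v^\perp$, i.e.\ $\pi_\fn(\psi(\Frob_v))\neq 0$, then $h^1_{T\cup\{v\}}\le h^1_T$;
\item in any case $h^1_{T\cup\{v\}}\le h^1_T+1$.
\end{itemize}
The unramified-to-Steinberg change at $v$ swaps a one-dimensional piece, so the dimension moves by at most $1$ either way.

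The inductive step is as follows. Given $T$ with $|T|<a$ and $h^1_T(\ad^0\rhobar)\le h^1_\varnothing+1$, we choose $v$ by Chebotarev in a suitable extension containing $F_N^*$, $K_N$, the splitting field of the classes $\psi$, and $L$. The conditions on $v$ are:
\begin{itemize}
\item $\rho_N(\Frob_v)$ is conjugate to $\rho_N(c_u)$, which makes $v$ good, gives $q_v\equiv-1$ (indeed $\epsilon(\Frob_v)\equiv-1\bmod p^N$), and, by formal smoothness of $R_v^{\St,1}$, makes $\rho_N$ new at $v$, since $\rho_N|_{G_{F_v}}$ is unramified with Frobenius eigenvalues $1,q_v^{-1}$, i.e.\ of the form $1\oplus\epsilon^{-1}$;
\item $v$ splits in $L$, which is possible by linear disjointness and $p\nmid[L:F]$;
\item $v\notin S\cup T$;
\item if $H^1_T(\ad^0\rhobar(1))\neq0$, additionally $\pi_\fn(\psi(\Frob_v))\ne0$ for one fixed nonzero class $\psi$.
\end{itemize}
The last condition is the main obstacle. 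Restricting $\psi$ to $G_{F_N^*}$ is injective by Lem.\ \ref{lem:definition of M}(1). The image $\psi(G_{F_N^*})$ is then a nonzero $G_F$-submodule of $\ad^0\rhobar(1)$, so by Lem.\ \ref{lem:uniserial adjoint} it contains the socle $\operatorname{im}\pi_\fn$. Choosing $\tau\in G_{F_N^*}$ with $\pi_\fn\psi(\tau)$ chosen so that $\pi_\fn\psi(\tau c_u)\neq0$ (either $\tau=1$ works or a suitable $\tau$ does), and taking $\Frob_v$ close to $\tau c_u$, handles it, with the $\pi_\fn$ for $v$ matching the one for $c_u$ after conjugation. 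Here one must check that the decomposition attached to $\Frob_v$ agrees with the one for $c_u$, since both are diagonalised in the same basis. This is where I expect most of the care to be needed.

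With these choices the procedure runs as follows. Take $T_0=\varnothing$. While the dual Selmer group is nonzero, adding a place with the killing condition does not increase $h^1$. Once it is zero, $h^1_T(\ad^0\rhobar(1))=0$, and adding an arbitrary good new place raises it to at most $1$. If it becomes $1$, the next added place can kill that class and bring it back to $0$, so the dimension oscillates within $\{0,1\}$ or decreases throughout. Hence at every stage $h^1_T\le\max(h^1_\varnothing,1)\le h^1_\varnothing+1$. After $a$ steps we obtain $|T_{a,N}|=a$ with properties (1)--(4) and the splitting in $L$.
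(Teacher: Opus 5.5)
Your proposal is correct and is essentially the paper's argument. You induct on $a$, at each step adding a place $v$ with $\rho_N(\Frob_v)=\rho_N(c_u)$, found by Chebotarev at an element of the form $\tau\sigma$ so that $\pi_\fn\psi(\Frob_v)\neq 0$ for a fixed nonzero dual Selmer class $\psi$ (or an arbitrary such place when the dual Selmer group vanishes), and then transfer the bound via \eqref{eqn:Greenberg--Wiles}.

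Two details need tidying.
\begin{itemize}
\item $\psi(G_{F_N^*})$ is only an $\bF_p[G_F]$-module, so it need not contain the socle $\fn$. It does, however, meet $\fn$, in the sense that it cannot lie in the $k$-subspace $\ker\pi_\fn$: otherwise its $k$-span would be a nonzero $k[G_F]$-submodule avoiding the simple socle. That is all you need.
\item To make $v$ split in $L$, take $\tau\in G_{F_N^*}\cap G_L$; the restriction of $\psi$ to this subgroup is still nonzero since $p\nmid[L:F]$. Also replace $c_u$, which need not lie in $G_L$, by some $\sigma\in G_L$ with $\rho_N(\sigma)=\rho_N(c_u)$, which exists by linear disjointness.
\end{itemize}
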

\begin{proof}
    Recall that there exists a complex conjugation~$c \in G_F$ such that $\rhobar(c) = \diag(-1, 1)$.
    Let $\gamma \coloneqq  \rho_N(c)$, which is conjugate to $\diag(-1, 1)$.
    If~$v$ is unramified in~$\rho_N$ and~$\rho_N(\Frob_v) = \gamma$, then $\rho_N$ is new at~$v$, and~$v$ is automatically a good prime for~$\rhobar$.

    We now claim that, for every finite~$X$ disjoint from~$S$, and any non-zero
    \[
    [\psi] \in H^1(G_{F, S \cup X}, \ad^0 \rhobar(1)), 
    \]
    there exists a prime~$v$ disjoint from~$S \cup X$, unramified in~$\rho_N$, and such that
    \begin{enumerate}
        \item $v$ is totally split in~$L$,
        \item $\rho_N(\Frob_v) = \gamma$ (hence~$v$ is good for~$\rhobar$), and
        \item $[\psi]$ has nonzero image in $H^1(G_{F_v}, \ad^0 \rhobar(1))/\mathcal{L}_v^\perp.$
    \end{enumerate}
    Assuming the claim, 
    we prove the proposition by induction on~$a$.
    The case~$a = 0$ is immediate (with~$T_{0, N}$ being the empty set).

    Assume given~$T_{a, N}$.
    If~$h^1_{T_{a, N}}(\ad^0 \rhobar(1)) \ne 0,$ choose a nonzero $[\psi] \in H^1_{T_{a, N}}(\ad^0 \rhobar(1))$.
    Choose~$v$ such that $\rho_N(\Frob_v) = \gamma$ and $[\psi]$ is not zero in $H^1(G_{F_v}, \ad^0 \rhobar(1))/\mathcal{L}_v^\perp,$
    and let~$T_{a+1, N} \coloneqq  T_{a, N} \cup \{v\}$.
    We have the inflation-restriction exact sequence
    \[
    0 \to H^1(G_{F, S \cup T_{a, N}}, \ad^0 \rhobar(1)) \to H^1(G_{F, S \cup T_{a+1, N}}, \ad^0 \rhobar(1)) \to H^1(G_{F_v}, \ad^0 \rhobar(1))/H^1_{\nr}(G_{F_v}, \ad^0 \rhobar(1)),
    \]
    in which the final term is a one-dimensional $k$-vector space.
    By our choice of~$v$,
    \[
    H^1_{T_{a+1, N}}(\ad^0 \rhobar(1)) \cap H^1(G_{F, S \cup T_{a, N}}, \ad^0 \rhobar(1))
    \]
    is a proper subspace of 
    \[
    H^1_{T_{a, N}}(\ad^0 \rhobar(1)).
    \]
    Since it has codimension at most one in
    \[
    H^1_{T_{a+1, N}}(\ad^0 \rhobar(1)),
    \]
    we conclude that
    \[
    h^1_{T_{a+1, N}}(\ad^0 \rhobar(1)) \leq h^1_{T_{a, N}}(\ad^0 \rhobar(1))
    \]
    Hence~\eqref{eqn:Greenberg--Wiles} shows that $h^1_{T_{a+1, N}}(\ad^0 \rhobar) \leq h^1_{T_{a, N}}(\ad^0 \rhobar) \leq h^1_\varnothing(\ad^0 \rhobar)+1,$ as desired.
    
    On the other hand, if~$h^1_{T_{a, N}}(\ad^0 \rhobar(1)) = 0,$ then we choose~$v$ unramified in~$\rho_N$, totally split in~$L$, and such that $\rho_N(\Frob_v) = \gamma.$
    Then~$v$ is good for~$\rhobar$,
    and $h^1_{T_{a+1, N}}(\ad^0 \rhobar(1)) \leq 1$.
    We conclude from~\eqref{eqn:Greenberg--Wiles} that
    \[
    h^1_{T_{a+1, N}}(\ad^0 \rhobar) \leq 1 \leq h^1_\varnothing(\ad^0 \rhobar) + 1,
    \]
    as desired.
    This concludes the proof of the proposition.

    We now prove the claim.
    By Lem.\ \ref{lem:extra cohomology vanishing},
    the restriction of~$[\psi]$ to $G_{K_N}$ defines a nonzero $G_F$-equivariant homomorphism
    $f: G_{K_N} \to \ad^0 \rhobar(1).$
    Note that since $p \nmid [L:F],$ the restriction of~$f$ to $G_{K_N} \cap G_L$ is not zero.
    Choose a prime~$v$, unramified in~$\rho_N$, totally split in~$L$, and such that $\rho_N(\Frob_v) = \gamma$.
    Choose a lift~$\sigma$ of~$\Frob_v$ to~$G_{F_v}$.

    By Rem.\ \ref{rem:explicit local condition n-good primes}, 
    $[\psi]$ is not zero in $H^1(G_{F_v}, \ad^0 \rhobar(1))/\mathcal{L}_v^\perp$
    if and only if $\pi_\fn(\psi(\sigma)) \ne 0$ (for any representing cocycle~$\psi$).
    If this is true, then we are done.
    
    Otherwise, choose $\tau \in G_{K_N} \cap G_L$ %
    such that $f(\tau) \ne 0$, and 
    \[
    f(\tau) \in f(G_{K_N} \cap G_L) \cap \operatorname{soc}_{k[G_F]} \ad^0 \rhobar(1).
    \]
    As explained in Rem.\ \ref{rem:explicit local condition n-good primes}, 
    the restriction of~$\pi_\fn$ to $\operatorname{soc}_{k[G_F]} \ad^0 \rhobar(1)$ is the identity, and so~$\pi_\fn f(\tau) \ne 0$.    

    By the Chebotarev density theorem, we can find $w \not \in S \cup X,$ unramified in $L\cdot K_N(\psi)$, and such that~$\Frob_w$ has the same image as $\tau\sigma$ in~$\rho_N \times \psi \times \Gal(L/F)$.
    Then~$w$ is totally split in~$L$, $\rho_N(\Frob_w) = \gamma,$
    and $\psi(\Frob_w) = \psi(\tau \sigma) = \psi(\tau) + \psi(\sigma) = f(\tau) + \psi(\sigma)$
    has nonzero image under~$\pi_\fn$.    
    This concludes the proof of the claim, and completes the proof of the proposition. \qedhere
\end{proof}

\begin{pp}\label{prop:bounding mod p^M Selmer group}
    Fix~$N \geq P_\rho, a \geq 1$, %
    and let~$T_{a, N}$ be as in Prop.\ \ref{prop:bounding mod p Selmer group}.
    Then 
    \[
    \operatorname{length}_\cO H^1_{T_{a, N}}(\ad^0 \rho_{P_\rho}) \leq P_\rho(h^1_\varnothing(\ad^0 \rhobar)+1).
    \]
\end{pp}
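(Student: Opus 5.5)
We argue by dévissage along the $\varpi$-adic filtration of $\ad^0 \rho_{P_\rho}$. For $1 \leq r \leq P_\rho$ there are short exact sequences of $\cO[G_{F, S \cup T_{a,N}}]$-modules
\[ 0 \to \ad^0 \rhobar \xrightarrow{\varpi^{r-1}} \ad^0 \rho_r \to \ad^0 \rho_{r-1} \to 0, \]
and the aim is to show that the induced maps on Selmer groups give, for each $r$, an exact sequence
\[ H^1_{T_{a,N}}(\ad^0 \rhobar) \to H^1_{T_{a,N}}(\ad^0 \rho_r) \to H^1_{T_{a,N}}(\ad^0 \rho_{r-1}). \]
Granting this, induction on $r$ gives $\operatorname{length}_\cO H^1_{T_{a,N}}(\ad^0 \rho_r) \leq r \cdot h^1_{T_{a,N}}(\ad^0 \rhobar)$. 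Taking $r = P_\rho$ and applying Prop.\ \ref{prop:bounding mod p Selmer group}(4) yields the bound $P_\rho(h^1_\varnothing(\ad^0\rhobar)+1)$.

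The main step is therefore to check that the local conditions are compatible with these sequences. First, for each $v \in S \cup T_{a,N}$, the map $\ad^0\rho_r \to \ad^0 \rho_{r-1}$ should carry $\mathcal{L}_{r,v}$ into $\mathcal{L}_{r-1,v}$, which is straightforward. Second, and harder, if a class in $H^1_{T_{a,N}}(\ad^0\rho_r)$ maps to zero in $H^1(G_{F,S\cup T_{a,N}}, \ad^0 \rho_{r-1})$, then it comes from a global class in $H^1(\ad^0\rhobar)$. We must choose that preimage so that it lies in $\mathcal{L}_{1,v}$ at every $v \in S \cup T_{a,N}$.

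For the first point, we use the structure of the $Z^1_{r,v}$ established above. At $v \in S$, Section~\ref{subsubsec:local condition at S} (which relies on \cite[Prop.\ 4.7]{FKPI}) provides these as the cocycle groups acting simply transitively on the fibres of $D_{n+r,v}\to D_{n,v}$. At $v\in T_{a,N}$, they are images of tangent spaces of the formally smooth ring $R_v\cong\cO\llbracket X_1,X_2,X_3\rrbracket$ at the point $\rho'_v$ of Section~\ref{subsubsec:local condition at T}. In both cases we expect short exact sequences
\[ 0 \to Z^1_{1,v} \to Z^1_{r,v} \to Z^1_{r-1,v} \to 0 \]
analogous to part~(5) of the lemma for $S_{\aux}$. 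This is what \cite[Prop.\ 4.7]{FKPI} gives for $v \in S$, and it is immediate from freeness of $\Omega_{R_v/\cO}\otimes\cO$ for $v\in T_{a,N}$.

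For the second point, take $\phi \in Z^1_{r,v}$ whose image in $H^1(\ad^0\rho_{r-1})$ vanishes. Modify $\phi$ by a coboundary, which lies in $Z^1_{r,v}$ by \cite[Lem.\ 4.5]{FKPI}, so that its image in $Z^1(\ad^0\rho_{r-1})$ vanishes; then $\phi$ lies in $\varpi^{r-1}Z^1_{r,v}=Z^1_{1,v}$ by the exact sequence above. Lifting the global class accordingly, the remaining ambiguity is a global element of $H^0(\ad^0\rho_{r-1})$, which vanishes because $H^0(G_F,\ad^0\rhobar)=0$ (Lem.\ \ref{lem:uniserial adjoint}). We also need the local version: the local $H^0$ terms must not obstruct the argument. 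A local coboundary from $H^0(G_{F_v},\ad^0\rho_{r-1})$ contributes an element of $B^1\subset Z^1_{1,v}$ after multiplying by $\varpi^{r-1}$, so it is harmless.

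The step we expect to be the main obstacle is this local compatibility, in particular exactness in the middle of $0 \to Z^1_{1,v} \to Z^1_{r,v} \to Z^1_{r-1,v}$ at $v\in S$. If the bookkeeping becomes problematic there, the fallback is to bound the length directly: replace the Selmer groups by their images in the unrestricted $H^1$ and use only the kernel of $H^1_{T_{a,N}}(\ad^0\rho_r)\to H^1(\ad^0\rho_{r-1})$. That kernel injects into the set of classes in $H^1(\ad^0\rhobar)$ satisfying the local conditions $\mathcal{L}_{1,v}$ (modulo $H^0$ terms), which is $H^1_{T_{a,N}}(\ad^0\rhobar)$. This argument gives the same length bound.
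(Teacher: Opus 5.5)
Your argument is correct and is essentially the paper's. The paper's one-line proof cites \cite[Lem.\ 6.1]{FKPI}, which supplies exactly the exact sequences $H^1_{T_{a,N}}(\ad^0\rhobar)\to H^1_{T_{a,N}}(\ad^0\rho_r)\to H^1_{T_{a,N}}(\ad^0\rho_{r-1})$ that you rederive; these give $\operatorname{length}_\cO H^1_{T_{a,N}}(\ad^0\rho_{P_\rho})\le P_\rho\, h^1_{T_{a,N}}(\ad^0\rhobar)$, and the paper then applies Prop.~\ref{prop:bounding mod p Selmer group}(4).

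The step you flag as the main obstacle is in fact immediate. Each $Z^1_{r,v}$ is $\Hom_\cO$ out of a finite free $\cO$-module into $\cO/\varpi^r$: the torsion-free quotient of $\Omega_{R_v/\cO}\otimes\cO$ for $v\in S$, and $\cO^3$ for $v\in T_{a,N}$. Hence $0\to Z^1_{1,v}\to Z^1_{r,v}\to Z^1_{r-1,v}\to 0$ is exact by definition.
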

\begin{proof}
    This follows from Prop.\ \ref{prop:bounding mod p Selmer group} and~\cite[Lem.\ 6.1]{FKPI}.
\end{proof}

\subsubsection{Relative Selmer groups}\label{subsec:relative Selmer groups}
We now fix positive integers~$N \geq \max\{P_\rho + 2M_\rho, P_\rho+\max_{v \in S}M_v\}$
and $a \geq 0$,
and we let $T\coloneqq T_{a, N}$ be a set produced by Prop.\ \ref{prop:bounding mod p Selmer group}.
Choose a finite subset $S_{\aux} \subset \cQ_N$ (as defined in Section~\ref{subsec:local condition at S_aux}).

For all~$1 \leq r \leq P_\rho,$
we write
$H^1_{T \cup S_{\aux}}(\ad^0 \rho_r)$ for the Selmer group denoted  
\[
H^1_{\mathcal{L}_r \cup \{\mathcal{L}_{r, v}\}_{v \in S_{\aux}}}(G_{F, S \cup T \cup S_{\aux}}, \rho_r(\fg^{\der}))
\]
in the notational scheme of~\cite{FKPI}, and similarly for $H^1_{T \cup S_{\aux}}(\ad^0 \rho_r(1)).$
We also introduce relative Selmer and dual Selmer groups:
\begin{gather*}
    \lbar{H^1_{T \cup S_{\aux}}(\ad^0 \rho_r)} \coloneqq  \image \left (H^1_{T \cup S_{\aux}}(\ad^0 \rho_r) \to H^1_{T \cup S_{\aux}}(\ad^0 \rhobar) \right )\\
    \lbar{H^1_{T \cup S_{\aux}}(\ad^0 \rho_r(1))} \coloneqq  \image \left (H^1_{T \cup S_{\aux}}(\ad^0 \rho_r(1)) \to H^1_{T \cup S_{\aux}}(\ad^0 \rhobar(1)) \right ).
\end{gather*}
Since the local conditions at $S \cup T \cup S_{\aux}$ are balanced, 
\cite[Lem.\ 6.3]{FKPI} implies that
\begin{equation}\label{eqn:relative Greenberg--Wiles}
\dim_k \lbar{H^1_{T \cup S_{\aux}}(\ad^0 \rho_r)} = \dim_k \lbar{H^1_{T \cup S_{\aux}}(\ad^0 \rho_r(1))}
\end{equation}
for all~$1 \leq r \leq P_\rho$.

\begin{pp}\label{prop:FKP double cocycle}
    Choose a finite subset $S_{\aux} \subset \cQ_N.$
    Let $\phi \in H^1_{T \cup S_{\aux}}(\ad^0 \rho_{P_\rho})$ and $\psi \in H^1_{T \cup S_{\aux}}(\ad^0 \rho_{P_\rho}(1))$
    be such that their images $\lbar \phi, \lbar \psi$ in the relative Selmer, resp.\ relative dual Selmer group are not zero.
    Then there exists~$v \in \cQ_N \setminus (S \cup T \cup S_{\aux})$ such that
    \begin{enumerate}
        \item the element $\psi|_{G_{F_v}} \text{ mod } \mathcal{L}_{P_\rho, v}^\perp \cap H^1_{\textnormal{nr}}(F_v, \ad^0 \rho_{P_\rho}(1))$ of
        \[ H^1_{\textnormal{nr}}(F_v, \ad^0 \rho_{P_\rho}(1)) /  \mathcal{L}_{P_\rho, v}^\perp \cap H^1_{\textnormal{nr}}(F_v, \ad^0 \rho_{P_\rho}(1)) \cong \cO / (\varpi^{P_\rho}) \]
        is not annihilated by $\varpi^{P_\rho - C_0 - 1}$; and
        \item the element $\phi|_{G_{F_v}} \text{ mod } \mathcal{L}_{P_\rho, v} \cap H^1_{\textnormal{nr}}(F_v, \ad^0 \rho_{P_\rho})$ of 
        \[ H^1_{\textnormal{nr}}(F_v, \ad^0 \rho_{P_\rho}) /  \mathcal{L}_{P_\rho, v} \cap H^1_{\textnormal{nr}}(F_v, \ad^0 \rho_{P_\rho}) \cong \cO / (\varpi^{P_\rho}) \]
        is not annihilated by $\varpi^{C_0}$. (See Lem.\ \ref{lem_existence_of_well_placed_elements} for the definition of~$C_0$.)
    \end{enumerate}
    If~$L/F$ is a finite Galois extension, linearly disjoint from~$F(\rho, \epsilon)$ and such that $p \nmid [L:F]$, then~$v$ can furthermore be chosen to split completely in~$L$.
\end{pp}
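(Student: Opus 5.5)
The plan follows the Chebotarev argument of \cite[Prop.\ 6.8]{FKPI}, with the split-torus elements of \emph{loc.\ cit.} replaced by the rationally defined elements $1 + (q-1)E_{g_i}$ of Lem.\ \ref{lem_rationally_defined_elements_in_image}. Restriction to $G_{F_N^*}$ converts $\phi$ and $\psi$ into homomorphisms. By Lem.\ \ref{lem:definition of M}(1) the restriction map $H^1(G_{F,S\cup T\cup S_{\aux}}, \ad^0\rho_{P_\rho}(1)) \to \Hom_{G_F}(G_{F_N^*}, \ad^0 \rho_{P_\rho}(1))$ is injective. Since $\overline\psi \neq 0$, $\psi$ is not divisible by $\varpi$ in the appropriate sense, so the $\cO$-span of $\psi(G_{F_N^*})$ is an $\cO[G_F]$-submodule of $\ad^0\rho_{P_\rho}(1)$ containing some element of exact order $\varpi^{P_\rho}$.

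For $\phi$, Lem.\ \ref{lem:definition of M}(2) shows that $\phi$ is not inflated from $\Gal(F_N^*/F)$ modulo classes dying in $H^1(\ad^0\rhobar)$. Since $P_\rho \ge N_\rho$ and $\overline\phi \neq 0$, the restriction of $\phi$ to $G_{F_N^*}$ should likewise have image whose span contains an element $X$ with reduction in $\ad^0\rho \setminus \varpi\ad^0\rho$, after rescaling by $\varpi^{P_\rho - N_\rho}$ at worst. Symmetrically, $\psi$ yields an element $Y$. Lem.\ \ref{lem_existence_of_well_placed_elements} then provides an index $i$ with $\ord_\varpi \tr \varpi^{\lambda_{g_i}}E_{g_i}X \le C_0$ and $\ord_\varpi \tr\varpi^{\mu_{g_i}} N_{g_i} Y \le C_0$. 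The identity $P_\rho = N_\rho + 2C_0$ is exactly the room needed to absorb both losses.

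Next, I pick $\gamma$ from Lem.\ \ref{lem_rationally_defined_elements_in_image} for this $i$, so that $\rho(\gamma) = 1 + (q-1)E_{g_i}$ with $q$ of exact level $M_\rho$. A place $v$ whose Frobenius equals $\gamma\tau$ in $\Gal(F_N^*(\phi,\psi)L/F)$, with $\tau \in G_{F_N^*}$ lying in the kernel of restriction to $L$, lies in $\cQ_N$. This works because $\rho_N(\tau)=1$, $\rho_{P_\rho}(\gamma) = 1$ and $\epsilon(\tau)\equiv 1 \bmod p^N$. Such a $v$ splits in $L$, using $p\nmid[L:F]$ and linear disjointness, and Chebotarev lets me avoid $S\cup T\cup S_{\aux}$.

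By the explicit descriptions following the definition of $Z_{r,v}$, the quantities in (1) and (2) are $\tr\psi(\Frob_v)\varpi^{\mu_{g_i}}N_{g_i}$ and $\tr\phi(\Frob_v)\varpi^{\lambda_{g_i}}E_{g_i}$, after conjugating by $\rho(\gamma_v)$. Cocycle additivity gives $\phi(\gamma\tau) = \phi(\gamma) + \phi(\tau)$, the action of $\gamma$ being trivial mod $\varpi^{P_\rho}$, and similarly for $\psi$. I then choose $\tau$ among $1$, $\tau_1$, $\tau_2$, $\tau_1\tau_2$, where $\tau_1,\tau_2$ realise $X,Y$ up to the relevant $\varpi$-multiples. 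The functions $\tau\mapsto \tr(\phi(\tau)\varpi^{\lambda}E)$ and $\tau\mapsto\tr(\psi(\tau)\varpi^{\mu}N)$ are additive, so the standard argument applies: a union of two proper subgroups cannot cover a group. Concretely, the set of $\tau$ where condition (2) fails is the preimage of a proper subgroup (a coset, after accounting for $\gamma$), and likewise for (1). Hence some $\tau$ satisfies both.

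The main obstacle is controlling the valuations. I must make sure the image of $\phi|_{G_{F_N^*}}$ really contains an element $X$ of the needed size in $\ad^0\rho_{P_\rho}$, reducing to the $\cO$-lattice $\ad^0\rho$ used in Lem.\ \ref{lem_existence_of_well_placed_elements}. This is also where $N_\rho$ enters, through the vanishing of the map in Lem.\ \ref{lem:definition of M}(2). The resulting thresholds are $\varpi^{C_0}$ for (2) and $\varpi^{P_\rho-C_0-1}$ for (1), and the asymmetry between them reflects that $\psi$'s span contains an exact-order element directly while $\phi$ loses $N_\rho$.
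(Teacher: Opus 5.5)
Your outline matches the paper's: restriction to $G_{F^*}$ via Lem.~\ref{lem:definition of M}, a common index $i$ from Lem.~\ref{lem_existence_of_well_placed_elements}, $\gamma$ from Lem.~\ref{lem_rationally_defined_elements_in_image}, additivity of the cocycles, then Chebotarev. However, the final selection step fails as you state it. The list $1,\tau_1,\tau_2,\tau_1\tau_2$ need not contain a good element. Write $\bar a,\bar b$ for the two trace functionals, reduced modulo $\varpi^{P_\rho-C_0}$ and $\varpi^{C_0+1}$ respectively. Then the configuration $\bar a(\gamma)=0$, $\bar a(\tau_2)=-\bar a(\tau_1)\neq 0$, $\bar b(\tau_1)=\bar b(\tau_2)=-\bar b(\gamma)\neq 0$ defeats all four candidates. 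The principle ``two proper subgroups cannot cover a group'' does not help either, because the bad sets are \emph{cosets}, and two cosets of index-$2$ subgroups can cover a group. The missing input is that $p$ is odd, so the relevant subgroups have index at least $3$. The paper organises this cleanly:
\begin{itemize}
\item it first picks a single $\gamma_2\in G_{F^*_{N_1}}\cap G_L$ with both $\psi_1(\gamma_2)\neq 0$ and $\phi_{N_\rho}(\gamma_2)\neq 0$ (here the subgroup statement does apply);
\item it reads off both $X$ and $Y$ from $\gamma_2$;
\item it tests $\gamma_2^r\gamma_1$ for $r\in\{0,1,2\}$. Each condition fails for at most one $r$, because $1$ and $2$ are units.
\end{itemize}

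Three further points need repair.

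Condition (3) in the definition of $\cQ_N$ compares $\rho_N(\Frob_v)$ with $1+(q_v-1)E_{g_i}$, and $E_{g_i}$ has denominator $\varpi^{\lambda_{g_i}}$. So you need $q_v\equiv q \bmod \varpi^{N+\lambda_{g_i}}$. Chebotarev over $F_N^*$ only gives $q_v\equiv q\bmod p^N$, which is insufficient, e.g.\ when $E/\mathbb{Q}_p$ is unramified. This is why the paper works with $F^*_{N_1}$, where $N_1=N+\max_j\lambda_{g_j}$.

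Your valuation count for $\phi$ is inconsistent. The correct input is $(\phi\bmod\varpi^{N_\rho})|_{G_{F^*_{N_1}}}\neq 0$; otherwise this class is inflated from $\Gal(F^*_{N_1}/F)$, and Lem.~\ref{lem:definition of M}(2) forces $\lbar\phi=0$. This gives $\phi(\gamma_2)=\varpi^m X$ with $m\leq N_\rho-1$, not a loss of $\varpi^{P_\rho-N_\rho}$. Then $\ord_\varpi\bigl(\varpi^{C_0}\tr \varpi^{\lambda_{g_i}}E_{g_i}\phi(\gamma_2)\bigr)\leq N_\rho-1+2C_0=P_\rho-1$, which is exactly what (2) requires.

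Finally, make explicit where the hypotheses on $L$ enter. The condition $p\nmid[L:F]$ is what keeps $\psi_1$ and $\phi_{N_\rho}$ nonzero on $G_{F^*_{N_1}}\cap G_L$. Linear disjointness from $F(\rho,\epsilon)$ is what lets you take $\gamma_1\in G_L$.
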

\begin{proof} 
    Let us use a subscript $i$ to denote image in cohomology with coefficients in $\ad^0 \rho_i$ or  $\ad^0 \rho_i(1)$ (so e.g.\ $\psi_1 \coloneqq \overline \psi$) and let $N_1 \coloneqq N + \max_j \lambda_j$. We begin by showing that $\psi_1|_{G_{F_{N_1}^\ast} \cap G_L} \neq 0$ and $\phi_{{N}_\rho}|_{G_{F_{N_1}^\ast} \cap G_L} \neq 0$. Since~$p$ is prime to $|G_{F_{N_1}^*}/(G_{F_{N_1}^*} \cap G_L)|$, it suffices to prove that $\psi_1|_{G_{F_{N_1}^\ast}} \neq 0$ and $\phi_{N_\rho}|_{G_{F_{N_1}^\ast}} \neq 0$. 
    Since 
    \[
    H^1(\Gal(F_{N_1}^*/F), \ad^0 \rhobar(1)) = 0, 
    \]
    by Lem.\ \ref{lem:definition of M}(1), the fact that $\psi_1|_{G_{F_{N_1}^\ast}} \neq 0$ follows from inflation-restriction. If $\phi_{N_\rho}|_{G_{F_{N_1}^\ast}} = 0$, then $\phi_{N_\rho}$ is inflated from $H^1(\Gal(F_{N_1}^\ast / F), \ad^0 \rho_{N_\rho})$; Lem.\ \ref{lem:definition of M}(2) then implies that $\phi_1$ must be 0, contradicting that $\phi_1$ is a nonzero element of the relative dual Selmer group. 
    
    Next, fix~$\gamma_2 \in G_{F_{N_1}^*} \cap G_L$ such that $\psi_1(\gamma_2) \neq 0$ and $\phi_{N_\rho}(\gamma_2) \ne 0$, and choose elements $X, Y \in \ad^0 \rho \setminus \varpi \ad^0 \rho$ and $m \geq 0$ such that $\psi(\gamma_2) = Y \text{ mod }\varpi^{P_\rho}$ and $\phi(\gamma_2) = \varpi^m X \text{ mod }\varpi^{P_\rho}$. 
    Then $m \leq N_\rho - 1$. By Lem.\ \ref{lem_existence_of_well_placed_elements}, we can find $1 \leq i \leq s$ such that $\ord_\varpi \tr \varpi^{\lambda_{g_i}} E_{g_i} X$ and $\ord_{\varpi} \tr \varpi^{\mu_{g_i}} N_{g_i} Y$ are both $\leq C_0$, hence $\tr \varpi^{\lambda_{g_i}} E_{g_i} \phi(\gamma_2) \in \cO / \varpi^{P_\rho}$ is not annihilated by $\varpi^{C_0}$  and  $\tr \varpi^{\mu_{g_i}} N_{g_i} \psi(\gamma_2) \in \cO / \varpi^{P_\rho}$ is not annihilated by $\varpi^{P_\rho - C_0 - 1}$. 

    By Lem.\ \ref{lem_rationally_defined_elements_in_image}, we can find $\gamma_1 \in G_L$ satisfying the following conditions:
    \begin{itemize}
        \item Writing $q \coloneqq \det \rho(\gamma_1)$, we have $q \equiv 1 \text{ mod }\varpi^{M_\rho}$ but $q \not\equiv 1 \text{ mod }\varpi^{M_\rho+1}$.
        \item $\rho(\gamma_1) = 1 + (q-1) E_{g_i}$.
    \end{itemize}
    Note that $\rho_{P_\rho}(\gamma_1)$ is then trivial, because $\varpi^{\lambda_{g_i}} E_{g_i}$ is integral and, by definition, we have $\ord_\varpi (q-1) = M_\rho \geq P_\rho + \lambda_{g_i}$. We claim that there exists $r \in \{ 0, 1, 2 \}$ such that if $g \coloneqq  \gamma_2^r \gamma_1$ then $ \tr  \varpi^{\lambda_{g_i}} E_{g_i} \phi(g)$ is not annihilated by $\varpi^{C_0}$ and $\tr \varpi^{\mu_{g_i}} N_{g_i} \psi(g) \in \cO / (\varpi^{P_\rho})$ is not annihilated by $\varpi^{P_\rho-C_0-1}$. The argument is the same as in \cite[Footnote~9]{FKPII}: we have
    \[ \phi(\gamma_2^r \gamma_1) = r \phi(\gamma_2) + \phi(\gamma_1) \]
    and
    \[ \psi(\gamma_2^r \gamma_1) = r \psi(\gamma_2) + \psi(\gamma_1). \]
    If $\tr \varpi^{\lambda_{g_i}} E_{g_i} \phi(\gamma_1)$ is not annihilated by $\varpi^{C_0}$ and $\tr \varpi^{\mu_{g_i}} N_{g_i} \psi(\gamma_1)$ is not annihilated by $\varpi^{P_\rho - C_0 - 1}$, then we're done. Suppose next that $\tr \varpi^{\lambda_{g_i}} E_{g_i}\phi(\gamma_1)$ is annihilated by $\varpi^{C_0}$ and $\tr \varpi^{\mu_{g_i}} N_{g_i} \psi(\gamma_1)$ is not annihilated by $\varpi^{P_\rho - C_0 - 1}$. Then $\tr \varpi^{\lambda_{g_i}} E_{g_i} \phi(\gamma_2^r \gamma_1)$ is not annihilated by $\varpi^{C_0}$ for either of $r = 1, 2$, and $\tr \varpi^{\mu_{g_i}} N_{g_i} \psi(\gamma_2^r \gamma_1)$ is not annihilated by $\varpi^{P_\rho - C_0 - 1}$ for at least one of these two values of $r$ (here we use that $p$ is odd). If $\tr \varpi^{\lambda_{g_i}} E_{g_i}\phi(\gamma_1)$ is not annihilated by $\varpi^{C_0}$ and $\tr \varpi^{\mu_{g_i}} N_{g_i} \psi(\gamma_1)$ is annihilated by $\varpi^{P_\rho - C_0 - 1}$, then the same argument works, reversing the roles of $\psi$ and $\phi$. Finally, if $\tr \varpi^{\lambda_{g_i}} E_{g_i} \phi(\gamma_1)$ and $\tr \varpi^{\mu_{g_i}} N_{g_i} \psi(\gamma_1)$ are both annihilated by the given powers of $\varpi$, then $r = 1$ works. 

    To complete the proof, let $L_{N_1, \psi, \phi} / F$ be the extension of $F_{N_1}^\ast$ cut out by $\operatorname{Res}(\phi) \times \operatorname{Res}(\psi)$, and choose (using the Chebotarev density theorem) a finite place $v \not\in S \cup T \cup S_{\textnormal{aux}}$ of $F$ such that the image of $\Frob_v$ in $\Gal(L_{{N_1}, \psi, \phi} / F)$ is conjugate to the image of $g$. Then we have $q_v \equiv q \text{ mod }\varpi^{N + \lambda_i}$, hence $1 + (q-1) E_{g_i} \equiv 1 + (q_v - 1)E_{g_i} \text{ mod }\varpi^N$. This shows that $v \in \cQ_N$; the remaining required properties of $v$ hold by construction. 
    \end{proof}

\begin{pp}\label{prop:FKP relative Selmer group}
    Let $N \geq \max\{P_\rho + 2M_\rho, P_\rho + \max_{v \in S} M_v\}$, let~$a \geq 0$, and let $T := T_{a, N}$ be a set produced by Prop.\ \ref{prop:bounding mod p Selmer group}. 
    Then there exists a finite subset $S_{\aux} \subset \cQ_N$, of size at most
    \[
    \length_\cO H^1_T(\ad^0 \rho_{P_\rho}), 
    \]
    such that
    \begin{gather*}
        \lbar{H^1_{T \cup S_{\aux}}(\ad^0 \rho_{P_\rho})} = 0\\
        \lbar{H^1_{T \cup S_{\aux}}(\ad^0 \rho_{P_\rho}(1))} = 0.
    \end{gather*}
\end{pp}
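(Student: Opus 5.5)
We argue by induction, adding places of $\cQ_N$ one at a time via Prop.\ \ref{prop:FKP double cocycle}, and we measure progress by the $\cO$-length of the Selmer group at level $P_\rho$. Put $S_{\aux}^{(0)} = \varnothing$. Given $S_{\aux}^{(j)}$, stop if $\lbar{H^1_{T \cup S^{(j)}_{\aux}}(\ad^0 \rho_{P_\rho})} = 0$. By \eqref{eqn:relative Greenberg--Wiles} with $r = P_\rho$, the relative dual Selmer group then vanishes too, so both required equalities hold.

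If instead the relative Selmer group is non-zero, \eqref{eqn:relative Greenberg--Wiles} shows that the relative dual Selmer group is also non-zero. We then choose $\phi$ and $\psi$ with non-zero images $\lbar\phi, \lbar\psi$, and apply Prop.\ \ref{prop:FKP double cocycle} to get a place $v \in \cQ_N \setminus (S \cup T \cup S^{(j)}_{\aux})$. Set $S^{(j+1)}_{\aux} = S^{(j)}_{\aux} \cup \{v\}$.

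The key claim is that
\[
\length_\cO H^1_{T \cup S^{(j+1)}_{\aux}}(\ad^0 \rho_{P_\rho}) < \length_\cO H^1_{T \cup S^{(j)}_{\aux}}(\ad^0 \rho_{P_\rho}).
\]
Given this, the process stops after at most $\length_\cO H^1_T(\ad^0 \rho_{P_\rho})$ steps, which is the required bound on $|S_{\aux}|$. To prove the claim, we compare both Selmer groups with the intermediate group $H^1_{\cap}$, defined by the conditions $\mathcal{L}_{P_\rho, w}$ at $w \in S \cup T \cup S_{\aux}^{(j)}$ together with $\mathcal{L}_{P_\rho,v} \cap H^1_{\nr}(F_v, \ad^0 \rho_{P_\rho})$ at $v$. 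This group is contained in both Selmer groups.

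First consider the old Selmer group. Restriction to $v$ followed by the explicit isomorphism $H^1_{\nr}/(\mathcal{L}_{P_\rho,v}\cap H^1_{\nr}) \cong \cO/\varpi^{P_\rho}$ gives a map whose kernel is $H^1_{\cap}$. By property (2) of $v$, the image of $\phi$ is not killed by $\varpi^{C_0}$, so the image of this map has length at least $P_\rho - C_0$. Hence $H^1_{\cap}$ has colength at least $P_\rho - C_0$ in the old group.

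Next consider the new Selmer group. The new group modulo $H^1_{\cap}$ injects into $\mathcal{L}_{P_\rho, v}/(\mathcal{L}_{P_\rho,v} \cap H^1_{\nr})$, which is isomorphic to $\cO/\varpi^{P_\rho}$, generated by the ramified cocycle $C$. To bound this quotient more sharply, we use Poitou--Tate duality, or equivalently the Greenberg--Wiles formula \eqref{eqn:Greenberg--Wiles} applied to the old, new and intermediate conditions. The global pairing of the dual class $\psi$ against a putative new class detects the ramified part at $v$ through the pairing with $\psi|_{G_{F_v}}$. By property (1), this restriction has order greater than $\varpi^{P_\rho - C_0 - 1}$ in the quotient. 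We therefore expect the new group modulo $H^1_{\cap}$ to have length at most $C_0$, and possibly at most $C_0 + 1$ after the duality bookkeeping.

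The two estimates give a net change in length of at most $-(P_\rho - C_0) + (C_0 + 1)$. This is negative because $P_\rho = N_\rho + 2C_0$ and $N_\rho \geq 1$ may be assumed. If the length does not strictly drop, we would instead track the pair consisting of this length and $\dim_k$ of the relative Selmer group, and argue as in \cite[Prop.\ 6.12]{FKPI}. Note also that $T$ keeps its role throughout, and the hypothesis $N \geq P_\rho + 2M_\rho$ ensures that the local conditions at the new places are defined.

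The main obstacle is the duality step. It requires making the pairing between the $C$-direction of $\mathcal{L}_{P_\rho,v}$ and the $N_{g_i}$-direction of unramified classes in $H^1(F_v, \ad^0\rho_{P_\rho}(1))$ precise up to units, so that the bounds in terms of $C_0$ come out exactly as stated. I would first try to copy the argument of \cite[Lem.\ 6.11--Prop.\ 6.12]{FKPI}, replacing their split-torus generators with $\varpi^{\lambda_{g_i}}E_{g_i}$ and $\varpi^{\mu_{g_i}}N_{g_i}$.
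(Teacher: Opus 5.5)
Your framework is right: induction on $\length_\cO$ of the level-$P_\rho$ Selmer group, stopping via \eqref{eqn:relative Greenberg--Wiles}, and comparing the old group $H^1_{T\cup S_{\aux}}(\ad^0\rho_{P_\rho})$ with the new group $H^1_{T\cup S_{\aux}\cup\{v\}}(\ad^0\rho_{P_\rho})$ through an auxiliary condition at $v$. However, the numerical core of the argument is wrong as written, so there is a genuine gap.

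\textbf{The first estimate is misattributed.} Property~(2) of Prop.~\ref{prop:FKP double cocycle} only says that the image of $\phi$ in $H^1_{\nr}(F_v,\ad^0\rho_{P_\rho})/(\mathcal{L}_{P_\rho,v}\cap H^1_{\nr})\cong\cO/\varpi^{P_\rho}$ is not killed by $\varpi^{C_0}$. That image therefore generates a submodule of length $\geq C_0+1$, not $\geq P_\rho-C_0$. The bound $P_\rho-C_0$ comes from property~(1), i.e.\ from $\psi$, and it lives on the dual side. So what you actually know is that $H^1_\cap$ has colength $\geq C_0+1$ in the old group.

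\textbf{Consequently the margin is exactly one.} You need $\length_\cO(H^1_{T\cup S_{\aux}\cup\{v\}}(\ad^0\rho_{P_\rho})/H^1_\cap)\leq C_0$ exactly. Your hedge ``possibly $C_0+1$'' would give net change $0$, and the induction would not go through. Even with your numbers, $-(P_\rho-C_0)+(C_0+1)=1-N_\rho$ is not negative when $N_\rho=1$. The fallback of tracking a pair of invariants is not an argument.

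\textbf{How to close the duality step.} The sharp bound does hold. Since $v\nmid p$, the local pairing induces a perfect pairing
\[ \mathcal{L}_{P_\rho,v}/(\mathcal{L}_{P_\rho,v}\cap H^1_{\nr})\times H^1_{\nr}(F_v,\ad^0\rho_{P_\rho}(1))/(H^1_{\nr}\cap\mathcal{L}_{P_\rho,v}^\perp)\to\cO/\varpi^{P_\rho}. \]
Indeed, its left kernel is $\mathcal{L}_{P_\rho,v}\cap H^1_{\nr}$, because $H^1_{\nr}$ is the exact annihilator of $H^1_{\nr}(1)$. Both factors are cyclic of length $P_\rho$; the left one is generated by the class of $C$.

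Now apply global reciprocity to a new Selmer class and to $\psi$. The class $\psi$ is unramified at $v$ and satisfies the dual conditions $\mathcal{L}^\perp_{P_\rho,w}$ at $w\in S\cup T\cup S_{\aux}$. Reciprocity shows that the image of the new group in the left factor is orthogonal to the image of $\psi|_{G_{F_v}}$. By property~(1), that image has $\varpi$-adic valuation $\leq C_0$. Hence the image of the new group has length $\leq C_0$. Combined with the colength $\geq C_0+1$ coming from $\phi$, the length drops by at least one.

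\textbf{Comparison with the paper.} This corrected argument is the dual of the paper's proof, which enlarges the condition at $v$ rather than shrinking it:
\begin{itemize}
\item The paper imposes $H^1_{\nr}+\mathcal{L}_{P_\rho,v}$ at $v$, defining $H^1_{\mathcal{L}}$. Greenberg--Wiles gives $\length_\cO H^1_{\mathcal{L}}-\length_\cO H^1_{\mathcal{L}^\perp}=P_\rho$.
\item Using $\psi$, it gets $\length_\cO H^1_{\mathcal{L}^\perp}\leq\length_\cO H^1_{T\cup S_{\aux}}(\ad^0\rho_{P_\rho}(1))-(P_\rho-C_0)$. With the balanced Greenberg--Wiles formula this gives $\length_\cO H^1_{\mathcal{L}}\leq\length_\cO H^1_{T\cup S_{\aux}}(\ad^0\rho_{P_\rho})+C_0$.
\item Finally, $\phi\in H^1_{\mathcal{L}}$ cuts the new Selmer group down by a further $C_0+1$.
\end{itemize}
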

\begin{proof}
    The proof is similar to the proof of ~\cite[Thm.\ 6.9]{FKPI}, using Prop.\ \ref{prop:FKP double cocycle}. By induction, and formula \eqref{eqn:relative Greenberg--Wiles}, it suffices to show that for any finite subset $S_{\aux} \subset \cQ_N$ such that  
    \[
    \overline{H^1_{T \cup S_{\aux}}(\ad^0 \rho_{P_\rho}(1))} \neq 0,
    \]
    we can find $v \in \cQ_N \setminus (T \cup S_{\aux})$ such that
     \[
    \length_\cO H^1_{T \cup S_{\aux} \cup \{v\}}(\ad^0 \rho_{P_\rho}(1))
    < \length_\cO H^1_{T \cup S_{\aux}}(\ad^0 \rho_{P_\rho}(1)).
    \]
    Suppose then that $\overline{H^1_{T \cup S_{\aux}}(\ad^0 \rho_{P_\rho}(1))}$ (and hence $\overline{H^1_{T \cup S_{\aux}}(\ad^0 \rho_{P_\rho})}$) is nonzero, and let $\psi, \phi$ be classes as in Prop.\ \ref{prop:FKP double cocycle} with nonzero images in the relative Selmer groups, and let $v$ be the place provided by that Proposition.

    Let $H^1_{\mathcal{L}}(\ad^0 \rho_{P_\rho})$ be the Selmer group with the same local conditions as those defining  $H^1_{T \cup S_{\aux}}(\ad^0 \rho_{P_\rho})$ away from $v$, and with condition $H^1_{\textnormal{nr}}(F_v, \ad^0 \rho_{P_\rho}) + \mathcal{L}_{P_\rho, v}$ at the place $v$; and let $H^1_{\mathcal{L}^\perp}(\ad^0 \rho_{P_\rho}(1))$ be the corresponding dual Selmer group. Then the Greenberg--Wiles formula gives the equality
    \[ \length_\cO H^1_{\mathcal{L}}(\ad^0 \rho_{P_\rho}) - \length_\cO H^1_{\mathcal{L}^\perp}(\ad^0 \rho_{P_\rho}(1)) = P_\rho. \]
    We have an inclusion
    \[ H^1_{\mathcal{L}^\perp}(\ad^0 \rho_{P_\rho}(1)) \leq H^1_{T \cup S_{\aux}}(\ad^0 \rho_{P_\rho}(1)), \]
    and the defining property of $\psi$ implies that we must have
    \[ \length_\cO H^1_{\mathcal{L}^\perp}(\ad^0 \rho_{P_\rho}(1)) \leq \length_\cO H^1_{T \cup S_{\aux}}(\ad^0 \rho_{P_\rho}(1)) - (P_\rho - C_0), \]
    hence 
    \[ \length_\cO H^1_{\mathcal{L}}(\ad^0 \rho_{P_\rho}) \leq \length_\cO H^1_{T \cup S_{\aux}}(\ad^0 \rho_{P_\rho})+ C_0. \]
    On the other hand, we have an inclusion
    \[ H^1_{T \cup S_{\aux} \cup \{ v \}}(\ad^0 \rho_{P_\rho}) \leq H^1_{\mathcal{L}}(\ad^0 \rho_{P_\rho}), \]
    with $\phi$ contained in the right-hand group, hence
    \begin{multline*} \length_\cO H^1_{T \cup S_{\aux} \cup \{v  \}}(\ad^0 \rho_{P_\rho}) \leq \length_\cO H^1_{\mathcal{L}}(\ad^0 \rho_{P_\rho}) - (C_0 + 1) \\  \leq \length_\cO H^1_{T \cup S_{\aux} }(\ad^0 \rho_{P_\rho}) + C_0 - (C_0 + 1) \\  = \length_\cO H^1_{T \cup S_{\aux}}(\ad^0 \rho_{P_\rho}) - 1. 
    \end{multline*}
    This completes the proof. 
\end{proof}

\subsection{Proof of Prop.\ \ref{prop:T-new lift}}
Continue to let~$N$ and~$T \coloneqq  T_{a, N}$ be as in the statement of Prop.\ \ref{prop:FKP relative Selmer group}, 
and let~$S_{\aux} \subset \cQ_N$ be a set produced by Prop.\ \ref{prop:FKP relative Selmer group}. 
Then
\[
|S_{\aux}| \leq \operatorname{length}_\cO H^1_T(\ad^0 \rho_{P_\rho}) \leq P_\rho(h^1_\varnothing(\ad^0 \rhobar(1))+1),
\]
where the second inequality is a consequence of Prop.\ \ref{prop:bounding mod p^M Selmer group}. 
So the sets~$S_{\aux}$ %
and~$T$ satisfy the requirements of Prop.\ \ref{prop:T-new lift}. 
We now construct the required lift~$\rho'$ of~$\rho_{N-P_\rho}$, following the proof of~\cite[Claim~6.14]{FKPI}.
Recall that for all~$v \in S \cup T \cup S_{\aux}$
we have specified:
\begin{itemize}
\item for all~$n \geq N-P_\rho$, a set $D_{n, v}$ of lifts of $\rhobar |_{G_{F_v}}$ to~$\cO/\varpi^n$, and
\item for all $1 \leq r \leq P_\rho$, an $\cO$-submodule $Z_{r, v} \subset Z^1(G_{F_v}, \ad^0 \rho_r)$, containing $B^1(G_{F_v}, \ad^0 \rho_r)$, with image $\mathcal{L}_{r, v} \subset H^1(G_{F_v}, \ad^0 \rho_r)$,
\item an integral quotient~$R_v$ of $R^\square_{\rhobar|_{G_{F_v}}}$ (which is $R^{\St, 1}_{\rhobar|_{G_{F_v}}}$ if $v \in T,$ and $R^{\St}_{\rhobar|_{G_{F_v}}}$ if $v \in S_{\aux}$) 
\end{itemize}
such that:
\begin{itemize}
\item for all $1 \leq r \leq P_\rho$,
reduction modulo~$\varpi^n$ defines a function $D_{n+r, v} \to D_{n, v}$, whose fibres above lifts of $\rho_{2P_\rho}|_{G_{F_v}}$ 
are nonempty and stable under the translation action of~$Z_{r, v}$; 
\item $\rho_{N}|_{G_{F_v}} \in D_{N, v}$;
\item every element of $D_{n, v}$ defines a $\cO/\varpi^n$-point of~$R_v$.
\end{itemize}
For each~$n \geq N-P_\rho$, we now construct a pair~$(\tau_n, \tld \rho_{n+P_\rho})$ of homomorphisms $\tau_n : G_{F, S \cup T \cup S_{\aux}} \to \GL_2(\cO/\varpi^n)$ and
$\tld \rho_{n+P_\rho} : G_{F, S \cup T \cup S_{\aux}} \to \GL_2(\cO/\varpi^{n+P_\rho})$,
with determinant~$\epsilon^{-1}$, and such that for all~$n \geq N-P_\rho$ the following are true:
\begin{enumerate}
    \item $\tau_n = \tld\rho_{n+P_\rho} \otimes_{\cO/\varpi^{n+P_\rho}} \cO/\varpi^n,$
    \item for all $v \in S \cup T \cup S_{\aux}$ we have $\tau_n |_{G_{F_v}} \in D_{n, v}$ and $\tld \rho_{n+P_\rho} |_{G_{F_v}} \in D_{n+P_\rho, v}$, and furthermore $\tau_n |_{G_{F_v}}$ and $\tld \rho_{n+P_\rho} |_{G_{F_v}}$ are lifts of $\rho_{2P_\rho}|_{G_{F_v}}$;
    \item $\tau_{n+1} \otimes_{\cO/\varpi^{n+1}} \cO/\varpi^n = \tau_n.$
\end{enumerate}
Furthermore, we will also have
$\tau_{N-P_\rho} = \rho_{N-P_\rho}$,
so that putting $\rho' \coloneqq  \varprojlim_{n \geq N-P_\rho} \tau_{n},$ 
and bearing in mind Lem.\ \ref{lem:deducing bound on image of rho},
we will have proved Prop.\ \ref{prop:T-new lift}.

We begin the construction by putting $\tau_{N-P_\rho} \coloneqq  \rho_{N-P_\rho}$ and $\tld \rho_N \coloneqq  \rho_N$, which guarantees that the first two properties are true for the index~$N-P_\rho$.

Assume now that $n \geq N-P_\rho$ and that we are given~$(\tau_n, \tld \rho_{n+P_\rho})$ such that the first two properties are true for the index~$n$.
Define
\[
\tau_{n+1} \coloneqq  \tld \rho_{n+P_\rho} \otimes_{\cO/\varpi^{n+P_\rho}} \cO/\varpi^{n+1}.
\]
Then property~(3) is true for the index~$n$.
It thus suffices to construct a lift~$\tld \rho_{n+1+P_\rho}$ of~$\tau_{n+1}$ to~$\cO/\varpi^{n+1+P_\rho},$
such that $\tld \rho_{n+1+P_\rho}|_{G_{F_v}} \in D_{n+1+P_\rho, v}$ for all~$v \in S \cup T \cup S_{\aux}$.
Then properties~(1) and~(2) will be true for the index~$n+1$, and we will be able to continue the construction inductively.

To construct~$\tld \rho_{n+1+P_\rho},$ we begin by showing that there exists a lift
\[
\tld \rho'_{n+1+P_\rho} : G_{F, S \cup T \cup S_{\aux}} \to \GL_2(\cO/\varpi^{n+P_\rho+1})
\]
of~$\tld \rho_{n+P_\rho}$.
Since reduction modulo~$\varpi^{n+P_\rho}$
defines, for every $v \in S \cup T \cup S_{\aux}$, 
a function $D_{n+1+P_\rho, v} \to D_{n+P_\rho, v}$
whose fibre above~$\tld \rho_{n+P_\rho}|_{G_{F_v}}$ is not empty,
the local obstruction class 
\[
\operatorname{cl}(\tld \rho_{n+P_\rho}|_{G_{F_v}}) \in H^2({F_v}, \ad^0 \rhobar)
\]
vanishes.
Hence the existence of~$\tld \rho'_{n+1+P_\rho}$ is a consequence of 
\[
\Sha^2_{S \cup T \cup S_{\aux}}(F_{S \cup T \cup S_{\aux}}/F, \ad^0 \rhobar) = 0
\]
which is a consequence of~\eqref{eqn:vanishing of Sha1}.

The restrictions $\tld \rho'_{n+1+P_\rho}|_{G_{F_v}}$ need not be elements of~$D_{n+1+P_\rho, v}$.
However, we are going to find some~$f \in Z^1(F_{S \cup T \cup S_{\aux}}, \ad^0 \rho_{P_\rho})$
such that $f \cdot \tld \rho'_{n+1+P_\rho}|_{G_{F_v}} \in D_{n+1+P_\rho, v}$
for all~$v \in S \cup T \cup S_{\aux}$.
Since $f \cdot \tld \rho'_{n+1+P_\rho}$ is still a lift of~$\tau_{n+1}$, this will conclude the proof.

To find~$f$, we begin by choosing preimages $\tld r_v$ of~$\tld \rho_{n+P_\rho}|_{G_{F_v}}$ under $D_{n+1+P_\rho, v} \to D_{n+P_\rho, v}$.
We then let~$f_v \in Z^1(F_v, \ad^0 \rhobar)$ be the cocycle such that $f_v \cdot  \tld \rho'_{n+1+P_\rho}|_{G_{F_v}} = \tld r_v$.
We claim that $([f_v]: v \in S \cup T \cup S_{\aux})$ is contained in the image of
\[
H^1(F_{S \cup T \cup S_{\aux}}/F, \ad^0 \rho_{P_{\rho}}) \to \prod_{v \in S \cup T \cup S_{\aux}}H^1(F_v, \ad^0 \rho_{P_\rho})/\mathcal{L}_{P_\rho, v}.
\]
Given the claim, we let~$f$ be a representative of a preimage of $([f_v]: v \in S \cup T \cup S_{\aux})$.
Then there exist~$z_v \in Z_{P_\rho, v}$ such that
\[
(f \cdot \tld \rho'_{n+1+P_\rho})|_{G_{F_v}} = z_v\cdot \tld r_v.
\]
On the other hand, $z_v\cdot \tld r_v \in D_{n+1+P_\rho, v}$,
because the fibres of $D_{n+1+P_\rho, v} \to D_{n+1, v}$ above lifts of $\rho_{2P_\rho}|_{G_{F_v}}$ are acted on by $Z_{P_\rho, v}$.
This concludes the proof of Prop.\ \ref{prop:T-new lift}.

There remains to prove the claim, which can be done by the same diagram chase as in the proof of~\cite[Claim~6.14]{FKPI}.
In more detail, the Poitou--Tate exact sequence implies that there is a commutative diagram with exact rows
\[
\resizebox{\textwidth}{!}{\xymatrix{H^1(F_{S \cup T \cup S_{\aux}} / F, \ad^0 \rho_{P_\rho}) \ar[r] \ar[d] & \prod_{v \in S \cup T \cup S_{\aux}}H^1(F_v, \ad^0 \rho_{P_{\rho}})/\mathcal{L}_{{P_\rho}, v} \ar[r] \ar[d] & H^1_{T \cup S_{\aux}}(\ad^0 \rho_{P_{\rho}}(1))^\vee \ar[d]\\
H^1(F_{S \cup T \cup S_{\aux}} / F, \ad^0 \rho_{P_\rho-1}) \ar[r] & \prod_{v \in S \cup T \cup S_{\aux}}H^1(F_v, \ad^0 \rho_{P_{\rho}-1})/\mathcal{L}_{{P_\rho-1}, v} \ar[r] & H^1_{T \cup S_{\aux}}(\ad^0 \rho_{P_{\rho}-1}(1))^\vee}}
\]
The third vertical arrow is injective, by Prop.\ \ref{prop:FKP relative Selmer group} (which, together with~\cite[Lem.\ 6.1]{FKPI}, shows surjectivity of the dual of the third vertical arrow).
Since $f_v \in Z^1(F_v, \ad^0 \rhobar_v),$ the class $([f_v] : v \in S \cup T \cup S_{\aux})$ maps to zero under the second vertical arrow.
It is therefore contained in the image of $H^1(F_{S \cup T \cup S_{\aux}} / F, \ad^0 \rho_{P_\rho}),$ as desired.

\section{Level-lowering mod~$\varpi^N$}\label{sec_level_lowering}
In this section we combine the work done in Sections \ref{sec_an_ALT} and \ref{sec: construction of T-new lifts} with a new level-lowering argument in order to prove an automorphy lifting result for $p$-adic Galois representations without any hypothesis of auxiliary ramification. (There are some additional simplifying conditions, as well as a residual automorphy hypothesis; our final task, undertaken in Section \ref{sec_deduction_of_main_theorem}, will be to reduce the general case to this one by base change, and show that residual automorphy always holds.) 

Our level-lowering argument is based on Mazur's principle, but goes through even when $\rhobar^{\textnormal{ss}}$ is reducible and $q_v$ (the size of the residue field at the place $v$ of interest) is congruent to 1 mod $p$. Key roles are played by the fact that as long as $\rhobar^{\textnormal{ss}}$ is not a twist of $1 \oplus \epsilon^{-1}$, the $H^0$ and $H^2$ of the Shimura curves that intervene still vanish (a weaker condition that the usual ``non-Eisenstein'' condition); and by Prop.\ \ref{prop_pseudocharacters_with_large_image}, which is a purely ``pseudocharacter-theoretic'' result that can be seen as a generalisation of \cite{BLR} beyond the residually irreducible case.

\subsection{Statement of main result}\label{subsec:statement of level-lowering}
Our main result in this section is as follows. 
\begin{pp}\label{prop:level-lowering}
    Let~$F$ be a totally real field of even degree. Let $p$ be an odd prime, let $E / \mathbb{Q}_p$ be a coefficient field, let $\rho : G_F \to \GL_2(E)$ be continuous with Zariski dense image, and let $\lambda \in (\bZ_+^2)^{\Hom(F, E)}$ be a weight. If the following conditions are satisfied, then~$\rho$ is modular:
    \begin{enumerate}
        \item 
        There is an isomorphism $\rhobarss = \overline \chi_1 \oplus \overline \chi_2$ for some characters $\overline \chi_i: G_F \to k^\times$.
        \item Writing $\overline \chi \coloneqq  \overline \chi_1/\overline \chi_2,$ we have $\overline \chi |_{G_{F(\zeta_p)}} \ne 1$.
        \item There exists a finite set~$S_0$ of finite places of~$F$, such that
        \begin{enumerate}
            \item $\rho$ is unramified away from $S_p \cup S_0$;
            \item for all~$v \in S_0$, $q_v \equiv 1 \text{ mod } p,$ $\rhobarss |_{G_{F_v}}$ is trivial, and $\rho|_{G_{F_v}}^{\mathrm{ss}} = 1 \oplus \epsilon^{-1}$;
            \item for all $v \in S_p$, $\rhobarss |_{G_{F_v}}$ is trivial, $[F_v:\bQ_p] > 2$, and $\rho|_{G_{F_v}}$ is ordinary and semistable of weight~$\lambda_v$.
            Moreover, if we write $\rho |_{G_{F_v}} \sim \left( \begin{smallmatrix} \psi_{v, 1} & * \\ 0 & \psi_{v, 2} \end{smallmatrix} \right)$ for characters $\psi_{v, i} : G_{F_v} \to E^\times$, as in the definition of $\lambda_v$-ordinary, then $\psi_{v, 1} \circ \Art_{F_v}$ is trivial on the subgroup of $p$-power roots of unity of $\cO_{F_v}^\times$.
        \end{enumerate}
        \item $\det \rho = \epsilon^{-1}$.
        \item (Residual modularity at minimal level) There exists a cuspidal, regular algebraic automorphic representation~$\pi$ of~$\GL_2(\bA_F)$, of weight~$0$ and trivial central character, and an isomorphism $\iota: \overline{\bQ}_p \to \mathbb{C}$, %
        all satisfying the following conditions:
        \begin{enumerate}
            \item There is an isomorphism $\overline{r}_{\pi, \iota}^{\textnormal{ss}} \cong \rhobarss$. 
            \item $\pi$ is $\iota$-ordinary.
            \item $\pi$ is unramified outside $S_p$, and semistable at~$S_p$.
        \end{enumerate}
    \end{enumerate}
\end{pp}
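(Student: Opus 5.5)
We follow the strategy sketched in the introduction: construct $T$-new lifts mod $\varpi^N$, prove their modularity at $T$-new level by Thm.\ \ref{thm_tame_SW_black_box}, and then lower the level mod $\varpi^{N - \text{error}}$ using Mazur's principle in the Cayley--Hamilton form of Prop.\ \ref{prop_pseudocharacters_with_large_image}. Finally we conclude by a compactness argument as $N \to \infty$.

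First we choose an $\cO$-lattice so that $\rho$ reduces to a non-split $\overline{\rho} = \left(\begin{smallmatrix} \overline{\chi}_1 & * \\ 0 & \overline{\chi}_2\end{smallmatrix}\right)$ with $\overline{\rho}(c_u) = \diag(-1,1)$, so that Section \ref{subsubsec:assumptions on Galois representation} applies. Zariski density of the image together with ordinarity (Lem.\ \ref{lem_ordinary_implies_split_image}) gives the open-subgroup hypothesis (4). We choose the components $R_v$ as in Rem.\ \ref{rem:intended choice of components}. This fixes $P_\rho, M_\rho, C_\rho$ and $h \coloneqq h^1_\varnothing(\ad^0\rhobar)$, which are independent of $N$.

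Next, for each large $N$ we apply Prop.\ \ref{prop:T-new lift} with $a = |T|$. We take $a$ large compared with $h^1_{T\text{-split}}(F_{S_p\cup S_0\cup S_{\aux}}/F,\overline\chi)+1$. This is possible because $|S_{\aux}| \le P_\rho(h+1)+1$ is bounded independently of $N$ and of $a$, and $h^1$ grows by at most $|S_{\aux}|$. We also need that enough Frobenius conditions at the good primes of $T$ force the classes in $H^1(F_{S_p\cup S_0\cup S_{\aux}}/F,\overline\chi)$ to be non-split at $T$; if necessary we use the auxiliary field $L$ to arrange this. We adjust the parity of $|T\cup S_0\cup S_{\aux}|$ by one extra good prime. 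This yields $\rho'_N$ lifting $\rho \bmod \varpi^{N-P_\rho}$ that is Steinberg at $T\cup S_{\aux}\cup S_0$ and ordinary at $S_p$, and whose image contains $\varpi^{C_\rho}M_2(\cO)$.

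To apply Thm.\ \ref{thm_tame_SW_black_box} to $\rho'_N$ we also need a $T\cup S_{\aux}\cup S_0$-Steinberg automorphic $\pi'$ with the same residual semisimplification. We would produce $\pi'$ by level raising from $\pi$, or, if this is awkward, from an Eisenstein congruence as in Lem.\ \ref{lem_residual_automorphy}. Level raising at $v\in T$ requires $q_v\equiv -1$ and the condition $\overline\chi_1(\Frob_v)=q_v$, and at $S_{\aux}$ it requires $q_v\equiv 1$ with trivial residual representation. Once Thm.\ \ref{thm_tame_SW_black_box} applies, $\rho'_N$ is modular, so $\rho\bmod\varpi^{N-P_\rho}$ occurs in the Hecke module of the quaternion algebra ramified at $T\cup S_0\cup S_{\aux}$.

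The main obstacle is the level lowering. We would remove the places of $T\cup S_{\aux}$ in pairs. For each pair we pass between a definite quaternion algebra and a Shimura curve, using the Cerednik--Drinfeld/Ribet exact sequence and the component group of the \'etale cohomology. We use the Eichler--Shimura relation to see that $\mathbb{T}[G_F]$ acts through the Cayley--Hamilton quotient $S$ associated to $\tr\rho'_N$. The vanishing of $H^0$ and $H^2$ holds because $\rhobarss$ is not a twist of $1\oplus\epsilon^{-1}$.

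With these inputs, Prop.\ \ref{prop_pseudocharacters_with_large_image}(3),(4) replaces the irreducibility used in the classical Mazur principle. At $v\in T$ the eigenvalue gap $\ord_\varpi(q_v-1)$ is $0$. At $v\in S_{\aux}$ it is $M_\rho$. So each removed place costs at most $2C_\rho+M_\rho$ in the exponent, giving modularity at minimal level of $\rho\bmod\varpi^{N-P_\rho-C_1|T|-C_2|S_{\aux}|}$. Since $|T|$ and $|S_{\aux}|$ are bounded independently of $N$, letting $N\to\infty$ shows that the point of $P^{\ord}$ defined by $\rho$ lies in $\mathbb{T}_0$. Hida classicality then shows that $\rho$ is modular.
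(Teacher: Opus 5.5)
Your overall architecture matches the paper's. You construct $T$-new lifts with Prop.~\ref{prop:T-new lift}, prove their modularity with Thm.~\ref{thm_tame_SW_black_box} using a level-raised form, lower the level with Mazur's principle in the Cayley--Hamilton form of Prop.~\ref{prop_pseudocharacters_with_large_image} (losses bounded independently of $N$), and take a limit over $N$. You need no non-splitness at $T$: hypothesis (5) only involves $h^1_{T\text{-split}}\le h^1(F_{S_p\cup S_0\cup S_{\aux}}/F,\overline\chi)\le h^1(F_{S_p\cup S_0}/F,\overline\chi)+|S_{\aux}|$, which is already bounded independently of $a$ and $N$.

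\textbf{The gap: the parity adjustment.} The step that does not work as stated is adjusting the parity ``by one extra good prime''. Thm.~\ref{thm_tame_SW_black_box} needs a parity condition for the definite algebra ramified exactly at $T\cup S_0\cup S_{\aux}$ and $\infty$ to exist, and Lem.~\ref{lem_raising_the_level} needs $|Z|$ even. But in Prop.~\ref{prop:T-new lift} the order is fixed: $T$ is chosen first, then $S_{\aux}$ comes from Prop.~\ref{prop:FKP relative Selmer group} with only an upper bound on its size, and only then is $\rho'$ built. This blocks each way of fixing parity:
\begin{itemize}
\item You cannot choose the parity of $a$ in advance, since the parity of $|S_{\aux}|$ is unknown at that point.
\item You cannot add a good prime to $T$ afterwards. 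There $\rho'$ is unramified, so you would need $\tr\rho'(\Frob_v)=1+q_v^{-1}$ exactly in $\cO$, and Chebotarev cannot produce that.
\item Adding another place of $\mathcal{Q}_N$ to $S_{\aux}$ is not covered by Prop.~\ref{prop:FKP relative Selmer group}. It need not preserve the vanishing of the relative Selmer groups that the lifting argument relies on.
\end{itemize}
The paper's fix is base change. Fix a totally real quadratic $L/F$, not contained in $F(\rho,\epsilon)$, in which $S_p\cup S_0$ split. Take $T$ and $S_{\aux}$ split in $L$, which the last clause of Prop.~\ref{prop:T-new lift} allows, so every relevant set of places of $L$ has even size. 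Then apply Lem.~\ref{lem_raising_the_level} to the base change of $\pi$ and Thm.~\ref{thm_tame_SW_black_box} to $\rho'|_{G_L}$, and descend through $L/F$.

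\textbf{The level-lowering step.} Your version is organised differently and, as written, carries a second parity constraint. Removing $T\cup S_{\aux}$ in pairs, starting from the algebra ramified at $T\cup S_0\cup S_{\aux}$, ends on a definite algebra ramified at $S_0$ and $\infty$. That algebra does not exist when $|S_0|$ is odd. The paper needs only $[F:\mathbb{Q}]$ even:
\begin{itemize}
\item It transfers $\rho'$ once, in characteristic $0$, to the Shimura curve of the algebra ramified at a single $z\in T$ and all but one infinite place, keeping Iwahori level at $S_0$.
\item It applies Mazur's principle on that one curve at every $v\in Z\setminus\{z\}$.
\item It finishes with a single Cherednik--Drinfeld switch at $z$ to the definite algebra ramified only at $\infty$.
\end{itemize}
Finally, your per-place cost $2C_\rho+M_\rho$ is too small. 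Besides Prop.~\ref{prop_pseudocharacters_with_large_image}(4), you must use part (2) to move the vector into the $I_{F_v}$-invariants, using that $\rho'\equiv\rho$ is unramified at $v$ modulo $\varpi^{N-P_\rho}$. Also, the kernel of the Mazur map is only killed by $q_v-1$. Together these give a cost of $4C_\rho+2m_v$ per place. This is harmless, since only independence of $N$ matters.
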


\subsection{Preliminaries}\label{subsec_preliminaries_to_level_lowering}
By Ribet's lemma, there exists an $\cO[G_F]$-lattice $\mathcal{L} \subset E^{\oplus 2}$ such that $\mathcal{L} \otimes_\cO k$ is a nonsplit extension of $\overline{\chi}_2$ by~$\overline{\chi}_1$.
By Lem.\ \ref{lem_ordinary_implies_split_image},
we can choose an $\cO$-basis of~$\mathcal{L}$ in such a way that $\rho(G_F) \subset \GL_\cO(\mathcal{L}) \isom \GL_2(\cO)$
contains a $\GL_2(E)$-conjugate of an open subgroup of~$\GL_2(\bZ_p)$.
We will apply the framework of Section~\ref{sec: construction of T-new lifts} to the resulting homomorphism $\rho : G_F \to \GL_2(\cO)$,
which satisfies Assumptions~(1)--(4) of Section~\ref{subsubsec:assumptions on Galois representation}. 

\subsubsection{Construction of auxiliary lifts}
We fix from the outset a finite place~$\mathfrak{w}$ of~$F$, disjoint from~$S_p \cup S_0$, and such that $q_{\mathfrak{w}} > 4^{[F:\mathbb Q]}$.
We will adopt notation from Section~\ref{sec: construction of T-new lifts} without further comment.
Amongst other things, this means that
we write $\rhobar \coloneqq \mathcal{L} \otimes_{\cO} k$ (this is consistent with our usage of $\rhobarss$),
and that
we choose a finite set~$S_\Sha$, disjoint from~$S_0 \cup S_p$, such that: 
\begin{itemize}
    \item For all~$v \in S_{\Sha}$, $q_v \equiv 1 \text{ mod } p$, and $\rhobar |_{G_{F_v}}$ is trivial;
    \item We have $\Sha^1_{S_0 \cup S_p \cup S_{\Sha}}(G_{F, S_0 \cup S_p \cup S_{\Sha}}, \ad^0 \rhobar(1)) = 0$.
\end{itemize}
As explained in Rem.\ \ref{rem:intended choice of components}, we make the following choice of local lifting rings:
\begin{itemize}
\item If~$v \in S_p$, then $R_v$ is an ordinary component of $R^{\lambda_v, \mathrm{st}}_{\rhobar|_{G_{F_v}}}$, containing $\rho |_{G_{F_v}},$ and such that for every $\overline{\mathbb{Q}}_p$-point~$\rho'_v$ of~$\Spec R_v$, writing $\rho'_v \sim \left( \begin{smallmatrix} \psi'_{v, 1} & * \\ 0 & \psi'_{v, 2} \end{smallmatrix} \right)$ for characters $\psi'_{v, i} : G_{F_v} \to \overline{\mathbb{Q}}_p^\times$, as in the definition of $\lambda_v$-ordinary, then $\psi'_{v, 1} \circ \Art_{F_v}^\times$ is trivial on the subgroup of $p$-power roots of unity of $\cO_{F_v}^\times$.
\item If~$v \in S_0$, then $R_v$ is the Steinberg lifting ring of~$\rhobar|_{G_{F_v}}$.
\item If~$v \in S_{\Sha}$, then $R_v$ is the unramified lifting ring of $\rhobar|_{G_{F_v}}$.
\end{itemize}
For~$N$ large enough (to be specified below) we would like to apply Prop.\ \ref{prop:T-new lift} to~$\rho_N \coloneqq  \rho \otimes_\cO \cO/\varpi^N$ (with the choice of~$R_v$ just described), and to apply Thm.\ \ref{thm_tame_SW_black_box} to the resulting lift~$\rho'$.
Note that Prop.\ \ref{prop:T-new lift} will then produce lifts unramified at~$S_{\Sha}$, and for this reason, in the rest of this section we will write, for simplicity, $S \coloneqq  S_p \cup S_0$ (as opposed to $S\coloneqq  S_0 \cup S_p \cup S_{\Sha},$ which is the convention in force in Section~\ref{sec: construction of T-new lifts}).

Since Thm.\ \ref{thm_tame_SW_black_box} comes with a parity assumption on $[F:\bQ]+|T \cup S_{\aux}|$, we will also fix an auxiliary totally real quadratic extension $L/F$, not contained in $F(\rho, \epsilon)$, 
and such that~$S_p \cup S_0$ is totally split in~$L$. 
For every set~$X$ of finite places of~$F$, we write $\widetilde X$ for the set of places of~$L$ above~$X$. %
We will actually apply Thm.\ \ref{thm_tame_SW_black_box} to $\rho'|_{G_L},$ and deduce our modularity results via base change.

We introduce the following quantities, depending only on~$\rho$ and~$S$: 
\begin{itemize}
\item $s_{\rho} \coloneqq  P_\rho(h^1_\varnothing(\ad^0 \rhobar)+1) + 1$, 
\item $a \coloneqq h^1(L_{\widetilde S}/L, \chibar) + 2s_\rho + 2$,  %
\item $t_\rho \coloneqq  P_\rho + 4 C_\rho (s_\rho+a)+2M_\rho s_\rho$.
\end{itemize}
We choose~$N$ such that 
\begin{equation}\label{eqn:bound on N for level-lowering}
N > \max\{P_\rho + 2M_\rho, P_\rho+\max\nolimits_{v \in S_0 \cup S_p \cup S_{\Sha}}M_v, P_\rho+C_\rho, t_\rho\}.
\end{equation}
We now apply Prop.\ \ref{prop:T-new lift} to these choices of~$\rho, N, a$.
We obtain sets~$T, S_{\aux},$ disjoint from~$S$, and a lift
\[
\rho' : G_{F, S \cup T \cup S_{\aux}} \to \GL_2(\cO)
\]
of~$\rho_{N-P_\rho}$, such that the following are true:
\begin{enumerate}
    \item $\det \rho' = \epsilon^{-1}$;
    \item For all~$v \in S_p$: $\rho' |_{G_{F_v}}$ is ordinary of weight~$\lambda_v$, and if we write $\rho'|_{G_{F_v}} \sim \left( \begin{smallmatrix} \psi'_{v, 1} & * \\ 0 & \psi'_{v, 2} \end{smallmatrix} \right)$ for characters $\psi'_{v, i} : G_{F_v} \to \overline{\mathbb{Q}}_p^\times$, as in the definition of $\lambda_v$-ordinary, then $\psi'_{v, 1} \circ \Art_{F_v}^\times$ is trivial on the subgroup of $p$-power roots of unity of $\cO_{F_v}^\times$.
    \item For all $v \in T$: $q_v \equiv -1 \text{ mod }p$, $(\rho'|_{G_{F_v}} \otimes_\cO E)^{\textnormal{ss}} \cong 1 \oplus \epsilon^{-1}$, and $\rhobar (\Frob_v)$ is conjugate to $\diag(-1, 1)$ within~$\rhobar(G_F)$. 
    \item For all $v \in S_0$: $q_v \equiv 1\text{ mod }p$, $\overline{\rho}|_{G_{F_v}}$ is trivial, and $(\rho'|_{G_{F_v}} \otimes_\cO E)^{\textnormal{ss}} \cong 1 \oplus \epsilon^{-1}$.
    \item For all $v \in S_{\aux}$: $q_v \equiv 1\text{ mod } \varpi^{M_\rho}$, $q_v \not \equiv 1\text{ mod } \varpi^{M_\rho+1}$, $\overline{\rho}|_{G_{F_v}}$ is trivial, and $(\rho'|_{G_{F_v}} \otimes_\cO E)^{\textnormal{ss}} \cong 1 \oplus \epsilon^{-1}$.
    \item $|S_{\aux}| \leq s_\rho$.
    \item $|T| = a$.
    \item $\rho'(\cO[G_F])$ contains $\varpi^{C_\rho} M_2(\cO)$.
    \item The sets~$T$ and~$S_{\aux}$ are totally split in~$L$, and $\mathfrak{w} \not \in S \cup T \cup S_{\aux}$.
\end{enumerate}
For simplicity, we will write $Z \coloneqq  T \cup S_{\aux}.$
For all~$v \in Z$, we will write $m_v \coloneqq  \ord_\varpi(q_v-1)$; 
note that $m_v = 0$ if~$v \in T$, and $m_v = M_\rho$ if~$v \in S_{\aux}$. 

\subsubsection{Level-raising}
We prove a level-raising result, which will be used in Section~\ref{subsec:proof of level lowering} to show that assumption~(6) of Thm.\ \ref{thm_tame_SW_black_box} is satisfied by $\rho'|_{G_{L}}$.

\begin{lemma}\label{lem_raising_the_level}
    Let $F$ be a totally real field with $[F : \mathbb{Q}]$ even, let $p$ be an odd prime, and  let $\iota : \overline{\mathbb{Q}}_p \to \mathbb{C}$ be an isomorphism. Suppose given  a cuspidal, regular algebraic automorphic representation $\pi$ of $\GL_2(\mathbb{A}_F)$ of weight 0 and trivial central character satisfying the following conditions:
    \begin{enumerate}
        \item $\pi$ is $\iota$-ordinary.
        \item $\pi$ is unramified away from $S_p$, and semistable at the $p$-adic places. 
        \item There is an isomorphism $\overline{r}_{\pi, \iota}^{\textnormal{ss}} \cong \overline{\chi}_1 \oplus \overline{\chi}_2$, where $(\overline{\chi}_1 / \overline{\chi}_2)|_{G_{F(\zeta_p)}}$ is non-trivial.
    \end{enumerate}
    Let $Z$ be a finite set of finite, prime-to-$p$ places of $F$ such that $|Z|$ is even
    and for each $v \in Z$, $q_v^2 \equiv 1 \text{ mod }p$ and  there is an isomorphism $\overline{r}_{\pi, \iota}|^{\textnormal{ss}}_{G_{F_v}} \cong 1 \oplus \epsilon^{-1}$. Then we can find a cuspidal, regular algebraic automorphic representation $\pi_Z$ of $\GL_2(\mathbb{A}_F)$ of weight 0 and trivial central character satisfying the following conditions:
    \begin{enumerate}
        \item $\pi_Z$ is $\iota$-ordinary.
        \item $\pi_Z$ is unramified away from $S_p \cup Z$, and semistable at the $p$-adic places.
        \item For each place $v \in Z$, $\pi_{Z, v}$ is isomorphic to the Steinberg representation. 
        \item There is an isomorphism $\overline{r}_{\pi_Z, \iota}^{\textnormal{ss}} \cong \overline{r}^{\textnormal{ss}}_{\pi, \iota}$.
    \end{enumerate}
\end{lemma}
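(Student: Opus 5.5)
We would prove Lem.\ \ref{lem_raising_the_level} by classical level-raising on a definite quaternion algebra, working in Hida families. Since $|Z|$ is even and $[F:\mathbb{Q}]$ is even, there is a totally definite quaternion algebra $D_0$ over $F$ unramified at all finite places; let $D_Z$ be the totally definite quaternion algebra ramified exactly at $Z$ and the infinite places. By Jacquet--Langlands, $\pi$ gives a system of Hecke eigenvalues in the ordinary space $H(U_0(1,0))^{\textnormal{ord}}$ for $D_0$, with maximal ideal $\mathfrak{m}$ as in \S\ref{subsec:automorphic forms and Hecke algebras}. The goal is to show that $\mathfrak{m}$ also occurs in the ordinary space for $D_Z$ at level maximal away from $S_p$. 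Lifting to characteristic $0$ (via Hida theory and Deligne--Serre, taking a weight $0$, semistable specialisation at $p$) then gives $\pi_Z$. Its local components at $v\in Z$ are forced to be Steinberg, up to an unramified quadratic twist, because $D_{Z,v}^\times$-representations of level $\cO_{D_v}^\times$ transfer to such. Trivial central character and $q_v^2\equiv 1$ let us arrange the exact Steinberg, possibly after adjusting by the choice of residual eigenvalue of $U_v$ as in \cite[Lem.\ 4.1]{Tho16}.

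The key step is to pass from $D_0$ to $D_Z$ two places at a time, using Ihara's lemma and the Jacquet--Langlands/Ribet exact sequence on the intermediate Shimura curves. Write $Z=\{v_1,\dots,v_{2n}\}$. Define $D_j$ as the quaternion algebra ramified at $v_1,\dots,v_{2j}$ and all infinite places except one; this is a Shimura-curve setting. Alternatively, proceed directly in the definite setting by the ``level-raising via degeneracy maps'' argument: for $v\in Z$, the $v$-old subspace at Iwahori level $\Iw_v$ is the image of two copies of the level-$U_{0,v}$ forms. The $v$-new quotient is nonzero at $\mathfrak{m}$ exactly when the intersection pairing on the old space is degenerate mod $\mathfrak{m}$, i.e.\ when $T_v^2-(1+q_v)^2\in\mathfrak{m}$. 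This congruence holds because $\overline{r}^{\textnormal{ss}}_{\pi,\iota}|_{G_{F_v}}\cong 1\oplus\epsilon^{-1}$, so the Frobenius trace is $1+q_v^{-1}$ and $q_v^2\equiv 1$. The injectivity of the old map mod $\mathfrak{m}$ (Ihara's lemma) in the definite case reduces to strong approximation for the norm-one group: the kernel consists of functions factoring through the reduced norm. These are Eisenstein and are killed by localising at $\mathfrak{m}$ provided $\overline{\chi}_1/\overline{\chi}_2$ is not of the form $\epsilon^{\pm1}$ times a character. That is guaranteed by $\overline{\chi}|_{G_{F(\zeta_p)}}\neq 1$.

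This produces $v$-new forms at Iwahori level on $D_0$ with eigensystem $\mathfrak{m}$. For a pair $v_1,v_2$ we then want to switch to $D$ ramified at $\{v_1,v_2\}$. Here one uses the Shimura-curve exact sequence of Ribet/Rajaei, namely the component group of the Jacobian at $v_2$ of the curve for $D$ ramified at $v_1$ and one real place. It identifies the $\{v_1,v_2\}$-new quotient on the definite algebra ramified at $\{v_1,v_2\}$ with the $v_1$-new, $v_2$-new forms on $D_0$ modulo Eisenstein parts. The hypothesis on $\overline{\chi}$ again ensures that $H^0$ and $H^2$ of the curves vanish after localisation at $\mathfrak{m}$. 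Ordinarity is preserved throughout because all constructions commute with $U_p$ and the ordinary projector. Iterating over the $n$ pairs gives the result.

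We expect the main obstacle to be the Eisenstein/non-Eisenstein bookkeeping in the residually reducible case. Ihara's lemma and the Ribet sequence are usually stated for non-Eisenstein $\mathfrak{m}$. Here $\mathfrak{m}$ \emph{is} Eisenstein in the naive sense, so one must check that the obstructions only come from characters of the reduced norm, i.e.\ from $\rhobarss$ being a twist of $1\oplus\epsilon^{-1}$. That case is excluded by hypothesis~(3). If the direct argument proves awkward, the fallback is the Hida-family version: show that the $Z$-new ordinary Hecke algebra has the full expected dimension $1+[F:\mathbb{Q}]$ near $\mathfrak{m}$, by comparing $\Lambda$-ranks of old and new spaces, and then pick a classical weight-$0$ point.
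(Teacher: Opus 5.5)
Your route is the paper's. You induct on an even subset $W\subset Z$ two places at a time. First, Ihara's lemma for the totally definite algebra ramified at $W$ (via strong approximation, so the kernel consists of functions through the reduced norm) makes $v_1$ Steinberg. Then the Ribet--Rajaei component-group argument on the Shimura curve attached to $W\cup\{v_1\}$ adds $v_2$. Throughout, ``Eisenstein'' is sharpened to ``$\overline{r}^{\textnormal{ss}}$ is a twist of $1\oplus\epsilon^{-1}$'', which hypothesis (3) excludes. There are two slips in your set-up. Since $[F:\mathbb{Q}]$ is even, no quaternion algebra is ramified at $2j$ finite places and all but one infinite place, so your $D_j$ does not exist. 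Also, the curve in your third paragraph must come from the algebra ramified at $v_1$ (in general $W\cup\{v_1\}$) and at \emph{all but one} real place, not at one real place.

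The genuine omission is the auxiliary level. You work throughout at level maximal away from $S_p\cup Z$. The inputs you invoke need a sufficiently small level: semistable integral models of the Shimura curves at $v_1,v_2$, their component groups, and a perfect pairing in the definite level-raising step. At your level the stabilisers $F^\times\backslash(U\mathbb{A}_F^{\infty,\times}\cap t^{-1}D^\times t)$ can have order divisible by $p$; this is why $m_0$ appears in Prop.~\ref{prop_freeness_of_modular_forms_at_TW_level}.

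The paper handles this as follows. It chooses $u\notin S_p\cup Z$ with $q_u>4^{[F:\mathbb{Q}]}$ and keeps $\Iw_{u,1}$-level at $u$ through the whole induction. It then has to remove that level, which is not routine because $\mathfrak{m}$ is residually reducible. So it also requires $q_u\equiv 1\bmod p^{1+m_0}$ and $\overline{\chi}_1(\Frob_u)\neq\overline{\chi}_2(\Frob_u)$, making $u$ a Taylor--Wiles place. Prop.~\ref{prop_freeness_of_modular_forms_at_TW_level} (with $Q=\{u\}$) then shows that nonvanishing at the auxiliary level forces nonvanishing at level unramified at $u$. Jacquet--Langlands then gives $\pi_Z$, and only cusp forms contribute since $\overline{\chi}\neq\epsilon^{\pm1}$. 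You need some such device.

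Relatedly, take $v\in Z$ with $q_v\equiv -1\bmod p$. There the residual representation does not distinguish $\mathrm{St}$ from its unramified quadratic twist, and the sign cannot be adjusted place by place afterwards. So ``trivial central character and $q_v^2\equiv 1$'' does not give the exact Steinberg. As the paper does, build ``$U_v$-eigenvalue $\equiv 1$'' into the inductive hypothesis and localise at $U_v-1$ at each raising step.
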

\begin{proof}
    Let $D$ denote the quaternion algebra over $F$ ramified at $Z$ and the infinite places, and let $m_0$ be the exponent of $p$ in the least common multiple of the integers $|F^\times \backslash U_0(1, 0) \cap t^{-1} D^\times t|$, $t \in (D \otimes_F \mathbb{A}_F^\infty)^\times$. We first choose a place $u$ of $F$ satisfying the following conditions:
    \begin{itemize}
        \item $u \not\in S_p \cup Z$.
        \item $q_u \equiv 1 \text{ mod } p^{1 + m_0}$. 
        \item $q_u > 4^{[F : \mathbb{Q}]}$. 
        \item $\overline{\chi}_1(\Frob_u) \neq \overline{\chi}_2(\Frob_u)$. 
    \end{itemize}
    We claim that for each subset $W \subset Z$ of even cardinality there exists a cuspidal, regular algebraic automorphic representation $\Pi_W$ of weight 0 satisfying the following conditions:
    \begin{itemize}
        \item $\Pi_W$ is $\iota$-ordinary.  
        \item $\Pi_W$ is unramified away from $S_p \cup W \cup \{ u \}$, and semistable at the $p$-adic places.
        \item For each place $v \in W$, $\Pi_{W, v}$ is isomorphic to an unramified twist of the Steinberg representation. If $q_v \equiv -1 \text{ mod }p$, then the $U_v$-eigenvalue of $\Pi_{W, v}^{\Iw_v}$ is congruent to $1 \text{ mod }p$.
        \item $\Pi_{W, u}^{\Iw_{u,1 }} \neq 0$. 
        \item $\overline{r}_{\Pi_W, \iota} \cong \overline{r}_{\pi, \iota}$.
    \end{itemize}
    We first explain why the claim implies the statement of the lemma. The representation $\Pi_Z$ has central character of finite, $p$-power order (because $\det \overline{r}_{\Pi_W, \iota} = \epsilon^{-1}$). Since $p$ is odd, we can twist by the reciprocal square-root of this character and assume, without disturbing any of the other conditions, that the central character of $\Pi_Z$ is trivial. We then refer to the statement of Prop.\ \ref{prop_freeness_of_modular_forms_at_TW_level}. The existence of $\Pi_Z$, with the Jacquet--Langlands correspondence, implies that $H(U_{\{ u \}(1), 1}(1, 1))_{\mathfrak{m}_{\{ u \}}}$ is non-zero (with $T$ there taken to be the set of places $v \in Z$ with $q_v \equiv -1 \text{ mod }p$, and $\mathfrak{m}_{\{u\}}$ the maximal ideal of the Hecke algebra corresponding to the Hecke eigenvalues of $\Pi_Z$). Then Prop.\  \ref{prop_freeness_of_modular_forms_at_TW_level}(2) implies that $H(U_{\{ u \}}(1, 1))_{\mathfrak{m}_{\{ u \}}} \neq 0$. The Jacquet--Langlands correspondence now implies the existence of an automorphic representation $\pi_Z$ with the desired properties (noting that $H(U_{\{ u \}}(1, 1))_{\mathfrak{m}_{\{ u \}}}$ only sees cusp forms, because $\overline{\chi}_1 / \overline{\chi}_2 \neq \epsilon^{\pm 1}$). 

    We now establish the claim, by induction on $|W|$. The case $W = \varnothing$ holds by hypothesis. In general, suppose that $\Pi_W$     with the claimed properties exists, and let $v_1, v_2 \in Z - W$ be distinct. We can now copy the proof of \cite[Lem.\ 4.11]{Tho16}. First, using Ihara's lemma for the quaternion algebra ramified at the infinite places and the elements of $W$, we can raise the level at $v_1$ to obtain an automorphic representation $\Pi_{W \cup \{ v_1 \}}$ having the given properties. (It is supposed in \emph{loc.\ cit.} that $q_v \not\equiv 1 \text{ mod }p$, but this is just to force the residual eigenvalues of Frobenius to be distinct; we can allow this case too without changing the argument.) The essential point here is that the kernel of the map $i$ of \emph{loc.\ cit.} is supported above maximal ideals of the Hecke algebra with Galois representations that are a twist of $1 \oplus \epsilon^{-1}$, a stronger statement than in the statement of \cite[Lem.\ 3.28]{Ski99}, but which follows from the proof and which is recorded explicitly e.g.\ in the proof of \cite[Prop.\ 4.4.4]{Pan22}. Second, arguing as in the proof of \cite[Thm.\ 5]{Raj01}, we can raise the level to produce $\Pi_{W \cup \{ v_1, v_2 \}}$ from $\Pi_{W \cup \{ v_1 \}}$. The point here is again that \cite[Prop.\ 5]{Raj01} (``component group is Eisenstein'') can again be refined to ``component group is a twist of trivial'', cf.\ Cor.\ \ref{cor:component group vanishes} below. 
    This completes the proof.\qedhere %
\end{proof}

\subsubsection{Modularity of auxiliary lifts}
We are going to apply Thm.\ \ref{thm_tame_SW_black_box} to~$\rho'|_{G_L}$, with the set there denoted~$S_{\aux}$ being our $\widetilde S_0 \cup \widetilde S_{\aux}$, and the set there denoted~$T$ being our $\widetilde T$.
We verify its assumptions: 
\begin{itemize}
    \item (1) holds because $\overline \chi|_{G_{F(\zeta_p)}} \ne 1$, and $G_F \to \Gal(L/F) \times \Gal(F(\rho, \epsilon)/F)$ is surjective.
    \item (2) and (3) hold because~$S_0, S_p, T, S_{\aux}$ are totally split in~$L/F$.
    \item (4) holds because $[L:\bQ]$ is even, and $T, S_0, S_{\aux}$ are totally split in~$L/F$.
    \item (5) holds because 
    \begin{multline*}
    h^1_{\widetilde{T}\text{-split}}(L_{\widetilde{S}_p \cup \widetilde{S}_0 \cup \widetilde{S}_{\aux}}/L, \overline \chi) + 1 \leq h^1(L_{\widetilde{S}_p \cup \widetilde{S}_0}/L, \overline \chi)+ 2 |S_{\aux}| + 1 < a = |T|. 
    \end{multline*}
    \item To check~(6), we first let~$\pi_L$ be the base change through~$L/F$ of the automorphic representation~$\pi$ asserted to exist in Prop.\ \ref{prop:level-lowering}~(6). We then apply Lem.\ \ref{lem_raising_the_level} to $\pi_L$, with the set there denoted~$Z$ being our $\widetilde S_0 \cup \widetilde T \cup \widetilde S_{\aux}$ (which has even cardinality). Then the representation constructed in Lem.\ \ref{lem_raising_the_level} has the desired properties.
\end{itemize}
We have now checked the assumptions of Thm.\ \ref{thm_tame_SW_black_box}, which thus shows that $\rho'|_{G_L}$ is modular.
By cyclic descent through~$L/F$, the same is true for~$\rho'$. 
We will next use the global Jacquet--Langlands correspondence to move the automorphic representation associated to~$\rho'$ to
a Shimura curve, allowing for level-lowering; the next section introduces the requisite framework.

\subsection{Automorphic forms and Hecke algebras, II}\label{subsec:automorphic forms and Hecke algebras II}
We introduce some Hecke algebras and Hecke modules. 
To economize notation, we will sometimes use the same symbols as in Section~\ref{subsec:automorphic forms and Hecke algebras}
with a different meaning: for example, $H(U)$ will denote a Hecke module arising from the \'etale cohomology of a Shimura curve (as opposed to a space of algebraic modular forms).

Choose a place $z \in T$, and an infinite place~$\tau_1$ of~$F$. 
Define quaternion $F$-algebras $B$ and $B_\varnothing$ as follows:
\begin{itemize}
    \item $B$ is ramified precisely at $(S_\infty \setminus \tau_1) \cup z$, and $B_\varnothing$ is ramified precisely at~$S_\infty$.
\end{itemize}
We write~$\cJ$ for the set
of compact open subgroups 
$U \subset (B \otimes_F \bA_F^\infty)^\times$ such that 
\begin{itemize}
    \item $U$ factors as a product~$\prod_v U_v$, where $U_v \coloneqq U \cap (B \otimes_F F_v)^\times$,
    \item for all $v \not \in S \cup Z \cup \mathfrak{w},$ $U_v = \GL_2(\cO_{F_v})$,
    \item for all~$v \in S_p$, $U_v = \Iw_v$, %
    \item for all $v \in S \setminus S_p,$ $U_v = \Iw_v,$ 
    \item for all $v \in Z\setminus z$, $U_v \in \{\GL_2(\cO_{F_v}), \Iw_v\}$.
    \item $U_z$ is the unique maximal compact subgroup of $(B \otimes_F F_z)^\times$.
    \item $U_{\mathfrak{w}} = \Iw_{1, \mathfrak{w}}$.
\end{itemize}
For all $X \subset Z$, we write
\[
\bT_X^{S \cup Z, \mathrm{univ}} \coloneqq  \cO[T_v, S_v, U_x: v \not \in S \cup Z \cup \{\mathfrak{w}\}, x \in X].
\]
If~$U \in \cJ$, we introduce the following objects, associated with~$B$:
\begin{itemize}
    \item $R(U) \coloneqq  \{v \in Z: U_v \ne \GL_2(\cO_{F_v})\}$. (This is the ``ramification set" of~$U$ that we would like to eliminate through level-lowering).
    \item $X(U)$ is the Shimura curve over~$F$, associated to~$B$, of level~$U$. (We work with the same normalization of Shimura data as~\cite{Tho16}. As usual, the condition on~$U_{\mathfrak{w}}$ guarantees that~$X(U)$ is a proper smooth curve over~$F$.)
    \item If~$v \in R(U)\setminus z$, so that $U_v = \Iw_v,$ then $\mathbf X_v(U)$ is the regular semistable model of $X(U) \otimes_F F_v$ over $\Spec \cO_{F_v}$ defined in \cite[Section~4.4]{Tho16}, and there denoted $_v \mathbb M_{\{z\}}(U)$.

    If~$v = z$, so that $U_v$ is the maximal subgroup of the nonsplit group $(B \otimes_F F_z)^\times$, then $\mathbf X_v(U)$ is the regular semistable model of $X(U) \times_F \breve F_v$ over $\Spec \breve \cO_{F_v}$ defined in~\cite[Section~4.5]{Tho16}, and there denoted $_z \mathbb{M}_{\{z\}}(U)$.
    \item $\mathcal F$ is the lisse $\mathcal O$-sheaf on $\mathbf X_v(U)$ associated to the representation 
    \[
    \bigotimes_{\tau : F \to E} \operatorname{Sym}^{\lambda_{\tau,1}-\lambda_{\tau,2}} \mathcal O^{\oplus 2} \otimes \det\nolimits^{-(\lambda_{\tau,1}-\lambda_{\tau,2})/2}
    \]
    of $\prod_{v \mid p}\GL_2(\cO_{F_v})$.
    Observe that the generic fibre~$\mathcal F_{F_v}$ of~$\mathcal F$ is pulled back from a sheaf on~$X(U)$.
    \item $H(U) \coloneqq  H^1(X(U) \otimes_F \overline F, \mathcal F_{\overline F})$, with its natural (covariant) action of $\bT^{S \cup Z, \textnormal{univ}}_{R(U)}$. 
    \item $\bT^{S \cup Z}(H(U))$ is the
    image of $\bT^{S \cup Z, \mathrm{univ}}_{R(U)}$ in $\operatorname{End}_\cO(H(U))$. 
    (We interpret $U_z$ to be the Hecke operator associated to any choice of uniformizer of the nonsplit quaternion $F_z$-algebra $B \otimes_F F_z$.)
\end{itemize}

If~$U \in \cJ$, we also introduce the following objects, associated with~$B_\varnothing$:
\begin{itemize}
\item $\overline U \coloneqq  U^z \GL_2(\cO_{F_z})$, viewed as a subgroup of $(B_\varnothing \otimes_F \bA_F^\infty)^\times$.
\item $R(\overline U) \coloneqq  R(U) \setminus z$. 
\item $H_\varnothing(\overline U)$ 
is the space of $\overline U$-equivariant functions 
\[
f: B_\varnothing^\times \setminus (B_\varnothing \otimes_F \bA_F^\infty)^\times \to \left (\bigotimes_{\tau : F \to E} \operatorname{Sym}^{\lambda_{\tau_1}-\lambda_{\tau_2}} \mathcal O^{\oplus 2} \otimes \det\nolimits^{-(\lambda_{\tau_1}-\lambda_{\tau_2})/2} \right )^\vee.
\]
\item $\bT^{S \cup Z}(H_\varnothing(\overline U))$ 
is the image of $\bT^{S \cup Z, \mathrm{univ}}_{R(\overline U)}$
in $\operatorname{End}_\cO(H_\varnothing(\overline U))$.
\end{itemize}

We let~$\fm$ be the maximal ideal of $\bT^{S \cup Z, \mathrm{univ}}_\varnothing$
generated by
\[
(\varpi, T_v - \trace \rhobar(\Frob_v), S_v - q_v^{-1}\det \rhobar(\Frob_v): v \not \in S \cup Z \cup \mathfrak{w}).
\]
For all~$U \in \cJ$  we write $\CH^{S \cup Z}(U)$ for the (maximal) Cayley--Hamilton quotient of $\bT^{S \cup Z}(H(U))_\fm[G_{F, S \cup Z \cup \mathfrak{w}}]$
associated to the unique pseudocharacter
\[
\rho^{S \cup Z}(U)_\fm : \bT^{S \cup Z}(H(U))_\fm[G_{F, S \cup Z \cup \mathfrak{w}}] \to \bT^{S \cup Z}(H(U))_\fm
\]
such that, for all~$v \not \in S \cup Z \cup \mathfrak{w},$
the characteristic polynomial of $\rho^{S \cup Z}(U)_\fm(\Frob_v)$ is $X^2-S_v^{-1} T_v X +q_vS_v^{-1}$.
In other words, using notation from Section~\ref{subsec:pseudocharacters for level-lowering},
\[
\CH^{S \cup Z}(U) \coloneqq  \bT^{S \cup Z}(H(U))_\fm[G_{F, S \cup Z \cup \mathfrak{w}}]/\mathcal{I}_{\CH}(\rho^{S \cup Z}(U)_\fm).
\]

\subsubsection{Properties of Hecke modules}\label{subsubsec:properties of Hecke modules}
We now recall, and slightly refine, some standard properties of the modules~$H(U)_\fm$.

\begin{lemma}\label{lem:CH factorization}
Let~$U \in \cJ$.
Then the action of $\bT^{S \cup Z}(H(U))_\fm[G_{F, S \cup Z \cup \mathfrak{w}}]$ on $H(U)_\fm$ factors through $\CH^{S \cup Z}(U)$. 
\end{lemma}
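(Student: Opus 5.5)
The plan is to reduce Lem.\ \ref{lem:CH factorization} to the Eichler--Shimura relation, using Lem.\ \ref{lem:computation of CH ideals}. That lemma tells us $\mathcal{I}_{\CH}(\rho^{S \cup Z}(U)_\fm)$ is the two-sided ideal of $\bT^{S \cup Z}(H(U))_\fm[G_{F, S \cup Z \cup \mathfrak{w}}]$ generated by the elements $f_g(g)$, for $g \in G_{F, S \cup Z \cup \mathfrak{w}}$. Here $f_g(T) = T^2 - t(g)T + \det t(g)$ and $t = \rho^{S \cup Z}(U)_\fm$. The kernel of the action on $H(U)_\fm$ is a two-sided ideal, so it suffices to show that each $f_g(g)$ acts as zero on $H(U)_\fm$.

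First I will treat $g = \Frob_w$ for a place $w \notin S \cup Z \cup \mathfrak{w}$. Here I invoke the Eichler--Shimura congruence relation for the Shimura curve $X(U)$ with coefficients in $\mathcal F$, in the normalisation of \cite{Tho16}. This says that on $H^1(X(U)\otimes_F \overline F, \mathcal F_{\overline F})$ one has
\[
\Frob_w^2 - S_w^{-1}T_w \Frob_w + q_w S_w^{-1} = 0
\]
as an identity in $\operatorname{End}_\cO(H(U))$. It is proved by reducing $X(U)$ at $w$, where it has good reduction, and decomposing the Hecke correspondence $T_w$ as $\mathrm{Frob} + S_w\,{}^t\mathrm{Frob}$. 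This polynomial is precisely the characteristic polynomial $f_{\Frob_w}$ defining $\rho^{S\cup Z}(U)_\fm$, so $f_{\Frob_w}(\Frob_w)$ kills $H(U)$, and hence also $H(U)_\fm$.

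Next I pass from Frobenius elements to arbitrary $g$. Consider the map $\Phi: G_{F,S\cup Z\cup\mathfrak{w}} \to \operatorname{End}(H(U)_\fm)$ given by $g \mapsto f_g(g)$. The key point is that $\Phi$ is continuous for the profinite topology on $\operatorname{End}_{\cO}(H(U)_\fm)$, which is a finite $\cO$-module. For this I use that $t(g)$ and $\det t(g)$ depend continuously on $g$, which follows from the fact that $t$ is a continuous pseudocharacter. The Frobenius elements are dense by Chebotarev, so $\Phi$ vanishes identically on $G_{F,S\cup Z\cup\mathfrak{w}}$. This step also needs the existence of the pseudocharacter $t$ with values in $\bT^{S \cup Z}(H(U))_\fm$ itself. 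I would take this as given by the definition of $\CH^{S\cup Z}(U)$, or justify it by the standard argument: $\bT$ is reduced and $\cO$-flat, $t$ is built from the Galois representations attached to the eigenforms, and its traces lie in $\bT$ because the Frobenius traces do and by density.

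The main obstacle is the Eichler--Shimura step. Precisely, I need it in the normalisation matching the covariant Hecke action and the characteristic polynomial $X^2 - S_v^{-1}T_vX + q_vS_v^{-1}$, with the twisted coefficient sheaf $\mathcal F$, and integrally rather than just after inverting $p$. I would check this first on $H(U)\otimes E$ via the Galois representations attached to eigenforms. Then I would deduce the integral statement from the $\cO$-flatness of $H(U)$, which is the $H^1$ of a proper smooth curve with torsion-free $H^2$ after localising at the non-Eisenstein $\fm$. If torsion is a concern, I would instead appeal directly to the integral congruence relation on the smooth model at $w$.
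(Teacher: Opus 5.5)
Your proposal is correct and follows the paper's proof. The paper likewise uses Lem.~\ref{lem:computation of CH ideals} to reduce to the vanishing of $f_g(g)$ for $g \in G_{F, S \cup Z \cup \mathfrak{w}}$, obtains this for Frobenius elements from the Eichler--Shimura relation on $H(U)$, and extends to all $g$ by noting that the action factors through $\bT^{S \cup Z}(H(U))_\fm\llbracket G_{F, S \cup Z \cup \mathfrak{w}}\rrbracket$, which is your continuity-plus-Chebotarev step.
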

\begin{proof}
The action of $\bT^{S \cup Z}(H(U))_\fm[G_{F, S \cup Z \cup \mathfrak{w}}]$ on $H(U)_\fm$  factors through $\bT^{S \cup Z}(H(U))_\fm\llbracket G_{F, S \cup Z \cup \mathfrak{w}}\rrbracket$, so this follows from Lem.\ \ref{lem:computation of CH ideals} and the Eichler--Shimura relation on $H(U)$. \qedhere
\end{proof}

\begin{lemma}\label{lem:Hecke support of H^0 and H^2}
    Let $V' \in \mathcal J$, let $v \not \in S$, and let $\mathcal G \in \{\mathcal F, \mathcal F/\varpi^i\}$.
    Then:
    \begin{enumerate}
    \item $H^j(X_v(V') \times_F {\overline F}, \mathcal G_{\overline F})_\fm = 0$ for~$j = 0, 2$.
    \item 
    Assume that $v \in R(Z)\setminus z$, and let $r : \widetilde{\mathbf{X}}_v(V')_{\overline k_v} \to \mathbf X_v(V')_{\overline k_v}$ be the normalization of the geometric special fibre of $\mathbf X_v(V')$.
    Then
    $H^2(\mathbf X_v(V')_{\overline k_v}, \mathcal G_{\overline k_v})_\fm = 0$, and $H^0(\widetilde{\mathbf{X}}_v(V')_{\overline k_v}, r^*\mathcal G_{\overline k_v})_\fm =~0$.
    \item Assume that $v = z$. Then $H^2(\mathbf X_z(V')_{\overline k_z}, \mathcal F_{\overline k_z})_\fm$ is $\cO$-torsion free. 
    \end{enumerate}
\end{lemma}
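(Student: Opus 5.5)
The idea is to exploit the fact that the only Hecke eigensystems contributing to $H^0$ and $H^2$ of Shimura curves, or of the relevant special-fibre objects, come from characters factoring through the reduced norm. Their associated pseudocharacters have the shape $\psi \oplus \psi\epsilon^{-1}$. Such an eigensystem cannot be congruent to $\fm$, whose residual pseudocharacter is $\overline\chi_1 + \overline\chi_2$ with $\overline\chi|_{G_{F(\zeta_p)}}\neq 1$. In particular $\overline\chi \neq \omega^{\pm1}$ (here $\omega = \epsilon^{-1} \bmod \varpi$, up to sign convention). If $\fm$ were in the support, we would get $\{\overline\chi_1,\overline\chi_2\} = \{\overline\psi,\overline\psi\omega^{\pm 1}\}$, hence $\overline\chi = \omega^{\pm1}$, which is trivial on $G_{F(\zeta_p)}$: a contradiction.

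For (1), I would first handle $\mathcal G = \mathcal F$ and $\mathcal G = \mathcal F/\varpi$, then pass to $\mathcal F/\varpi^i$ by dévissage using the long exact sequences for $0\to \mathcal F/\varpi^{i-1}\to\mathcal F/\varpi^i\to\mathcal F/\varpi\to 0$. Localization is exact, so vanishing for $H^0$ and $H^2$ mod $\varpi$ propagates; the $H^1$ terms in between do not matter, since only $H^0$ and $H^2$ are concerned. If the weight is nonparallel, $H^0$ and $H^2$ with coefficients in $\mathcal F/\varpi$ need not vanish outright. Still, $H^0(X_{\overline F},\mathcal F/\varpi)$ and $H^2$ (dual to $H^0$ of the dual sheaf twisted by $(1)$) are computed on the geometric connected components. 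Those components form the finite set $F^\times_+\backslash \mathbb A_F^{\infty,\times}/\det U$, on which $T_v$ acts via $\operatorname{Nm}$ and $(1+q_v)$. The coefficients are $U_p$-invariants, respectively coinvariants, of the algebraic representation. So any Hecke eigensystem in these groups satisfies $T_v \equiv \psi(\Frob_v)(1+q_v)$ for a character $\psi$, i.e.\ its pseudocharacter is $\overline\psi(1+\omega^{\pm1})$, and the argument above shows that $\fm$ does not occur.

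For (2), at $v\in R(U)\setminus z$ with $U_v = \Iw_v$, the special fibre of $\mathbf X_v(V')$ is described in \cite[\S4.4]{Tho16}: two copies of the level $U^v\GL_2(\cO_{F_v})$ curve crossing at supersingular points. Hence the normalization $\widetilde{\mathbf X}_v$ is a disjoint union of two smooth proper curves of prime-to-$v$ level. Its $H^0$, and the $H^2$ of $\mathbf X_v$ (which equals $H^2$ of the normalization, since the map is finite and an isomorphism off a finite set), are again given by functions on the components with the same norm-type Hecke action. They are therefore killed by localization at $\fm$, as in (1), using smooth and proper base change to identify them with their generic-fibre counterparts.

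For (3), at $v=z$ the Cherednik–Drinfeld uniformization description in \cite[\S4.5]{Tho16} exhibits the special fibre as a union of $\mathbb P^1$'s indexed by a definite quaternion double coset. Then $H^2$ of the special fibre is a free $\cO$-module, a sum of copies of $H^2(\mathbb P^1,\mathcal F)\cong$ (coinvariants of the $\cO$-free coefficient module, twisted). The torsion-freeness needs care because the coefficient sheaf is a nontrivial local system. Here I would use that $H^2$ of a curve with $\cO$-flat lisse coefficients is the cokernel of a map to which only the top-degree (coinvariant) piece contributes, and show the torsion in the coinvariants is Eisenstein in the above sense, so that it dies after localizing at $\fm$. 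This last point is the step I expect to be the main obstacle.
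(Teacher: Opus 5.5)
Your arguments for (1) and (2) are essentially the paper's. The Hecke action on $H^0$ of the geometric components factors through the reduced norm, which forces $T_w^2=(q_w+1)^2S_w$, and this is incompatible with $\fm$ because $\overline\chi|_{G_{F(\zeta_p)}}\neq 1$. Part (2) is then reduced to (1) through the normalization of the special fibre. The paper makes your ``invariants/coinvariants'' description rigorous more cheaply: it reduces to $j=0$ by Poincar\'e duality, and shrinks the level at $p$ until $\mathcal F/\varpi^i$ is constant, so that $H^0$ is just functions on $\pi_0$.

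The genuine gap is in (3). You take as premise that $\mathcal F$ is a nontrivial local system on the components of the special fibre at $z$, but this is false. The normalization $\widetilde{\mathbf X}_z(V')_{\overline k_z}$ is a disjoint union of copies of $\mathbb P^1_{\overline k_z}$, which is simply connected. So $r^*\mathcal F_{\overline k_z}$ is constant on each component, with value an $\cO$-free stalk $M$. Hence each summand $H^2(\mathbb P^1_{\overline k_z}, M)\cong M(-1)$ is $\cO$-free, and $H^2(\widetilde{\mathbf X}_z(V')_{\overline k_z}, r^*\mathcal F_{\overline k_z})$ is torsion-free even before localizing. The cokernel of $\mathcal F_{\overline k_z}\to r_*r^*\mathcal F_{\overline k_z}$ is punctual, so it has no $H^1$ or $H^2$. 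Therefore this group is isomorphic to $H^2(\mathbf X_z(V')_{\overline k_z},\mathcal F_{\overline k_z})$, and that is the whole proof.

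So the step you flag as the main obstacle is unnecessary. The route you sketch for it, showing that torsion in the coinvariants is ``Eisenstein'' and dies at $\fm$, could not work even in principle. The irreducible components at $z$ are indexed by a definite quaternionic double coset, and the Hecke algebra acts on functions on this set as on $H_\varnothing(\overline V)$, not through the norm. Moreover $\fm$ genuinely lies in that support: $H_\varnothing(\overline U)_\fm\neq 0$ is exactly what the level-lowering argument for Prop.~\ref{prop:level-lowering} uses, via Prop.~\ref{prop:Cherednik--Drinfeld switch}. This is why (3) asserts only torsion-freeness rather than vanishing, and why its proof has to be purely geometric rather than a Hecke-support argument as in (1) and (2).
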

\begin{proof}
Proof of~(1): 
The essential point is that the support of these cohomology groups consists of maximal ideals of the Hecke algebra such that the associated Galois representations are a twist of $1 \oplus \epsilon^{-1}$. 
This refines the statement that these maximal ideals are Eisenstein, and it can be extracted from \cite[Lem.\ 18.1, Lem.\ 18.2]{Jar99}; we give some details.
It suffices to prove this when $\mathcal G = \mathcal F/\varpi^i$, and by Poincar\'e duality, when~$j = 0$.
After increasing the level at~$p$, we can assume without loss of generality that~$\mathcal G = \mathcal O/\varpi^i$.
Then the vanishing follows from our assumption that $\overline \chi |_{G_{F(\zeta_p)}} \ne 1$. 
Indeed, this assumption implies that there exists $w \not \in S \cup Z \cup a$ such that $q_w \trace (\Frob_w)^2 - (q_w+1)^2\det(\Frob_w) \not \in \fm,$
and so acts invertibly on $H^0(X(V') \times_F \overline F_v, \mathcal O/\varpi^i)_\fm$;
but an analysis of the set of connected components of $X(V') \times_F \overline F_w$ shows that
$T_w^2 - (q_w+1)^2S_w$ annihilates $H^0(X(V') \times_F \overline F_w, \mathcal O/\varpi^i)$ (because the group action on the set of connected components factors through the reduced norm of~$B$).

Proof of~(2): 
Again, it suffices to prove this when $\mathcal G = \mathcal F/\varpi^i$.
Since $\mathcal G_{\overline k_v} \to r_*r^*\mathcal G_{\overline k_v}$ is a monomorphism whose cokernel is supported at the singular points of $\mathbf X_v(V')_{\overline k_v}$,
it suffices to prove that $H^2(\widetilde{\mathbf X}_v(V')_{\overline k_v}, r^*\mathcal G_{\overline k_v})_\fm = 0$ and
$H^0(\widetilde{\mathbf{X}}_v(V')_{\overline k_v}, r^*\mathcal G_{\overline{k}_v})_\fm = 0$.
By Poincar\'e duality, it suffices to prove the vanishing for~$H^0$.
Let $V \coloneqq  V'^v \GL_2(\cO_{F_v})$. 
A standard analysis of the geometry of $\mathbf{X}_v(V')_{\overline k_v}$ (see e.g.\ \cite[Prop.\ 4.4]{Tho16}) now shows that %
\[
H^0(\widetilde{\mathbf{X}}_v(V')_{\overline k_v}, r^*\mathcal G_{\overline{k}_v})_\fm \isom H^0(X(V) \times_F \overline F_v, \mathcal G_{\overline F_v})_\fm^{\oplus 2},
\]
and so it vanishes by part~(1).

Proof of~(3): continue to write $r : \widetilde{\mathbf X}_z(V')_{\overline k_z} \to \mathbf X_z(V')_{\overline k_z}$
for the normalization morphism.
Since the cokernel of $\mathcal F_{\overline k_v} \to r_*r^*\mathcal F_{\overline k_v}$ has zero-dimensional support, it suffices to prove that
$H^2(\widetilde{\mathbf X}_z(V')_{\overline k_z}, r^* \mathcal F_{\overline k_z})_\fm$ is $\cO$-torsion free. In fact, $H^2(\widetilde{\mathbf X}_z(V')_{\overline k_z}, r^* \mathcal F_{\overline k_z})$ is $\cO$-torsion free: $\widetilde{\mathbf X}_z(V')_{\overline k_z}$ is a disjoint union of projective lines, so the lisse sheaf $r^* \mathcal F_{\overline k_z}$ is constant on every connected component, so this follows from the usual calculation of the \'etale cohomology of $\mathbb{P}^1$.
\end{proof}

\begin{cor}\label{cor:component group vanishes}
    Let~$V' \in \mathcal J$, and let $v \in R(V')\setminus z$.
    Then the component group associated to the sheaf~$\mathcal{F}$ on $\mathbf{X}_v(V')$ (as defined in~\eqref{eqn:component group}) vanishes after localization at~$\mathfrak{m}$. 
\end{cor}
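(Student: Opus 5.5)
The plan is to identify the component group through Grothendieck's description in terms of the special fibre of the semistable model, and then to show that every piece of that description is killed by localisation at $\fm$, using Lem.\ \ref{lem:Hecke support of H^0 and H^2}.

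\textbf{Description of the component group.} Put $\mathbf X = \mathbf X_v(V')$ with geometric special fibre $Y = \mathbf X_{\overline k_v}$, and let $r : \widetilde Y \to Y$ be the normalisation. Let $\Sigma$ be the set of singular points of $Y$, which are ordinary double points. The group defined in \eqref{eqn:component group} comes from the vanishing-cycles/monodromy pairing. Its character group $X = X(\mathcal F)$ sits in the exact sequence
\[ 0 \to H^0(Y, \mathcal F) \to H^0(\widetilde Y, r^*\mathcal F) \to \bigoplus_{x \in \Sigma} \mathcal F_x \to H^1(Y,\mathcal F) \to \cdots \]
More precisely, $X$ is the cokernel of the middle map, equivalently the kernel of the dual map to $H^2$. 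The component group is then the cokernel of the monodromy map $X \to X^\vee$ induced by the intersection pairing.

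\textbf{Reduction to a statement about $X$.} All of these maps are Hecke-equivariant, and localisation at $\fm$ is exact. It therefore suffices to show that $X_\fm = 0$, or at least that $X_\fm$ and $(X^\vee)_\fm$ vanish. Lem.\ \ref{lem:Hecke support of H^0 and H^2}(2) gives $H^0(\widetilde Y, r^*\mathcal G)_\fm = 0$ and $H^2(Y,\mathcal G)_\fm = 0$ for $\mathcal G = \mathcal F$ and $\mathcal G = \mathcal F/\varpi^i$. From the sequence above we get
\[ X_\fm \cong \Bigl(\bigoplus_{x\in\Sigma}\mathcal F_x\Bigr)_\fm. \]

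\textbf{Analysis of the singular points.} This step is the main obstacle. We use the explicit geometry from \cite[Prop.\ 4.4]{Tho16}. For $v\in R(V')\setminus z$, the special fibre consists of two copies of the level-$V = V'^v\GL_2(\cO_{F_v})$ curve, crossing at supersingular points. These points are indexed by a double coset space of $B_\varnothing$ (the definite algebra ramified at $z$ and $\infty$, i.e.\ the algebra obtained by switching $\tau_1$ and $v$). So $\bigoplus_x \mathcal F_x$ is a space of algebraic modular forms, and its $\fm$-part need not vanish. Hence the vanishing of $X_\fm$ by itself is the wrong target.

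Instead I would follow the Ribet/Rajaei argument. The monodromy pairing on $X$ is identified with the pairing on these algebraic modular forms, which is given by $U_v^2-1$ (or $U_v^2 - S_v$), up to the Eisenstein part. Its cokernel is the component group. The standard proof \cite[Prop.\ 5]{Raj01} shows that the component group is supported on the image of the degeneracy maps. By Ihara-type arguments this image is cut out by the connected-component ($H^0$) part, i.e.\ by functions factoring through the reduced norm.

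So the key step is to show that the component group is a Hecke-equivariant subquotient of the reduced-norm-factoring functions on $B_\varnothing$, or of $H^0$ of the normalisation. The plan is to compute the cokernel of the monodromy pairing as a quotient of $H^0(\widetilde Y, r^*\mathcal F)\oplus H^0(\widetilde Y, r^*\mathcal F)^\vee$ modulo the image of $X$. On these groups Hecke operators $T_w$ act through the norm, so that $T_w^2-(q_w+1)^2S_w$ annihilates them.

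Exactly as in the proof of Lem.\ \ref{lem:Hecke support of H^0 and H^2}(1), the hypothesis $\overline\chi|_{G_{F(\zeta_p)}}\ne1$ provides $w$ with $T_w^2-(q_w+1)^2S_w\notin\fm$. This operator is therefore invertible after localisation while annihilating the component group. Hence the localised component group vanishes.
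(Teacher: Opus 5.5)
You set up the right objects, and you even carry out the right localisation, obtaining $X_\fm\cong\bigl(\bigoplus_{x\in\Sigma}\mathcal F_x\bigr)_\fm$. But you then abandon the argument one step before it closes.

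The goal is not that $X_\fm$ or $(X^\vee)_\fm$ vanish. It is that the localised monodromy map $X_\fm\to(X^\vee)_\fm$ be surjective. Lem.~\ref{lem:Hecke support of H^0 and H^2}(2) identifies \emph{both} sides with full sums over the nodes. In the notation of \eqref{eqn:component group}:
\begin{itemize}
\item $\operatorname{image}(\beta)$ differs from $\bigoplus_x R^1\Phi(\mathcal F)_x(1)$ by a submodule of $H^2(\mathbf X_v(V')_{\overline k_v},\mathcal F_{\overline k_v})(1)$;
\item $\operatorname{coimage}(\beta')$ differs from $\bigoplus_x H^1_x(\mathbf X_v(V')_{\overline k_v},R\Psi\mathcal F)$ by a quotient of $H^0$ of the normalisation.
\end{itemize}
Both discrepancies die at $\fm$, so after localisation the monodromy map is just $\bigoplus_x N_x$. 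The missing idea is that each local monodromy operator $N_x$ is an isomorphism because the model $\mathbf X_v(V')$ is regular (every node has thickness one). That finishes the proof. Your proposal never uses regularity, which is the one geometric input the statement needs.

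The route you substitute does not work as stated. For a regular semistable model, the monodromy pairing on the nodes is not ``$U_v^2-1$ up to the Eisenstein part''. Operators of that kind (or $T_v^2-(1+q_v)^2$) measure congruences between $v$-old and $v$-new forms in level raising. They are typically not units at the maximal ideals relevant here; that is exactly the level-raising condition. So a component group governed by them would not vanish after localisation at $\fm$, and your description is at odds with the statement you are proving. Degeneracy maps and Ihara-type arguments play no role.

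Consequently your ``key step'' is asserted without justification: that the component group is a Hecke-equivariant subquotient of $H^0(\widetilde Y,r^*\mathcal F)$ and its dual. It is true, but precisely because the $N_x$ are isomorphisms. Once it is known, your closing norm argument merely repeats Lem.~\ref{lem:Hecke support of H^0 and H^2}(2).

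A minor point: the supersingular points are indexed by the definite algebra ramified at $S_\infty\cup\{z,v\}$, not by $B_\varnothing$. No quaternion algebra is ramified exactly at $z$ and the infinite places, since $[F:\mathbb Q]$ is even.
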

\begin{proof}
    Recall that the component group is defined as the cokernel of the second vertical arrow in the diagram
    \begin{equation}\label{eqn:component group}
        \resizebox{\textwidth}{!}{\xymatrix{  
        H^1(X(V') \times_F \overline{F}_v, \mathcal F_{\overline F_v})(1) \ar[r] \ar[d]^N & \operatorname{image}(\beta) \ar[r] \ar[d] & \bigoplus_{x \in \operatorname{Sing}\mathbf{X}_v(V')_{\overline k_v}} R^1\Phi(\mathcal F)_x(1) \ar[d]^{\bigoplus_x N_x}\\
        H^1(X(V') \times_F \overline{F}_v, \mathcal F_{\overline F_v}) & \operatorname{coimage}(\beta') \ar[l] & \bigoplus_{x \in \operatorname{Sing}\mathbf{X}_v(V')_{\overline k_v}} H^1_x(\mathbf{X}_v(V')_{\overline k_v}, R\Psi \mathcal F) \ar[l]
        }}
    \end{equation}
    where~$N, N_x$ are the (global, resp.\ local) monodromy operators, and~$\beta, \beta'$ are morphisms in the specialization, resp.\ cospecialization sequence.
    Since $\operatorname{coker}(\beta)$ is a submodule of $H^2(\mathbf X_v(V')_{\overline k_v}, \mathcal F_{\overline k_v})(1)$, and 
    $\operatorname{ker}(\beta')$ is a quotient of $H^0(\widetilde{\mathbf X}_v(V')_{\overline k_v}, \mathcal F_{\overline k_v})$,
    it follows from Lem.\ \ref{lem:Hecke support of H^0 and H^2}~(2) that~$\beta_{\mathfrak m}$ is surjective and $\beta'_{\mathfrak m}$ is injective.
    On the other hand, the fact that~$\mathbf X_v(V')$ is regular implies that~$N_x$ is an isomorphism for all~$x$.
    Hence the map $\operatorname{image}(\beta) \to \operatorname{coimage}(\beta')$ becomes an isomorphism after localizing at~$\mathfrak{m}$, as desired.
\end{proof}

\begin{pp}\label{prop:level-lowering sequence for curves}
    Let~$V' \in \cJ$.
    Choose a place~$v \in R(V')\setminus z$, and let~$V \coloneqq  V'^v\GL_2(\cO_{F_v}).$
    Then, for any~$i > 0$, there is a $\bT^{S \cup Z, \mathrm{univ}}_{R(V)}[G_{F_v}]$-linear morphism
    \begin{equation}\label{eqn:level-lowering sequence for curves}
    (H(V') \otimes_\cO \cO/\varpi^i)_{\fm}^{I_{F_v}} \to (H(V) \otimes_\cO \cO/\varpi^i)^{\oplus 2}_{\fm}
    \end{equation}
    whose kernel is annihilated by~$\Frob_v^{-1} - U_v$.
\end{pp}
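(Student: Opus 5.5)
We will follow the classical proof of Mazur's principle via the vanishing cycles (specialization) sequence for the semistable model $\mathbf X_v(V')$ of $X(V')\otimes_F F_v$, working throughout with torsion coefficients $\mathcal F/\varpi^i$ and localising at $\fm$. The target $(H(V)\otimes\cO/\varpi^i)^{\oplus 2}_\fm$ should be read as the cohomology of the normalization $\widetilde{\mathbf X}_v(V')_{\overline k_v}$. By \cite[Prop.\ 4.4]{Tho16}, the special fibre is two copies of the special fibre of the smooth model of $X(V)$, glued at supersingular points, so $H^1(\widetilde{\mathbf X}_v(V')_{\overline k_v}, r^*\mathcal F/\varpi^i)\cong H^1(X(V)_{\overline F_v},\mathcal F/\varpi^i)^{\oplus 2}$ by smooth and proper base change. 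This identification is Hecke equivariant, and it is $G_{F_v}$-equivariant once we let $\Frob_v$ act on the two copies through the standard matrix involving $T_v$, $S_v$ and $1$.

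The map \eqref{eqn:level-lowering sequence for curves} is then the composite of two maps. The first is the specialization isomorphism $H^1(\mathbf X_v(V')_{\overline k_v}, R\Psi)\cong H^1(X(V')_{\overline F_v},\mathcal F/\varpi^i)$. The second goes from $I_{F_v}$-invariants to $H^1$ of the special fibre, and then, via $r^*$, to $H^1$ of the normalization. We get it from the exact sequence
\[
H^1(\mathbf X_v(V')_{\overline k_v},\mathcal G)\to H^1(\mathbf X_v(V')_{\overline F_v},\mathcal G)\to \textstyle\bigoplus_x R^1\Phi(\mathcal G)_x,
\]
together with the normalization sequence
\[
0\to H^0(\widetilde{\mathbf X})/H^0(\mathbf X)\to \textstyle\bigoplus_x \mathcal G_x\to H^1(\mathbf X_{\overline k_v})\to H^1(\widetilde{\mathbf X})\to 0 .
\]
By Lem.\ \ref{lem:Hecke support of H^0 and H^2}(1),(2), the localised $H^0$ and $H^2$ terms vanish, so these sequences stay exact after localising at $\fm$. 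The invariants under $I_{F_v}$ (equivalently, under the pro-$p$ tame generator) are the kernel of the monodromy operator $N$. Corollary~\ref{cor:component group vanishes} (the component group vanishes at $\fm$, and the $N_x$ are isomorphisms) then shows that a class killed by $N$ has zero image in $\bigoplus_x R^1\Phi_x$. Such a class therefore comes from $H^1(\mathbf X_{\overline k_v},\mathcal G)_\fm$, and we map it on to $H^1(\widetilde{\mathbf X})_\fm$. For this to be well defined on all of $H^1(X(V')_{\overline F_v})^{I}_\fm$, we must also know that $H^1(\mathbf X_{\overline k_v})_\fm\to H^1(\overline F_v)_\fm$ is injective. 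That injectivity comes from the $H^2$ vanishing and the isomorphism $N_x$ in the same diagram.

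It remains to control the kernel. It is the image of the toric part $\bigoplus_x\mathcal G_x$ (functions on supersingular points), i.e.\ the character group piece. On this piece Frobenius acts through the action on supersingular points, and it is identified with a space of functions on a definite quaternion algebra ramified at $v$ (the Cerednik--Drinfeld/Ribet description, as in \cite[\S 4]{Tho16}). There the standard computation gives $U_v=\Frob_v^{-1}$ on the image, up to the normalization of $\Frob_v$ versus its inverse and of the weight twist. Hence the kernel is annihilated by $\Frob_v^{-1}-U_v$. Finally, all the maps are equivariant for $\bT^{S\cup Z,\mathrm{univ}}_{R(V)}$ and $G_{F_v}$, since they are built from functorial sheaf maps and the Hecke correspondences away from $v$ extend to the models.

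We expect the main obstacle to be this identification of Frobenius with $U_v^{-1}$ on the toric part, keeping track of the twist by $\mathcal F$ and the choice of geometric Frobenius. We would first check it in weight $0$ (constant coefficients), using Thm.\ 4.4-style geometry from \cite{Tho16}, and then deduce the weighted case because $\mathcal F$ is constant on the supersingular points. A secondary issue is torsion coefficients. Everything above is exact at $\fm$ only because of Lem.\ \ref{lem:Hecke support of H^0 and H^2}, which replaces the usual non-Eisenstein hypothesis.
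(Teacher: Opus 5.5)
Your argument is correct and uses the paper's ingredients: the vanishing-cycles diagram behind Cor.~\ref{cor:component group vanishes}, the normalization sequence, Lem.~\ref{lem:Hecke support of H^0 and H^2}, and the weight-zero kernel computation of \cite[Thm.~4.8]{Tho16}. What differs is the order in which you reduce to torsion coefficients.

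The paper first proves an integral statement: a surjection $H(V')^{I_{F_v}}_\fm \to H(V)^{\oplus 2}_\fm$ whose kernel is killed by $\Frob_v^{-1}-U_v$. It then uses \cite[Prop.~4]{Raj01} and Cor.~\ref{cor:component group vanishes} to identify $(H(V')\otimes_\cO\cO/\varpi^i)^{I_{F_v}}_\fm$ with $H(V')^{I_{F_v}}_\fm\otimes_\cO\cO/\varpi^i$. Surjectivity then guarantees that the kernel of the reduced map is still annihilated.

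You instead work with $\mathcal F/\varpi^i$ from the start. This is legitimate for two reasons: Lem.~\ref{lem:Hecke support of H^0 and H^2}(2) is stated for $\mathcal F/\varpi^i$, and the local monodromy maps $N_x$ are isomorphisms for a regular model with any coefficients. Your route avoids Rajaei's comparison. In exchange, you need Lem.~\ref{lem:Hecke support of H^0 and H^2}(1) to identify $H^1(X(V')_{\overline F_v},\mathcal F/\varpi^i)_\fm$ and $H^1(X(V)_{\overline F_v},\mathcal F/\varpi^i)_\fm$ with the reductions of $H(V')_\fm$ and $H(V)_\fm$.

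Two small corrections:
\begin{enumerate}
\item The injectivity of $H^1(\mathbf X_v(V')_{\overline k_v},\mathcal G)\to H^1(X(V')_{\overline F_v},\mathcal G)$ holds automatically, because $R\Phi\mathcal G$ is concentrated in degree~$1$. It does not come from $H^2$-vanishing or from the $N_x$.
\item The two components of $\widetilde{\mathbf X}_v(V')_{\overline k_v}$ are identified with the special fibre of the smooth model of $X(V)$ by degeneracy maps defined over $k_v$. So $\Frob_v$ simply acts diagonally on the two copies. The $2\times 2$ matrix in $T_v, S_v$ you have in mind describes $U_v$ on old forms, not the Galois action.
\end{enumerate}
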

\begin{proof}
    We generalize the proof of~\cite[Thm.\ 4.8]{Tho16}, which covers the weight zero case, i.e.\ when $\mathcal F$ is the constant sheaf~$\cO$.
    It suffices to prove the following statements:
    \begin{enumerate}
        \item The natural map $H(V')^{I_{F_v}}_\fm \otimes_\cO \cO/\varpi^i \to (H(V') \otimes_\cO \cO/\varpi^i)_{\fm}^{I_{F_v}}$ is an isomorphism.
        \item There exists a $\bT^{S \cup Z, \mathrm{univ}}_{R(V)}[G_{F_v}]$-linear morphism $H(V')^{I_{F_v}}_\fm \to H(V)^{\oplus 2}_\fm$ whose kernel is annihilated by $\Frob_v^{-1} - \mathbf{U}_v$.
    \end{enumerate}
    Proof of~(1): 
    Since $H(V') \isom H^1(\mathbf{X}_v(V')_{\overline F_v}, \mathcal{F}_{\overline F_v})$, 
    we need to prove that
    \[
    H^1(\mathbf{X}_v(V')_{\overline F_v}, \mathcal{F}_{\overline F_v})_\fm^{I_{F_v}} \otimes_\cO \cO/\varpi^i \to (H^1(\mathbf{X}_v(V')_{\overline F_v}, \mathcal{F}_{\overline F_v}) \otimes_\cO \cO/\varpi^i)_\fm^{I_{F_v}}
    \]
    is an isomorphism.
    Since
    \[
    H^1(\mathbf{X}_v(V')_{\overline k_v}, \mathcal{F}_{\overline k_v})_\fm \isom H^1(\mathbf{X}_v(V')_{\overline F_v}, \mathcal{F}_{\overline F_v})_\fm^{I_{F_v}},
    \]
    this follows from~\cite[Prop.\ 4]{Raj01} and Cor.\ \ref{cor:component group vanishes}.

    Proof of~(2): Since $\mathcal F_{\overline k_v} \to r_*r^*\mathcal F_{\overline k_v}$ is a monomorphism whose cokernel is supported at the singular points of $\mathbf X_v(V')_{\overline k_v}$, a standard analysis of the geometry of $\widetilde{\mathbf{X}}_v(V')_{\overline k_v}$ shows that there is a $\bT^{S \cup Z, \mathrm{univ}}_{R(V)}[G_{F_v}]$-linear surjection
    \[
    H^1(\mathbf{X}_v(V')_{\overline k_v}, \mathcal F_{\overline k_v})_\fm \to H(V)_\fm^{\oplus 2},
    \]
    and we have already observed that $H^1(\mathbf{X}_v(V')_{\overline k_v}, \mathcal F_{\overline k_v})_\fm \cong H(V')^{I_{F_v}}_\fm.$ 
    The kernel of the resulting morphism
    $H(V')^{I_{F_v}}_\fm \to H(V)_\fm^{\oplus 2}$
    can be shown to be annihilated by $\Frob_v^{-1} - \mathbf{U}_v$ by the same argument as in~\cite[Thm.\ 4.8]{Tho16}.
    This concludes the proof.    
\end{proof}

\begin{pp}\label{prop:Cherednik--Drinfeld switch}
Let~$V \in \cJ$. 
Then, for all~$i > 0$, there exists a $\bT_{R(\overline V)}^{S \cup Z, \mathrm{univ}}$-linear morphism
\begin{equation}\label{eqn:Cherednik--Drinfeld switch}
(H(V) \otimes_\cO \cO/\varpi^i)_\fm^{I_{F_z}} \to (H_\varnothing(\lbar V) \otimes_\cO \cO/\varpi^i)^{\oplus 2}_{\fm}
\end{equation}
whose kernel is annihilated by $\Frob_z^{-1}-U_z$.
\end{pp}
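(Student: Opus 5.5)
This is the Cherednik--Drinfeld analogue of Prop.\ \ref{prop:level-lowering sequence for curves}, with the place $z$ (where $B$ is ramified) playing the role of $v$. I would follow the same two-step structure as there and as in \cite[Thm.\ 4.8, \S 4.5]{Tho16}. Write $H(V) \cong H^1(\mathbf X_z(V)_{\overline{F}_z}, \mathcal F)$, using the Cherednik--Drinfeld model $\mathbf X_z(V)$ over $\breve\cO_{F_z}$. This model is a regular semistable curve. Its special fibre is a union of projective lines, and its dual graph is the quotient of the Bruhat--Tits tree of $\GL_2(F_z)$ by an arithmetic group coming from the definite algebra $B_\varnothing$. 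Concretely, the irreducible components are indexed by two copies of the finite double coset space attached to $\overline V$, and the singular points by the Iwahori-level double cosets.

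First I would show that reduction mod $\varpi^i$ commutes with taking $I_{F_z}$-invariants after localization at $\fm$. The argument is the same as in part (1) of the proof of Prop.\ \ref{prop:level-lowering sequence for curves}. One identifies $H(V)^{I_{F_z}}_\fm$ with $H^1(\mathbf X_z(V)_{\overline k_z}, \mathcal F)_\fm$ and invokes \cite[Prop.\ 4]{Raj01}. This requires that the component group at $z$ vanishes after localization at $\fm$. I would prove this exactly as in Cor.\ \ref{cor:component group vanishes}, using diagram \eqref{eqn:component group}: $N_x$ is an isomorphism by regularity, and it remains to show that $\operatorname{coker}\beta$ and $\ker\beta'$ vanish at $\fm$. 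Here $\ker\beta'$ is a quotient of $H^0(\widetilde{\mathbf X}_z(V)_{\overline k_z}, r^*\mathcal F)$. That module is identified with functions on the vertex set of the dual graph, i.e.\ with two copies of the $H^0$-type part of $H_\varnothing(\overline V)$ on which the Hecke action factors through the reduced norm. So its localization vanishes by the same $T_w^2-(q_w+1)^2S_w$ argument as in Lem.\ \ref{lem:Hecke support of H^0 and H^2}(1), which uses $\chibar|_{G_{F(\zeta_p)}}\neq 1$. The $H^2$ term is handled dually. Lem.\ \ref{lem:Hecke support of H^0 and H^2}(3), which gives torsion-freeness of $H^2$, is what I expect to use where the argument needs the integral statement rather than the mod $\varpi^i$ one.

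Second, I would construct the morphism \eqref{eqn:Cherednik--Drinfeld switch} integrally and then reduce mod $\varpi^i$. Use the normalization sequence $0\to\mathcal F\to r_*r^*\mathcal F\to(\text{skyscraper at singular points})\to 0$ on the special fibre. The associated long exact sequence gives a Hecke-equivariant map $H^1(\mathbf X_z(V)_{\overline k_z},\mathcal F)_\fm \to H^1(\widetilde{\mathbf X}_z(V)_{\overline k_z}, r^*\mathcal F)_\fm$. However, $\widetilde{\mathbf X}_z$ is a disjoint union of $\mathbb P^1$'s, so this $H^1$ vanishes. The information therefore instead sits in the graph cohomology: $H^1_\fm$ of the special fibre is an extension involving $H^1$ of the dual graph with coefficients in the stalks of $\mathcal F$. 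After localization at $\fm$ (which kills the $H^0$ terms), this becomes the cokernel of the map from edge functions (Iwahori level on $B_\varnothing$) to vertex functions, i.e.\ to $H_\varnothing(\overline V)^{\oplus 2}$. The resulting edge-to-vertex map gives a $\bT^{S\cup Z,\mathrm{univ}}_{R(\overline V)}$-linear map $H(V)^{I_{F_z}}_\fm\to H_\varnothing(\overline V)^{\oplus 2}_\fm$, which is the $\fm$-localized Cherednik--Drinfeld/Ribet comparison.

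To show that the kernel is annihilated by $\Frob_z^{-1}-U_z$, I would follow \cite[Thm.\ 4.8]{Tho16}. On the toric part, Frobenius acts on the dual graph through the element of $B_z^\times$ of reduced norm $\varpi_z$, which swaps the two vertex types; on the Cherednik--Drinfeld side, the geometric Frobenius is identified with the Hecke operator $U_z$, up to the relevant normalization. This is the step I expect to be the main obstacle: it requires checking compatibility of this identification with the nonconstant sheaf $\mathcal F$ and the Tate twists, and making sure no error term arises beyond the component-group vanishing already established. Finally, tensoring with $\cO/\varpi^i$ and using step one yields \eqref{eqn:Cherednik--Drinfeld switch}, with the annihilation property preserved.
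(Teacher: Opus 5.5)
Your Step 1 fails at $z$, and the rest of the argument depends on it. At $z$ the normalization $\widetilde{\mathbf X}_z(V)_{\overline k_z}$ is a disjoint union of projective lines indexed by two copies of the double coset space of $B_\varnothing$ at level $\overline V$. So $H^0(\widetilde{\mathbf X}_z(V)_{\overline k_z}, r^*\mathcal F)$, and likewise $H^2$ of the special fibre, is essentially $H_\varnothing(\overline V)^{\oplus 2}$ with its full Hecke action, not a part on which Hecke operators act through the reduced norm.

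Its localization at $\fm$ is the target of \eqref{eqn:Cherednik--Drinfeld switch}, which is nonzero in the application: Prop.~\ref{prop:level-lowering at z} produces nonzero vectors in it. This is why Lem.~\ref{lem:Hecke support of H^0 and H^2}(3) asserts only torsion-freeness at $z$. The argument of Cor.~\ref{cor:component group vanishes} therefore does not transfer. Nor should you expect the component group at $z$ to vanish at $\fm$: at a Cherednik--Drinfeld place it measures congruences between $z$-new and $z$-old forms, and $\fm$ satisfies the level-raising congruence at $z$ (since $q_z \equiv -1 \bmod p$ and $\tr \rhobar(\Frob_z) = 0$). So the isomorphism $H(V)^{I_{F_z}}_\fm \otimes \cO/\varpi^i \cong (H(V)\otimes\cO/\varpi^i)^{I_{F_z}}_\fm$ that you want from \cite[Prop.~4]{Raj01} is not available.

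Your last step has a separate problem. If $f\colon A \to B$ has kernel killed by $x$, the kernel of $f \otimes \cO/\varpi^i$ is $f^{-1}(\varpi^i B)/\varpi^i A$. This need not be killed by $x$ unless $\operatorname{coker} f$ is $\varpi$-torsion free. Your Step 2 also reverses the roles. The special-fibre $H^1$, i.e.\ edge functions modulo coboundaries of vertex functions, is the kernel of the desired map. The map itself is the residue along the generic points of the components, not an edge-to-vertex map.

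The fix is never to tensor down, but to work with the sheaf $\mathcal F/\varpi^i$ directly.
\begin{itemize}
\item By absolute purity, for $\mathcal G \in \{\mathcal F, \mathcal F/\varpi^i\}$ there is a Hecke-equivariant exact sequence $0 \to H^1(\mathbf X_z(V), \mathcal G) \to H^1(X(V)\times_F \breve F_z, \mathcal G) \to \prod_J H^0(J^{\mathrm{reg}}, \mathcal G)(-1)$.
\item After localizing at $\fm$, the middle term is $H(V)^{I_{F_z}}_\fm$ (resp.\ $(H(V)\otimes\cO/\varpi^i)^{I_{F_z}}_\fm$). This follows from Hochschild--Serre and the vanishing of $H^0_\fm$ and $H^2_\fm$ on the generic fibre, which is Lem.~\ref{lem:Hecke support of H^0 and H^2}(1).
\item The last term is $H_\varnothing(\overline V)^{\oplus 2}_\fm$ (resp.\ its reduction mod $\varpi^i$) by \cite[Cor.~4.6]{Tho16}.
\end{itemize}
The kernel of the mod $\varpi^i$ map is then exactly $H^1(\mathbf X_z(V), \mathcal F/\varpi^i)_\fm$. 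Since $\Frob_z^{-1} = U_z$ on $H(V)^{I_{F_z}}_\fm \supset H^1(\mathbf X_z(V), \mathcal F)_\fm$, it suffices that $H^1(\mathbf X_z(V), \mathcal F)_\fm \to H^1(\mathbf X_z(V), \mathcal F/\varpi^i)_\fm$ be surjective. That is precisely the $\varpi$-torsion-freeness of $H^2(\mathbf X_z(V), \mathcal F)_\fm$, given by Lem.~\ref{lem:Hecke support of H^0 and H^2}(3) together with proper base change. So Lem.~\ref{lem:Hecke support of H^0 and H^2}(3) is the key input, and it replaces any component-group statement at $z$.
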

\begin{proof}
    We generalize the proof of the weight zero case $\mathcal F = \cO$ in~\cite[Thm.\ 4.10]{Tho16}. 
    First, for any lisse sheaf~$\mathcal G$ of $\cO$-modules (or $\cO/\varpi^i$-modules) on~$\mathbf{X}_z(V)$, there is a Hecke-equivariant exact sequence
        \[
        0 \to H^1(\mathbf X_z(V), \mathcal G) \to H^1(X(V) \times_F \breve F_z, \mathcal G_{\breve F_z}) \to 
        \prod_{J \in \operatorname{Irr} \mathbf X_z(V)_{\overline k_z}}H^0(J^{\textnormal{reg}}, \mathcal G_{\overline k_z})(-1).
        \]
    (The existence of this sequence with $\mathbf X_z(V)$ replaced with $\mathbf X_z(V)\setminus \operatorname{Sing}\mathbf X_z(V)_{\overline k_z}$ is a consequence of absolute cohomological purity~\cite[Thm.\ 3.1.1]{Rio14}.
    Since $\operatorname{Sing}\mathbf X_z(V)_{\overline k_z}$ has codimension~2,
    the same result shows that
    $H^1(\mathbf X_z(V), \mathcal G) = H^1(\mathbf X_z(V)\setminus \operatorname{Sing}\mathbf X_z(V)_{\overline k_z}, \mathcal G)$.)

    Now, if~$\mathcal G \in \{\mathcal F, \mathcal F/\varpi^i\}$, 
    the natural map 
    \[
    H^1(X(V) \times_F \breve F_z, \mathcal G_{\breve F_z})_\fm \to H^1(X(V) \times_F \overline{\breve F_z}, \mathcal G_{\overline{\breve F_z}})_\fm^{I_{F_z}}
    \]
    is an isomorphism: given the Hochschild--Serre spectral sequence, this is a consequence of 
    \[
    H^0(X(V) \times_F \overline{\breve F_z}, \mathcal G_{\overline{\breve F_z}})_\fm = 0,
    \]
    which follows from Lem.\ \ref{lem:Hecke support of H^0 and H^2}~(1).
    Furthermore, $H^1(X(V) \times_F \overline{\breve F_z}, \mathcal F_{\overline{\breve F_z}})_\fm \isom H(V)_\fm,$
    and 
    \[
    H^1(X(V) \times_F \overline{\breve F_z}, (\mathcal F/\varpi^i)_{\overline{\breve F_z}})_\fm \isom H(V)_\fm\otimes_\cO \cO/\varpi^i
    \]
    (since $H^2(X(V) \times_F \overline F, \mathcal F)_\fm = 0$, again by Lem.\ \ref{lem:Hecke support of H^0 and H^2}~(1)).
    
    Putting all these together, and applying~\cite[Cor.\ 4.6]{Tho16}, we obtain a commutative diagram with exact rows
    \[
    \xymatrix{
    0 \ar[r] & H^1(\mathbf{X}_z(V), \mathcal F)_\fm \ar[r] \ar[d] & H(V)^{I_{F_z}}_\fm \ar[r] \ar[d] & H_\varnothing(\overline V)^{\oplus 2}_\fm \ar[d]\\
    0 \ar[r] & H^1(\mathbf{X}_z(V), \mathcal F/\varpi^i)_\fm \ar[r] & (H(V) \otimes_\cO \cO/\varpi^i)_\fm^{I_{F_z}} \ar[r] & H_\varnothing(\overline V)^{\oplus 2}_\fm \otimes_\cO \cO/\varpi^i
    }
    \]
    which generalizes~\cite[(4.17)]{Tho16}.
    Since $\Frob_z^{-1}$ equals~$\mathbf{U}_z$ on $H(V)^{I_{F_z}}_\fm$, 
    it now suffices to prove that the first vertical arrow is surjective.
    This follows from the fact that $H^2(\mathbf X_z(V), \mathcal F)_\fm$ is $\cO$-torsion free, which is a consequence of Lem.\ \ref{lem:Hecke support of H^0 and H^2}~(3) and the proper base change theorem.
\end{proof}

\subsection{Proof of Prop.\ \ref{prop:level-lowering}}\label{subsec:proof of level lowering}
We can now begin the proof of Prop.\ \ref{prop:level-lowering}.
Let~$U'$ be the unique element of~$\cJ$ such that $U'_v \ne \GL_2(\cO_{F_v})$ for all~$v \in Z \setminus \{z\}$. 
By the global Jacquet--Langlands correspondence,
and the modularity of~$\rho'$, there exists a unique $\cO$-algebra homomorphism 
\[
\alpha: \bT^{S \cup Z}(H(U'))_{\fm} \to \cO,
\]
such that, for all~$v \not \in S \cup Z \cup \mathfrak{w},$ we have
$\alpha(T_v) = \trace(\rho'(\Frob_v))$ and $\alpha(S_v) = q_v^{-1}\det(\rho'(\Frob_v))$.
Since, for all~$v \in Z$, $\operatorname{WD}(\rho'|_{G_v})^{F\textnormal{-ss}}$ 
is the Langlands parameter of the Steinberg representation of~$\GL_2(F_v)$,
the morphism~$\alpha$ has the additional property that $\alpha(U_v) = 1$ for all~$v \in Z$. %

It will be convenient to have some notation for the pullback of~$\alpha$ to the universal Hecke algebra $\mathbb{T}^{S \cup Z, \mathrm{univ}}_{Z} \to \bT^{S \cup Z}(H(U'))_{\fm}$ (see Section~\ref{subsec:automorphic forms and Hecke algebras II} for our normalizations, bearing in mind that~$R(U') = Z$).
More generally, for any $X \subset Z$, we will denote by~$\alpha_X$ the pullback of~$\alpha$ to a morphism $\bT^{S \cup Z, \mathrm{univ}}_{X} \to \cO$.

Now let $U \in \cJ$ be defined by $U_v \coloneqq  U_v'$ if $v \not \in Z$, and $U_v \coloneqq  \GL_2(\cO_{F_v})$ if~$v \in Z \setminus z$. 
We will show that there exists 
an $\cO$-algebra homomorphism
\[
\overline \alpha_N : \bT^{S}(H_\varnothing(\overline U))_\fm \to \cO/\varpi^{N-t_\rho},
\]
such that, for all $v \not \in S \cup \mathfrak{w}$, we have $\overline \alpha_N(T_v) = \trace(\rho(\Frob_v))$ and $\overline \alpha_N(S_v) = q_v^{-1}\det(\rho(\Frob_v))$.
Here, $t_\rho$ is a quantity defined in Section~\ref{subsec_preliminaries_to_level_lowering}, and depends only on~$\rho$ and~$S$, and $\bT^{S}(H_\varnothing(\overline U))$ denotes the algebra obtained by adjoining~$(T_v, S_v: v \in Z)$ to $\bT^{S \cup Z}(H_\varnothing(\overline U))$.
Therefore, the source of~$\overline \alpha_N$ depends only on~$\rho$ and~$S$, and we will be able to conclude by taking an inverse limit in~$N$. 

\begin{pp}\label{prop:level-lowering away from z}
    Let $V' \in \cJ$.
    Choose a place~$v \in R(V')\setminus z$, and let~$V \coloneqq  V'^v\GL_2(\cO_{F_v}).$
    Let~$i \geq \nu \geq 4C_\rho + 2m_v$, 
    assume that $(\rho' \otimes_\cO \cO/\varpi^i)|_{G_{F_v}}$ is unramified,
    and
    assume that there exists a vector
    \[
    f' \in (H(V') \otimes_\cO \cO/\varpi^i)_\fm[\alpha_{R(V')}]
    \]
    of exact order~$\varpi^\nu$.
    Then there exists a vector
    \[
    f \in (H(V) \otimes_\cO \cO/\varpi^i)_\fm[\alpha_{R(V)}]
    \]
    of exact order $\varpi^{\nu-4C_\rho-2m_v}$.
\end{pp}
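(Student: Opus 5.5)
We will run Mazur's principle, with Prop.\ \ref{prop_pseudocharacters_with_large_image} replacing the usual residually irreducible input. Let $M \coloneqq (H(V') \otimes_\cO \cO/\varpi^i)_\fm$. By Lem.\ \ref{lem:CH factorization}, $M$ is a module over $\CH^{S \cup Z}(V')$. Hence the submodule $M[\alpha_{R(V')}]$ is a module over $S \coloneqq \CH^{S\cup Z}(V') \otimes_{\bT,\alpha} \cO/\varpi^i$. This ring is a quotient of $\cO[G_F]/(\varpi^i, \mathcal{I}_{\CH}(\tr \rho'))$, because the pseudocharacter $\rho^{S\cup Z}(V')_\fm$ specialises under $\alpha$ to $\tr\rho'$ by Chebotarev. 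We apply Prop.\ \ref{prop_pseudocharacters_with_large_image} to $\rho'$. Its hypotheses hold: $\rho'\otimes E$ is absolutely irreducible (it is modular and residually non-split), and $\rhobarss$ is a sum of two distinct characters. By property (8) of $\rho'$ we may take $C = C_\rho$, and $\nu \geq 2C_\rho$.

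\textbf{Step 1: produce inertia invariants.} Put $y \coloneqq f'$, of exact order $\varpi^\nu$, and consider the cyclic module $S y$. Let $t_v$ be a generator of tame inertia. Since $(\rho'\bmod \varpi^i)|_{G_{F_v}}$ is unramified, $\rho'(t_v - 1) \equiv 0 \bmod \varpi^i$. By Prop.\ \ref{prop_pseudocharacters_with_large_image}(2), $\varpi^{2C_\rho}(t_v - 1)$ lies in $\varpi^i S$ and so annihilates $Sy$; if this fails directly in the Cayley--Hamilton quotient, we apply part (3) in contrapositive form. Thus $\varpi^{2C_\rho} S y$ is contained in $M^{I_{F_v}}[\alpha]$, and it still contains an element of exact order $\varpi^{\nu - 2C_\rho}$. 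Rather than $y$ itself, it is convenient to pass to $y_1 \coloneqq \varpi^{2C_\rho}y$ and work inside $S_1 y_1 \subset M^{I_{F_v}}[\alpha]$.

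\textbf{Step 2: find an element not killed by the kernel of the level-lowering map.} By Prop.\ \ref{prop:level-lowering sequence for curves}, the kernel of $M^{I_{F_v}} \to (H(V)\otimes \cO/\varpi^i)^{\oplus 2}_\fm$ is killed by $\Frob_v^{-1} - U_v$. On $M[\alpha]$ we have $U_v = 1$, so the kernel lies in $M[\Frob_v - 1]$. The eigenvalues of $\rho'(\Frob_v)$ are congruent to $1$ and $q_v^{-1}$ modulo $\varpi^i$ (via $\tr = 1 + \epsilon^{-1}$ on $G_{F_v}$). Their difference has valuation $D = m_v$, and $D < (\nu - 2C_\rho) - 2C_\rho$ by the hypothesis $\nu \geq 4C_\rho + 2m_v$.

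We apply Prop.\ \ref{prop_pseudocharacters_with_large_image}(4) to $y_1$, with $\alpha \leftrightarrow q_v^{-1}$ and $\beta \leftrightarrow 1$, exchanging the roles if needed. This produces inside $S y_1 \subset M^{I_{F_v}}[\alpha]$ a copy of $\cO/\varpi^{\nu - 4C_\rho - m_v}$ on which $\Frob_v$ acts by the eigenvalue $\ne 1$. On this copy, $\Frob_v - 1$ acts by multiplication by $q_v^{-1} - 1$, of valuation $m_v$. Its intersection with the kernel is therefore contained in the $\varpi^{m_v}$-torsion. So its image in the target contains an element of exact order at least $\varpi^{\nu - 4C_\rho - 2m_v}$.

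\textbf{Step 3: conclude.} The map is $\bT^{S\cup Z,\mathrm{univ}}_{R(V)}$-linear, so the image lies in $(H(V)\otimes\cO/\varpi^i)^{\oplus2}_\fm[\alpha_{R(V)}]$. One of the two coordinate projections of this image element retains exact order at least $\varpi^{\nu-4C_\rho-2m_v}$; a suitable multiple of it is the desired $f$.

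\textbf{Main obstacle.} The delicate point is the bookkeeping of the $2C_\rho$ losses: one loss comes from forcing inertia invariance in Step 1, and a second from applying part (4) in Step 2, giving $4C_\rho$ in total. One must also check that the part (4) argument can be run inside the inertia-invariant cyclic submodule, which requires $S y_1$ to be a module over the Cayley--Hamilton algebra. This is fine because $M^{I_{F_v}}$ is stable under $G_F$ only after enlarging to $Sy$, so we apply part (4) to $S y_1$ with $y_1 = \varpi^{2C_\rho} y$, for which $S y_1 \subset M^{I_{F_v}}$ by Step 1.
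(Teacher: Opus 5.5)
Your argument is correct and is essentially the paper's proof: the same ingredients (Lem.\ \ref{lem:CH factorization}, Prop.\ \ref{prop_pseudocharacters_with_large_image}(2) for inertia invariance, Prop.\ \ref{prop_pseudocharacters_with_large_image}(4) for a $\Frob_v$-eigenvector, and the kernel of the level-lowering map being killed by $\Frob_v^{-1}-U_v$) give the same $2C_\rho+2C_\rho+2m_v$ loss, except that the paper applies (4) to $f'$ first and multiplies by $\varpi^{2C_\rho}$ afterwards, whereas you do these in the opposite order; your projection onto one summand of the target $(\cdot)^{\oplus 2}$ also tidies a point the paper glosses. Two harmless slips should be fixed. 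For geometric $\Frob_v$ the non-unit eigenvalue is $\epsilon^{-1}(\Frob_v)=q_v$, not $q_v^{-1}$. You should also base change the Cayley--Hamilton algebra to $\cO/\varpi^\nu$ rather than $\cO/\varpi^i$, since only $\alpha_{R(V')} \bmod \varpi^\nu$ is known to factor through the Hecke algebra.
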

\begin{proof}
    We apply Prop.\ \ref{prop_pseudocharacters_with_large_image}(4) with $M \coloneqq (H(V') \otimes_\cO \cO/\varpi^i)_\fm[\alpha_{R(V')}, \varpi^\nu], y \coloneqq f'$, 
    $C \coloneqq  C_\rho$, $D\coloneqq  m_v$, $N \coloneqq \nu$, and~$\phi$ an arbitrary lift of~$\Frob_v$ in~$\Gamma \coloneqq G_{F, S \cup Z \cup \mathfrak{w}}$.
    We verify its assumptions.
    Firstly, the existence of~$f'$ implies that the morphism $\alpha_{R(V')}: \mathbb{T}_{R(V')}^{S \cup Z, \textnormal{univ}} \to \cO/\varpi^\nu$ factors through $\mathbb{T}^{S \cup Z}(H(V'))_\fm$.
    By Lem.\ \ref{lem:CH factorization}, we deduce that $M$ is a module over 
    \[
    \CH^{S \cup Z}(V') \otimes_{\bT^{S \cup Z}(H(V'))_\fm, \alpha_{R(V')
    }} \cO/\varpi^\nu.
    \]
    Since the formation of maximal Cayley--Hamilton quotients commutes with base change,
    there is a natural isomorphism
    \[
    \CH^{S \cup Z}(V') \otimes_{\bT^{S \cup Z}(H(V'))_\fm, {\alpha_{R(V')}}} \cO/\varpi^\nu \isom \CH_{\rho'}/\varpi^\nu,
    \]
    where~$\CH_{\rho'}$ is the maximal Cayley--Hamilton quotient of $\cO[G_{F, S \cup Z \cup \mathfrak{w}}]$ corresponding to the pseudocharacter of~$\rho'$.
    Thus~$M$ is a $\CH_{\rho'}/\varpi^\nu$-module. 
    Furthermore, by construction, the eigenvalues of $\phi$ on~$\rho'$ are~$q_v$ and~$1$ (because $v \in R(V') \subset Z$). 
    So we can indeed apply
    Prop.\ \ref{prop_pseudocharacters_with_large_image}(4) to~$M$,
    and conclude that
    $(\CH_{\rho'}f')[\phi-q_v]$
    contains an $\cO$-submodule isomorphic to $\cO/\varpi^{\nu-2C_\rho-m_v}$.
    We replace~$f'$ by a generator of this submodule.

    We now apply Prop.\ \ref{prop_pseudocharacters_with_large_image}(2) to~$x \coloneqq  \tau_v-1$, where~$\tau_v$ is a generator of the tame quotient of~$I_{F_v}$.
    Since we are assuming that $(\rho' \otimes_\cO \cO/\varpi^i)|_{G_{F_v}}$ is unramified, 
    we have $\rho'(x) \equiv 0 \text{ mod } \varpi^i,$ and we conclude from Prop.\ \ref{prop_pseudocharacters_with_large_image}~(2) that $\varpi^{2C_\rho}x \in \varpi^i \CH_{\rho'}.$ 
    Hence
    \[
    \varpi^{2C_\rho}f' \in (H(V') \otimes_\cO \cO/\varpi^i)_\fm^{I_{F_v}}[\alpha_{R(V')}, \Frob_v - q_v].
    \]
    Let~$f$ be the image of~$\varpi^{2C_\rho}f'$ under~\eqref{eqn:level-lowering sequence for curves}.
    Since the kernel of~\eqref{eqn:level-lowering sequence for curves} is annihilated by $\Frob_v^{-1} - U_v$, and $\alpha_{R(V')}(U_v) = 1,$
    the intersection 
    \[
    (H(V') \otimes_\cO \cO/\varpi^i)_\fm^{I_{F_v}}[\alpha_{R(V')}, \Frob_v-q_v] \cap \ker \eqref{eqn:level-lowering sequence for curves}
    \]
    is annihilated by~$q_v-1$. 
    Bearing in mind that $m_v \coloneqq  \operatorname{ord}_\varpi(q_v-1)$, we conclude that
    \[
    \varpi^{\nu-4C_\rho-2m_v-1}f \ne 0,
    \]
    as desired.
\end{proof}

\begin{pp}\label{prop:level-lowering at z}
    Let~$V \in \cJ$.
    Let~$i \geq \nu \geq 2C_\rho$, assume that $(\rho' \otimes_{\cO} \cO/\varpi^i) |_{G_{F_z}}$ is unramified, and assume that there exists a vector
    \[
    f \in M \coloneqq  (H(V) \otimes_\cO \cO/\varpi^i)_\fm[\alpha_{R(V)}]
    \]
    of exact order~$\varpi^\nu$.
    Then there exists a vector
    \[
    \overline f \in (H_\varnothing(\overline V) \otimes_\cO \cO/\varpi^i)_\fm[\alpha_{R(\overline V)}]
    \]
    of exact order $\varpi^{\nu-4C_\rho}$. %
\end{pp}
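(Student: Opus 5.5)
This proposition mirrors Prop.\ \ref{prop:level-lowering away from z}. We replace the level-lowering sequence \eqref{eqn:level-lowering sequence for curves} by the Cherednik--Drinfeld switch \eqref{eqn:Cherednik--Drinfeld switch}. Since $q_z \equiv -1 \text{ mod } p$, we have $m_z = 0$, and this removes the $2m_v$ loss.

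\textbf{Step 1 (Cayley--Hamilton structure).} As in the previous proof, the existence of $f$ forces $\alpha_{R(V)}$ to factor through $\bT^{S \cup Z}(H(V))_\fm$. By Lem.\ \ref{lem:CH factorization} and base change of Cayley--Hamilton quotients, $M[\varpi^\nu]$ is then a module over $\CH_{\rho'}/\varpi^\nu$. Here $\CH_{\rho'}$ is the maximal Cayley--Hamilton quotient of $\cO[G_{F, S\cup Z\cup\mathfrak{w}}]$ attached to $\tr \rho'$. Prop.\ \ref{prop_pseudocharacters_with_large_image} applies with $C = C_\rho$, because $\rho'(\cO[G_F]) \supset \varpi^{C_\rho}M_2(\cO)$ and $\rhobarss$ is a sum of two distinct characters.

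\textbf{Step 2 (invariance under inertia).} Let $x \coloneqq \tau_z - 1$, where $\tau_z$ generates tame inertia at $z$. The hypothesis that $(\rho' \otimes \cO/\varpi^i)|_{G_{F_z}}$ is unramified gives $\rho'(x) \equiv 0 \text{ mod } \varpi^i$. Prop.\ \ref{prop_pseudocharacters_with_large_image}(2) then gives $\varpi^{2C_\rho}x \in \varpi^i \CH_{\rho'}$. Since $\varpi^i$ kills $M$, the vector $\varpi^{2C_\rho} f$ is $I_{F_z}$-invariant; because $\nu \geq 2C_\rho$, it has exact order $\varpi^{\nu - 2C_\rho}$.

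\textbf{Step 3 (eigenvector for Frobenius).} We want a vector on which $\Frob_z$ acts with the eigenvalue distinct from $\alpha(U_z)^{-1} = 1$. The eigenvalues of $\rho'(\Frob_z)$ are $1$ and $q_z$, and $\ord_\varpi(q_z - 1) = 0$ because $q_z \equiv -1 \text{ mod } p$ and $p$ is odd. Apply Prop.\ \ref{prop_pseudocharacters_with_large_image}(4) to the module $\CH_{\rho'}\cdot \varpi^{2C_\rho}f$ with $D = 0$. This gives a vector in $(\CH_{\rho'}\varpi^{2C_\rho} f)[\Frob_z - q_z]$ of exact order $\varpi^{\nu - 4C_\rho}$.

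To keep inertia-invariance, we instead do Step 3 first, on $f$ itself. This yields $f_1 \in (\CH_{\rho'} f)[\phi - q_z]$ of exact order $\varpi^{\nu-2C_\rho}$. We then multiply by $\varpi^{2C_\rho}$, which gives an $I_{F_z}$-invariant vector of exact order $\varpi^{\nu-4C_\rho}$. It is still killed by $\Frob_z - q_z$ and by $\ker \alpha_{R(V)}$, since these lie in the commutative Hecke algebra, which is central.

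\textbf{Step 4 (transfer).} Let $\overline f$ be the image of this vector under \eqref{eqn:Cherednik--Drinfeld switch}. The kernel of that map is annihilated by $\Frob_z^{-1} - U_z$. On our vector, $\Frob_z^{-1} - U_z$ acts as $q_z^{-1} - 1$, which is a unit because $q_z \not\equiv 1 \text{ mod } p$. Hence the kernel meets the $\cO$-span of our vector trivially, and $\overline f$ has exact order $\varpi^{\nu - 4C_\rho}$.

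By Hecke-linearity with respect to $\bT^{S\cup Z,\mathrm{univ}}_{R(\overline V)}$, the vector $\overline f$ lies in $(H_\varnothing(\overline V)\otimes \cO/\varpi^i)_\fm[\alpha_{R(\overline V)}]$. We can read either copy in the target $(\,\cdot\,)^{\oplus 2}$: some projection preserves the exact order of $\overline f$. I expect the main point requiring care to be the order of operations in Steps 2 and 3, so that inertia-invariance survives the Frobenius projection. Commutativity of $\varpi$-multiplication with $\CH_{\rho'}$ handles this, as in the previous proposition.
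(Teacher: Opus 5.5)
Your proposal is correct and is essentially the paper's proof, which reruns the argument of Prop.~\ref{prop:level-lowering away from z} with \eqref{eqn:Cherednik--Drinfeld switch} in place of \eqref{eqn:level-lowering sequence for curves}, in the same order you settle on: first a Frobenius eigenvector via Prop.~\ref{prop_pseudocharacters_with_large_image}(4) with $D = m_z = 0$, then multiplication by $\varpi^{2C_\rho}$ to get $I_{F_z}$-invariance, then transfer using that $q_z^{-1}-1$ is a unit. (Your first ordering would also have worked: for $\sigma \in I_{F_z}$, $\varpi^{2C_\rho}(\sigma - 1)$ kills all of $M$, so $\varpi^{2C_\rho}\CH_{\rho'} f$ is already inertia-invariant.)
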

\begin{proof}
    This follows from the same argument as Prop.\ \ref{prop:level-lowering away from z}, applying~\eqref{eqn:Cherednik--Drinfeld switch} instead of~\eqref{eqn:level-lowering sequence for curves}. 
\end{proof}

We now choose a vector
\[
f' \in (H(U') \otimes_\cO \cO/\varpi^{N-P_\rho})_\fm[\alpha_{R(U')}]
\]
of exact order~$\varpi^{N-P_\rho}$;
this is possible because $H(U')_\fm[\alpha_{R(U')}]$ is a finite free $\cO$-module,
and the map
\[
H(U')_\fm[\alpha_{R(U')}] \otimes_\cO \cO/\varpi^{N-P_\rho} \to (H(U') \otimes_\cO \cO/\varpi^{N-P_\rho})_\fm[\alpha_{R(U')}]
\]
is injective.
Repeated applications of Prop.\ \ref{prop:level-lowering away from z} (starting with~$i \coloneqq  N-P_\rho$) show that 
there exists a vector
\[
f \in (H(U) \otimes_\cO \cO/\varpi^{N-P_\rho})_\fm[\alpha_z]
\]
of exact order 
\[
\varpi^{N-P_\rho -4C_\rho|Z \setminus z|-2\sum_{v \in Z\setminus z}m_v}.
\]
By an application of Prop.\ \ref{prop:level-lowering at z}, we conclude that there exists a vector
\[
\lbar f \in (H_\varnothing(\overline U) \otimes_\cO \cO/\varpi^{N-P_\rho})_\fm[\alpha_\varnothing]
\]
of exact order 
\[
\varpi^{N-P_\rho - 4 C_\rho|Z|-2\sum_{v \in Z}m_v} = \varpi^{N-P_\rho - 4 C_\rho|Z|-2M_\rho|S_{\aux}|},
\]
where the equality is because $m_v = 0$ if~$v \in T$ and~$m_v = M_\rho$ if~$v \in S_{\aux}$.

Recall that we have introduced the notation $t_\rho \coloneqq  P_\rho + 4 C_\rho (s_\rho+ a)+2M_\rho s_\rho$, 
and that $P_\rho + 4 C_\rho |Z|+2M_\rho |S_{\aux}| \leq t_\rho,$
since $Z = T \cup S_{\aux}$ and 
$|T| = a,$ $|S_{\aux}| \leq s_\rho$, by construction.

Examining the action of $\bT^{S \cup Z}(H_\varnothing(\overline U))_\fm$ on~$\overline f$, we conclude that there exists a $\cO$-algebra homomorphism
\[
\overline \alpha_N : \bT^{S \cup Z}(H_{\varnothing}(\overline U))_{\fm} \to \cO/\varpi^{N-t_\rho}
\]
such that, for all~$v \not \in S \cup Z \cup \mathfrak{w},$ we have
$\overline\alpha_N(T_v) = \alpha(T_v) = \trace(\rho'(\Frob_v))$ and $\overline\alpha_N(S_v) = \alpha(S_v) = q_v^{-1}\det(\rho'(\Frob_v))$.
Since~$\rho'_{N-P_\rho} = \rho_{N-P_\rho}$, we also have
$\overline\alpha_N(T_v) = \trace(\rho(\Frob_v))$ and $\overline\alpha_N(S_v) = q_v^{-1}\det(\rho(\Frob_v))$.
Furthermore, by~\eqref{eqn:bound on N for level-lowering}, the exponent $N-t_\rho$ is positive.

We now observe that
\begin{equation}\label{eqn:equality of Hecke algebras}
\bT^{S \cup Z}(H_{\varnothing}(\overline U))_{\fm}
 = \bT^{S}(H_{\varnothing}(\overline U))_{\fm}.
\end{equation}
In fact, the inclusion 
\[
\bT^{S \cup Z}(H_{\varnothing}(\overline U))_{\fm} \subset \bT^{S}(H_{\varnothing}(\overline U))_{\fm}
\]
holds by definition, 
and the $G_{F, S}$-pseudocharacter valued in $\bT^{S}(H_{\varnothing}(\overline U))_{\fm}$ (which we will denote~$\rho^{S}(\overline U)_\fm$)
factors through the closed subring
$\bT^{S \cup Z}(H_{\varnothing}(\overline U))_{\fm},$
since this is true on the dense subset consisting of Frobenius elements away from~$S \cup Z \cup \mathfrak{w}$.     
Hence $\bT^{S \cup Z}(H_{\varnothing}(\overline U))_{\fm}$ also contains $T_v = \rho^{S}(\overline U)_\fm(\Frob_v), S_v = q_v^{-1}\det \rho^{S}(\overline U)_\fm(\Frob_v)$ for~$v \in Z$, which implies~\eqref{eqn:equality of Hecke algebras}.   (Note that $\overline U_v = \GL_2(\cO_{F_v})$ for all~$v \in Z$, hence there are no $U_v$-operators in $\bT^{S \cup Z}(H_{\varnothing}(\overline U))_{\fm}$ or $\bT^{S}(H_{\varnothing}(\overline U))_{\fm}$.)

The equality~\eqref{eqn:equality of Hecke algebras} implies that
there is an $\cO$-algebra homomorphism
\begin{equation}\label{eqn:alpha_N at correct level}
\overline\alpha_N : \bT^{S}(H_{\varnothing}(\overline U))_{\fm} \to \cO/\varpi^{N-t_\rho}
\end{equation}
such that, for all~$v \not \in S \cup \mathfrak{w}$, we have
$\overline\alpha_N(T_v) = \trace(\rho(\Frob_v))$ and $\overline\alpha_N(S_v) = q_v^{-1}\det(\rho(\Frob_v))$.
Letting~$N$ tend to $+ \infty$, and noting that the source of~\eqref{eqn:alpha_N at correct level} does not depend on~$N$, we obtain an $\cO$-algebra homomorphism
\[
\overline\alpha \coloneqq  \varprojlim_N \overline \alpha_N : \bT^S(H_\varnothing(\overline U))_\fm \to \cO
\]
such that, for all~$v \not \in S \cup \mathfrak{w}$, we have
$\overline\alpha(T_v) = \trace(\rho(\Frob_v))$ and $\overline\alpha(S_v) = q_v^{-1}\det(\rho(\Frob_v))$.
This concludes the proof of Prop.\ \ref{prop:level-lowering}.

\section{Deduction of Thm.\ \ref{introthm_automorphy_lifting}}\label{sec_deduction_of_main_theorem}

In this section, we show how Prop.\ \ref{prop:level-lowering} implies the theorem stated in the introduction. Our main tasks are to verify residual automorphy (which we do following \cite[\S 5.6]{Pan22}), and to show that the local conditions of Section \ref{sec_level_lowering} can be always to taken to hold after a solvable base change. Finally, the condition, used in Section~\ref{sec: construction of T-new lifts} and~\ref{sec_level_lowering}, that the image of the target Galois representation contain a conjugate of an open subgroup of $\GL_2(\mathbb{Z}_p)$, is shown to follow from the ordinary hypothesis (see Lem.\ \ref{lem_ordinary_implies_split_image}). 
Any generalisation of our results to the non-ordinary case would need to account for this.  

\subsection{Statement of main result} For the convenience of the reader, we first recall the statement of Thm.\ \ref{introthm_automorphy_lifting}.
\begin{thm}\label{thm_automorphy_lifting}
    Let $F$ be a totally real field, let $p$ be an odd prime, and let $\rho : G_F \to \GL_2(\overline{\mathbb{Q}}_p)$ be a continuous, irreducible representation satisfying the following conditions:
    \begin{enumerate}
        \item $\rho$ is unramified at all but finitely many places, and $\det \rho = \epsilon^{-1}$. 
        \item There exists $\mathbf{\lambda} \in (\mathbb{Z}^2_+)^{\Hom(F, \overline{\mathbb{Q}}_p)}$ such that $\rho$ is ordinary of weight $\mathbf{\lambda}$. 
        \item The semisimple residual representation $\overline{\rho}^{\textnormal{ss}} : G_F \to \GL_2(\overline{\mathbb{F}}_p)$ is reducible, isomorphic to $\overline{\chi}_1 \oplus \overline{\chi}_2$ for $\overline{\chi}_1, \overline{\chi}_2 : G_F \to \overline{\mathbb{F}}_p^\times$ continuous characters, and $ (\overline{\chi}_1 / \overline{\chi}_2)|_{G_{F(\zeta_p)}}$ is non-trivial. 
    \end{enumerate}
    Then $\rho$ is automorphic: there exists an isomorphism $\iota : \overline{\mathbb{Q}}_p \to \mathbb{C}$ and a cuspidal, regular algebraic, $\iota$-ordinary automorphic representation $\pi$ of $\GL_2(\mathbb{A}_F)$ of weight $\iota \mathbf{\lambda}$ such that $\rho \cong r_{\pi, \iota}$. 
\end{thm}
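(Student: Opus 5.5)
The plan is to reduce Thm.\ \ref{thm_automorphy_lifting} to Prop.\ \ref{prop:level-lowering} by a solvable base change, and then to descend. The descent step is standard: if $\rho|_{G_{F'}}$ is automorphic for a solvable totally real extension $F'/F$, then $\rho$ is automorphic. This follows from solvable descent for $\GL_2$ (Langlands, Arthur--Clozel), given that $\rho$ is irreducible and $\rho|_{G_{F'}}$ stays irreducible. To guarantee the latter, we choose $F'$ linearly disjoint from a suitable finite extension. Ordinarity of the resulting $\pi$ and the correct weight are then read off from $\rho$ via local--global compatibility.

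The first step is to choose $F'$. It should be a solvable totally real extension, linearly disjoint from $\overline{F}^{\ker \overline{\rho}^{\textnormal{ss}}}(\zeta_p)$ and from the fixed field of the (open) image of $\rho$ modulo its centre, and it should satisfy the following:
\begin{itemize}
\item $[F':\mathbb{Q}]$ is even, and $[F'_v:\mathbb{Q}_p]>2$ for all $v\mid p$;
\item $\overline{\rho}^{\textnormal{ss}}|_{G_{F'_v}}$ is trivial at all $v\mid p$ and at all ramified places of $\rho$;
\item at each prime-to-$p$ ramified place $v$, $q_v\equiv 1\bmod p$ and $\rho|_{G_{F'_v}}$ is unipotently ramified, so Steinberg with $(\rho|_{G_{F'_v}})^{\textnormal{ss}}\cong 1\oplus\epsilon^{-1}$ after a quadratic twist (the twist is absorbed since $\det\rho=\epsilon^{-1}$ is preserved up to squares);
\item $\rho|_{G_{F'_v}}$ is semistable at $p$;
\item the characters $\psi_{v,1}\circ\Art$ are trivial on $p$-power roots of unity.
\end{itemize}
All of these are arranged by standard local extensions combined with a global approximation argument (e.g.\ \cite[Lem.\ 4.1.2]{Cal23}-style). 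Disjointness preserves $(\overline{\chi}_1/\overline{\chi}_2)|_{G_{F'(\zeta_p)}}\neq 1$.

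Zariski density of the image, which Prop.\ \ref{prop:level-lowering} requires (and which is used in Section~\ref{sec_level_lowering} via Lem.\ \ref{lem_ordinary_implies_split_image}), should come from ordinarity. An ordinary irreducible $\rho$ with $\overline{\rho}^{\textnormal{ss}}$ reducible cannot be induced from a CM field in a way incompatible with the distinct Hodge--Tate weights, except for the dihedral case. That case I expect to handle separately: either it is automatically modular by automorphic induction, or it is excluded after base change by the hypothesis on $\overline{\chi}$.

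The main obstacle is residual modularity at minimal level, hypothesis (5) of Prop.\ \ref{prop:level-lowering}. Following \cite[\S 5.6]{Pan22}, I would construct an ordinary weight-$0$ cuspidal $\pi$ unramified outside $p$ with $\overline{r}_{\pi,\iota}^{\textnormal{ss}}\cong \overline{\chi}_1\oplus\overline{\chi}_2$ via Hida families of Eisenstein series. Concretely, we take the ordinary Eisenstein series attached to lifts of $\overline{\chi}_1,\overline{\chi}_2$ (residual modularity being automatic, cf.\ Lem.\ \ref{lem_residual_automorphy}). We then show that the Eisenstein ideal in the ordinary Hida Hecke algebra of tame level $1$ meets the cuspidal part, using a nonvanishing of the constant term modulo $p$, i.e.\ divisibility of a $p$-adic $L$-function, possibly after a further base change to make the relevant $L$-value divisible by $p$. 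Finally we specialise to weight $0$ via Hida control, obtaining a cuspidal point, which can be made semistable at $p$ and with trivial central character by twisting.

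With $F'$ chosen and $\pi$ in hand, Prop.\ \ref{prop:level-lowering} applies to $\rho|_{G_{F'}}$ (with $S_0$ the ramified prime-to-$p$ places), giving modularity over $F'$, and descent concludes the proof.
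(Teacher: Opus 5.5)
Your overall architecture agrees with the paper's: solvable base change to the hypotheses of Prop.~\ref{prop:level-lowering} (Lem.~\ref{lem_basic_reductions}), residual automorphy (Lem.~\ref{lem_residual_automorphy}), then descent, with the non-Zariski-dense case handled by automorphic induction. Of your two alternatives there, only automorphic induction works; the hypothesis on $\overline{\chi}$ does not exclude the dihedral case.

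The genuine gap is residual modularity at minimal level, which is the one place you depart from the paper. You want a tame-level-one $\iota$-ordinary cusp form with $\overline{r}^{\textnormal{ss}}_{\pi,\iota}\cong\overline{\chi}_1\oplus\overline{\chi}_2$, obtained because the Eisenstein ideal meets the cuspidal Hida Hecke algebra. For that you need two things.
\begin{enumerate}
\item The constant terms of the ordinary Eisenstein family must \emph{vanish} modulo $\varpi$ at every cusp; you wrote ``nonvanishing'', which is backwards.
\item You then need an actual construction of a congruent $\Lambda$-adic cusp form, controlling the constant terms at all cusps of the Hilbert modular variety.
\end{enumerate}
You supply neither. ``Possibly after a further base change to make the relevant $L$-value divisible by $p$'' is not an argument: you give no mechanism by which base change produces the needed $p$-divisibility. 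Moreover, your own reductions force $\overline{\chi}_1|_{G_{F_v}}=\overline{\chi}_2|_{G_{F_v}}=1$ for every $v\mid p$. That is exactly the residually non-$p$-distinguished, trivial-zero situation, where the two ordinary $p$-stabilisations of the Eisenstein series coincide residually. The Ribet--Wiles style construction of a congruent cusp form is delicate there, and you do not address it.

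The paper sidesteps $L$-values entirely.
\begin{enumerate}
\item It additionally arranges $|S_p|>1$, which is missing from your list of local conditions.
\item It fixes a prime $\ell\neq p$ not dividing $|k^\times|$ and a character $\overline{\theta}$ of order $\ell$, unramified at $p$, with $\overline{\theta}(\Frob_{v_0})=\alpha$ and $\overline{\theta}(\Frob_v)=\alpha^{-1}$ for $v\in S_p\setminus\{v_0\}$. Then $\overline{\chi}_1\overline{\theta}$ and $\overline{\chi}_2\overline{\theta}^{-1}$ are distinct at every $p$-adic place.
\item Following \cite[\S 5.6]{Pan22}, it takes the weight one Eisenstein series with the mixed $p$-stabilisation $L^{S_p\setminus\{v_0\}}(F,s,[\overline{\chi}_1\overline{\theta}])\,L^{\{v_0\}}(F,s,[\overline{\chi}_2\overline{\theta}^{-1}])$; this is where $|S_p|>1$ is needed.
\item It multiplies this by a high-weight form whose $q$-expansion is $\equiv 1 \bmod \varpi$. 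This gives an $\iota$-ordinary cuspidal $\pi'$ with $\overline{r}_{\pi',\iota}\cong\overline{\chi}_1\overline{\theta}\oplus\overline{\chi}_2\overline{\theta}^{-1}$.
\item It moves $\pi'$ to weight $0$ and trivial central character by Hida theory.
\item Finally it applies the solvable base change theorem of \cite{Ski01} over some $F'\supset F(\overline{\theta})$ linearly disjoint from $F(\overline{\chi}_1,\overline{\chi}_2,\epsilon)$. This kills $\overline{\theta}$ and removes the tame level, giving $\pi$ unramified outside $p$ and semistable at $p$.
\end{enumerate}
You should replace your Eisenstein-ideal step with this argument, or give a complete proof of the Eisenstein congruence in the residually non-$p$-distinguished case.
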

We begin with some preparatory lemmas. 
\begin{lemma}\label{lem_ordinary_implies_split_image}
        Let $E/\bQ_p$ be a coefficient field.
        Let $F$ be a number field, let $\lambda \in (\mathbb{Z}^2_+)^{\Hom(F, \overline{\mathbb{Q}}_p)}$, and let $\rho : G_F \to \GL_2(E)$  be a continuous representation with the following properties:
    \begin{enumerate}
        \item $\det \rho = \epsilon^{-1}$.
        \item The image of $\rho(G_F)$ in $\operatorname{PGL}_2(E)$ is Zariski dense in~$\operatorname{PGL}_{2, E}$.
        \item $\rho$ is ordinary of weight $\lambda$.
    \end{enumerate}
    Then~$\rho(G_F)$ contains a $\GL_2(E)$-conjugate of an open subgroup of $\GL_2(\mathbb{Z}_p)$.
\end{lemma}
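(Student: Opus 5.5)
The strategy is to identify the Lie algebra of the $p$-adic Lie group $\Gamma := \rho(G_F)$ and then to produce a rational torus inside it from the ordinary condition. Since $\det\rho = \epsilon^{-1}$, the image $\Gamma$ is a compact $p$-adic analytic subgroup of $\GL_2(E)$. By hypothesis (2), the image $\overline\Gamma$ of $\Gamma$ in $\operatorname{PGL}_2(E)$ is Zariski dense. Pink's theorem \cite[Thm.\ 0.7]{Pin98}, already used in the proof of Lem.\ \ref{lem:definition of M}, then provides:
\begin{itemize}
\item a subfield $\bQ_p \subset E_0 \subset E$;
\item a quaternion algebra (or matrix algebra) $D_0$ over $E_0$ with $D_0 \otimes_{E_0} E \cong M_2(E)$;
\end{itemize}
such that $\overline\Gamma$ is, up to conjugacy, an open subgroup of $D_0^\times/E_0^\times$. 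Equivalently, $\Lie(\ad^0\Gamma) = \Lie(D_0^\times/E_0^\times)$ is the $\bQ_p$-Lie algebra $D_0^{\mathrm{tr}=0}$. Since $\Gamma$ is open in a group of the form $E_0^\times D_0^\times$ up to center, and $\det$ is valued in $\bZ_p^\times$ via $\epsilon$, it is enough to show two things: that $E_0 = \bQ_p$, and that $D_0$ is split. Given these, $D_0^\times \cong \GL_2(\bQ_p)$ is embedded in $\GL_2(E)$ via $D_0 \otimes E \cong M_2(E)$. That embedding is a $\GL_2(E)$-conjugate of the standard $\GL_2(\bQ_p)$, by Skolem--Noether. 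Then $\Gamma \cap \operatorname{SL}$ is open in a conjugate of $\operatorname{SL}_2(\bZ_p)$. Together with a suitable element of non-square determinant coming from $\epsilon$, this shows that $\Gamma$ contains an open subgroup of $h\GL_2(\bZ_p)h^{-1}$.

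To get both facts from ordinarity, fix $v \in S_p$ and write $\rho|_{G_{F_v}} \sim \left(\begin{smallmatrix}\psi_{v,1} & * \\ 0 & \psi_{v,2}\end{smallmatrix}\right)$ as in \eqref{eqn_def_of_ordinary}. On an open subgroup of inertia, $\psi_{v,1}/\psi_{v,2}$ is given by an algebraic character $x \mapsto \prod_\tau \tau(x)^{-(\lambda_{\tau,2}-\lambda_{\tau,1}-1)}$, whose exponents $\lambda_{\tau,1}-\lambda_{\tau,2}+1 \ge 1$ are all positive. The key point is the field of traces. For $g$ in the inertia image, the element $\ad^0\rho(g)$ has eigenvalue $\psi_{v,1}/\psi_{v,2}(g)$. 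Taking $g = \Art(x)$ with $x \in \bZ_p^\times \subset \cO_{F_v}^\times$ close to $1$, this eigenvalue is $x^{k}$, where $k = \sum_\tau(\lambda_{\tau,1}-\lambda_{\tau,2}+1) > 0$. Thus the upper-triangular image of these $g$ lies, modulo center, in a one-parameter subgroup $t \mapsto \diag(t^{k},1)$ with $t \in \bZ_p$. Differentiating gives a nonzero element $H \in \Lie(\ad^0\Gamma)$ that is $\GL_2(E)$-conjugate to $\diag(1,-1)$. The element $H$ is therefore split semisimple with eigenvalues $\pm 1 \in \bQ_p$, and it lies in the $\bQ_p$-form $D_0^{\mathrm{tr}=0}$.

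Now I will use this element to conclude. First, $D_0$ is split over $E_0$: the element $H \in D_0$ with $H^2 = 1$ and $H \ne \pm 1$ gives a nontrivial idempotent $(1+H)/2 \in D_0$, so $D_0$ is not a division algebra. Second, $E_0 = \bQ_p$. The main obstacle is this step: Pink's $E_0$ is determined by the $\bQ_p$-span of the trace field of $\ad\Gamma$, and we must show that this field is $\bQ_p$. The plan is to use that $\Lie\overline\Gamma$ is an $E_0$-Lie algebra $\mathfrak{sl}_2(E_0)$ that is simple over $E_0$. Its $\bQ_p$-subalgebra generated by $H$ and the nilpotent Lie algebra of the unipotent radical of the inertia image is stable under $\ad H$. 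A root vector $N$ for $\ad H$ with eigenvalue $2$ exists in $\Lie\overline\Gamma$ with $E_0$-coefficients. I would argue that $\{H, N, N'\}$ with $\bQ_p$-structure forces $E_0$ to equal the field generated by the eigenvalues of $\ad H$ acting on $\mathfrak{sl}_2(E_0)$, restricted to $\bQ_p$-rational pieces. If this fails, I would fall back on a direct approach: if $E_0 \ne \bQ_p$, then $\Gamma$ up to center is open in $\operatorname{Res}_{E_0/\bQ_p}\operatorname{PGL}_2$. In that case the abelianised determinant-trace data would have $E_0$-dimension larger than allowed by the one-dimensional $\bZ_p$-image of $\epsilon$ combined with the rank of the cyclotomic-type characters $\psi_{v,i}$. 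I would check this against the Hodge--Tate structure at $v$.
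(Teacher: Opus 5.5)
The gap is the step you yourself flag as the main obstacle. You reduce the lemma to showing $E_0=\bQ_p$, but this is false in general, so neither your sketched argument nor your fallback can be completed.

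For a counterexample, take a non-CM newform $f$ of weight $2$ and trivial character with no inner twists, and a prime $\wp \mid p$ of its Hecke field $K$ with $[K_\wp:\bQ_p]>1$ and $a_p(f)$ a $\wp$-adic unit. Suitably normalised, $\rho_{f,\wp}$ satisfies all three hypotheses of the lemma. Yet by the results of Ribet and Momose its image has Lie algebra $\bQ_p\cdot 1\oplus\mathfrak{sl}_2(K_\wp)$, so Pink's field is $E_0=K_\wp\neq\bQ_p$. Your fallback could not detect $E_0$ in any case: $D_0^\times/E_0^\times$ only sees the projective image, while $\det\rho=\epsilon^{-1}$ only constrains the central direction.

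Fortunately $E_0=\bQ_p$ is not needed, and dropping it gives exactly the paper's proof. Your argument that $D_0$ is split is fine. The idempotent $(1+H)/2$ is a Lie-algebra version of the paper's group-level argument: if $D_0$ were a division algebra, the eigenvalue ratios of elements of $E^\times D_0^\times$ would be $1$ or norm-one elements of quadratic extensions of $E_0$. These meet $\bQ_p^\times$ only in $\{\pm 1\}$, which contradicts the ratios $u^k$ supplied by ordinarity.

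One correction in producing $H$: the inertia image is only upper triangular, so it need not lie in a diagonal one-parameter subgroup. Instead, take the logarithm of a single $\rho(g)$ with eigenvalue ratio $x^k\neq\pm 1$; such an element is semisimple. Then rescale its trace-zero part by an element of $\bQ_p^\times$.

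The rest of the proof then goes as follows.
\begin{itemize}
\item By Skolem--Noether, conjugate so that $D_0=M_2(E_0)\subset M_2(E)$.
\item Then $\Lie\Gamma\subseteq E\cdot 1+\mathfrak{sl}_2(E_0)$, with surjective projection onto $\mathfrak{sl}_2(E_0)$. Hence $\Lie\Gamma\supseteq[\Lie\Gamma,\Lie\Gamma]=\mathfrak{sl}_2(E_0)\supseteq\mathfrak{sl}_2(\bQ_p)$.
\item Since $\tr(\Lie\Gamma)=\Lie(\epsilon^{-1}(G_F))=\bQ_p$, also $\bQ_p\cdot 1\subseteq\Lie\Gamma$. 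Hence $\Lie\Gamma\supseteq\mathfrak{gl}_2(\bQ_p)$.
\item Exponentiating a small lattice in $\mathfrak{gl}_2(\bQ_p)$ gives an open subgroup of $\GL_2(\bZ_p)$ inside $\Gamma$.
\end{itemize}
This also replaces your final step. A single element of non-square determinant would not by itself provide the central $\bZ_p^\times$-direction of an open subgroup.
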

\begin{proof}
    By \cite[Thm.\ 0.7]{Pin98}, we can find an intermediate extension $\mathbb{Q}_p \subset E_0 \subset E$, a central simple $E_0$-algebra $D_0$, and an isomorphism $D_0 \otimes_{E_0} E \cong M_2(E)$ such that $\rho(G_F) \leq E^\times D_0^\times$ and the image of $\rho(G_F)$ in $E^\times D_0^\times / E^\times \cong D_0^\times / E_0^\times$ is open. We claim that $D_0$ is split. Since any automorphism of $M_2(E)$ is inner, this will imply that we can conjugate $\rho$ so that $\rho(G_F) \leq E^\times \GL_2(E_0)$, with open image in $\operatorname{PGL}_2(E_0)$. It follows that the Lie algebra of the $p$-adic Lie group $\rho(G_F)$ is a $\mathbb{Q}_p$-subalgebra of $\mathfrak{gl}_2(E)$ that contains $\mathfrak{sl}_2(E_0)$ and whose trace is equal to $\mathbb{Q}_p$ (because $\det \rho = \epsilon^{-1}$ and $\epsilon^{-1}(G_F)$ is an open subgroup of $\mathbb{Z}_p^\times$). In particular, this Lie algebra contains $\mathfrak{gl}_2(\mathbb{Q}_p)$, and therefore $\rho(G_F)$ contains an open subgroup of $\GL_2(\mathbb{Z}_p)$. 

    We now prove the claim that $D_0$ is split. Suppose for contradiction that $D_0$ is non-split. Then if $g \in \rho(G_F)$, we can write $g = \lambda g_0$ for some $\lambda \in E^\times$, $g_0 \in D_0^\times$. If $g_0 \in E_0^\times$, then $g$ is scalar. If $g_0 \not\in E_0^\times$, then $E_0(g_0) / E_0$ is a quadratic extension of $E_0$ inside $D_0$. We conclude that for any $g \in \rho(G_F)$, the eigenvalues $\alpha, \beta \in \overline{\mathbb{Q}}_p$ of $g$ are either equal, or there exists a quadratic extension $L_0 / E_0$ such that $\alpha / \beta \in L_0^\times$ and $\alpha / \beta \in \ker( \mathbb{N}_{L_0 / E_0} : L_0^\times \to E_0^\times )$. Note that $\mathbb{Q}_p^\times \cap \ker( \mathbb{N}_{L_0 / E_0} ) = \{ \pm 1 \}$.

    On the other hand, we are assuming that $\rho$ is ordinary of weight $\lambda$, so for any place $v | p$ there is an isomorphism
    \[ \rho|_{G_{F_v}} \sim \begin{pmatrix} \psi_{v, 1} & * \\ 0 & \psi_{v, 2} \end{pmatrix}, \]
    where $\psi_{v, 1} \circ \Art_{F_v}$ agrees on an open subgroup of $\cO_{F_v}^\times$ with the character $x \mapsto \prod_{\tau \in \Hom(F_v, E)} \tau(x)^{-\lambda_{\tau, 2}}$, and $\psi_{v, 2} \circ \Art_{F_v}$ agrees on an open subgroup of $\cO_{F_v}^\times$ with the character $x \mapsto \prod_{\tau \in \Hom(F_v, E)} \tau(x)^{-(\lambda_{\tau, 1} + 1)}$. Therefore $(\psi_{v, 1}  / \psi_{v, 2}) \circ \Art_{F_v}$ agrees on an open subgroup of $\cO_{F_v}^\times$ with $x \mapsto \prod_{\tau \in \Hom(F_v, E)} \tau(x)^{1 + (\lambda_{\tau, 1} - \lambda_{\tau, 2})}$. 

    We conclude that there is an open subgroup $U \leq \mathbb{Z}_p^\times$ such that for any $u \in U$, there exists $g \in G_{F_v}$ with eigenvalues $\alpha, \beta \in \overline{\mathbb{Q}}_p$ satisfying the formula
    \[ \alpha / \beta = u^{[F_v : \mathbb{Q}_p] + \sum_{\tau \in \Hom(F_v, E)} (\lambda_{\tau, 1} - \lambda_{\tau, 2})}. \]
    The exponent in the right-hand side is a positive integer, so we can certainly choose $u$ so that the right-hand side is not equal to $\pm 1$. This contradicts the restriction we have placed on the eigenvalue ratios of elements of $E^\times D_0^\times$, so we conclude that $D_0$ must be split, as required. \qedhere

\end{proof}
\begin{lemma}\label{lem_basic_reductions}
    In order to prove Thm.\ \ref{thm_automorphy_lifting}, it suffices to prove it under the following additional hypotheses:
    \begin{enumerate}
    \item The image of $\rho(G_F)$ in $\operatorname{PGL}_{2, \overline{\mathbb{Q}}_p}$ is Zariski dense. 
    \item $|S_p| > 1$ and $[F : \mathbb{Q}]$ is even. 
        \item For each $v \in S_p$, $\overline{\rho}^{\textnormal{ss}}|_{G_{F_v}}$ is trivial, $[F_v : \mathbb{Q}_p] > 2$, and, if we write $\rho|_{G_{F_v}} \sim \left( \begin{smallmatrix} \psi_{v, 1} & \ast \\ 0 & \psi_{v, 2} \end{smallmatrix} \right)$ as in the definition of $\lambda_v$-ordinary, the character $\psi_{v, 1} \circ \Art_{F_v}$ is trivial on the subgroup of $p$-power roots of unity in $\cO_{F_v}^\times$. 
        \item There is a set $S_0$ of finite places of $F$ such that if $v \in S_0$, then $q_v \equiv 1 \text{ mod }p$, $\overline{\rho}^{\textnormal{ss}}|_{G_{F_v}}$ is trivial, and $\rho|_{G_{F_v}}^{\textnormal{ss}} \cong 1 \oplus \epsilon^{-1}$.
        \item For each finite place $v \not\in S_p \cup S_0$ of $F$, $\rho|_{G_{F_v}}$ is unramified.
    \end{enumerate}
\end{lemma}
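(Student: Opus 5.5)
We reduce to the additional hypotheses by a solvable totally real base change combined with the standard descent of automorphy, which is available for solvable (in fact iterated cyclic) extensions of totally real fields by Langlands base change. Concretely, given $\rho$ as in Thm.\ \ref{thm_automorphy_lifting}, we construct a finite solvable totally real extension $F'/F$ such that $\rho|_{G_{F'}}$ satisfies all the hypotheses of Thm.\ \ref{thm_automorphy_lifting} together with (1)--(5). Automorphy of $\rho|_{G_{F'}}$ then implies automorphy of $\rho$ by cyclic descent step by step. For this descent we need $\rho|_{G_{F''}}$ to remain irreducible at each intermediate step, and we need the automorphic representation to be $\iota$-ordinary of the right weight, which follows from local-global compatibility at $p$ once $\pi$ exists.

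We first dispose of (1). Since $\rho$ is irreducible and ordinary with regular Hodge--Tate weights, the Zariski closure of the image in $\operatorname{PGL}_2$ is either everything or the normaliser of a torus; it cannot be finite because of the Hodge--Tate weights. In the normaliser case, $\rho$ is induced from a character of a quadratic extension $K/F$. Such $\rho$ is automorphic by automorphic induction, since the character is algebraic by the ordinarity/de Rham property; the ordinary condition forces $K$ to be CM and the character to be a Hecke character. So we may assume (1).

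Next we choose $F'$. We take $F'$ linearly disjoint from the field $F(\rho|_{\text{res}}, \overline{\rho}^{\textnormal{ss}}, \zeta_p)$, in the sense required to preserve Zariski density of the image and the condition $\overline{\chi}|_{G_{F'(\zeta_p)}} \neq 1$. This is arranged by using fields disjoint from the field cut out by $\overline{\chi}$ over $F(\zeta_p)$ and by avoiding the finitely many quadratic subfields that could destroy irreducibility. Within this constraint we impose the following local conditions.

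\begin{enumerate}
\item At places $v \in S_p$, we make $F'_w/F_v$ sufficiently ramified and of large degree. This kills $\overline{\rho}^{\textnormal{ss}}|_{G_{F_w}}$, makes $[F'_w:\mathbb{Q}_p]>2$, and makes $\psi_{w,1}\circ\Art$ trivial on $p$-power roots of unity. The last point is possible because $\psi_{v,1}$ restricted to inertia is a finite-order character times an algebraic one, and we need to kill a finite-order character on a finite group, which is achieved by further local extension.
\item At ramified places $v\nmid p$, we use Grothendieck's monodromy theorem to make $\rho$ either unramified or unipotently ramified. In the unipotent case $\rho|_{G_{F_w}}^{\textnormal{ss}}$ is unramified with eigenvalue ratio $q_w$. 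After a further unramified extension we can make the eigenvalues equal to $1$ and $\epsilon^{-1}$ (using $\det\rho=\epsilon^{-1}$ and twisting considerations), make $q_w\equiv 1 \bmod p$, and make $\overline{\rho}^{\textnormal{ss}}|_{G_{F_w}}$ trivial. These places form $S_0$.
\item For (2), we adjoin an extra quadratic extension in which $p$ splits, so that $|S_p|>1$ and the degree is even.
\end{enumerate}

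All local conditions are realised simultaneously by a global solvable (abelian-by-abelian) totally real extension with prescribed completions. This uses the standard lemma (e.g.\ via Grunwald--Wang for cyclic extensions of odd or controlled degree) producing totally real cyclic extensions with prescribed local behaviour at finitely many places and disjoint from a given finite extension.

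The main obstacle is ensuring that the unipotent-ramification condition yields semisimplification exactly $1\oplus\epsilon^{-1}$ rather than a twist $\mu\oplus\mu\epsilon^{-1}$ with $\mu$ unramified of finite order. Since $\mu^2=1$, a further unramified quadratic local extension kills $\mu$, and we would build this into the choice of completion at such $v$. We also need to verify that the descent step handles the weight: the weight of $\pi'$ over $F'$ is the restriction of $\lambda$, and the descended $\pi$ has weight $\iota\lambda$ by the uniqueness in strong multiplicity one.
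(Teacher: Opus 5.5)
Your proposal follows essentially the same route as the paper: the non-Zariski-dense case is dihedral and handled by automorphic induction, and the remaining conditions are arranged by a soluble totally real base change followed by descent. Two small corrections. First, irreducibility along the descent is automatic once the projective image is Zariski dense, because $\operatorname{PGL}_2$ is connected, so no disjointness is needed for that. Second, the one step the paper actually spells out, the triviality of $\psi_{v,1}\circ\Art_{F_v}$ on $p$-power roots of unity, is not covered by your ``kill a finite-order character on a finite group'': the group $\mu_{p^\infty}(F'_w)$ changes with $F'_w$, and adding ramification need not help. Instead choose $F'_w$ with $\psi_{v,1}(G_{F'_w}) \subset 1+p\cO$; this group is torsion-free since $p$ is odd, so the condition holds and persists under any further extension.
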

\begin{proof}
    If the image of $\rho(G_F)$ in $\operatorname{PGL}_{2, \overline{\mathbb{Q}}_p}$ is not Zariski dense then $\rho$ is dihedral and its automorphy follows by automorphic induction. The remaining conditions can be satisfied after passage to a suitable soluble, totally real extension $F'/F$ (using soluble base change and descent). Note in particular that the condition that $\psi_{v, 1} \circ \Art_{F_v}$ be trivial on $p$-power roots of unity is implied by the condition that $\psi_{v, 1}(G_{F_v})$ contains no non-trivial elements of $p$-power order. If $E / \mathbb{Q}_p$ is a coefficient field such that $\psi_{v, 1}$ takes values in $\cO^\times$, then $\psi_v^{-1}(1 + p \cO)$ is an open subgroup whose image contains no non-trivial elements of $p$-power order, so it suffices to choose $F'$ such that, if $v'$ is a $p$-adic place of $F'$ lying above $v$, then $G_{F'_{v'}} \subset \psi_v^{-1}(1 + p \cO)$.     
\end{proof}

\begin{lemma}\label{lem_residual_automorphy}
    Let~$p$ be an odd prime, and let~$E/\mathbb Q_p$ be a coefficient field.
    Let $F$ be a totally real field satisfying $|S_p| > 1$, and let $\overline{\chi}_1, \overline{\chi}_2 : G_F \to k^\times$ be continuous characters satisfying the following conditions:
    \begin{enumerate}
        \item For each $v \in S_p$, $\overline{\chi}_1|_{G_{F_v}}, \overline{\chi}_2|_{G_{F_v}}$ are trivial. 
        \item $\overline{\chi}_1 \overline{\chi}_2 = \epsilon^{-1}$.
        \item $\overline{\chi}_1 / \overline{\chi}_2|_{G_{F(\zeta_p)}}$ is non-trivial. 
    \end{enumerate}
    Then we can find a soluble %
    totally real extension $F' / F$, an isomorphism $\iota : \overline{\mathbb{Q}}_p \to \mathbb{C}$, and a cuspidal, regular algebraic automorphic representation $\pi$ of $\GL_2(\mathbb{A}_{F'})$ of weight 0 and trivial central character, satisfying the following conditions:
    \begin{enumerate}
        \item $F' / F$ is linearly disjoint from the extension $F(\overline{\chi}_1, \overline{\chi}_2, \epsilon) / F$.
        \item There is an isomorphism $\overline{r}_{\pi, \iota}^{\textnormal{ss}} \cong (\overline{\chi}_1 \oplus \overline{\chi}_2)|_{G_{F'}}$
        \item $\pi$ is $\iota$-ordinary.
        \item $\pi$ is unramified away from the $p$-adic places of $F'$, and semistable at the $p$-adic places.
    \end{enumerate}
\end{lemma}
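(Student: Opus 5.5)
The plan is to follow \cite[\S 5.6]{Pan22}: realise $\overline{\chi}_1 \oplus \overline{\chi}_2$ as the residual semisimplification of an ordinary cuspidal eigenform. The form will be built as a congruence with an Eisenstein series, after a soluble base change that makes the local conditions tractable.

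Step one is the base change. Using \cite{Pan22}-style arguments and the Grunwald--Wang theorem, choose a soluble totally real $F'/F$ linearly disjoint from $F(\overline{\chi}_1, \overline{\chi}_2, \epsilon)$. We want every $p$-adic place of $F'$ to have large degree over $\mathbb{Q}_p$, we want $|S_p(F')| > 1$ to persist, and we want $[F':\mathbb{Q}]$ even. Linear disjointness keeps hypothesis (3), $\overline{\chi}|_{G_{F'(\zeta_p)}} \neq 1$, intact. Since $\overline{\chi}_1, \overline{\chi}_2$ are trivial at $p$, the Teichm\"uller lifts $\chi_i \coloneqq [\overline{\chi}_i]$ are finite order characters unramified at $p$, with $\chi_1\chi_2\epsilon$ of finite order. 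I would twist so that the Eisenstein series $E(\chi_1\epsilon, \chi_2)$, or rather its weight $0$ analogue, is ordinary, has trivial central character up to twist, and is unramified away from $p$ after the base change. Unramifiedness is arranged by choosing $F'$ to kill the ramification of $\overline{\chi}_i$ away from $p$, using that the $\overline{\chi}_i$ have prime-to-$p$ order.

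Step two is the congruence. In the ordinary Hida family of weight $0$, parallel Iwahori level at $p$ and trivial tame level, I would show that the Eisenstein maximal ideal $\mathfrak{m}$ attached to $\overline{\chi}_1 \oplus \overline{\chi}_2$ is in the support of the cuspidal Hida Hecke algebra. Following Pan, and ultimately Skinner--Wiles and Wiles' proof of Iwasawa main conjecture, one computes the constant terms of the ordinary Eisenstein family. These are given by a $p$-adic $L$-function $L_p(\overline{\chi}^{-1}\omega\cdots)$. The hypothesis $|S_p|>1$ forces this to be non-unit, or rather forces a trivial-zero-type vanishing at enough specialisations. Via the Hida-theoretic exact sequence relating Eisenstein and cuspidal parts, this produces a cuspidal ordinary family congruent to the Eisenstein family modulo $\mathfrak{m}$. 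Condition (3) guarantees $\mathfrak{m}$ is not of the form $1\oplus\epsilon^{-1}$-twist, so no boundary pathologies occur. Specialising the family at an arithmetic point of weight $0$ and trivial character, then twisting to trivial central character using $p$ odd, gives $\pi$. It is ordinary, unramified outside $p$, semistable at $p$ (the Iwahori level), and satisfies $\overline{r}^{\mathrm{ss}}_{\pi,\iota} \cong \overline{\chi}_1 \oplus \overline{\chi}_2$ by Chebotarev and Brauer--Nesbitt.

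I expect step two to be the main obstacle: showing the Eisenstein ideal genuinely meets the cuspidal locus, rather than merely the Eisenstein locus. I would first try to copy \cite[Prop.\ 5.6.x]{Pan22}, whose key input is exactly $|S_p|>1$, which makes the constant term of the ordinary Eisenstein family vanish to order at least one. That vanishing is what makes the cuspidal congruence automatic, without any input on the main conjecture.
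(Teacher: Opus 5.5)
Your Step two, the congruence with a cusp form, does not go through as described. The obstruction is exactly the hypothesis that $\overline{\chi}_1$ and $\overline{\chi}_2$ are both trivial on $G_{F_v}$ for every $v \in S_p$.

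\begin{itemize}
\item \emph{What $|S_p|>1$ is really for.} In the construction the paper borrows from \cite[\S 5.6]{Pan22}, $|S_p|>1$ is not used to produce a trivial zero. It is used to build a \emph{mixed} $p$-stabilisation of a weight one Eisenstein series, with Dirichlet series $L^{S_p - \{v_0\}}(F,s,\psi_1)L^{\{v_0\}}(F,s,\psi_2)$. This series is ordinary with respect to $\psi_2$ at the places $v \neq v_0$ and with respect to $\psi_1$ at $v_0$. One then reaches high weight by multiplying with a form whose $q$-expansion is $\equiv 1 \bmod \varpi$.
\item \emph{Why your family cannot be mixed.} The mixing can only happen in weight one. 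In the paper's weight $0$ (classical parallel weight $2$) and above, only one of the two stabilisations at each $v \mid p$ has unit $U_v$-eigenvalue, so your ordinary Eisenstein family is never mixed.
\item \emph{Why the characters matter.} Mixing also needs $\psi_1(\Frob_v) \neq \psi_2(\Frob_v)$ at the $p$-adic places. For Teichm\"uller lifts of your characters both values are $1$. The two stabilisations at each $v \mid p$ then coincide, so there is nothing to mix, which is why the paper does not apply that construction to $\overline{\chi}_1 \oplus \overline{\chi}_2$ directly.
\item \emph{Why the trivial-zero story does not replace it.} The trivial zero comes from the factor $\prod_{v \mid p}(1-\chi_1^{-1}\chi_2(\varpi_v))$ at weight one. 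That factor already vanishes with a single $p$-adic place, so it is not what $|S_p|>1$ buys. To turn it into a cuspidal congruence you would still have to show two things: that the constant terms of the family at all cusps relevant to cuspidality lie in $\mathfrak{m}_\Lambda$, and that the cuspidal/Eisenstein sequence stays exact modulo $\mathfrak{m}_\Lambda$. None of this is in your proposal, and it is a different argument from Pan's, so ``copy \cite[\S 5.6]{Pan22}'' does not supply it.
\end{itemize}

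The idea you are missing is the auxiliary twist the paper uses.
\begin{itemize}
\item \emph{The twist.} Choose a prime $\ell \neq p$ with $\ell \nmid |k^\times|$ (so $\ell$ is odd), an element $\alpha \in \overline{\mathbb{F}}_p^\times$ of exact order $\ell$, and a place $v_0 \in S_p$. Take a character $\overline{\theta}$ of order $\ell$, unramified at $S_p$, with $\overline{\theta}(\Frob_{v_0}) = \alpha$ and $\overline{\theta}(\Frob_v) = \alpha^{-1}$ for $v \in S_p \setminus \{v_0\}$.
\item \emph{Effect at $p$.} The characters $\overline{\chi}_1\overline{\theta}$ and $\overline{\chi}_2\overline{\theta}^{-1}$ are unramified at each $v \mid p$ and take the distinct values $\alpha^{\pm 1}$ on $\Frob_v$.
\item \emph{Construction over $F$.} The construction with Dirichlet series $L^{S_p - \{v_0\}}(F,s,[\overline{\chi}_1\overline{\theta}])\,L^{\{v_0\}}(F,s,[\overline{\chi}_2\overline{\theta}^{-1}])$ now works over $F$ itself. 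It gives an $\iota$-ordinary cuspidal $\pi'$ with $\overline{r}_{\pi',\iota} \cong \overline{\chi}_1\overline{\theta} \oplus \overline{\chi}_2\overline{\theta}^{-1}$. Hida theory then moves $\pi'$ to weight $0$ and trivial central character.
\item \emph{Base change last.} The main result of \cite{Ski01} gives a soluble $F'/F$ and $\pi$ over $F'$. Here $F'$ contains $F(\overline{\theta})$, which kills the twist; $F(\overline{\theta})$ is itself disjoint from $F(\overline{\chi}_1,\overline{\chi}_2,\epsilon)$ because $\ell \nmid p|k^\times|$. Also $F'$ is linearly disjoint from $F(\overline{\chi}_1,\overline{\chi}_2,\epsilon)$, and $\pi$ is unramified away from $p$, semistable at $p$, with $\overline{r}^{\textnormal{ss}}_{\pi,\iota} \cong (\overline{\chi}_1 \oplus \overline{\chi}_2)|_{G_{F'}}$.
\end{itemize}
So the twist also forces the base change to come at the end rather than at the start. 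It is that final base change which removes both the auxiliary character and the ramification away from $p$.
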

\begin{proof}
    Let $\ell \neq p$ be a prime not dividing $|k^\times|$, let $v_0 \in S_p$, and let $\alpha \in \overline{\mathbb{F}}_p^\times$ be an element of exact order $\ell$. Let $\overline{\theta} : G_F \to \overline{\mathbb{F}}_p^\times$ be a continuous character of order $\ell$, unramified at the $p$-adic places, and such that $\overline{\theta}(\Frob_{v_0}) = \alpha$ and $\overline{\theta}(\Frob_v) = \alpha^{-1}$ if $v \in S_p - \{ v_0 \}$. 

    The argument of \cite[\S 5.6]{Pan22} (multiply a weight 1 Eisenstein series with Dirichlet series $L^{S_p - \{ v_0 \}}(F, s, [ \overline{\chi}_1 \overline{\theta} ]) L^{\{ v_0 \}}(F, s, [ \overline{\chi}_2 \overline{\theta}^{-1}])$ by a high weight modular form whose $q$-expansion is congruent  to 1 mod $\varpi$) produces a cuspidal, regular algebraic automorphic representation $\pi'$ of $\GL_2(\mathbb{A}_F)$ and an isomorphism $\iota : \overline{\mathbb{Q}}_p \to \mathbb{C}$ with the following properties:
    \begin{itemize}
        \item $\pi'$ is $\iota$-ordinary.
        \item $\overline{r}_{\pi', \iota} \cong \overline{\chi}_1 \overline{\theta} \oplus \overline{\chi}_2 \overline{\theta}^{-1}$. 
    \end{itemize}
    By Hida theory, we can moreover assume that $\pi'$ has weight 0 and trivial central character. Then, making use of the main result of \cite{Ski01}, we can find a soluble extension $F' / F$ and a cuspidal, regular algebraic automorphic representation $\pi$ of $\GL_2(\mathbb{A}_{F'})$, all satisfying the following conditions:
    \begin{itemize}
        \item $F' / F$ is disjoint from the extension $F(\overline{\chi}_1, \overline{\chi}_2, \epsilon) / F$ and contains the extension $F(\overline{\theta}) / F$.
        \item $\pi$ is $\iota$-ordinary.
        \item $\pi$ is unramified away from the $p$-adic places of $F'$, and semistable at the $p$-adic places.
        \item There is an isomorphism $\overline{r}_{\pi, \iota}^{\textnormal{ss}} \cong \overline{r}_{\pi', \iota}^{\textnormal{ss}}|_{G_{F'}}$.
    \end{itemize}
    This $\pi$ now has the desired properties, by construction. 
\end{proof}

We now have all the ingredients for the proof of our main result.

\begin{proof}[Proof of Thm.\ {\em\ref{thm_automorphy_lifting}}]
    Let $\rho : G_F \to \GL_2(\overline{\mathbb{Q}}_p)$ be as in the theorem.
    Choose a sufficiently large coefficient field $E/\mathbb{Q}_p$ such that the image of~$\rho$ is contained in~$\GL_2(E)$, and the image of $\overline{\chi}_1, \overline{\chi}_2$ is contained in~$k^\times$.
    By Lem.\ \ref{lem_basic_reductions} and Lem.\ \ref{lem_residual_automorphy}, we can assume, without loss of generality, that $\rho$ satisfies the additional conditions of Lem.\ \ref{lem_basic_reductions} and that there is a cuspidal, regular algebraic automorphic representation $\pi$ of $\GL_2(\mathbb{A}_F)$ satisfying the conditions of Lem.\ \ref{lem_residual_automorphy}.     
    At this point, all the hypotheses of Prop.\ \ref{prop:level-lowering} are satisfied:
    and this concludes the proof.    
\end{proof}

\bibliographystyle{alpha}
\bibliography{references}
\end{document}